\documentclass[12pt,centertags,oneside]{amsart}
\usepackage{lgrind}
\usepackage{cmap}
\usepackage{amsmath}
\usepackage{amsthm}
\usepackage{amssymb}
\usepackage{graphicx}
\usepackage{tikz}
\usepackage{tikz-cd}
\usepackage{tocvsec2}
\usepackage{mathrsfs}
\usepackage{pifont}
\usepackage[utf8]{inputenc}
\usepackage{hyperref}
\hypersetup{
    colorlinks=true,
    linkcolor=red,
    filecolor=magenta,      
    urlcolor=cyan,
}

\definecolor{labelkey}{rgb}{1,0,0}

\usetikzlibrary{arrows}
\theoremstyle{plain}
\newtheorem{definition}{Definition}[section]
\newtheorem{theorem}[definition]{Theorem}
\newtheorem{remark}[definition]{Remark}
\newtheorem{proposition}[definition]{Proposition}

\newtheorem{lemma}[definition]{Lemma}

\newcommand{\product}[1]{\langle #1\rangle}
\newcommand{\derivative}[1]{\frac{\partial}{\partial #1}}
\newcommand{\cc}{\mathbb{C}}
\newcommand{\zz}{\mathbb{Z}}
\newcommand{\rr}{\mathbb{R}}
\newcommand{\nn}{\mathbb{N}}

\newcommand{\trace}{\mathrm{Tr}}
\newcommand{\ENDO}{\mathrm{End}}

\newcommand{\dbar}{{\overline{\partial}^X}}

\newcommand{\dbarstar}{\overline{\partial}^{X,*}}
\newcommand{\bisconn}{\mathbb{B}}

\newcommand{\chern}{\mathrm{ch}}

\newcommand{\bktv}{\mathcal{B}_{K,t,v}}
\newcommand{\bkttv}{\mathcal{B}_{K,t,tv}}
\newcommand{\lk}{\mathcal{L}_K}

\newcommand{\midarrow}{\tikz \draw[-triangle 90] (0,0)-- +(.1,0);}
\newcommand{\str}{\mathrm{Tr}_{\text{s}}}
\newcommand{\kahler}{\mathrm{K\Ddot{a}hler}}
\newcommand{\td}{\mathrm{Td}}
\newcommand{\kongge}{\mathrm{\;}}
\newcommand{\ch}{\mathrm{ch}}
\newcommand{\cvt}{\mathcal{A}_{v,t}}
\newcommand{\ctvt}{\mathcal{A}_{tv,t}}
\newcommand{\dabar}{\overline{da}}
\newcommand{\scrbku}{\mathscr{B}_{K,u}}
\newcommand{\dba}{\overline{\partial}}
\newcommand{\uprime}{\mathcal{U}_{\epsilon}'}
\newcommand{\uneighbor}{\mathcal{U}_{\epsilon}}
\newcommand{\vneighbor}{\mathcal{V}_{\epsilon}}
\newcommand{\ccvt}{\mathcal{C}_{v,t}}
\newcommand{\akt}{A_{K,t}}

\newcommand{\bigtwo}{\uppercase\expandafter{\romannumeral2}}
\newcommand{\ft}{\widetilde{F}_t}
\newcommand{\Op}{{\mathcal{O}p}}
\newcommand{\lzktv}{L_{Z_0,K}^{3,(t,v)}}
\newcommand{\cinfty}{{\mathscr{C}}^{\infty}}
\newcommand{\Tr}{\mathrm{Tr}_{\mathrm{s}}}

\title[Equivariant Analytic Torsion Forms]{Equivariant Analytic Torsion Forms}

\author{Yuxiang Li}

\address{Universit\'e Paris Cit\'e, CNRS, IMJ-PRG, B\^atiment Sophie Germain, UFR de Math\'ematiques, Case 7012, 75205 Paris Cedex 13, France}

\email{yuxiang.li@imj-prg.fr}

\usepackage{tikz-cd}

\begin{document}
\begin{abstract}
In this paper, we first construct infinitesimal equivariant analytic torsion forms for K\"ahler fibrations. We then compare them with Ma's equivariant analytic torsion forms, modulo $\partial$- and $\overline{\partial}$-exact forms, in terms of equivariant Bott--Chern currents and the $I$-genus. Our comparison formula extends that of Bismut and Goette to families.
\end{abstract}
\maketitle
\numberwithin{equation}{section}
\date{\today}
\tableofcontents
\setcounter{section}{-1}
\settocdepth{section}

\section{Introduction}
The holomorphic analytic torsion of Ray-Singer \cite{RaySinger} is a famous spectral invariant in complex geometry, as a complex analogue of the analytic torsion for flat vector bundle \cite{RS71}. Since the 1980s, the  theory of holomorphic analytic torsion was fully developed by Bismut and his co-authors, and they have found many applications.  In particular, Bismut, Gillet and Soul\'e established a curvature formula \cite{BGS3} of the determinant line bundle, which refines the Riemann-Roch-Grothendieck theorem at the differential form level in degree 2. In \cite{BL}, Bismut and Lebeau studied the behaviors of analytic torsions under holomorphic immersions, in terms of the Bott-Chern currents \cite{BGS1988} and  the $R$-genus of Gillet-Soul\'{e} \cite{GS91}. 
The analytic torsions arise naturally  in the Arakelov geometry as the analytic part of direct images (cf. \cite{soulebourbaki}, \cite{Soulebook}). By using Bismut-Lebeau's immersion formula \cite{BL}, Gillet and Soul\'{e} \cite{GS92} proved the arithmetic Riemmann-Roch theorem, where their $R$-genus appears as a correction term.

To extend the above results to the family case, the first question is to construct a family version of the analytic torsion, this tough task is completed by Bismut and K\"ohler in \cite{BK1992}. In their article, they follow the previous work of Bismut-Gillet-Soul\'e \cite{BGS2}, and use essentially the Bismut superconnections \cite{B1986} to construct the holomorphic analytic torsion form, whose $0$-degree part coincides with the Ray-Singer analytic torsion. The $\partial\overline{\partial}$ of Bismut-K\"ohler's analytic torsion form is the difference of the terms which appear in the Riemann-Roch-Grothendieck theorem at the form level. In particular, it gives a refinement of the Riemann-Roch-Grothendieck theorem in full degrees.
 In \cite{BK1992}, Bismut and K\"ohler also extend the anomaly formula in \cite{BGS3} to analytic torsion forms. The immersion formula for analytic torsion forms is established by Bismut in \cite{B1997}, which extends the result of \cite{BL}. Then in \cite{GRS}, Gillet, R\"ossler and Soul\'e used Bismut's immersion formula \cite{B1997} to obtain the arithmetic Riemann-Roch theorem in full degrees, we refer to \cite{Faltings92} for related works. These results motivate a lot to study the functoriality and diverse properties of the torsion forms. For example, in \cite{ma99}, \cite{ma00}, Ma studied the behavior of analytic torsion forms under the composition of two submersions. 
See also  \cite{BFL1402}, \cite{BFL14} for recent works on analytic torsion class and their applications to arithmetic Riemann-Roch theorem for general projective morphisms.

As the Atiyah-Singer index theorem has its equivariant version, naturally we try  to extend the above  story to the equivariant situation. In fact K\"ohler and R\"ossler proved a Lefschetz fixed point formula in Arakelov geometry in \cite{KR01} by using Bismut's immersion formula \cite{B95} for equivariant analytic torsion.
Their fixed point formula have various applications in number theory and in arithmetic geometry, which could be found in \cite{KaK}, \cite{KR2}, \cite{KR03}, \cite{MaR}, \cite{MaR04}, \cite{MaR09}. 

In \cite{Ma2000}, Ma extended the holomorphic analytic torsion forms to the equivariant case, and he proved the corresponding anomaly formula. The immersion formula in the equivariant case is established by Bismut and Ma in \cite{BismutMa02}, by combining the various techniques developed in \cite{BL}, \cite{B95}, \cite{B1997}. Their formula is important in extending K\"ohler-R\"ossler's' arithmetic Lefschetz fixed point theorem to full degrees,  we refer to \cite{Tangshun10}, \cite{tang12}, \cite{Tang21} for some recent progresses on this subject.

The equivariant analytic torsion is not necessarily continuous as the function on the group, thus we hope to study the analytic properties of this function. It turns out to be difficult to work on the equivariant analytic torsion directly, thus in \cite{BismutGoette2000}, Bismut and Goette  constructed an infinitesimal version of equivariant  analytic torsion, which is an analytic function in a neighborhood of $0$ in the Lie algebra. However, the operators used in their construction do not have lower bounds, thus Bismut and Goette developped new techniques to go through this difficulty. Subsequently, they compared the two versions of equivariant analytic torsion in terms of equivariant Bott-Chern current of Bismut \cite{B1990} and their $I$-genus, from which the analytic properties of equivariant analytic torsions are revealed.  As an application in Arakelov geometry, in \cite{KR2}, K\"ohler and R\"ossler used Bismut-Goette's comparison formula to obtain a residue formula in Arakelov geometry.

This article aims to generalize the result of Bismut-Goette to the family case. We will first introduce an extension of Bismut-Goette's infinitesimal equivariant analytic torsion to higher degrees.  Then we will  prove a comparison formula between the equivariant analytic torsion form and its infinitesimal version, from which we could express the singularity of the  equivariant analytic torsion form module $\partial$- and $\overline{\partial}$-exact forms, as locally computable differential forms.  Our work is an analogue of \cite{BG04} for Bismut-Lott torsion forms for flat vector bundles, and of \cite{Go00}, \cite{LiuMa2022} for Bismut-Cheeger $\eta$-forms.

We briefly comment on the main challenge in the proof. The comparison formula is obtained by computing the limit of a contour integral. In \cite{BismutGoette2000}, this integral vanishes identically. However, in our case, the contour integral equals a sum of $\partial$ and $\overline{\partial}$ of certain double integrals. If the base manifold is compact and K\"ahler, the space of $\partial$- and $\overline{\partial}$-exact forms is closed under limits, so we would not need to analyze these double integrals. In general, however, a detailed study of these double integrals is necessary. Our situation is similar to Bismut's proof of the immersion formula for analytic torsion forms \cite{B1997}, and his methods provide valuable guidance.

Let's descibe our results in more detail.
Let $\pi: W\rightarrow S$ be a submersion of complex manifolds with compact fiber $X$ of complex dimension ${\ell}$. Let $TX=TW/S$ be the holomorphic relative tangent bundle. Let $\xi$ be a holomorphic vector bundle over $W$. 

Let $R^{\bullet}\pi_{*}\xi=\oplus_{i=0}^{\ell}R^{i}\pi_{*}\xi$ be the direct image of $\xi$. We make the basic \textbf{assumption} that  $R^{\bullet}\pi_{*}\xi$ is locally free. Let $H^{\bullet}(X,\xi|_{X})$ be the cohomology of $\xi|_{X}$, then we have the canonical identification of holomorphic vector bundles on $S$, 
\begin{align}\label{8q}
R^{\bullet}\pi_{*}\xi\cong H^{\bullet}(X,\xi|_{X}).
\end{align}

Let $(\Omega^{0,\bullet}(X,\xi|_{X}),\overline{\partial}^X)$ be the Dolbeault complex of smooth sections of $\Lambda(T^{*(0,1)}X)\otimes \xi$ on the fiber of $\pi$, then its cohomology coincides with $H^{\bullet}(X,\xi|_{X})$.


We suppose that the submersion $\pi: W\rightarrow S$ is a $\kahler$ fibration in the sense of \cite{BGS2}. Namely, there is a real closed  $(1,1)$ form $\omega$ on $W$, which restricts to the K\"ahler form $\omega^{TX}$ on the relative tangent bundle $TX$. Let $h^{TX}$ be the Hermitian metric on $TX$ induced by $\omega^{TX}$. Let $h^{\xi}$ be a Hermitian metric on $\xi$.  We  equip $\Omega^{0,\bullet}(X,\xi|_{X})$ with the $L^{2}$-metric (cf. \eqref{L2 metric}) induced by $h^{TX}$ and $h^{\xi}$. Let $\overline{\partial}^{X,*}$ be the formal adjoint of $\overline{\partial}^X$ with respect to the $L^{2}$-metric. Set
\begin{align}\label{8r}
D^{X}=\overline{\partial}^X+\overline{\partial}^{X,*}
\end{align}
 acting on $\Omega^{0,\bullet}(X,\xi|_{X})$.
 By using the Hodge theory, we can identify $H^{\bullet}(X,\xi|_X)$ to the kernel of $D^{X}$, so that $H^{\bullet}(X, \xi|_X)$ inherits a Hermitian metric $h^{H^{\bullet}(X,  \xi|_X)}$ from the $L^{2}$-metric.

Now let $G$ be a compact Lie group. Let $\mathfrak{g}$ be the Lie algebra of $G$ and $\mathfrak{g}^{*}$ be its dual space. We assume that $G$ acts holomorphically, fiberwise on $W$, and its action lifts holomorphically to $\xi$. We suppose that $\omega$ and $h^{\xi}$ are $G$-invariant.

For $g\in G$, let $W_g=\left\{ x\in W: gx=x \right\}$ be the fixed point set of $g$. Then the restriction of $\pi$ on $W_g$, is a submersion $\pi|_{W_g}:W_g\to S$ with compact fiber $X_g$.
Let  $\ch_g(\xi, h^{\xi})\in \Omega^{\bullet}(W_{g},\mathbb{C})$ be the equivariant Chern character form (cf. \eqref{genus-g}) on $W_{g}$ associated to $\nabla^{\xi}$. Similarly, let $\td_{g}(TX,h^{TX})\in \Omega^{\bullet}(W_{g},\mathbb{C})$ be the equivariant Todd form of $(TX, h^{TX})$. Put 
\begin{align}\label{8u}
    \begin{split}
 \ch_{g}(H^{\bullet}(X,\xi|_{X}),h^{H^{\bullet}(X,\xi|_{X})})=\sum_{k=0}^{\ell}(-1)^{k}\ch_{g}(H^{k}(X,\xi|_{X}),h^{H^{\bullet}(X,\xi|_{X})}),\\
        \ch_{g}'(H^{\bullet}(X,\xi|_{X}),h^{H^{\bullet}(X,\xi|_{X})})=\sum_{k=0}^{\ell}(-1)^{k}k\ch_{g}(H^{k}(X,\xi|_{X}),h^{H^{\bullet}(X,\xi|_{X})}).
    \end{split}
\end{align}

Let $P^S$ be the space of  sums of smooth $(p,p)$ forms on $S$.  Let $P^{S,0}$ be the subspace of $P^S$ consisting of $\gamma\in P^S$ of the form $\partial \alpha+\overline{\partial}\beta$, where $\alpha, \beta$ are smooth forms on $S$.

Let $T_g(\omega, h^{\xi})\in P^S$ be the equivariant analytic torsion form constructed by Ma in \cite[Def. 2.11]{Ma2000}, which is an equivariant extension of Bismut-K\"ohler's analytic torsion form \cite{BK1992}. It satisfies 
the following transgression formula, 
    \begin{equation}\label{intro-02}
        \frac{\overline{\partial}\partial}{2i\pi}T_g(\omega,h^{\xi})=\chern_g(H^{\bullet}(X,\xi|_{\xi}),h^{H^{\bullet}(X,\xi{|_X})})-\int_{X_g}\td_g(TX,h^{TX})\ch_g(\xi,h^{\xi}).
    \end{equation}

Recall that the fixed point formula has two versions, the Lefschetz fixed point formula and the Kirillov like formula \cite[Theorem 8.2]{BGV}, we expect that the same is true for equivariant analytic torsion forms. The first version is the $T_g(\omega,h^\xi)$ in \eqref{intro-02}. Once we consider \eqref{intro-02} in the de Rham cohomology, it reduces to an equivariant version of Riemann-Roch-Grothendieck theorem. Thus it gives a refinement of the Lefschetz fixed point formula at the differential form level. Now we will explain the second version of  equivariant analytic torsion forms,  which extend Bismut-Goette's  equivariant infinitesimal analytic torsion to higher degrees.

Let $\bisconn_{u}, u>0$ be the rescaled Bismut superconnection (cf. \eqref{Bismut superconnection}) constructed by Bismut in \cite{B1986}. The operators $\bisconn_{u},u>0$ commute with the action of $G$.

Set $\Phi: P^{S}\to P^{S}$ as multiplication by $(2i\pi)^{-p}$ on $(p,p)$-forms.
Let $N_{V}$ be the number operator of $\Omega^{0,\bullet}(X,\xi|_{X})$, which acts as multiplication by $p$ on $\Omega^{0,p}(X,\xi|_{X})$.

 We denote by $\omega^{H}$ the horizontal part of $\omega$ (cf. \eqref{95}).  For $u>0$, set 
\begin{align}\label{8t}
N_{u}=N_{V}+i\frac{\omega^{H}}{u}.
\end{align}

For $K\in \mathfrak{g}$,  let $\mathcal{L}_K$ be the Lie derivative of $K$, and let $K^{X}\in \cinfty(W,T_{\mathbb{R}}X)$ be the corresponding smooth vector field on $W$ (see \eqref{96}, \eqref{definition lk}).

\textbf{Assumption(*)}: There is a moment map $\mu\in \cinfty(W,\mathfrak{g}^{*})$ associated to the $G$-action and real form $\omega$, that is, for any $g\in G$, $x\in W$, $K\in\mathfrak{g}$, 
\begin{align}\label{97}
&\mu(gx)=\text{Ad}_{g}^{*} \mu(x), &&d\product{\mu,K}+i_{K^{X}}\omega=0.
\end{align}

For $g\in G$, we denote by $\mathfrak{z}(g)$ the Lie algebra of the centralizer $Z(g)\subset G$ of $g$.
For $K\in \mathfrak{z}(g)$, $u>0$, put
\begin{align}\label{8s}
\gamma_{g,K}(u)=\Tr\left[ g\left(N_{u}-i\frac{\left\langle \mu,K \right\rangle}{u}\right)\exp\left(-\mathcal{L}_K-\left( \bisconn_{u}+\frac{c(K^{X})}{2\sqrt{2u}} \right)^{2}\right) \right]\in P^{S}.
\end{align}

 For $g\in G$, $K\in \mathfrak{z}(g)$, with $|K|$ small enough, let $\chern_{g,K}(\xi,h^{\xi})$, $\td_{g,K}(TX, h^{TX})$, $\td_{g,K}'(TX, h^{TX})\in \Omega^{\bullet}(W_{g},\mathbb{C})$ be the equivariant infinitesimal characteristic forms (cf. \eqref{equivariant curvature}, \cite[(2.30)]{BismutGoette2000}). 
Set 
\begin{equation}
\begin{split}
     &C_{-1,g,K}=\int_{X_g}\left(\frac{\omega}{2\pi}-i\product{\mu,K}\right)\td_{g,K}(TX,h^{TX})\ch_{g,K}(\xi,h^{\xi}),\\
        &C_{0,g,K}=\int_{X_g}\left(-\td_{g,K}'(TX,h^{TX})+\ell\cdot \td_{g,K}(TX,h^{TX})\right)\ch_{g,K}(\xi,h^{\xi}).
\end{split}
\end{equation}

    \begin{theorem}\label{8x}
         Under \textbf{Assumption(*)}, for $g\in G$, for any compact subset $S'\subset S$, there exist $\beta>0$, $C_{0,g,K}'\in P^{S'}$ with $C_{0,g,K}'=C_{0,g,K}$ in $P^{S'}/P^{S',0}$, such that the following integral is well defined for any $K\in \mathfrak{z}(g)$, $|K|<\beta$, 
         \begin{equation}
\begin{split}
    T_{g,K}(\omega, h^{\xi})&=-\int_{0}^1\left(\Phi\gamma_{g,K}(u)-\frac{C_{-1,g,K}}{u}-C_{0,g,K}'\right)\frac{du}{u}\\
    &-\int_{1}^{+\infty}
    \left(\Phi\gamma_{g,K}(u)-
    \chern'_{ge^{K}}( H^{\bullet}(X,\xi|_{X}),h^{H^{\bullet}(X,\xi|_{X})}) \right) \frac{du}{u}\\
    &+C_{-1,g,K}+\Gamma'(1)\left(C_{0,g,K}'-\chern'_{ge^{K}}(H^{\bullet}(X,\xi|_{X}),h^{H^{\bullet}(X,\xi|_{X})})\right).
\end{split}
\end{equation}
       The differential form $T_{g,K}(\omega,h^{\xi})\in P^{S'}$ satisfies
   \begin{multline}
  \frac{\Bar{\partial}\partial}{2i\pi}T_{g,K}(\omega,h^{\xi})=\chern_{ge^K}\left(H^{\bullet}(X,\xi|_{X}),h^{H^{\bullet}(X,\xi|_{X})}\right)\\ -\int_{X_{g}}\td_{g,K}(TX,h^{TX})\ch_{g,K}(\xi,h^{\xi}).
 \end{multline}

For fixed $K_{0}\in \mathfrak{z}(g)$, the form $T_{g,zK_{0}}(\omega,h^{\xi})$ is an analytic function in $z\in \mathbb{C}$ near $0$. 
    \end{theorem}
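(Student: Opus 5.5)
The plan is to follow Bismut--Goette \cite{BismutGoette2000} and Bismut--K\"ohler \cite{BK1992}, with three analytic inputs: small-time asymptotics, large-time convergence, and a transgression identity for $\gamma_{g,K}(u)$.

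\emph{Transgression identity.} The key algebraic observation is that, by Assumption(*), the operator
\begin{align*}
\bisconn_{u,K}:=\bisconn_u+\frac{c(K^X)}{2\sqrt{2u}}
\end{align*}
satisfies $\bisconn_{u,K}^2+\mathcal{L}_K=\bigl(\bisconn_u+\frac{c(K^X)}{2\sqrt{2u}}\bigr)^2+\mathcal{L}_K$. Up to the term $-i\langle\mu,K\rangle/u$, this is the curvature of an equivariant superconnection for the Kähler form $\omega-2i\pi\langle\mu,K\rangle$, which is closed in the equivariant (Cartan) sense. Using the Kähler fibration identities of \cite{BGS2}, I will prove a family version of the formula of \cite[Thm.~2.9]{BK1992}:
\begin{align*}
\frac{\partial}{\partial u}\Phi\,\Tr\bigl[g\exp(-\mathcal{L}_K-\bisconn_{u,K}^2)\bigr]=-\frac{1}{u}\,\frac{\overline{\partial}\partial}{2i\pi}\,\Phi\gamma_{g,K}(u).
\end{align*}
The proof splits $\bisconn_{u,K}$ into its $(1,0)$ and $(0,1)$ parts, each of square zero. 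Since $\mathcal{L}_K$ commutes with everything, both endpoints of the usual argument survive.

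\emph{Small time.} As $u\to0$ I will use Bismut's local index techniques, localizing near $W_g$ as in \cite{B1986} and \cite{BismutGoette2000}, to get the expansion
\begin{align*}
\Phi\gamma_{g,K}(u)=\frac{C_{-1,g,K}}{u}+C_{0,g,K}+O(u)
\end{align*}
uniformly for $|K|<\beta$ and over $S'$. The problem is that in the family setting the limit of the number-operator part is naturally controlled only modulo $\partial$- and $\overline{\partial}$-exact forms; this is exactly why $C'_{0,g,K}$ appears. I will therefore define $C'_{0,g,K}$ as the actual constant term of the asymptotic expansion and show that it equals $C_{0,g,K}$ modulo $P^{S',0}$. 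To do this I will compute it by the Getzler rescaling of \cite[\S2]{BismutGoette2000}, keeping track of the horizontal $\omega^H/u$ term, and show that the discrepancy is $\partial\alpha+\overline{\partial}\beta$ by the same type of transgression.

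\emph{Large time.} This is the main obstacle. As in \cite{BismutGoette2000}, $\bisconn_{u,K}^2+\mathcal{L}_K$ is not bounded below, because $c(K^X)^2$ and $\mathcal{L}_K$ are first order. I will use their technique: for small $|K|$, show that the resolvent of $D^{X,2}+\frac{1}{\sqrt{u}}[D^X,c(K^X)]+\mathcal{L}_K+\cdots$ has spectrum in a sector, with a uniform spectral gap away from $\ker D^X$, using Assumption(*) (the Kostant-type formula) to control the term $\mathcal{L}_K$. Having the spectrum in a sector lets me represent the heat operator by a contour integral. Combined with the analysis of \cite[\S3]{BK1992} for the horizontal terms, this gives
\begin{align*}
\Phi\gamma_{g,K}(u)=\chern'_{ge^K}(H^\bullet,h^{H^\bullet})+O(u^{-1/2})
\end{align*}
uniformly in $K$ and on $S'$. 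The main worry is uniformity in the nilpotent horizontal degrees; I would first try a Duhamel expansion in the horizontal Grassmann variables, bounding each term by the sector resolvent estimates.

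\emph{Conclusion.} With the two asymptotic expansions, the integrals defining $T_{g,K}$ converge. Applying $\frac{\overline{\partial}\partial}{2i\pi}$ under the integral and using the transgression identity turns the integrand into a $u$-derivative. The endpoint values are
\begin{align*}
\chern_{ge^K}(H^\bullet,h^{H^\bullet})\quad(u\to\infty),\qquad \int_{X_g}\td_{g,K}(TX,h^{TX})\ch_{g,K}(\xi,h^{\xi})\quad(u\to0),
\end{align*}
where the $u\to0$ value comes from the equivariant infinitesimal local index theorem. The constant terms are harmless because $C_{-1,g,K}$ and $C'_{0,g,K}$ are $\partial\overline{\partial}$-closed. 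The last point is that $C_{-1,g,K}$ is closed, since it is an integral of equivariantly closed forms and $\omega/2\pi-i\langle\mu,K\rangle$ is closed on $W_g$. Finally, all estimates are uniform for complex $z$ with $|zK_0|<\beta$, and the integrand is holomorphic in $z$. Uniform convergence of holomorphic functions then gives analyticity of $T_{g,zK_0}$.
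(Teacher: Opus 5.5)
Your outline has the same architecture as the paper: the transgression from Proposition \ref{a2}, the asymptotics of $\Phi\gamma_{g,K}(u)$ at $0$ and $\infty$, a BGS-type $da\,\overline{da}$ transgression giving $C'_{0,g,K}=C_{0,g,K}$ modulo $P^{S,0}$, and holomorphy from uniform estimates. However, the small-time step, which you treat as routine, is a genuine gap, and you have placed the main difficulty in the wrong regime.

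\textbf{The small-time gap.} The lack of a lower bound matters as $u\to0$, not as $u\to\infty$. By \eqref{lichnerowicz formula}, $\mathcal{L}_K+\bisconn_{K,u}^2$ is a Laplacian for the non-unitary connection $\nabla-K'^X/2u$, so it contains the potential $-|K^X|^2/8u$, which blows up as $u\to0$. After Getzler rescaling:
\begin{itemize}
\item the model \eqref{def of lxk30} is a harmonic oscillator with an ``imaginary magnetic field'' $m^{TX}(K)$, whose quadratic form is not bounded below;
\item the rescaled operators have coefficients growing polynomially in $Z$ (the terms $t^{-1/2}h_i(K,\sqrt{t}Z)$ in \eqref{explicit nabla ei'}).
\end{itemize}
So standard local index theory does not produce a heat-kernel expansion here. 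The paper obtains one only through the Bismut--Goette \S7 machinery, as adapted by Liu--Ma:
\begin{itemize}
\item the finite-propagation splitting $\widetilde{F}_t=\sum_n\widetilde{F}_{t,n}$;
\item the weighted norms $|\cdot|_{t,x,0}$;
\item resolvent bounds on $U_{\eta n+d}$, valid only for $|K|\le c_\eta$;
\item resummation using the Gaussian decay in $n$.
\end{itemize}
Moreover, by \eqref{nkt rescaling}, $t[N_{K,t}]^{(3)}_t\to i\jmath^*\omega-i\langle\mu,K\rangle$. So getting the constant term and an $O(u)$ remainder requires $u\,\Phi\gamma_{g,K}(u)$ up to $O(u^2)$, well beyond the Kirillov limit of Liu--Ma. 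This is Theorem \ref{thm asymptotic higher akt}. Its proof needs a parity argument, based on the structure of $\Op_r$, that kills the coefficients of $t^{r/2}$ for odd $r$. The same analysis, applied to the $da,\overline{da}$-deformed operator \eqref{gamma lichnerowicz formula}, is what produces $C'_{0,g,K}=C_{0,g,K}-\overline{\partial}\partial L_0$. None of these ingredients appears in your plan.

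\textbf{Large time is lighter than you expect.} For $t\ge1$ the $K$-dependent terms are relatively bounded perturbations (Lemma \ref{elliptic estimate infty}). The paper uses the resulting resolvent bounds only to compare $\exp(-\mathcal{L}_K-\bisconn_{K,t}^2)$ with $\exp(-\mathcal{L}_K-\bisconn_t^2)$ up to $O(t^{-1/2})$, using $P[D^X,c(K^X)]P=0$. Then \eqref{equation two supertraces g gK}, namely $\Tr[g\exp(-\mathcal{L}_K-\bisconn_t^2)]=\Tr[ge^K\exp(-\bisconn_t^2)]$, reduces everything to Ma's large-time result for $ge^K$. Your direct spectral-gap argument with a Duhamel expansion could also work, but it is not needed.

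\textbf{Two smaller corrections.}
\begin{itemize}
\item $c(K^X)^2=-|K^X|^2$ is of order zero. The first-order perturbations are $\mathcal{L}_K$ and $[D^X,c(K^X)]$.
\item Assumption(*) plays no role in controlling $\mathcal{L}_K$ at large time. It enters through $[\bisconn''_{K,u},i\langle\mu,K\rangle]=-\sqrt{u}\,c(K^{X(1,0)})/\sqrt{2}$, which follows from $d\langle\mu,K\rangle+i_{K^X}\omega=0$. That identity is exactly what gives $[\bisconn''_{K,u},N_{K,u}]=-2u\frac{\partial}{\partial u}\bisconn''_{K,u}$, so your transgression step should state it explicitly.
\end{itemize}
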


    The differential form $T_{g,K}(\omega,h^{\xi})$ is called the equivariant infinitesimal analytic torsion form, whose zero degree part coincides with the  equivariant infinitesimal analytic torsion of  Bismut-Goette \cite[(2.75)]{BismutGoette2000}. As an element of $P^{S'}/ P^{S',0}$, $T_{g,K}(\omega,h^{\xi})$ does not depend on the choice of $\mu$ (cf. \S \ref{93}).

For $K=0$, the torsion form $T_{g,K}(\omega,h^{\xi})$ is exactly the equivariant analytic torsion form $T_{g}(\omega,h^{\xi})$ in \eqref{intro-02}. For $g=e$, the unit element, we simply denote $T_{g,K}(\omega,h^{\xi})$ as $T_{K}(\omega,h^{\xi})$.

One may ask how \textbf{Assumption(*)} arises naturally.
 By following their spirit in \cite[\S 2.5]{BismutGoette2000}, we may understand  $T_{K}(\omega, h^\xi)$, at the formal power series level, as universal analytic torsion forms of some further fibrations. \textbf{Assumption(*)} then implies that this further fibration is a K\"ahler fibration. To be explicit, let $G_{\cc}$ be the complexification of the Lie group $G$, by \cite{GuS}, $G_{\cc}$ acts holomorphically fiberwise on $W$ and its action lifts to $\xi$. Let $B$ be a complex manifold. Let $p: P\rightarrow B$ be a holomorphic $G_{\cc}$-principal  bundle, and we choose a $G$-reduction of $P$, $q:Q\rightarrow B$ , then the $G$-principal bundle $Q$ is canonically equipped with a connection which lifts to a holomorphic connection on $P$ (cf. \eqref{9a}), let $\Omega$ be the associated curvature, this is a differential form on $P$ with value in $\mathfrak{g}$. With \textbf{Assumption (*)}, the natural fibration $\widetilde{\pi}:P\times_{G_{\cc}}W\rightarrow B\times S$ is a $\kahler$ fibration with fiber $X$ (cf. Proposition \ref{9e}). And $P\times_{G_{\mathbb{C}}}\xi$ is a holomorphic vector bundle on $P\times_{G_{\cc}}W$. For this fibration, the squre of Bismut superconnection has the same structure as the operator used to define $T_{K}(\omega, h^\xi)$, with $K$ replaced by $\Omega$ (cf. Proposition \ref{9r}). Therefore, if $\widetilde{T}$ is the corresponding Bismut-$\mathrm{K\Ddot{o}hler}$'s analytic torsion form of the fibration $\widetilde{p}$, we would have
\begin{equation}
    \widetilde{T}=T_{\frac{i}{2\pi}\Omega}(\omega, h^\xi).
\end{equation}
Similar results hold for $T_{g,K}(\omega, h^\xi)$ with general $g\in G$, if we replace $G_{\mathbb{C}}$ by $Z(g)_{\mathbb{C}}$ in the arguments above.

Note that we aim to study the analytic properties of $T_{g}(\omega,h^{\xi})$ as a function of $g\in G$, which is difficult to solve directly. So we will follow Bismut-Goette \cite{BismutGoette2000} to establish a comparison formula between $T_{ge^{zK_{0}}}(\omega, h^\xi)$ and $T_{g,zK_{0}}(\omega, h^{\xi})$.  By Theorem \ref{8x}, the latter one is  analytic  in $z$, so the comparison formula enables us to study $T_{ge^{zK_{0}}}(\omega, h^{\xi})$ in the variable $z$ near $z=0$. Now we try to explain several terms appearing in the final formula.

 For $g\in G$, $K\in \mathfrak{z}(g)$, put 
 \begin{align}\label{90}
 W_{K}=\left\{ x\in W: K^{X}(x)=0 \right\}.
 \end{align}
 The restriction of $\pi$ to $W_{K}$ is K\"ahler fibration with fiber $X_{K}$. Put $W_{g,K}=W_{g}\cap W_{K}$, $X_{g,K}=X_{g}\cap X_{K}$.

In \cite[Def. 2.9]{B1992Inter}, Bismut introduced the equivariant Bott-Chern currents which, in complex geometry, refine the localization formulas of Berline and Vergne \cite[Theorem 7.13]{BGV}. In our case, the equivariant Bott-Chern current $S_{K}(W_{g},\omega)$ is a current on $W_{g}$ which satisfies
\begin{equation}
    \frac{\bar{\partial}_K\partial_K}{2i\pi}{S}_K(W_{g},\omega)=1-\frac{\delta_{W_{g,K}}}{c_{\mathrm{max},K}(N_{X_{g,K}/X_{g}},h^{N_{X_{g,K}/X_{g}}})}
\end{equation}
for $K\in\mathfrak{z}(g)$ with $|K|$ small enough. Here $\dba_K$, $\partial_K$ are the anti-holomorphic and holomorphic parts of $d_{K}=d-2i\pi i_{K^{X}}$, $N_{X_{g,K}/X_{g}}$ is the normal bundle to $X_{g,K}$ in $X_{g}$, and $c_{\text{max},K}$ denotes the equivariant Eular class.

For $\theta,\theta'\in \mathbb{R}$, $x\in \mathbb{C}$,  $\theta\notin 2\pi \zz$,  $|\theta'|$, $|x|$ small enough, or for $\theta\in 2\pi\zz$, $|\theta'|$, $|x|$ small, with $x+i\theta'\neq 0$, set 
\begin{equation}
    I({\theta,\theta',x})=\sum_{k\in\zz, 2k\pi+\theta\neq 0}\frac{\log(1+\frac{\theta'}{2k\pi+\theta})}{i(2k\pi +\theta+\theta')+x}.
\end{equation}
 Let $N_{X_K/X}$ be the normal bundle to $X_K$ in $X$, Bismut and Goette \cite[\S 4]{BismutGoette2000} associated an additive genus $I_{g,K}(N_{X_K/X})$ on $X_{g,K}$  to the function $I(\theta,\theta',x)$, as is explicitly explained in Section 4.3. The $I$-genus is closely related to the $R(\theta,\cdot)$-genus (cf. \cite[Theorem 4.2]{BismutGoette2000}), and in particular, it is analytic in $K\in\mathfrak{z}(g)$ in a neighborhood of $0$. 

Now we can state our main result as follows, 
\begin{theorem}\label{8o}
  With \textbf{Assumption (*)},  for $g\in G$ and $K_0\in \mathfrak{z}(g)$, for any compact subset $S'\subset S$, there exists $\beta>0$ such that for $z\in \rr$ with $0<|z|<\beta$, for $K=zK_0$, we have
    \begin{align}\label{91}
        \begin{split}
        T_{g,K}(\omega,h^{\xi})=T_{ge^K}(\omega, h^{\xi})+\int_{X_g}\td_{g,K}(TX,h^{TX})\ch_{g,K}(\xi,h^\xi)S_K(W_g,\omega)\qquad\\ 
        -\int_{X_{g,K}}\td_{ge^K}(TX, h^{TX})I_{g,K}(N_{X_K/X}, h^{N_{X_{K}/X}})\ch_{ge^K}(\xi,h^{\xi})\quad\text{ in } P^{S'}/P^{S',0}.
        \end{split}
    \end{align}
\end{theorem}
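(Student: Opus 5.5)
Following Bismut--Goette \cite{BismutGoette2000}, I will compare the two torsion forms through a two-parameter deformation. For $K=zK_0$ with $z$ real and small, and $u,v>0$, I introduce the superconnection
\[
\mathcal{B}_{K,u,v}=\bisconn_u+\frac{c(vK^X)}{2\sqrt{2u}}.
\]
On $(0,+\infty)^2\ni(u,v)$ I then form the one-form
\[
\beta=du\wedge(\cdots)+dv\wedge(\cdots),
\]
whose coefficients are the supertraces $\Tr\left[g\,(\cdots)\exp\left(-v\lk-\mathcal{B}_{K,u,v}^2\right)\right]$. In the $du$-direction the insertion is $N_u-iv\product{\mu,K}/u$, and in the $dv$-direction it is the $v$-derivative term of the superconnection. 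The first step is a transgression identity (a family version of \cite[Thm.~3.x]{BismutGoette2000}): $d^{(u,v)}\beta$ equals $\partial$ and $\overline{\partial}$ of explicit forms $\alpha_{u,v}$, $\beta_{u,v}$ on $S$. This follows from the Bianchi identity together with \textbf{Assumption(*)}. The moment map identity $d\product{\mu,K}+i_{K^X}\omega=0$ is what makes $\mathcal{L}_K+\mathcal{B}^2$ compatible with $N_u$, exactly as in the Kähler fibration computation of Proposition \ref{9r}. At $v=1$ the $du$-component is $\gamma_{g,K}(u)$. At $v=0$ it is Ma's integrand for $T_{ge^K}$, obtained after replacing $g$ by $ge^{K}$ using $e^{-\lk}=e^{-K}$ on the superbundle.

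Next I integrate $\beta$ over the boundary of the rectangle $[\epsilon,A]\times[\epsilon_0,1]$, or more precisely over a contour adapted to the scaling $v\sim\sqrt u$ or $v=tu$ used in the local index computations. Stokes' formula then shows that this contour integral lies in $P^{S'}/P^{S',0}$ up to the double integrals $\int\!\!\int\partial\alpha_{u,v}+\overline{\partial}\beta_{u,v}$. I then take the limits $A\to\infty$, $\epsilon\to0$, $\epsilon_0\to 0$ in the correct order, and each of the four sides contributes one term of \eqref{91}.

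The side $u\to\infty$ is handled by spectral gap arguments on $\ker D^X$: the kernel is $G$-invariant and $K$ acts on it through $e^{K}$, so this side contributes the $\chern'_{ge^K}$ terms, which cancel between $T_{g,K}$ and $T_{ge^K}$.

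The side $u\to0$ with $v$ fixed is treated by the Bismut--Goette local index theory with the fixed point set $W_g$. Along it, the $v$-dependent infinitesimal characteristic forms appear, and their transgression in $v$ yields precisely the equivariant Bott--Chern current $S_K(W_g,\omega)$ of \cite{B1992Inter}. Concretely, the Bismut construction uses the form $\exp(-v\,d_K\omega/\ldots)$ on $W_g$, which localizes onto $W_{g,K}$ as $v\to\infty$; this gives the term $\int_{X_g}\td_{g,K}\ch_{g,K}S_K$. The corner where $u\to0$ and $v\to\infty$ simultaneously, in the regime $v\sim 1/u$, localizes near $X_{g,K}$. A harmonic oscillator computation in the normal directions $N_{X_K/X}$ there produces the series defining $I(\theta,\theta',x)$, and hence the term $\int_{X_{g,K}}\td_{ge^K}I_{g,K}\ch_{ge^K}$, exactly as in \cite[\S 4--\S 7]{BismutGoette2000}. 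The divergent terms $C_{-1,g,K}/u$ and $C'_{0,g,K}$ are matched by the asymptotic expansions already established in Theorem \ref{8x}.

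I expect the main obstacle to be the double integrals of $\partial\alpha_{u,v}+\overline{\partial}\beta_{u,v}$. Since $S$ need not be compact Kähler, $P^{S',0}$ is not closed, so I cannot simply pass to limits. Following Bismut's immersion paper \cite{B1997}, I plan to prove that $\alpha_{u,v}$ and $\beta_{u,v}$ themselves satisfy the same uniform estimates and asymptotic expansions as the main integrands in every regime, including small $u$, large $u$, and the intermediate $v\sim1/u$ regime. Uniform convergence in $\mathscr{C}^\infty(S')$ of $\int\!\!\int\alpha$ and $\int\!\!\int\beta$, after subtracting explicit divergent parts that are themselves $\partial$- or $\overline{\partial}$-exact, then shows that the limit remains in $P^{S',0}$. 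A secondary difficulty is that $\mathcal{L}_K+\mathcal{B}^2$ has no lower bound, so every large-time estimate must use $|K|<\beta$ and the $G$-invariance to restrict to finitely many isotypic pieces, as in \cite{BismutGoette2000}.
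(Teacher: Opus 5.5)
There is a genuine gap at the first step: your two-parameter family does not connect $T_{g,K}$ to $T_{ge^K}$. You scale both the Lie derivative and the Clifford term by $v$. Hence $v\mathcal{L}_K+\mathcal{B}_{K,u,v}^2=\mathcal{L}_{vK}+\mathbb{B}_{vK,u}^2$, and your $du$-insertion $N_u-iv\langle\mu,K\rangle/u$ is just $N_{vK,u}$. So the family is $v\mapsto$ (integrand of $T_{g,vK}$).

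At $v=0$ the factor $e^{-v\mathcal{L}_K}$ is the identity, and what you obtain is Ma's integrand for $T_g$, not for $T_{ge^K}$. Your justification via $e^{-\mathcal{L}_K}=e^{-K}$ requires the full $\mathcal{L}_K$ to be present at that endpoint, and it is not.

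Worse, nothing localizes along such a family. For every $v$, the $u\to0$ limit is $\int_{X_g}\td_{g,vK}\ch_{g,vK}$, and the $N$-insertion has the smooth expansion $C_{-1,g,vK}/u+C'_{0,g,vK}$. So neither $S_K(W_g,\omega)$ nor the $I$-genus can appear. The contour argument would only reproduce the known analyticity of $v\mapsto T_{g,vK}$.

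The missing idea is to keep $\mathcal{L}_K$ fixed and deform only the coefficient of $c(K^X)$. The paper uses
$\mathcal{A}_{v,t}=\mathbb{B}_t+\frac{\sqrt t\,c(K^X)}{2\sqrt2}\bigl(\frac1t-\frac1v\bigr)$.
\begin{itemize}
\item On the diagonal $v=t$ the Clifford term vanishes, and $\Tr[g\exp(-\mathcal{L}_K-\mathbb{B}_t^2)]=\Tr[ge^K\exp(-\mathbb{B}_t^2)]$.
\item As $v\to\infty$ the operator becomes $\mathbb{B}_{K,t}$.
\item The form $\gamma_{v,t}$ has $dt$-insertion $N_{K,t}$ and $dv$-insertion $i\langle\mu,K\rangle/v^2$. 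On the diagonal these two combine back into $N_t$.
\item The contour is the triangle with vertices $(a,a)$, $(A,a)$, $(A,A)$.
\end{itemize}
The $dv$-insertion must be the moment map. It acts as a second number operator through $[\mathcal{A}''_{v,t},i\langle\mu,K\rangle]=-\sqrt t\,c(K^{X(1,0)})/\sqrt2$. Inserting $\partial_v$ of the superconnection itself would only give a $d$-transgression, not $\partial$- and $\overline{\partial}$-exact terms.

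The mismatch between the fixed $\mathcal{L}_K$ and the partial Clifford term is exactly what produces the localization. It creates the potential $\frac{1}{2v}|K^X|^2$ and the curvature $-dK'^X/(2v)$ in the Lichnerowicz formula (Theorem \ref{thm lichnerowicz avt}). These amount to an auxiliary line bundle with equivariant Chern character $\exp(\overline{\partial}_K\partial_K\langle\mu,K\rangle/(2\pi v))$. On the side $t=a$ this gives $\widetilde e_v$, hence $\widetilde S_K=S_K$ modulo $P^{S,0}$ (Theorem \ref{ab}). The rescaled regime $v=at$ near $X_{g,K}$ gives the generalized supertraces of Section 4 and hence $I_{g,K}$.

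With the family corrected this way, the rest of your outline follows the paper: the large-time analysis on $\ker D^X$, matching the divergent terms, and controlling the double integrals as in \cite{B1997}. One caveat: the uniform estimates you propose to prove directly for $\alpha$ and $\beta$ fail for the part carrying the moment map. The paper needs three additional ingredients there:
\begin{itemize}
\item the splitting $\langle\mu,K\rangle=\mu_1+\mu_2$ with $\mu_2$ locally constant near $W_K$;
\item the correction forms $\xi'_{v,t}$ and $\xi''_{v,t}$;
\item the Liu--Ma type identity, Theorem \ref{analogue of liuma}, to control the $\mu_2$-terms.
\end{itemize}
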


 If $A$ is an invertible $(q,q)$ matrix, put 
\begin{align}\label{8n}
c_{\text{max}}^{-1,\prime}(A)=\frac{\partial}{\partial b}\left[ \det (A+b) \right]^{-1}|_{b=0}.
\end{align}

In \cite{KR2}, K\"ohler and R\"ossler studied the behavior of equivariant Bott-Chern currents $S_{zK_{0}}(X,\omega)$ near $z=0$, when they prove a "Bott"-like residue formula in Arakelov geometry. In \cite{KK23}, K\"ohler studies the equivariant Bott-Chern current in more detail, in particular by \cite[Cor. 4.2]{KK23}, if $S$ is compact, we have the following identity module $\overline{\partial}_K$ and $\partial_K$-exact currents,
\begin{align}\label{8m}
z^{{N'}/2+1}S_{zK_{0}}(W_{g},\omega)-S_{K_{0}}(W_{g},\omega)=\log z^{2} c_{\text{max},K_{0}}^{-1,\prime}(N_{X_{g,K}/X_{g}})\cdot \delta_{W_{g,K}},
\end{align}
where $N'$ denotes the number operator on differential forms.

Now as a corollary to Theorem \ref{8o}, we have the following result.
\begin{theorem}
     With \textbf{Assumption (*)},
  if $S$ is compact and K\"ahler, for $g\in G$, $K_{0}\in \mathfrak{z}(g)$,  there exist $C_{j}, D_{j}\in P^{S}, -(\dim X_{g}+\dim S)\leqslant j\leqslant 0$, such that for $z\in \mathbb{R}$, $z\neq 0$, as $z\to 0$,
\begin{align}\label{8p}
T_{ge^{zK_{0}}}(\omega,h^{\xi})=\sum_{j=-\dim X_{g}-\dim S}^{0}z^{j-1}\left( C_j+ \log z^2 \cdot D_j\right)+\mathcal{O}(z^{0})\text{ in } P^{S}/P^{S,0}.
\end{align}
\end{theorem}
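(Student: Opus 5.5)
Since $S$ is compact, we take $S'=S$ in Theorem \ref{8o}. This gives, for $0<|z|<\beta$ and $K=zK_0$, modulo $P^{S,0}$,
\begin{equation*}
T_{ge^{zK_0}}(\omega,h^\xi)=T_{g,zK_0}(\omega,h^\xi)-\int_{X_g}\td_{g,zK_0}\ch_{g,zK_0}\,S_{zK_0}(W_g,\omega)+\int_{X_{g,K}}\td_{ge^{zK_0}}\,I_{g,zK_0}(N_{X_K/X})\,\ch_{ge^{zK_0}}.
\end{equation*}
We then expand each of the three terms in $z$ near $0$.

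\textbf{The first term.} By Theorem \ref{8x}, $T_{g,zK_0}(\omega,h^\xi)$ is analytic in $z$ near $0$, so it is $\mathcal{O}(z^0)$.

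\textbf{The third term.} Note that $X_{g,K}=X_{g,K_0}$ for $z\neq 0$ small. The $I$-genus is not analytic at $K=0$ on the whole normal bundle $N_{X_K/X}$. Following Bismut--Goette \cite[Theorem 4.2]{BismutGoette2000}, we split $N_{X_K/X}$ into two parts:
\begin{itemize}
\item the part where $g$ acts by angles $\theta\notin 2\pi\zz$, where $I$ is analytic in $(\theta',x)$;
\item the part $N_{X_{g,K}/X_g}$, where $\theta\in2\pi\zz$.
\end{itemize}
On the second part, the $k=0$ summand is excluded and the remaining sum is a finite combination of $R$-genus type terms. When $\theta'=z\theta'_0$, it contributes expressions like $\log(z\theta'_0)/(iz\theta'_0+x)$. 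Expanding in powers of the curvature $x$ up to the dimension of $X_{g,K}$ gives finitely many terms $z^{-m}(a+b\log z^2)$. The degree bound on $m$ comes from the number of curvature factors, which is at most $\dim X_g+\dim S$. These terms are of the claimed form $z^{j-1}(C_j+\log z^2 D_j)$.

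\textbf{The second term.} Here we use K\"ohler's formula \eqref{8m}, valid modulo $\partial_K$- and $\dba_K$-exact currents since $S$ is compact:
\begin{equation*}
S_{zK_0}=z^{-N'/2-1}\bigl(S_{K_0}+\log z^2\,c^{-1,\prime}_{\max,K_0}\delta_{W_{g,K}}\bigr).
\end{equation*}
We multiply by $\td_{g,zK_0}\ch_{g,zK_0}$, which is analytic in $z$ and equivariantly closed for $d_{zK_0}$, and integrate along $X_g$. Then:
\begin{itemize}
\item exact currents push forward to elements of $P^{S,0}$, using that $\int_{X_g}$ commutes with $\partial$, $\dba$ on base-horizontal components;
\item the factor $z^{-N'/2-1}$ yields powers $z^{-p-1}$ with $p$ ranging over half-degrees, bounded by $\dim X_g+\dim S$, times $\log z^2$ or constants;
\item expanding the analytic factors in $z$ and discarding nonnegative powers gives the coefficients $C_j,D_j$.
\end{itemize}

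\textbf{Main obstacle.} The delicate step is justifying that the $\partial_K$- and $\dba_K$-exact ambiguity in \eqref{8m}, after multiplication by the $d_{zK_0}$-closed form and integration over $X_g$, lands in $P^{S,0}$. The same must hold after re-expanding in $z$, where the exact pieces could carry negative powers of $z$. Here the K\"ahler hypothesis on the compact $S$ is used. Via the $\partial\dba$-lemma, $P^{S,0}$ is closed and can be identified through cohomological conditions. This lets us pass to coefficients of the Laurent-log expansion termwise, and to take limits, without losing the statement modulo $P^{S,0}$.

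Collecting the three contributions gives \eqref{8p}.
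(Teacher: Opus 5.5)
Your overall route is the one the paper intends. The paper states this result as a direct corollary of Theorem~\ref{8o} with $S'=S$, combined with the analyticity of $T_{g,zK_0}$ (Theorem~\ref{8x}), the analyticity of the $I$-genus, and K\"ohler's scaling identity \eqref{8m}. Your first term and your use of \eqref{8m} for the second term match this.

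The genuine error is in your third term, and it is wrong on two counts.

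First, the $I$-genus is analytic in $K$ near $0$; the introduction says so explicitly. For $\theta\in2\pi\mathbb{Z}$, the only summand that could produce a $\log\theta'$ is the excluded one. The remaining series $\sum_{k\neq0}\log(1+\theta'/2k\pi)/(i(2k\pi+\theta')+x)$ has terms bounded by $C|\theta'|/k^{2}$ uniformly, so it is analytic in $(\theta',x)$ near $0$ and vanishes at $\theta'=0$. Hence $I_{g,zK_0}$ is analytic and even $\mathcal{O}(z)$, and the terms $\log(z\theta'_0)/(iz\theta'_0+x)$ you describe do not occur.

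Second, you miss the actual source of the singularity, which is $\td_{ge^{zK_0}}(TX,h^{TX})$ on $X_{g,K}=X_{ge^{zK_0}}$. On $N_{X_{g,K}/X_g}$ the element $ge^{zK_0}$ acts with eigenvalues $e^{iz\theta'_0}\to1$. So each factor $(\td/c_{\mathrm{max}})(-R/2i\pi+iz\theta'_0)$ equals $\det(iz\theta'_0-R/2i\pi)^{-1}$ times a factor analytic near $z=0$. Expanding in the nilpotent curvature gives a Laurent series with no logarithms and pole order at most $\mathrm{rk}\,N_{X_{g,K}/X_g}+\dim X_{g,K}+\dim S=\dim X_g+\dim S$. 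With this correction the third term has the required shape, and all $\log z^2$ terms come from the Bott--Chern term alone.

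There is also a problem with ``discarding nonnegative powers'' in the second term. This is harmless for $z^n$, and for $z^n\log z^2$ with $n\geq1$, but $z^0\log z^2$ is not $\mathcal{O}(z^0)$.

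Since $z^{-N'/2-1}$ turns $c^{-1,\prime}_{\mathrm{max},K_0}\delta_{W_{g,K}}$ into $c^{-1,\prime}_{\mathrm{max},zK_0}\delta_{W_{g,K}}$, the coefficient of $\log z^2$ is, up to sign, $\int_{X_{g,K}}\td_{g,zK_0}\ch_{g,zK_0}\,c^{-1,\prime}_{\mathrm{max},zK_0}(N_{X_{g,K}/X_g})$. This is a Laurent series whose constant term does not vanish in general. Take $S$ a point, $g=1$, $\xi$ trivial and a rotation of $X=\mathbb{P}^1$: the coefficient is a nonzero multiple of $\cot(u)/u$, with $u$ a nonzero real multiple of $z$, and its constant term is $-1/3$. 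This agrees with the $\log|\theta|$ term at order $\theta^0$ in the equivariant torsion of $\mathbb{P}^1$.

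So a $\log z^2$ term at order $z^0$ must be kept. As displayed, the range of the $D_j$ is one order short, and no argument can absorb this term into $\mathcal{O}(z^0)$.

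Finally, the obstacle you single out is milder than you suggest. Theorem~\ref{8o} and the integrated form of \eqref{8m} hold modulo $P^{S,0}$ for each fixed $z$, so no termwise passage to limits in $P^S/P^{S,0}$ is needed. What you should check is compatibility of the exact terms: $z^{-N'/2}$ carries $\overline{\partial}_{K_0}$- and $\partial_{K_0}$-exact currents to $\overline{\partial}_{zK_0}$- and $\partial_{zK_0}$-exact ones. These pair correctly with the $d_{zK_0}$-closed form $\td_{g,zK_0}\ch_{g,zK_0}$ before you integrate over $X_g$.
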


Our paper is organized as follows.

In Section 1, we recall the construction of equivariant analytic torsion forms of Ma \cite{Ma2000}.
In Section 2, we construct the infinitesimal version of equivariant analytic torsion forms. We will use some estimates on several supertraces involving $\exp\left(-\mathcal{L}_K-\bisconn_{K,u}^2\right)$, whose proof will be deferred to Section 6. 
In Section 3, we recall the construction of equivariant Bott-Chern current $S_{K}(W_{g},\omega)$ of \cite{B1992Inter}.

In Section 4, we study the infinitesimal equivariant analytic torsion form when the fibration $\pi$ is a vector bundle. The result of this section will be in particular useful in Sections 5.6-5.7.

In Section 5,  we will construct a two parameter operator $\mathcal{A}_{v,u}, u,v>0$, which links the $\bisconn_{u}$ with $\bisconn_{K,u}$. With some estimates on the heat operator $\exp\left(-\mathcal{L}_K-\mathcal{A}_{v,u}^{2}\right)$, we will express the difference of the two sides of \eqref{91} as the $\partial$ and $\overline{\partial}$ of some integrals on a domain in $\mathbb{R}^{2}$, by following the steps of \cite[\S 6]{BismutGoette2000}. If $S$ is K\"ahler and compact, the subspace $P^{S,0}\subset P^{S}$ is closed, thus we obtain the comparison formula \eqref{91} directly. In the general case, we have to take care of the two-parameter integrals carefully, our proof is inspired by \cite[\S 6.6-6.8]{B1997}.

In Section 6, we study the asymptotic of the supertraces of $\exp\left(-\mathcal{L}_K-\bisconn_{K,u}^2\right)$. The technique developped by Bismut-Goette in \cite[\S 7]{BismutGoette2000} plays a key role in our analysis. The results in this Section will complete the proof of Theorem \ref{8x}.
In Sections 7-8, we prove the intermediate lemmas in Section 5.

\section{Equivairant Analytic Torsion Forms}
In this section, we recall the construction of the equivariant analytic torsion forms in \cite[Section 2]{Ma2000}. In Section 1.1, we recall elementary results on Clifford algebras and complex vector spaces. In Section 1.2, we explain the $\kahler$ fibration. In Section 1.3, we introduce the Bismut superconnection. In Section 1.4, we recall the double tansgression of certain superconnection forms. Finally in Section 1.5, we construct the equivariant analytic torsion forms and state some basic properties.

\subsection{The Clifford action on exterior algebras}
Let $V$ be a complex Hermitian vector space of dimension $\ell$, with underlying real vector space $V_{\rr}$. Let $\overline{V}$ be the conjugate vector space of $V$. We denote by $\langle\,,\,\rangle$ the scalar product on $V_{\mathbb{R}}$, and we extend it to a $\mathbb{C}$-bilinear form on $V_{\mathbb{R}}\otimes \mathbb{C}$. Let $c(V_{\mathbb{R}})$ be the Clifford algebra of $V_{\mathbb{R}}$, that is, the algebra over $\mathbb{R}$ spanned by $1\in \mathbb{R}$, $X\in V_{\mathbb{R}}$ with the relation 
\begin{align}\label{98}
XY+YX=-2\left\langle X,Y \right\rangle.
\end{align}

If $X\in V$, let $\overline{X}\in \overline{V}$ be the conjugate element of $X$, and  $X^*\in \overline{V}^*$ be the metric dual of $X$. Set
\begin{equation}\label{clifford action}
    c(X)=\sqrt{2}X^{*}\wedge,\quad c(\overline{X})=-\sqrt{2}i_{\overline{X}}.
\end{equation}
The actions in \eqref{clifford action} induce a representation of Clifford algebras:
\begin{equation}
    c: c(V_{\rr})\otimes \mathbb{C}\rightarrow \ENDO(\Lambda({V}^{*(0,1)})).
\end{equation}

If $A\in \ENDO(V)$, then $A$ acts naturally on $\Lambda({V}^{*(0,1)})$. Let $e_1,... e_{2\ell}$ be an orthonormal basis of $V_{\rr}$, then we have
\begin{equation}\label{action on exterior algebra}
    A|_{\Lambda(V^{*(0,1)})}=\frac{1}{4}\langle A e_i,e_j\rangle c(e_i)c(e_j)+\frac{1}{2}\trace[A].
\end{equation}

\subsection{Bismut superconnection}\label{bl}
Let $\pi: W\rightarrow S$ be a holomorphic submersion of complex manifolds, with $\ell$-dimensional compact fibers $X$. Let $TX=TW/S$  be the relative tangent bundle to the fibers $X$. Let $T_{\rr}X$ be the corresponding real tangent bundle and $J^{TX}$ be the complex structure on it.

We suppose that there is a real closed $(1,1)$-form $\omega$ on $W$, which restricts to be a K\"ahler form $\omega^{TX}$ on $TX$. For 
 $U,V\in T_{\rr} X$, set 
 \begin{align}\label{92}
 g^{TX}( U, V)=\omega(J^{TX}U,V).
 \end{align}
We denote by $h^{TX}$ the Hermitian metric on $TX$ induced by $g^{TX}$.

Put  
\begin{align}
 T^{H}W=\{U\in TW: \omega(U,{\overline{V}})=0 \text{ for any }  V\in TX\}.
 \end{align}
Then $T^{H}W$ is a horizontal subbundle of $TW$. We have the decomposition \begin{equation}\label{splitting}
    TW=T^{H}W\oplus TX.
\end{equation}
as smooth vector bundles. By \cite[Definition 1.4]{BGS2}, the triple $(\pi, h^{TX}, T^{H}W)$ defines a K\"ahler fibration.

From the splitting \eqref{splitting}, we have the following identification, 
\begin{align}\label{94}
T^{H}W \simeq \pi^{*}TS.
\end{align}
If $U$ is a vector in $T_{\mathbb{R}}S$, let $U^H\in T_{\rr}^HW$ be its horizontal lift in $T_{\rr}^HW$, so that $\pi_{*}U^{H}=U$.

If $U,V\in T_{\mathbb{R}}S$, set 
\begin{equation}\label{95}
    \omega^{H}(U,V)=\omega(U^{H},V^{H}).
\end{equation}

 Let $\xi$ be a holomorphic vector bundle on $W$. Let $h^\xi$ be a Hermitian metric on $\xi$. Let $\nabla^{TX}$, $\nabla^{\xi}$ be  the holomorphic Hermitian connections on $(TX,h^{TX})$, $(\xi, h^\xi)$, with curvatures $R^{TX}$, $R^{\xi}$.  Let $\nabla^{\Lambda(T^{*(0,1)}X)}$ be the connection on $\Lambda(T^{*(0,1)}X)$ induced by $\nabla^{TX}$. Set 
\begin{equation}
    \nabla^{\Lambda(T^{*(0,1)}X)\otimes\xi}=\nabla^{\Lambda(T^{*(0,1)}X)}\otimes 1+1\otimes \nabla^{\xi}.
\end{equation}

Let $g^{TS}$ be a Riemannian metric on $T_{\rr}S$. We equip $T_{\rr}W$ with the metric
\begin{equation}\label{metric on W}
    g^{TW}=\pi^{*}g^{TS}\oplus g^{TX}.
\end{equation}
Let $\nabla^{T_{\rr}W,L}$ (resp.$\nabla^{T_{\rr}S}$) be the Levi-Civita connection on ($T_{\rr}W,g^{TW}$) (resp. ($T_{\rr}S,g^{TS}$)).

 Let $\nabla^{T_{\rr}W}=\pi^{*}\nabla^{T_{\rr}S}\oplus\nabla^{T_{\rr}X}$ be the  connection on $T_{\rr}W=T_{\mathbb{R}}^HW\oplus T_{\mathbb{R}}X$.
 Set 
\begin{equation}
    S=\nabla^{T_{\rr}W,L}-\nabla^{T_{\rr}W}.
\end{equation}
Then $S$ is a 1-form on $W$ which takes value in antisymmetric elements of $\ENDO(T_{\rr}W)$.

Let $T$ be the torsion of $\nabla^{T_{\rr}W}$. By \cite[\S 1c]{B1986}, $T$ is a two form on $W$ with value in $T_{\mathbb{R}}X$, and if $U,V,Z\in T_{\rr}W$,
\begin{equation}\label{S and T}
    \begin{split}
       & S(U)V-S(V)U+T(U,V)=0,\\
       & 2\langle S(U)V,Z\rangle+\product{T(U,V),Z}+\product{T(Z,U),V}-\product{T(V,Z),U}=0.
    \end{split}
\end{equation}

 Let $\mathcal{L}_{U^H}$ be the Lie derivative operator associated with the vector field $U^H$. Then $\mathcal{L}_{U^{H}}$ acts on the tensor algebra of $T_{\rr}X$. In particular, if $U\in T_{\rr}S,$ $(g^{TX})^{-1}\mathcal{L}_{U^{H}}g^{TX}$ defines a self-adjoint endomorphism of $T_{\rr}X$. We recall some properties in \cite[\S 1.1, 2.1]{B1997}.
\begin{proposition}\label{prop T}
    1)\, The tensors $T$ and $\product{S(\cdot)\cdot,\cdot}$ do not depend on $g^{T_{\rr}S}$. And the following identities hold:

    -If $U,V\in T_{\rr}S$, 
    \begin{equation}\label{9n}
        T(U^{H},V^{H})=-P^{TX}[U^{H},V^{H}].
    \end{equation}

    -If $U\in T_{\rr}S, Z\in T_{\rr}X$,
    \begin{equation}
        T(U^H,Z)=\frac{1}{2}(g^{TX})^{-1}\mathcal{L}_{U^{H}}g^{TX}Z.
    \end{equation}

    -If $Z,Z'\in T_{\rr}X$,
    \begin{equation}
        T(Z,Z')=0.
\end{equation}

2)\, For $Z\in T_{\mathbb{R}}X$, the 2-forms $\left\langle S(Z)\cdot, \cdot \right\rangle$ and $T$ are of complex type $(1,1)$.
\end{proposition}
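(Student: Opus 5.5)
The plan is to compute $T$ directly from its definition, $T(A,B)=\nabla^{T_{\rr}W}_AB-\nabla^{T_{\rr}W}_BA-[A,B]$, and then to read off $\langle S(\cdot)\cdot,\cdot\rangle$ from the second identity in \eqref{S and T}. That identity holds because $\nabla^{T_{\rr}W,L}$ is torsion free and $\nabla^{T_{\rr}W}$ is metric; it is the usual Koszul-type argument.

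First I fix the description of the vertical connection. For a K\"ahler fibration, \cite[Theorem 1.7]{BGS2} shows that the holomorphic Hermitian connection on $(TX,h^{TX})$ coincides with $P^{TX}\nabla^{T_{\rr}W,L}$ restricted to $T_{\rr}X$. On the fibres it is the Levi-Civita connection of $g^{TX}$. In horizontal directions it is given by
\begin{equation*}
\nabla^{T_{\rr}X}_{U^H}Z=P^{TX}[U^H,Z]+\tfrac12 (g^{TX})^{-1}(\mathcal{L}_{U^H}g^{TX})Z .
\end{equation*}
This formula involves only $g^{TX}$ and $T^HW$, and not $g^{TS}$.

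The three formulas in part 1) then follow by splitting into cases.

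\begin{itemize}
\item For $U,V\in T_{\rr}S$: the horizontal part of $T(U^H,V^H)$ is the lift of the torsion of $\nabla^{T_{\rr}S}$, which is zero. The vertical part is $-P^{TX}[U^H,V^H]$, since $\pi^*\nabla^{T_{\rr}S}$ produces no vertical component.
\item For $U\in T_{\rr}S$ and $Z\in T_{\rr}X$: we have $\nabla_Z U^H=0$, because $U^H$ is a pulled-back section of $\pi^*TS$ and is constant along the fibres. Also $[U^H,Z]$ is vertical. Subtracting $[U^H,Z]$ from the formula above leaves $\tfrac12(g^{TX})^{-1}\mathcal{L}_{U^H}g^{TX}Z$.
\item For $Z,Z'\in T_{\rr}X$: the torsion vanishes because the Levi-Civita connection on the fibres is torsion free.
\end{itemize}

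None of these expressions involves $g^{TS}$, so $T$ is independent of it. To see that $\langle S(\cdot)\cdot,\cdot\rangle$ is also independent, I use the second identity of \eqref{S and T}. It expresses $2\langle S(U)V,Z\rangle$ through $T$ and the metric. Since $T$ takes values in $T_{\rr}X$, only $g^{TX}$ enters whenever one of the three arguments is vertical. The remaining purely horizontal component of $\langle S(\cdot)\cdot,\cdot\rangle$ also vanishes by that identity, because all terms pair a vertical vector with a horizontal one.

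For part 2), I use that $\omega$ is closed and of type $(1,1)$, and that $T^HW$ is the $\omega$-orthogonal complement of $TX$. Writing $\omega=\omega^{TX}+\omega^H$ and expanding $d\omega=0$ in horizontal and vertical components yields two identities.

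\begin{itemize}
\item The $(H,H,V)$ component gives $\omega^{TX}\big(T(U^H,V^H),Z\big)=-(d\omega^H)(U^H,V^H,Z)$, up to sign.
\item The $(H,V,V)$ component shows that $\mathcal{L}_{U^H}\omega^{TX}$ restricted to the fibres is the fibrewise $d$ of a form of type $(1,1)$.
\end{itemize}

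Because $\omega$ is $(1,1)$ and the splitting \eqref{splitting} is compatible with the complex structures, each of these is of type $(1,1)$. Hence $T(U^H,V^H)$ and $T(U^H,\cdot)$ give $(1,1)$ tensors. Combining this with the expression of $\langle S(Z)\cdot,\cdot\rangle$ through $T$ from part 1) then gives the $(1,1)$ property for $\langle S(Z)\cdot,\cdot\rangle$.

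I expect the main obstacle to be this type bookkeeping in part 2), and in particular the claim that $(g^{TX})^{-1}\mathcal{L}_{U^H}g^{TX}$ combines with $U$ into a $(1,1)$ form. Here I would follow \cite[Theorem 1.7]{BGS2} and \cite[\S 1.1]{B1997}, using that $[U^H,\cdot]$ preserves $T^{(1,0)}X$ for $U\in T^{(1,0)}S$, since the horizontal lift is holomorphically compatible.
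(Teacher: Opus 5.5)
The paper gives no proof of Proposition~\ref{prop T}; it quotes it from \cite{B1997}, which goes back to \cite{B1986} and \cite{BGS2}. Your part 1) is the standard argument and is correct. That is, you use Bismut's formula for $\nabla^{TX}_{U^H}$, compute the torsion case by case, and get $S$ from the Koszul-type identity \eqref{S and T}. Your $(H,H,V)$ computation in part 2) is also correct. From $d\omega=0$ you get $\omega(T(U^H,V^H),Z)=-Z\big(\omega^H(U,V)\big)$, so $T(U^H,V^H)$ is of type $(1,1)$ in $(U,V)$ because $\omega^H$ is.

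The $(H,V)$ step of 2) does not go through as you phrase it.

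\emph{What you actually get from $d\omega=0$.} The $(H,V,V)$ component gives more than fibrewise exactness. Since $i_{U^H}\omega$ vanishes on $T_{\rr}X$ by the definition of $T^HW$, it gives $\mathcal{L}_{U^H}\omega^{TX}=0$ exactly.

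\emph{Why you need the exact vanishing.} With $g^{TX}(A,B)=\omega(J^{TX}A,B)$ one has $(\mathcal{L}_{U^H}g^{TX})(A,B)=(\mathcal{L}_{U^H}\omega^{TX})(J^{TX}A,B)+\omega((\mathcal{L}_{U^H}J^{TX})A,B)$. If the first term were a nonzero fibrewise $(1,1)$ form, it would in general give $\langle T(U^H,Y),\overline{Y'}\rangle\neq 0$ for $U\in T^{(1,0)}S$ and $Y,Y'\in T^{(1,0)}X$. That is a $(2,0)$ component of $T$, so "each piece is of type $(1,1)$" does not imply what you want.

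\emph{How to finish.} Once the first term vanishes, $T(U^H,\cdot)=-\frac12 J^{TX}\mathcal{L}_{U^H}J^{TX}$ on $T_{\rr}X$. Your integrability remark, that $[U^H,\cdot]$ preserves $T^{(1,0)}X$ for $U\in T^{(1,0)}S$, then gives $(\mathcal{L}_{U^H}J^{TX})Y=0$. Hence $T(U^H,Y)=0$ for $Y\in T^{(1,0)}X$, which is exactly the $(1,1)$ property.

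\emph{The $S$ part.} For $\langle S(Z)\cdot,\cdot\rangle$, write out the cases from \eqref{S and T} rather than saying "combining". The form vanishes on two vertical arguments. On two horizontal arguments it equals $\frac12\langle T(U^H,V^H),Z\rangle$. On a pair $(U^H,Z')$ it equals $\langle Z,T(U^H,Z')\rangle$; this uses that $T(U^H,\cdot)$ is $g^{TX}$-symmetric. Its type therefore follows from that of $T$.
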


\begin{definition}\label{9g}
    For $0\leq p\leq \ell$, $s\in S$, let $\mathbb{E}_s^p$ be the vector space of smooth sections of $(\Lambda(T^{*(0,1)}X)\otimes\xi)$ $|_{X_s}$ over the fiber $X_s=\pi^{-1}(s)$. Set
    \begin{equation}
        \mathbb{E}_s=\bigoplus_{p=0}^{\ell}\mathbb{E}_s^{p}\mathrm{\;},\mathrm{\;} \mathbb{E}_{s,+}=\bigoplus_{p\mathrm{\, even}}\mathbb{E}_s^p\mathrm{\;},\mathrm{\;} \mathbb{E}_{s,-}=\bigoplus_{p\mathrm{\, odd}}\mathbb{E}_s^p.
    \end{equation}
\end{definition}

As in \cite[\S 1f)]{B1986}, we can view $\mathbb{E}_s, s\in S$ as the fibers of a smooth $\zz$-graded infinite dimensional vector bundle over $S$. And the smooth sections of $\mathbb{E}$ over $S$ are identified to smooth sections of $\Lambda(T^{*(0,1)}X)\otimes\xi$ over $W$.

Let $dv_{X_s}$ be the Riemannian volume form on the fiber $(X_s,g^{TX})$. Let $h^{\Lambda(T^{*(0,1)}X)\otimes  \xi}$ be the Hermitian metric on $\Lambda(T^{*(0,1)}X)\otimes\xi$ induced by $h^{TX}$, $h^{\xi}$. We equip $\mathbb{E}_s$ with the Hermitian product
\begin{equation}\label{L2 metric}
    \alpha,\alpha'\in \mathbb{E}_s\rightarrow\product{\alpha,\alpha'}_{0}=\Big(\frac{1}{2\pi}\Big)^\ell\int_{X_s}\product{\alpha,\alpha'}_{h^{\Lambda(T^{*(0,1)}X)}\otimes h^\xi}dv_{X_s}.
\end{equation}

Let $\dbar$ be the fiberwise Dolbeault operator acting on $\mathbb{E}_s$ and let $\dbarstar$ be its formal adjoint with respect to the $L^2$ metric (\ref{L2 metric}). Set 
\begin{equation}
    D^{X}=\dbar+\dbarstar.
\end{equation}
Since for each $s\in S$, the 2-form $\omega$ restricts to be a $\kahler$ form on $X_s$, therefore $\sqrt{2}D^X$ is  a fiberwise Dirac operator.

For any $U\in T_{\rr}S$, if $s$ is a smooth section of $\mathbb{E}$ over $s$, set
\begin{equation}\label{9i}
    \nabla_{U}^{\mathbb{E}}s=\nabla_{U^H}^{\Lambda(T^{*(0,1)}X)\otimes\xi} s. 
\end{equation}
By \cite[Theorem 1.14]{BGS2}, $\nabla^{\mathbb{E}}$ defines a Hermitian connection with respect to the $L^2$ metric (\ref{L2 metric}).  Let $\nabla^{\mathbb{E}'}$ and $\nabla^{\mathbb{E}''}$ be the holomorphic and antiholomorphic part of $\nabla^{\mathbb{E}}$.

Let $\{f_p\}$ be a local frame of $T_{\rr}S$ and $\{f^p\}$ be its dual. Set
\begin{equation}\label{cth}
    c(T)=\frac{1}{2}f^p f^q c(T(f_p^{H},f_q^{H})).
\end{equation}
Then $c(T)$ is a section of $\pi^{*}\Lambda^2(T_{\rr}^*S)\widehat{\otimes}\ENDO(\Lambda(T^{*(0,1)}X)\otimes\xi)$. Let $T^{(1,0)}$, $T^{(0,1)}$ be the components of $T$ in $T^{(1,0)}X$, $T^{(0,1)}X$, we define $c(T^{(1,0)})$ an $c(T^{(0,1)})$ as in (\ref{cth}) so that
\begin{equation}
    c(T)=c(T^{(1,0)})+c(T^{(0,1)}).
\end{equation}

\begin{definition}
    For $u>0$, let $\bisconn_u$ be the Bismut superconnection constructed in \cite[\S 3]{B1986} and \cite[\S 2a]{BGS2},
    \begin{equation}\label{Bismut superconnection}
        \begin{split}
            &\bisconn_u''=\sqrt{u}\,\dbar+\nabla^{\mathbb{E}''}-\frac{c(T^{(1,0)})}{2\sqrt{2u}},\\
              &\bisconn_u'=\sqrt{u}\,\dbarstar+\nabla^{\mathbb{E}'}-\frac{c(T^{(0,1)})}{2\sqrt{2u}},\\
              &\bisconn_u=\bisconn_u''+\bisconn_u'.
        \end{split}
    \end{equation}
\end{definition}

Let $G$ be a compact Lie group, which acts holomorphically and fiberwisely on $W$. We assume that the action of $G$ lifts to a holomorphic action on $\xi$.
We suppose that the action of $G$ also preserves the $(1,1)$-form $\omega$, and the Hermitian metric $h^{\xi}$. Then the Hermitian metric $h^{TX}$  and the subbundle $T^HW\subset TW$ also $G$-invariant.

The group $G$ acts on $\mathbb{E}=\cinfty(X,\Lambda(T^{*(0,1)}X)\otimes\xi)$ by
\begin{equation}\label{g on section}
    (g\cdot s)(x)=g\cdot s(g^{-1}x)\quad \mathrm{for\;any\;} g\in G.
\end{equation}
Then the Bismut superconnection $\bisconn_{u}, u>0$ commute with the action of $G$.

\subsection{Superconnection forms and transgression formulas}

Let $N_V$ be the operator which defines the $\zz$-grading on $\mathbb{E}$, i.e. $N_V$ acts by multiplication by $p$ on $\mathbb{E}^{p}$.
\begin{definition}
    Set
    \begin{equation}\label{number operator-g}
        N_u=N_V+\frac{i\omega^H}{u}.
    \end{equation}
    Then $N_u$ is a section of $\pi^{*}\Lambda^2(T_{\rr}^{*}S)\widehat{\otimes}\ENDO(\Lambda(T^{*(0,1)}X)\otimes\xi)$.
\end{definition}

We recall some results in \cite[Theorem 2.6]{BGS2}.
\begin{theorem}\label{identities-g}
    The following identities hold,
    \begin{equation}
        \begin{split}
            &[\bisconn_u'',N_u]=-2u\frac{\partial}{\partial u}\bisconn_u''\mathrm{\;},\mathrm{\;}[\bisconn_u',N_u]=2u\frac{\partial}{\partial u}\bisconn_u'.\\
            &\bisconn_u'^{2}=0\kongge,\kongge \bisconn_u''^{2}=0.\\
            &\bisconn_u^{2}=[\bisconn_u'',\bisconn_u'].
        \end{split}
    \end{equation}
\end{theorem}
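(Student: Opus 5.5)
The theorem to be proved is Theorem \ref{identities-g}, the identities of \cite[Theorem 2.6]{BGS2}. The plan is to check each identity by direct computation from the definition \eqref{Bismut superconnection}. We decompose $\bisconn_u''$ according to its dependence on $u$ and compute commutators with $N_V$ and with $i\omega^H/u$ separately.

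\textbf{Step 1: the commutator with $N_u$.} Take the identity for $\bisconn_u''$. The three pieces of $\bisconn_u''$ behave as follows.
\begin{itemize}
\item[(a)] $\sqrt{u}\,\dbar$ raises the degree in $\Lambda(T^{*(0,1)}X)$ by one and carries no form degree on $S$.
\item[(b)] $\nabla^{\mathbb{E}''}$ preserves the fiber degree and is a $(0,1)$-form on $S$.
\item[(c)] $c(T^{(1,0)})$ involves $\sqrt2\,T^{(1,0),*}\wedge$. Since $T^{(1,0)}(f_p^H,f_q^H)$ has its dual in $T^{*(0,1)}X$, this raises the fiber degree by one, with a $2$-form coefficient on $S$. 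By Proposition \ref{prop T} it is of type $(1,1)$ in the horizontal variables.
\end{itemize}
Commuting with $N_V$ gives $[N_V,\sqrt u\dbar]=\sqrt u\dbar$ and $[N_V,c(T^{(1,0)})]=c(T^{(1,0)})$. With the sign conventions of \cite{BGS2}, $[\bisconn_u'',N_V]=-\sqrt u\dbar+\frac{c(T^{(1,0)})}{2\sqrt{2u}}$. Note that $-2u\partial_u$ applied to $\sqrt u\dbar$ gives $-\sqrt u\dbar$, and applied to $-c(T^{(1,0)})/(2\sqrt{2u})$ gives $-c(T^{(1,0)})/(2\sqrt{2u})$. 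So the $\omega^H$-term must contribute $\frac{i}{u}[\bisconn_u'',\omega^H]=-\frac{2c(T^{(1,0)})}{2\sqrt{2u}}$ (up to the sign bookkeeping above) and must kill $\nabla^{\mathbb{E}''}$.

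\textbf{Step 2: the $\omega^H$ contribution.} The part $[\nabla^{\mathbb{E}''},\omega^H]=\overline{\partial}^S\omega^H$ should vanish modulo the fiber contributions. To see this, use $d\omega=0$ on $W$ and decompose $d\omega$ according to \eqref{splitting}. The component of $d\omega$ with three horizontal entries gives $d^S\omega^H$ plus a term $\omega(T(U^H,V^H),W^H)$. The latter vanishes since $\omega(TX,T^HW)=0$. Hence $\omega^H$ is horizontally closed.

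The component with two horizontal entries and one vertical entry $Z$ gives the identity $\omega(T(U^H,V^H),Z)=\pm(\ldots)$. The term $[\dbar,\omega^H]$ then equals $\dbar^X$ of $\omega^H$, viewed as a function on the fiber. Via $\omega^H(U,V)$ and the cocycle relation, this is $\dbar$ of the $1$-form $Z\mapsto\omega(Z,\cdot)$. Using \eqref{9n}, that is $T(U^H,V^H)=-P^{TX}[U^H,V^H]$, this is exactly $g^{TX}(T^{(1,0)}(U^H,V^H),\cdot)$ in $(0,1)$-form guise, i.e. $c(T^{(1,0)})/\sqrt2$ up to the constant. This matching of constants and signs, $\sqrt u\cdot \frac{i}{u}[\dbar,\omega^H]=-\frac{c(T^{(1,0)})}{\sqrt{2u}}$, is the main obstacle, and I would check it in a local orthonormal frame.

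The $\bisconn_u'$ identity then follows by taking adjoints. The adjoint of $N_V$ is $N_V$, $\omega^H$ is real, and the adjoint conventions of \cite{BGS2} introduce $i\mapsto -i$ compensated by the superadjoint sign.

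\textbf{Step 3: $\bisconn_u''^2=0$.} This is easiest to see conceptually. $\bisconn_u''$ for $u=1$ is the total $\overline{\partial}^W$ of $\Lambda(T^{*(0,1)}W)\otimes\xi$, transported under the identification $\Lambda(T^{*(0,1)}W)\cong\pi^*\Lambda(T^{*(0,1)}S)\widehat\otimes\Lambda(T^{*(0,1)}X)$ given by \eqref{splitting}. The term $c(T^{(1,0)})$ is the failure of $T^HW$ to be integrable, coming from \eqref{9n}. Hence $\bisconn_1''^2=(\overline{\partial}^W)^2=0$. For general $u$ one has $\bisconn_u''=u^{N_u/2}\bisconn_1''u^{-N_u/2}$ up to the grading operator on horizontal forms, or more simply it follows from Step 1: $\partial_u(\bisconn_u''^2)$ is a commutator expression that reduces to a rescaling. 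Then $\bisconn_u'^2=0$ by adjointness.

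Finally $\bisconn_u^2=(\bisconn_u''+\bisconn_u')^2=\bisconn_u''^2+\bisconn_u'^2+[\bisconn_u'',\bisconn_u']=[\bisconn_u'',\bisconn_u']$.
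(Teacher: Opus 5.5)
Your Steps 1 and 2 follow the standard argument behind \cite[Theorem 2.6]{BGS2}, which the paper simply quotes. The constant you left open does come out right. Evaluate $d\omega=0$ on $(U^H,V^H,Z)$ with $Z$ vertical, and use \eqref{9n} together with $\omega(T^HW,TX)=0$. This gives $Z\,\omega(U^H,V^H)=-\omega(T(U^H,V^H),Z)=g^{TX}(J^{TX}T(U^H,V^H),Z)$. Hence $\dbar\,\omega(U^H,V^H)=i\,T^{(1,0)}(U^H,V^H)^*$ and $[\dbar,i\omega^H]=-c(T^{(1,0)})/\sqrt2$, which is exactly what Step 1 needs.

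The gap is in Step 3: $\bisconn_1''$ is \emph{not} $\overline{\partial}^W$ transported by the splitting \eqref{splitting}.

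\textbf{Why the identification fails.} Under that identification one has $\overline{\partial}^W=\dbar+\nabla^{\mathbb{E}''}$, with no torsion term; this uses that $T$ is of type $(1,1)$, Proposition \ref{prop T}. The non-integrability of $T^HW$ could only enter through $T(\overline U^H,\overline V^H)$ with $U,V\in T^{(1,0)}S$. That vanishes, and your own Step 2 identity shows it: for such $U,V$ one has $\omega(U^H,V^H)=0$, so $\omega(T(U^H,V^H),\cdot)|_{TX}=0$. By contrast, $c(T^{(1,0)})$ multiplies by a horizontal $(1,1)$-form and raises the fiber degree. So $\bisconn_1''$ does not even preserve $\pi^*\Lambda(T^{*(0,1)}S)\widehat\otimes\mathbb{E}$.

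\textbf{Why the fallback fails.} Step 1 only yields the linear relation $2u\,\partial_u(\bisconn_u''^2)=-[\bisconn_u''^2,N_u]$. The rescaling in $u$ only transports nilpotency from one value of $u$ to the others. Neither of these produces nilpotency.

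\textbf{The fix.} Use Step 2 once more. Since $[\nabla^{\mathbb{E}''},\omega^H]=0$, and $[\dbar,i\omega^H]=-c(T^{(1,0)})/\sqrt2$ commutes with $\omega^H$, the exponential series truncates and
\[
\bisconn_u''=\exp\Bigl(-\frac{i\omega^H}{2u}\Bigr)\bigl(\sqrt u\,\dbar+\nabla^{\mathbb{E}''}\bigr)\exp\Bigl(\frac{i\omega^H}{2u}\Bigr).
\]
Now $(\sqrt u\,\dbar+\nabla^{\mathbb{E}''})^2=0$: its pieces of horizontal degree $0,1,2$ are those of $(\dbar+\nabla^{\mathbb{E}''})^2=(\overline{\partial}^W)^2=0$. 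Here $\overline{\partial}^W$ acts with coefficients in the holomorphic bundle $\pi^*\Lambda(T^{*(1,0)}S)$, so all horizontal degrees are covered. Equivalently, expand $\bisconn_1''^2$ and use $c(T^{(1,0)})^2=0$ together with $[\overline{\partial}^W,[\overline{\partial}^W,i\omega^H]]=[(\overline{\partial}^W)^2,i\omega^H]=0$.

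Once $\bisconn_u''^2=0$ is established this way, your adjointness argument for $\bisconn_u'$ and the final identity $\bisconn_u^2=[\bisconn_u'',\bisconn_u']$ go through as written.
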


\begin{definition}
    Let $P^{S}$ be the vector space of smooth forms on $S$, which are sums of forms of type $(p,p)$. Let $P^{S,0}$ be subspace of the $\omega\in P^{S}$ of the form $\partial\alpha+\overline{\partial}\beta$, where $\alpha,\beta$ are smooth forms on $S$. The vector spaces $P^{W}$, $P^{W,0}$ are defined in a similar way.
\end{definition}

Set $\Phi:\Lambda^{\text{even}}(T_{\rr}^*S)\rightarrow \Lambda^{\text{even}}(T_{\rr}^*S) $ be the map $\alpha\rightarrow (2i\pi)^{-\mathrm{deg}\alpha/2}\alpha$. Then we recall the double transgression formula in \cite[Theorem 2.9]{Ma2000}, which extends the result of \cite[Theorem 2.9]{BGS2}. Note that in our case, $g$ acts trivially on the base space $S$, so $S_g=S$,
\begin{theorem}\label{double transgression}
    For $u>0$, the forms $\Phi \Tr[g\exp(-\bisconn_u^2)]$ and $\Phi\Tr[gN_u\exp(-\bisconn_u^{2})]$ lie in $P^{S}$. The forms $\Phi \Tr[g\exp(-\bisconn_u^2)]$ are closed and their cohomology classes remain constant. Moreover,
    \begin{equation}
        \frac{\partial}{\partial u}\Phi \Tr[g\exp(-\bisconn_u^2)]=-\frac{1}{u}\frac{\overline{\partial}\partial}{2i\pi}\Phi \Tr[gN_u\exp(-\bisconn_u^2)].
    \end{equation}
\end{theorem}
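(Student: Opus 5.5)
We follow the argument of \cite[Theorem 2.9]{BGS2}, inserting the group element $g$ throughout, as in \cite[Theorem 2.9]{Ma2000}. First, since $g$ acts fiberwise and commutes with $\bisconn_u$, $N_V$ and $\omega^H$, the forms $\Tr[g\exp(-\bisconn_u^2)]$ and $\Tr[gN_u\exp(-\bisconn_u^2)]$ are well defined. They are smooth on $S$ because $\exp(-\bisconn_u^2)$ is a fiberwise smoothing operator whose kernel depends smoothly on $s\in S$, and $g$ is bounded. Closedness follows from the Bianchi identity $[\bisconn_u,\bisconn_u^2]=0$, together with the fact that the supertrace of a supercommutator vanishes, $d\Tr[A]=\Tr[[\bisconn_u,A]]$. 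One uses that $[\bisconn_u,g]=0$, so that $g$ can be moved past $\bisconn_u$ inside the supertrace.

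For the type statement, we use Theorem \ref{identities-g}. Introduce an auxiliary operator $N_V$-like grading: for $\theta\in\rr$ conjugate by $e^{i\theta N}$, where $N$ counts $N_V$ plus half the difference of holomorphic and antiholomorphic base degrees. Then $\bisconn_u''$ and $\bisconn_u'$ are homogeneous of opposite weights, so $\bisconn_u^2=[\bisconn_u'',\bisconn_u']$ has weight $0$. Hence the supertraces only contain forms with equal numbers of $dz$ and $d\bar z$, i.e.\ they lie in $P^S$. Here we use that $g$ preserves the grading. The same holds with $N_u$ inserted, since $\omega^H$ is of type $(1,1)$.

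For the transgression, we differentiate in $u$. Theorem \ref{identities-g} gives $\partial_u\bisconn_u=\frac{1}{2u}\bigl([\bisconn_u',N_u]-[\bisconn_u'',N_u]\bigr)$. Then by Duhamel,
\[
\partial_u\Tr[g e^{-\bisconn_u^2}]=-\Tr\bigl[g[\bisconn_u,\partial_u\bisconn_u]e^{-\bisconn_u^2}\bigr].
\]
Using $\bisconn_u''^2=\bisconn_u'^2=0$ and the Bianchi identities $[\bisconn_u'',\bisconn_u^2]=[\bisconn_u',\bisconn_u^2]=0$, one rewrites the right-hand side as
\[
-\frac{1}{2u}\Bigl(\bigl[\bisconn_u'',\,[\bisconn_u',\,N_u e^{-\bisconn_u^2}]\bigr]-\bigl[\bisconn_u',\,[\bisconn_u'',\,N_u e^{-\bisconn_u^2}]\bigr]\Bigr),
\]
taken inside $\Tr[g\,\cdot\,]$. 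Since $g$ commutes with $\bisconn_u'$ and $\bisconn_u''$ separately (the action is holomorphic and isometric), $\Tr[g[\bisconn_u'',A]]=\overline{\partial}\Tr[gA]$ and $\Tr[g[\bisconn_u',A]]=\partial\Tr[gA]$. This yields $-\frac1u\overline{\partial}\partial\Tr[gN_ue^{-\bisconn_u^2}]$ up to the normalization. Applying $\Phi$ converts the factor to $\frac{\overline{\partial}\partial}{2i\pi}$, since $\overline{\partial}\partial$ raises the bidegree by $(1,1)$.

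Constancy of the cohomology class is then immediate. The main obstacle is the bookkeeping of signs and of the $\omega^H/u$ term in $N_u$, whose $u$-derivative must be absorbed consistently. We would check this against the non-equivariant computation in \cite{BGS2}, where it is identical once $g$ is shown to commute with every operator involved.
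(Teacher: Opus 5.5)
Your proposal is correct and is essentially the argument the paper relies on. The paper gives no proof of its own: it quotes \cite[Theorem 2.9]{Ma2000}, which is the proof of \cite[Theorem 2.9]{BGS2} with $g$ inserted. That proof uses exactly what you use, namely that $g$ acts trivially on $S$ and commutes with $\bisconn_u'$ and $\bisconn_u''$.

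There are two minor points to fix.

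First, in the type argument the grading must be $N_V+N^{(0,1)}_S-N^{(1,0)}_S$, that is, antiholomorphic minus holomorphic base degree with coefficient $1$, not $\frac12$. With $\frac12$, the terms $\sqrt{u}\,\overline{\partial}^X$ (weight $1$) and $\nabla^{\mathbb{E}''}$ (weight $\frac12$) would have different weights, so $\bisconn_u''$ would not be homogeneous. With the correct grading, $c(T^{(1,0)})$ also has weight $1$, because $T$ is of type $(1,1)$.

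Second, your worry about the $\omega^H/u$ term is unfounded. Your computation never differentiates $N_u$; its effect is already built into the identities of Theorem \ref{identities-g}.
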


\subsection{Equivariant analytic torsion forms}

Fix a $g\in G$, set $W_g=\{x\in W: gx=x\}$ the fixed point set of $g$. Then we have a holomorphic submersion $\pi|_{W_{g}}: W_{g}\to S$ with compact fiber $X_{g}$.

Let $N_{W_g/W}$ be the normal bundle to $W_{g}$ in $W$, thus
\begin{equation}
    N_{W_g/W}:=\frac{TW}{TW_g}=\frac{TX}{TX_g}=:N_{X_g/X}.
\end{equation}


Then $g$ acts on $N_{X_g/X}$, let $e^{i\theta_1},...,e^{i\theta_q}$ be the locally constant distinct eigenvalues of $g$ acting on $N_{X_g/X}$, and let $N_{X_g/X}^{\theta_1},...,N_{X_g/X}^{\theta^q}$ be the corresponding eigenbundles. Then $TX$ has a holomorphic and orthogonal splitting:
\begin{equation}\label{splitting of tangent space-g}
    TX=TX_g\oplus N_{X_g/X}=TX_g\oplus_{1\leq j\leq q}N_{X_g/X}^{\theta_j}.
\end{equation}
Let $h^{TX_g}, h^{N_{X_g/X}^{\theta_1}}\dots.$ be the  metrics on $TX_g,N_{X_g/X}^{\theta_1}\dots.$ induced by the Hermitian metric $g^{TX}$. And let $\nabla^{TX_g},\nabla^{N_{X_g/X}^{\theta_1}}\dots.$ be the holomorphic Hermitian connection on $(TX_g,h^{TX_g})$, $\Big(N_{X_g/X}^{\theta_1},h^{N_{X_g/X}^{\theta_1}}\Big)\dots.$ Let $R^{TX_g},R^{N_{X_g/X}^{\theta_1}}\dots.$ be the corresponding curvatures.

If $A$ is a $(q,q)$ matrix, set
\begin{equation}
    \td(A)=\mathrm{det}\left(\frac{A}{1-e^{-A}}\right)\kongge,\kongge \ch(A)=\trace[\exp(A)]\kongge,\kongge c_{\text{max}}(A)=\mathrm{det}(A).
\end{equation}
\begin{definition}
    Set
    \begin{equation}\label{genus-g}
        \begin{split}
            &\td_g(TX,h^{TX})=\td\left(\frac{-R^{TX_g}}{2i\pi}\right)\prod_{j=1}^{q} \Big(\frac{\td}{c_{\mathrm{max}}}\Big)\Big(-\frac{R^{N_{X_g/X}^{\theta_j}}}{2i\pi}+i\theta_j\Big),\\
             &\td_g'(TX,h^{TX})=\frac{\partial}{\partial b}\Big|_{{b=0}}\Big[\td\Big(\frac{-R^{TX_g}}{2i\pi}+b\Big)\prod_{j=1}^{q} \Big(\frac{\td}{c_{\mathrm{max}}}\Big)\Big(-\frac{R^{N_{X_g/X}^{\theta_j}}}{2i\pi}+i\theta_j+b\Big)\Big],\\
              &\ch_g(\xi,h^\xi)=\trace\left[g\exp\left(\frac{-R^{\xi}|_{X_g}}{2i\pi}\right)\right].
        \end{split}
    \end{equation}
\end{definition}
Then the forms in (\ref{genus-g}) are closed forms on $W_g$ and lie in $P^{W_g}$.

For $s\in S$, let $H^p(X_s,\xi|_{X})$ be the $p$-th cohomology of the sheaf of holomorphic sections of $\xi|_{X_s}$.

We make the basic \textbf{assumption} that for $0\leq p \leq \ell$, the dimension of $H^{p}(X_s,\xi|_{X_s})$ is locally constant. Then these vector spaces form a $\zz$-graded holomorphic vector bundle $H^{\bullet}(X,\xi|_{X})$ over $S$. By Hodge theory,  we have an identification of $G$-spaces,
\begin{equation}\label{Hodge}
    H^p(X_s,\xi|_{X_s})\cong \mathrm{Ker}(D|_{\mathbb{E}_s^p})=\{f\in \mathbb{E}_s^p: \dbar f=0, \kongge \dbarstar f=0\}.
\end{equation}
As a $G$-invariant subspace of $\mathbb{E}_s$, $\mathrm{Ker}(D)$ inherits a $G$-invariant metric from the $L^{2}$-metric \eqref{L2 metric}, let $h^{H^{\bullet}(X,\xi|_{X})}$ be the corresponding $G$-invariant metric on $H^{\bullet}(X,\xi|_{X})$ by the identification (\ref{Hodge}). 

Set
\begin{equation}\label{D formula}
    \begin{split}
        &\ch_g(H^{\bullet}(X,\xi|_{X}),h^{H(X,\xi|_{X})})=\sum_{k=0}^{\ell}(-1)^{k}\ch_g(H^{k}(X,\xi|_{X}),h^{H^{\bullet}(X,\xi|_{X})}),\\
        &\ch_g'(H^{\bullet}(X,\xi|_{X}),h^{H^{\bullet}(X,\xi|_{X})})=\sum_{k=0}^{\ell}(-1)^{k}k\ch_g(H^{k}(X,\xi|_{X}),h^{H^{\bullet}(X,\xi|_{X})}),\\
        &D_{-1,g}=\int_{X_g}\frac{\omega}{2\pi}\td_g(TX,h^{TX})\ch_g(\xi,h^\xi),\\
        &D_{0,g}=\int_{X_g}\big(-\td_{g}'(TX,h^{TX})+\ell\cdot \td_g(TX,h^{TX})\big)\ch_g(\xi,h^\xi).
    \end{split}
\end{equation}

Now we state a result proved in \cite[Theorem 2.10]{Ma2000}.
\begin{theorem}\label{thm asymptotic-g}
There are forms $A_{j,g}, D_{j,g}'\in P^{S}, j\geq -1$ such that for any $k\in\nn$, as $u\rightarrow 0$,
\begin{equation}\label{asymptotic-gn}
    \begin{split}
        &\Phi\Tr[g\exp(-\bisconn_u^2)]=\sum_{j=0}^{k}A_{j,g}u^{j}+\mathcal{O}(u^{k+1}),\\
         & \Phi\Tr[gN_u\exp(-\bisconn_u^2)]=\sum_{j=-1}^{k}D_{j,g}'u^{j}+\mathcal{O}(u^{k+1}).
    \end{split}
\end{equation}
Moreover we have
\begin{equation}\label{DDprime}
    \begin{split}
        &D_{-1,g}'=D_{-1,g},\\
        &A_{0,g}=\int_{X_g}\td_{g}(TX,h^{TX})\ch_{g}(\xi,h^\xi),\\ 
    &\kongge D_{0,g}'=D_{0,g}\kongge\qquad\quad\quad\text{in }\kongge P^S/P^{S,0},\\ 
        &\frac{\dba\partial}{2i\pi}D_{j,g}=-jA_{j,g}\quad\text{ for any }j\geq 0.
    \end{split}
\end{equation}
As $u\rightarrow +\infty$,
\begin{equation}\label{asymptotic-infty-g}
    \begin{split}
        &  \Phi\Tr[g\exp(-\bisconn_u^2)]=\ch_g(H^{\bullet}(X,\xi|_{X}),h^{H^{\bullet}(X,\xi|_{X})})+\mathcal{O}(\frac{1}{\sqrt{u}}),\\
         &  \Phi\Tr[gN_u\exp(-\bisconn_u^2)]=\ch_g'(H^{\bullet}(X,\xi|_{X}),h^{H^{\bullet}(X,\xi|_{X})})+\mathcal{O}(\frac{1}{\sqrt{u}}).
    \end{split}
\end{equation}
\end{theorem}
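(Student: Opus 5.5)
The plan is to treat the two regimes $u\to 0$ and $u\to+\infty$ separately, following the non-equivariant argument of Bismut--Gillet--Soul\'e and Bismut--K\"ohler and inserting Bismut's equivariant localization near $W_g$.

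\textbf{Small time.} The Schwartz kernel of $\exp(-\bisconn_u^2)$ is concentrated near the diagonal as $u\to0$, with remainders $\mathcal{O}(e^{-c/u})$ by finite propagation speed. Since $g$ acts by $(g\cdot s)(x)=g\,s(g^{-1}x)$, the kernel of $g\exp(-\bisconn_u^2)$ at $(x,x)$ is the kernel at $(g^{-1}x,x)$. Hence only a tubular neighbourhood of $W_g$ contributes up to exponentially small terms.

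In normal coordinates along $N_{X_g/X}$, I would apply Getzler's rescaling only in the directions of $TX_g$, together with the rescaling $f^p\mapsto f^p/\sqrt u$ of horizontal forms that is built into $\Phi$ and into the definition of $\bisconn_u$. The normal directions are handled by the harmonic-oscillator computation of Bismut (and Berline--Vergne), which produces the factor $\prod_j(\td/c_{\max})(-R^{N^{\theta_j}}/2i\pi+i\theta_j)$.

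The standard heat-kernel expansion then gives an asymptotic series in integer powers of $u$ for $\Phi\Tr[g\exp(-\bisconn_u^2)]$. Half-integer powers cancel for parity reasons after $\Phi$, because the forms lie in $P^S$. The limit is $A_{0,g}=\int_{X_g}\td_g\ch_g$, by the equivariant local index theorem for families.

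For $\Tr[gN_u\exp(-\bisconn_u^2)]$, the term $i\omega^H/u$ produces the $u^{-1}$ coefficient. Using that $\omega$ is closed and restricts to $\omega^{TX}$, the rescaled limit of $u\,N_u$ is $i\omega/u$ in the Getzler sense, giving $D'_{-1,g}=\int_{X_g}\frac{\omega}{2\pi}\td_g\ch_g$.

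For $D'_{0,g}$, I would use the trick of BGS: introduce the auxiliary parameter $b$ by replacing $\bisconn_u^2$ with $\bisconn_u^2 - bN_V$, or equivalently consider the generating deformation whose $b$-derivative gives $\td'_g$. The constant term then equals $\int_{X_g}(-\td_g'+\ell\,\td_g)\ch_g$ up to terms of the form $\partial\alpha+\dba\beta$. These exact terms arise from the difference between $N_V$ and the vertical Kähler form contraction, handled by the identity $[\bisconn_u'',N_u]=-2u\partial_u\bisconn_u''$ of Theorem \ref{identities-g}.

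Finally, the relation $\frac{\dba\partial}{2i\pi}D'_{j,g}=-jA_{j,g}$ follows by inserting both expansions into the double transgression formula of Theorem \ref{double transgression} and comparing coefficients of $u^{j-1}$. I expect the identification of $D'_{0,g}$ modulo $P^{S,0}$ to be the main obstacle, since it requires tracking the $b$-deformation through the equivariant localization.

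\textbf{Large time.} By the basic assumption, the dimension of $\ker D^X$ is locally constant, so on a compact subset of $S$ there is a uniform spectral gap. Following Berline--Getzler--Vergne, Ch.~9, and Bismut--K\"ohler, I would write $\exp(-\bisconn_u^2)$ by the contour integral $\frac{1}{2i\pi}\int e^{-\lambda}(\lambda-\bisconn_u^2)^{-1}d\lambda$ and split it into the part near $0$ and the part away from the spectrum near $0$.

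The part away from $0$ is $\mathcal{O}(e^{-cu})$. The part near $0$ is computed by projecting onto $\ker D^X$. By the standard argument, $P\bisconn_uP$ converges to the holomorphic Hermitian connection of $(H^\bullet,h^{H^{\bullet}})$, with an error $\mathcal{O}(1/\sqrt u)$. Because $g$ commutes with everything, this yields $\ch_g(H^\bullet,h^{H^{\bullet}})+\mathcal{O}(1/\sqrt u)$.

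For the $N_u$ version, the term $i\omega^H/u$ is $\mathcal{O}(1/u)$. $N_V$ acts on $H^k$ by $k$, which gives $\ch_g'(H^{\bullet},h^{H^{\bullet}})+\mathcal{O}(1/\sqrt u)$.
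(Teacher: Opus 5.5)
Your skeleton is the standard route of \cite[Theorem 2.10]{Ma2000}, which the paper cites rather than proves and re-runs for $K\neq 0$ in \S\ref{e0}. It consists of localization near $W_g$, Getzler rescaling along $TX_g$ only with the harmonic oscillator in the normal directions, coefficient comparison in Theorem~\ref{double transgression}, and a spectral-gap argument for $u\to+\infty$. The genuine gap is the identity $D'_{0,g}=D_{0,g}$ in $P^S/P^{S,0}$, which is the nontrivial part of the theorem, and the mechanism you sketch does not give it.

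Deforming $\bisconn_u^2$ to $\bisconn_u^2-bN_V$ produces nothing new. By Duhamel's formula and cyclicity of $\Tr$ ($g$ commutes with $\bisconn_u$ and $N_V$), $\partial_b\Tr[g\exp(-\bisconn_u^2+bN_V)]|_{b=0}=\Tr[gN_V\exp(-\bisconn_u^2)]$, which is the very quantity whose constant term you want. That constant term cannot be read off from the leading-order rescaled kernel. After Getzler rescaling $u[N_u]^{(3)}_u\to i\jmath^*\omega$ (cf.~\eqref{nkt rescaling}); the vertical part of this $1/u$ singularity comes from $N_V$ itself, not only from $i\omega^H/u$. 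So the $u^0$ coefficient couples this $1/u$ term to the $\mathcal{O}(u)$ correction of the rescaled heat kernel, which no Mehler-type computation of the limit operator provides. Moreover, a potential term like $bN_V$ is not a shift of the curvature, so no $\td_g'$ comes out of it.

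What is missing is the Bismut--Gillet--Soul\'e identity \cite{BGS2}, which the paper uses in \S\ref{section establish analytic torsion}; it is \eqref{derivative gamma} with $K=0$. Its left-hand side has to be read as $\partial_t(t\gamma_{t,g,K})$ for the coefficient comparison there to work; note that $\partial_u(uN_u)=N_V$. The identity uses $[\bisconn_u'',N_u]=-2u\partial_u\bisconn_u''$, $[\bisconn_u',N_u]=2u\partial_u\bisconn_u'$ and $\bisconn_u'^{2}=\bisconn_u''^{2}=0$. It expresses $\partial_u\big(u\,\Phi\Tr[gN_u\exp(-\bisconn_u^2)]\big)$ as the sum of two terms:
\begin{itemize}
\item the $da\,\dabar$-component of $\Phi\Tr[g\exp(\cdots)]$, where $-\bisconn_u^2$ is deformed by $da$ and $\dabar$ times multiples of $u\partial_u(\bisconn_u''-\bisconn_u')$ and $u\partial_u(\bisconn_u''+\bisconn_u')$, and by $N_V\,da\,\dabar$;
\item minus $\frac{\dba\partial}{2i\pi}$ applied to $\Phi\frac12\partial_c^2\Tr[g\exp(-\bisconn_u^2+cN_u)]|_{c=0}$.
\end{itemize}

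This turns $D'_{0,g}$ into a \emph{limit}. In the Lichnerowicz formula \eqref{gamma lichnerowicz formula}, the Grassmann terms shift the curvature of $TX$ by $da\,\dabar$ times a multiple of $J^{TX}$, i.e.\ by a scalar on $T^{(1,0)}X$ (see \eqref{curvature 2nabla}). Equivariant local index theory then gives the limit $\int_{X_g}(\ell\td_g-\td_g')\ch_g=D_{0,g}$; this is where $\td_g'$ actually comes from.

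The second missing ingredient, entirely absent from your plan, is a separate small-$u$ expansion $\frac{L_{-2}}{u^2}+\frac{L_{-1}}{u}+L_0+\mathcal{O}(u)$ of the auxiliary $c$-derivative term. It is obtained by the same localization and rescaling (cf.~\eqref{etatstar2}). Comparing coefficients then forces $\dba\partial L_{-2}=\dba\partial L_{-1}=0$ and gives $D'_{0,g}=D_{0,g}-\frac{\dba\partial}{2i\pi}L_0$. Without this expansion you only know that $D'_{0,g}-D_{0,g}$ is a limit of $\dba\partial$-exact forms, i.e.\ lies in the closure of $P^{S,0}$, which need not be closed. The exact correction is $\frac{\dba\partial}{2i\pi}L_0$, not a term arising from ``the difference between $N_V$ and the vertical K\"ahler form''.
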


We denote by $\Gamma(s), s\in \mathbb{C}$ the Gamma function. For $s\in \cc,$ $\mathrm{Re}(s)>1$, set
       \begin{equation}
        \begin{split}
    \zeta_{1,g}(s)=-\frac{1}{\Gamma(s)}\int_{0}^{1} u^{s-1} 
    \Big(\Phi\Tr[gN_u\exp(-\bisconn_u^2)]\qquad\qquad \\ \qquad\qquad-
    \chern'_g(H^{\bullet}(X,\xi|_{X}),h^{H^{\bullet}(X,\xi|_{X})})\Big)du.
        \end{split}
\end{equation}
By using (\ref{asymptotic-gn}), we see that $\zeta_{1,g}(s)$ extends to a holomorphic function of $s\in\cc$ near $0$.

For $s\in \cc,$ $\mathrm{Re}(s)<\frac{1}{2}$, set
    \begin{equation}
        \begin{split}
    \zeta_{2,g}(s)=-\frac{1}{\Gamma(s)}\int_{1}^{+\infty} u^{s-1} 
    \Big(\Phi\Tr[gN_u\exp(-\bisconn_u^2)]\qquad\qquad \\ \qquad\qquad-
    \chern'_g(H^{\bullet}(X,\xi|_{X}),h^{H^{\bullet}(X,\xi|_{X})})\Big)du.
        \end{split}
\end{equation}
By using (\ref{asymptotic-infty-g}), we see that $\zeta_{2,g}(s)$ is a holomorphic function of $s\in\cc$ near $0$.

\begin{definition}
    Set 
    \begin{equation}\label{analytic torsion forms-g}
          T_g(\omega,h^\xi)=\frac{\partial}{\partial s}(\zeta_{1,g}+\zeta_{2,g})(0).
    \end{equation}
\end{definition}

Then $T_g(\omega,h^\xi)$ is a smooth form on $S$, and it will be called the equivariant analytic torsion forms. Using (\ref{asymptotic-gn}), (\ref{asymptotic-infty-g}), we get
\begin{equation}\label{analytic torsion expression-g}
\begin{split}
    &T_g(\omega, h^\xi)=-\int_{0}^1\Big(\Phi\Tr[gN_u\exp(-\bisconn_u^{2})]-\frac{D_{-1,g}'}{u}-D_{0,g}'\Big)\frac{du}{u}\\
    &-\int_{1}^{+\infty}
    \Big(\Phi\Tr[ gN_u\exp(-\bisconn_u^2)]-
    \chern'_g(H^{\bullet}(X,\xi{|_X}),h^{H^{\bullet}(X,\xi{|_X})})\Big)\frac{du}{u}\\
    &+D_{-1,g}'+\Gamma'(1)\Big(D_{0,g}'-\chern'_g(H^{\bullet}(X,\xi{|_X}),h^{H^{\bullet}(X,\xi{|_X})})\Big).
\end{split}
\end{equation}

Using Theorem \ref{double transgression} and (\ref{analytic torsion expression-g}), we obtain \cite[Theorem 2.12]{Ma2000} :
\begin{theorem}\label{ah}
    The form $T_g(\omega,h^\xi)$ lie in $P^{S}$. Moreover,
    \begin{equation}\label{double transgression global}
        \frac{\overline{\partial}\partial}{2i\pi}T_g(\omega,h^{\xi})=\chern_g(H^{\bullet}(X,\xi|_{X}),h^{H^{\bullet}(X,\xi{|_X})})-\int_{X_g}\td_g(TX,h^{TX})\ch_g(\xi,h^{\xi}).
    \end{equation}
\end{theorem}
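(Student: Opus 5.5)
The statement is Theorem \ref{ah}, i.e. \cite[Theorem 2.12]{Ma2000}. I will derive it directly from the explicit formula \eqref{analytic torsion expression-g}, Theorem \ref{double transgression} and the asymptotics of Theorem \ref{thm asymptotic-g}.

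\emph{Membership in $P^S$.} By Theorem \ref{double transgression}, $\Phi\Tr[gN_u\exp(-\bisconn_u^2)]\in P^S$ for every $u>0$. The coefficients $D_{-1,g}'$, $D_{0,g}'$ are in $P^S$ by Theorem \ref{thm asymptotic-g}, and $\ch_g'(H^{\bullet}(X,\xi|_X),h^{H^{\bullet}(X,\xi|_X)})$ is a sum of $(p,p)$ forms. The integrands in \eqref{analytic torsion expression-g} are $\mathcal{O}(1)$ near $u=0$ and $\mathcal{O}(u^{-1/2})$ near $u=+\infty$. These bounds hold locally uniformly on $S$, together with all derivatives in the base variables, which is how the asymptotic expansions of \cite{Ma2000} are proved. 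Hence the integrals converge in the $\cinfty$ topology, and $T_g(\omega,h^\xi)$ is a smooth element of $P^S$.

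\emph{Computing $\frac{\dba\partial}{2i\pi}T_g$.} Since the convergence is uniform with derivatives, I will apply $\frac{\dba\partial}{2i\pi}$ under the integral signs in \eqref{analytic torsion expression-g}. The relevant identities are:
\begin{itemize}
\item Theorem \ref{double transgression} gives $\frac{\dba\partial}{2i\pi}\Phi\Tr[gN_u\exp(-\bisconn_u^2)]=-u\frac{\partial}{\partial u}\Phi\Tr[g\exp(-\bisconn_u^2)]$.
\item By \eqref{DDprime}, $\frac{\dba\partial}{2i\pi}D_{j,g}'=-jA_{j,g}$ for $j\geq 0$, so $\frac{\dba\partial}{2i\pi}D_{0,g}'=0$.
\item Matching the $u^{-1}$ terms of the two expansions in \eqref{asymptotic-gn} through the transgression identity (the first expansion has no $u^{-1}$ term) gives $\frac{\dba\partial}{2i\pi}D_{-1,g}'=0$.
\item The metrics on $H^{k}(X,\xi|_X)$ give closed forms $\ch_g(H^k)$, so $\frac{\dba\partial}{2i\pi}\ch_g'=0$.
\end{itemize}

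Therefore
\[
\frac{\dba\partial}{2i\pi}T_g(\omega,h^\xi)=\int_0^1\frac{\partial}{\partial u}\Phi\Tr[g\exp(-\bisconn_u^2)]\,du+\int_1^{+\infty}\frac{\partial}{\partial u}\Phi\Tr[g\exp(-\bisconn_u^2)]\,du.
\]
By \eqref{asymptotic-gn} and \eqref{asymptotic-infty-g}, this equals
\[
\lim_{u\to+\infty}\Phi\Tr[g\exp(-\bisconn_u^2)]-\lim_{u\to0}\Phi\Tr[g\exp(-\bisconn_u^2)]=\ch_g(H^{\bullet}(X,\xi|_X),h^{H^{\bullet}(X,\xi|_X)})-A_{0,g}.
\]
Since $A_{0,g}=\int_{X_g}\td_g(TX,h^{TX})\ch_g(\xi,h^\xi)$ by \eqref{DDprime}, this is \eqref{double transgression global}.

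\emph{Main obstacle.} The only delicate point is the interchange of $\dba\partial$ with the improper integrals. This requires the asymptotics \eqref{asymptotic-gn} and \eqref{asymptotic-infty-g} to hold in $\cinfty$ norms on compact subsets of $S$, not merely pointwise. I expect this to follow from the standard local index (Getzler rescaling) techniques and the spectral gap argument used in \cite{BGS2}, \cite{Ma2000}, which give estimates uniform in the base variables and their derivatives.
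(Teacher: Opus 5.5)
Your proposal is correct and follows the same route as the paper. The paper obtains the theorem directly by applying the double transgression formula of Theorem \ref{double transgression} to the explicit expression \eqref{analytic torsion expression-g}, with the asymptotics of Theorem \ref{thm asymptotic-g} identifying the boundary values. Your extra checks are exactly the details the paper leaves implicit: that $\frac{\dba\partial}{2i\pi}$ annihilates $D_{-1,g}'$, $D_{0,g}'$ and $\ch_g'$, and that the asymptotics must hold uniformly in $\cinfty$ on the base so that $\dba\partial$ can be commuted with the improper integrals.
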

\section{Equivariant Infinitesimal Analytic Torsion Forms}
In this section, we aim to extend the infinitesimal equivariant analytic torsion of Bismut and Goette \cite{BismutGoette2000} to higher degrees. In Section \ref{az}, we state the family Kirillov formula for equivariant Riemann-Roch theorem. In Section \ref{b0},  following the idea of \cite[\S 2.5]{BismutGoette2000} , we explain how to relate the family Kirillov formula to the equivariant Riemann-Roch theorem for a further fibration. In Section \ref{93}, we define the equivariant infinitesimal analytic torsion forms, and establish Theorem \ref{8x}.

We use the same assumptions and notation as in Section 1. In particular, the compact Lie group $G$ acts fiberwisely on $W$.

\subsection{Family Kirillov formula}\label{az}
  Let $\mathfrak{g}$ be the Lie algebra of $G$.
 For $K\in  \mathfrak{g}$, we denote by $K^X\in \cinfty(W,T_{\mathbb{R}}X)$ given by
\begin{equation}\label{96}
    K^{X}(x)=\frac{\partial}{\partial t}\Big|_{t=0}e^{tK}\cdot x \kongge\kongge \mathrm{for}\kongge x\in W.
\end{equation}
For any $K,K'\in \mathfrak{g}$, we have
\begin{equation}
    [K^{X},K'^{X}]=-[K,K']^{X}.
\end{equation}

For $K\in \mathfrak{g}$, let $\lk$ be the corresponding Lie derivative given by 
\begin{equation}\label{definition lk}
    \lk s=\frac{\partial}{\partial t}\Big|_{t=0}(e^{-tK}\cdot s),
\end{equation}
for smooth sections $s$.

As in  \cite[Definition 7.5]{BGV}, for $K\in \mathfrak{g}$, we set
\begin{equation}\label{moment}
\begin{split}
      &  m^{TX}(K):=\nabla_{K^X}^{TX}-\lk^{TX}\in \cinfty(W,\ENDO(TX)),\\
       &  m^{\xi}(K):=\nabla_{K^X}^{\xi}-\lk^{\xi}\in \cinfty(W,\ENDO(\xi)).
\end{split}
\end{equation}
Then $m^{TX}(\cdot)$ and $m^{\xi}(\cdot)$ are the associated moment maps on $TX$ and $\xi$.

Since the action of $G$ preserves the metrics $h^{TX}$ and $h^{\xi}$ on $TX$ and $\xi$, and the connections $\nabla^{TX}$ and $\nabla^{\xi}$ are Hermitian, the moment maps $m^{TX}(K)$ and $m^{\xi}(K)$ takes value in skew-adjoint sections of $\ENDO(TX)$ and $\ENDO(\xi)$. Moreover since the connections $\nabla^{TX}$ and $\nabla^{\xi}$ are $G$-invariant, from $(\ref{moment})$ (cf. \cite[(7.4)]{BGV}),
\begin{equation}\label{dmoment map}
\begin{split}
   & \nabla^{TX}_{\cdot}m^{TX}(K)+i_{K^{X}}R^{TX}=0,\\
   & \nabla^{\xi}_{\cdot}m^{\xi}(K)+i_{K^{X}}R^{\xi}=0.
\end{split}
\end{equation}

Let $d$ be the exterior differential on $W$, it splits as $d=\partial+\overline{\partial}$. As in \cite[(2.25)]{BismutGoette2000},  we set 
\begin{equation}\label{dk}
    \begin{split}
        & d_{K}=d-2i\pi i_{K^X},\\
        & \partial_{K}=\partial-2i\pi i_{K^{X(0,1)}},\\
        & \overline{\partial}_K=\overline{\partial}-2i\pi i_{K^{X(1,0)}}.
    \end{split}
\end{equation}
Clearly,
\begin{equation}
    \begin{split}
        d_K^{2}=-2i\pi \mathcal{L}_{K}.
    \end{split}
\end{equation}
As the action of $G$ is holomorphic,
\begin{equation}\label{dkpingfangdengyu0}
    \partial_K^2=0,\kongge \overline{\partial}_K^{2}=0.
\end{equation}
By (\ref{dk}) and (\ref{dkpingfangdengyu0}),  we get
\begin{equation}
    \begin{split}
       & [\partial_K,\overline{\partial}_K]=-2i\pi \mathcal{L}_{K},\\
       [\mathcal{L}_{K}&,\partial_K]=0,\kongge [\mathcal{L}_{K},\overline{\partial}_K]=0.
    \end{split}
\end{equation}
In particular when acting on $G$-invariant forms, the operators $\partial_k$ and $\overline{\partial}_K$ anticommute.

Put
\begin{equation}\label{equivariant curvature}
    R_K^{TX}=R^{TX}-2i\pi m^{TX}(K),\kongge \kongge R_K^{\xi}=R^{\xi}-2i\pi m^{\xi}(K).
\end{equation}
Then $R_K^{TX}$ (resp. $R_K^{\xi}$) is  the equivariant curvature of $TX$ (resp. of $\xi$).

Take $g\in G$, let $Z(g)\subset G$ be the centralizer of $g$ with Lie algebra $\mathfrak{z}(g)$, then
\begin{equation}
    \mathfrak{z}(g)=\{K\in\mathfrak{g}: \text{Ad}_{g} K=K\}.
\end{equation}

In the sequel, we fix $g\in G$ and we always take $K\in \mathfrak{z}(g)$. Put
\begin{equation}
    W_K=\{x\in W: K^X(x)=0\}.
\end{equation}
Then $W_K$ is a totally geodesic submanifold along each fiber $X$ and $\pi|_{W_K}:W_K\rightarrow S$ is a submersion with compact fiber $X_{K}$. Set
\begin{equation}
    W_{g,K}=W_g\cap W_K.
\end{equation}
If $K_0\in\mathfrak{z}(g)$, for $z\in \rr$ small enough, we have
\begin{equation}
    W_{g,zK_0}=W_{ge^{zK_0}}.
\end{equation}

Observe that $m^{TX}(K)$ in \eqref{moment} preserves the splitting (\ref{splitting of tangent space-g}). We set $m^{TX_g}(K)$, $m^{N_{X_g/X}^{\theta_1}}(K).\dots$ be the restriction of $m^{TX}(K)$ to $TX_g$, $N_{X_g/X}^{\theta_1}\dots.$ We define the corresponding equivariant curvatures $R_K^{TX_g}$, $R_{K}^{N_{X_g/X}^{\theta_1}}\dots.$ as in (\ref{equivariant curvature}).

\begin{definition}
    For $K\in\mathfrak{z}(g)$ with $|K|$ small enough, set
     \begin{equation}\label{genus-gK}
        \begin{split}
            &\td_{g,K}(TX,h^{TX})=\td\Big(\frac{-R_K^{TX_g}}{2i\pi}\Big)\cdot\prod_{j=1}^{q} \Big(\frac{\td}{c_{\mathrm{max}}}\Big)\Big(-\frac{R_K^{N_{X_g/X}^{\theta_j}}}{2i\pi}+i\theta_j\Big),\\
             &\td_{g,K}'(TX,h^{TX})=\frac{\partial}{\partial b}\Big|_{{b=0}}\Big[\td\Big(\frac{-R_K^{TX_g}}{2i\pi}+b\Big)\\ & \qquad\qquad\qquad\qquad\qquad\times\prod_{j=1}^{q} \Big(\frac{\td}{c_{\mathrm{max}}}\Big)\Big(-\frac{R_K^{N_{X_g/X}^{\theta_j}}}{2i\pi}+i\theta_j+b\Big)\Big],\\
              &\ch_{g,K}(\xi,h^{\xi})=\trace\Big[g\exp\Big(\frac{-R_K^{\xi}|_{X_g}}{2i\pi}\Big)\Big].
        \end{split}
    \end{equation}\end{definition}

    The forms in (\ref{genus-gK})  lie in $P^{W_g}$. Moreover by \cite[Theorem 7.7]{BGV},
    \begin{align}
       &d_K \td_{g,K}(TX,h^{TX})=0, &&d_K\ch_{g,K}(\xi,h^{\xi})=0.
    \end{align}

Recall that $\bisconn_u$ is the rescaled Bismut superconnection in (\ref{Bismut superconnection}). Set
 \begin{equation}\label{bku}
        \begin{split}
            &\bisconn_{K,u}''=\bisconn_u''+\frac{c(K^{X(1,0)})}{2\sqrt{2u}},\\
              &\bisconn_{K,u}'=\bisconn_u'+\frac{c(K^{X(0,1)})}{2\sqrt{2u}},\\
              &\bisconn_{K,u}=\bisconn_{K,u}''+\bisconn_{K,u}'.
        \end{split}
    \end{equation}

Then we state the Kirillov formula for the equivariant Riemann-Roch-Grothendieck theorem.
\begin{theorem}\label{thm gK asym bku}
    For any $K\in\mathfrak{z}(g)\otimes_{\mathbb{R}}\mathbb{C}$,  for $|K|$ sufficiently small, for $u>0$, the form
\begin{equation}\label{phi gK Bku Ps}
    \Phi\Tr[g\exp(-\lk-\bisconn_{K,u}^2)]\in P^{S}.
\end{equation}
     is closed, its cohomology class  does not depend on $u$ and
     \begin{equation}\label{Kirillov formula-gK}
     \begin{split}
         &\lim_{u\rightarrow 0}\Phi\trace_{\mathrm{s}}[g\exp(-\lk-\bisconn_{K,u}^2)]=\int_{X_g}\td_{g,K}(TX,h^{TX})\ch_{g,K}(\xi,h^{\xi}),\\
          &\lim_{u\rightarrow +\infty}\Phi\Tr[g\exp(-\lk-\bisconn_{K,u}^2)]=\ch_{ge^K}(H^{\bullet}(X,\xi|_{X}),h^{H^{\bullet}(X,\xi|_{X})}).
      \end{split}
    \end{equation}
\end{theorem}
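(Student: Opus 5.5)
We prove Theorem \ref{thm gK asym bku} in three steps: closedness with a $u$-independent class, the limit $u\to0$ by local index theory, and the limit $u\to+\infty$ by projecting onto harmonic forms. First we extend the formalism of Theorem \ref{identities-g} to $\bisconn_{K,u}$. The key identities to check are
\begin{equation*}
(\bisconn''_{K,u})^2=-\tfrac{1}{2}\cdot(\text{something proportional to }\lk\text{ paired with }\dbar\text{ part}),\qquad [\bisconn''_{K,u},\bisconn'_{K,u}]=\bisconn_{K,u}^2 .
\end{equation*}
More precisely, since $K^X$ is holomorphic and $G$ acts holomorphically, we expect $(\bisconn''_{K,u})^2=0=(\bisconn'_{K,u})^2$ modulo terms commuting with everything. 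The precise statement is that $\bisconn_{K,u}+\dots$ squares compatibly with $\partial_K,\overline\partial_K$ on the base, in the sense that $\lk+\bisconn_{K,u}^2$ commutes with $\bisconn''_{K,u}$ and $\bisconn'_{K,u}$. These identities follow from \eqref{bku} together with $[\dbar,c(K^{X(1,0)})]$ producing $\lk$ and moment terms, as in \cite[\S 2]{BismutGoette2000}. Since $g$ and $\lk$ commute with $\bisconn_{K,u}$, the standard superconnection argument gives $d\,\Tr[g\exp(-\lk-\bisconn_{K,u}^2)]=\Tr[g[\bisconn_{K,u},\exp(\cdots)]]=0$. The same argument gives
\begin{equation*}
\frac{\partial}{\partial u}\Tr[g\exp(-\lk-\bisconn_{K,u}^2)]=-d\,\Tr\Big[g\frac{\partial\bisconn_{K,u}}{\partial u}\exp(-\lk-\bisconn_{K,u}^2)\Big].
\end{equation*}
This transgression yields both closedness and the independence of the class in $u$. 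Type $(p,p)$ follows from the grading argument of \cite{BGS2}, using $N_u$ and the fact that $\bisconn''$ and $\bisconn'$ shift degree by opposite amounts. Alternatively, it follows from the $\overline\partial\partial$-transgression, which we establish in the same way using $N_u-i\langle\mu,K\rangle/u$ (Assumption (*)). For complex $K$ everything is polynomial/holomorphic in $K$, so the identities extend.

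For the $u\to0$ limit we use Bismut's local index technique in the equivariant Kirillov form, following \cite[Theorem 8.2]{BGV} and \cite[\S 7]{BismutGoette2000}. The supertrace localizes near $X_g$, where we apply Getzler rescaling with the Clifford variables scaled by $\sqrt u$. Under this rescaling the term $c(K^X)/(2\sqrt{2u})$ combines with $\lk=\nabla_{K^X}-m(K)$ to produce the equivariant curvature $R_K$ in the limit harmonic oscillator. The resulting Mehler formula yields $\td_{g,K}\ch_{g,K}$ integrated over $X_g$; the $\Phi$ normalization absorbs the $2i\pi$ factors.

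The main obstacle is that $\lk$ is a first-order operator, so $\lk+\bisconn_{K,u}^2$ is not a Laplacian-type operator with a uniform lower bound. The heat kernel must therefore be controlled by Duhamel expansion in $K$, which is valid only for $|K|$ small. We would first try to treat $\lk$ together with $g$ via $g\exp(-\lk)=ge^{K}$ acting on sections, reducing to $\Tr[ge^{K}\exp(-\bisconn_{K,u}^2)]$. Here $\bisconn_{K,u}^2$ commutes with $e^{K}$ and is a generalized Laplacian plus nilpotent form terms plus $O(|K|)$ perturbations, so the Bismut–Goette estimates apply.

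For $u\to+\infty$, the term $c(K^X)/\sqrt u$ is a small perturbation. Standard spectral-gap arguments from \cite{BGS2}, \cite{Ma2000} show that $\exp(-\bisconn_{K,u}^2)$ converges to the operator built from $P\nabla^{\mathbb E}P$ on $\ker D^X\cong H^\bullet(X,\xi|_X)$. On this kernel $\lk$ acts by the infinitesimal action, so $g\exp(-\lk)$ acts as $ge^{K}$. The limit is therefore $\ch_{ge^K}(H^\bullet,h^{H^\bullet})$.
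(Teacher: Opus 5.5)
Your closedness and transgression argument is fine: for $K\in\mathfrak z(g)$, both $g$ and $\lk$ commute with $\bisconn_{K,u}$, so $[\bisconn_{K,u},\lk+\bisconn_{K,u}^2]=0$. Your large-time strategy is also the paper's (Section \ref{ej}), but it needs two corrections.

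First, the $(p,p)$ grading argument requires the exact identities $\bisconn_{K,u}''^{2}=\bisconn_{K,u}'^{2}=0$ of Proposition \ref{a2}. These follow from $c(K^{X(1,0)})^2=0$ and $[\bisconn''_u,c(K^{X(1,0)})]=0$, which holds by holomorphy. Your displayed identity involving $\lk$ is wrong, and vanishing ``modulo terms commuting with everything'' would not preserve the grading.

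Second, at large $u$ the term $c(K^X)/(2\sqrt{2u})$ is small in $\bisconn_{K,u}$ but not in $\bisconn_{K,u}^2$. The square contains the $O(1)$ first-order operator $\frac{1}{2\sqrt2}[D^X,c(K^X)]$. The paper controls it using $P[\sqrt{u}\,D^X,c(K^X)]P=0$ and the norms $|s|_{u,1}^2=\|Ps\|_0^2+u\|P^{\perp}s\|_0^2+u\|\nabla P^{\perp}s\|_0^2$, in which this term is $O(u^{-1/2})$ from $|\cdot|_{u,1}$ to $|\cdot|_{u,-1}$. It then compares with $\exp(-\lk-\bisconn_u^2)=e^K\exp(-\bisconn_u^2)$ and applies Ma's large-time result for $ge^K$.

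The genuine gap is the limit $u\to0$. A Duhamel expansion in $K$ cannot give it. The difference $\lk+\bisconn_{K,u}^2-\bisconn_u^2$ contains the scalar $-|K^X|^2/(8u)$, coming from $c(K^X)^2=-|K^X|^2$. This term is not $O(|K|)$ uniformly in $u$ and does not vanish on $X_g\setminus X_K$. The series is therefore only bounded by something like $e^{C|K|^2/u}$, which gives nothing uniform as $u\to0$ at fixed $K\neq0$. In the non-family case the supertrace is the constant $\mathrm{ind}(ge^K)$, analytic in $K$, so identifying its expansion in $K$ would suffice. Here the forms depend on $u$, so uniform-in-$u$ bounds are indispensable.

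Writing $g\exp(-\lk)=ge^K$ does not help either:
\begin{itemize}
\item it leaves the unbounded-below potential inside $\bisconn_{K,u}^2$;
\item it is not a group action on sections for complex $K$, which the statement allows;
\item it removes from the operator exactly the $-m(K)$ terms that, together with the twist $-K'^X/(2u)$, produce $R_K$ in the Getzler limit.
\end{itemize}

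What is missing is the Bismut--Goette machinery, which the paper imports from Liu--Ma and redoes in Sections \ref{ed}--\ref{ee}. Its ingredients are:
\begin{itemize}
\item the bound $\|\widetilde I_u(u(\lk+\bisconn_{K,u}^2))\|_{(1)}\le ce^{-C/u}$ together with finite propagation speed, to localize near $X_g$;
\item trivialization by ${}^1\nabla^{\mathbb E,u}$, so that the rescaled operator tends to the harmonic oscillator $L^{3,0}_{x,K}$ built on $\jmath^*R^{TX}-m^{TX}(K)$;
\item the splitting $\widetilde F_u=\sum_n\widetilde F_{u,n}$, where each piece sees the operator only on a ball of radius $O(n)$ on which its non-self-adjoint part is $O(|nK|)$;
\item weighted-norm resolvent estimates on $U_{\eta n+d}$ for $|K|\le c_\eta$;
\item a choice of small $\eta$ so that the factor $e^{-C'n^2}$ in the bound for $\widetilde F_{u,n}$ on $V_{\eta(n+|Z|)+d}$, together with Gaussian decay in $|g^{-1}Z-Z|$, absorbs the growth $e^{C''(\eta(n+|Z|)+d)^2}$.
\end{itemize}
That last step is where the smallness of $|K|$ actually enters.
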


The first identity in (\ref{Kirillov formula-gK}) is a special case of \cite[Theorem 2.1]{LiuMa2022}, and the second identity is a special case of \cite[(2.28)]{LiuMa2022}. Indeed, if we apply $\mathcal{E}=\Lambda(T^{*(0,1)}X)\otimes \xi$ to their formulas, the Bismut superconnection $\bisconn_{K,t}$ in their notation coincides with $\bisconn_{K,2t}$ of ours (cf. \cite[p.96]{BGS2}). The notation $\mathcal{E}/S$ in \cite{LiuMa2022} is formally $\det(T^{(1,0)}X)^{1/2}\otimes \xi$ here, thus we get $\td_{g,K}(TX,h^{TX})$ in our formula instead of their $\widehat{A}_{g,K}(TX,\nabla^{TX})$.

 For $g=e$, (\ref{Kirillov formula-gK}) is obtained in \cite{y.wang}.

\subsection{Moment maps and K\"ahler fibrations} \label{b0}\noindent{}

\textbf{Assumption (*)}: From now on, we assume that there is  a moment map $\mu$ associated to the $2$-form $\omega$ and the $G$-action on $W$, that is,
\begin{itemize}
    \item For $g\in G, x\in W$,
    \begin{equation}\label{9c}
        \mu(gx)=\text{Ad}_{g}^{*} \mu(x).
    \end{equation}
    \item  For $K\in\mathfrak{g}$, then
    \begin{equation}\label{dmuk}
        d\product{\mu,K}+i_{K^{X}}\omega=0.
    \end{equation}
\end{itemize}
Then $\mu$ restricts to the moment map for the action of $G$ on each fiber $(X_s,\omega|_{X_s}), s\in S$, and $\mu$ is constant in the horizontal direction $T^{H}W$.

Now following the idea of \cite[\S 2.5]{BismutGoette2000}, we will show how to extend our fibration by a fibration with structure group 
G, so that $\bisconn_{K,u}$ is  interpreted as the Bismut superconnection on this extended fibration. In this process, \textbf{Assumption (*)} plays a role analogues to that of Kähler fibrations.

Assume that $G$ is connected, and denote by $G_{\mathbb{C}}$ its unique complexification. According to \cite[Theorem 4.4]{GuS}, the holomorphic action of $G$ on $W$ extends uniquely to a holomorphic action of $G_{\mathbb{C}}$, and this action lifts to $\xi$.

Let $B$ be a complex manifold, and let $p: P\to B$ be a holomorphic principal bundle over $B$ with structure group $G_{\mathbb{C}}$. Since $G_{\mathbb{C}}/G$ is contractible, the $G_{\mathbb{C}}$-principal bundle $P$ can be reduced to a $G$-principal bundle $q: Q\to B$.

Let $J^{TP}$ denote the complex structure of $T_{\mathbb{R}}P$, for $x\in Q$, define
\begin{align}\label{9a}
T^{H}_{\mathbb{R},x}Q=\left\{ U\in T_{\mathbb{R},x}Q: J^{TP}U\in T_{\mathbb{R},x}Q \right\}.
\end{align}
The splitting $T_{\mathbb{R}}Q=T_{\mathbb{R}}^{H}Q\oplus \mathfrak{g}$ induce the unique connection form $\theta$ on $Q$ which lifts to a holomorphic connection on $P$. Let $\Omega\in \Omega^{2}(Q,\mathfrak{g})^{G}$ be its curvature.

Set 
\begin{align}\label{9b}
\widetilde{\pi}: \widetilde{W}=P\times_{G_{\mathbb{C}}}W=Q\times_{G}W\to B\times S.
\end{align}
Then 
 $\widetilde{\pi}$ is a holomorphic submersion with compact fiber $X$.

The splitting $T_{\mathbb{R}}Q=T_{\mathbb{R}}^{H}Q\oplus \mathfrak{g}$ induce a corresponding splitting $T_{\mathbb{R}}\widetilde{W}=T_{\mathbb{R}}^{H}Q\oplus T_{\mathbb{R}}W$. We extend the form $\omega\in \Omega^{2}(W)$ to a differential form on $\widetilde{W}$ by requiring that $i_{U}\omega=0$ for $U\in T_{\mathbb{R}}^{H}Q$.

Note that the moment map $\mu\in \cinfty(W, \mathfrak{g}^{*})$ is $G$-invariant (cf. \eqref{9c}), thus $\left\langle \mu,  \Omega \right\rangle$ is a well-defined differential form on $\widetilde{W}$.

Set   \begin{align}\label{9d}
\widetilde{\omega}=\omega+\left\langle \mu, \Omega \right\rangle\in \Omega^{2}(\widetilde{W}).
\end{align}

We now state the following analogue of \cite[Proposition 2.11]{BismutGoette2000}.

\begin{proposition}\label{9e}
The differential form $\widetilde{\omega}$ is a real closed $(1,1)$-form on $\widetilde{W}$ which restricts to $\omega^{TX}$ on each fiber of $\widetilde{\pi}$. 
\end{proposition}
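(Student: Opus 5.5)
Three things need checking for $\widetilde{\omega}=\omega+\langle\mu,\Omega\rangle$: it is real, it restricts to $\omega^{TX}$ on the fibers, and it is closed and of type $(1,1)$. Reality is clear, since $\omega$ is real, $\mu$ is real-valued in $\mathfrak{g}^*$ and $\Omega$ is a real $\mathfrak{g}$-valued form. For the restriction, a fiber of $\widetilde{\pi}$ is $\{q\}\times X_s$ inside $Q\times_G W$. The form $\Omega$ is horizontal, i.e. it vanishes on vertical vectors of $Q$ and, after pulling back to $\widetilde{W}$, on $T_{\mathbb{R}}W$. Hence $\langle\mu,\Omega\rangle$ vanishes on $TX$, and $\widetilde{\omega}|_{TX}=\omega|_{TX}=\omega^{TX}$.

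For closedness, I work upstairs on $Q\times W$ and identify forms on $\widetilde{W}$ with $G$-basic forms there, in the spirit of the Cartan/Chern--Weil model. The form $\omega$ extended by $i_U\omega=0$ for $U\in T^H_{\mathbb{R}}Q$ corresponds to the basic form obtained from $\omega$ by substituting for a vector of $T_{\mathbb{R}}W$ its component relative to the connection. Concretely, if $\theta=\sum\theta^a K_a$, this is
\begin{equation*}
\omega^\theta=\omega+\sum_a\theta^a\wedge i_{K_a^X}\omega+\tfrac12\sum_{a,b}\theta^a\theta^b\,\omega(K_a^X,K_b^X),
\end{equation*}
with signs to be fixed by the convention for $K^X$. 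Using \eqref{dmuk} to replace $i_{K_a^X}\omega$ by $-d\langle\mu,K_a\rangle$ and the equivariance \eqref{9c}, the standard minimal-coupling computation gives
\begin{equation*}
\omega^\theta+\langle\mu,\Omega\rangle=\omega-d\bigl(\langle\mu,\theta\rangle\bigr),
\end{equation*}
where $\Omega=d\theta+\tfrac12[\theta,\theta]$. This is the computation behind Sternberg--Weinstein coupling forms, and it directly implies $d\widetilde{\omega}=d\omega=0$. An alternative route is the equivariant Chern--Weil map: the $G$-equivariantly closed form $\omega+\langle\mu,\cdot\rangle$, which is $d_K$-closed by \eqref{dmuk}, is sent by the Cartan map to a closed form on $Q\times_G W$, and that form is exactly $\widetilde{\omega}$. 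Relative to the paper's normalization $d_K=d-2i\pi i_{K^X}$, factors of $2i\pi$ may appear, and I will adjust them to the conventions of \eqref{9d}.

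For the type $(1,1)$, I use the complex structure on $\widetilde{W}=P\times_{G_{\mathbb{C}}}W$. The essential point is that $\theta$ extends to a holomorphic connection on $P$, so its curvature has no $(0,2)$ part; since $\Omega$ is real, it is of type $(1,1)$ on $Q$, and hence also in the horizontal directions on $\widetilde{W}$. At a point of $\widetilde{W}$, the complex tangent space splits $J$-invariantly as $T^HQ\oplus TW$, the horizontal part being $J$-stable by \eqref{9a}. I must check that this splitting is compatible with the holomorphic structure of $P\times_{G_{\mathbb{C}}}W$. This reduces to the fact that the $G_{\mathbb{C}}$-action on $W$ is holomorphic, so the infinitesimal action of $\mathfrak{g}_{\mathbb{C}}=\mathfrak{g}\oplus i\mathfrak{g}$ satisfies $(iK)^X=J K^X$. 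With respect to this splitting, $\omega$ lives on the $TW$ factor and is $(1,1)$ there, while $\langle\mu,\Omega\rangle$ lives on the horizontal factor and is $(1,1)$. Hence $\widetilde{\omega}$ is of type $(1,1)$.

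I expect the main obstacle to be this type verification. The bookkeeping that the horizontal/vertical splitting on $Q\times_G W$ agrees with the complex structure of $P\times_{G_{\mathbb{C}}}W$ requires checking that the horizontal distribution of \eqref{9a}, transported to $\widetilde{W}$, is a complex subbundle. I plan to follow \cite[Proposition 2.11]{BismutGoette2000} closely for this step.
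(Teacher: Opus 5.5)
Your proposal is correct and follows the same route as the paper's proof. Closedness comes from \eqref{dmuk}, and your minimal-coupling identity is just the explicit version of that step; type $(1,1)$ comes from the holomorphy of $P\to B$ and of the lifted connection; and the restriction to the fibers comes from the horizontality of $\Omega$. One small point: with the paper's conventions ($K^X=\frac{\partial}{\partial t}e^{tK}x$, $\Omega=d\theta+\frac12[\theta,\theta]$) the identity reads $\omega^\theta+\langle\mu,\Omega\rangle=\omega+d\langle\mu,\theta\rangle$ rather than with a minus sign, which gives $d\widetilde{\omega}=0$ just the same.
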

\begin{proof}
By using \eqref{dmuk} in \textbf{Assumption (*)}, we get $d\widetilde{\omega}=0$. Since $\omega$ is a $(1,1)$-form and $p: P\to B$ is holomorphic, we see that $\widetilde{\omega}$ is of complex type $(1,1)$. The curvature $\Omega\in \Omega^{2}(Q,\mathfrak{g})^{G}$ is horizontal, thus $\widetilde{\omega}|_{T_{\mathbb{R}}X}=\omega|_{T_{\mathbb{R}}X}=\omega^{TX}$. 
\end{proof}

Note that the decomposition $T\widetilde{W}=T^HQ\oplus TW=T^{H}Q\oplus T^H W\oplus TX$ is orthogonal with respect to $\widetilde{\omega}$, thus the triple $(\widetilde{\pi}, h^{TX}, T^{H}Q\oplus T^{H}W)$ is a K\"ahler fibration in the sense of Section \ref{bl}. 

We denote by $\widetilde{\omega}^{H}$ the horizontal part of $\widetilde{\omega}$ as in \eqref{95}, so we get 
\begin{align}\label{9f}
\widetilde{\omega}^H=\omega^{H}+\left\langle \mu, \Omega \right\rangle.
\end{align}

Observe that $\widetilde{\xi}=P\times_{G_{\mathbb{C}}}\xi$ forms a holomorphic vector bundle over $\widetilde{W}$, whose Hermitian metric is induced by $h^{\xi}$. 
Let $\widetilde{\mathbb{E}}$ be the analogue of $\mathbb{E}$ associated with $(\widetilde{W},\widetilde{\xi})$ (cf. Definition \ref{9g}).  Regarding $\widetilde{\mathbb{E}}$ as an infinite dimensional vector bundle over $B\times S$,  we have 
\begin{align}\label{9h}
\widetilde{\mathbb{E}}=P\times_{G_{\mathbb{C}}}\mathbb{E}=Q\times_{G}\mathbb{E}.
\end{align}
If $U\in T_{\mathbb{R}}B$, we denote by $U^{H}\in T_{\mathbb{R}}^{H}Q$ the horizontal lift of $U$. For $U\in T_{\mathbb{R}}B$, we define $\nabla_{U}^{\widetilde{\mathbb{E}}}$ in a similar way as in \eqref{9i}.

Put 
\begin{align}\label{ax}
\mathscr{E}=P\times_{G_{\mathbb{C}}}\cinfty(S,\,{\mathbb{E}})=Q\times_G \cinfty(S,\, {\mathbb{E}}).
\end{align}
Then $\mathscr{E}$ form an infinite dimensional vector bundle over $B$. Let $\nabla^{\mathscr{E}}$ be the connection on $\mathscr{E}$ induced by the connection form $\theta$ on $Q$. It is clear that for $U\in T_{\mathbb{R}}B$, $\nabla_U^{\mathscr{E}}$ and $\nabla_U^{\widetilde{\mathbb{E}}}$ coincide as operators on $\cinfty(B,\mathscr{E})\simeq \cinfty(B\times S,\, \widetilde{\mathbb{E}})$.

Let $\left\{ K_i \right\}$ be a basis of $\mathfrak{g}$. We write $\Omega\in \Omega^2(Q,\mathfrak{g})$ in the form 
\begin{align}\label{av}
\Omega=\sum \Omega_i  K_i,
\end{align}
where $\Omega_i\in \Omega^2(Q)$. Then the curvature of $\nabla^{\mathscr{E}}$ is
\begin{align}\label{aw}
\big(\nabla^{\mathscr{E}}\big)^2=\mathcal{L}_{-\Omega}=\sum \Omega_i \mathcal{L}_{-K_i}.
\end{align}

Let $\widetilde{T}$ be the $(1,1)$-form on $\widetilde{W}$ with values in $T_{\mathbb{R}}X$, which corresponds to the form $T$ on $W$ in \eqref{S and T}. Put $\Omega^X=\sum \Omega_i K_i^X\in \Omega^2(Q,T_{\mathbb{R}}X)$.

By \eqref{9n}, we have the proposition as follows.

\begin{proposition}\label{9q}
  The following identity holds, 
   \begin{align}\label{ad}
   \widetilde{T}(U^{H},V^{H})=\begin{cases}
   \Omega^{X}(U^{H},V^{H})  &\text{if}\quad U,V\in T_{\mathbb{R}}B,\\ T(U^{H},V^H)  &\text{if}\quad U,V\in T_{\mathbb{R}}S,\\0  &\text{if}\quad U\in T_{\mathbb{R}}B, V\in T_{\mathbb{R}}S.
   \end{cases}
   \end{align}
\end{proposition}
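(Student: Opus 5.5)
The plan is to compute everything from \eqref{9n}, applied to the K\"ahler fibration $(\widetilde{\pi}, h^{TX}, T^HQ\oplus T^HW)$ given by Proposition \ref{9e}. For $U,V\in T_{\mathbb{R}}(B\times S)$ this gives
\begin{equation*}
\widetilde{T}(U^H,V^H)=-P^{TX}[U^H,V^H].
\end{equation*}
The problem therefore reduces to identifying the horizontal lifts in $\widetilde{W}=Q\times_G W$ and computing their Lie brackets modulo horizontal vectors. I would work upstairs on $Q\times W$, where all computations are explicit, and then push down along the quotient map $Q\times W\to Q\times_G W$. This map intertwines brackets of $G$-invariant vector fields with brackets of the descended fields.

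First I describe the lifts. For $U\in T_{\mathbb{R}}B$, let $U^{H,Q}$ be its $\theta$-horizontal lift to $Q$; it is $G$-invariant. The vector field $(U^{H,Q},0)$ on $Q\times W$ is then $G$-invariant and descends to the horizontal lift $U^H$ in $T^HQ\subset T\widetilde{W}$. For $V\in T_{\mathbb{R}}S$, the lift $V^{H}\in T^HW$ is $G$-invariant, because $\omega$ is $G$-invariant and $G$ acts fiberwise, hence trivially on $S$. So $(0,V^H)$ descends to the horizontal lift of $V$ in $\widetilde{W}$; it is horizontal for $\widetilde{\omega}$ since $\langle\mu,\Omega\rangle$ only involves $T^HQ$ directions.

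With these lifts the three cases go as follows.

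\textbf{Mixed case.} If $U\in T_{\mathbb{R}}B$ and $V\in T_{\mathbb{R}}S$, then $(U^{H,Q},0)$ and $(0,V^H)$ live on different factors of $Q\times W$, so their bracket vanishes. Hence the bracket of the descended fields vanishes, and $\widetilde{T}(U^H,V^H)=0$.

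\textbf{Case $U,V\in T_{\mathbb{R}}S$.} The bracket $[(0,U^H),(0,V^H)]=(0,[U^H,V^H])$ descends to $[U^H,V^H]$ computed in $W$. Its $TX$-component is unchanged, so $\widetilde{T}=T$ by \eqref{9n}.

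\textbf{Case $U,V\in T_{\mathbb{R}}B$.} By the structure equation of the connection $\theta$, the bracket on $Q$ is
\begin{equation*}
[U^{H,Q},V^{H,Q}]=[U,V]^{H,Q}-\Omega(U^{H},V^{H})^{*},
\end{equation*}
where $K^{*}$ denotes the fundamental vector field on $Q$ of $K\in\mathfrak{g}$. On $Q\times_G W$ the vector $(K^{*},0)$ is equivalent to $(0,K^X)$ up to sign, because the diagonal action has $(K^{*},-K^X)$ or $(K^{*},K^X)$ tangent to the $G$-orbits, depending on the convention for the right action on $Q$. Therefore the $TX$-component of $[U^H,V^H]$ is $\mp\Omega^X(U^H,V^H)$, and \eqref{9n} gives $\widetilde{T}(U^H,V^H)=\pm\Omega^X(U^H,V^H)$.

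The main obstacle is this sign bookkeeping in the last case. I would fix it using the conventions already in the paper: \eqref{96} and the relation $[K^X,K'^X]=-[K,K']^X$ for the action on $W$, the principal bundle convention for $Q$, and the normalization of the curvature that yields \eqref{aw}, $\big(\nabla^{\mathscr{E}}\big)^2=\mathcal{L}_{-\Omega}$. A consistent choice makes the vertical part of $[U^{H,Q},V^{H,Q}]$ act on $W$ as $-\Omega^X$, so the final sign is $+$ and $\widetilde{T}(U^H,V^H)=\Omega^X(U^H,V^H)$, as in \eqref{ad}. As a cross-check, I would verify this sign by compatibility with \eqref{aw}: the $\mathscr{E}$-curvature, computed via the descended lifts as $\nabla^{\widetilde{\mathbb{E}}}_{[U^H,V^H]}$, must reproduce $\mathcal{L}_{-\Omega}$.
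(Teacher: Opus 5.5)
Your proposal is correct and takes the same route as the paper: the paper simply invokes \eqref{9n} for the K\"ahler fibration $(\widetilde{\pi},h^{TX},T^HQ\oplus T^HW)$, and your bracket computation for the descended lifts on $Q\times W$ is exactly the verification it omits. The sign in the $T_{\mathbb{R}}B$ case is forced rather than a matter of choice: with $\theta(K^{*})=K$, $\Omega=d\theta+\frac{1}{2}[\theta,\theta]$ and $[qg,w]=[q,gw]$ in $Q\times_G W$, one gets $(K^{*},0)\equiv(0,K^X)$ modulo orbit directions, hence $P^{TX}[U^H,V^H]=-\Omega^X(U^H,V^H)$, so \eqref{9n} gives the $+$ sign, consistently with \eqref{aw}.
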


Recall that the Bismut superconnection $\bisconn_{u}, u>0$ (see \eqref{Bismut superconnection}) commutes with the action of $G$. We continue to denote by $\bisconn_{u}$ the induced operator on $\widetilde{W}$ via \eqref{9h}.

For $u>0$, let $\widetilde{\bisconn}_u$ be the Bismut superconnection on $\widetilde{W}$. By Proposition \ref{9q} and \eqref{Bismut superconnection},  we get 
\begin{align}\label{9p}
\widetilde{\bisconn}_u=\bisconn_{u}+\nabla^{\mathscr{E}}-\frac{c(\Omega^{X})}{2\sqrt{2u}}.
\end{align}

In the same manner as in \eqref{aw}, we define the operator $\mathcal{L}_{-\Omega}$, 

\begin{proposition}\label{9r}
We have the following identity, 
\begin{align}\label{9s}
\widetilde{\bisconn}_u^{2}=\mathcal{L}_{-\Omega}+\left( \bisconn_{u}-\frac{c(\Omega^{X})}{2\sqrt{2u}} \right)^2.
\end{align}
\end{proposition}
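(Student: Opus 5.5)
We prove \eqref{9s} by squaring \eqref{9p} directly and sorting the terms by their horizontal degree along $B$. Write $\widetilde{\bisconn}_u=A+\nabla^{\mathscr{E}}$ with $A=\bisconn_{u}-\frac{c(\Omega^{X})}{2\sqrt{2u}}$. Here $A$ is a superconnection-type operator involving only derivatives in $S$ and along the fibers $X$, while $\nabla^{\mathscr{E}}$ differentiates in the $B$-directions. Then
\begin{align*}
\widetilde{\bisconn}_u^2=A^2+\big(\nabla^{\mathscr{E}}\big)^2+[\nabla^{\mathscr{E}},A],
\end{align*}
with the supercommutator. By \eqref{aw}, $(\nabla^{\mathscr{E}})^2=\mathcal{L}_{-\Omega}$. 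The claim therefore reduces to showing that $[\nabla^{\mathscr{E}},A]=0$.

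We treat the two summands of $A$ separately.

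First, consider $[\nabla^{\mathscr{E}},\bisconn_u]$. The operator $\bisconn_u$ on $W$ commutes with the $G$-action, so it descends via \eqref{9h} to a well-defined operator on $\mathscr{E}=Q\times_G\cinfty(S,\mathbb{E})$. Locally over $B$ we choose a local section of $Q$, which gives a trivialization $\mathscr{E}\simeq B\times\cinfty(S,\mathbb{E})$. In this trivialization $\nabla^{\mathscr{E}}=d^B+\sum_i\theta_i\,\mathcal{L}_{K_i}$ up to sign conventions, where $\theta=\sum\theta_i K_i$ is the pulled-back connection form. The operator $\bisconn_u$ is then constant in $B$, so it supercommutes with $d^B$. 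It also supercommutes with each $\theta_i\mathcal{L}_{K_i}$: the $\theta_i$ are odd forms on $B$, $\bisconn_u$ contains odd forms on $S$, and $[\mathcal{L}_{K_i},\bisconn_u]=0$ by $G$-invariance. Since the signs work out in the graded tensor product, we get $[\nabla^{\mathscr{E}},\bisconn_u]=0$.

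Second, consider $[\nabla^{\mathscr{E}},c(\Omega^X)]$. We have $c(\Omega^X)=\sum\Omega_i c(K_i^X)$, where $\Omega_i$ is an even form on $B$ and $c(K_i^X)$ is an odd endomorphism. The section $\sum \Omega_i\otimes c(K_i^X)$ is invariant, because $\Omega$ is $\mathrm{Ad}$-equivariant and $K\mapsto c(K^X)$ is $G$-equivariant. Its covariant derivative is therefore $\sum (D\Omega)_i\, c(K_i^X)$, with $D$ the covariant exterior derivative on the adjoint bundle. This vanishes by the Bianchi identity $D\Omega=0$. Hence $[\nabla^{\mathscr{E}},c(\Omega^X)]=0$.

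Combining the two cases gives $[\nabla^{\mathscr{E}},A]=0$, and hence \eqref{9s}.

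The main obstacle is this cross-term computation. It requires tracking signs of the supercommutators in the graded tensor product $\Lambda(T^*B)\widehat{\otimes}\Lambda(T^*S)\widehat{\otimes}\ENDO$. It also requires checking that the $T^HQ$-lift used in \eqref{9p} is exactly what the connection $\theta$ produces, consistently with Proposition \ref{9q}.
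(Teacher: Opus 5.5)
Your proof is correct and follows essentially the same route as the paper. You expand $\widetilde{\bisconn}_u^2$ using \eqref{9p}, identify $(\nabla^{\mathscr{E}})^2=\mathcal{L}_{-\Omega}$ via \eqref{aw}, and show that the cross terms $[\nabla^{\mathscr{E}},\bisconn_u]$ and $[\nabla^{\mathscr{E}},c(\Omega^X)]$ vanish by $G$-invariance and by the Bianchi identity $D\Omega=0$ respectively; your local-trivialization computation only spells out details that the paper delegates to the Berline--Getzler--Vergne argument.
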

\begin{proof}
We proceed as in the proof of \cite[Theorem 10.39]{BGV}. By using the fact that $\bisconn_u$ and $\Omega$ are $G$-invariant, and the Bianchi identity on $W$, we see that the commutators  $[\bisconn_{u}, \nabla^{\mathscr{E}}]$ and $[c(\Omega^{X}),\nabla^{\mathscr{E}}]$ vanish. By using \eqref{aw}, we complete the proof of Proposition \ref{9r}.
\end{proof}

From the above, we notice that by taking $g=e$, $K=-\Omega$ in the Kirillov formula  \eqref{Kirillov formula-gK}, it indeed leads to the Riemann-Roch-Grothendieck theorem for the fibration $\widetilde{\pi}: \widetilde{W}\to B\times S$. This correspondence plays a key role throughout this article.

For any $g\in G$, if we replace $G$ by $Z(g)$ in the settings above, we will obtain an equivariant Riemann-Roch-Grothendieck theorem.

\subsection{Equivariant infinitesimal analytic torsion forms}\label{93}
Recall that the number operator $N_u$ is defined in (\ref{number operator-g}).
\begin{definition}
    Set 
    \begin{equation}\label{definition nkt}
        N_{K,u}=N_u-i\frac{\product{\mu,K}}{u}=N_V+i\frac{\omega^H}{u}-i\frac{\product{\mu,K}}{u}.
    \end{equation}
\end{definition}

\begin{proposition}\label{a2}
    The following  identities hold,
    \begin{equation}\label{bku-algebraic identity}
        \begin{split}
        &[\bisconn_{K,u}'',N_{K,u}]=-2u\frac{\partial}{\partial u}\bisconn_{K,u}''\mathrm{\;},\mathrm{\;}[\bisconn_{K,u}',N_{K,u}]=2u\frac{\partial}{\partial u}\bisconn_{K,u}'.\\
        &\bisconn_{K,u}'^{2}=0\kongge,\kongge \bisconn_{K,u}''^{2}=0.\\
        &\bisconn_{K,u}^{2}=[\bisconn_{K,u}'',\bisconn_{K,u}'].
        \end{split}
    \end{equation}
\end{proposition}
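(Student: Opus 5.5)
The plan is to reduce everything to Theorem \ref{identities-g}, which gives the identities for $K=0$, and then track the extra terms. Write
\[
\bisconn_{K,u}''=\bisconn_u''+\frac{c(K^{X(1,0)})}{2\sqrt{2u}},\qquad \bisconn_{K,u}'=\bisconn_u'+\frac{c(K^{X(0,1)})}{2\sqrt{2u}} .
\]
By \eqref{clifford action}, $c(K^{X(1,0)})=\sqrt2\,K^{X(1,0),*}\wedge$ raises the degree in $\Lambda(T^{*(0,1)}X)$ by one, and $c(K^{X(0,1)})=-\sqrt2\, i_{K^{X(0,1)}}$ lowers it by one.

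\textbf{Step 1: the number-operator identities.} Expand the commutators as
\[
[\bisconn_{K,u}'',N_{K,u}]=[\bisconn_u'',N_u]+\Big[\tfrac{c(K^{X(1,0)})}{2\sqrt{2u}},N_V\Big]-\tfrac{i}{u}\Big[\bisconn_u'',\langle\mu,K\rangle\Big],
\]
using that $\omega^H/u$ and $c(K^{X})$ are operators of even, respectively odd, total degree, so the cross terms involving $c(K^X)$ and $\omega^H$ vanish. The pieces are computed as follows.
\begin{itemize}
\item By Theorem \ref{identities-g}, $[\bisconn_u'',N_u]=-2u\,\partial_u\bisconn_u''$.
\item Since $c(K^{X(1,0)})$ raises degree by one, $[c(K^{X(1,0)}),N_V]=-c(K^{X(1,0)})$. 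This gives $-\frac{c(K^{X(1,0)})}{2\sqrt{2u}}$.
\item Only $\sqrt u\,\dbar$ and $\nabla^{\mathbb E''}$ act nontrivially on the function $\langle\mu,K\rangle$. Since $\mu$ is constant along $T^HW$, $\nabla^{\mathbb E''}$ contributes nothing. So $[\sqrt u\,\dbar,\langle\mu,K\rangle]=\sqrt u\,(\dbar\langle\mu,K\rangle)\wedge$.
\end{itemize}

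By \eqref{dmuk}, $\dbar\langle\mu,K\rangle=-(i_{K^X}\omega)^{(0,1)}$. Using \eqref{92}, this equals $-i\,\overline{K^{X(1,0)}}^{*}$ up to the normalizing constant, i.e. it is proportional to $c(K^{X(1,0)})$. The goal is to check that
\[
-\tfrac{i}{u}\sqrt u\,\big(-i c(K^{X(1,0)})/\sqrt2\big)
\]
equals $+\frac{c(K^{X(1,0)})}{\sqrt{2u}}$ with the correct constant. Combined with the previous bullet, this yields $\frac{c(K^{X(1,0)})}{2\sqrt{2u}}=-2u\,\partial_u\frac{c(K^{X(1,0)})}{2\sqrt{2u}}$, as required. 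The primed identity follows the same way, or by taking adjoints.

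\textbf{Step 2: vanishing of the squares.} Write
\[
\bisconn_{K,u}''^2=\bisconn_u''^2+\tfrac{1}{2\sqrt{2u}}\big[\bisconn_u'',c(K^{X(1,0)})\big]+\tfrac{1}{8u}c(K^{X(1,0)})^2 .
\]
The last term vanishes because wedge with a fixed form squares to zero.
\begin{itemize}
\item $[\sqrt u\,\dbar,K^{X(1,0),*}\wedge]=\sqrt u\,\dbar(K^{X(1,0),*})$. This vanishes because $K^{X(1,0),*}$ is, via the Kähler metric, the $\dbar$-closed form $\propto\dbar\langle\mu,K\rangle$ on fibers.
\item $[\nabla^{\mathbb E''},c(K^{X(1,0)})]$ involves horizontal derivatives of $K^X$. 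These vanish since $G$ acts fiberwise and preserves $T^HW$, so $[U^H,K^X]=0$ and $\nabla_{U^H}K^X$ reduces to a torsion term.
\item That torsion contribution must cancel against $[c(T^{(1,0)}),c(K^{X(1,0)})]$, which is a wedge-type anticommutator and is zero.
\end{itemize}
The same computation handles $\bisconn_{K,u}'^2$. Finally, $\bisconn_{K,u}^2=[\bisconn_{K,u}'',\bisconn_{K,u}']$ follows formally from the two vanishing squares.

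The main obstacle is keeping the constants straight in Step 1, i.e. matching the normalization of $c$, of $g^{TX}$ in \eqref{92}, and of $\mu$, together with the horizontal-derivative terms in Step 2. If the direct computation becomes messy, I would instead use Proposition \ref{9r}: regard $K$ formally as $-\Omega$ on the extended Kähler fibration $\widetilde\pi$. There, by \eqref{9p} and \eqref{9f}, $\widetilde{\bisconn}_u$ and $\widetilde N_u=N_V+i\widetilde\omega^H/u$ satisfy Theorem \ref{identities-g}. Both sides of the proposed identities are polynomial (linear or quadratic) in $K$, so comparing the components of degree one in $\Omega$ gives the identities for general $K$.
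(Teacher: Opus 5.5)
Your strategy is the paper's: reduce to Theorem~\ref{identities-g}, compute $[\bisconn''_{K,u},i\product{\mu,K}]$ from \eqref{dmuk}, and show $[\bisconn''_{u},c(K^{X(1,0)})]=0$. The gap is in the horizontal part of this last vanishing. The commutator $[\nabla^{\mathbb{E}''},c(K^{X(1,0)})]$ is a sum of terms $\pm\bar f^{\alpha}\wedge c\big(\nabla^{TX}_{\bar f^{H}_{\alpha}}K^{X(1,0)}\big)$ with $\bar f_\alpha\in T^{(0,1)}S$. You reduce $\nabla_{U^H}K^X$ to $T(U^H,K^X)=\frac12(g^{TX})^{-1}(\mathcal{L}_{U^H}g^{TX})K^X$, which is in general nonzero. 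You then say it ``must cancel'' against $[c(T^{(1,0)}),c(K^{X(1,0)})]$, a term you yourself correctly show is zero. So nothing cancels it, and the step is not proved.

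The missing idea is the one behind \eqref{ag}. Because $G$ acts holomorphically and fiberwise, $K^{X(1,0)}$ is a holomorphic section of the holomorphic bundle $TX\to W$, and $\nabla^{TX}$ is its Chern connection. Hence $\nabla^{TX}_{V}K^{X(1,0)}=0$ for all $V\in T^{(0,1)}W$.
\begin{itemize}
\item With $V=\bar f^{H}_{\alpha}$, this kills the horizontal term. Only $(0,1)$-directions and the $(1,0)$-component enter, so the nonvanishing real torsion is irrelevant.
\item With vertical $V$, it gives $[\dbar,c(K^{X(1,0)})]=0$ directly, since on K\"ahler fibers $\dbar=\sum_j\bar w^j\wedge\nabla_{\bar w_j}$. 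Your argument via $\dbar\product{\mu,K}$ for this part is also fine.
\end{itemize}

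In Step 1, the constant you left open has the wrong sign as written. By \eqref{92}, $\omega(U,\cdot)=-i\product{U,\cdot}$ on $TX$ for $U\in T^{(1,0)}X$. Hence $\dbar\product{\mu,K}=-i_{K^{X(1,0)}}\omega=+i\,K^{X(1,0),*}=\frac{i}{\sqrt2}c(K^{X(1,0)})$. Note that the relevant $(0,1)$-form is $K^{X(1,0),*}$, not $\overline{K^{X(1,0)}}^{*}$. This is \eqref{af}, and it yields $[\bisconn''_{K,u},-i\product{\mu,K}/u]=+c(K^{X(1,0)})/\sqrt{2u}$. Adding the contribution $-c(K^{X(1,0)})/(2\sqrt{2u})$ from $N_V$ gives exactly $-2u\frac{\partial}{\partial u}\big(c(K^{X(1,0)})/(2\sqrt{2u})\big)$. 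Your displayed expression, with $-i$, evaluates to $-c(K^{X(1,0)})/\sqrt{2u}$ and would break the identity.

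Finally, the fallback through Proposition~\ref{9r} is only a heuristic as stated. To make it a proof you would need:
\begin{itemize}
\item $G$ connected;
\item a choice of $(B,P)$ with the $\Omega_i$ linearly independent at a point, so that the $\Omega$-linear coefficients can be extracted;
\item a check that $(\nabla^{\mathscr E''})^2$, $[\bisconn''_u,\nabla^{\mathscr E''}]$ and $[\nabla^{\mathscr E''},c(\Omega^{X(1,0)})]$ vanish.
\end{itemize}
The paper uses this picture only as motivation, not as a proof.
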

\begin{proof}
   When $K=0$, this is exactly the result of Theorem \ref{identities-g}. We will now deal with the additional terms. 
    By \eqref{clifford action}, (\ref{dmuk}),  we obtain
\begin{align}\label{af}
    &[\bisconn_{K,u}'',i\product{\mu,K}]=-\frac{\sqrt{u}c(K^{X(1,0)})}{\sqrt{2}}, &&[\bisconn_{K,u}',i\product{\mu,K}]=\frac{\sqrt{u}c(K^{X(0,1)})}{\sqrt{2}}.
\end{align}
From \eqref{af}, we get the first line in \eqref{bku-algebraic identity}. Since the action of $G$ is holomorphic, by \eqref{bku}, we have 
\begin{align}\label{ag}
&[\bisconn_{K,u}'', c(K^{X(1,0)})]=0, &&[\bisconn_{K,u}', c(K^{X(0,1)})]=0.
\end{align}
From \eqref{ag}, we obtain the remaining parts of \eqref{bku-algebraic identity}.
\end{proof}

\begin{remark}
    Note that by \eqref{9f}, we have the following identity, 
\begin{align}\label{a1}
N_{-\Omega,u}=N_{V}+i\frac{\widetilde{\omega}^H}{u}.
\end{align}
We could thus understand  Proposition \ref{a2} as a universal form of Theorem \ref{identities-g}.
\end{remark}

For $u>0$, $K\in \mathfrak{z}(g)\otimes_{\mathbb{R}}\mathbb{C}$, put   
  \begin{equation}\label{gama-ugk}
         \gamma_{u,g,K}=\Phi\Tr[gN_{K,u}\exp(-\lk-\bisconn_{K,u}^{2})].
    \end{equation}
   As in Theorem \ref{double transgression}, $\gamma_{u,g,K}\in P^{S}$.

Set \begin{equation}\label{C formula}
\begin{split}
     &C_{-1,g,K}=\int_{X_g}\left(\frac{\omega}{2\pi}-i\product{\mu,K}\right)\td_{g,K}(TX,h^{TX})\ch_{g,K}(\xi,h^{\xi}),\\
        &C_{0,g,K}=\int_{X_g}\left(-\td_{g,K}'(TX,h^{TX})+\ell\cdot \td_{g,K}(TX,h^{TX})\right)\ch_{g,K}(\xi,h^{\xi}).
\end{split}
\end{equation}
\begin{theorem}\label{thm asymptotic-gK}
    There exist $\beta>0$, $C, C'>0$, $\gamma \in\, ]0,1]$, such that for $K\in \mathfrak{z}(g)\otimes_{\mathbb{R}}\mathbb{C}$, $|K|<\beta$,  
\begin{itemize}
    \item for any $u>1$,
    \begin{equation}\label{asymptotic-infty-gK}
        \big|\gamma_{u,g,K}-\ch_{ge^K}'(H^{\bullet}(X,\xi|_{X}),h^{H^{\bullet}(X,\xi|_{X})})\big|\leq \frac{C'}{u^{\gamma}};
    \end{equation}
    \item there exist $C_{-1,g,K}',$ $C_{0,g,K}' \in P^S$, which is analytic for $|K|<\beta$, for any $u\in\, ]0,1]$,
    \begin{equation}\label{asymptotic-gK}
    \begin{split}
       & \Big|\gamma_{u,g,K}-\frac{C_{-1,g,K}'}{u}-C_{0,g,K}'\Big|\leq  C u,\\ & C_{-1,g,K}'=C_{-1,g,K},\\ 
         & C_{0,g,K}'=C_{0,g,K}\kongge\qquad\qquad\qquad\qquad\qquad \text{in }\kongge P^S/P^{S,0}.
         \end{split}
    \end{equation}
    
\end{itemize}
\end{theorem}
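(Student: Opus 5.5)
The plan is to treat the two regimes separately. In both, the difficulty of Bismut--Goette is that $\lk+\bisconn_{K,u}^{2}$ has no lower bound, so the first step is an algebraic rewriting of this operator. Expanding $\bisconn_{K,u}^2=\bigl(\bisconn_u+\frac{c(K^X)}{2\sqrt{2u}}\bigr)^2$ gives the cross term $\frac{1}{2\sqrt{2u}}[\bisconn_u,c(K^X)]$. Its degree-zero part $\frac12[D^X,c(K^X)]$ is, by a Lichnerowicz-type computation, $-\nabla_{K^X}$ plus a zeroth-order term. Since $\lk=\nabla_{K^X}-m(K)$ by \eqref{moment}, the first-order part of the cross term cancels $\nabla_{K^X}$ in $\lk$. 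What remains is
\begin{equation*}
u(D^X)^2+\bigl(\text{zeroth order terms in } m^{TX}(K),\,m^{\xi}(K)\bigr)+\frac{|K^X|^2}{8u}+\bigl(\text{terms of positive degree in }\Lambda(T^*_{\rr}S)\bigr).
\end{equation*}
The positive-degree terms are nilpotent and carry the usual powers of $\sqrt u$. The operator $\lk$ itself commutes with $\bisconn_{K,u}$, because $K\in\mathfrak{z}(g)$ and $G$ preserves all the data. For $K$ complex with $|K|<\beta$, the zeroth-order terms are bounded by $C|K|$ and the term $|K^X|^2/8u$ has small imaginary part. This is the substitute for a lower bound.

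For $u\to+\infty$ I follow \cite{BK1992} and \cite[\S 2]{Ma2000}. Let $P$ be the projection onto $\ker D^X$, which is $G$-invariant, and use the uniform spectral gap of $D^X$ on $S'$. Write $\exp(-\lk-\bisconn_{K,u}^2)$ as a contour integral of the resolvent in $\lambda$ and apply the Duhamel expansion in the nilpotent terms. Then $P\exp(-\lk-\bisconn_{K,u}^2)P$ converges to $\exp(-\lk)$ restricted to $\ker D^X\simeq H^\bullet(X,\xi|_X)$, and the complementary part decays like $u^{-\gamma}$ once $\beta$ is small compared to the gap. On the kernel, $g\exp(-\lk)$ acts as $ge^{K}$. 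The terms $i(\omega^H-\product{\mu,K})/u$ in $N_{K,u}$ contribute $O(1/u)$. Together these give \eqref{asymptotic-infty-gK} with limit $\ch'_{ge^K}$, and analyticity in $K$ follows from the uniform convergence of the Duhamel series.

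For $u\to0$ I localize near $W_g$ by finite propagation speed, using the same rewriting to get Gaussian off-diagonal bounds uniform in $|K|<\beta$, as in \cite[\S 7]{BismutGoette2000}. Then I apply Getzler rescaling in the normal coordinates of $X_g$. The rescaled operator converges to a harmonic oscillator in which $R^{TX}$ and $R^\xi$ are replaced by the equivariant curvatures $R^{TX}_K$ and $R^\xi_K$ of \eqref{equivariant curvature}. This is exactly the mechanism behind Theorem \ref{thm gK asym bku}. Inserting $N_{K,u}$, its singular part $i(\omega^H-\product{\mu,K})/u$ combines with the rescaled fiberwise $\omega^{TX}$ term of $N_V$ into the equivariantly closed form $\frac{\omega}{2\pi}-i\product{\mu,K}$; the moment map condition \eqref{dmuk} is what makes this combination equivariantly closed. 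This yields $C'_{-1,g,K}=C_{-1,g,K}$. An expansion in integer powers of $u$ with remainder $O(u)$ follows from the standard Duhamel argument, uniformly and analytically in $K$.

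The constant term $C'_{0,g,K}$ is computed, as in \cite{BGS2} and \cite[Theorem 2.10]{Ma2000}, by introducing an auxiliary parameter $b$, replacing $N_V$ by a deformation of the metric, and differentiating in $b$. This produces $-\td'_{g,K}+\ell\,\td_{g,K}$ only modulo $\partial$- and $\overline{\partial}$-exact terms, through the transgression identities of Proposition \ref{a2}. Alternatively, via the universal fibration of Proposition \ref{9r}, one can reduce formally to Ma's identity $D'_{0,g}=D_{0,g}$ in $P^S/P^{S,0}$ with $K$ replaced by $-\Omega$.

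The main obstacle is the small-$u$ step. Getting estimates uniform in complex $K$ requires the Bismut--Goette method of \cite[\S 7]{BismutGoette2000} to control $\exp(-\lk-\bisconn_{K,u}^2)$ without positivity. As the paper indicates, these estimates are carried out in Section 6.
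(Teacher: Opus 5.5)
\textbf{There is a genuine gap.} Your outline has the right architecture: reduction to $\ker D^X$ and to $ge^K$ for large $u$; localization near $W_g$, Getzler rescaling and $t[N_{K,t}]^{(3)}_t\to i\jmath^*\omega-i\langle\mu,K\rangle$ for small $u$; a BGS-type $da\,\overline{da}$ transgression for $C'_{0,g,K}$. But the algebraic rewriting you rely on for uniformity in $K$ is false, and it fails exactly where the real difficulty lies.

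With $D^X=\frac{1}{\sqrt2}\sum_i c(e_i)\nabla_{e_i}$ and $c(e_i)c(V)+c(V)c(e_i)=-2\langle e_i,V\rangle$, the degree-zero cross term is $\frac{1}{2\sqrt2}[D^X,c(K^X)]=-\frac12\nabla_{K^X}+\frac14\sum_i c(e_i)c(\nabla_{e_i}K^X)$ (compare \eqref{bd}). So it cancels only half of $\nabla_{K^X}$ in $\lk$. Moreover $c(K^X)^2=-|K^X|^2$, so the potential is $-|K^X|^2/8u$, not $+|K^X|^2/8u$. This is what \eqref{lichnerowicz formula} says: $\akt=-\frac t2\bigl(\nabla_{e_i}+\cdots-\frac{\langle K^X,e_i\rangle}{2t}\bigr)^2+\cdots$ is a non-self-adjoint Laplacian with a real ``magnetic'' potential. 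For real $K$, since $\lk$ and $[D^X,c(K^X)]$ are skew-adjoint, one gets exactly $\mathrm{Re}\langle \akt^{(0)}s,s\rangle_0=t\|D^Xs\|_0^2-\frac{1}{8t}\||K^X|s\|_0^2$, which is not bounded below uniformly as $t\to0$. So there is no ``substitute for a lower bound'', and the small-$u$ expansion does not follow from a standard Duhamel argument.

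The paper instead runs the Bismut--Goette scheme:
\begin{enumerate}
\item Write $e^{-a}=\widetilde{F}_t(a)+\widetilde{I}_t(ta)$ with $\|\widetilde{I}_t(t\akt)\|_{(1)}\le ce^{-C/t}$, and localize $\widetilde{F}_t(\akt)$ near $X_g$ by finite propagation speed.
\item Split $\widetilde{F}_t=\sum_n\widetilde{F}_{t,n}$ so that the $n$-th piece only sees $|Z|\le n+p+2$.
\item Prove resolvent bounds for the cut-off rescaled operators $L^{3,t}_{x,K,n}$ on $U_{\eta n+d}$ in weighted norms, valid only for $|K|\le c_\eta$.
\item The resulting kernel bounds grow like $\exp(2C''\eta^2|Z|^2)$. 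This growth is beaten, once $\eta$ is fixed small, by the Gaussian decay coming from $|g^{-1}Z-Z|\ge c|Z|$ on $N_{X_g/X}$.
\item The expansion comes in powers $t^{r/2}$, via the classes $\Op_r$, and the half-integer coefficients vanish by a parity argument.
\end{enumerate}
You cite Bismut--Goette \S 7 at the end, but the mechanism you actually propose for uniformity in $K$ is not valid.

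The same miscomputation affects your large-$u$ step. The cross term $\frac{1}{2\sqrt2}[D^X,c(K^X)]$ is an $O(1)$ first-order operator of degree zero, not a nilpotent term. A Duhamel expansion in the positive-degree part does not control it, and you give no reason why it does not shift the limit. The missing observation is $P[D^X,c(K^X)]P=0$. In the norms \eqref{sobolev infty 01}, this makes the term $O(t^{-1/2})$ from $|\cdot|_{t,1}$ to $|\cdot|_{t,-1}$. With Lemma \ref{resolvent infty} this gives $\|\exp(-\akt)-\exp(-\lk-\bisconn_t^2)\|\le C/\sqrt t$, hence a kernel estimate $O(t^{-\delta})$. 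Then $\str[g\exp(-\lk-\bisconn_t^2)]=\str[ge^K\exp(-\bisconn_t^2)]$ reduces everything to Ma's large-time result for $ge^K$, uniformly in $K$.

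Your direct route through $\ker D^X$ can work once this is added. Note also that the limit on $\ker D^X$ must involve the curvature of the $L^2$ connection on $H^{\bullet}(X,\xi|_X)$, not just $\exp(-\lk)$; otherwise you recover only the degree-zero part of $\ch'_{ge^K}$. Finally, the universal-fibration reduction via Proposition \ref{9r} is only formal. It cannot replace \eqref{derivative gamma} and the coefficient comparison giving $C'_{0,g,K}=C_{0,g,K}-\overline{\partial}\partial L_0$.
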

\begin{proof}
    Theorem \ref{thm asymptotic-gK} will be proved in Section 6.
\end{proof}
For $|K|<\beta$, $s\in \cc,$ $\mathrm{Re}(s)>1$, set
    \begin{equation}\label{9x}
        \begin{split}
    \zeta_{1,g,K}(s)=-\frac{1}{\Gamma(s)}\int_{0}^1 u^{s-1} 
    \Big(\gamma_{u,g,K}-
    \chern'_{ge^{K}}(H^{\bullet}(X,\xi|_{X}),h^{H^{\bullet}(X,\xi|_{X})})\Big)du.
        \end{split}
\end{equation}
By using (\ref{asymptotic-gK}), we see that $\zeta_{1,g,K}(s)$ extends to a holomorphic function of $s\in\cc$ near $0$.

For $|K|<\beta$, $s\in \cc,$ $\mathrm{Re}(s)<\gamma$, by (\ref{asymptotic-infty-gK})
    \begin{equation}
        \begin{split}
    \zeta_{2,g,K}(s)=-\frac{1}{\Gamma(s)}\int_{1}^{+\infty} u^{s-1}\Big(\gamma_{u,g,K}-
    \chern'_{ge^{K}}(H^{\bullet}(X,\xi|_{X}),h^{H^{\bullet}(X,\xi|_{X})})\Big)du
        \end{split}
\end{equation}
is a holomorphic function of $s\in\cc$ near $0$.

\begin{definition}
   As in (\ref{analytic torsion forms-g}), we define  the equivariant infinitesimal analytic torsion form $T_{g,K}(\omega,h^{\xi})$ as
    \begin{equation}\label{a0}
          T_{g,K}(\omega,h^{\xi})=\frac{\partial}{\partial s}(\zeta_{1,g,K}+\zeta_{2,g,K})(0).
    \end{equation}
\end{definition}

 As in (\ref{analytic torsion expression-g}), by \eqref{asymptotic-infty-gK} and \eqref{asymptotic-gK}, we have
\begin{align}\label{explicit tgk}
\begin{split}
    T_{g,K}(\omega, h^{\xi})&=-\int_{0}^1\Big(\gamma_{u,g,K}-\frac{C_{-1,g,K}'}{u}-C_{0,g,K}'\Big)\frac{du}{u}\quad\\
    &-\int_{1}^{+\infty}
    \Big(\gamma_{u,g,K}-
    \chern'_{ge^{K}}(H^{\bullet}(X,\xi|_{X}),h^{H^{\bullet}(X,\xi|_{X})})\Big)\frac{du}{u}\\
    &+C_{-1,g,K}'+\Gamma'(1)\Big(C_{0,g,K}'-\chern'_{ge^{K}}(H^{\bullet}(X,\xi|_{X}),h^{H^{\bullet}(X,\xi|_{X})})\Big).
\end{split}
\end{align}
 
\begin{theorem}\label{ai}
    We have the following identity, 
     \begin{equation}
        \begin{split}
        \frac{\overline{\partial}\partial}{2i\pi}T_{g,K}(\omega,h^{\xi})=\chern_{ge^K}(H^{\bullet}(X,\xi|_{X}),h^{H^{\bullet}(X,\xi|_{X})})\qquad\qquad\\ -\int_{X_g}\td_{g,K}(TX,h^{TX})\ch_{g,K}(\xi,h^{\xi}).
        \end{split}
    \end{equation}
\end{theorem}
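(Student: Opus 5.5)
We follow the proof of Theorem \ref{ah}. The plan is to apply $\frac{\overline{\partial}\partial}{2i\pi}$ term by term to the explicit formula \eqref{explicit tgk}, after first establishing a double transgression formula for $\Phi\Tr[g\exp(-\lk-\bisconn_{K,u}^{2})]$.

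\textbf{Step 1: the double transgression identity.} For $u>0$ we aim to show
\begin{equation*}
\frac{\partial}{\partial u}\Phi\Tr[g\exp(-\lk-\bisconn_{K,u}^2)]=-\frac{1}{u}\frac{\overline{\partial}\partial}{2i\pi}\gamma_{u,g,K}.
\end{equation*}
This is the analogue of Theorem \ref{double transgression}, and the proof of \cite[Theorem 2.9]{BGS2} carries over using Proposition \ref{a2}. Since $g$ and $\lk$ commute with $\bisconn_{K,u}$, the operator $\exp(-\lk-\bisconn_{K,u}^2)$ commutes with $\bisconn_{K,u}'$ and $\bisconn_{K,u}''$. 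Using $\bisconn_{K,u}^2=[\bisconn_{K,u}'',\bisconn_{K,u}']$ and $\partial_u\bisconn_{K,u}''=-\frac{1}{2u}[\bisconn_{K,u}'',N_{K,u}]$, we get
\begin{equation*}
\partial_u \Tr[g e^{-\lk-\bisconn_{K,u}^2}]=-\Tr\big[g[\partial_u\bisconn_{K,u}'',\bisconn_{K,u}'] e^{-\lk-\bisconn_{K,u}^2}\big]-\Tr\big[g[\bisconn_{K,u}'',\partial_u\bisconn_{K,u}'] e^{-\lk-\bisconn_{K,u}^2}\big].
\end{equation*}
Substituting the commutator identities and using that supertraces of supercommutators are given by $\overline{\partial}$ and $\partial$ on $S$ (by the Bianchi-type relations $\Tr[[\bisconn_{K,u}'',A]]=\overline{\partial}\Tr[A]$ and $\Tr[[\bisconn_{K,u}',A]]=\partial\Tr[A]$), this becomes $\frac{1}{u}\overline{\partial}\partial\Tr[gN_{K,u}e^{-\lk-\bisconn_{K,u}^2}]$ up to sign. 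Applying $\Phi$ converts $\overline{\partial}\partial$ into $\frac{\overline{\partial}\partial}{2i\pi}$, as in \cite{BGS2}. The Bianchi relations need $\bisconn_{K,u}'^2=\bisconn_{K,u}''^2=0$ from Proposition \ref{a2}. The only new point is that $\lk$ is a genuine operator rather than a form. Since $\lk$ commutes with $g$ and with the superconnection, the usual trace argument $\Tr[[\bisconn,A]]=d^S\Tr[A]$ still applies to trace-class $A$ commuting with $\lk$.

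\textbf{Step 2: apply $\frac{\overline{\partial}\partial}{2i\pi}$ to each integral.} Differentiating under the integral signs is justified by the uniformity of the estimates of Theorem \ref{thm asymptotic-gK} (with derivatives in $S$, which we assume Section 6 provides), and by analyticity in $K$. From Theorem \ref{thm asymptotic-gK} we also read off the following:
\begin{itemize}
\item Comparing the expansion of $\gamma_{u,g,K}$ with the transgression in Step 1 and the limit \eqref{Kirillov formula-gK} as $u\to0$, the $1/u$ coefficient satisfies $\overline{\partial}\partial C'_{-1,g,K}=0$. Otherwise $\partial_u\Phi\Tr$ would contain a $u^{-2}$ term, contradicting convergence.
\item The constant term gives $\frac{\overline{\partial}\partial}{2i\pi}C'_{0,g,K}=0$.
\item $\ch'_{ge^K}$ is $\overline{\partial}\partial$-closed, since $g e^K$ acts fiberwise by holomorphic isometries on $H^\bullet$.
\end{itemize}
Then
\begin{equation*}
\frac{\overline{\partial}\partial}{2i\pi}T_{g,K}=\int_0^\infty u\,\partial_u\Phi\Tr[g e^{-\lk-\bisconn_{K,u}^2}]\frac{du}{u}=\lim_{u\to\infty}\Phi\Tr[\cdots]-\lim_{u\to0}\Phi\Tr[\cdots].
\end{equation*}
The two limits are given by Theorem \ref{thm gK asym bku}, which yields exactly the stated formula.

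\textbf{Main obstacle.} The main obstacle is Step 1 in the presence of $\lk$, which is unbounded, together with the fact that $\lk+\bisconn_{K,u}^2$ has no lower bound. Trace-class properties and differentiation in $u$ must therefore be handled via the Bismut–Goette techniques deferred to Section 6. I would first treat $g$ and $\lk$ on each isotypic component of the $G$-action (where $\lk$ is a scalar on $\mathfrak{z}(g)$-weights) and then sum, relying on the convergence already established for $|K|<\beta$.
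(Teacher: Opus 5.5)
Your argument is correct and is the paper's own route: the transgression identity \eqref{ak}, obtained from Proposition \ref{a2}, is applied term by term to \eqref{explicit tgk}, and the endpoint values at $u\to 0$ and $u\to+\infty$ come from Theorem \ref{thm gK asym bku}; you rightly keep $\lk$ in the exponent of the transgression identity. Your explicit check that $C'_{-1,g,K}$, $C'_{0,g,K}$ and $\ch'_{ge^K}$ are $\overline{\partial}\partial$-closed fills in what the paper leaves implicit, whether by your divergence argument or directly from the $d_K$-closedness of the integrands. The isotypic decomposition in your last paragraph is unnecessary: for fixed $u$, the operator $\lk+\bisconn_{K,u}^2$ is a generalized Laplacian with trace-class heat operator, and the hard part is the uniform asymptotics in $u$, which you already take from Section 6.
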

\begin{proof}
The proof is the same as the proof of Theorem \ref{ah}. By using Proposition \ref{a2}, we have the transgression formula 
\begin{align}\label{ak}
        \frac{\partial}{\partial u}\Phi \Tr[g\exp(-\bisconn_{K,u}^2)]=-\frac{1}{u}\frac{\overline{\partial}\partial}{2i\pi}\Phi \Tr[gN_{K,u}\exp(-\bisconn_{K,u}^2)].
\end{align} By  \eqref{explicit tgk} and  \eqref{ak}, we obtain Theorem \ref{ai}.
\end{proof}

\begin{proposition}\label{9t}
As an element of $P^{S}/P^{S,0}$, the equivariant infinitesimal analytic torsion form $T_{g,K}(\omega,h^{\xi})$ does not depend on the choice of moment map $\mu$.
\end{proposition}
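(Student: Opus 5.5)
The key observation is that a moment map for $\omega$ is unique only up to adding a constant. Suppose $\mu$ and $\mu'$ both satisfy \textbf{Assumption (*)}. By \eqref{dmuk}, $d\langle\mu-\mu',K\rangle=0$ on $W$ for every $K\in\mathfrak{g}$, so $\nu=\mu-\mu'$ is locally constant on $W$. Since the fibers $X$ are connected (or, working fiber component by component), and $\mu$ is constant along horizontal directions, $\nu$ is the pull-back $\pi^*\nu_S$ of a locally constant $\mathfrak{g}^*$-valued function $\nu_S$ on $S$. It is also $\mathrm{Ad}^*$-invariant, by \eqref{9c}. So I plan to compare $T_{g,K}$ built from $\mu$ and from $\mu'=\mu-\nu$ directly through \eqref{explicit tgk}. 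The operator $\bisconn_{K,u}$ does not involve $\mu$; only $N_{K,u}$, $C_{-1,g,K}$ and $C'_{0,g,K}$ do.

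The first step is to compute the change in $\gamma_{u,g,K}$. Writing $c=\langle\nu,K\rangle$, which is a locally constant scalar on $S$ that commutes with everything, we have $N'_{K,u}=N_{K,u}+ic/u$. Hence
\begin{equation*}
\gamma'_{u,g,K}=\gamma_{u,g,K}+\frac{ic}{u}\,\Phi\Tr\big[g\exp(-\lk-\bisconn_{K,u}^2)\big],
\end{equation*}
where $\Phi$ commutes with multiplication by the degree-zero constant $ic$. Set $\alpha_u=\Phi\Tr[g\exp(-\lk-\bisconn_{K,u}^2)]$. By the transgression \eqref{ak}, $\alpha_u-\alpha_1\in P^{S,0}$ for all $u$. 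More precisely, $\partial_u\alpha_u=-\frac{1}{u}\frac{\overline\partial\partial}{2i\pi}\gamma_{u,g,K}$. By Theorem \ref{thm gK asym bku}, $\alpha_u\to A=\int_{X_g}\td_{g,K}\ch_{g,K}$ as $u\to0$, and $\alpha_u\to\ch_{ge^K}(H)$ as $u\to\infty$. On the constant side, $C'_{-1,g,K}$ changes by $ic\,A$, consistent with the $-i\langle\mu,K\rangle$ term in \eqref{C formula}. I also expect $C'_{0,g,K}$ to change by $ic\,A_1$, where $A_1$ is the $u^1$ coefficient of $\alpha_u$. Finally, $\ch'_{ge^K}$ is unchanged, because $\nu$ shifts $\langle\mu,K\rangle$ only by a constant and $N_V$ acting on harmonic forms is unaffected.

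The second step is to insert these changes into \eqref{explicit tgk}. The difference $T'-T$ equals $ic$ times
\begin{equation*}
-\int_0^1\big(\alpha_u-A-uA_1\big)\frac{du}{u^2}-\int_1^\infty \alpha_u\frac{du}{u^2}+A+\Gamma'(1)A_1,
\end{equation*}
up to the bookkeeping of which constant term absorbs what. I will show this is in $P^{S,0}$ by writing $\alpha_u=\ch_{ge^K}(H)+\int_u^\infty\frac{1}{v}\frac{\overline\partial\partial}{2i\pi}\gamma_{v}\,dv$, and similarly $\alpha_u=A-\int_0^u\cdots$ near $0$. Every non-exact contribution then reduces to the explicit constants $A$ and $\ch_{ge^K}(H)$, whose integrals against $du/u^2$ cancel exactly against the boundary terms. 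What remains is $\overline\partial\partial$ of a double integral. The double integrals converge by the estimates of Theorem \ref{thm asymptotic-gK}, and $\overline\partial\partial$ can be pulled out because the convergence is uniform with derivatives on compact subsets; the result is a form $\overline\partial\beta$ with $\beta$ smooth. Note that $A_1$ is itself $\overline\partial\partial$-exact, by the analogue of \eqref{DDprime}.

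The main obstacle I expect is justifying that the shift in the asymptotic coefficient $C'_{0,g,K}$ is exactly $ic\,A_1$ and is compatible with the choice made in Theorem \ref{thm asymptotic-gK}. Since $C'_{0,g,K}$ is only specified modulo $P^{S,0}$, I would fix it as the honest $u^0$ coefficient of the small-$u$ expansion, from Section 6. With that choice, linearity in $\mu$ gives the shift immediately, and the rest is the elementary integral manipulation above.
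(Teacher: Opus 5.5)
Your proposal is correct in substance and rests on the same two facts as the paper's proof: $\gamma'_{u,g,K}-\gamma_{u,g,K}=\frac{ic}{u}\alpha_u$, and the class of $\alpha_u$ is independent of $u$ by \eqref{ak}. Where you differ is in how you conclude.

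\emph{The paper's route.} It works directly with the zeta functions in \eqref{a0}, and only modulo $d\Omega(S,\mathbb{C})$. Exact forms are closed under uniform limits (de Rham), so $\alpha_u$ may be replaced by $\alpha_1$ inside both Mellin integrals. The changes in $\zeta_{1,g,K}$ and $\zeta_{2,g,K}$ then become opposite multiples of $\frac{1}{\Gamma(s)(s-1)}\alpha_1$, and they cancel after continuation. Finally, a $d$-exact form $d\beta$ lying in $P^S$ equals $\partial\beta+\bar\partial\beta$, so it is in $P^{S,0}$. This needs only $C^0$ estimates and never involves $C'_{-1,g,K}$ or $C'_{0,g,K}$.

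\emph{Your route.} Going through \eqref{explicit tgk} forces you to track those coefficients. Your ``main obstacle'' is in fact harmless. The coefficient $C'_{0,g,K}$ is uniquely determined by the estimate of Theorem \ref{thm asymptotic-gK}, since any other choice makes the $\int_0^1$ diverge logarithmically. Its shift is therefore necessarily $ic\,A_{1,g,K}$, with $A_{1,g,K}$ from Theorem \ref{thm asymptotic higher akt}.

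What your route could buy is an explicit $\bar\partial\partial$-primitive. As sketched, however, it uses input the paper does not supply:
\begin{itemize}
\item the $u^{1}$ coefficient $C'_{1,g,K}$ of $\gamma_{u,g,K}$, needed both for $A_{1,g,K}=-\frac{\bar\partial\partial}{2i\pi}C'_{1,g,K}$ and for convergence of your double integral near $u=0$;
\item control of derivatives along $S$, needed to pull $\bar\partial\partial$ out of the $u$-integrals.
\end{itemize}
Also, the constants do not cancel exactly. The integral $\int_1^\infty$ contributes $-\ch_{ge^K}(H)$, while the boundary term is $+A$. These differ by $\frac{\bar\partial\partial}{2i\pi}T_{g,K}(\omega,h^{\xi})$, by Theorem \ref{ai}.

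\emph{Simplest repair.} Borrow the paper's closure argument. Then $A_{1,g,K}=\lim_{u\to0}(\alpha_u-A)/u$ is exact. Your expression equals convergent $u$-integrals of the exact forms $\alpha_u-A-uA_{1,g,K}$ and $\alpha_u-\ch_{ge^K}(H)$, plus $A-\ch_{ge^K}(H)+\Gamma'(1)A_{1,g,K}$. Each piece is exact, hence so is the whole expression.
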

\begin{proof}
Let $\mu'$ be another moment map associated to $\omega$, then $C_{K}=\left\langle \mu'-\mu,K \right\rangle$ is a locally constant function on $W$. When we replace $\mu$ by $\mu'$,  a superscript $'$ is added to the corresponding objects. Since the operator $\mathcal{L}_K+\bisconn_{K,u}^{2}$ does not depend on $\mu$, by \eqref{definition nkt},  we have
\begin{align}\label{9w}
\gamma_{u,g,K}'-\gamma_{u,g,K}=\frac{C_{K}}{u}\Tr[g\exp(-\lk-\bisconn_{K,u}^{2})].
\end{align} 

By Theorem \ref{thm gK asym bku}, $\Tr[g\exp(-\lk-\bisconn_{K,u}^{2})]$ remain constant in the cohomology class of $S$ as $u$ varies. By \cite[\S 22, Theorem 17]{GdeRham}, $d\Omega(S,\mathbb{C})$ is closed under the uniform convergence. By \eqref{9x}, \eqref{9w}, for $s\in \mathbb{C}$, $\mathrm{Re}(s)> 1$, we get 
\begin{align}\label{9y}
\zeta_{1,g,K}'(s)-\zeta_{1,g,K}(s)=\frac{C_{K}}{\Gamma(s)(s-1)}\Tr[g\exp(-\lk-\bisconn_{K,1}^{2})]\qquad \text{mod }\, d\Omega(S,\mathbb{C}).
\end{align}

Similarly, we have 
\begin{align}\label{9z}
\zeta_{2,g,K}'(s)-\zeta_{2,g,K}(s)=-\frac{C_{K}}{\Gamma(s)(s-1)}\Tr[g\exp(-\lk-\bisconn_{K,1}^{2})]\qquad \text{mod }\, d\Omega(S,\mathbb{C})
\end{align}
on its domain of definition. 
By \eqref{a0}, \eqref{9y}, \eqref{9z}, we finish the proof of Proposition \ref{9t}.
\end{proof}

By using Duhamel's formula (see e.g. \cite[Theorem 2.48]{BGV}), 
\begin{align}\label{b1}
    \begin{split}
\frac{\partial}{\partial \bar{z}}\gamma_{u,g,zK}=\Phi\frac{\partial}{\partial b} \Tr\Big[gN_{zK,u}\exp\big(-\mathcal{L}_{zK}-\bisconn_{zK,u}^2-b\frac{\partial}{\partial \bar{z} }\bisconn_{zK,u}^2\big)\Big]_{b=0}\\
        +\Phi \Tr[g\frac{\partial}{\partial \bar{z}}N_{zK,u}\exp(-\lk-\bisconn_{zK,u}^{2})]=0.
    \end{split}
\end{align}
 Then by using the dominated convergence theorem and Theorem \ref{thm asymptotic-gK}, we see that in their domain of definitions, the functions $\zeta_{1,g,zK}(s)$, $\zeta_{2,g,zK}(s)$ are holomorphic  in the variable $z$ near $0$. This implies that $T_{g,zK}(\omega, h^{\xi})$ is holomorphic near $0$, which completes the last part of Theorem \ref{8x}.

\section{Equivariant Bott-Chern Currents}
In \cite{B1990}, \cite{B1992Inter}, Bismut constructed the equivariant Bott-Chern currents on K\"ahler manifolds, which refine the localization formulas of Berline and Vergne \cite[Theorem 7.13]{BGV}. In this section, we will apply Bismut's construction to the total space $(W,\omega)$. Although $(W,\omega)$ is not K\"ahler in general, his methods can be extended to this setting, since $G$ acts fiberwisely on $W$ and the form $\omega$ restricts to a K\"ahler form on each fiber.

This section is organized as follows. We first state the construction of the equivariant Bott-Chern currents $S_K(W,\omega)$. And we describe the variation of $S_{zK}(W,\omega)$ in terms of a current $\rho_1$ on $W$, as $z$ varies. Then we will give a formula concerning the current $\rho_1$. This formula gives a  proof of \eqref{8m}, and it also plays an important role in the proof of our main result. Finally, we follow \cite[\S 3.3]{BismutGoette2000} to give an alternative construction of $S_K(W,\omega)$ under \textbf{Assumption (*)}.

For $t>0$, we set 
\begin{align}\label{btdt}
 &b_{K,t}=\exp\Big(\frac{\overline{\partial}_K\partial_K}{2i\pi t}\frac{\omega}{2\pi}\Big), &&d_{K,t}=-\frac{\omega}{2\pi t}\exp\Big(\frac{\overline{\partial}_K\partial_K}{2i\pi t}\frac{\omega}{2\pi}\Big).
\end{align}

From (\ref{btdt}), we obtain
\begin{proposition}\label{transgression bt}
    The following identity holds
    \begin{equation}
        \frac{\partial}{\partial t}b_{K,t}=\frac{\overline{\partial}_K\partial_K}{2i\pi t}{d_{K,t}}.
    \end{equation}
\end{proposition}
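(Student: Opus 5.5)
The statement to prove is Proposition \ref{transgression bt}, $\frac{\partial}{\partial t}b_{K,t}=\frac{\overline{\partial}_K\partial_K}{2i\pi t}d_{K,t}$. The plan is a direct computation from the definitions \eqref{btdt}, viewing $b_{K,t}$ as the exponential of a $t$-dependent even form and using that $\overline{\partial}_K$, $\partial_K$ act as (anti)derivations on the algebra of $G$-invariant forms. Set
\begin{equation*}
\alpha=\frac{\overline{\partial}_K\partial_K}{2i\pi}\frac{\omega}{2\pi},
\end{equation*}
so that $b_{K,t}=\exp(\alpha/t)$ and $d_{K,t}=-\frac{\omega}{2\pi t}\exp(\alpha/t)$.

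Since $\alpha$ is an even form, it commutes with everything in the exterior algebra, and the exponential is a finite polynomial. Hence
\begin{equation*}
\frac{\partial}{\partial t}b_{K,t}=-\frac{\alpha}{t^2}\exp(\alpha/t).
\end{equation*}

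Next I compute the right-hand side. First, $\omega$ is $G$-invariant. Also $\mathcal{L}_K$ commutes with $\partial_K$ and $\overline{\partial}_K$, so $\alpha$ and all its powers are $G$-invariant as well.

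The key point is that $\exp(\alpha/t)$ is annihilated by both $\partial_K$ and $\overline{\partial}_K$. For $\overline{\partial}_K$ this follows from $\overline{\partial}_K\alpha=\frac{1}{2i\pi}\overline{\partial}_K^2\partial_K\frac{\omega}{2\pi}=0$, using \eqref{dkpingfangdengyu0}. For $\partial_K$ we have
\begin{equation*}
\partial_K\alpha=\frac{1}{2i\pi}\partial_K\overline{\partial}_K\partial_K\frac{\omega}{2\pi}=-\frac{1}{2i\pi}\overline{\partial}_K\partial_K\partial_K\frac{\omega}{2\pi}+\frac{2i\pi}{2i\pi}\mathcal{L}_K\partial_K\frac{\omega}{2\pi},
\end{equation*}
by $[\partial_K,\overline{\partial}_K]=-2i\pi\mathcal{L}_K$. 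The first term vanishes since $\partial_K^2=0$. The second vanishes because $\partial_K\frac{\omega}{2\pi}$ is $G$-invariant, so $\mathcal{L}_K$ kills it. Because $\partial_K$ and $\overline{\partial}_K$ are derivations and $\alpha$ is even, both operators then annihilate $\exp(\alpha/t)$.

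By the Leibniz rule it follows that
\begin{equation*}
\overline{\partial}_K\partial_K\Big(\frac{\omega}{2\pi}\exp(\alpha/t)\Big)=\Big(\overline{\partial}_K\partial_K\frac{\omega}{2\pi}\Big)\exp(\alpha/t)=2i\pi\,\alpha\exp(\alpha/t).
\end{equation*}
In this step I still need to check the signs. The main one comes from moving $\partial_K$ past $\omega$, but $\omega$ has even degree, so no sign arises.

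Therefore
\begin{equation*}
\frac{\overline{\partial}_K\partial_K}{2i\pi t}d_{K,t}=-\frac{1}{2i\pi t^2}\,2i\pi\,\alpha\exp(\alpha/t)=-\frac{\alpha}{t^2}\exp(\alpha/t),
\end{equation*}
which matches $\frac{\partial}{\partial t}b_{K,t}$.

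The only subtle point is the $\partial_K\alpha=0$ step, where one must use the $G$-invariance of $\omega$ to kill the $\mathcal{L}_K$ term. Everything else is routine.
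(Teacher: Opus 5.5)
Your proof is correct and is the same direct computation from \eqref{btdt} that the paper has in mind (the paper only says the identity follows from the definitions). The key inputs are the right ones: $\overline{\partial}_K\alpha=0$, $\partial_K\alpha=0$ via $[\partial_K,\overline{\partial}_K]=-2i\pi\mathcal{L}_K$ and the $G$-invariance of $\omega$, and the Leibniz rule.

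There are two harmless slips. First, the $\mathcal{L}_K$ term in your expansion of $\partial_K\alpha$ should carry a minus sign; this does not matter because that term vanishes. Second, $\exp(\alpha/t)$ is not a finite polynomial, since by \eqref{dkkprime} $\alpha$ has the nonzero degree-zero part $-|K^X|^2/2$. Writing $\exp(\alpha/t)$ as $e^{-|K^X|^2/2t}$ times a polynomial in the nilpotent part, your $t$-derivative and derivation arguments go through unchanged.
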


\begin{definition}\label{b2}
    Let $P_{K,W_K}^{W}$ be the set of $K^X$-invariant currents on $W$, which are sums of currents of type $(p,p)$, whose wave-front set is included in $N_{X_K/X,\rr}^{*}=N_{W_K/W,\rr}^{*}$. 
\end{definition}

We identify $N_{X_K/X}$ with the orthogonal bundle to $TX_K$ in $TX$. Let $h^{N_{X_K/X}} $ be the Hermitian metric on $N_{X_K/X}$ induced by $h^{TX}$.
We apply the result of \cite[Theorem 1.3]{B1986Local}, \cite[Theorem 2.5-2.7]{B1992Inter} to $(W,\omega)$, then there exist currents $\rho_1,...,\rho_k,...$ in $P_{K,W_K}^W$ such that if $\eta$ is a smooth form on $W$, as $t\to 0$,
\begin{equation}\label{aymptopic-bt}
    \int_W \eta b_{K,t}=\int_{W_K}\frac{\eta}{c_{\mathrm{max},K}(N_{X_K/X},h^{N_{X_K/X}})}+\sum_{j=1}^{k}\int_{W}\eta \rho_j t^j+\mathcal{O}(t^{k+1}).
\end{equation}

By (\ref{aymptopic-bt}), we see that as $t\rightarrow 0$, 
\begin{equation}\label{expansion dt}
    \int_{W}\eta {d_{K,t}}=-\int_{W_K}\frac{{\omega}/2\pi \cdot\eta}{c_{\mathrm{\mathrm{max}},K}(N_{X_K/X},h^{N_{X_K/X}})}\frac{1}{t}-\sum_{j=1}^{k}\int_{W}\frac{\omega}{2\pi}\eta \rho_j t^{j-1}+\mathcal{O}(t^k).
\end{equation}
By (\ref{expansion dt}), for $s\in \cc, \mathrm{Re}(s)>1$, set
\begin{equation}\label{feta1}
    F_{\eta}^1(s)=\frac{1}{\Gamma(s)}\int_0^1t^{s-1}\Big(
       \int_{W}\eta {d_{K,t}} \Big)dt.
\end{equation}
It extends to a holomorphic function near $s=0$. For $s\in\cc, \mathrm{Re}(s)<1$, the function
\begin{equation}\label{feta2}
    F_{\eta}^2(s)=\frac{1}{\Gamma(s)}\int_1^{\infty}t^{s-1}\Big( \int_{W}\eta {d_{K,t}}\Big) dt
\end{equation}
is holomorphic.

\begin{definition}\label{aa}
    Let $S_K(W,\omega)$ be the current on $W$, such that
    \begin{equation}
        \int_{W} \eta {S}_K(W,\omega)=\frac{\partial}{\partial s}(F_{\eta}^1+F_{\eta}^2)(0).
    \end{equation}
\end{definition}

Let $\delta_{W_K}$ be the current of integration on $W_K$. Now we state the result of \cite[(3.16)]{BismutGoette2000}.
\begin{proposition}\label{explicit sk}
    The following identity holds,
    \begin{equation}\label{a6}
     \begin{split}
           \int_{W}\eta {S}_K&(W,\omega)= 
           \int_{0}^{1}\frac{dt}{t}\int_{W}\eta\Big({d_{K,t}}+\frac{{\omega}/2\pi \cdot \delta_{W_K}}{c_{\mathrm{max},K}(N_{X_K/X},h^{N_{X_K/X}})}\frac{1}{t}+\frac{\omega}{2\pi}\rho_1\Big)\\
           +&\int_1^{+\infty}\frac{dt}{t}\int_{W}\eta {d_{K,t}}+\int_{W_K}\frac{{\omega}/{2\pi}\cdot\eta}{c_{\mathrm{max},K}(N_{X_K/X},h^{N_{X_K/X}})}+\Gamma'(1)\int_{W} \frac{\omega}{2\pi}\eta \rho_1.
     \end{split}
    \end{equation}
\end{proposition}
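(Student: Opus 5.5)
We have to prove Proposition \ref{explicit sk}, i.e. the explicit formula \eqref{a6} for $\int_W\eta S_K(W,\omega)$, starting from Definition \ref{aa}. The plan is the standard Mellin-transform computation used for \eqref{analytic torsion expression-g}. First we record the small-$t$ behaviour of $f(t)=\int_W\eta\, d_{K,t}$. By \eqref{expansion dt} with $k=1$,
\begin{equation*}
f(t)=\frac{a_{-1}}{t}+a_0+\mathcal{O}(t),\qquad a_{-1}=-\int_{W_K}\frac{\omega/2\pi\cdot\eta}{c_{\mathrm{max},K}(N_{X_K/X},h^{N_{X_K/X}})},\quad a_0=-\int_W\frac{\omega}{2\pi}\eta\rho_1 .
\end{equation*}
For large $t$ we check directly from \eqref{btdt} that $d_{K,t}$ is a polynomial in $t^{-1}$ without constant term: $\exp(\overline{\partial}_K\partial_K\omega/(4i\pi^2 t))$ expands as a finite sum because of degree reasons. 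Hence $f(t)=\mathcal{O}(1/t)$ as $t\to\infty$. This decay is what makes $F^2_\eta$ holomorphic for $\mathrm{Re}(s)<1$, and it also shows that $\int_1^\infty f(t)\,dt/t$ converges.

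Next we treat $F^1_\eta$. For $\mathrm{Re}(s)>1$ we split
\begin{equation*}
\int_0^1 t^{s-1}f(t)\,dt=\int_0^1 t^{s-1}\Big(f(t)-\frac{a_{-1}}{t}-a_0\Big)dt+\frac{a_{-1}}{s-1}+\frac{a_0}{s}.
\end{equation*}
The first integral is holomorphic for $\mathrm{Re}(s)>-1$. We then multiply by $1/\Gamma(s)=s+\Gamma'(1)s^2+\mathcal{O}(s^3)$, using $\Gamma'(1)=-\gamma_E$ and $1/\Gamma(s)=s+\gamma_E s^2+\dots$. Taking the derivative at $s=0$ gives
\begin{equation*}
(F^1_\eta)'(0)=\int_0^1\Big(f(t)-\frac{a_{-1}}{t}-a_0\Big)\frac{dt}{t}-a_{-1}-\Gamma'(1)a_0 .
\end{equation*}
Here the $a_{-1}/(s-1)$ term contributes $-a_{-1}$, since $\frac{d}{ds}\big|_0\frac{s}{s-1}=-1$. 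The $a_0/s$ term contributes $\frac{d}{ds}\big|_0\frac{1}{s\Gamma(s)}=\frac{d}{ds}\big|_0\frac{1}{\Gamma(s+1)}=-\Gamma'(1)$, which gives $-\Gamma'(1)a_0$. We have to be careful with this sign convention, matching it against \eqref{analytic torsion expression-g}.

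For $F^2_\eta$ the argument is shorter. The integral is holomorphic near $s=0$ and $1/\Gamma(s)$ vanishes at $s=0$ to first order, so $(F^2_\eta)'(0)=\int_1^\infty f(t)\,dt/t$.

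Finally we add the two contributions and substitute $a_{-1}$ and $a_0$. The term $-a_{-1}/t-a_0$ becomes the correction $+\frac{\omega/2\pi\cdot\delta_{W_K}}{c_{\mathrm{max},K}}\frac1t+\frac{\omega}{2\pi}\rho_1$ inside the first integral. The term $-a_{-1}$ becomes $+\int_{W_K}\frac{\omega/2\pi\cdot\eta}{c_{\mathrm{max},K}}$. The term $-\Gamma'(1)a_0$ becomes $+\Gamma'(1)\int_W\frac{\omega}{2\pi}\eta\rho_1$. This is exactly \eqref{a6}.

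The main point to verify is the uniformity of the expansion \eqref{expansion dt}, that is, an honest $\mathcal{O}(t)$ remainder with $k=1$. This is supplied by \eqref{aymptopic-bt}, which is quoted from \cite{B1992Inter} and applied on $(W,\omega)$; the fiberwise Kähler property suffices for it because $K^X$ is vertical. Everything else in the argument is bookkeeping of the $\Gamma$-function constants.
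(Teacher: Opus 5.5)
Your proposal is correct and is the standard Mellin-transform bookkeeping behind this identity. The paper gives no separate proof and simply quotes \cite[(3.16)]{BismutGoette2000}; the computation is the same one that produces \eqref{analytic torsion expression-g}, and your final constants $-a_{-1}$ and $-\Gamma'(1)a_0$ are right.

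Two slips in your justification should be fixed. First, the correct expansion is $1/\Gamma(s)=s-\Gamma'(1)s^2+\mathcal{O}(s^3)$, not $s+\Gamma'(1)s^2$. This does not affect your result: only the $a_0/s$ term depends on the $s^2$ coefficient, and you computed it via $1/\Gamma(1+s)$ with the correct sign. Second, $d_{K,t}$ is not a polynomial in $t^{-1}$, because by \eqref{dkkprime} the exponent contains the degree-zero term $-|K^X|^2/2t$. The decay you need still holds: write $d_{K,t}=-\frac{\omega}{2\pi t}\,e^{-|K^X|^2/2t}\exp\big(\frac{dK'^X}{4i\pi t}\big)$, where $0<e^{-|K^X|^2/2t}\leq 1$ and the last factor is a polynomial in $t^{-1}$. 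This gives $\int_W\eta\, d_{K,t}=\mathcal{O}(1/t)$ as $t\to+\infty$, which is all the argument requires.
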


The following result is established in \cite[Theorem 6]{B1990}, \cite[Theorem 2.12]{B1992Inter}.
\begin{theorem}
    The current ${S}_K(W,\omega)$ lies in $P_{K,W_K}^W$. Moreover
    \begin{equation}\label{ddbarsk}
        \frac{\overline{\partial}_K\partial_K}{2i\pi}{S}_K(W,\omega)=1-\frac{\delta_{W_K}}{c_{\mathrm{max},K}(N_{X_K/X},h^{N_{X_K/X}})}.
    \end{equation} 
\end{theorem}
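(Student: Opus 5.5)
The plan is to derive \eqref{ddbarsk} from the transgression of Proposition \ref{transgression bt} together with the two asymptotic regimes of $b_{K,t}$. First I will pair with a smooth $K^X$-invariant test form $\eta$. This is enough, because $b_{K,t}$, $d_{K,t}$ and all the currents involved are $K^X$-invariant, and on invariant forms $\partial_K$ and $\overline{\partial}_K$ anticommute and are adjoint, up to sign, to themselves under $\int_W$. By Proposition \ref{transgression bt},
\begin{equation*}
\frac{\partial}{\partial t}\int_W \eta\, b_{K,t}=\int_W \frac{\overline{\partial}_K\partial_K}{2i\pi t}\eta\cdot d_{K,t}.
\end{equation*}
Next I apply $\frac{\overline{\partial}_K\partial_K}{2i\pi}$ to the Mellin-type definition of Definition \ref{aa}. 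Replacing $\eta$ by $\frac{\overline{\partial}_K\partial_K}{2i\pi}\eta$ in $F^1_\eta$ and $F^2_\eta$ gives, for the relevant $s$,
\begin{equation*}
\frac{1}{\Gamma(s)}\int t^{s}\,\frac{\partial}{\partial t}\Big(\int_W\eta\, b_{K,t}\Big)dt .
\end{equation*}
Integrating by parts turns this into $-\frac{s}{\Gamma(s)}\int t^{s-1}\int_W\eta\, b_{K,t}\,dt$ plus boundary terms at $t=1$, which cancel between the two pieces. Since $s/\Gamma(s)=s^2+O(s^3)$, the derivative at $s=0$ only sees the pole parts in $s$ of the integrals.

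I then read off the pole parts from the two asymptotics. Near $t=0$, \eqref{aymptopic-bt} gives $\int_W\eta\, b_{K,t}\to\int_{W_K}\eta/c_{\max,K}$, so
\begin{equation*}
\int_0^1 t^{s-1}\int_W\eta\, b_{K,t}\,dt=\frac{1}{s}\int_{W_K}\frac{\eta}{c_{\max,K}}+O(1).
\end{equation*}
Near $t=\infty$ the form $b_{K,t}=\exp\big(\frac{\overline{\partial}_K\partial_K}{2i\pi t}\frac{\omega}{2\pi}\big)$ is a polynomial in $1/t$ with constant term $1$, because the exponent is nilpotent. Hence
\begin{equation*}
\int_1^\infty t^{s-1}\int_W\eta\, b_{K,t}\,dt=-\frac{1}{s}\int_W\eta+(\text{holomorphic at }0),
\end{equation*}
and the same polynomial structure justifies the holomorphy claimed for $F^2_\eta$. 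Differentiating $-\frac{s}{\Gamma(s)}(\cdots)$ at $s=0$ then yields
\begin{equation*}
\int_W\eta-\int_{W_K}\frac{\eta}{c_{\max,K}}
\end{equation*}
(up to an overall sign), which is \eqref{ddbarsk} in the weak sense. The wave-front and type statements, that $S_K(W,\omega)\in P^W_{K,W_K}$, come from Bismut's construction of the $\rho_j$ in \cite{B1992Inter}, which applies fiberwise.

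The main obstacle is bookkeeping rather than analysis. I need to check signs: those from moving $\partial_K$ and $\overline{\partial}_K$ across $\int_W$ on even forms, and those from the order of $\overline{\partial}_K\partial_K$ against the definition of $d_{K,t}$. I also need to confirm that the expansion \eqref{aymptopic-bt} is uniform enough to differentiate in $t$ and integrate. Here I would follow Bismut's original proof in \cite{B1990} essentially verbatim. The only new point is that $W$ is not K\"ahler, and this is harmless since $K^X$ is vertical and $\omega$ is fiberwise K\"ahler, so all localization happens along the fibres.
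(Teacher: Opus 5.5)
Your argument is correct. It is the standard transgression and Mellin-transform proof of the $\overline{\partial}_K\partial_K$ identity. The paper gives no proof of its own here: it cites Bismut \cite{B1990}, \cite{B1992Inter} and remarks that his method survives because the action is fiberwise and $\omega$ is fiberwise K\"ahler. So your proposal supplies what the paper delegates. Like the paper, you leave the wave-front statement $S_K(W,\omega)\in P^W_{K,W_K}$ to Bismut's analysis.

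\textbf{The nilpotency step is wrong.} The exponent $\frac{\overline{\partial}_K\partial_K}{2i\pi}\frac{\omega}{2\pi}$ is not nilpotent. By \eqref{dkkprime} it equals $-\frac{|K^X|^2}{2}+\frac{dK'^X}{4i\pi}$, whose degree-zero part is the function $-|K^X|^2/2$. Hence $b_{K,t}=e^{-|K^X|^2/2t}\exp\bigl(\frac{dK'^X}{4i\pi t}\bigr)$ is not a polynomial in $1/t$. This Gaussian factor is exactly what localizes $b_{K,t}$ on $W_K$ as $t\to 0$. The repair is easy: $b_{K,t}$ is an entire function of $1/t$ with bounds uniform on $\mathrm{supp}\,\eta$. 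Therefore $f(t):=\int_W\eta\, b_{K,t}=\int_W\eta+\mathcal{O}(1/t)$ as $t\to+\infty$, and this is all you use. It gives $\int_1^{+\infty}t^{s-1}f(t)\,dt=-\frac{1}{s}\int_W\eta+h_2(s)$ with $h_2$ holomorphic for $\mathrm{Re}(s)<1$, and the same bound gives the holomorphy of $F^2_\eta$. At $t\to 0$ you likewise only need $f(t)-f(0)=\mathcal{O}(t)$ from \eqref{aymptopic-bt}; no differentiation of the expansion is required. This gives $\int_0^1t^{s-1}f(t)\,dt=\frac{f(0)}{s}+h_1(s)$ with $h_1$ holomorphic for $\mathrm{Re}(s)>-1$.

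\textbf{The sign is fixed, with no ambiguity.}
\begin{itemize}
\item For even $K^X$-invariant forms, $\int_W(\overline{\partial}_K\partial_K\eta)\,\alpha=\int_W\eta\,\overline{\partial}_K\partial_K\alpha$ holds with no sign.
\item Set $\eta'=\frac{\overline{\partial}_K\partial_K}{2i\pi}\eta$. The integration by parts in the $[0,1]$ piece is valid for $\mathrm{Re}(s)>0$, and in the $[1,+\infty[$ piece for $\mathrm{Re}(s)<0$. Continue each piece separately before cancelling the two boundary terms $\pm f(1)/\Gamma(s)$.
\item Together these give $F^1_{\eta'}+F^2_{\eta'}=\frac{1}{\Gamma(s)}\bigl(f(+\infty)-f(0)-s(h_1(s)+h_2(s))\bigr)$.
\item Its derivative at $s=0$ is exactly $\int_W\eta-\int_{W_K}\eta/c_{\mathrm{max},K}(N_{X_K/X},h^{N_{X_K/X}})$.
\end{itemize}
This is \eqref{ddbarsk} with the stated sign.
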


Set $N'$ as the multiplication by $p$ on $\Lambda^{p}(T^{*}W)$.

\begin{proposition}
    For $z\in \mathbb{R}$, $z\neq 0$, we have the following identity, 
    \begin{align}\label{a7}
z^{N'/2+1}S_{zK}(W,\omega)=S_{K}(W,\omega)-\log z^{2}\frac{\omega}{2\pi}\rho_{1}.
\end{align}
\end{proposition}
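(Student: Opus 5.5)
The plan is a scaling argument in $t$ that relates $b_{zK,t}$ and $d_{zK,t}$ to $b_{K,\cdot}$ and $d_{K,\cdot}$ at a rescaled time. Then we substitute into the formula for $S_{zK}(W,\omega)$ in Proposition \ref{explicit sk}, where the explicit expression (\ref{a6}) is available.

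The first step is homogeneity. We have $\overline{\partial}_{zK}\partial_{zK}$ with $\overline{\partial}_{zK}=\overline{\partial}-2i\pi z\, i_{K^{X(1,0)}}$, and $\omega$ has degree $2$. Hence the operator $z^{N'/2}$, which multiplies a degree-$p$ form by $z^{p/2}$, should intertwine these operators, with the sign of $z$ handled through $z^2$ as below. Concretely, for a form $\alpha$ of mixed degree, the degree-$(p+2-k)$ component of $\overline{\partial}_{zK}\partial_{zK}\alpha$ picks up a factor $z^{k/2}$, where $k\in\{0,2,4\}$ counts the interior multiplications. I therefore expect the identities
\begin{equation*}
z^{N'/2}\,b_{zK,t}=b_{K,t/z},\qquad z^{N'/2}\,d_{zK,t}=\tfrac1z\, d_{K,t/z}.
\end{equation*}
To check these, write $\frac{\overline\partial_{zK}\partial_{zK}}{2i\pi t}\frac{\omega}{2\pi}$ as the sum of its degree-$2$ part, degree-$0$ part ($\propto z^2\langle\mu,K\rangle$-type terms) and degree-$1$ part, and match the powers of $z$ against $t$. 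Since $d_{K,t}$ carries $\omega/t$, this produces the extra factor $1/z$. Because the construction in fact only depends on $z^2$ (for $z<0$ one uses $-K$ together with conjugation), I will treat $z>0$ and note that the formula involves only $\log z^2$. The expansion (\ref{aymptopic-bt}) must transform compatibly. Since $c_{\max,zK}$ is homogeneous and $\delta_{W_{zK}}=\delta_{W_K}$, the current $\rho_1$ attached to $zK$ should satisfy $z^{N'/2+1}\rho_1^{zK}=\rho_1^{K}$, with a corresponding statement for the $\omega\rho_1$ term.

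The second step applies $z^{N'/2+1}$ to (\ref{a6}) for $zK$ and pairs against $\eta$, that is, we apply the dual scaling to $\eta$. The change of variables $s=t/z$ in both integrals leaves $dt/t$ invariant but moves the splitting point $1$ to $1/z$. The right side therefore becomes the expression for $S_K$ except that the cut between the regularized integral near $0$ and the integral near $\infty$ sits at $1/z$. The terms $\int_{W_K}\cdots$ and $\Gamma'(1)\omega\rho_1$ are unchanged by the first step.

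The third step is to compute the correction from moving the cut from $1$ to $1/z$. Between $1$ and $1/z$, the subtracted terms appearing in the $\int_0^1$ integrand contribute two pieces:
\begin{equation*}
-\int_1^{1/z}\frac{1}{t}\,\frac{\omega\,\delta_{W_K}}{2\pi c_{\max,K}}\,\frac{dt}{t}=\frac{\omega\,\delta_{W_K}}{2\pi c_{\max,K}}\,(z-1),
\end{equation*}
and $\log z\cdot\frac{\omega}{2\pi}\rho_1$. The first piece must cancel against the scaling of the term $\int_{W_K}\frac{\omega/2\pi\cdot\eta}{c_{\max}}$, whose behaviour is fixed by the factor $z^{N'/2+1}$ and the $1/z$ from the first step. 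What remains is $-\log z^2\,\frac{\omega}{2\pi}\rho_1$, up to sign bookkeeping.

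The main obstacle is the bookkeeping of the powers of $z$ for the local terms. This concerns the precise form of the homogeneity of $\rho_1$, of $c_{\max,zK}$ and of the $\delta$-term under $z^{N'/2+1}$. I would settle it by first verifying the scaling of the full asymptotic expansion (\ref{aymptopic-bt}) term by term using the scaling identity for $b$, which pins down every coefficient.
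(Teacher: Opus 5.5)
Your strategy is the paper's: a homogeneity identity for $d_{zK,t}$, substitution into (\ref{a6}), the change of variables in $t$, and a comparison of the two cuts. However, the homogeneity identity you write down is wrong, and every later step inherits the error. Your intertwining relation $z^{N'/2}\,\overline{\partial}_{zK}\partial_{zK}=z\,\overline{\partial}_{K}\partial_{K}\,z^{N'/2}$ is correct. What you forgot is that $z^{N'/2}$ also acts on $\omega$, which has degree $2$, so $z^{N'/2}\omega=z\,\omega$. Hence
\begin{equation*}
z^{N'/2}\Big(\frac{\overline{\partial}_{zK}\partial_{zK}}{2i\pi t}\frac{\omega}{2\pi}\Big)=\frac{\overline{\partial}_{K}\partial_{K}}{2i\pi (t/z^{2})}\frac{\omega}{2\pi},\qquad z^{N'/2}b_{zK,t}=b_{K,t/z^{2}},\qquad z^{N'/2+1}d_{zK,t}=d_{K,t/z^{2}} .
\end{equation*}
The time rescaling is therefore $t/z^{2}$, not $t/z$. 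With your $t/z$ the cut moves to $1/z$, and the $\rho_1$-piece is $\log(1/z)\,\frac{\omega}{2\pi}\rho_1$. That is half of the required $-\log z^{2}\,\frac{\omega}{2\pi}\rho_1$. The discrepancy is a factor $2$, not a sign, so your last sentence asserts the answer rather than deriving it. Your handling of the local terms is also inconsistent: in the second step the $\int_{W_K}$ term is unchanged, while in the third it must scale. Finally, $z^{N'/2+1}\rho_1^{zK}=\rho_1^{K}$ is false.

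With the correct scaling everything closes up.
\begin{itemize}
\item We have $z^{N'/2}c_{\mathrm{max},zK}=z^{q}c_{\mathrm{max},K}$ and $z^{N'/2}\delta_{W_K}=z^{q}\delta_{W_K}$, where $q$ is the complex rank of $N_{X_K/X}$. So the term $\int_{W_K}\frac{\omega/2\pi\cdot\eta}{c_{\mathrm{max},zK}}$ of (\ref{a6}) is multiplied by $z^{2}$.
\item Comparing the expansions (\ref{aymptopic-bt}) of $b_{zK,t}$ and $b_{K,t/z^{2}}$ gives $z^{N'/2}\rho_1^{zK}=z^{-2}\rho_1^{K}$. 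Hence $z^{N'/2+1}\big(\frac{\omega}{2\pi}\rho_1^{zK}\big)=\frac{\omega}{2\pi}\rho_1^{K}$, so the subtracted term in the $\int_0^1$ integrand and the $\Gamma'(1)$ term are genuinely unchanged.
\item After $s=t/z^{2}$ the cut sits at $1/z^{2}$. Between $1$ and $1/z^{2}$, the $\delta$-term contributes $(1-z^{2})\int_{W_K}\frac{\omega/2\pi\cdot\eta}{c_{\mathrm{max},K}}$. This cancels the $(z^{2}-1)\int_{W_K}\frac{\omega/2\pi\cdot\eta}{c_{\mathrm{max},K}}$ coming from the rescaled explicit term.
\item On the same interval the $\rho_1$-term contributes exactly $\log(1/z^{2})\int_W\frac{\omega}{2\pi}\eta\rho_1$, which is the stated correction.
\end{itemize}
No reduction to $z>0$ is needed. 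All forms involved have even degree, so $z^{N'/2}$ is an integer power, and $t/z^{2}>0$ also for $z<0$. The justification you give for the reduction is also wrong. The degree-$2$ part of $\overline{\partial}_{K}\partial_{K}\omega$ is odd in $K$, so $b_{-K,t}\neq b_{K,t}$, and the construction does not depend only on $z^{2}$. The term-by-term check of (\ref{aymptopic-bt}) that you propose at the end would have caught the error: the rescaled $1/t$ coefficient of $d_{zK,t}$ is $z^{2}$ times that of $d_{K,t}$, which forces $t/z^{2}$.
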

\begin{proof}
From \eqref{btdt}, clearly we have
\begin{align}\label{a4}
z^{N'/2+1}d_{zK,t}=d_{K,t/z^{2}}.
\end{align}

By \eqref{a6}, \eqref{a4}, we see that if $\eta$ is a smooth form on $W$,  then
\begin{align}\label{a5}
   \begin{split}
        \int_{W}\eta   & z^{N'/2+1}{S}_{zK}(W,\omega)=\\ 
           &\int_{0}^{1}\frac{dt}{t}\int_{W}\eta\Big({d_{K,t/z^{2}}}+\frac{z^{2} {\omega}/2\pi \cdot \delta_{W_K}}{c_{\mathrm{max},K}(N_{X_K/X},h^{N_{X_K/X}})}\frac{1}{t}+\frac{\omega}{2\pi}\rho_1\Big)\\
           +&\int_1^{+\infty}\frac{dt}{t}\int_{W}\eta {d_{K,t/z^{2}}}+\int_{X_K}\frac{z^{2} {\omega}/{2\pi}\cdot\eta}{c_{\mathrm{max},K}(N_{X_K/X},h^{N_{X_K/X}})}+\Gamma'(1)\int_{W} \frac{\omega}{2\pi}\eta \rho_1.
     \end{split}
\end{align}
From \eqref{a6}, \eqref{a5}, we get \eqref{a7}.
\end{proof}

If $A$ is an invertible $(q,q)$ matrix, put
\begin{equation}
    c_{\mathrm{max}}^{-1,\prime}(A)=\frac{\partial}{\partial b}[\mathrm{det}(A+b)]^{-1}\Big|_{b=0}.
\end{equation}

If $\eta$ is a smooth form on $W$, then for $\rho_{1}$ in \eqref{expansion dt}, by Definition \ref{b2}, $\int_X -\frac{\omega}{2\pi}\eta\rho_1$ is a smooth form on $S$.  Now we state an analogue of \cite[(2.23)]{B1992Inter} and \cite[Theorem 3.17]{BismutGoette2000}.
\begin{proposition}\label{prop rho1}
    If $\eta\in P^{W}$ is $d_K$-closed form on $W$, then we have
    \begin{equation}\label{rho1}
        \int_{X} -\frac{\omega}{2\pi}\eta\rho_1=\int_{X_K}\eta\, c_{\mathrm{\mathrm{max}},K}^{-1,\prime}(N_{X_K/X},h^{N_{X_K/X}})\qquad \kongge\text{ in } \kongge P^{S}/P^{S,0}.
    \end{equation}
\end{proposition}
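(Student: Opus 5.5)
Since $\eta$ is $d_K$-closed, we will compute $\int_X -\frac{\omega}{2\pi}\eta\rho_1$ through a scaling argument in the variable $K$. The model is Bismut's proof of \cite[(2.23)]{B1992Inter} and \cite[Theorem 3.17]{BismutGoette2000}. The idea is to differentiate the relation \eqref{a7} and then integrate the result against $\eta$ along the fibers.

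Fix $K$ and consider $zK$ for $z$ near $1$. By \eqref{ddbarsk}, for every $z$,
\begin{equation*}
\frac{\overline{\partial}_{zK}\partial_{zK}}{2i\pi}S_{zK}(W,\omega)=1-\frac{\delta_{W_K}}{c_{\mathrm{max},zK}(N_{X_K/X},h^{N_{X_K/X}})} .
\end{equation*}
We apply $z^{N'/2+1}$ to both sides. Rescaling turns $\overline{\partial}_{zK}\partial_{zK}$ into $z^{-1}\overline{\partial}_K\partial_K$. It also turns $c_{\mathrm{max},zK}$ into $z^{N'/2}$ applied to $c_{\mathrm{max},K}$, with the total power of $z$ appearing as a shift $b$. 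Substituting \eqref{a7}, we obtain an identity involving $-\log z^2\,\frac{\overline{\partial}_K\partial_K}{2i\pi}\frac{\omega}{2\pi}\rho_1$ and the $z$-dependence of $z^{N'/2+1}c^{-1}_{\mathrm{max},zK}$. Differentiating at $z=1$ should give a current identity on $W$ of the form
\begin{equation*}
-2\frac{\overline{\partial}_K\partial_K}{2i\pi}\Big(\frac{\omega}{2\pi}\rho_1\Big)= \Big(N'/2+1+\frac{d}{dz}\Big|_{z=1}\Big)\Big(1-\frac{\delta_{W_K}}{c_{\mathrm{max},zK}}\Big)+(\text{terms from } \tfrac{d}{dz}z^{N'/2+1}S_{zK}).
\end{equation*}
This is not yet the target, because we want an identity for $\rho_1$ itself rather than for $\overline{\partial}_K\partial_K\rho_1$. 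So the plan is to work at the level of the defining expansions directly.

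The direct route goes through the small-$t$ expansion \eqref{aymptopic-bt}. For a $d_K$-closed $\eta$, we consider $\int_X \eta\, d_{K,t}$. By \eqref{a4} we have $d_{K,t}=z^{N'/2+1}d_{zK,tz^2}$, and $\partial_t b_{K,t}=\frac{\overline{\partial}_K\partial_K}{2i\pi t}d_{K,t}$. We then integrate by parts in the fiber directions. Because $\eta$ is $d_K$-closed and $\omega$ restricted to $X$ is $G$-invariant, $\int_X \eta\,\overline{\partial}_K\partial_K(\cdot)$ equals $\overline{\partial}\partial\int_X\eta(\cdot)$ on $S$. That term lies in $P^{S,0}$, since $K^X$ is vertical and the contraction terms integrate to zero along $X$. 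Consequently, modulo $P^{S,0}$, the coefficient of $t^0$ in $\int_X\eta\, d_{K,t}$, namely $-\int_X\frac{\omega}{2\pi}\eta\rho_1$, should be obtainable from the $t$-expansion of $\int_X\eta\, b_{K,t}$ with the Euler operator in $t$ inserted.

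Concretely, scaling $\omega\mapsto\omega/t$ is equivalent to the combined rescaling of $K$ by $1/t$ and of forms by $N'$. Differentiating in $t$ the identity $\int_X\eta\, b_{K,t}=\int_{X_K}\eta/c_{\mathrm{max},K}+t\int_X\eta\rho_1+\dots$ is therefore compared with differentiating $\int_X \eta\, b_{K,t}$ after a homotopy $\omega\mapsto s\omega$. The latter gives $\int_X\eta\frac{\omega}{2\pi}b_{K,t}/t$ up to $\overline{\partial}\partial$-exact terms, which is exactly $-\int_X\eta\,d_{K,t}$.

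The main step, and the expected obstacle, is the localization: we must identify $\int_X\eta\, b_{K,t}$, modulo $P^{S,0}$, with $\int_{X_K}\eta\,[\det(R_K^{N}/(-2i\pi)+\text{(scaled)})]^{-1}$ as an exact function of $t$. For this I would use the equivariant Chern–Weil argument of Berline–Vergne along the fibers, in which $b_{K,t}$ is $d_K$-cohomologous to $1$ fiberwise. I would also deform $\omega$ near $W_K$ to a model Gaussian on $N_{X_K/X}$, exactly as in \cite[\S 3]{BismutGoette2000}, keeping track of the $\partial,\overline{\partial}$-exact error on $S$. In the model the computation is explicit. There $t$ enters as $c_{\mathrm{max},K}(N)+t$-shift, so the $t^1$ coefficient is $c^{-1,\prime}_{\mathrm{max},K}$, which yields \eqref{rho1}.
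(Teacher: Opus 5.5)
Your proposal has a genuine gap. It never produces a mechanism that turns the $\omega$-weighted coefficient into $c_{\mathrm{max},K}^{-1,\prime}$, and both concrete claims it relies on are incorrect.

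\textbf{The transgression gives only $\overline{\partial}\partial$-information.} The only tool you actually use is $\frac{\partial}{\partial t}b_{K,t}=\frac{\overline{\partial}_K\partial_K}{2i\pi t}d_{K,t}$, equivalently your homotopy $\omega\mapsto s\omega$. After fiber integration it gives
\[
t\frac{\partial}{\partial t}\int_X\eta\, b_{K,t}=\frac{\overline{\partial}\partial}{2i\pi}\int_X\eta\, d_{K,t}.
\]
So the homotopy derivative lies entirely in $P^{S,0}$. It is not ``$\int_X\eta\frac{\omega}{2\pi}b_{K,t}/t$ up to $\overline{\partial}\partial$-exact terms''. Comparing expansions, this identity shows that $\int_X\eta\,b_{K,t}$ is constant modulo $P^{S,0}$, and that $\int_X\eta\rho_1=-\frac{\overline{\partial}\partial}{2i\pi}\int_X\frac{\omega}{2\pi}\eta\rho_2\in P^{S,0}$. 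About the quantity you need, the $t^0$-coefficient $-\int_X\frac{\omega}{2\pi}\eta\rho_1$ of $\int_X\eta\,d_{K,t}$, it only says that its $\overline{\partial}\partial$ vanishes. That is the same defect you already noticed when working with \eqref{a7}.

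\textbf{The localization claim is false.} In the linear model on $N_{X_K/X}$, the dilation $Z\mapsto\sqrt{t}Z$ makes $b_{K,t}$ exactly independent of $t$, with fiber integral $c_{\mathrm{max},K}^{-1}$. There is no ``$t$-shift''. The $t^1$-coefficient of $\int_X\eta\,b_{K,t}$ is $\int_X\eta\rho_1\in P^{S,0}$, not $\int_{X_K}\eta\,c_{\mathrm{max},K}^{-1,\prime}$. For example, take $S$ a point, $\eta=1$, and $X=\mathbb{P}^1$ with a rotation. Then $\int_X\rho_1=0$, while $\int_{X_K}c_{\mathrm{max},K}^{-1,\prime}$ is a sum of two terms $-A^{-2}$ with $A=\pm i\theta$, hence nonzero. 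So your argument computes the wrong coefficient.

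\textbf{What is missing.} You need the step by which the non-$d_K$-closed factor $\omega$ produces $c_{\mathrm{max},K}^{-1,\prime}$. Start from
\[
\int_X-\tfrac{\omega}{2\pi}\eta\rho_1=\lim_{t\to0}\int_X\tfrac{\omega}{2\pi t}\bigl(\tfrac{\overline{\partial}_K\partial_K}{2i\pi t}\tfrac{\omega}{2\pi}\bigr)\eta\,b_{K,t}.
\]
Use $\frac{1}{2\pi}\partial_K\omega=K'^{X(1,0)}$ and $\frac{1}{2\pi}\overline{\partial}_K\omega=-K'^{X(0,1)}$, where $K'^X$ is the metric dual of $K^X$. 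Together with $\overline{\partial}_K\eta=0$ and $\int_X\overline{\partial}_K=\overline{\partial}^S\int_X$, this rewrites the integral as
\[
\frac{1}{2i\pi}\int_X\frac{K'^{X(0,1)}K'^{X(1,0)}}{t^{2}}\eta\,b_{K,t}-\overline{\partial}^S\frac{1}{2i\pi}\int_X\frac{\omega}{2\pi t}\frac{K'^{X(1,0)}}{t}\eta\,b_{K,t}.
\]
After rescaling near $W_K$, the first term converges to
\[
\frac{1}{2i\pi}\int_{X_K}\eta\int_{N_{X_K/X,\mathbb{R}}}\theta_0^{(0,1)}\wedge\theta_0^{(1,0)}\wedge\exp\bigl(\tfrac{d\theta_0}{4i\pi}\bigr)e^{-|m^{TX}(K)Z|^2/2}.
\]
This is an explicit Gaussian integral equal to $\int_{X_K}\eta\,c_{\mathrm{max},K}^{-1,\prime}$, and it is where the answer comes from. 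For the second term, a parity argument (odd polynomials integrated against a Gaussian) shows that its would-be divergent $t^{-1}$ and $t^{-1/2}$ coefficients vanish. Hence it converges to $\overline{\partial}\sigma_1$ with $\sigma_1$ a smooth form. This is indispensable, because $P^{S,0}$ is not closed under limits. Neither of these steps appears in your proposal.
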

\begin{proof}
  By  \cite[(2.23)]{B1992Inter}, 
   \begin{equation}\label{b3}
        \int_{W} -\frac{\omega}{2\pi}\eta\rho_1=\int_{W_K}\eta\, c_{\mathrm{\mathrm{max}},K}^{-1,\prime}(N_{X_K/X},h^{N_{X_K/X}}).
    \end{equation}
    If $S$ is a compact K\"ahler manifold, our formula \eqref{rho1} follows from \eqref{b3} and the $\partial\overline{\partial}$-lemma. In general, by (\ref{btdt}), (\ref{aymptopic-bt}), we have
    \begin{equation}\label{calculation rho1}
        \begin{split}
        \int_{X} -\frac{\omega}{2\pi}\eta\rho_1=&\lim_{t\rightarrow 0}\derivative{t}\Big(\int_{X} -\frac{\omega}{2\pi}\eta \exp\Big(\frac{\overline{\partial}_K\partial_K}{2\pi i t}\frac{\omega}{2\pi}\Big)\Big)\\
          =&  \lim_{t\rightarrow 0}\int_{X} \frac{\omega}{2\pi t}\Big(\frac{\overline{\partial}_K\partial_K}{2\pi i t}\frac{\omega}{2\pi}\Big)\eta \exp\Big(\frac{\overline{\partial}_K\partial_K}{2\pi i t}\frac{\omega}{2\pi}\Big).
        \end{split}
    \end{equation}
    
If $\alpha$ is a smooth form on $W$, since $K^X\in T_{\rr}X$, 
\begin{equation}\label{int dbar}
    \int_{X} \overline{\partial}_K\alpha=\int_{X} \overline{\partial}\alpha-2i\pi\int_{X} i_{K^{X(1,0)}}\alpha=\int_{X} \overline{\partial}\alpha=\overline{\partial}^S \int_{X} \alpha.
\end{equation}

Let $K^{\prime X}$ be the one form on $W$ which is dual to $K^{X}$ with respect to $g^{TW}$, then we have
\begin{equation}\label{k' ikw}
    K^{\prime X}=i(i_{K^{X(1,0)}}-i_{K^{X(0,1)}})\omega.
\end{equation}
By  (\ref{k' ikw}), we obtain
\begin{align}\label{partial muk}
    &\frac{1}{2\pi}\partial_K{\omega}= K'^{X(1,0)},  &&\frac{1}{2\pi}\overline{\partial}_K{\omega}=- K'^{X(0,1)}.
\end{align}

Using the fact that $\eta$ is $\overline{\partial}_K$-closed, by (\ref{dkpingfangdengyu0}), (\ref{int dbar}) and (\ref{partial muk}) we see that
\begin{equation}\label{split of rho1}
    \begin{split}
        \int_{X} \frac{\omega}{2\pi t}\Big(\frac{\overline{\partial}_K\partial_K}{2\pi i t}\frac{\omega}{2\pi}\Big)\eta \exp\Big(\frac{\overline{\partial}_K\partial_K}{2\pi i t}\frac{\omega}{2\pi}\Big)\qquad\qquad\qquad\\ 
        =  \frac{1}{2i\pi}\int_{X} \frac{K'^{X(0,1)}}{{t}}\frac{K'^{X(1,0)}}{{t}}\eta \exp\Big(\frac{\overline{\partial}_K\partial_K}{2\pi i t}\frac{\omega}{2\pi}\Big)\quad\\ 
        -\dba^S \frac{1}{2i\pi}\int_{X} \frac{\omega}{2\pi t}\frac{K'^{X(1,0)}}{  t}\eta \exp\Big(\frac{\overline{\partial}_K\partial_K}{2\pi i t}\frac{\omega}{2\pi}\Big).
    \end{split}
\end{equation}

Now we explain how to calculate the limit of  the right hand side of \eqref{split of rho1}.
By \eqref{partial muk}, 
    \begin{equation}\label{dkkprime}
        \frac{\overline{\partial}_K\partial_K}{2i\pi t}\frac{\omega}{2\pi}=-\frac{|K^{X}|^2}{2t}+\frac{dK'^X}{4i\pi t}.
    \end{equation}
    Therefore for $x\notin W_K$, when $t\rightarrow 0$, the integral term in (\ref{aymptopic-bt}) decays exponentially. So the study of the integral in (\ref{aymptopic-bt}) can be localized on a neighborhood of $W_K$.

    Given $\varepsilon>0$, let $U_{\varepsilon}$ be the $\varepsilon$-neighborhood of $W^{K}$ in $N_{X_K/X,\rr}$. There exists $\varepsilon_0$ such that for $0<\varepsilon<\varepsilon_0$, the exponential map $(y,z)\in N_{X_K/X}\rightarrow \exp_{y}^X(z)$ is a diffeomorphism from $U_{\varepsilon}$ to a neighborhood $V_{\varepsilon}$ of $W_K$ in $W$. We denote by $\widetilde{V}_{\varepsilon}$ the fiber of the fibration $V_{\varepsilon}\rightarrow S$. With this identification, let $\overline{k}(y,z)$ be the function such that 
    \begin{equation}
        dv_{X}(y,Z)=\overline{k}(y,Z)dv_{X_K}(y)dv_{N_{X_K/X,\rr}}(Z).
    \end{equation} 
    Here $dv_X$, $dv_{X_K}$ are the volume forms on $X$ and $X_K$, and $dv_{N_{X_K/X,\rr}}$ is the Euclidean volume form on $N_{X_K/X,\rr}$. Then $\overline{k}(y,0)=1$ for any $y\in W_K$.
    
    Let $e^1,...,e^{2\ell}$ be a locally orthonormal frame of $T_{\rr}^*X$, for $\beta\in \Omega^{\bullet}(W)$, let $[\beta]^{\mathrm{\mathrm{max}}}$ denote the coefficient of $e^1\wedge...\wedge e^{2\ell}$ of $\beta$.  For $t>0$, let $\delta_t$ be the dilation of $N_{X_K/X,\rr}$, defined by $\delta_t(y,Z)=(y,\sqrt{t}Z).$ Then we have
    \begin{equation}\label{calculation of bt}
        \begin{split}
            &\int_{\widetilde{V}_{\varepsilon}} \frac{K'^{X(0,1)}}{{t}}\frac{K'^{X(1,0)}}{{t}}\eta \exp\Big(\frac{\overline{\partial}_K\partial_K}{2\pi i t}\frac{\omega}{2\pi}\Big)\\
            =&\int_{X_K}\int_{\substack{|Z|<\varepsilon/\sqrt{t}\\ Z\in N_{X_K/X,\rr}}}\Big[\delta_t^*\Big(\frac{K'^{X(0,1)}}{{t}}\frac{K'^{X(1,0)}}{{t}}\eta \exp\Big(-\frac{|K^{X}|^2}{2t}+\frac{dK'^X}{4i\pi t}\Big)\Big)\Big]^{\mathrm{max}}(y,Z)\\
           &\qquad\qquad\qquad\qquad\qquad\qquad\qquad\qquad \cdot \overline{k}(y,\sqrt{t}Z)dv_{X_K}(y)dv_{N_{X_K/X,\rr}}(Z).
        \end{split}
    \end{equation}
    
    Let $\nabla^{N_{X_K/X,\rr}}$ be the connection on $N_{X_K/X,\rr}$ induced by $\nabla^{TX}$. Let $\pi_N: N_{X_K/X,\rr}\rightarrow W_K$ be the obvious projection.  With respect to $\nabla^{N_{X_K/X,\rr}}$, we have the canonical splitting of the tangent bundle of $N_{X_K/X,\rr}.$
\begin{equation}\label{split of N}
    \begin{split}
    &TN_{X_K/X,\rr}=T^HN_{X_K/X,\rr}\oplus \pi_N^*N_{X_K/X,\rr},\\
   & T^HN_{X_K/X,\rr}\simeq \pi_N^{*}T_{\rr}W_K.
    \end{split}
\end{equation}

For $y\in W_K$, we take $Y_1,Y_1'\in T_{\rr,y}W_K$, $Y^V,Y'^V\in N_{X_K/X,\rr}$. By the identification in (\ref{split of N}), we can lift $Y_1, Y_1'$ to $Y_1^H,Y_1'^H$, which are  sections of $T^HN_{X_K/X,\rr}$. Then $Y=Y_1^H+Y^V, Y'=Y_1'^H+Y'^V$ are sections of $TN_{X_K/X,\rr}$ along $N_{X_K/X,\rr,y}.$

For $z\in N_{X_K/X}$, we identify $Z=z+\bar{z}\in N_{X_K/X,\rr}$ with $z$.
Let $\theta_0$ be the one form on the total space $\mathcal{N}$ of $N_{X_K/X,\rr}$ given by 
\begin{equation}
    \theta_0(Y)_{(y,Z)}=\product{m^{TX}(K)Z,Y^V}_y \qquad \kongge \mathrm{for}\kongge Y=Y_1^H+Y^V.
\end{equation}
From the arguments before \cite[Lemma 7.16]{BGV}, we calculate that for $(y,Z)\in N_{X_K/X,\mathbb{R}}$,
\begin{equation}
    d\theta_0(Y,Y')_{(y,Z)}=2\product{m^{TX}(K)Y^V,Y'^V}_y-\product{R^{TX}(Y_1^H,Y_1'^H)(m^{TX}(K)Z),Z}_y.
\end{equation}
By \cite[Lemma 7.15]{BGV}, we have
\begin{equation}\label{expansion of K}
    \begin{split}
        &\frac{1}{t}\delta_t^*K'^X=\theta_0+\mathcal{O}(t^{1/2}),\\
        &\frac{1}{t}|K^X(y,\sqrt{t}Z)|^2=|m^{TX}(K)Z|^2+\mathcal{O}(t^{1/2}).
    \end{split}
\end{equation}
From the first equation in (\ref{expansion of K}), we also have
\begin{equation}\label{expansion of dk}
    \frac{1}{t}\delta_t^*d K'^X=d\theta_0+\mathcal{O}(t^{1/2}).
\end{equation}

From (\ref{calculation of bt}), (\ref{expansion of K}) and (\ref{expansion of dk}), we get
 \begin{equation}
    \begin{split}
    &\int_{X} \frac{K'^{X(0,1)}}{{t}}\frac{K'^{X(1,0)}}{{t}}\eta \exp\Big(\frac{\overline{\partial}_K\partial_K}{2\pi i t}\frac{\omega}{2\pi}\Big)\\ 
    =&\int_{X_K}\eta\int_{Z\in N_{X_K/X,\rr}}\theta_0^{(0,1)}\wedge \theta_0^{(1,0)}\wedge\exp\Big(\frac{d\theta_0}{4i\pi}\Big)\cdot \exp(-\frac{|m^{TX}(K)Z|^2}{2})+\mathcal{O}(t^{1/2}).
    \end{split}
 \end{equation}
Since $\theta_0^{(0,1)}\wedge \theta_0^{(1,0)}\wedge\exp(\frac{d\theta_0}{4i\pi})$ is an even polynomial in $Z$ and $m^{TX}(K)$ is non-degenerate on $N_{X_K/X,\rr}$, we can compute this Gaussian integral explicitly as in \cite[(47)-(50)]{B1990} to obtain
\begin{equation}\label{asy expansion 1}
    \begin{split}
   \lim_{t\to 0} \frac{1}{2i\pi}\int_{X} \frac{K'^{X(0,1)}}{{t}}\frac{K'^{X(1,0)}}{{t}}\eta \exp\Big(\frac{\overline{\partial}_K\partial_K}{2\pi i t}\frac{\omega}{2\pi}\Big)
        =\int_{X_K}\eta c_{\mathrm{\mathrm{max}},K}^{-1,\prime}(N_{X_K/X},h^{N_{X_K/X}}).
    \end{split}
\end{equation}
Similarly, by \eqref{expansion of K} and \eqref{expansion of dk},
\begin{equation}\label{aym expansion 2}
    \begin{split}
  &\lim_{t\to 0}\frac{t}{2i\pi}\int_{X} \frac{\omega}{2\pi t}\frac{K^{\prime X(1,0)}}{ t}\eta \exp\Big(\frac{\overline{\partial}_K\partial_K}{2\pi i t}\frac{\omega}{2\pi}\Big)\\ 
  =&\frac{1}{2i\pi}\int_{X_K}\frac{\omega}{2\pi}\eta\int_{Z\in N_{X_K/X,\rr}} \theta_0^{(1,0)}\wedge\exp(\frac{d\theta_0}{4i\pi})\cdot \exp(-\frac{|m^{TX}(K)Z|^2}{2})=0.
    \end{split}
\end{equation}
The limit vanishes because it equals to an integral of the product an odd polynomial with a Gaussian function. Moreover by expanding $\frac{1}{t}\delta_t^*K'^X$, $\frac{1}{t}|K^X(y,\sqrt{t}Z)|^2$ as in (\ref{expansion of K}) to higher degrees, in the variable $t^{1/2}$, we see that there exist smooth forms  $\sigma_{1/2}$, $\sigma_1$ on $S$ such that 
\begin{align}\label{b4}
\frac{t}{2i\pi}\int_{X} \frac{\omega}{2\pi t}\frac{K^{\prime X(1,0)}}{ t}\eta \exp\Big(\frac{\overline{\partial}_K\partial_K}{2\pi i t}\frac{\omega}{2\pi}\Big)=\sigma_{1/2}t^{1/2}+\sigma_1 t+\mathcal{O}(t^{3/2}).
\end{align}

Observe that the terms appearing in $\sigma_{1/2}$ are of the form
\begin{align}\label{aj}
\int_{X_{K}}\varphi\int_{Z\in N_{X_{K}/X,\mathbb{R}}} Z^{\alpha}\beta\cdot\exp\Big(-\frac{|m^{TX}(K)Z|^2}{2}\Big)
\end{align}
with $\varphi$ a smooth form on $W_{K}$,   $\beta\in \Lambda^{\bullet}(N_{X_{K}/X,\mathbb{R}}^{*})$, and $|\alpha|+\deg \beta$  odd. The dimension of $N_{X_{K}/X,\mathbb{R}}$ is even, but \eqref{aj} is either an integral of an odd function, or an integral of an odd form, so we get 
\begin{align}\label{b5}
\sigma_{1/2}=0.
\end{align}
By using (\ref{calculation rho1}), (\ref{split of rho1}), (\ref{asy expansion 1})-(\ref{b5}), we obtain
\begin{equation}\label{sigma tau}
    \begin{split}
        \int_{X} -\frac{\omega}{2\pi}\eta\rho_1=\int_{X_K}\eta c_{\mathrm{\mathrm{max}},K}^{-1,\prime}(N_{X_K/X},h^{N_{X_K/X}})-\dba\sigma_1.
    \end{split}
\end{equation}
This finishes the proof of Proposition \ref{prop rho1}.
\end{proof}

Now we make \textbf{Assumption (*)} as in Section 2.1. By \eqref{dmuk}, we have 
\begin{align}\label{a9}
d_{K}\left( \omega-2i\pi \left\langle \mu,K \right\rangle \right)=0.
\end{align} 

Using essentially the equation \eqref{a9}, the results in this section holds true if we replace $\omega$ by $2i \pi \left\langle \mu,  K \right\rangle$. In particular, 
 if $\eta\in P^{W}$ is $d_K$-closed form on $W$, then 
    \begin{equation}\label{bo}
        \int_{X} -i\left\langle \mu,K \right\rangle \eta {\rho}_1=\int_{X_K}\eta c_{\mathrm{\mathrm{max}},K}^{-1,\prime}(N_{X_K/X},h^{N_{X_K/X}})\qquad \kongge\text{ in } \kongge P^{S}/P^{S,0}.
    \end{equation}

For $t>0$, set 
\begin{align}\label{a8}
\widetilde{d}_{K,t}=-i \frac{\left\langle \mu,K \right\rangle}{t}\exp\left(\frac{\overline{\partial}_K\partial_K}{2\pi t}\left\langle \mu,K \right\rangle\right). 
\end{align}
Then by (\ref{aymptopic-bt}), we see that if $\eta$ is a smooth form on $W$, as $t\rightarrow 0$, 
\begin{equation}\label{bv}
    \int_{W}\eta \widetilde{d}_{K,t}=-\int_{W_K}\frac{i\left\langle \mu,K \right\rangle\eta}{c_{\mathrm{\mathrm{max}},K}(N_{X_K/X},h^{N_{X_K/X}})}\frac{1}{t}-\sum_{j=1}^{k}\int_{W}i\left\langle \mu,K \right\rangle\eta \rho_j t^{j-1}+\mathcal{O}(t^k).
\end{equation}
Indeed by using the same arguments as in the proof of Proposition \ref{prop rho1}, we have 
\begin{equation}\label{bw}
    \int_{X}\eta \widetilde{d}_{K,t}=-\int_{X_K}\frac{i\left\langle \mu,K \right\rangle\eta}{c_{\mathrm{\mathrm{max}},K}(N_{X_K/X},h^{N_{X_K/X}})}\frac{1}{t}-\sum_{j=1}^{k}\int_{X}i\left\langle \mu,K \right\rangle\eta \rho_j t^{j-1}+\mathcal{O}(t^k).
\end{equation}

Let $\widetilde{S}_K(W,\omega)$ be the current as in Definition \ref{aa}, but with $\widetilde{d}_{K,t}$ in the place of $d_{K,t}$, we have the following result analogue to \cite[Theorem 3.18]{BismutGoette2000}.

\begin{theorem}\label{ab}
If $\eta\in P^{W}$ is a $d_{K}$-closed form, the following identify holds, 
\begin{align}\label{ac}
\int_{X}\eta \widetilde{S}_{K}(W,\omega)=\int_{X}\eta {S}_{K}(W,\omega) \qquad\qquad\text{ in } P^{S}/P^{S,0}.
\end{align}
\end{theorem}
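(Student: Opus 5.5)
We will compare the two currents through a one-parameter deformation that interpolates between $\omega/2\pi$ and $i\langle\mu,K\rangle$. For $s\in[0,1]$, set
\[
\omega_s=(1-s)\frac{\omega}{2\pi}+s\,i\langle\mu,K\rangle .
\]
By \eqref{a9}, $\overline{\partial}_K\partial_K\frac{\omega}{2\pi}=\frac{1}{2i\pi}\cdot\overline{\partial}_K\partial_K(2i\pi\langle\mu,K\rangle)\cdot\frac{1}{2\pi}$ up to the normalisations in \eqref{btdt} and \eqref{a8}. Indeed, since $d_K(\omega-2i\pi\langle\mu,K\rangle)=0$ and $\omega-2i\pi\langle\mu,K\rangle$ splits into $(1,1)$ and $(0,0)$ parts, the $\partial_K$- and $\overline{\partial}_K$-components vanish separately. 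Hence $\partial_K\omega=2i\pi\partial_K\langle\mu,K\rangle$ and $\overline{\partial}_K\omega=2i\pi\overline{\partial}_K\langle\mu,K\rangle$. This is exactly what makes $b_{K,t}$ equal to the exponential appearing in $\widetilde d_{K,t}$. So the ``$b$-part'' $\exp\big(\frac{\overline{\partial}_K\partial_K}{2i\pi t}\frac{\omega}{2\pi}\big)$ is common to $d_{K,t}$ and $\widetilde d_{K,t}$, and
\[
d_{K,t}-\widetilde d_{K,t}=-\frac{1}{t}\Big(\frac{\omega}{2\pi}-i\langle\mu,K\rangle\Big)b_{K,t}.
\]
Write $\alpha=\frac{\omega}{2\pi}-i\langle\mu,K\rangle$. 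It satisfies $\partial_K\alpha=0$ and $\overline{\partial}_K\alpha=0$ by the computation above.

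The key step is to show that, for $\eta\in P^W$ that is $d_K$-closed, $\int_X\eta\,\alpha\, b_{K,t}$ is, in $P^S/P^{S,0}$, independent of $t$ up to an explicit polar part. Using $\partial_K\alpha=\overline{\partial}_K\alpha=0$ and the transgression of Proposition \ref{transgression bt}, we get
\[
\frac{\partial}{\partial t}\int_X\eta\,\alpha\,b_{K,t}=\int_X\eta\,\alpha\,\frac{\overline{\partial}_K\partial_K}{2i\pi t}d_{K,t}.
\]
We move $\overline{\partial}_K\partial_K$ onto the closed form $\eta\alpha$; the boundary terms become $\overline{\partial}^S\int_X(\cdots)$ and $\partial^S\int_X(\cdots)$ by \eqref{int dbar}, and its $\partial$-analogue. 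Hence this derivative lies in $P^{S,0}$. Therefore $\int_X\eta\alpha b_{K,t}$ is constant modulo $P^{S,0}$. By \eqref{aymptopic-bt}, its limit as $t\to0$ is $\int_{X_K}\eta\,\alpha/c_{\max,K}$. But $\alpha|_{W_K}=\frac{\omega}{2\pi}-i\langle\mu,K\rangle$ restricted to $W_K$, and on $W_K$ \eqref{dmuk} gives $d\langle\mu,K\rangle=0$. Combining the $\rho_1$-formulas \eqref{rho1} and \eqref{bo}, the coefficient of $t$ in the expansion of $\int_X\eta\alpha b_{K,t}$ is $\int_X\eta\alpha\rho_1=0$ in $P^S/P^{S,0}$; similarly all higher coefficients vanish modulo $P^{S,0}$ by the $t$-independence.

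Next we plug the difference into Definition \ref{aa} through the explicit formula of Proposition \ref{explicit sk} and its analogue for $\widetilde S_K$ using \eqref{bw}. We have
\[
\int_X\eta(S_K-\widetilde S_K)=-\int_0^1\frac{dt}{t}\Big(\frac{1}{t}\int_X\eta\alpha b_{K,t}-\frac{1}{t}\int_{X_K}\frac{\eta\alpha}{c_{\max,K}}-\int_X\eta\alpha\rho_1\Big)-\int_1^\infty\frac{dt}{t^2}\int_X\eta\alpha b_{K,t}+\cdots .
\]
Modulo $P^{S,0}$, the integrands become $\frac{1}{t}\int_{X_K}\eta\alpha/c_{\max,K}$ plus zero. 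The integrals then cancel against the constant terms $\int_{X_K}\eta\alpha/c_{\max,K}$ and $\Gamma'(1)\int\eta\alpha\rho_1$, exactly as in the Mellin-transform computation of a function $c/t$, whose regularized value is zero.

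The main obstacle is analytic. Integrals in $t$ of elements of $P^{S,0}$ need not lie in $P^{S,0}$, since $P^{S,0}$ is not closed when $S$ is noncompact or non-K\"ahler. We will therefore write the exactness explicitly as $\partial^S A_t+\overline{\partial}^S B_t$ with $A_t$, $B_t$ given by fibre integrals of $\eta\alpha\, d_{K,t}$-type expressions. We then check that $\int_0^1 A_t\,dt/t$ and $\int_1^\infty A_t\,dt/t$ (and likewise for $B_t$) converge after subtracting asymptotic terms. For this we use the localization and Gaussian rescaling of \eqref{calculation of bt}–\eqref{b5}: the odd-parity vanishing kills the singular $t^{-1/2}$-type terms, as was done for $\sigma_{1/2}$. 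For large $t$, exponential convergence is not needed, since the factor $1/t^2$ suffices.
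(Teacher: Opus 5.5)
Your proposal is correct, and its algebraic core is the paper's. Since $\partial_K$ and $\overline{\partial}_K$ annihilate $\omega-2i\pi\langle\mu,K\rangle$ separately, the two exponentials coincide, so $\widetilde d_{K,t}-d_{K,t}=\frac1t\,\alpha\, b_{K,t}$. Moreover $f(t):=\int_X\eta\alpha b_{K,t}$ satisfies $f'(t)=\frac{1}{2i\pi t}\overline{\partial}\partial\int_X\eta\alpha d_{K,t}$.

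The difference is in the analytic step. The paper (``as in Proposition \ref{9t}'') works directly with the Mellin transforms of Definition \ref{aa}. Modulo $d\Omega(S,\mathbb{C})$, the two differences are $\pm\frac{1}{\Gamma(s)(s-1)}f(1)$; they cancel, so the derivative at $s=0$ vanishes modulo exact forms. The obstacle you raise never arises there, for two reasons. First, $f'(t)=d\bigl(\frac{1}{2i\pi t}\partial\int_X\eta\alpha d_{K,t}\bigr)$ is $d$-exact. Second, unlike $P^{S,0}$, the space $d\Omega(S,\mathbb{C})$ is closed under uniform convergence on compacts (de Rham). At the end one keeps $(p,p)$ components, and the $(p,p)$ part of $d\beta$ is $\partial\beta^{(p-1,p)}+\overline{\partial}\beta^{(p,p-1)}\in P^{S,0}$. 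That route needs no primitives and no use of \eqref{rho1} or \eqref{bo}.

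Your route yields explicit primitives. The price is small-$t$ expansions $f(t)=\sum_{j\geq0}a_jt^j$ and $h(t):=\frac{1}{2i\pi t}\int_X\eta\alpha d_{K,t}=\sum_{j\geq -2}h_jt^j$, uniform in $C^2$ on compacts of $S$; this is where your parity argument enters. With these, matching coefficients in $f'=\overline{\partial}\partial h$ gives $\overline{\partial}\partial h_{-2}=\overline{\partial}\partial h_{-1}=0$ and $a_j=\frac1j\,\overline{\partial}\partial h_{j-1}$ for $j\geq1$. Hence
$f(t)-a_0-a_1t=\overline{\partial}\partial\int_0^t\bigl(h-h_{-2}u^{-2}-h_{-1}u^{-1}-h_0\bigr)\,du=\mathcal{O}(t^2)$.
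This makes all your $t$-integrals converge with explicit $\overline{\partial}\partial$-primitives. It also gives $\int_X\eta\alpha\rho_1=a_1\in P^{S,0}$ directly, so \eqref{rho1} and \eqref{bo} are not needed in your route either.

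Minor points: the interpolating family $\omega_s$ is never used. Your first normalisation display should read $\overline{\partial}_K\partial_K\frac{\omega}{2\pi}=i\,\overline{\partial}_K\partial_K\langle\mu,K\rangle$.
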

\begin{proof}
    By Proposition \ref{transgression bt} and \ref{a9}, The cohomology class of $\widetilde{d}_{K,t}-d_{K,t}$ does not depend on $t$. 
The proof of Theorem \ref{ab} proceeds as the proof of Proposition \ref{9t}.
\end{proof}

\section{The infinitesimal equivariant analytic torsion forms on the vector bundle}\label{em}
Inspired by \cite{B1990Kuszul}, \cite{B1994Equivariantshort}, \cite[\S 5]{B1997}, we will study the equivariant analytic torsion forms when the K\"ahler fibration  is a holomorphic Hermitian vector bundle $M$.   In Section \ref{b7}, we construct a family of Bismut superconnections $\mathscr{B}_{K,u}$ on the total space of $M$. In Section \ref{b9}, we introduce the $I$-genus of Bismut and Goette \cite{BismutGoette2000}. In Section \ref{b8}, we recall the Mehler's formula. In Section \ref{ba}, we introduce the generalized supertraces of the heat kernels of $\mathscr{B}_{K,u}$, and we  relate it to the $I$-genus. Finally, in Section \ref{bb}, we establish various identities on the generalized supertraces, which will be needed in Sections \ref{sec intermediate right}-\ref{b6}.

\subsection{Vector bundles and superconnections}\label{b7}
Let $B$ be a complex manifold and let $M$ be holomorphic vector bundle on $B$ of dimension $m$. Denote by $\pi$ the obvious projection from $M$ to $B$. Let $G$ be a compact Lie group with Lie algebra $\mathfrak{g}$. Let $G$ act holomorphically, and fiberwisely on the vector bundle $M$. Let $m^{M}\in C^{\infty}(B,\mathfrak{g}^*\otimes\ENDO(M))$ be the moment map (cf. \eqref{moment})  on $M$.

Let $h^{M}$ be a $G$-invariant Hermitian metric on $M$. Let $\nabla^{M}$ be the holomorphic Hermitian connection on $M$ with curvature $R^{M}$. We denote by $h^{\Lambda(\overline{M}^*)}$ and $\nabla^{\Lambda(\overline{M}^*)}$ the Hermitian metric and connection on $\Lambda(\overline{M}^*)$ induced by $h^{M}$ and $\nabla^M$. 

\begin{definition}
    For $y\in B$, let $I_y$ be the space of smooth sections of $\Lambda(\overline{M}^*)$ along the fiber $M_{\rr,y}$. 
    Let $I$ be the $\zz$-graded vector bundle on $B$, which is formed by $I_{y}$.
\end{definition}

 The group $G$ acts on $I$ via \eqref{g on section}. Let $dv_M$ be the standard Euclidean volume form on the fiber of $M$, we can equip $I$ with the $G$-invariant Hermitian metric
\begin{equation}\label{product I}
    f,g\in I_y\rightarrow \product{f,g}=\Big(\frac{1}{2\pi}\Big)^{m}\int_{M}\product{f,g}_{h^{\Lambda(\overline{M}^*)}}dv_M.
\end{equation}

Let $\dba^{M_y}$ be the fiberwise Dolbeaut operator acting on $I_y$ and let $\dba^{M_y,*}$ be its formal adjoint with respect to the product (\ref{product I}). Set
\begin{equation}\label{DM}
    D_y^{M}=\dba^{M_y}+\dba^{M_y,*}.
\end{equation}

The connection $\nabla^{M}$ defines a horizontal subbundle $T^HM$ of $TM$. For $U\in T_{\rr}B$, denote by $U^H\in T_{\rr}^HM$ the horizontal lift of $U$. For a smooth section $s$ of $I$, put 
\begin{equation}
    \nabla_{U}^{I}s=\nabla_{U^H}^{\Lambda(\overline{M}^*)}s.
\end{equation}
Then $\nabla^{I}$ is a Hermitian connection on $I$ with respect to the product (\ref{product I}).

For $z\in M$, set $Z=z+\bar{z}\in M_{\rr}$.
\begin{definition}
    For $K\in \mathfrak{g}$, for $u\geq 1$, let $\mathscr{B}_{K,u}$ be the superconnection on $I$ given by 
    \begin{equation}\label{bc}
        \mathscr{B}_{K,u}=D^M+\nabla^I-\frac{c(R^M Z)}{2\sqrt{2}}+\frac{c(m^M(K)Z)}{2\sqrt{2}}
        \Big(1-\frac{1}{u}\Big).
    \end{equation}
    As in \eqref{Bismut superconnection},  $\scrbku$ splits as
    \begin{equation}
        \scrbku=\scrbku''+\scrbku'.
    \end{equation}
\end{definition}
\begin{remark}
    For $K\in \mathfrak{g}$, the infinitesimal vector field $K^M$ on the total space of $M$ is given by 
        \begin{equation}
            K^{M}(x)=\frac{\partial}{\partial t}\Big|_{t=0}e^{tK}\cdot x \kongge\kongge \mathrm{for}\kongge x\in M.
        \end{equation}
        So we can compute directly to obtain that 
        \begin{equation}\label{km}
            K^M(Z)=m^{M}(K)Z.
        \end{equation}

        By proceeding as in \cite[\S 15]{B1997}, we see that in a toy model,  the superconnection $ \mathscr{B}_{K,u}$ is 
        just the operator $\mathcal{A}_{u,1}$ in \eqref{definition cvt}. In particular, the factor $c(R^MZ)$ in \eqref{bc} corresponds to $c(T)$ in \eqref{definition cvt}, and by \eqref{km}, $c(m^M(K))Z$ corresponds to $c(K^X)$.
\end{remark}

Let $J^M$ be the complex structure on $M$, set
\begin{equation}\label{def of tilde mu}
\widetilde{\mu}(K)(Z)=\frac{1}{2}\product{J^Mm^M(K)Z,Z}.
\end{equation}
Then $\widetilde{\mu}(K)$ defines a function on the total space of $M$. By using (\ref{dmoment map}), we have
\begin{equation}\label{dmutilde}
    d\widetilde{\mu}(K)=\langle J^{M}m^M(K)Z,\cdot\rangle.
\end{equation}

\begin{proposition}
    We have
    \begin{equation}\label{scrbku commutator}
        \begin{split}
            &\Big[\scrbku'',i\frac{\widetilde{\mu}(K)}{u}\Big]=-2u\frac{\partial}{\partial u}\scrbku''\mathrm,\quad\Big[\scrbku',i\frac{\widetilde{\mu}(K)}{u}\Big]=2u\frac{\partial}{\partial u}\scrbku'.\\
            &\scrbku'^{2}=0,\quad \scrbku''^{2}=0,\quad
            \scrbku^{2}=[\scrbku'',\scrbku'].\\
            &[\scrbku'',\scrbku^2]=[\scrbku',\scrbku^2]=0.
            \end{split}
    \end{equation}
\end{proposition}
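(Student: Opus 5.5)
We treat the three lines of \eqref{scrbku commutator} separately, following the proof of Proposition \ref{a2}, which is the model case. Since the commutators with a function only see its differential, we first compute $[\scrbku'',\widetilde{\mu}(K)]$ and $[\scrbku',\widetilde{\mu}(K)]$. Only $\dba^{M}$ and $\dba^{M,*}$ fail to commute with multiplication by $\widetilde{\mu}(K)$. Indeed, $\nabla^I$ differentiates along horizontal lifts, and $\widetilde{\mu}(K)$ is constant along $T^HM$: by \eqref{dmutilde}, $d\widetilde{\mu}(K)$ is the vertical one form $\product{J^Mm^M(K)Z,\cdot}$.

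Step 1: compute the fiberwise commutators. Splitting $J^Mm^M(K)Z$ into types and using \eqref{clifford action} and \eqref{dmutilde}, we expect
\[
[\dba^M,i\widetilde{\mu}(K)]=-\tfrac{1}{\sqrt2}c\big((m^M(K)Z)^{(1,0)}\big),\qquad
[\dba^{M,*},i\widetilde{\mu}(K)]=\tfrac{1}{\sqrt2}c\big((m^M(K)Z)^{(0,1)}\big).
\]
This is the exact analogue of \eqref{af}, with \eqref{km} playing the role of $K^X$. Dividing by $u$, the right-hand sides become $\mp\frac{c((m^MZ)^{(1,0)/(0,1)})}{\sqrt2 u}$. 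Now $\frac{c(m^M(K)Z)}{2\sqrt2}(1-\frac1u)$ is the only $u$-dependent term of $\scrbku$, and its $(1,0)$-part lies in $\scrbku''$. Hence $-2u\partial_u$ of it gives $-2u\cdot\frac{c(\cdot^{(1,0)})}{2\sqrt2u^2}=-\frac{c(\cdot^{(1,0)})}{\sqrt2u}$. This matches, which gives the first line. The normalizing constants in \eqref{product I} and the factor $\frac12$ in \eqref{def of tilde mu} need to be checked carefully, but they are set up to produce this match.

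Step 2: show $\scrbku''^2=0$. For $K=0$, $\scrbku''=\dba^M+\nabla^{I\prime\prime}-\frac{c((R^MZ)^{(1,0)})}{2\sqrt2}$ is the Bismut superconnection of the Kähler fibration $M\to B$ with a flat fiber metric. This is the standard computation of \cite{B1997}, \S 5, analogous to Theorem \ref{identities-g}, and we quote it. The additional term $c((m^M(K)Z)^{(1,0)})$ is a contraction $i$-type operator times $\sqrt2$, so it has square zero. Its anticommutator with $\dba^M$ vanishes because $Z\mapsto (m^M(K)Z)^{(1,0)}$ is holomorphic in $z$, as $G$ acts holomorphically; this is the analogue of \eqref{ag}. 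Its anticommutator with $\nabla^{I\prime\prime}$ is $c((\nabla^{\prime\prime}m^M(K))Z)^{(1,0)}$, which by \eqref{dmoment map} equals $-c((i_{K^B}R^M)\cdots)$. This vanishes since $G$ acts fiberwise, so $K^B=0$ and $m^M(K)$ is parallel. Its anticommutator with $c((R^MZ)^{(1,0)})$ vanishes since both are contractions. The same argument applied to $\scrbku'$ gives $\scrbku'^2=0$, and then $\scrbku^2=[\scrbku'',\scrbku']$ is formal.

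Step 3: the last line follows from $\scrbku''^2=\scrbku'^2=0$ by the super-Jacobi identity: $[\scrbku'',[\scrbku'',\scrbku']]=[\scrbku''^2,\scrbku']=0$, and similarly for $\scrbku'$.

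The main obstacle is the $K=0$ identity $\scrbku''^2=0$ on the noncompact total space $M$, including the torsion term $c(R^MZ)$. If it is not directly quotable, we would verify it by writing $\scrbku''$ in local holomorphic frames of $M$, where $\dba^M+\nabla^{I\prime\prime}$ is the $\dba$ operator of the total space, and checking that the $c(R^MZ)$ term compensates the noncommutativity coming from $R^M$ being of type $(1,1)$.
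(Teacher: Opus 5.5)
Your proposal is correct and follows essentially the paper's route: the first line comes from \eqref{dmutilde} together with the Clifford conventions \eqref{clifford action} (your constants do match exactly, since $\dba^M\widetilde{\mu}(K)=i\,((m^M(K)Z)^{(1,0)})^*$), and the remaining identities reduce to \cite[(5.9)]{B1997} for $K=0$ plus the vanishing of the extra cross terms, as in Proposition \ref{a2}. One small correction: by \eqref{clifford action}, $c((m^M(K)Z)^{(1,0)})=\sqrt{2}\,((m^M(K)Z)^{(1,0)})^*\wedge$ and $c((R^MZ)^{(1,0)})$ are exterior multiplications by $(0,1)$-forms, not contractions. This is exactly why your holomorphicity argument kills the anticommutator with $\dba^M$, and why the two terms anticommute with each other.
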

\begin{proof}
    The first line of (\ref{scrbku commutator}) follows from (\ref{dmutilde}). The proof of the remaining part is the same as the proof of (\ref{bku-algebraic identity}) and \cite[(5.9)]{B1997}.
\end{proof}

Let $\lk$ be the Lie derivative of $K$ on $\Lambda(\overline{M}^*)$. We take $e_1,...,e_{2m}$ as an orthonormal basis of $M_{\rr}$. Let $\trace[R^M]\in \Lambda^2(T_{\rr}^*B)$ be the trace of $R^M\in \Lambda^2(T_{\rr}^*B)\otimes \ENDO(M)$. We have the following extension of \cite[Theorem 3.6]{B1990Kuszul}.
\begin{theorem}
For $u\geq 1$, we have the following identity, 
\begin{equation}\label{lichnerowicz scrbku}
    \begin{split}
        \lk+\scrbku^2=&-\frac{1}{2}\sum_{i=1}^{2m}\Big(\nabla_{e_i}+\frac{1}{2}\Big\langle \Big(R^M-\Big(1+\frac{1}{u}\Big)m^M(K)\Big)Z,e_i \Big\rangle\Big)^2 \\ +&\frac{1}{2u}|m^M(K)Z|^2
      -\frac{1}{4u}\product{m^M(K)e_i,e_j}c(e_i)c(e_j)\\+&\frac{1}{2u}\product{R^{M}m^M(K)Z,Z}+\frac{1}{2}\trace[R^M]-\frac{1}{2}\trace[m^M(K)].
    \end{split}
\end{equation}     
\end{theorem}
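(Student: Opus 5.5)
We will prove \eqref{lichnerowicz scrbku} by a direct Lichnerowicz-type computation, following \cite[Theorem 3.6]{B1990Kuszul}. The case $K=0$ is exactly that theorem:
\begin{equation*}
\mathscr{B}_{0,u}^2=-\tfrac12\sum_i\bigl(\nabla_{e_i}+\tfrac12\langle R^MZ,e_i\rangle\bigr)^2+\tfrac12\trace[R^M].
\end{equation*}
Here $\mathscr{B}_{0,u}=D^M+\nabla^I-c(R^MZ)/2\sqrt2$ does not depend on $u$. We then write
\begin{equation*}
\scrbku=\mathscr{B}_{0}+\frac{c(m^M(K)Z)}{2\sqrt2}\Bigl(1-\frac1u\Bigr),
\end{equation*}
expand the square, and add $\lk$.

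\textbf{Step 1: the square of $\scrbku$.} Put $a=1-1/u$. Then
\begin{equation*}
\scrbku^2=\mathscr{B}_0^2+\frac{a}{2\sqrt2}\bigl[\mathscr{B}_0,c(m^M(K)Z)\bigr]+\frac{a^2}{8}c(m^M(K)Z)^2 .
\end{equation*}
The last term equals $-\frac{a^2}{4}|m^M(K)Z|^2$ by \eqref{98}. For the supercommutator we treat the pieces of $\mathscr{B}_0$ separately.

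First, $[D^M,c(m^M(K)Z)]$. Since $\sqrt2D^M=\sum_i c(e_i)\nabla_{e_i}$ on the flat fiber, this term produces $-\sqrt2\sum_i\langle m^M(K)Z,e_i\rangle\nabla_{e_i}$ together with the zeroth-order term $\frac{1}{\sqrt2}\sum_{ij}\langle m^M(K)e_i,e_j\rangle c(e_i)c(e_j)$. The signs here must be checked against \eqref{clifford action}.

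Second, $[\nabla^I,c(m^M(K)Z)]$. This gives $c((\nabla m^M(K))Z)$, which by \eqref{dmoment map} is $-c(i_{K}R^M\,Z)$. Because $K^M$ is vertical and $G$ acts fiberwise, this piece should combine with $[c(R^MZ),c(m^M(K)Z)]$, giving the horizontal-degree-two term $\frac{1}{2u}\langle R^Mm^M(K)Z,Z\rangle$ after the Clifford relation is used.

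\textbf{Step 2: the Lie derivative.} We write
\begin{equation*}
\lk=\nabla_{K^M}-m^{\Lambda}(K),\qquad K^M=m^M(K)Z \text{ by \eqref{km}}.
\end{equation*}
By \eqref{action on exterior algebra}, the endomorphism $m^{\Lambda}(K)$ equals $\frac14\langle m^M(K)e_i,e_j\rangle c(e_i)c(e_j)+\frac12\trace[m^M(K)]$. The differential operator $-\frac{a}{2}\cdot(\ldots)\nabla$ from Step 1 and $\nabla_{m^M(K)Z}$ are then absorbed by completing the square. We replace $\langle R^MZ,e_i\rangle$ by $\langle(R^M-bm^M(K))Z,e_i\rangle$ with $b=1+1/u$. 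The resulting cross term is $b\langle m^M(K)Z,e_i\rangle\nabla_{e_i}$ plus a divergence term, and the divergence vanishes since $m^M(K)$ is skew. The leftover quadratic term is
\begin{equation*}
-\frac{b^2}{8}|m^MZ|^2+\frac{a^2}{4}\cdots .
\end{equation*}
Collecting it with $-\frac{a^2}{4}|m^MZ|^2$ should yield $\frac{1}{2u}|m^M(K)Z|^2$. One checks the identity $-\frac{a^2}{4}+\frac{b^2}{8}\cdot\ldots$ in $1/u$ for consistency. Similarly, the Clifford terms $\frac{a}{4}\langle m e_i,e_j\rangle cc$ and $-\frac14\langle m e_i,e_j\rangle cc$ combine to $-\frac{1}{4u}\langle m^M(K)e_i,e_j\rangle c(e_i)c(e_j)$.

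\textbf{Main obstacle.} The hardest part is bookkeeping the constants and signs in the completion of the square, including the cross term between $R^MZ$ and $m^M(K)Z$ inside the square. That cross term contributes $\frac{b}{4}\langle R^MZ,m^MZ\rangle$, which must cancel against the Step 1 contributions so that exactly $\frac{1}{2u}\langle R^Mm^M(K)Z,Z\rangle$ survives. As a consistency check we would verify the identity at $u=1$, where $a=0$. There it must reduce to $\lk+\mathscr{B}_0^2$, which by the $K=0$ formula together with $\lk=\nabla_{m^M(K)Z}-m^\Lambda(K)$ gives the stated expression with $b=2$. We also confirm the $u\to\infty$ limit against the Kirillov-type structure of Proposition \ref{9r}.
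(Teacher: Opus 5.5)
Your route is the paper's: start from the $K=0$ identity of \cite[Theorem 3.6]{B1990Kuszul}, expand $\scrbku^2$ in the extra term $\frac{a}{2\sqrt2}c(m^M(K)Z)$ with $a=1-\frac1u$, add $\lk=\nabla_{m^M(K)Z}-\frac14\product{m^M(K)e_i,e_j}c(e_i)c(e_j)-\frac12\trace[m^M(K)]$, and complete the square. But the statement is nothing more than this bookkeeping, and as written your bookkeeping does not close, because several of the pieces you write down are wrong.

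First, $[\nabla^I,c(m^M(K)Z)]=0$. Since $G$ acts trivially on $B$, $K$ induces the zero vector field on $B$, so \eqref{dmoment map} gives $\nabla^Mm^M(K)=0$; also $Z$ is parallel along horizontal lifts. Your ``$-c(i_KR^MZ)$'' is therefore zero. It could not ``combine'' with $[c(R^MZ),c(m^M(K)Z)]$ anyway, since one is a $1$-form on $B$ and the other a $2$-form. The $\frac1{2u}\product{R^Mm^M(K)Z,Z}$ term comes only from $[c(R^MZ),c(m^M(K)Z)]=2\product{R^Mm^M(K)Z,Z}$ together with the cross term of the completed square.

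Second, by \eqref{98}, $\frac{a^2}{8}c(m^M(K)Z)^2=-\frac{a^2}{8}|m^M(K)Z|^2$, not $-\frac{a^2}{4}|m^M(K)Z|^2$.

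Third, the terms produced by completing the square enter with a plus sign. Put $b=1+\frac1u$, so that $\nabla_{m^M(K)Z}-\frac a2\nabla_{m^M(K)Z}=\frac b2\nabla_{m^M(K)Z}$. The skewness of $R^M$ and $m^M(K)$ then gives
\begin{align*}
&-\frac12\sum_i\Big(\nabla_{e_i}+\frac12\product{R^MZ,e_i}\Big)^2+\frac b2\nabla_{m^M(K)Z}\\
&\quad=-\frac12\sum_i\Big(\nabla_{e_i}+\frac12\product{(R^M-b\,m^M(K))Z,e_i}\Big)^2+\frac b4\product{R^Mm^M(K)Z,Z}+\frac{b^2}{8}|m^M(K)Z|^2 .
\end{align*}
The expansion of $\scrbku^2$ contributes $-\frac a4\product{R^Mm^M(K)Z,Z}$, coming from $-\frac a8[c(R^MZ),c(m^M(K)Z)]$, and $-\frac{a^2}{8}|m^M(K)Z|^2$. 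The identity then follows from
\[
\frac{b-a}4=\frac{b^2-a^2}8=\frac1{2u}.
\]
With the coefficients you wrote ($-\frac{a^2}4$ from the square of the extra term and $-\frac{b^2}8$ as the leftover), the coefficient of $|m^M(K)Z|^2$ does not come out to $\frac1{2u}$.

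Your $u=1$ check only tests the completion of the square: at $a=0$ every contribution of the cross terms in $\scrbku^2$ disappears, so it cannot detect errors in Step 1. The limit $u\to\infty$, where $a=b=1$ and both $\frac1{2u}$ coefficients must vanish, would have caught the $-\frac{a^2}{4}$.
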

\begin{proof}
  From \cite[Theorem 3.6]{B1990Kuszul}, we have
  \begin{equation}\label{proof of lichnerowicz scrbku 1}
    \begin{split}
        \Big(D^M+\nabla^I-\frac{c(R^M Z)}{2\sqrt{2}}\Big)^2=-\frac{1}{2}\sum_{i=1}^{2m}\Big(\nabla_{e_i}+\frac{1}{2}\product{R^MZ,e_i}\Big)^2 +\frac{1}{2}\trace[R^M].
    \end{split}
  \end{equation}
So we get the $K=0$ part of \eqref{lichnerowicz scrbku}.
 By \eqref{98},
  \begin{equation}\label{bd}
    \begin{split}
         \big[D^M, c(m^M(K)Z)\big]=\frac{1}{\sqrt{2}}\sum_{i=1}^{2m}\big[c(e_i)\nabla_{e_i},c(m^M(K)Z)\big]\qquad\qquad\qquad\\ 
         =-\sqrt{2}\sum_{i=1}^{2m}\product{m^M(K)Z,e_i}\nabla_{e_i}+\frac{1}{\sqrt{2}}\product{m^M(K)e_i,e_j}c(e_i)c(e_j).
    \end{split}
  \end{equation}
 By (\ref{dmoment map}), and since $G$ act trivially on $B$,
  \begin{equation}
    \big[\nabla^I,c(m^M(K)Z)\big]=0.
  \end{equation}

Since $R^M$ takes values in skew symmetric endomorphisms of $M_{\rr}$, by \eqref{98},
\begin{equation}
    \begin{split}
         &\big[c(R^MZ),c(m^M(K)Z)\big]=2\product{R^{M}m^M(K)Z,Z}, \\ &\big[c(m^M(K)Z),c(m^M(K)Z)\big]=-2|m^M(K)Z|^2.
    \end{split}
\end{equation}

By (\ref{action on exterior algebra}), (\ref{moment}) and (\ref{km}), we have
\begin{equation}\label{proof of lichnerowicz scrbku 6}
    \begin{split}
         \lk&=\nabla_{K^M}-m^{\Lambda(\overline{M}^*)}(K)\\ &=\nabla_{m^M(K)Z}-\frac{1}{4}\product{m^M(K)e_i,e_j}c(e_i)c(e_j)-\frac{1}{2}\trace[m^M(K)].
    \end{split}
\end{equation}
From \eqref{bc}, (\ref{proof of lichnerowicz scrbku 1})-(\ref{proof of lichnerowicz scrbku 6}), we get the contribution of $\mathcal{L}_K+\mathscr{B}_{K,u}^2$ in the right side of \eqref{lichnerowicz scrbku}, in particular, by \eqref{bd} and \eqref{proof of lichnerowicz scrbku 6}, we get the factor $\Big(1+\frac{1}{u}\Big)m^M(K)$ in \eqref{lichnerowicz scrbku}.
\end{proof}

\subsection{The genus $I$, $\Phi_s$}\label{b9}

For $s\in \,]0,1]$, $\theta, \theta'\in\rr$, and $x\in\cc$. For $\theta\notin 2\pi \zz$,  $|\theta'|$, $|x|$ small enough, or for $\theta\in 2\pi\zz$, $|\theta'|$, $|x|$ small, with $x+i\theta'\neq 0$,   set
\begin{equation}\label{bi}
    \begin{split}
&\Phi_s(\theta,\theta',x)=\frac{1}{2}\Big[\coth\Big(\frac{i\theta+si\theta'}{2}\Big)-\coth\Big(\frac{i\theta+i\theta'+x}{2}\Big)\Big]\frac{si\theta'}{(1-s)i\theta'+x}.\\
 &   I({\theta,\theta',x})=\sum_{k\in\zz, 2k\pi+\theta\neq 0}\frac{\log(1+\frac{\theta'}{2k\pi+\theta})}{i(2k\pi +\theta+\theta')+x}.
    \end{split}
\end{equation}

We keep our notations in Section 4.1. Now we fix $g\in G$ and take $K\in \mathfrak{z}(g)$. Let $L=M_{g,K}=M_g\cap M_K$ be the $g$, $K$-invariant subbundle of $M$, and let  $N$ be the orthogonal bundle to $L$ with respect to $h^{M}$. Then $N$ inherits a $G$-invariant metric $h^{N}$. Let $N_0$ be the orthogonal bundle to $L$ in $M_g$, with the metric $h^{N_0}$.

Let $e^{i\theta_1},...,e^{i\theta_q}$, $0<\theta_j<2\pi$, be the locally constant distinct eigenvalue of the action of $g$ on $N$.  Let $i\theta_{1}',...,i\theta_{q'}',\theta_{j}\in\rr$, be the locally distinct eigenvalues of $m^M(K)$. Then $N$ and $N_0$ splits orthogonally as
\begin{equation}
    \begin{split}
        &N=\oplus_{j,j'}N^{\theta_j,\theta_{j'}'},\\ 
        &N_0=\oplus_{j'}N^{0,\theta_{j'}'}.
    \end{split}
\end{equation}
Then $N^{\theta_j,\theta_{j'}'}$ inherits a Hermitian metric from $M$. Let $\nabla^{N^{\theta_j,\theta_{j'}'}} $ be the corresponding holomorphic Hermitian connection on $N^{\theta_j,\theta_{j'}'}$, with curvature $R^{N^{\theta_j,\theta_{j'}'}}$.

\begin{definition}
    Put
    \begin{equation}
        \begin{split}
        \Phi_{s,g,K}(N,h^N)=\sum_{j,j'}\trace\Big[\Phi_s\Big(\theta_j,\theta_{j'}',-\frac{R^{N^{\theta_j,\theta_{j'}'}}}{2i\pi}\Big)\Big],\\ 
        I_{g,K}(N,h^N)=\sum_{j,j'}\trace\Big[I\Big(\theta_j,\theta_{j'}',-\frac{R^{N^{\theta_j,\theta_{j'}'}}}{2i\pi}\Big)\Big],\\ 
        \Big(\frac{c_{\mathrm{\mathrm{max}}}'}{c_{\mathrm{max}}}\Big)_K(N^0,h^{N^0})=\sum_{j'}\trace\Big[\Big(i\theta_{j'}'-\frac{R^{N^{0,\theta_{j'}'}}}{2i\pi}\Big)^{-1}\Big].
        \end{split}
    \end{equation}
\end{definition}

Now we state the result of \cite[Section 4.3]{BismutGoette2000}.
\begin{theorem}\label{thm Phi and I}
    As $s\in\, ]0,1]\rightarrow 0$,
    \begin{equation}\label{Phi and I}
        \Phi_{s,g,K}(N,h^N)=  \Big(\frac{c_{\mathrm{max}}'}{c_{\mathrm{max}}}\Big)_K(N^0,h^{N^0})+\mathcal{O}(s).
    \end{equation}
    And the following identity holds,
    \begin{equation}
        \int_{0}^1\Big(  \Phi_{s,g,K}(N,h^N)- \Big(\frac{c_{\mathrm{max}}'}{c_{\mathrm{max}}}\Big)_K(N^0,h^{N^0})\Big)\frac{ds}{s}=I_{g,K}(N,h^N).
    \end{equation}
\end{theorem}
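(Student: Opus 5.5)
This statement is \cite[Section 4.3]{BismutGoette2000}. The plan is to reduce everything to scalar identities for a single line bundle and then apply the splitting principle. Both $\Phi_{s,g,K}$, $I_{g,K}$ and $(c_{\mathrm{max}}'/c_{\mathrm{max}})_K$ are additive genera, built as sums over $(j,j')$ of traces of a function of $-R^{N^{\theta_j,\theta_{j'}'}}/2i\pi$. It therefore suffices to prove, for fixed $(\theta,\theta')$ with $x$ a formal (nilpotent) variable, two scalar statements. The first is the expansion $\Phi_s(\theta,\theta',x)=\text{(limit)}+\mathcal{O}(s)$, uniformly in small $x$. The second is the identity $\int_0^1(\Phi_s(\theta,\theta',x)-\lim_{s\to0}\Phi_s)\,ds/s=I(\theta,\theta',x)$. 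Since all the functions involved are analytic in $x$ near $0$, the Taylor coefficients in $x$ then give the forms.

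For the limit, note that $\Phi_s$ carries the explicit factor $\frac{si\theta'}{(1-s)i\theta'+x}$. I would distinguish two cases.

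\emph{Case $\theta\notin 2\pi\zz$.} The bracket $\frac12[\coth(\frac{i\theta+si\theta'}{2})-\coth(\frac{i\theta+i\theta'+x}{2})]$ is bounded for small $\theta',x$. Since $i\theta'+x\neq 0$ is needed to divide, I would instead use that the bracket vanishes when $s\theta'=\theta'-ix$, so $\Phi_s$ is analytic. In any case $\Phi_s=\mathcal{O}(s)$, and the limit is $0$. This matches the fact that $N^{\theta_j,\cdot}$ with $\theta_j\ne0$ does not contribute to $N^0$.

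\emph{Case $\theta=0$ (the $N_0$ part).} Here $\coth(si\theta'/2)\sim 2/(si\theta')$, so $\Phi_s\to \frac{1}{i\theta'+x}$. This is exactly the summand $(i\theta'-R/2i\pi)^{-1}$, and the remainder is $\mathcal{O}(s)$ by the Laurent expansion of $\coth$.

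For the integral identity, I would expand $\frac12\coth(y/2)=\sum_{k\in\zz}\frac{1}{y+2ik\pi}$, understood in the symmetric sense. This gives
\[
\tfrac12\Big[\coth\big(\tfrac{i\theta+si\theta'}{2}\big)-\coth\big(\tfrac{i\theta+i\theta'+x}{2}\big)\Big]=\sum_k\frac{(1-s)i\theta'+x}{(i(2k\pi+\theta)+si\theta')(i(2k\pi+\theta+\theta')+x)},
\]
which converges absolutely. Multiplying by $\frac{si\theta'}{(1-s)i\theta'+x}$ cancels the denominator and gives
\[
\Phi_s=\sum_k\frac{si\theta'}{(i(2k\pi+\theta)+si\theta')(i(2k\pi+\theta+\theta')+x)}.
\]
When $\theta\in2\pi\zz$, the term with $2k\pi+\theta=0$ equals $\frac{1}{i\theta'+x}$ exactly, and it is subtracted off. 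For each remaining $k$,
\[
\int_0^1\frac{i\theta'\,ds}{i(2k\pi+\theta)+si\theta'}=\log\Big(1+\frac{\theta'}{2k\pi+\theta}\Big),
\]
which produces the $k$-th term of $I$. To interchange sum and integral I would use dominated convergence: for $|\theta'|,|x|$ small and $k\neq$ the excluded index, the $k$-th summand is $\mathcal{O}(s|\theta'|/k^2)$ uniformly in $s\in]0,1]$.

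The main obstacle is this justification of the term-by-term integration, together with the analytic dependence on $x$. The $x$-dependence is needed so that the result can be substituted by the nilpotent curvature matrix. The bound above is uniform on a complex neighborhood of $x=0$, so the identities hold as analytic functions and pass to the genera.
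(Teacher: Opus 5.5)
Your proof is correct. The paper does not prove this statement; it quotes it from \cite[\S 4.3]{BismutGoette2000}, so your computation is a self-contained substitute for that citation, and its key device is the right one.

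The partial-fraction expansion $\frac12\coth(y/2)=\sum_{k}(y+2ik\pi)^{-1}$ turns $\Phi_s(\theta,\theta',x)$ into the absolutely convergent series $\sum_k si\theta'/\bigl((i(2k\pi+\theta)+si\theta')(i(2k\pi+\theta+\theta')+x)\bigr)$. In this series, the term with $2k\pi+\theta=0$ is exactly $(i\theta'+x)^{-1}$. Every other term is $\mathcal{O}(s|\theta'|/(1+k^2))$, uniformly for $x$ in a complex neighbourhood of $0$. This gives the $\mathcal{O}(s)$ expansion, and termwise integration in $ds/s$ produces the sum defining $I$.

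One simplification is available. The series form already makes $\Phi_s$ analytic in $x$, including at $s=1$ and on components with $\theta'=0$, where both $\Phi_s$ and $I$ vanish identically. So your separate case discussion of the bracket vanishing where $(1-s)i\theta'+x=0$ is unnecessary.
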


\subsection{The heat Kernel of the Harmonic Oscillator}\label{b8}
Let $V$ be a finite dimensional complex Hermitian vector space. Let $F$, $H$ be commuting elements of $\ENDO(V)$, with $F$ self-adjoint and $H$ skew-adjoint. Let $e_1,...,e_{2l}$ be an orthonormal basis of $V_{\rr}$. Set
\begin{equation}
    \mathcal{L}=-\frac{1}{2}\Big(\nabla_{e_i}+\frac{1}{2}\product{HZ,e_i}\Big)^2+\frac{1}{2}\product{FZ,Z}.
\end{equation}
Put 
\begin{equation}
    P=H^2+4F.
\end{equation}
When $P\geq 0$, let $Q$ be the nonnegative self-adjoint square root of $P$. Let $p(Z,Z'), Z,Z'\in V_{\rr}$ be the smooth kernel of the operator $\exp(-\mathcal{L})$ with respect to $dv_{V}/(2\pi)^{\ell}$. By using the Mehler's formula, we have the following result which is obtained in \cite[(4.30)]{BismutGoette2000}.
\begin{equation}\label{mehler formula}
    \begin{split}
    p(Z,Z')=&\mathrm{det}\Big(\frac{Q/2}{\sinh(Q/2)}\Big)\exp\Big\{-\frac{1}{2}\Big\langle\frac{Q/2}{\tanh(Q/2)}Z,Z\Big\rangle\\ 
    &-\frac{1}{2}\Big\langle\frac{Q/2}{\tanh(Q/2)}Z',Z'\Big\rangle+\Big\langle\frac{Q/2}{\sinh(Q/2)}e^{H/2}Z,Z'\Big\rangle\Big\}.
    \end{split}
\end{equation}
From (\ref{mehler formula}), the kernel $p(Z,Z')$  is an analytic function of $Q^2=P$. Therefore it extends to a holomorphic function of the pair $(F,H)$ as long as for any $k\in \nn^{*}$, $-4k^2\pi^2$ is not an eigenvalue of $P$. We will still call this extension of $p(Z,Z')$ the smooth kernel of the operator $\exp(-\mathcal{L})$.

\subsection{The Heat Kernels of $\lk+\mathscr{B}_{K,u}^{2}$}\label{ba}\quad

Now we fix $g\in G$, and we take $K\in\mathfrak{z}(g)$ such that $|K|\neq 0$ small enough.

Let $dv_{M}$, $dv_{N}$ be the standard volume forms on the fibers of $M_{\rr}, N_{\rr}$ respectively. For $y\in B$, let $Q_u^y(Z,Z')$, $Z,Z'\in M_{\rr}$ be the smooth kernel of $\exp(-\lk-\scrbku^2)$ on the fiber $M_{\rr,y}$, with respect to the volume form $dv_{M}/(2\pi)^{\mathrm{dim} M}.$

Recall that $L=M_{g,K}$ is the $g,K$-invariant subbundle of $M$ and $N$ is the orthogonal bundle to $L$ in $M$. 
 By (\ref{lichnerowicz scrbku}), the coefficient of $\lk+\scrbku$ lie in $\ENDO(\Lambda(\overline{N}^*))$, so $Q_u^y(Z,Z')$ also takes value in $\ENDO(\Lambda(\overline{N}^*))$.
 
 We have the following analogue of \cite[Theorem 4.2]{B1990Kuszul}. 
\begin{theorem}\label{thm-transition invariance}
    For any $u\geq 1$, $y\in B$, $Z,Z'\in M_{\rr,y}$ and $Z_0\in L_{\rr,y}$, then we have
    \begin{equation}\label{transition invariant}
        Q_u^y(Z+Z_0,Z'+Z_0)=\exp\Big(-\frac{\product{R^MZ_0,Z}}{2}\Big)Q_u^y(Z,Z')\exp\Big(\frac{\product{R^MZ_0,Z'}}{2}\Big).
    \end{equation}
    In particular, we have 
    \begin{equation}\label{transition invariant 2}
        \begin{split}
        &\str^{\Lambda(\overline{N}^*)}\big[gQ_u(g^{-1}(Z+Z_0),Z+Z_0)\big]=\str^{\Lambda(\overline{N}^*)}\big[gQ_u(g^{-1}Z,Z)\big],\\ 
        &\str^{\Lambda(\overline{N}^*)}\Big[i\frac{\widetilde{\mu}(K)}{u}gQ_u(g^{-1}(Z+Z_0),Z+Z_0)\Big]=\str^{\Lambda(\overline{N}^*)}\Big[i\frac{\widetilde{\mu}(K)}{u}gQ_u(g^{-1}Z,Z)\Big].
        \end{split}
    \end{equation}
\end{theorem}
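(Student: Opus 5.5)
The plan is to follow Bismut's argument for \cite[Theorem 4.2]{B1990Kuszul}: write the translation by $Z_0\in L_{\rr,y}$ as a gauge transformation and compare the operators.

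Let $\tau_{Z_0}$ denote translation by $Z_0$ on the fiber $M_{\rr,y}$, acting on sections by $(\tau_{Z_0}s)(Z)=s(Z+Z_0)$. The first step is to compute $\tau_{Z_0}^{-1}(\lk+\scrbku^2)\tau_{Z_0}$ from the Lichnerowicz-type formula \eqref{lichnerowicz scrbku}. Replacing $Z$ by $Z+Z_0$ produces three kinds of changes.
\begin{itemize}
\item[(i)] The magnetic term $\frac12\product{(R^M-(1+\frac1u)m^M(K))Z,e_i}$ picks up the constant shift $\frac12\product{R^MZ_0,e_i}$. The contribution of $m^M(K)Z_0$ vanishes because $Z_0\in L\subset M_K$, so $m^M(K)Z_0=0$.
\item[(ii)] The potential terms $\frac1{2u}|m^M(K)Z|^2$ and $\frac1{2u}\product{R^Mm^M(K)Z,Z}$ are unchanged. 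This uses $m^M(K)Z_0=0$, the skew-adjointness of $m^M(K)$ and $R^M$, and the fact that $R^M$ commutes with $m^M(K)$ by \eqref{dmoment map} with $G$ acting trivially on $B$. Together these give $\product{R^Mm^M(K)Z_0,Z}=-\product{m^M(K)Z_0,R^MZ}$-type terms that vanish.
\item[(iii)] The Clifford and trace terms are independent of $Z$.
\end{itemize}

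The shift in (i) is a constant even form-valued potential $\frac12\product{R^MZ_0,e_i}\,dZ_i$, and it is exact: $\frac12\product{R^MZ_0,\cdot}=d\big(\frac12\product{R^MZ_0,Z}\big)$. Hence conjugation by the multiplication operator $\exp\big(\frac12\product{R^MZ_0,Z}\big)$ removes it. Since this function takes values in even forms on $B$, it commutes with everything relevant. We obtain
$$\tau_{Z_0}^{-1}(\lk+\scrbku^2)\tau_{Z_0}=e^{-\frac12\product{R^MZ_0,Z}}(\lk+\scrbku^2)e^{\frac12\product{R^MZ_0,Z}}.$$
By uniqueness of the heat kernel (Mehler's formula \eqref{mehler formula} gives the Gaussian decay needed), exponentiating yields \eqref{transition invariant}. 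The sign conventions must be checked against the explicit kernel of Section \ref{b8}.

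For \eqref{transition invariant 2}, use that $g$ fixes $Z_0\in M_g$. Applying \eqref{transition invariant} at the pair $(g^{-1}Z,Z)$ gives the factor
$$\exp\Big(-\tfrac12\product{R^MZ_0,g^{-1}Z}+\tfrac12\product{R^MZ_0,Z}\Big).$$
Since $g$ is unitary, commutes with $R^M$, and satisfies $g^{-1}Z_0=Z_0$, we have $\product{R^MZ_0,g^{-1}Z}=\product{gR^MZ_0,Z}=\product{R^MZ_0,Z}$. The exponent therefore cancels.

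For the $\widetilde\mu(K)$ version, note that $\widetilde\mu(K)(Z+Z_0)=\widetilde\mu(K)(Z)$ because $m^M(K)Z_0=0$ and $J^Mm^M(K)$ is symmetric. Here $\widetilde\mu$ is evaluated at the diagonal point; the cross term $\product{J^Mm^M(K)Z,Z_0}$ vanishes for the same reason.

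The main obstacle is justifying that the holomorphically extended kernel of Section \ref{b8} (recall $R^M$ is nilpotent form-valued) still obeys uniqueness under this gauge/translation identity. I would handle this by expanding in the nilpotent form degree, where each coefficient is a genuine heat kernel of a harmonic oscillator with positive potential, since $|K|\ne0$ and the potential is nondegenerate on $N$. The translation directions along $L$ are flat, so the kernel is in fact that of $L$-translation-invariant operators, which is exactly what the identity expresses.
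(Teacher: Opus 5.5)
Your proposal is correct and is essentially the paper's argument: the paper invokes \cite[Theorem 4.2]{B1990Kuszul} for $K=0$, notes that the extra terms $m^M(K)Z$ and $\langle R^M m^M(K)Z,Z\rangle$ in \eqref{lichnerowicz scrbku} are unchanged under $Z\mapsto Z+Z_0$ because $m^M(K)Z_0=0$, and then obtains \eqref{transition invariant 2} from $g^{-1}(Z+Z_0)=g^{-1}Z+Z_0$ and $\widetilde{\mu}(K)(Z+Z_0)=\widetilde{\mu}(K)(Z)$; you additionally make Bismut's gauge mechanism and the cancellation of the two exponential factors explicit. One minor correction to your rigor remark: the potential $\frac{1}{2u}|m^M(K)Z|^2$ vanishes on $N\cap M_K$ (where only $g$ acts nontrivially), so it is not nondegenerate on $N$ in general, but nothing depends on this, since the kernel identity follows from the operator conjugation identity together with the explicit Mehler-type kernel expanded in the nilpotent $R^M$-degree.
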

\begin{proof}
    If $K=0$, then \eqref{transition invariant} follows directly from \cite[Theorem 4.2]{B1990Kuszul}. So we only need to deal with the extra factors $m^M(K)Z$ and $\product{R^{M}m^M(K)Z,Z}$ in \eqref{lichnerowicz scrbku}. 
    By \eqref{km}, $m^M(K)Z_0=0$ for $Z_0\in L_{\mathbb{R},y}$, this implies that $m^M(K)Z$ is invariant under the translation by $Z_0$. Similarly, $\product{R^{M}m^M(K)Z,Z}$ is invariant under the translation, so we get \eqref{transition invariant}.

   Notice that $g^{-1}(Z+Z_0)=g^{-1}Z+Z_0$, and by \eqref{def of tilde mu},
   \begin{equation}\label{be}
    \widetilde{\mu}(K)(Z+Z_0)=\widetilde{\mu}(K)(Z).
   \end{equation}
 Then   (\ref{transition invariant 2}) follows from (\ref{transition invariant}) and \eqref{be}.
\end{proof}

By omitting the $da$, $\overline{da}$ terms in \cite[(4.45)]{BismutGoette2000}, we see that for any compact subset $K\subset B$, there exist $c,C>0$, such that for $y\in K$, $Z\in N_{\rr,y}$, $u\geq 1$, then 
    \begin{equation}\label{estimate scr kernel}
        |Q_u^y(g^{-1}Z,Z)|\leq c\exp(-C|Z|^2).
    \end{equation}

    \begin{definition}\label{bf}
    For $u\geq 1$, set 
    \begin{equation}\label{basic generalized supertraces}
    \begin{split}
       & \str\big[g\exp(-\lk-\scrbku^2)\big]=\int_{N_{\rr}}\str^{\Lambda(\overline{N}^*)}\big[gQ_u(g^{-1}Z,Z)\big]\frac{dv_N}{(2\pi)^{\mathrm{dim}N}},\\ 
        &\str\Big[i\frac{\widetilde{\mu}(K)}{u}g\exp(-\lk-\scrbku^2)\Big]=\int_{N_{\rr}}\str^{\Lambda(\overline{N}^*)}\Big[i\frac{\widetilde{\mu}(K)}{u}gQ_u(g^{-1}Z,Z)\Big]\frac{dv_N}{(2\pi)^{\mathrm{dim}N}}.
    \end{split}
\end{equation}
    \end{definition}

    Note that in general, the operator $g\exp\left(-\mathcal{L}_K-\mathscr{B}_{K,u}^2\right)$ is not a trace class (see \eqref{transition invariant 2}), $\str\big[g\exp(-\lk-\scrbku^2)\big]$ and $\str\big[i\frac{\widetilde{\mu}(K)}{u}g\exp(-\lk-\scrbku^2)\big]$ are generalized supertraces.

Set $\varphi:\Lambda(T_{\rr}^*B)\rightarrow \Lambda(T_{\rr}^*B) $ be the map $\alpha\rightarrow (2i\pi)^{-\mathrm{deg}\alpha/2}\alpha$. 
\begin{theorem}\label{thm kernel of scrbku}
   We have the following identities,
    \begin{equation}\label{bh}
        \begin{split}
            &\varphi\str\big[g\exp(-\lk-\scrbku^2)\big]=\td_{ge^K}(M,h^{M}),\\ 
            &\varphi\str\Big[i\frac{\widetilde{\mu}(K)}{u}g\exp(-\lk-\scrbku^2)\Big] =\td_{ge^K}(M,h^{M})\Phi_{\frac{1}{u},g,K}(N,h^N). 
        \end{split}
    \end{equation}
\end{theorem}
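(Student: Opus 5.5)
Our plan is to reduce both identities to an explicit harmonic oscillator computation on each fiber $N_{\rr,y}$, using the Lichnerowicz formula \eqref{lichnerowicz scrbku} together with Mehler's formula \eqref{mehler formula}, and then to read off the eigenvalue-by-eigenvalue contributions.

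\textbf{Step 1: splitting.} By Theorem \ref{thm-transition invariance} the supertraces only involve $N$, and by \eqref{lichnerowicz scrbku} the operator $\lk+\scrbku^2$ splits compatibly with $M=L\oplus\bigoplus_{j,j'}N^{\theta_j,\theta'_{j'}}$ (and $N_0$), since $R^M$, $m^M(K)$ and $g$ commute and preserve this splitting. On $L$, $g$ and $m^M(K)$ act trivially. The $L$-part contributes only the scalar $\exp(-\frac12\trace[R^L])$. Combined with the Gaussian normalisation, it therefore gives $\td(-R^L/2i\pi)$ after applying $\varphi$, exactly as in the $K=0$ computation of \cite[Theorem 4.2]{B1990Kuszul}. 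So it suffices to compute, for each block $N^{\theta,\theta'}$ of rank $r$ with $R=R^{N^{\theta,\theta'}}$ and $m^M(K)=i\theta'$, the fermionic and bosonic parts separately. These are then multiplied, and for the second identity the contributions are added, since $\widetilde\mu(K)$ is a sum over blocks.

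\textbf{Step 2: one block, first identity.} On a block, \eqref{lichnerowicz scrbku} has the form $\mathcal{L}$ of Section \ref{b8} plus constants, with
\[
H=R-(1+\tfrac1u)i\theta',\qquad F=\tfrac1u\big(\theta'^2+i\theta' R\big)
\]
in the appropriate real form. We then compute the following.
\begin{itemize}
\item $P=H^2+4F=(R+(1-\tfrac1u)i\theta')^2$, so $Q=R+(1-\tfrac1u)i\theta'$ formally.
\item The Clifford term $-\frac{1}{4u}\product{m e_i,e_j}c(e_i)c(e_j)$ together with $g$ gives the fermionic supertrace $\det(1-g^{-1}e^{-i\theta'/u}\cdots)$ on $\Lambda(\overline N^*)$.
\item The integral $\int_{N_\rr}p(g^{-1}Z,Z)\,dv$ is Gaussian. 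It equals $\det(Q/2\sinh(Q/2))\det(\cosh(Q/2)-\cosh(\cdots))^{-1}$-type expressions, obtained by the usual computation of \cite[(4.30)--(4.40)]{BismutGoette2000}.
\end{itemize}
The key point is that the $u$-dependence cancels between the fermionic and bosonic factors. As in \cite[Theorem 4.9]{BismutGoette2000}, the product collapses to $(\td/c_{\max})(-R/2i\pi+i\theta+i\theta')$ on $N^{\theta,\theta'}$, which after $\varphi$ gives $\td_{ge^K}(M,h^M)$. Independence of $u$ can alternatively be checked via the transgression from \eqref{scrbku commutator}, which gives $\partial_u\str[g\exp(\cdot)]=-\frac1u\frac{\dba\partial}{2i\pi}(\ldots)$. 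Here the right side is the $\dba\partial$ of a function on $B$ that is a characteristic form, hence the value is constant in $u$ up to exact forms. We will use this only as a consistency check.

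\textbf{Step 3: second identity.} Insert $i\widetilde\mu(K)/u=\frac{i}{2u}\product{J m^M(K)Z,Z}$, which is a quadratic function on each block. We obtain it by differentiating the Gaussian with respect to a parameter $b$ added to $F$, i.e. $\frac{\partial}{\partial b}$ of the block kernel with $F\to F+b\,\frac{\theta'}{u}\mathrm{Id}$. This is Duhamel plus the fact that the coefficient commutes. The derivative of $\log$ of the block determinant then yields the logarithmic derivative
\[
\tfrac12\Big[\coth\Big(\tfrac{i\theta+i\theta'/u}{2}\Big)-\coth\Big(\tfrac{i\theta+i\theta'+x}{2}\Big)\Big]\frac{i\theta'/u}{(1-1/u)i\theta'+x},\qquad x=-R/2i\pi ,
\]
which is $\Phi_{1/u}(\theta,\theta',x)$ in \eqref{bi}. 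Summing over blocks and taking traces gives $\td_{ge^K}\Phi_{1/u,g,K}(N,h^N)$.

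\textbf{Main obstacle.} The main obstacle is the careful bookkeeping in Step 3. We must keep the signs and factors of $2i\pi$ under $\varphi$ straight, and we must track the analytic continuation in Mehler's formula, since $R$ is nilpotent-valued and $Q$ must be taken formally. We also need the $g$-twisted Gaussian integral to be justified, which uses $\theta\notin2\pi\zz$ on $N$ or $\theta'\neq0$ (via \eqref{estimate scr kernel}). We would first verify the identity for a line bundle block $r=1$ with $R$ a scalar 2-form, and then extend multiplicatively using the splitting principle.
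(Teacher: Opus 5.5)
Your proposal is correct in outline, but it takes a different route from the paper. The paper does no Mehler computation. It rewrites the insertion of $i\widetilde\mu(K)/u$ as a nilpotent perturbation of the operator,
\[
\str\big[g\exp(-\lk-\scrbku^2)\big]+da\,\dabar\,\str\Big[\frac{i\widetilde\mu(K)}{u}g\exp(-\lk-\scrbku^2)\Big]=\str\Big[g\exp\Big(-\lk-\scrbku^2+da\,\dabar\,\frac{i\widetilde\mu(K)}{u}\Big)\Big],
\]
justified as in \cite[(4.37)]{B1990Kuszul}. It then observes from \eqref{lichnerowicz scrbku} that the perturbed operator is Bismut--Goette's $L_0^{1/u}$ of \cite[(4.40)]{BismutGoette2000}, up to a $da$-term, and reads both identities off \cite[Theorem 4.13]{BismutGoette2000}.

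Your $\partial_b|_{b=0}$ plays the role of the coefficient of $da\,\dabar$, and your blockwise Mehler computation is in effect a reproof of that theorem. What your version buys is transparency:
\begin{itemize}
\item In degree zero on a line block, the fermionic factor is $1-e^{i(\theta+\theta'/u)}$ up to a phase. It cancels the factor $\sin\frac{\theta+\theta'/u}{2}$ in the denominator of the twisted Gaussian integral, which is exactly why the first identity is independent of $u$.
\item Differentiating the logarithm of the bosonic factor in a scalar $\beta$ added to $F$ produces the difference of $\coth$'s. It also produces $\frac{si\theta'}{(1-s)i\theta'+x}$, with $s=1/u$, whose denominator comes from $\partial Q/\partial\beta=2/Q$.
\end{itemize}

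What the citation buys is that the bookkeeping is already done, and yours contains two slips:
\begin{itemize}
\item With your own $H$ and $F$, one gets $H^2+4F=(R-(1-\frac1u)i\theta')^2$, not $(R+(1-\frac1u)i\theta')^2$. This relative sign decides which $\coth$ in $\Phi_{1/u}$ carries the curvature $x$.
\item On a block, $\frac{i}{u}\widetilde\mu(K)=-\frac{i\theta'}{2u}|Z|^2$, so the perturbation is $F\to F+b\,\frac{i\theta'}{u}$, not $F\to F+b\,\frac{\theta'}{u}$. Dropping the $i$ changes the result by a factor of $i$.
\end{itemize}

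One justification also needs repair. Identifying $\partial_b$ of the perturbed supertrace with the supertrace containing $i\widetilde\mu(K)/u$ is not ``Duhamel plus commuting coefficients.'' Duhamel's formula produces $\int_0^1\str[g\,e^{-sA}\,V\,e^{-(1-s)A}]\,ds$, and collapsing this to $\str[V g\,e^{-A}]$ requires cyclicity. Cyclicity is not automatic here, because $g\exp(-\lk-\scrbku^2)$ is not trace class and only a generalized supertrace is defined. Supplying this cyclicity is exactly what the paper imports from \cite{B1990Kuszul}.

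In your explicit setting you can avoid the issue altogether. By Definition \ref{bf}, the second generalized supertrace is the weighted Gaussian integral
\[
\int_{N_{\rr}}\frac{i\widetilde\mu(K)(Z)}{u}\,\str^{\Lambda(\overline N^*)}\big[gQ_u(g^{-1}Z,Z)\big]\frac{dv_N}{(2\pi)^{\dim N}},
\]
which Mehler's formula evaluates directly. On a line block this second moment reproduces your logarithmic derivative.
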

\begin{proof}
    Let $da, \overline{da}$ be the Grassmann variables. By proceeding as in the proof of \cite[(4.37)]{B1990Kuszul},   we get
    \begin{equation}
        \begin{split}
     \str\big[g\exp(-\lk-\scrbku^2)\big]+ da\dabar \str\Big[i\frac{\widetilde{\mu}(K)}{u}g\exp(-\lk-\scrbku^2) \Big]\\ =\str\Big[ g\exp\Big(-\lk-\scrbku^2+da\dabar \frac{i\widetilde{\mu}(K)}{u}\Big)\Big].
        \end{split}
    \end{equation}

    From (\ref{lichnerowicz scrbku}), we see that $\lk+\scrbku^2-da\dabar \frac{i\widetilde{\mu}(K)}{u}$ is just the operator $L_0^{1/u}$ in \cite[(4.40)]{BismutGoette2000} except for a $da$ term. So Theorem  \ref{thm kernel of scrbku} follows from \cite[Theorem 4.13]{BismutGoette2000}.
\end{proof}

Using Theorems  \ref{thm Phi and I} and \ref{thm kernel of scrbku}, it's easy to obtain the following result.
\begin{proposition}\label{fu} The following identity holds,
    \begin{equation}\label{integral of kernel of scrbku}
        \begin{split}
           \int_1^{+\infty}& \Big(\varphi\str\Big[i\frac{\widetilde{\mu}(K)}{u}g\exp(-\lk-\scrbku^2)\Big]\\&-\td_{ge^K}(M,h^M) \Big(\frac{c_{\mathrm{max}}'}{c_{\mathrm{max}}}\Big)_K(N^0,h^{N^0})\Big)\frac{du}{u} 
             =\td_{ge^K}(M,h^{M})I_{g,K}(N,h^N).
        \end{split}
    \end{equation}
\end{proposition}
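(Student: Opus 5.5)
The plan is to substitute the identities of Theorem \ref{thm kernel of scrbku} into the left-hand side of \eqref{integral of kernel of scrbku} with $s=1/u$, and then reduce everything to Theorem \ref{thm Phi and I}. By the second identity in \eqref{bh}, for $u\geq 1$ the integrand equals
\begin{equation*}
\td_{ge^K}(M,h^M)\Big(\Phi_{\frac{1}{u},g,K}(N,h^N)-\Big(\frac{c_{\mathrm{max}}'}{c_{\mathrm{max}}}\Big)_K(N^0,h^{N^0})\Big).
\end{equation*}
The factor $\td_{ge^K}(M,h^M)$ does not depend on $u$. It is an even form, so it commutes with the other forms, and we can pull it out of the integral.

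Next, we change variables by $s=1/u$. This gives $du/u=-ds/s$, and it maps $u\in[1,+\infty[$ onto $s\in\,]0,1]$, so the orientation reversal cancels the minus sign. The $u$-integral therefore becomes
\begin{equation*}
\int_0^1\Big(\Phi_{s,g,K}(N,h^N)-\Big(\frac{c_{\mathrm{max}}'}{c_{\mathrm{max}}}\Big)_K(N^0,h^{N^0})\Big)\frac{ds}{s}.
\end{equation*}

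We also need to check convergence. The first statement \eqref{Phi and I} of Theorem \ref{thm Phi and I} says the integrand is $\mathcal{O}(s)$ as $s\to 0$, which means $\mathcal{O}(1/u)$ as $u\to+\infty$. Hence the original $u$-integral converges absolutely at infinity. At $u=1$, i.e. $s=1$, everything is smooth, since $|K|\neq 0$ is small and the genera are defined.

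Finally, the second statement of Theorem \ref{thm Phi and I} evaluates the $s$-integral as $I_{g,K}(N,h^N)$. Multiplying by $\td_{ge^K}(M,h^M)$ gives the right-hand side of \eqref{integral of kernel of scrbku}.

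There is no real obstacle. The only points to watch are that the normalization $\varphi$ is applied consistently on both sides, since it is multiplicative on even forms, and that the parameter in \eqref{bh} is exactly $s=1/u$, as it is written there.
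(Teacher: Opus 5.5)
Your proposal is correct and follows the paper's own (one-line) argument: substitute the second identity of \eqref{bh}, pull out the $u$-independent factor $\td_{ge^K}(M,h^M)$, change variables $s=1/u$, and apply Theorem \ref{thm Phi and I}. The $\mathcal{O}(s)$ estimate in that theorem gives convergence, and its integral identity yields $I_{g,K}(N,h^N)$.
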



\subsection{Some identities on generalized supertraces}\label{bb}
Let $da$ be the canonical generator of $\mathbb{C}^{*}$. Then $da$, $\dabar$ span $\rr^{2*}\otimes_{\rr}\cc$. If $\sigma\in\Lambda(T_{\rr}^*B)\widehat{\otimes}_{\rr}\Lambda(\rr^{2*})$, we write $\sigma$ in the form 
 \begin{equation}\label{notation dadabar}
     \sigma=\sigma_0+da\, \sigma^{da}+ \dabar \,\sigma^{\dabar}+da\dabar \,\sigma^{da\dabar}, \quad \sigma_0,\sigma^{da},\sigma^{\dabar},\sigma^{da\dabar}\in\Lambda(T_{\rr}B^*).
 \end{equation}

 \begin{definition}
    For $u\geq 1$, put
   \begin{equation}\label{many generalized supertraces}
        \begin{split}
            &\Theta_u=\derivative{b}\str\Big[g\exp\Bigl(-\Bigl(-\frac{1}{2}\Bigl(\nabla_{e_i}+\frac{1}{2}\Big\langle\Bigl(R^M-\Big(1+\frac{1}{u}\Big)m^M(K)-iJ^{M}da\dabar\Bigr)Z,e_i\Big\rangle\Bigr)^2\\ 
             +\frac{1}{2u}&|m^M(K)Z|^2-\frac{1}{4u}\product{m^M(K)e_i,e_j}c(e_i)c(e_j)+\frac{\dabar}{\sqrt{2}u}c(m^M(K)\bar{z})+\frac{da}{\sqrt{2}u}c(m^M(K)z)\\
             -da\dabar & \frac{m}{2}+\frac{1}{2u}\product{R^{M}m^M(K)Z,Z}+\frac{1}{2}\trace[R^M]-\frac{1}{2}\trace[m^M(K)]+b\frac{i\widetilde{\mu}(K)}{u}\Bigr)\Bigr)\Big]_{b=0}^{da\dabar},\\ 
             &\Lambda_{u}=\str\Big[g\exp\Bigl(-\lk-\scrbku^2-\frac{da}{\sqrt{2}u}c(m^M(K)z)\\ &\qquad\qquad\qquad\qquad\qquad\qquad\qquad-\frac{\dabar}{\sqrt{2}u}c(m^M(K)\bar{z}) 
            -da\dabar\frac{i\widetilde{\mu}(K)}{u}\Bigr) \Big]^{da\dabar},\\
            &\Pi_u'=\str\Big[i\frac{\widetilde{\mu}(K)}{u}g\exp\Bigl(-\lk-\scrbku^2+\frac{\dabar}{\sqrt{2}u}c(m^M(K)\bar{z})\Bigr)\Big]^{\dabar},\\ 
           & \Pi_u''=\str\Big[i\frac{\widetilde{\mu}(K)}{u}g\exp\Bigl(-\lk-\scrbku^2+\frac{da}{\sqrt{2}u}c(m^M(K){z})\Bigr)\Big]^{da}, \\
       &\mathrm{T}_u=\frac{1}{2}\frac{\partial^2}{\partial c^2}\str\Big[g\exp\Bigl(-\lk-\scrbku^2+c\frac{i\widetilde{\mu}(K)}{u}\Bigr)\Big]_{c=0}.
        \end{split}
    \end{equation}
    These generalized supertraces are well defined thanks to (\ref{estimate scr kernel}).
 \end{definition}

 As in (\ref{DM}), we can define the corresponding operators $D^N$, $D^L$ acting on the fibers of $N$ and $L$.  Let $P^N$, $P^L$ be the orthogonal projection from $M$ to $N,L$.
 Clearly, we have,
 \begin{equation}
    D^M=D^N+D^L.
 \end{equation}
  Then we have an analogue
   of \cite[Theorem 3.12]{B1990Kuszul}.
 \begin{lemma}\label{lemma scr}
    Let $\mathscr{L}_u$ be the operator definded as
    \begin{equation}
     \mathscr{L}_u=\nabla^{I}+D^{N}-\frac{c(P^N R^M Z)}{2\sqrt{2}}+\frac{c(m^M(K)Z)}{2\sqrt{2}}\Big(1-\frac{1}{u}\Big).
    \end{equation}
    We can split $\mathscr{L}_u$ into  $\mathscr{L}_u'+ \mathscr{L}_u''$ as usual.
  And we have the following identities.
\begin{equation}\label{bp}
    [\mathscr{L}_u,\lk+\scrbku^2]=[\mathscr{L}_u',\lk+\scrbku^2]=[\mathscr{L}_u'',\lk+\scrbku^2]=0.
\end{equation}
 \end{lemma}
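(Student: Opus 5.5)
The plan is to follow the proof of \cite[Theorem 3.12]{B1990Kuszul} and reduce everything to elementary commutator computations. We use the decomposition $M_{\rr}=L_{\rr}\oplus N_{\rr}$ and the Lichnerowicz-type formula \eqref{lichnerowicz scrbku}. First we write
\begin{equation*}
\mathscr{B}_{K,u}=\mathscr{L}_u+D^{L}-\frac{c(P^{L}R^{M}Z)}{2\sqrt{2}},
\end{equation*}
using $D^M=D^N+D^L$, $c(R^MZ)=c(P^NR^MZ)+c(P^LR^MZ)$, and the fact that $m^M(K)$ preserves $N$ and vanishes on $L$. Hence $c(m^M(K)Z)$ only involves $Z^N$ and Clifford variables of $N$. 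The same splitting applies to $'$ and $''$ parts. It therefore suffices to show that $\mathscr{L}_u'$ and $\mathscr{L}_u''$ separately commute with $\lk+\scrbku^2$; the statement for $\mathscr{L}_u$ then follows by addition.

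For the $''$ part, write $\scrbku''=\mathscr{L}_u''+\mathscr{R}''$, where $\mathscr{R}''=\dba^{L}-c(P^LR^MZ)^{(1,0)}/(2\sqrt2)$ is the $L$-directional remainder. By \eqref{scrbku commutator}, $[\scrbku'',\scrbku^2]=0$, so
\begin{equation*}
[\mathscr{L}_u'',\scrbku^2]=-[\mathscr{R}'',\scrbku^2].
\end{equation*}
Thus it is equivalent to show that $\mathscr{L}_u''$ commutes with $\scrbku^2=[\scrbku'',\scrbku']$. By the super Jacobi identity,
\begin{equation*}
[\mathscr{L}_u'',[\scrbku'',\scrbku']]=[[\mathscr{L}_u'',\scrbku''],\scrbku']-[\scrbku'',[\mathscr{L}_u'',\scrbku']].
\end{equation*}
We then compute $[\mathscr{L}_u'',\scrbku'']$ and $[\mathscr{L}_u'',\scrbku']$ directly, exactly as in \cite[Theorem 3.12]{B1990Kuszul}.

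The $K$-independent terms are handled in \cite{B1990Kuszul}. There, $[\mathscr{L}_u'',\scrbku'']=0$, and $[\mathscr{L}_u'',\scrbku']$ is, up to sign, the horizontal covariant derivative along $L$-translations twisted by $\langle R^MZ_0,\cdot\rangle$. That operator is the infinitesimal generator of the translation invariance \eqref{transition invariant} and commutes with $\scrbku''$ by Theorem \ref{thm-transition invariance}.

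For the new $K$-terms, the needed facts come from earlier results:
\begin{itemize}
\item $[\nabla^I,c(m^M(K)Z)]=0$, by \eqref{dmoment map} and because $G$ acts trivially on $B$;
\item $[D^L,c(m^M(K)Z)]=0$, because $m^M(K)Z$ depends only on $Z^N$ and $c(e_i^L)$ anticommutes with $c(N)$;
\item $[c(P^LR^MZ),c(m^M(K)Z)]=2\langle R^Mm^M(K)Z,P^LR^M Z\rangle$-type terms, which vanish because $m^M(K)$ maps into $N\perp L$.
\end{itemize}
Holomorphy of the $G$-action gives $[\dba^{N},c(m^M(K)z)^{(1,0)}]=0$, as in \eqref{ag}. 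Altogether the $K$-correction terms of $\scrbku$ lie entirely in $\mathscr{L}_u$ and commute with $\mathscr{R}',\mathscr{R}''$.

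Finally, $\lk$ commutes with $\mathscr{L}_u$. The operator $\mathscr{L}_u$ is built from $G$-invariant objects: $\nabla^I$, $D^N$, $c(P^NR^MZ)$ and $c(m^M(K)Z)$ with $K\in\mathfrak{z}(g)$. Also $[K,K]=0$, so $\lk$ commutes with $c(m^M(K)Z)$, using \eqref{proof of lichnerowicz scrbku 6}. We add this to the previous step and obtain \eqref{bp}.

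The main obstacle is the bookkeeping of the cross terms between $D^L$, $c(P^LR^MZ)$ and the $K$-dependent pieces. I expect these to vanish using only $m^M(K)|_L=0$ and $m^M(K)(N)\subset N$. If a residual term of the form $\langle R^M m^M(K)Z,Z\rangle$ appears, it should be checked to depend only on $Z^N$, so that it commutes with $D^L$ and $c(P^LR^MZ)$.
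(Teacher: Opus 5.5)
\paragraph{The gap is in the second Jacobi term.} Everything there rests on the claim that $[\mathscr{L}_u'',\scrbku']$ is a twisted translation generator along $L$ that commutes with $\scrbku''$ by Theorem~\ref{thm-transition invariance}. Neither part holds.
\begin{itemize}
\item $[\mathscr{L}_u'',\scrbku']$ contains $[\dba^{N},\dba^{N,*}]$, a second-order operator in the $N$-directions. Once the cross terms $[\mathscr{L}_u'',\mathscr{R}']$ vanish, it is exactly $\mathscr{L}_u^2$, not a first-order translation operator.
\item Theorem~\ref{thm-transition invariance} concerns the kernel of $\exp(-\lk-\scrbku^2)$. It says twisted $L$-translations commute with $\lk+\scrbku^2$; it says nothing about commuting with $\scrbku''$.
\end{itemize}
Proving $[\scrbku'',\mathscr{L}_u^2]=0$ would require $[\mathscr{R}'',\mathscr{L}_u^2]=0$, which is a statement of the same type as the lemma, so this route is circular. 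Separately, ``the $K$-independent terms are handled in \cite{B1990Kuszul}'' is not an additive reduction. The square $\scrbku^2$ contains cross terms between the $K$-dependent and $K$-independent pieces, and you do not compute them.

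\paragraph{The missing idea is a degree argument, which is the paper's entire proof.} You already wrote $[\mathscr{L}_u'',\scrbku^2]=-[\mathscr{R}'',\scrbku^2]$. Now grade by the degree in $\Lambda(\overline{L}^*)$, using $\Lambda(\overline{M}^*)=\Lambda(\overline{L}^*)\widehat{\otimes}\Lambda(\overline{N}^*)$.
\begin{itemize}
\item By \eqref{lichnerowicz scrbku}, $\lk+\scrbku^2$ preserves this degree, since its only Clifford terms involve $N$; so does $\lk$ alone.
\item $\mathscr{L}_u''$ preserves it. Indeed $L$ and $N$ are $\nabla^M$-parallel, because $g$ preserves $\nabla^M$ and $m^M(K)$ is parallel by \eqref{dmoment map}. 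Also $c(P^NR^MZ)$ and $c(m^M(K)Z)$ involve only $N$-Clifford variables.
\item $\mathscr{R}''$ raises the degree by one.
\end{itemize}
So the two sides of your identity live in different degrees and both vanish. The same argument works for $\mathscr{L}_u'$, and for $\mathscr{L}_u$ via $[\scrbku,\lk+\scrbku^2]=0$.

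A correct direct alternative would be to prove the four cross commutators $[\mathscr{L}_u^{\prime/\prime\prime},\mathscr{R}^{\prime/\prime\prime}]=0$ and $(\mathscr{L}_u'')^2=(\mathscr{L}_u')^2=0$, then conclude by the Jacobi identity. Your bullet points supply part of the $K$-terms; the $K$-independent terms vanish because $L,N$ are parallel and $N$- and $L$-Clifford variables anticommute. That is not the argument your proposal makes.
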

 \begin{proof}
    We split $\scrbku$ into two parts,
    \begin{equation}\label{split of scrbku}
        \begin{split}
        \scrbku=\Big(\nabla^{I}+D^{N}-\frac{c(P^N R^M Z)}{2\sqrt{2}}+\frac{c(m^M(K)Z)}{2\sqrt{2}}\Big(1-\frac{1}{u}\Big)\Big)\\ 
        +\Big(D^L-\frac{c(P^L R^M Z)}{2\sqrt{2}}\Big).
        \end{split}
    \end{equation}

    So we can rewrite the Bianchi identity $[\scrbku,\lk+\scrbku^2]=0$ in the form
    \begin{equation}\label{rewrite bianchi}
        \begin{split}
            \Big[\nabla^{I}+D^{N}-\frac{c(P^N R^M Z)}{2\sqrt{2}}+\frac{c(m^M(K)Z)}{2\sqrt{2}}\Big(1-\frac{1}{u}\Big),\lk+\scrbku^2\Big]\\+\Big[D^L-\frac{c(P^L R^M Z)}{2\sqrt{2}},\lk+\scrbku^2\Big]=0.
        \end{split}
    \end{equation}
    Remember that $\Lambda(\overline{M}^*)=\Lambda(\overline{L}^*)\widehat{\otimes}\Lambda(\overline{N}^*).$ The first part in (\ref{split of scrbku}) preserves the degree of $\Lambda(\overline{L}^*)$ while the other part does not preserve. Also note that by (\ref{lichnerowicz scrbku}), the operator $\lk+\scrbku^2$ preserves the degree in $\Lambda(\overline{L}^*)$. So it is clear that both commutators in (\ref{rewrite bianchi}) vanish. Therefore we get the first result in \eqref{bp}. The same arguments work for $\scrbku'$ and $\scrbku''$, by using $[\scrbku',\lk+\scrbku^2]=0$ and $[\scrbku'',\lk+\scrbku^2]=0$ instead (cf.\,\eqref{scrbku commutator}).
 \end{proof}

 \begin{proposition}\label{prop, piu=0}
    For $u\geq 1$,
    \begin{equation}\label{piu=0}
        \begin{split}
            \Pi_u'=\partial \mathrm{T}_u=0,\\ 
            \Pi_u''=-\dba \mathrm{T}_u=0.
        \end{split}
    \end{equation}
 \end{proposition}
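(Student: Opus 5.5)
We follow the pattern of \cite[Theorem 3.12]{B1990Kuszul} and \cite[(5.12)]{B1997}. The plan is to write $\Pi_u'$ and $\Pi_u''$ as the $\overline{da}$ and $da$ components of a supertrace built from the superconnection $\mathscr{L}_u$ of Lemma \ref{lemma scr}. The transgression structure in \eqref{scrbku commutator} should then identify them with $\partial\mathrm{T}_u$ and $-\dba\mathrm{T}_u$. Separately, a direct argument should show that $\mathrm{T}_u$ is $\partial$- and $\dba$-closed, which forces both sides to vanish.

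\emph{Step 1: $\Pi_u'=\partial\mathrm{T}_u$.} By \eqref{scrbku commutator},
\[
\frac{\dabar}{\sqrt{2}u}\,c(m^M(K)\bar z)=\dabar\,\Big[\scrbku',\,i\frac{\widetilde{\mu}(K)}{u}\Big]\Big(1-\frac1u\Big)^{-1}\cdot(\text{normalisation}),
\]
up to checking constants with \eqref{dmutilde}. Duhamel's formula then rewrites $\Pi_u'$ as
\[
\str\Big[i\frac{\widetilde{\mu}(K)}{u}\,g\int_0^1 e^{-s(\lk+\scrbku^2)}\big[\scrbku',i\tfrac{\widetilde{\mu}(K)}{u}\big]e^{-(1-s)(\lk+\scrbku^2)}ds\Big].
\]
Next I decompose $\scrbku'=\mathscr{L}_u'+(D^{L,*}\text{-part}-c(P^LR^MZ)^{(0,1)}/2\sqrt2)$. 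The $L$-part commutes with $\widetilde{\mu}(K)$, since $\widetilde{\mu}(K)$ is invariant under translation in $L$ (this is \eqref{be}). So only $\mathscr{L}_u'$ contributes. By Lemma \ref{lemma scr}, $\mathscr{L}_u'$ commutes with $\lk+\scrbku^2$, and it also commutes with $g$. Moving $\mathscr{L}_u'$ around under the generalized supertrace yields $\nabla^{I\prime}$ of $\frac12\frac{\partial^2}{\partial c^2}\str[g e^{-\lk-\scrbku^2+c\,i\widetilde{\mu}/u}]$, that is $\partial\mathrm{T}_u$. The symmetric-product identity $\frac12\partial_c^2$ arises exactly from the two insertions of $i\widetilde{\mu}/u$.

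The delicate point is the vanishing of $\str[\,[\mathscr{L}_u',A]\,]$ for the generalized supertraces of Definition \ref{bf}: the operators are not trace class. I would handle it by restricting to $N_{\rr}$ as in Theorem \ref{thm-transition invariance}. On the fibres of $N$, the kernels have Gaussian decay \eqref{estimate scr kernel}, so the fibrewise part $D^{N,*}$ of $\mathscr{L}_u'$ has vanishing supertrace commutator by integration by parts. The $\nabla^I$ part produces $\partial$ on $B$, with $\dba$ analogously for $\Pi_u''$ using $\scrbku''$. The sign change comes from the opposite signs in the first line of \eqref{scrbku commutator}.

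\emph{Step 2: $\mathrm{T}_u$ is closed.} Since $\mathrm{T}_u$ is given by an explicit Mehler-type computation, I would compute it through Theorem \ref{thm kernel of scrbku}, adding a second Grassmann pair. Alternatively, note that $\str[g\exp(-\lk-\scrbku^2+c\,i\widetilde{\mu}/u)]$ is, by the argument of Theorem \ref{thm kernel of scrbku} via \cite[Theorem 4.13]{BismutGoette2000}, a universal polynomial in $-R^{N^{\theta,\theta'}}/2i\pi$ (times $\td_{ge^K}$). Hence it is built from closed characteristic forms, and so is $\partial$- and $\dba$-closed; differentiating twice in $c$ preserves this. Therefore $\partial\mathrm{T}_u=\dba\mathrm{T}_u=0$, which proves \eqref{piu=0}.

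I expect the main obstacle to be Step 1: justifying the cyclicity and integration by parts for non-trace-class generalized supertraces, and tracking constants and signs so that exactly $\partial\mathrm{T}_u$ and $-\dba\mathrm{T}_u$ appear.
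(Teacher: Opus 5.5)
Your Step 1 is the paper's argument. The paper also writes $\Pi_u'=\str[i\frac{\widetilde{\mu}(K)}{u}g\exp(-\lk-\scrbku^2+\dabar[\scrbku',i\frac{\widetilde{\mu}(K)}{u}])]^{\dabar}$ and identifies it with $\partial\mathrm{T}_u$ by carrying the proof of \cite[Theorem 4.6]{B1990Kuszul} over to generalized supertraces, using Theorem \ref{thm-transition invariance} and Lemma \ref{lemma scr}. One correction: no factor $(1-\frac1u)^{-1}$ belongs there, and it would blow up at $u=1$. The term $\frac{c(m^M(K)Z)}{2\sqrt2}(1-\frac1u)$ commutes with the function $\widetilde\mu(K)$, so the first line of \eqref{scrbku commutator} gives exactly $[\scrbku',i\frac{\widetilde{\mu}(K)}{u}]=2u\frac{\partial}{\partial u}\scrbku'=\frac{c(m^M(K)\bar z)}{\sqrt2 u}$.

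Where you genuinely differ from the paper is in how you get the vanishing.
\begin{itemize}
\item The paper shows $\Pi_u'=0$ directly. It writes $\Pi_u'$ as the $da\dabar$-component of $\int_{N_{\rr}}\str^{\Lambda(\overline{N}^*)}[g\frac{c(m^M(K)\bar z)}{\sqrt2 u}\exp(-\lk-\scrbku^2+da\dabar\frac{i\widetilde\mu(K)}{u})(g^{-1}Z,Z)]\frac{dv_N}{(2\pi)^{\dim N}}$. The kernel from \cite[Theorem 4.13]{BismutGoette2000} is even in $Z$ with Gaussian decay, while $c(m^M(K)\bar z)$ is linear in $Z$, so the integrand is odd. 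The paper then reads off $\partial\mathrm{T}_u=0$ from the transgression identity.
\item You go the other way: you show $\mathrm{T}_u$ is $\partial$- and $\dba$-closed, then deduce $\Pi_u'=0$ from the transgression identity.
\end{itemize}

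Your closedness argument is sound, and the exact shape of the Mehler formula is irrelevant to it. At $y\in B$ the generalized supertrace is a function of $R^M_y$ that is invariant under unitaries commuting with $g$ and $m^M(K)$. Both $g$ and $m^M(K)$ are $\nabla^M$-parallel; for $m^M(K)$ this is \eqref{dmoment map}, since $G$ acts trivially on $B$. Since $R^M$ is of type $(1,1)$, Chern--Weil gives a $d$-closed element of $P^B$. This is the same mechanism the paper invokes for $\Theta_u$ in Proposition \ref{prop partial thetau}.

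The trade-off is this. In the paper, the vanishing of $\Pi_u'$ does not depend on the non-trace-class manipulations. In your route it rests entirely on Step 1, which you yourself flag as delicate. The two vanishing arguments together (parity for $\Pi_u',\Pi_u''$, Chern--Weil for $\mathrm{T}_u$) already prove \eqref{piu=0} without any transgression identity. The later uses of the proposition, in \eqref{bz}, \eqref{pi1 two estimates} and \eqref{end etatt kappatt}, need only $\Pi_u'=\Pi_u''=0$ and the closedness of $\mathrm{T}_u$.
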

 \begin{proof}
By Definition \ref{bf},
\begin{equation}\label{piu=0 calculate}
    \begin{split}
    &\Pi_u'=\int_{N_{\rr}}\str^{\Lambda(\overline{N}^*)}\Big[g\frac{c(m^M(K)\bar{z})}{\sqrt{2}u}\exp\Big(-\lk-\scrbku^2 \\ &\qquad\qquad\qquad\qquad\qquad\qquad +da\dabar\frac{i\widetilde{\mu}(K)}{u}\Big) (g^{-1}Z,Z)\Big]^{da\dabar}\frac{dv_N}{(2\pi)^{\mathrm{dim}N}}.
    \end{split}
\end{equation}
The heat kernel $\exp(-\lk-\scrbku^2+da\dabar\frac{i\widetilde{\mu}(K)}{u}) (g^{-1}Z,Z)$ is calculated in \cite[Theorem 4.13]{BismutGoette2000}, which is an even function on $Z$ and can be controlled by $c\exp(-C|Z|^2)$. So the right-hand side of (\ref{piu=0 calculate}) is an integral of an odd function on $Z\in N_{\rr,y}$, which is zero.

By \eqref{scrbku commutator},
\begin{equation}\label{piu=0 proof}
        \begin{split}
        \Pi_u'=\str\Big[i\frac{\widetilde{\mu}(K)}{u}g\exp\big(-\lk-\scrbku^2+\dabar \big[\scrbku',i\frac{\widetilde{\mu}(K)}{u}\big]\big)\Big]^{\dabar}.
        \end{split}
    \end{equation}

If \eqref{piu=0 proof} is a supertrace in the normal sense, classically we have 
\begin{align}\label{piu=dsm}
 \Pi_u'=\frac{1}{2}\partial\, \str\Big[i\frac{\widetilde{\mu}(K)}{u}g\exp\big(-\lk-\scrbku^2+\dabar \frac{\widetilde{i\mu}(K)}{u}\big)\Big]^{\dabar}=  \partial \mathrm{T}_u.
\end{align}
Now $\Pi_u'$ is just a generalized supertrace, but \eqref{piu=dsm} remains valid by using the same arguments as in the proof of \cite[Theorem 4.6]{B1990Kuszul}, in particular Theorem \ref{thm-transition invariance} and Lemma \ref{lemma scr} are used as analogues of \cite[Theorems 3.12, 4.2]{B1990Kuszul}.

    Similar results hold true for $\Pi_u''$, which complete our proof.
 \end{proof}

\begin{proposition}\label{prop lambda theta}
    The following identity holds,
    \begin{equation}\label{lambdau thetau}
        \begin{split}
            &\Lambda_u=u\derivative{u} \str\Big[i\frac{\widetilde{\mu}(K)}{u}g\exp\big(-\lk-\scrbku^2\big)\Big].
        \end{split}
    \end{equation}
\end{proposition}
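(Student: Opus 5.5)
We will prove Proposition \ref{prop lambda theta} by the standard two-parameter (Grassmann variable) argument used for \cite[Theorem 4.6]{B1990Kuszul} and \cite[(5.11)]{B1997}. The point is to realize the operator in $\Lambda_u$ as the curvature of a superconnection in the extra directions $da,\dabar$. Let us regard $u$ as a variable. Using the first line of \eqref{scrbku commutator},
\begin{align*}
\Big[\scrbku'',i\frac{\widetilde{\mu}(K)}{u}\Big]=-2u\derivative{u}\scrbku''=-\frac{c(m^M(K)\bar z)}{\sqrt2\,u},\qquad
\Big[\scrbku',i\frac{\widetilde{\mu}(K)}{u}\Big]=\frac{c(m^M(K)z)}{\sqrt2\,u},
\end{align*}
since only the term $\frac{c(m^M(K)Z)}{2\sqrt2}(1-\frac1u)$ depends on $u$. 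Hence the operator appearing in $\Lambda_u$ is
\begin{align*}
\lk+\scrbku^2-da\Big[\scrbku',i\frac{\widetilde{\mu}(K)}{u}\Big]+\dabar\Big[\scrbku'',i\frac{\widetilde{\mu}(K)}{u}\Big]-da\dabar\frac{i\widetilde{\mu}(K)}{u},
\end{align*}
up to sign conventions that will be fixed by checking the $da\dabar$ coefficient. This operator is $\lk+\mathcal{C}^2$, with $\mathcal{C}=\scrbku+da\,\alpha+\dabar\,\beta$ for suitable multiples $\alpha,\beta$ of $\frac{i\widetilde\mu(K)}{u}$ (this uses $\scrbku'^2=\scrbku''^2=0$).

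First, I will expand $\exp(-\lk-\mathcal{C}^2)$ in $da,\dabar$ with Duhamel's formula and take the $da\dabar$ component. This gives three contributions. The first is $\str[\frac{i\widetilde\mu}{u}g e^{-\lk-\scrbku^2}]$ times the coefficient coming from the $da\dabar$ term. The second is a double Duhamel integral $\int_0^1\str\big[g\,[\scrbku',\frac{i\widetilde\mu}{u}]e^{-s(\cdot)}[\scrbku'',\frac{i\widetilde\mu}{u}]e^{-(1-s)(\cdot)}\big]ds$. Second, I will use $u\derivative{u}$ of $\str[\frac{i\widetilde\mu}{u}ge^{-\lk-\scrbku^2}]$, computed again by Duhamel. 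It equals $-\str[\frac{i\widetilde\mu}{u}g e^{\cdots}]$ plus $-\int_0^1\str[\frac{i\widetilde\mu}{u}g\, e^{-s(\cdot)}\,u\partial_u(\scrbku^2)\,e^{-(1-s)(\cdot)}]$, where
\[
u\partial_u\scrbku^2=\tfrac12\big(-[\scrbku',[\scrbku'',\tfrac{i\widetilde\mu}{u}]]+[\scrbku'',[\scrbku',\tfrac{i\widetilde\mu}{u}]]\big).
\]
Third, I will move the commutators with $\scrbku',\scrbku''$ around the supertrace, using that $\scrbku$ commutes with $\lk+\scrbku^2$ and with $g$, and that supertraces of supercommutators vanish. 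This identifies the double-commutator terms with the double Duhamel term of the first step, so that $\Lambda_u$ equals $u\derivative{u}\str[\frac{i\widetilde\mu}{u}ge^{-\lk-\scrbku^2}]$. As a sanity check, I will confirm that the $\frac{-1}{u}$ term from differentiating $\frac1u$ matches the $da\dabar\frac{i\widetilde\mu}{u}$ term.

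The main obstacle is the third step. The supertraces are only generalized ones (Definition \ref{bf}), because the operators are not trace class along $L$. So $\str[[\scrbku,A]]=0$ cannot be used directly. Note also that $\frac{i\widetilde\mu}{u}$ involves only the $N$-component, by \eqref{be}. To handle this, I will follow the proof of \cite[Theorem 4.6]{B1990Kuszul}, as in Proposition \ref{prop, piu=0}. Split $\scrbku=\mathscr{L}_u+(D^L-\frac{c(P^LR^MZ)}{2\sqrt2})$. By Lemma \ref{lemma scr}, $\mathscr{L}_u$ and its $'$, $''$ parts commute with $\lk+\scrbku^2$. The $L$-part changes the $\Lambda(\overline L^*)$-degree, so it contributes nothing to $\str^{\Lambda(\overline N^*)}$ after restricting to $\Lambda(\overline N^*)$. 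The $\mathscr{L}_u$-part involves only $N$-directional derivatives and $\nabla^I$. Its supercommutator contributes nothing once we integrate over $N_\rr$: integration by parts is justified by the Gaussian bound \eqref{estimate scr kernel}, and translation invariance along $L$ holds by Theorem \ref{thm-transition invariance}. The $\nabla^I$ part of the supercommutator yields exact forms on $B$, which must cancel pairwise between the $'$ and $''$ terms. If that cancellation does not close cleanly, the fallback is to compute both sides explicitly from the Mehler formula \eqref{mehler formula}, as in \cite[Theorem 4.13]{BismutGoette2000}.
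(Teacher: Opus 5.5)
Your strategy matches the paper's. You differentiate in $u$, express $u\derivative{u}\scrbku^2$ through the double commutators coming from \eqref{scrbku commutator}, and move $\scrbku'$ and $\scrbku''$ across the generalized supertrace, justified by Lemma \ref{lemma scr} and Theorem \ref{thm-transition invariance} in the manner of \cite[Theorem 4.6]{B1990Kuszul}. You then match what remains with the $da\dabar$-component of the Duhamel expansion defining $\Lambda_u$.

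\textbf{The gap.} It lies in your treatment of the $\nabla^I$-contributions of the supercommutators. They do not cancel pairwise between the $'$ and $''$ terms. The $\dba$-exact term is $\dba$ of the Duhamel integral $\int_0^1\str[g\,\frac{i\widetilde\mu(K)}{u}\,e^{-s(\lk+\scrbku^2)}\,c(m^M(K)\bar z)\,e^{-(1-s)(\lk+\scrbku^2)}]\,ds$. This is a multiple of $\dba\Pi_u'$, with $\Pi_u'$ as in \eqref{many generalized supertraces}. Likewise the $\partial$-exact term is a multiple of $\partial\Pi_u''$. The paper records the outcome as $u\derivative{u}\str[\cdots]=-\frac{1}{2u}\dba\Pi_u'-\frac{1}{2u}\partial\Pi_u''+\Lambda_u$.

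Nothing forces $\dba\Pi_u'$ and $\partial\Pi_u''$ to cancel each other. What removes them is that $\Pi_u'$ and $\Pi_u''$ each vanish, which is Proposition \ref{prop, piu=0}, and the reason is parity. Written as an integral over $N_\rr$, $\Pi_u'$ integrates $\str^{\Lambda(\overline N^*)}$ of $g\,c(m^M(K)\bar z)$ against the kernel of $\exp(-\lk-\scrbku^2+da\dabar\,i\widetilde\mu(K)/u)$ at $(g^{-1}Z,Z)$. That kernel is even in $Z$ and Gaussian by \eqref{estimate scr kernel}, while $c(m^M(K)\bar z)$ is odd, so the integral is zero; the same holds for $\Pi_u''$. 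Without this input your argument proves \eqref{lambdau thetau} only modulo $\dba$- and $\partial$-exact forms on $B$. With it, the Mehler fallback is unnecessary.

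\textbf{Two smaller points.}
\begin{itemize}
\item The superconnection $\mathcal C=\scrbku+da\,\alpha+\dabar\,\beta$, with $\alpha,\beta$ multiples of $i\widetilde\mu(K)/u$, does not produce the operator in $\Lambda_u$. Since $\alpha$ and $\beta$ commute, $(da\,\alpha+\dabar\,\beta)^2=0$, so no $da\dabar$ term arises. Also $[\scrbku,da\,\alpha]$ involves both $c(m^M(K)z)$ and $c(m^M(K)\bar z)$. You never actually use $\mathcal C$, so drop it and work directly with the Duhamel expansion.
\item The $(1,0)$-component goes with the $''$ part (cf.\ \eqref{bku}). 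Hence $[\scrbku'',i\widetilde\mu(K)/u]=-c(m^M(K)z)/(\sqrt2\,u)$ and $[\scrbku',i\widetilde\mu(K)/u]=c(m^M(K)\bar z)/(\sqrt2\,u)$, consistent with the definitions of $\Pi_u'$ and $\Pi_u''$. You have $z$ and $\bar z$ interchanged.
\end{itemize}
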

\begin{proof}
    By proceeding as in the proof of Proposition \ref{prop, piu=0}, we have
    \begin{equation}\label{bz}
        \begin{split}
            &u\derivative{u}\str\Big[i\frac{\widetilde{\mu}(K)}{u}g\exp\big(-\lk-\scrbku^2\big)\Big]\\ 
            =\derivative{b}&\str\Big[i\widetilde{\mu}(K)g\exp\Big(-\lk-\scrbku^2-b\big[\scrbku'',\derivative{u}\scrbku'\big]-b\big[\scrbku',\derivative{u}\scrbku''\big]\Big)\Big]_{b=0}\\ 
            &-\str\Big[i\frac{\widetilde{\mu}(K)}{u}g\exp\big(-\lk-\scrbku^2\big)\Big]\\ 
            =&\dba \derivative{b}\str\Big[i\widetilde{\mu}(K)g\exp\Big(-\lk-\scrbku^2-b\derivative{u}\scrbku'\Big)\Big]_{b=0}\\ 
           & +\partial \derivative{b}\str\Big[i\widetilde{\mu}(K)g\exp\Big(-\lk-\scrbku^2-b\derivative{u}\scrbku''\Big)\Big]_{b=0}\\ 
            &+\str\Big[\big[\scrbku'',i\widetilde{\mu}(K)\big]g\exp\Big(-\lk-\scrbku^2-\dabar\derivative{u}\scrbku'\Big)\Big]^{\dabar}\\
            &+\str\Big[\big[\scrbku',i\widetilde{\mu}(K)\big]g\exp\Big(-\lk-\scrbku^2-da\derivative{u}\scrbku''\Big)\Big]^{da}\\ 
           &-\str\Big[i\frac{\widetilde{\mu}(K)}{u}g\exp\big(-\lk-\scrbku^2\big)\Big]\\ 
          = &-\frac{1}{2u} \dba \Pi_u'- \frac{1}{2u}\partial\Pi_u''+\Lambda_u.
        \end{split}
    \end{equation}
    Using the result of Proposition \ref{prop, piu=0}, we obtain  (\ref{lambdau thetau}). 
\end{proof}

\begin{theorem}\label{bg}
    As $u\rightarrow +\infty$, we have
    \begin{equation}\label{phi Thetau phi Lambdau}
        \begin{split}
            \varphi\Lambda_u=\mathcal{O}\Big(\frac{1}{u}\Big).
        \end{split}
    \end{equation}
\end{theorem}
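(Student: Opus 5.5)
Since $\Lambda_u$ is a generalized supertrace over the fibers of $N_{\rr}$, the plan is to compute it explicitly from the Mehler formula (\ref{mehler formula}) and read off the decay in $u$. The first step is to simplify $\Lambda_u$ using Proposition \ref{prop lambda theta}, which gives
\[
\Lambda_u=u\derivative{u}\str\Big[i\frac{\widetilde{\mu}(K)}{u}g\exp(-\lk-\scrbku^2)\Big].
\]
Applying $\varphi$ and Theorem \ref{thm kernel of scrbku}, this becomes
\[
\varphi\Lambda_u=\td_{ge^K}(M,h^M)\,u\derivative{u}\Phi_{\frac{1}{u},g,K}(N,h^N).
\]
For this I need $\varphi$ to commute with $u\partial_u$, which is clear because $\varphi$ acts only on the form degree on $B$. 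The factor $\td_{ge^K}(M,h^M)$ does not depend on $u$.

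The problem is then reduced to the behaviour of the genus $\Phi_s$ as $s=1/u\to 0$. We have $u\partial_u=-s\partial_s$. By Theorem \ref{thm Phi and I}, $\Phi_{s,g,K}(N,h^N)=(c'_{\max}/c_{\max})_K(N^0,h^{N^0})+\mathcal{O}(s)$. To differentiate, I need more than this $\mathcal{O}(s)$ remainder: I need $\Phi_s$ to be smooth (indeed analytic) in $s$ near $0$. I will check this directly from (\ref{bi}).

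For $\theta\notin 2\pi\zz$, the factor $\coth\big(\frac{i\theta+si\theta'}{2}\big)$ is analytic in $s$. The factor $\frac{si\theta'}{(1-s)i\theta'+x}$ is also analytic, since $i\theta'+x$ is invertible for $x$ nilpotent and $\theta'\neq0$. When $\theta'=0$, $\Phi_s$ vanishes identically. For $\theta=0$, $\theta'\neq0$, the pole of $\coth(si\theta'/2)\sim 2/(si\theta')$ cancels against the factor $s$, which gives the $c'_{\max}/c_{\max}$ limit. The remaining terms are again analytic in $s$.

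With $x=-R^{N^{\theta_j,\theta'_{j'}}}/2i\pi$ nilpotent, each trace $\trace[\Phi_s(\theta_j,\theta'_{j'},x)]$ is a finite sum of terms analytic in $s$ on $[0,1]$. Hence $s\partial_s\Phi_{s,g,K}=\mathcal{O}(s)$, and therefore $\varphi\Lambda_u=\mathcal{O}(1/u)$ uniformly on compact subsets of $B$.

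The step I expect to be delicate is the first one, because it relies on Proposition \ref{prop, piu=0} through Proposition \ref{prop lambda theta}, i.e. on generalized supertraces. If that identification were unavailable, the fallback is to compute $\Lambda_u$ directly as the $da\dabar$-coefficient of the explicit Gaussian kernel from \cite[Theorem 4.13]{BismutGoette2000}. The $u$-dependence enters only through the coefficients $\frac1u m^M(K)$ and $(1+\frac1u)m^M(K)$, which are analytic in $1/u$. After integrating over $N_{\rr}$, the zeroth-order term in $1/u$ cancels because of the $u\partial_u$ structure, leaving $\mathcal{O}(1/u)$.
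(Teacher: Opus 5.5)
Your proposal is correct and follows essentially the same route as the paper: (\ref{lambdau thetau}) together with (\ref{bh}) reduces $\varphi\Lambda_u$ to $\td_{ge^K}(M,h^M)\,u\partial_u\Phi_{1/u,g,K}(N,h^N)$, and analyticity of $\Phi_s$ in $s=1/u$ at $s=0$, read off from (\ref{bi}), gives $\mathcal{O}(1/u)$. You simply spell out the analyticity check that the paper leaves implicit. Only analyticity near $s=0$ is needed, which is fortunate, since the factor $\frac{si\theta'}{(1-s)i\theta'+x}$ on its own degenerates as $s\to 1$, contrary to your phrase ``analytic in $s$ on $[0,1]$''.
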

\begin{proof}
    By \eqref{bi} and \eqref{bh}, $\str\big[i\frac{\widetilde{\mu}(K)}{u}g\exp(-\lk-\scrbku^2)\big]$ is an analytic function of $u$ at $+\infty$. By \eqref{lambdau thetau}, as $u\to +\infty$, we have
     \begin{align}\label{bk}
     \Lambda_u=u\frac{\partial}{\partial u}\str\Big[i\frac{\widetilde{\mu}(K)}{u}g\exp(-\lk-\scrbku^2)\Big]=\mathcal{O}\Big(\frac{1}{u}\Big).
     \end{align}
     So we get  \eqref{phi Thetau phi Lambdau}. 
\end{proof}

\begin{proposition}\label{prop partial thetau}
    For $u\geq 1$, we have
    \begin{equation}\label{partial thetau}
        \begin{split}
            \dba \Theta_u=0,\quad \partial\Theta_u=0,\\ 
            \dba \Lambda_u=0,\quad \partial\Lambda_u=0.
        \end{split}
    \end{equation}
\end{proposition}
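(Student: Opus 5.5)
The plan is to treat $\Theta_u$ and $\Lambda_u$ as generalized supertraces that are $da\dabar$-components of supertraces of heat kernels of superconnection-like operators. We then show they are $\partial$- and $\dba$-closed by the usual Bianchi/Chern--Weil argument: a supertrace of a commutator vanishes. The rigorous justification in the non-trace-class setting rests on the translation invariance of Theorem \ref{thm-transition invariance} and the commutation relations of Lemma \ref{lemma scr}, exactly as in the proof of Proposition \ref{prop, piu=0}.

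The first step handles $\Lambda_u$. We write the operator in its definition as the curvature of an extended superconnection with Grassmann variables $da,\dabar$. Set
\begin{equation*}
\mathscr{C}_u''=\scrbku''+da\,\frac{i\widetilde{\mu}(K)}{u},\qquad \mathscr{C}_u'=\scrbku'+\dabar\,\frac{i\widetilde{\mu}(K)}{u}.
\end{equation*}
By \eqref{scrbku commutator} and \eqref{dmutilde}, the brackets $[\scrbku'',i\widetilde{\mu}(K)/u]$ and $[\scrbku',i\widetilde{\mu}(K)/u]$ are, up to signs, $\frac{1}{\sqrt{2}u}c(m^M(K)z)$ and $\frac{1}{\sqrt{2}u}c(m^M(K)\bar z)$. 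Hence $\lk+(\mathscr{C}_u''+\mathscr{C}_u')^2$ reproduces the exponent of $\Lambda_u$, with $\mathscr{C}_u''^2=\mathscr{C}_u'^2=0$. We may need to adjust signs, or to use $da$ with $\scrbku'$ instead. The Bianchi identities $[\mathscr{C}_u',\lk+\mathscr{C}_u^2]=[\mathscr{C}_u'',\lk+\mathscr{C}_u^2]=0$ then hold formally. The standard identity $\partial\str[g\exp(-\lk-\mathscr{C}_u^2)]=\str[[\mathscr{C}_u',g\exp(-\lk-\mathscr{C}_u^2)]]$ reduces $\partial\Lambda_u$ to the $da\dabar$-part of a supertrace of a supercommutator.

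The second step is to show that this generalized supertrace of a supercommutator vanishes. Following \cite[Theorem 4.6]{B1990Kuszul}, we split $\mathscr{C}_u'$ into the $N$-part $\mathscr{L}_u'$ (plus the Grassmann term) and the $L$-part $D^{L,*}-c(P^L R^M Z)/2\sqrt{2}$. By Lemma \ref{lemma scr}, $\mathscr{L}_u'$ commutes with $\lk+\mathscr{C}_u^2$, since the added $\widetilde{\mu}(K)$ terms depend only on the $N$-coordinates by \eqref{be}. The $L$-part changes the $\Lambda(\overline{L}^*)$ degree, so its contribution to $\str^{\Lambda(\overline{N}^*)}$ vanishes after projection. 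The $N$-part contributes $\str$ of a genuine commutator of a first-order operator with a kernel that decays like $\exp(-C|Z|^2)$ in $Z\in N_{\rr}$ by \eqref{estimate scr kernel}. Integration by parts over $N_{\rr}$ therefore gives zero, and the horizontal $\nabla^I$ part produces exactly the $\partial$ on $B$. The same argument with $\mathscr{C}_u''$ gives $\dba\Lambda_u=0$.

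The third step handles $\Theta_u$. Its operator is the Lichnerowicz form of \eqref{lichnerowicz scrbku} with $R^M$ replaced by $R^M-iJ^M da\dabar$ and the extra $-da\dabar\frac{m}{2}$ and Clifford $da,\dabar$ terms. This is the formula for the curvature of a superconnection on the bundle $M$ pulled back to $B\times\cc$ with the twisted metric, as in \cite[(4.40)]{BismutGoette2000}. Differentiating in $b$ inserts $i\widetilde{\mu}(K)/u$. We therefore rerun steps one and two for this deformed superconnection, with the $b$-term added as a further $da\dabar$-type insertion that commutes appropriately by the first line of \eqref{scrbku commutator}. This gives $\partial\Theta_u=\dba\Theta_u=0$.

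The main obstacle is step two for $\Theta_u$: checking that the deformed operator still has a Bianchi identity and a translation-invariance statement analogous to Theorem \ref{thm-transition invariance}, so that the generalized supertrace of a commutator vanishes. If the direct verification becomes unwieldy, I would instead use the explicit Mehler kernel of \cite[Theorem 4.13]{BismutGoette2000}. That kernel shows $\Theta_u$ and $\Lambda_u$ are given by Chern--Weil expressions in $R^{N^{\theta_j,\theta'_{j'}}}$, which are closed holomorphic Hermitian curvature forms of type $(p,p)$ and hence $\partial$- and $\dba$-closed.
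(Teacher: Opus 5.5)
Your main route fails at Step~1, and the failure is not a matter of signs. Since $f=i\widetilde{\mu}(K)/u$ is a scalar and $da$ anticommutes with $\scrbku''$, the first line of \eqref{scrbku commutator} gives
\[
(\mathscr{C}_u'')^2=-da\,[\scrbku'',f]=2u\,da\,\frac{\partial}{\partial u}\scrbku''=\frac{da}{\sqrt{2}\,u}\,c\bigl(m^M(K)z\bigr)\neq 0,
\]
and similarly $(\mathscr{C}_u')^2\neq 0$.

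The full square is also wrong. Because $\bigl((da+\dabar)f\bigr)^2=0$, you get $(\mathscr{C}_u''+\mathscr{C}_u')^2=\scrbku^2-(da+\dabar)[\scrbku,f]$. This has no $da\dabar$-term, whereas the exponent defining $\Lambda_u$ contains $da\dabar\,i\widetilde{\mu}(K)/u$. It also has cross terms $da\,c(m^M(K)\bar z)$ and $\dabar\,c(m^M(K)z)$ that this exponent lacks. Changing signs or exchanging $da$ and $\dabar$ cannot fix this: an insertion $da\,P+\dabar\,Q$ with scalar $P,Q$ contributes only $da\dabar\,[P,Q]=0$ to the $da\dabar$-part of the square.

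Step~3 inherits the same defect. Its claim that the $b\,i\widetilde{\mu}(K)/u$ insertion ``commutes appropriately'' contradicts the first line of \eqref{scrbku commutator}, which gives the nonzero brackets $-2u\frac{\partial}{\partial u}\scrbku''$ and $2u\frac{\partial}{\partial u}\scrbku'$.

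More fundamentally, there is no general Bianchi-type mechanism making such $da\dabar$-components closed. In the global analogue \eqref{derivative gamma}, the corresponding $da\dabar$-term equals $\frac{\partial}{\partial t}\gamma_{t,g,K}$ modulo a $\dba\partial$-exact form, and that is not closed in general. The closedness of $\Theta_u$ is a special feature of the linear model, and it becomes visible only once the kernel has been computed.

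Your last-sentence fallback is what actually works, and it is essentially the paper's proof. For $\Theta_u$, the paper uses the explicit Mehler-kernel evaluation of \cite[Theorem 4.13]{BismutGoette2000}, the same computation that yields \eqref{bh}. This exhibits $\Theta_u$ as a combination of characteristic forms on $B$ of the holomorphic Hermitian bundles in $M=L\oplus\bigoplus_{j,j'}N^{\theta_j,\theta'_{j'}}$. Such forms are $d$-closed sums of $(p,p)$-forms, hence $\partial$- and $\dba$-closed.

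For $\Lambda_u$ no new kernel computation is needed. By Proposition~\ref{prop lambda theta}, $\Lambda_u=u\frac{\partial}{\partial u}\str\bigl[i\frac{\widetilde{\mu}(K)}{u}g\exp(-\lk-\scrbku^2)\bigr]$. By \eqref{bh}, the form being differentiated is, after applying $\varphi$, the closed form $\td_{ge^K}(M,h^M)\Phi_{1/u,g,K}(N,h^N)$, so its $u$-derivative is closed too.

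If you want a commutator argument for $\Lambda_u$, apply it to this un-extended supertrace, where the Bianchi identity does hold. There $\partial$ of it equals $\str\bigl[[\scrbku',f]\,g\exp(-\lk-\scrbku^2)\bigr]$, which vanishes by the parity argument in the proof of Proposition~\ref{prop, piu=0}.
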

\begin{proof}
     From the proof of Theorem \ref{bg}, $\Theta_u$ is a combination of characteristic classes on $B$, so we get the first line of (\ref{partial thetau}). By \eqref{bh} and (\ref{lambdau thetau}), we obtain the second line.
\end{proof}
\section{A proof of Theorem \ref{8o}}\label{dz}
The aim of this section is to establish the main result, Theorem \ref{8o}, which is a family version of \cite[Theorem 5.1]{BismutGoette2000}. Our proof is based on the methods in \cite[\S 6]{B1997} and \cite[\S 6]{BismutGoette2000}.
 
Inspired by \cite[\S 6]{BismutGoette2000}, we will construct a differential form $\gamma_{v,t}$ on $\mathbb{R}_{+}^{*}\times \mathbb{R}_{+}^{*}\times S$ and compute its integral over the contour $\Gamma_{a,A}$ shown in the graph in Section \ref{br}. We will show that the contour integral $\int_{\Gamma_{a,A}}\gamma_{v,t}$ lies in $P^{S,0}$. More precisely, it has the following explicit expression as a double integral: $\int_{\Gamma_{a,A}}\gamma_{v,t}=\Phi \overline{\partial}\int_{\Delta_{a,A}}\alpha_{v,t}\,dv\wedge dt+\Phi \partial\int_{\Delta_{a,A}}\beta_{v,t}\,dv\wedge dt$. Here, the integrations are taken with respect to $(v,t)$, while $\partial$ and $\overline{\partial}$ act on $S$. Using some intermediate results to be proved in Sections \ref{ek}--\ref{el}, we study the asymptotic behavior of $\int_{\Gamma_{a,A}}\gamma_{v,t}$ as $A\to+\infty$, then $a\to0$. We show that the divergent terms in the asymptotic expansion of the contour integral belong to $P^{S,0}$, while the remaining terms are exactly those appearing in \eqref{91}. By passing to the limit, we conclude that \eqref{91} holds modulo the closure of $P^{S,0}$.  In the special case where $S$ is a closed K\"ahler manifold, $P^{S,0}$ is closed, so we obtain Theorem \ref{8o}.

The discussion above is similar to that in \cite{BismutGoette2000}, where $S=\left\{\mathrm{pt}\right\}$, but there are several differences. In \cite{BismutGoette2000}, the most difficult part is the estimates of a supertrace involving the moment map $\left\langle\mu,K\right\rangle$ (cf. \eqref{qs}). They introduce a splitting $\left\langle\mu,K\right\rangle=\mu_{1}+\mu_{2}$ and treat separately the supertraces corresponding to $\mu_{1}$ and $\mu_{2}$. In our case, we need to modify this splitting so that the resulting functions $\mu_{1}$ and $\mu_{2}$ are constant in the horizontal direction. We will explain this in detail in Section \ref{bq}. Moreover, when studying the supertrace corresponding to $\mu_{2}$, two extra terms arise in our case (cf. \eqref{hvt}), compared with \cite{BismutGoette2000}. We therefore need some additional analysis of these new terms, which is given in Theorem \ref{h,t01}.

In general, $P^{S,0}$ need not be closed under uniform convergence. To show that the limit still lies in $P^{S,0}$, we therefore need to study precisely the asymptotic behavior of the double integral $\Phi \overline{\partial}\int_{\Delta_{a,A}}\alpha_{v,t}\,dv\wedge dt+\Phi \partial\int_{\Delta_{a,A}}\beta_{v,t}\,dv\wedge dt$ as $A\to+\infty$, then $a\to 0$. This is the most challenging part of our proof. We divide the terms appearing in the double integral into two classes, corresponding respectively to $\mu_{1}$ and $\mu_{2}$ (cf. \eqref{sum of Ik0}). The terms corresponding to $\mu_{1}$ have structures similar to those on the right-hand side of \cite[(4.26)]{B1997}, and we treat them by following the approach of \cite{B1997}. In particular, the results in Section \ref{em} are important for this part and play a role similar to that of \cite[\S 5]{B1997} in the proof of their main result. By contrast, the terms involving $\mu_{2}$ are quite different. Motivated by the algebraic identity in \cite[Proposition 8.32]{BismutGoette2000}, we will establish several identities that reduce the analysis of the terms involving $\mu_{2}$ to that of certain expressions involving $d\mu_{2}$ (cf. \eqref{ddbarpartialt kappa1}, \eqref{kappa1va}, and \eqref{extension liuma}). We then use the key fact that $d\mu_{2}=0$ in a neighborhood of $W_{g,K}$ to simplify the estimates. In summary, we show that the divergent terms in the double integral agree exactly with those of $\int_{\Gamma_{a,A}}\gamma_{v,t}$. After removing these divergent parts, the limit of the double-integral expression belongs to $P^{S,0}$, while the remaining part of the contour integral is exactly the expression in \eqref{91}. This completes the proof of our main theorem.

This section is organized as follows. In Section \ref{bq}, we introduce the splitting $\left\langle\mu,K\right\rangle=\mu_{1}+\mu_{2}$. In Section \ref{br}, we introduce a two-parameter Bismut superconnection $\cvt$, which is equal to $\bisconn_{t}$ in \eqref{Bismut superconnection} when $v=t$ and deforms to $\bisconn_{K,t}$ in \eqref{bku} as $v\to+\infty$. Then we define the differential form $\gamma_{v,t}$ and corresponding contour integral. In Section \ref{bt}, we state a series of intermediate results, whose proofs are given in Sections \ref{ek}--\ref{el}. In Section \ref{bu}, we study the asymptotic behavior of the contour integral and prove Theorem \ref{8o} when $S$ is closed and K\"ahler. Finally, in Sections \ref{bs}--\ref{b6}, we treat the general case by studying the double integrals in much detail. The methods in \cite[\S 6.6--6.8]{B1997} will be the guidance of us.

\subsection{A splitting of $\product{\mu,K}$}\label{bq}
Before we start to prove the main theorem, we introduce a division of  the function $\product{\mu,K}$
 as in \cite[\S 8.3]{BismutGoette2000}.

For $K\in \mathfrak{z}(g)$, by \eqref{dmuk}, $\left\langle \mu,K \right\rangle$ is constant function on each component of $W_K=\left\{ x\in W: K^X(x)=0 \right\}$. Let $\mu_K^1, \cdots, \mu_K^m$ be the distinct values of $\left\langle \mu,K \right\rangle$ on $W_K$. We choose $\varepsilon>0$ small enough such that the intervals $[\mu_K^i-\varepsilon, \mu_K^i+\varepsilon]$ are pairwise disjoint.

Let $\gamma(s)$ be a smooth function from $\rr
$ to $[0,1]$ such that 
\begin{equation}\label{gammas}
    \begin{split}
        \gamma(s)&=1 \qquad \mathrm{if} \kongge |s|<\varepsilon/2,\\ 
        &=0 \qquad \mathrm{if} \kongge |s|\geq \varepsilon.
    \end{split}
\end{equation}

\begin{definition}
    Put 
    \begin{equation}\label{fy}
        \begin{split}
            &\mu_1=\product{\mu,K}-\mu_2,\\ 
            &\mu_2=\sum_{i=1}^m \mu_K^i \gamma(\left\langle \mu,K \right\rangle-\mu_K^i).
        \end{split}
    \end{equation}
\end{definition}

By (\ref{dmuk}) , (\ref{gammas}), we get the key features of $\mu_1$ and $\mu_2$, 
\begin{equation}\label{proposition of mu1mu2}
    \begin{split}
        &\mu_1=\mathcal{O}(|Z|^2) \kongge \mathrm{near} \kongge W_K,\\ 
        &\mu_2 \kongge\mathrm{is}\kongge \mathrm{locally} \kongge \mathrm{constant} \kongge \mathrm{near} \kongge W_K. 
    \end{split}
\end{equation}

By using \eqref{proposition of mu1mu2}, we can proceed as in the proof of  Proposition \ref{prop rho1} to get refinement of \eqref{bo},   if $\eta\in P^{W}$ is $d_K$-closed form on
    \begin{align}\label{obo}
        &\int_{X} -i\mu_1 \eta {\rho}_1=\int_{X_K}\eta c_{\mathrm{\mathrm{max}},K}^{-1,\prime}(N_{X_K/X},h^{N_{X_K/X}}),
&&\int_{X} -i\mu_2 \eta{\rho}_1\in  P^{S,0}.
    \end{align}

Recall that in \eqref{a8}, \eqref{ac}, we explain a construction of the equivariant Bott-Chern current using the moment map $\mu$. As an analogue of the splitting of $\mu$, for
 $k=1,2$, we set
\begin{equation}\label{def of dvt}
    \begin{split}
    &\widetilde{d}_{t}^k=-i\frac{\mu_k}{t}\exp\Big(\frac{\overline{\partial}_K\partial_K}{2\pi t}\product{\mu,K}\Big).
    \end{split}
\end{equation}
Therefore we get a splitting of $\widetilde{d}_{K,t}$ in \eqref{a8}.

Similarly, for $k=1,2$, let $K_k^X\in T_{\rr}X$ be the vertical vector field on $W$ given by
\begin{equation}
    d\mu_k+i_{K_k^X}\omega=0.
\end{equation}
So we get a splitting of $K^X$.

\subsection{A contour integral}\label{br}
\begin{definition}
    For $t>0, v>0$,
    set
    \begin{equation}\label{definition cvt}
        \begin{split}
            &\cvt''=\bisconn_t''+\frac{\sqrt{t}c(K^{X(1,0)})}{2\sqrt{2}}\Big(\frac{1}{t}-\frac{1}{v}\Big),\\
           &\cvt'=\bisconn_t'+\frac{\sqrt{t}c(K^{X(0,1)})}{2\sqrt{2}}\Big(\frac{1}{t}-\frac{1}{v}\Big),\\
           &\cvt=\cvt''+\cvt'.
        \end{split}
    \end{equation} 
\end{definition}
Let $\gamma_{v,t}$ be the smooth form on $\rr_+^{*2}\times S$ defined by
\begin{equation}\label{gammavt}
    \gamma_{v,t}=\frac{dt}{t}\trace_s\big[N_{K,t}g\exp\big(-\lk-\cvt^2\big)\big]+\frac{dv}{v^2}\trace_s\big[i\product{\mu,K}g\exp\big(-\lk-\cvt^2\big)\big].
\end{equation} 

Let $\alpha_{v,t},\beta_{v,t}$ be the forms on $\rr_+^{*2}\times S$, defined by
\begin{equation}\label{alphabeta}
\begin{split}
    \alpha_{v,t}=\derivative{b}\Big|_{b=0}\Big\{\trace_s\Big[\frac{N_{K,t}}{t}g\exp\Big(-\lk-\cvt^2-b\frac{\sqrt{t}c(K^{X(0,1)})}{2\sqrt{2}v^2}\Big)\Big]\\ 
    -\trace_s\Big[i\frac{\product{\mu,K}}{v^2}g\exp\Big(-\lk-\cvt^2-b\derivative{t}\cvt'\Big)\Big]\Big\},\\ 
    \beta_{v,t}=\derivative{b}\Big|_{b=0}\Big\{\trace_s\Big[\frac{N_{K,t}}{t}g\exp\Big(-\lk-\cvt^2-b\frac{\sqrt{t}c(K^{X(1,0)})}{2\sqrt{2}v^2}\Big)\Big]\\ 
    -\trace_s\Big[i\frac{\product{\mu,K}}{v^2}g\exp\Big(-\lk-\cvt^2-b\derivative{t}\cvt''\Big)\Big]\Big\}.
\end{split}
\end{equation}

\begin{theorem}\label{thm dclosed form}
    The following identity holds,
    \begin{equation}\label{dclosed form}
        (dv\wedge \frac{\partial}{\partial v}+dt\wedge\frac{\partial}{\partial t})\gamma_{v,t}=dv\wedge dt \wedge (\overline{\partial}^S\alpha_{v,t}+{\partial}^S\beta_{v,t}).
    \end{equation}
\end{theorem}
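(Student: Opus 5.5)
The identity is a Stokes-type transgression, and I would prove it by the same mechanism as Theorem \ref{double transgression} and \eqref{ak}, now with two parameters. Writing $\gamma_{v,t}=dt\,f_{v,t}+dv\,h_{v,t}$ with $f_{v,t}=t^{-1}\Tr[N_{K,t}g\exp(-\lk-\cvt^2)]$ and $h_{v,t}=v^{-2}\Tr[i\product{\mu,K}g\exp(-\lk-\cvt^2)]$, the left-hand side of \eqref{dclosed form} equals $dv\wedge dt\,(\partial_v f_{v,t}-\partial_t h_{v,t})$. So the claim reduces to
\begin{equation*}
\partial_v f_{v,t}-\partial_t h_{v,t}=\overline{\partial}^S\alpha_{v,t}+\partial^S\beta_{v,t}.
\end{equation*}

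\textbf{Step 1: algebraic identities for $\cvt$.} I would first establish the two-parameter analogue of Proposition \ref{a2}. Arguing as for \eqref{ag} (the $G$-action is holomorphic), I would show
\begin{equation*}
\cvt''^2=0,\qquad \cvt'^2=0,\qquad \cvt^2=[\cvt'',\cvt'],
\end{equation*}
together with $[\lk,\cvt'']=[\lk,\cvt']=0$ and the Bianchi identities $[\cvt'',\lk+\cvt^2]=[\cvt',\lk+\cvt^2]=0$. From \eqref{af} I would then read off the $v$-variations:
\begin{equation*}
\partial_v\cvt''=\frac{\sqrt t\,c(K^{X(1,0)})}{2\sqrt2\,v^2}=-\frac{1}{2}\Big[\cvt'',\frac{i\product{\mu,K}}{v^2}\Big],\qquad
\partial_v\cvt'=\frac{1}{2}\Big[\cvt',\frac{i\product{\mu,K}}{v^2}\Big].
\end{equation*}
For the $t$-variation I would combine Theorem \ref{identities-g} with \eqref{af}. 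A short computation should give $[\cvt'',N_{K,t}]=-2t\,\partial_t\cvt''$ plus a remainder proportional to $\sqrt t\,c(K^{X(1,0)})/v$. I would rewrite this remainder through $\partial_v\cvt''$, which is exactly why the extra $b$-terms with $c(K^{X(0,1)})/v^2$ and $c(K^{X(1,0)})/v^2$ appear in \eqref{alphabeta}. The analogous identity holds for $\cvt'$ with the opposite sign.

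\textbf{Step 2: differentiate and compare.} Next I would differentiate $f_{v,t}$ in $v$ and $h_{v,t}$ in $t$ using Duhamel's formula. The fibres are compact and $g$ commutes with everything, so the operators are genuinely trace class. In each derivative I would substitute $\partial\cvt^2=[\cvt'',\partial\cvt']+[\cvt',\partial\cvt'']$, using $\cvt^2=[\cvt'',\cvt']$. I would then move $\cvt''$ and $\cvt'$ across by supercommutator manipulations, using the Bianchi identities of Step 1 and the rule
\begin{equation*}
\Tr\big[[\cvt'',Y]\big]=\overline{\partial}^S\Tr[Y],\qquad \Tr\big[[\cvt',Y]\big]=\partial^S\Tr[Y],
\end{equation*}
valid since $\cvt''=\nabla^{\mathbb E''}+(\text{vertical terms})$ and $\Tr$ kills supercommutators of vertical operators. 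Whenever a commutator $[\cvt'',N_{K,t}]$ or $[\cvt'',i\product{\mu,K}]$ is produced, I would replace it by the identities of Step 1, and likewise for $\cvt'$.

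\textbf{Step 3: cancellation, and the main obstacle.} After these substitutions, the non-exact contributions to $\partial_v f_{v,t}$ and to $\partial_t h_{v,t}$ should coincide and cancel, exactly as in the scalar case \cite[\S 6]{BismutGoette2000}. What survives is $\overline{\partial}^S$ of the $b$-derivative terms collected in $\alpha_{v,t}$ and $\partial^S$ of those collected in $\beta_{v,t}$. The main obstacle is the bookkeeping in this cancellation: getting the signs and the factors $\tfrac12$, $t^{\pm1/2}$, $v^{-2}$ right, and checking that the remainder from $[\cvt'',N_{K,t}]$ pairs precisely with the $\partial_t h_{v,t}$ term. I would first verify the computation in the case $S=\mathrm{pt}$, where it must reduce to the closedness statement of \cite[Theorem 6.2]{BismutGoette2000}. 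I would then track separately the $\omega^H/t$ part of $N_{K,t}$, which is the new feature in the family setting.
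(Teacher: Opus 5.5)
Your overall architecture (Duhamel's formula, $\partial\cvt^2=[\cvt'',\partial\cvt']+[\cvt',\partial\cvt'']$, the rule $\Tr[[\cvt'',Y]]=\overline{\partial}^S\Tr[Y]$, then the commutator identities) is the paper's. However, Step 1 contains a false identity, and Step 3 relies on a cancellation that does not occur.

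There is no remainder in $[\cvt'',N_{K,t}]$. Write $\cvt''=\bisconn''_{K,t}-\frac{\sqrt t\,c(K^{X(1,0)})}{2\sqrt2\,v}$. Proposition \ref{a2} gives $[\bisconn''_{K,t},N_{K,t}]=-2t\partial_t\bisconn''_{K,t}$. Since $c(K^{X(1,0)})$ raises $N_V$ by one and commutes with $\omega^H$ and $\product{\mu,K}$, you get $[\sqrt t\,c(K^{X(1,0)}),N_{K,t}]=-\sqrt t\,c(K^{X(1,0)})=(-2t\partial_t\sqrt t)\,c(K^{X(1,0)})$. Hence $[\cvt'',N_{K,t}]=-2t\partial_t\cvt''$ and $[\cvt',N_{K,t}]=2t\partial_t\cvt'$ hold exactly, for every $v$.

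The $v^{-2}$ insertions in \eqref{alphabeta} are simply $b\,\partial_v\cvt'$ and $b\,\partial_v\cvt''$, coming from the Duhamel expansion of $\partial_v\cvt^2$; they are not a rewritten remainder. If you carried a term $\propto\sqrt t\,c(K^{X(1,0)})/v$ into Step 3, it would produce a pairing of $\partial_v\cvt''$ against $\partial_v\cvt'$. Such a term has no partner in $\partial_t h_{v,t}$, so the matching you plan to check would fail.

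What Step 3 actually needs is the antisymmetry of the Duhamel pairing. Put $L=\lk+\cvt^2$, and for odd $G$-invariant $A,B$ set $\langle\!\langle A,B\rangle\!\rangle=\partial_b\Tr[A\,g\exp(-L-bB)]_{b=0}$. Use the exact identities above together with $[\cvt'',i\product{\mu,K}/v^2]=-2\partial_v\cvt''$ and $[\cvt',i\product{\mu,K}/v^2]=2\partial_v\cvt'$. Then your Step 2 gives $\partial_v f_{v,t}-\partial_t h_{v,t}=\overline{\partial}^S\alpha_{v,t}+\partial^S\beta_{v,t}+R$, where
\begin{equation*}
R=2\langle\!\langle\partial_t\cvt'',\partial_v\cvt'\rangle\!\rangle+2\langle\!\langle\partial_v\cvt',\partial_t\cvt''\rangle\!\rangle-2\langle\!\langle\partial_t\cvt',\partial_v\cvt''\rangle\!\rangle-2\langle\!\langle\partial_v\cvt'',\partial_t\cvt'\rangle\!\rangle .
\end{equation*}
These are the last two lines of \eqref{calculation2 derivative v} and \eqref{calculation derivative t}. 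Now $\langle\!\langle A,B\rangle\!\rangle=-\int_0^1\Tr[A g e^{-sL}Be^{-(1-s)L}]\,ds$, and both $Age^{-sL}$ and $Be^{-(1-s)L}$ are odd. Supercyclicity of $\Tr$, together with $g$ commuting with $A$, $B$ and $L$, therefore gives $\langle\!\langle A,B\rangle\!\rangle=-\langle\!\langle B,A\rangle\!\rangle$, so $R=0$. Once Step 1 is corrected and this antisymmetry is added, your argument coincides with the paper's proof.
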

\begin{proof}
    By the analogue of (\ref{bku-algebraic identity}), we get
    \begin{equation}\label{cvt-algebraic identity}
        \begin{split}
        \cvt^2=[\cvt'',\cvt'],\kongge\quad [\cvt'',\cvt^2]=0,\kongge\quad [\cvt',\cvt^2]=0.
        \end{split}
    \end{equation}
From (\ref{cvt-algebraic identity}), we obtain
\begin{equation}\label{derivative of cvt2}
    \begin{split}
        &\derivative{v}\cvt^2=[\cvt'',\derivative{v}\cvt']+[\cvt',\derivative{v}\cvt''].
    \end{split}
\end{equation}
Using  (\ref{cvt-algebraic identity}), (\ref{derivative of cvt2}) and the fact that supertraces vanish on supercommutators, we have
\begin{equation}\label{calculation derivative v}
    \begin{split}
       \derivative{v}\trace_s[N_{K,t}g&\exp(-\lk- \cvt^2)]\\
       =\derivative{b}\Big|_{b=0}\trace_s[&N_{K,t}g \exp(-\lk-\cvt^2-b[\cvt'',\derivative{v}\cvt']-b[\cvt',\derivative{v}\cvt''])]\\ 
        =&\overline{\partial}^S \derivative{b}\Big|_{b=0}\trace_s\Big[N_{K,t}g\exp\Big(-\lk-\cvt^2-b\frac{\sqrt{t}c(K^{X(0,1)})}{2\sqrt{2}v^2}\Big)\Big]\\
        +&{\partial}^S \derivative{b}\Big|_{b=0}\trace_s\Big[N_{K,t}g\exp\Big(-\lk-\cvt^2-b\frac{\sqrt{t}c(K^{X(1,0)})}{2\sqrt{2}v^2}\Big)\Big]\\
        -&\derivative{b}\Big|_{b=0}\trace_s\Big[[\cvt'',N_{K,t}]g\exp\Big(-\lk-\cvt^2-b\frac{\sqrt{t}c(K^{X(0,1)})}{2\sqrt{2}v^2}\Big)\Big]\\
        -&\derivative{b}\Big|_{b=0}\trace_s\Big[[\cvt',N_{K,t}]g\exp\Big(-\lk-\cvt^2-b\frac{\sqrt{t}c(K^{X(1,0)})}{2\sqrt{2}v^2}\Big)\Big].
    \end{split}
\end{equation}
Then by (\ref{bku-algebraic identity}), (\ref{calculation derivative v}), we get
\begin{equation}\label{calculation2 derivative v}
    \begin{split}
        \derivative{v}\trace_s[\frac{N_{K,t}}{t}g&\exp(-\lk- \cvt^2)]=\\
        =&\overline{\partial}^S \derivative{b}\Big|_{b=0}\trace_s\Big[\frac{N_{K,t}}{t}g\exp\Big(-\lk-\cvt^2-b\frac{\sqrt{t}c(K^{X(0,1)})}{2\sqrt{2}v^2}\Big)\Big]\\
        +&{\partial}^S \derivative{b}\Big|_{b=0}\trace_s\Big[\frac{N_{K,t}}{t}g\exp\Big(-\lk-\cvt^2-b\frac{\sqrt{t}c(K^{X(1,0)})}{2\sqrt{2}v^2}\Big)\Big]\\
        +&\derivative{b}\Big|_{b=0}\trace_s\Big[2\derivative{t}\cvt''g\exp\Big(-\lk-\cvt^2-b\frac{\sqrt{t}c(K^{X(0,1)})}{2\sqrt{2}v^2}\Big)\Big]\\
        -&\derivative{b}\Big|_{b=0}\trace_s\Big[2\derivative{t}\cvt'g\exp\Big(-\lk-\cvt^2-b\frac{\sqrt{t}c(K^{X(1,0)})}{2\sqrt{2}v^2}\Big)\Big].
    \end{split}
\end{equation}

As in \eqref{af}, we have
\begin{equation}\label{commutator c mu}
    [\cvt'',i\product{\mu,K}]=-\frac{\sqrt{t}c(K^{X(1,0)})}{\sqrt{2}},\qquad  [\cvt',i\product{\mu,K}]=\frac{\sqrt{t}c(K^{X(0,1)})}{\sqrt{2}}.
\end{equation}
By proceeding as in (\ref{calculation derivative v}), (\ref{calculation2 derivative v}), we find that
\begin{equation}\label{calculation derivative t}
    \begin{split}
        \derivative{t}\frac{1}{v^2}\trace_s[i\product{\mu,K}g&\exp(-\lk- \cvt^2)]=\\
        =&\overline{\partial}^S \derivative{b}\Big|_{b=0}\trace_s\Big[i\frac{\product{\mu,K}}{v^2}g\exp\Big(-\lk-\cvt^2-b\derivative{t}\cvt'\Big)\Big]\\
        +&{\partial}^S \derivative{b}\Big|_{b=0}\trace_s\Big[i\frac{\product{\mu,K}}{v^2}g\exp\Big(-\lk-\cvt^2-b\derivative{t}\cvt''\Big)\Big]\\
        +&\derivative{b}\Big|_{b=0}\trace_s\Big[\frac{\sqrt{t}c(K^{X(1,0)})}{\sqrt{2}v^2}g\exp\Big(-\lk-\cvt^2-b\derivative{t}\cvt'\Big)\Big]\\
        -&\derivative{b}\Big|_{b=0}\trace_s\Big[\frac{\sqrt{t}c(K^{X(0,1)})}{\sqrt{2}v^2}g\exp\Big(-\lk-\cvt^2-b\derivative{t}\cvt''\Big)\Big].
    \end{split}
\end{equation}

From (\ref{calculation2 derivative v}), (\ref{calculation derivative t}), we get (\ref{dclosed form}).
\end{proof}

Take $0<a\leq 1\leq A<+\infty. $ Let $\Gamma=\Gamma_{a,A} $ be the oriented contour in $\rr_+^2$:

\begin{center}
    \begin{tikzpicture}
        \draw[thick,-latex] (-0.3,0) -- (5.5,0) node[anchor=north west] {$v$};
        \draw[thick,-latex] (0,-0.3) -- (0,5.5) node[anchor=south east] {$t$};
        \node[below=0.2cm, left=-0cm] at (-0,0) {$0\strut$};
        \node at (4.6,2.5){$\Gamma_1$};
        \node at (2.3,2.9){$\Gamma_2$};
        \node[below] at (1,0) {$a$};
        \node[below] at (4.2,0) {$A$};
        \node[left] at (0,1) {$a$};
        \node[left] at (0,4.2) {$A$};
        \node at (3.2,1.8) {$\Delta$};
    
        \begin{scope}[dashed]
            \draw (4.2,0) -- (4.2,1);
            \draw (1,0) --(1,1);
            \draw (0,1) --(1,1);
            \draw (0,4.2) --(4.2,4.2);
        \end{scope}
        \begin{scope}[thick, every node/.style={sloped,allow upside down}]
            \draw (1,1) -- node {\midarrow} node[below=0.1cm]{$\Gamma_3$} (4.2,1);
        \draw (4.2,1) --node {\midarrow}  (4.2,4.2);
        \draw (4.2,4.2) --node {\midarrow}  (1,1);
        \end{scope}
        
        \end{tikzpicture}
    \end{center}

    As indicated in the picture, $\Gamma$ is made of three oriented pieces: $\Gamma_1,\Gamma_2,\Gamma_3$. And $\Gamma$ bounds an oriented triangular domain $\Delta=\Delta_{a,A}.$ For $k=1,2,3$, we set  
        \begin{equation}\label{ft}
            I_k^0=\Phi\int_{\Gamma_k}\gamma_{v,t}.
        \end{equation}

        By Stokes' formula and Theorem \ref{thm dclosed form},
        \begin{equation}\label{0 sum of Ik0}
            \sum_{k=1}^3 I_k^0=\Phi\int_{\partial\Delta}\gamma_{v,t}=
            \Phi\int_{\Delta}(dv\wedge\frac{\partial}{\partial v}+dt\wedge\frac{\partial}{\partial t})\gamma_{v,t}
        =\Phi\overline{\partial}\alpha+\Phi{\partial}\beta.
        \end{equation}
        Here $\alpha,\beta$ are the forms on $S$ defined by 
        \begin{align}
            &\alpha=\int_{\Delta_{a,A}}\alpha_{v,t}dv\wedge dt, &&\beta=\int_{\Delta_{a,A}}\beta_{v,t}dv\wedge dt.
        \end{align}

        \subsection{Intermediate results on the left-hand side of (\ref{0 sum of Ik0})}\label{bt}
        We now state some intermediate results, which extend \cite[Section 6.3]{BismutGoette2000} to the family version. These resluts will be used to study the left hand side of (\ref{0 sum of Ik0}) as $A\rightarrow +\infty$, and then $a\rightarrow 0$.

        \begin{theorem}\label{main lemma1}
            For $K\in\mathfrak{z}(g)\otimes_{\mathbb{R}} \mathbb{C}$ with $|K|< 1$ and any $t_0>0$ , there exists $\gamma\in\, ]0,1]$, $C>0$, such that for $t_{0}\leq t\leq v$, we have
            \begin{equation}
                \big|\Phi\trace_s[N_{K,t}g\exp(-\lk-\cvt^2)]-\ch_{ge^K}'(H(X,\xi|_{X}),h^{H(X,\xi|_{X})})\big|\leq \frac{C}{t^{\gamma}}.
            \end{equation} 
        \end{theorem}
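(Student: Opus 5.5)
The plan is to follow the large-time analysis of Theorem \ref{thm asymptotic-gK} (\eqref{asymptotic-infty-gK}), now for the two-parameter operator $\cvt$, uniformly in $v\geq t$. Write
\[
\cvt=\bisconn_{K,t}-\frac{\sqrt{t}\,c(K^X)}{2\sqrt{2}\,v},
\]
so that, after the rescaling $\delta_t$ of the Grassmann variables of $S$ (which underlies $\Phi$), the operator $\lk+\cvt^2$ is $tD^{X,2}$ plus lower order terms. The extra term $\frac{\sqrt t}{v}c(K^X)$ is bounded by $C|K|/\sqrt t$ because $v\geq t$, so it is a small perturbation of the terms already present in $\bisconn_{K,t}$. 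The first step is to establish a uniform spectral gap. Let $P$ be the orthogonal projection onto $\ker D^X\simeq H^{\bullet}(X,\xi|_X)$ and $P^\perp=1-P$. Since $D^{X,2}\geq c_0>0$ on $\mathrm{Im}\,P^\perp$, uniformly over a compact subset of $S$, I will use the resolvent contour argument of \cite[Theorem 9.19]{BGV} and \cite{BK1992}. The key input is that $\lk$ commutes with $D^X$ and is skew-adjoint, so it only shifts spectra along the imaginary axis. For $|K|<1$ (or after a harmless rescaling), the contour integral representation of $\exp(-\lk-\cvt^2)$ then converges, with bounds uniform in $v\geq t\geq t_0$.

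The second step is to show that the projection onto the kernel yields the cohomological term. Following \cite[\S 9]{BGV}, compute the limit of $\Phi\trace_s[N_{K,t}g\exp(-\lk-\cvt^2)]$ via Schur complement, i.e.\ the Berline--Getzler--Vergne decomposition with respect to $P$. Three contributions arise on $\ker D^X$:
\begin{itemize}
\item the $N_V$ part gives $\trace_s[N_Vg e^{-\lk}\exp(-(\nabla^{H})^2)]$, hence $\ch'_{ge^K}$, since $e^{-\lk}$ acts as $e^{K}$ on harmonic forms;
\item the terms $i\omega^H/t$ and $-i\langle\mu,K\rangle/t$ in $N_{K,t}$ are $O(1/t)$;
\item the terms $\frac{c(K^X)}{\sqrt t}\big(1-\frac tv\big)$ have off-diagonal $P c(K^X)P^\perp$ components that contribute $O(t^{-1/2})$ after the Schur complement.
\end{itemize}
The diagonal part $Pc(K^X)P$ appears with the factor $1/\sqrt t$ and, being odd, contributes only at higher order. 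Together these give the bound $C t^{-\gamma}$ with $\gamma=1/2$, or a smaller $\gamma$ if a loss is needed in the resolvent estimates.

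The main obstacle is uniformity in $v$ in the off-diagonal estimates. Here I will control $\frac{1}{v}-\frac{1}{t}$ by $1/t$, and treat the interpolation parameter $s=t/v\in\,]0,1]$ as a bounded coefficient in all the Sobolev-norm estimates of \cite[\S 11]{BGS2}. I will also check that $c(K^X)$ preserves $G$-invariance, so that the constants are independent of $s$. The remaining routine point is the analyticity/holomorphic dependence in complex $K$, which I will obtain as in \eqref{b1}.
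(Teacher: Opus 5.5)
Your proposal is sound in outline, and its starting point is the paper's own. Since $v\geq t$, the coefficient $\frac{1}{\sqrt t}\bigl(1-\frac tv\bigr)$ of $\frac{c(K^X)}{2\sqrt 2}$ in $\cvt$ lies in $[0,t^{-1/2}[$, so $\cvt$ is dominated by $\bisconn_{K,t}$ uniformly in $v$. The paper's proof is just this remark plus the assertion that Section 6.1 goes through verbatim.

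Where you differ is in what ``the large-time analysis'' is. Section 6.1 does not recompute the limit by a Schur complement. It is a perturbation argument: it compares $\exp(-\lk-\cvt^2)$ with $\exp(-\lk-\bisconn_t^2)$ through the resolvent identity on the parabolic contour $\Gamma_c$, in the $t$-weighted norms $|\cdot|_{t,\pm1}$. The only new input is $P[D^X,c(K^X)]P=0$ (because $PD^X=D^XP=0$), which gives \eqref{estimate D c(K)}. Kernel bounds then upgrade the resulting $O(t^{-1/2})$ operator estimate to an $O(t^{-\delta})$ estimate on supertraces (Theorem \ref{thm infty main1}). The limit itself is inherited from \eqref{equation two supertraces g gK}, i.e.\ $\str[g\exp(-\lk-\bisconn_t^2)]=\str[ge^K\exp(-\bisconn_t^2)]$, together with the known large-time asymptotics \eqref{asymptotic-infty-g} for $ge^K$ and $N_{K,t}-N_t=O(t^{-1})$.

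Your direct BGV-type computation would also work and is self-contained. The cost is that you must redo the identification of the limit with the new terms present: harmonic forms, $P\nabla^{\mathbb{E}}P$ as the Chern connection of $h^{H}$, and the twist by $e^K$. The comparison argument gets all of this for free.

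If you pursue your route, fix three points.

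First, the genuinely new $O(1)$ term in $\lk+\cvt^2$ is not $c(K^X)/\sqrt t$. It is the degree-zero, first-order, non-self-adjoint operator $\frac{1}{2\sqrt2}\bigl(1-\frac tv\bigr)[D^X,c(K^X)]$. You need to state three things about it:
\begin{itemize}
\item it is small relative to $tD^{X,2}$ on $\mathrm{Im}\,P^\perp$, by a factor $O(t^{-1})$ in the form sense;
\item its $P$--$P$ block vanishes;
\item its off-diagonal blocks contribute only $O(t^{-1/2})$ through the Schur complement.
\end{itemize}
The block $Pc(K^X)P$ is harmless because of its $t^{-1/2}$ prefactor, not because it is odd. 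In fact it vanishes under Assumption (*): by \eqref{af}, $c(K^{X(1,0)})=-\sqrt2\,[\overline{\partial}^X,i\langle\mu,K\rangle]$, and similarly $c(K^{X(0,1)})$ is a commutator with $\overline{\partial}^{X,*}$.

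Second, the statement allows $K\in\mathfrak{z}(g)\otimes_{\mathbb R}\mathbb{C}$. For such $K$, $\lk$ is not skew-adjoint and does not merely shift spectra along the imaginary axis. The spectral localization has to come from a coercivity estimate like Lemma \ref{elliptic estimate infty}: the first-order operator $\lk$ is relatively bounded by $tD^{X,2}$, which confines the spectrum to a parabolic region.

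Third, what you actually need is that $g$ and $\lk$ commute with $c(K^X)$, which holds because $K\in\mathfrak{z}(g)$. The uniformity in $v$ comes from $0\leq 1-\frac tv\leq 1$, not from any $G$-invariance of $c(K^X)$.
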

        \begin{proof}
             The proof will be delayed to Section 6.
        \end{proof}

        Recall that $\widetilde{d}_{K,t}$ is defined in (\ref{a8}),  set
        \begin{equation}\label{ev formula}
            \begin{split}
            &\widetilde{e}_v=\int_{X_g}\td_{g,K}(TX,h^{TX})\ch_{g,K}(\xi,h^{\xi})\widetilde{d}_{K,v},\\
            &\widetilde{D}_{-1,g,K}=\int_{X_{g,K}}\td_{ge^K}(TX,h^{TX})\ch_{ge^K}(\xi,h^{\xi})i\product{\mu,K},\\ 
            &E_{0,g,K}=\int_{X_{g,K}}\td_{ge^K}(TX,h^{TX})\Big(\frac{c_{\mathrm{max}}'}{c_{\mathrm{max}}}\Big)_K(N_{X_{g,K/X_g}},h^{X_{g,K/X_g}})\ch_{ge^K}(\xi,h^{\xi}). 
            \end{split}
        \end{equation}

        By applying  \eqref{bo} and (\ref{bw}) to $X_g$, we have the expansion: as $v\rightarrow 0$,
        \begin{equation}\label{expansion ev}
            \begin{split}
            &\widetilde{e}_v=-\frac{\widetilde{D}_{-1,g,K}}{v}-E_{0,g,K}'+\mathcal{O}(v),\\
            &E_{0,g,K}'=E_{0,g,K}\kongge\qquad\mathrm{in} \kongge P^S/P^{S,0}.
            \end{split}
        \end{equation}

 Now we state two intermediate lemmas concerning about the case when $t$ is small. We take $K_0\in\mathfrak{z}(g)$ and let $K=zK_0$.

Recall that we have the splitting $\left\langle \mu,K \right\rangle=\mu_1+\mu_2$, and that  $da$ is the canonical generator of $\mathbb{C}^{*}$.   For $0<t\leq v$,  set 
        \begin{equation}\label{hvt}
            \begin{split}
                \xi_{v,t}'=\frac{1}{2}\int_{0}^{t}\str\Big[\frac{i\mu_2}{v}g\exp\Big(-\lk-\mathcal{A}_{v,s}^2-2\dabar s\derivative{s}\mathcal{A}_{v,s}'\Big)\Big]^{\dabar} \frac{ds}{s},\\
                \xi_{v,s}''=\frac{1}{2}\int_{0}^{t}\str\Big[\frac{i\mu_2}{v}g\exp\Big(-\lk-\mathcal{A}_{v,s}^2-2da s\derivative{s}\mathcal{A}_{v,s}''\Big)\Big]^{da} \frac{ds}{s}.
            \end{split}
        \end{equation}
      By Theorem \ref{h,t01} and \eqref{pivt}, $\xi_{v,t}', \xi_{v,t}''$ are well-defined smooth forms on $S$, depending smoothly on $(v,t)$.

        \begin{theorem}\label{main lemma 1}
            For $z\in \rr$, $|z|\neq 0$ small enough,
           for any $v>0$, as $t\rightarrow 0$, we have
            \begin{equation}\label{limit of main}
                \Phi\str\Big[i\frac{\product{\mu,K}}{v}g\exp\Big(-\lk-\cvt^2\Big)\Big]\rightarrow -\widetilde{e}_v.
            \end{equation}
            Moreover, there exist $C>0$, $\gamma\in\, ]0,1]$, such that for $0<t\leq v\leq 1$,
            \begin{equation}\label{t01v01}
                \Big|\Phi\str\Big[i\frac{\product{\mu,K}}{v}g\exp\Big(-\lk-\cvt^2\Big)\Big]+\widetilde{e}_v-\Phi\partial \xi_{v,t}'-\Phi\dba \xi_{v,t}'' \Big|\leq C\Big(\frac{t}{v}\Big)^{\gamma}.
            \end{equation}  
            And for $0<t\leq 1\leq v$,
            \begin{equation}\label{t01v1}
                \Big|\Phi\str\Big[i\frac{\product{\mu,K}}{v}g\exp\Big(-\lk-\cvt^2\Big)\Big]\Big|\leq \frac{C}{v}.
            \end{equation}
        \end{theorem}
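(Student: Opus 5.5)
The plan is to follow \cite[\S 7, \S 8.3]{BismutGoette2000}, localizing the small-$t$ analysis near $W_{g,K}$ and using the splitting $\product{\mu,K}=\mu_1+\mu_2$ of Section \ref{bq}. Write
\[
\Phi\str\Big[i\frac{\product{\mu,K}}{v}g\exp(-\lk-\cvt^2)\Big]=\Phi\str\Big[i\frac{\mu_1}{v}g\exp(-\lk-\cvt^2)\Big]+\Phi\str\Big[i\frac{\mu_2}{v}g\exp(-\lk-\cvt^2)\Big],
\]
and treat the two pieces separately. Since the operator $\cvt$ only involves $K^X$ through the factor $(1/t-1/v)$, after the Getzler rescaling (Grassmann variables of $T^*S$ and of $T^*X$ near $W_g$, and $Z\to \sqrt{t}Z$ in the normal direction to $W_g$) the term $c(K^X)/(2\sqrt{2t})$ produces, as $t\to0$ with $v$ fixed, the equivariant curvature $R^{TX}_{K}$ in the "$v$-rescaled'' form. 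The limit operator on the fibre of $N_{X_g/X}$ is a harmonic oscillator whose kernel is given by Mehler's formula \eqref{mehler formula}.

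The first step is the pointwise limit \eqref{limit of main}. Off a neighbourhood of $W_g$ the contribution decays like $e^{-c/t}$, by finite propagation speed and standard Gaussian estimates. Near $W_g$, the local index computation (as in the proof of Theorem \ref{thm gK asym bku}) gives the local index density $\td_{g,K}\ch_{g,K}$ times a form coming from $i\product{\mu,K}/v$ together with the remaining $1/v$-part of $\cvt^2$. Concretely, the $-\frac{\sqrt t c(K^X)}{2\sqrt2 v}$ terms combine with $\frac{i\product{\mu,K}}{v}$ into exactly $-\widetilde d_{K,v}$, because $\frac{\overline{\partial}_K\partial_K}{2\pi v}\product{\mu,K}$ is what the squared extra terms contribute under rescaling, using \eqref{af}. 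Integrating along $X_g$ then gives $-\widetilde e_v$ by \eqref{ev formula}. The same computation with uniformity in $v\ge1$ gives \eqref{t01v1}: the only explicit $v$-dependence is the prefactor $1/v$ and the coefficients $1/v$ in $\cvt$, and the heat kernel estimates of Section 6 are uniform for $v\ge1$.

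The main obstacle is \eqref{t01v01}, the uniform estimate for $0<t\le v\le1$ with the error $(t/v)^\gamma$. Here one must rescale with respect to $t/v$, and near $W_{g,K}$ the limiting model is precisely the vector bundle situation of Section \ref{em}, with $M=N_{X_{g,K}/X_g}$ and $\mathscr B_{K,u}$, $u=v/t$.

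\textbf{The $\mu_1$ part.} By \eqref{proposition of mu1mu2} we have $\mu_1=\mathcal O(|Z|^2)$, so $i\mu_1/v$ rescales to $i\widetilde\mu(K)/u$. The estimate then follows the techniques of \cite[\S 7]{BismutGoette2000} together with \eqref{estimate scr kernel}, and the resulting error is $\mathcal{O}((t/v)^\gamma)$.

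\textbf{The $\mu_2$ part.} Here $\mu_2$ is locally constant near $W_K$ but not small, so the difference from its limit is not controlled pointwise. I would differentiate in $s\in\,]0,t]$ and use the transgression identity behind Theorem \ref{thm dclosed form}, with $[\mathcal A_{v,s}'',i\mu_2]$ expressed through $K_2^X$. This writes the $s$-derivative as $\partial$ and $\overline\partial$ of the integrands in \eqref{hvt}, plus terms containing $c(K_2^X)$, i.e.\ $d\mu_2$. Because $d\mu_2=0$ near $W_{g,K}$, those remainder terms are supported away from the fixed set and decay like $e^{-c\,v/t}$, hence are bounded by $C(t/v)^\gamma$. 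Integrating in $s$ from $0$ to $t$, with the $s\to0$ limit identified as the $\mu_2$-portion of $-\widetilde e_v$, produces exactly $\Phi\partial\xi'_{v,t}+\Phi\overline\partial\xi''_{v,t}$.

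The delicate point, which I expect to be hardest, is to show that the integrals in \eqref{hvt} converge at $s=0$ uniformly in $v$. For this I would invoke the bound announced as Theorem \ref{h,t01}, proved in the same rescaling framework.
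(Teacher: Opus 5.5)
\textbf{The gap is in the $\mu_1$ part of \eqref{t01v01}.}

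The flat model $\mathscr B_{K,u}$ of Section~\ref{em} is not the right limiting model in the regime $t\ll v$. It is the linearization of $\lk+\cvt^2$ at $W_{g,K}$ when $t/v$ stays bounded below. That is the regime of $\mathcal A_{tv,t}$ in Theorem~\ref{main lemma 2}, where the Clifford variables of $N_{X_{g,K}/X}$ are \emph{not} Getzler-rescaled (Definition~\ref{et}).

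Used with $u=v/t$, the model's value is $\int_{X_{g,K}}\td_{ge^K}\,\Phi_{t/v,g,K}\,\ch_{ge^K}(\xi)$ (Theorem~\ref{thm kernel of scrbku}), which tends to $E_{0,g,K}$ as $t/v\to0$. The target $-\widetilde e^{\,1}_v$, however, is a global integral over $X_g$ of $\td_{g,K}\ch_{g,K}$ against $-\widetilde d^{\,1}_v$ (cf.\ \eqref{def of dvt}). By \eqref{expansion ev} and \eqref{expansion ev2} it differs from $E_{0,g,K}$ by $\mathcal O(v)$, not by $\mathcal O((t/v)^\gamma)$. For instance, at $v=1$, $t\to0$, your error does not tend to zero.

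The underlying reason is that the potential $|K^X|^2/2v$ confines only at scale $\sqrt v$. For $v$ of order one this scale is macroscopic, so no linearization at $W_{g,K}$ can be accurate. You also never treat the part of $X_g$ away from $X_{g,K}$, where the $\mu_1$-contribution is not small when $v\sim1$.

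What is needed is the two-scale analysis of Section~\ref{ek}:
\begin{itemize}
\item Near $X_{g,K}$, use coordinates $(y_0,\sqrt v Z_0,\sqrt t Z)$ with $Z_0\in N_{X_{g,K}/X_g}$ and $Z\in N_{X_g/X}$. Getzler-rescale by $c_t$ on $TX_{g,K}$ and by $c_{t/v}$ on $N_{X_{g,K}/X_g}$.
\item The limit operator is then $\psi_v\overline L^{3,(0,v)}_{Z_0,K}\psi_v^{-1}$, the local index operator on $X_g$ twisted by $(L,\nabla^L_v)$.
\item With the uniform bound \eqref{difference ftlvzk3tv} and \eqref{estimate mu1}, this gives the pointwise limit $\beta^1_v$ with the error in \eqref{first ineq of mu}.
\item On $X_g\setminus\mathcal U''_{\varepsilon_0}$ a separate estimate \eqref{outside uepsilon''} is required. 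Its factor $|v/z^2|^p$ comes from $|zf_j/\sqrt v|\geq c|z|/\sqrt v$.
\end{itemize}

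\textbf{The $\mu_2$ part.} You use the right identity, \eqref{tderivative t str mu2}, but the mechanism you give for the remainder is wrong. $K_2^X$ vanishes near $W_K$, not near $W_g$. On $W_g\setminus W_{g,K}$ the rescaled kernel converges to the local index density twisted by $\ch_{g,K}(L,\nabla^L_v)$, which is of size $e^{-c|z|^2/v}$; there is no $e^{-cv/t}$ decay.

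The $c(K_2^X)$-terms are $\mathcal O((t/v)^\gamma)$ for a different reason: their $t\to0$ limit vanishes, see \eqref{mu2 transgression tend to 0}. This holds because the curvature of the connection ${}^4\nabla^{\mathbb E,t}$, which absorbs $-\dabar\,c(e_i^{(0,1)})/(2\sqrt{2t})$, has no $\dabar$-component, as in \cite[Theorem 2.16]{BGS2}. The rate then comes from the same off-$X_{g,K}$ convergence estimates as for $\mu_1$.

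Your arguments for \eqref{limit of main} (the $1/v$-terms read through the line bundle $(L,\nabla^L_v)$) and for \eqref{t01v1} (uniformity of the Section~\ref{e0} analysis for bounded $1/v$) agree with the paper.
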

        \begin{proof}
        It will be proved in Section \ref{ek}.
        \end{proof}
 
        We apply the constructions in Section \ref{em} to $B=W_{g,K}$, $M=TX|_{W_{g,K}}$, therefore $\varphi\str\Big[i\frac{\widetilde{\mu}(K)}{v}g\exp\Big(-\lk-\mathscr{B}_{K,v}^2\Big)\Big]$ in \eqref{basic generalized supertraces} is a smooth form on $W_{g,K}$.  For $v\geq 1$, set
        \begin{equation}\label{i5}
            \widetilde{D}_{0,g,K}^v=\int_{X_{g,K}}\varphi\str\Big[i\frac{\widetilde{\mu}(K)}{v}g\exp\Big(-\lk-\mathscr{B}_{K,v}^2\Big)\Big]\ch_{ge^K}(\xi,h^{\xi}).
        \end{equation}

        \begin{theorem}\label{main lemma 2}
            For $z\in\rr$, $|z|\neq 0$ small enough, for any $v\geq 1$\,, when $t\in \,]0,1]\rightarrow 0$, we have 
            \begin{align}\label{expansion hvt}
                \begin{split}
                   \Phi\str\Big[i\frac{\product{\mu,K}}{tv}g\exp\Big(-\lk-\mathcal{A}_{tv,t}^2\Big)\Big]-\Phi\dba \xi_{tv,t}'-\Phi\partial \xi_{tv,t}''\\ =\frac{\widetilde{D}_{-1,g,K}}{tv}+\widetilde{D}_{0,g,K}^{v\prime}+o(1),
                   \\ 
                 \widetilde{D}_{0,g,K}^{v\prime}-\widetilde{D}_{0,g,K}^{v}=E_{0,g,K}'-E_{0,g,K}\in P^{S,0}.
                \end{split}
            \end{align}
        \end{theorem}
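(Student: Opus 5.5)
The plan is to localize near $W_{g,K}$ in the regime $t\to 0$ with $v=tv'$ ($v'\ge 1$ fixed), where the rescaled operator $\lk+\mathcal{A}_{tv,t}^2$ becomes, after Getzler rescaling in the fiber directions of $X_g$ and the scaling $Z\mapsto \sqrt t Z$ in the normal directions $N_{X_{g,K}/X}$, the model operator $\lk+\mathscr{B}_{K,v}^2$ of Section \ref{em} on $M=TX|_{W_{g,K}}$. I split $\product{\mu,K}=\mu_1+\mu_2$ as in \eqref{fy} and treat the two pieces separately.

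\emph{First step: the $\mu_1$ part.} Since $\mu_1=\mathcal{O}(|Z|^2)$ near $W_K$ by \eqref{proposition of mu1mu2}, we have $\mu_1/(tv)=\mathcal{O}(|Z|^2/(tv))$. After the substitution $Z\to\sqrt{t}Z$, this becomes exactly the quadratic function $\widetilde\mu(K)(Z)/v$ of \eqref{def of tilde mu}, up to $\mathcal{O}(\sqrt t)$. Away from $W_{g,K}$, the factor $\exp(-|K^X|^2/(2v))$ coming from \eqref{lichnerowicz scrbku}-type formulas, together with $g$-localization, gives exponential decay. I would use finite propagation speed, the uniform Gaussian bound analogous to \eqref{estimate scr kernel}, and the local index rescaling of Bismut--Goette \cite[\S 7--8]{BismutGoette2000}, which Section 6 adapts to our family setting. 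This gives
\[
\Phi\str\Big[\tfrac{i\mu_1}{tv}g\exp(-\lk-\mathcal{A}_{tv,t}^2)\Big]\to \widetilde D^{v}_{0,g,K},
\]
with the convergence holding in the form \eqref{i5}. The $\td_{ge^K}\ch_{ge^K}$ factors arise from the tangential Getzler limit on $X_{g,K}$, and the normal Gaussian produces Theorem \ref{thm kernel of scrbku}. There is no $1/t$ term from $\mu_1$.

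\emph{Second step: the $\mu_2$ part, which I expect to be the main obstacle.} Here $\mu_2$ is locally constant near $W_K$, with values $\mu_K^i$. Near $W_{g,K}$ it therefore contributes $\frac{i\mu_2}{tv}\,\Tr[g\exp(-\lk-\mathcal{A}^2_{tv,t})]$, whose localized limit is $\widetilde D_{-1,g,K}/(tv)$ plus an $o(1)$ correction. This is where the two-parameter transgression is needed. I would differentiate in the second variable $s$ of $\mathcal{A}_{tv,s}$, using the analogue of Theorem \ref{thm dclosed form} with $\mu_2$ in place of $\product{\mu,K}$. The commutators $[\mathcal{A}'',i\mu_2]$ and $[\mathcal{A}',i\mu_2]$ involve $c(K_2^X)$, and $K_2^X$ vanishes near $W_K$ because $d\mu_2=0$ there. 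This writes the $s$-variation from $0$ to $t$ as $\dba\xi'_{tv,t}+\partial\xi''_{tv,t}$, plus terms supported away from $W_K$; the latter are exponentially small after localization. The value at $s\to 0$ is given by the Kirillov limit of Theorem \ref{thm gK asym bku}, with $\mu_2$ restricted to $W_{g,K}$. Combining the two, its $1/(tv)$ coefficient is $\int_{X_{g,K}}\td_{ge^K}\ch_{ge^K}\, i\mu_2$, which equals $\widetilde D_{-1,g,K}$ because $\mu_1=0$ on $W_K$. The constant term is a $\mu_2$-analogue of $\rho_1$, which lies in $P^{S,0}$ by \eqref{obo}. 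The delicate point is the uniformity of these estimates as $t\to 0$, for which I would use Theorem \ref{h,t01}.

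\emph{Third step: assembly.} Summing the two contributions gives the expansion in \eqref{expansion hvt} with $\widetilde D^{v\prime}_{0,g,K}=\widetilde D^v_{0,g,K}+(\text{the }\mu_2\text{ constant term})$. To identify the discrepancy, I compare with the same computation done for $\widetilde e_v$ via \eqref{expansion ev}, \eqref{bo} and \eqref{obo}. The $\mu_1$ part reproduces $E_{0,g,K}$ exactly, by Proposition \ref{prop rho1} applied on $X_g$. The $\mu_2$ part contributes the same $P^{S,0}$ term to both $E'_{0,g,K}$ and $\widetilde D^{v\prime}_{0,g,K}$. This yields $\widetilde D^{v\prime}_{0,g,K}-\widetilde D^v_{0,g,K}=E'_{0,g,K}-E_{0,g,K}\in P^{S,0}$.
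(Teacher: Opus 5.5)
Your architecture is the paper's. You split $\product{\mu,K}=\mu_1+\mu_2$. You reduce the $\mu_1$ part, by the Bismut--Goette localization near $X_{g,K}$, to the model $\lk+\mathscr{B}_{K,v}^2$ of Section \ref{em}, which gives $\widetilde D^v_{0,g,K}$. You treat the $\mu_2$ part by the transgression in the second variable; this is exactly Theorem \ref{thm mu2 to sum up} with $v$ replaced by $tv$, combined with the small-$v$ expansion \eqref{expansion ev2}.

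However, the step that closes the $\mu_2$ half is misidentified. For fixed $v'=tv$, the limit as $s\to 0$ of $\Phi\str[\frac{i\mu_2}{v'}g\exp(-\lk-\mathcal{A}_{v',s}^2)]$ is not the Kirillov limit of Theorem \ref{thm gK asym bku}. That theorem is the case $v'=+\infty$, i.e.\ the operator $\bisconn_{K,s}$, and nothing is restricted to $W_{g,K}$ at that stage. You must regard $\mathcal{A}_{v',s}$ as the Bismut superconnection of $\xi\otimes L$ with $\nabla^L=d-K'^X/(2v')$ (cf.\ \eqref{def of nabla l}, \eqref{lichnerowicz avt}). By \eqref{chern l}, its equivariant Chern form is $\exp\big(\frac{\dba_K\partial_K}{2\pi v'}\product{\mu,K}\big)$, so the limit is $-\widetilde e^2_{v'}$, an integral over all of $X_g$ (see \eqref{en}, \eqref{tend to ev2}). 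Only the subsequent expansion of this Bott--Chern type form as $v'=tv\to 0$, via \eqref{bw} and \eqref{obo}, localizes onto $X_{g,K}$. That expansion produces $\widetilde D_{-1,g,K}/(tv)+E'_{0,g,K}-E_{0,g,K}+\mathcal{O}(tv)$. Taken literally, Theorem \ref{thm gK asym bku} would give $\int_{X_g}\frac{i\mu_2}{tv}\td_{g,K}\ch_{g,K}$. That has the wrong $1/(tv)$ coefficient and no constant term, which contradicts your own next sentence about a $\mu_2$-analogue of $\rho_1$.

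Two further corrections. First, Theorem \ref{h,t01} does not supply the uniformity you need; it only guarantees that $\xi'_{v,t}$ and $\xi''_{v,t}$ are well defined. The remainder is the integral over $s\in\,]0,t]$, against $ds/s$, of the $c(K_2^X)$-terms in \eqref{tderivative t str mu2}. These are controlled by \eqref{exp transgression mu2}: after $v\mapsto tv$, the bound $C|z|^{-r}|v/z^2|^p(s/v)^\gamma$ integrates to $C'|z|^{-r}|tv/z^2|^p v^{-\gamma}$. The factor $(t/(tv))^\gamma=v^{-\gamma}$ does not tend to $0$. So the error is $o(1)$ only because of the gain $|v/z^2|^p$, which comes from $K_2^X=0$ near $X_K$ and $|K^X|\geq c|z|$ on $X_g$ away from $X_{g,K}$. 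This is how the paper concludes from Theorem \ref{thm mu2 to sum up}.

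Second, in the $\mu_1$ step the Getzler rescaling must act only along $TX_{g,K}$ (Definition \ref{et}), not along all of $TX_g$. For $e_i,e_j\in N_{X_{g,K}/X_g}$, the term $-\frac{1}{4v}\product{\nabla^{TX}_{e_i}K^X,e_j}c(e_i)c(e_j)$ in \eqref{lichnerowicz bkttv} would become of order $1/t$ under $c\mapsto c_t$. Keeping these Clifford variables unrescaled is what yields the model operator \eqref{lyk30v}. With these corrections, your argument coincides with the paper's.
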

        \begin{proof}
            Theorem \ref{main lemma 2} will be proved in Section \ref{el}.
        \end{proof}
        \begin{remark}
            The proof of Theorem \ref{main lemma 1} and \ref{main lemma 2} will be separeted into two parts, by splitting $\product{\mu,K}$ into the sum of $\mu_1$ and $\mu_2$. In particular, we will prove that for any $v\geq 1$\,, when $t\in\, ]0,1]\rightarrow 0$, we have 
            \begin{equation}\label{mu1 part of main lemma 2}
                \Phi\str\Big[i\frac{\mu_1}{tv}g\exp\Big(-\lk-\mathcal{A}_{tv,t}^2\Big)\Big]=\widetilde{D}_{0,g,K}^v+o(1).
            \end{equation}
        \end{remark}
   
        \subsection{The asymptotics of the left hand side of \eqref{0 sum of Ik0}}\label{bu}
        We will study the asymptotic bahaviors of $I_k^0$ in \eqref{ft} in  for $k=1,2,3$, as $A\rightarrow +\infty,$ and then $a\rightarrow 0$, using the intermediate results in the last section. 

        \textbf{1.The term $I_1^0$.}
        By (\ref{gammavt}), it is clear that 
        \begin{equation}
            I_1^0=\int_a^A \Phi\trace_s\Big[N_{K,t}g\exp\Big(-\lk-\mathcal{A}_{A,t}^2\Big)\Big]\frac{dt}{t}.
        \end{equation}

        a)$\kongge A\rightarrow +\infty$. As $A\rightarrow +\infty$,
\begin{equation}
    \begin{split}
   &\Phi\trace_s\Big[N_{K,t}g\exp\Big(-\lk-\mathcal{A}_{A,t}^2\Big)\Big]\\ 
    &    \rightarrow \gamma_{t,g,K}:=\Phi\trace_s\Big[N_{K,t}g\exp\Big(-\lk-\bisconn_{K,t}^2\Big)\Big].
    \end{split}
\end{equation}

        From Theorem \ref{main lemma1}, for $|z|\leq 1$, there exists $\gamma\in\, ]0,1],$ $C>0$, such that for $0<a\leq t\leq A,$ 
        \begin{equation}
            \Big|\Phi\trace_s\Big[N_{K,t}g\exp\Big(-\lk-\mathcal{A}_{A,t}\Big)\Big]-\ch_{ge^K}'(H(X,\xi|_{X}),h^{H(X,\xi|_{X})})\Big|\leq \frac{C}{t^{\gamma}}.
        \end{equation} 
Therefore we find that as $A\rightarrow +\infty$, 
\begin{equation}
    \begin{split}\label{I11}
       & I_1^0-\ch_{ge^K}'(H(X,\xi|_{X}),h^{H(X,\xi|_{X})})\log(A)\\ 
       & \rightarrow I_1^1=\int_a^1 \gamma_{t,g,K}\frac{dt}{t}+\int_{1}^{+\infty}(\gamma_{t,g,K}-\gamma_{+\infty,g,K})\frac{dt}{t}.
    \end{split}
\end{equation}

b) $\kongge a\rightarrow 0$.  By Theorem \ref{thm asymptotic-gK}, as $a\rightarrow 0$,
\begin{equation}\label{I12}
    \begin{split}
       & I_1^1-\frac{C_{-1,g,K}'}{a}+C_{0,g,K}'\log(a)\\
       & \rightarrow I_1^2=\int_0^1 (\gamma_{t,g,K}-\frac{C_{-1,g,K}'}{t}-C_{0,g,K}')\frac{dt}{t}\\ &\qquad\qquad\qquad+\int_{1}^{+\infty}(\gamma_{t,g,K}-\gamma_{+\infty,g,K})\frac{dt}{t}-C_{-1,g,K}'.
    \end{split}
\end{equation}

c) $\kongge$ Evaluation of $I_1^2$. From (\ref{explicit tgk}), (\ref{I12}), we obtain
\begin{equation}\label{fv}
  I_1^2=-T_{g,K}(\omega,h^{\xi})+\Gamma'(1)(C_{0,g,K}'-\ch_{ge^K}'(H(X,\xi|_{X}),h^{H(X,\xi|_{X})}))  .
\end{equation}

\textbf{2.The term $I_2^0$.}
By (\ref{definition lk}), (\ref{definition nkt}), (\ref{definition cvt}), we have the following identity
\begin{equation}
    \begin{split}
    I_2^0=&-\int_a^A\Phi\trace_s\Big[\Big(N_{K,t}+i\frac{\product{\mu,K}}{t}\Big)g\exp\Big(-\lk-\mathcal{A}_{t,t}^2\Big)\Big]\frac{dt}{t}\\
    &=-\int_a^A\Phi\trace_s\Big[N_t ge^{K}\exp\Big(-\bisconn_{t}^2\Big)\Big]\frac{dt}{t}.
    \end{split}
\end{equation}

a) $\kongge A\rightarrow +\infty.\kongge$  By (\ref{asymptotic-infty-g}) in  Theorem \ref{thm asymptotic-g}, we have
\begin{equation}\label{I21}
    \begin{split}
 I_2^0+  & \ch_{ge^K}'(H(X,\xi|_{X}),h^{H(X,\xi|_{X})})\log(A)\\ 
  \rightarrow  & I_2^1=-\int_a^1 \Phi\trace_s\Big[N_t ge^{K}\exp\Big(-\bisconn_{t}^2\Big)\Big]\frac{dt}{t}\\
    & -\int_1^{+\infty}\Big(\Phi\trace_s\Big[N_t ge^{K}\exp\Big(-\bisconn_{t}^2\Big)\Big]-\ch_{ge^K}'(H(X,\xi|_{X}),h^{H(X,\xi|_{X})})\Big)\frac{dt}{t}.     
    \end{split}
\end{equation}

b) $\kongge a\rightarrow 0.\kongge$ By (\ref{asymptotic-gn}) in Theorem \ref{thm asymptotic-g}, we have
\begin{equation}\label{I22}
    \begin{split}
    &I_2^1+\frac{D_{-1,ge^K}'}{a}-D_{0,ge^{K}}'\log(a)\\
       & \rightarrow I_2^2=-\int_0^1 \Big(\Phi\trace_s\Big[N_t ge^{K}\exp\Big(-\bisconn_{t}^2\Big)\Big]-\frac{D_{-1,ge^K}'}{t}-D_{0,ge^K}'\Big)\frac{dt}{t}\\ 
        &-\int_1^{+\infty}\Big(\Phi\trace_s\Big[N_t ge^{K}\exp\Big(-\bisconn_{t}^2\Big)\Big]-\ch_{ge^K}'(H(X,\xi|_{X}),h^{H(X,\xi|_{X})})\Big)\frac{dt}{t}+D_{-1,ge^K}'.     
    \end{split}
\end{equation}
    
c) $\kongge$ Evaluation of $I_2^2$. From (\ref{analytic torsion expression-g}), (\ref{I22}), we obtain
\begin{equation}\label{fw}
    I_2^2=T_{ge^K}(\omega,h^{\xi})-\Gamma'(1)(D_{0,ge^K}'-\ch_{ge^K}'(H(X,\xi|_{X}),h^{H(X,\xi|_{X})})).
\end{equation}

\textbf{3.The term $I_3^0$.}
By (\ref{gammavt}), we have
\begin{equation}\label{qs}
    I_3^0=\int_{a}^{A}\Phi\str\Big[i\frac{\product{\mu,K}}{v}g\exp\Big(-\lk-\mathcal{A}_{v,a}^2\Big)\Big]\frac{dv}{v}.
\end{equation}

a) $\kongge A\rightarrow +\infty.\kongge $ As $A\rightarrow+\infty$, by (\ref{t01v1}),
\begin{equation}\label{I31}
    I_3^0\rightarrow I_3^1=\int_{a}^{+\infty}\Phi\str\Big[i\frac{\product{\mu,K}}{v}g\exp\Big(-\lk-\mathcal{A}_{v,a}^2\Big)\Big]\frac{dv}{v}.
\end{equation}

b) $\kongge a\rightarrow 0.\kongge$ We split $I_3^1$ into the sum of the following terms,
\begin{equation}\label{J1-J4}
    \begin{split}
        &J_1=-\int_a^1 \widetilde{e}_v\frac{dv}{v},\\ 
        &J_2=\int_{1}^{+\infty}\Phi\str\Big[i\frac{\product{\mu,K}}{v}g\exp\Big(-\lk-\mathcal{A}_{v,a}^2\Big)\Big]\frac{dv}{v},\\ 
        &J_3=\int_{1}^{1/a}\Big(\Phi\str\Big[i\frac{\product{\mu,K}}{av}g\exp\Big(-\lk-\mathcal{A}_{av,a}^2\Big)\Big]+\widetilde{e}_{av}\\ 
        &\hspace{6cm}-\Phi\dba \xi_{av,a}'-\Phi\partial \xi_{av,a}''\Big)\frac{dv}{v},\\ 
        &J_4=\Phi\dba\int_{a}^{1}\xi_{v,a}'\frac{dv}{v}+\Phi\partial\int_{a}^{1}\xi_{v,a}''\frac{dv}{v}.
    \end{split}
\end{equation}

By (\ref{expansion ev}), as $a\rightarrow 0$,
\begin{equation}\label{j1}
    \begin{split}
        J_1-\frac{\widetilde{D}_{-1,g,K}}{a}+E_{0,g,K}'\log(a)\qquad\qquad\qquad\qquad\qquad\\
        \rightarrow J_1^1=\int_{0}^{1}(-\widetilde{e}_v-\frac{\widetilde{D}_{-1,g,K}}{v}-E_{0,g,K}')\frac{dv}{v}-\widetilde{D}_{-1,g,K}.
    \end{split}
\end{equation}

By (\ref{limit of main}), (\ref{t01v1}), we see that as $a\rightarrow 0$,
\begin{equation}
    \begin{split}
        J_2\rightarrow J_2^1=\int_{1}^{+\infty}-\widetilde{e}_v\frac{dv}{v}.
    \end{split}
\end{equation}

By (\ref{expansion ev}), (\ref{expansion hvt}), as $a\rightarrow 0$,
\begin{equation}\label{limit of j3}
    \begin{split}
    \Phi\str\Big[i\frac{\product{\mu,K}}{av}g\exp\Big(-\lk-\mathcal{A}_{av,a}^2\Big)\Big]+\widetilde{e}_{av}-\Phi\dba \xi_{av,a}'-\Phi\partial \xi_{av,a}''\qquad\\ 
    \rightarrow \widetilde{D}_{0,g,K}^{v\prime}-E_{0,g,K}'=\widetilde{D}_{0,g,K}^{v}-E_{0,g,K}.
    \end{split}
\end{equation}

Then using Proposition \ref{fu} and (\ref{t01v01}), (\ref{limit of j3}) and , we obtain that as $a\rightarrow 0$,
\begin{equation}\label{j3}
    J_3\rightarrow J_3^1=\int_{X_{g,K}}\td_{ge^K}(TX,h^{TX})I_{g,K}(N_{X_{g,K}}/X, h^{N_{X_{g,K}/X}})\ch_{g,K}(\xi,h^{\xi}).
\end{equation}

Combining (\ref{j1})-(\ref{j3}), as $a\rightarrow 0$,
\begin{equation}\label{I32}
    \begin{split}
    I_3^1-\frac{\widetilde{D}_{-1,g,K}}{a}+E_{0,g,K}\log(a)-\Phi\dba\int_{a}^{1}\xi_{v,a}'\frac{dv}{v}-\Phi\partial\int_{a}^{1}\xi_{v,a}''\frac{dv}{v}\qquad\qquad\\ 
    \rightarrow I_3^2 :=\int_{0}^{1}(-\widetilde{e}_v-\frac{\widetilde{D}_{-1,g,K}}{v}-E_{0,g,K}')\frac{dv}{v}-\int_{1}^{+\infty}\widetilde{e}_v\frac{dv}{v}-\widetilde{D}_{-1,g,K}\\ 
    +\int_{X_{g,K}}\td_{ge^K}(TX,h^{TX})I_{g,K}(N_{X_{g,K}}/X, h^{N_{X_{g,K}/X}})\ch_{g,K}(\xi,h^{\xi}).
    \end{split}
\end{equation}

c) $\kongge$ Evaluation of $I_3^2$. By applying the analogue of Proposition \ref{explicit sk} to the form $\eta=\td_{g,K}(TX,h^{TX})\ch_{g,K}(\xi,h^{\xi})$ and $\widetilde{S}_{K}(X_g,\omega^{X_g})$, we have 
\begin{equation}\label{fx}
    \begin{split}
    I_3^2=-\int_{X_g}\td_{g,K}(TX,h^{TX})\ch_{g,K}(\xi,h^{\xi})\widetilde{S}_K(X_g,\omega^{X_g})+\Gamma'(1)E_{0,g,K}\\ 
    +\int_{X_{g,K}}\td_{ge^K}(TX,h^{TX})I_{g,K}(N_{X_{g,K}}/X, h^{N_{X_{g,K}/X}})\ch_{g,K}(\xi,h^{\xi}).
    \end{split}
\end{equation}

\textbf{4.The divergences of the left-hand side of (\ref{sum of Ik0})}\quad

We now summarize the  the diverging term of the left-hand side of $\sum_{k=1}^{3}I_{k}^{0}$ as $A\rightarrow 0$, and then $a\rightarrow 0$ .

\textbf{a)  $\kongge A\rightarrow +\infty.\kongge $} By (\ref{I11}), (\ref{I21}), (\ref{I31}), the diverging term of  $\sum_{k=1}^3 I_k^0$ is
\begin{equation}\label{left A}
    (\ch_{ge^K}'(H(X,\xi|_{X}),h^{H(X,\xi|_{X})})-\ch_{ge^K}'(H(X,\xi|_{X}),h^{H(X,\xi|_{X})}))\log(A),
\end{equation}
which clearly vanishes.

\textbf{b) $\kongge a\rightarrow 0.\kongge $} By (\ref{I12}), (\ref{I22}), (\ref{I32}), the form $\sum_{k=1}^3 I_k^1$ consists of three sorts of diverging terms.

The first sort of terms diverges in the speed of $\frac{1}{a}$,
\begin{equation}\label{diverge 1/a}
    (C_{-1,g,K}'-D_{-1,ge^K}'+\widetilde{D}_{-1,g,K})\frac{1}{a}.
\end{equation}

From \eqref{a9}, the form $\frac{\omega}{2\pi}-i\product{\mu,K}\in P^W$ is  $d_K$-closed. Therefore by using (\ref{D formula}), (\ref{C formula}), (\ref{ddbarsk}), (\ref{ev formula}), we obtain that (\ref{diverge 1/a}) lies in $P^{S,0}$.

Similarly, the second sort of terms are
\begin{equation}\label{diverge loga}
    (-C_{0,g,K}'+D_{0,ge^K}'-E_{0,g,K}')\log(a).
\end{equation}
By using \eqref{D formula}, \eqref{C formula}, (\ref{ddbarsk}), \eqref{ev formula},  the terms in (\ref{diverge loga}) lie in $P^{S,0}$.

The last sort of term is 
\begin{equation}\label{j4}
    J_4=\Phi\dba\int_{a}^{1}\xi_{v,a}'\frac{dv}{v}+\Phi\partial\int_{a}^{1}\xi_{v,a}''\frac{dv}{v}.
\end{equation}
And it clearly lies in $P^{S,0}$.

Therefore, if $S$ is compact and $\kahler$, $P^{S,0}$ is closed under uniform convergence. From (\ref{0 sum of Ik0}) and the above analysis, we conclude that $\sum_{k=1}^3 I_{k}^2\in P^{S,0}$. In this case, we have completed the proof of  Theorem \ref{8o}, by using \eqref{fv}, \eqref{fw}, \eqref{fx}.

But in general, when $P^{S,0}$ is not closed, one has to study in detail these diverging terms in $P^{S,0}$. This explains why we need to study the right-hand side of (\ref{0 sum of Ik0}) in the remaining part of this section.

\subsection{Some elementary identities}\label{bs}
 Recall that $da$ is the canonical generator of $\mathbb{C}^{*}$,  then $da\dabar$ span $\rr^{2*}\otimes_{\rr}\cc$. 

 For $0<t\leq v$, set
 \begin{equation}\label{pivt}
     \begin{split}
        &\pi_{v,t}^{0\prime}=\str\Big[\frac{i\product{\mu,K}}{v}g\exp\Big(-\lk-\cvt^2-2\dabar t\derivative{t}\cvt'\Big)\Big]^{\dabar},\\ 
         &\pi_{v,t}^{0\prime\prime}=\str\Big[\frac{i\product{\mu,K}}{v}g\exp\Big(-\lk-\cvt^2-2da t\derivative{t}\cvt''\Big)\Big]^{da},\\ 
         &\chi_{v,t}^{0\prime}=\derivative{b}\str\Big[\frac{{\sqrt{t}c(K^{X(0,1)})}}{2\sqrt{2}v} g\exp\Big(-\lk-\cvt^2-bN_{K,t}\Big)\Big]_{b=0},\\ 
         &\chi_{v,t}^{0\prime\prime}=\derivative{b}\str\Big[\frac{{\sqrt{t}c(K^{X(1,0)})}}{2\sqrt{2}v} g\exp\Big(-\lk-\cvt^2-bN_{K,t}\Big)\Big]_{b=0},\\ 
         &\kappa_{v,t}^{0}=\derivative{b}\str\Big[\frac{i\product{\mu,K}}{v}g\exp\Big(-\lk-\cvt^2-bN_{K,t}\Big)\Big]_{b=0}.
     \end{split}
 \end{equation}
 By replacing $\product{\mu,K}$ in (\ref{pivt}) by $\mu_1,\mu_2$ (cf. \eqref{fy}), and $K^{X}$ by $K_1^X, K_2^X$, we can define $\pi_{v,t}^{1,\prime/\prime\prime}$, $\pi_{v,t}^{2,\prime/\prime\prime}$, $\chi_{v,t}^{1,\prime/\prime\prime}$ , $\chi_{v,t}^{2,\prime/\prime\prime}$, $\kappa_{v,t}^1$ and $\kappa_{v,t}^2$ respectively.

 Clearly, if $h_{v,t}$ is one of the functions $\pi_{v,t}^{\prime/\prime\prime}, \chi_{v,t}^{\prime/\prime\prime}, \kappa$, then
 \begin{equation}
    h_{v,t}^0=h_{v,t}^1+h_{v,t}^2.
 \end{equation}
 
 \begin{proposition}
     For $k=0,1,2$, the following identities hold,
     \begin{equation}\label{pichikappa}
         \begin{split}
         \pi_{v,t}^{k\prime}+2\chi_{v,t}^{k\prime}=\partial \kappa_{v,t}^k,\\ 
         \pi_{v,t}^{k\prime\prime }+2\chi_{v,t}^{k\prime\prime }=-\dba \kappa_{v,t}^k.    
         \end{split}
     \end{equation}
 \end{proposition}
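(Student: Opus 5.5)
We imitate the proof of \eqref{piu=dsm} in Proposition \ref{prop, piu=0} and the derivation of \eqref{bz}: the identities \eqref{pichikappa} should come from the algebraic relations for $\cvt$ together with the vanishing of supertraces on supercommutators. The case $k=0$ will be done first. The cases $k=1,2$ then follow by the same computation with $\product{\mu,K}$ replaced by $\mu_k$ and $K^X$ by $K_k^X$. The only input needed there is the commutator relation
\begin{align*}
[\cvt'',i\mu_k]=-\frac{\sqrt t\,c(K_k^{X(1,0)})}{\sqrt2},\qquad [\cvt',i\mu_k]=\frac{\sqrt t\,c(K_k^{X(0,1)})}{\sqrt2}.
\end{align*}
This holds exactly as \eqref{commutator c mu}, since $d\mu_k+i_{K_k^X}\omega=0$ and $\mu_k$ is constant in the horizontal direction: $\mu_k$ is a function of $\product{\mu,K}$, which is horizontally constant by Assumption (*).

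\textbf{Key steps.} For the first identity, we start from the term $\partial\kappa^k_{v,t}$ and write $\partial$ on $S$ as the supercommutator with $\cvt'$ modulo $\nabla^{\mathbb E'}$. More precisely, $\partial^S\Tr[\cdot]=\Tr[[\cvt',\cdot]]$, because the remaining vertical and Clifford parts give supercommutators, which have vanishing supertrace. We apply this to $i\frac{\mu_k}{v}g\exp(-\lk-\cvt^2-bN_{K,t})$ and differentiate at $b=0$ via Duhamel.

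Three commutators then appear. First, $[\cvt',\lk+\cvt^2]=0$ by \eqref{cvt-algebraic identity} and $G$-invariance. Second, $[\cvt',i\mu_k/v]$ yields the $c(K_k^{X(0,1)})$-term, i.e.\ $2\chi^{k\prime}_{v,t}$ with a sign to be fixed. Third, $[\cvt',N_{K,t}]$ has to be computed. For this we use Proposition \ref{a2} in its $(v,t)$-version: $[\cvt',N_{K,t}]=2t\frac{\partial}{\partial t}\cvt'$ plus an extra term. That extra term comes from the mismatch between the coefficients $\frac1t-\frac1v$ in $\cvt$ and $\frac1t$ in $N_{K,t}$; by \eqref{commutator c mu} it is a multiple of $c(K^{X(0,1)})\sqrt t/v$. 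This is why $\chi$ carries the factor $\frac{\sqrt t}{2\sqrt2 v}$.

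The $2t\frac{\partial}{\partial t}\cvt'$ term, inserted in the exponential with a Grassmann variable $\dabar$, reproduces $\pi^{k\prime}_{v,t}$ by the same rewriting as in \eqref{piu=0 proof}. The second identity is the conjugate computation, using $\cvt''$ and $\dba$; the minus sign comes from the opposite sign in the first line of \eqref{bku-algebraic identity}.

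\textbf{Main obstacle.} The delicate point is the bookkeeping of the extra $c(K^X)/v$ terms and their signs. One has to verify that the contributions from $[\cvt',i\mu_k/v]$ and from the defect in $[\cvt',N_{K,t}]$ combine into exactly $2\chi^{k\prime}_{v,t}$, and not into a mixture involving $K^X$ rather than $K_k^X$. I would settle this by writing $N_{K,t}=N_t-i\product{\mu,K}/t$ and computing $[\cvt',N_t]$ and $[\cvt',i\product{\mu,K}]$ separately.

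The resulting Duhamel term in which $c(K^{X})$ is paired with $i\mu_k$ must then be reorganized. I would try to use the symmetry of the $b$-derivative, namely that $\frac{\partial}{\partial b}\Tr[A\exp(-L-bB)]$ is symmetric in $A,B$ when both are even. With this, the cross terms cancel and only the $K_k^X$ contribution survives. If that fails for $k=1,2$, the fallback is to prove the $k=0$ case together with a separate identity for $k=2$; the $k=1$ case then follows by subtraction.
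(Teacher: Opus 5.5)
Your skeleton is the paper's proof: write $\partial\kappa^k_{v,t}$ as $\frac{\partial}{\partial b}\str\bigl[[\cvt',\frac{i\mu_k}{v}g\exp(-\lk-\cvt^2-bN_{K,t})]\bigr]_{b=0}$, expand the supercommutator, and use $[\cvt',\lk+\cvt^2]=0$. However, the step you single out as the main obstacle rests on a false claim. There is no extra term in $[\cvt',N_{K,t}]$; one has exactly
\[
[\cvt',N_{K,t}]=2t\derivative{t}\cvt',
\]
which is \eqref{cvt-algebraic identity2}. To see this, write $\cvt'=\bisconn'_{K,t}+P$ with $P=-\frac{\sqrt t\,c(K^{X(0,1)})}{2\sqrt2\,v}$.
\begin{itemize}
\item Proposition \ref{a2} gives $[\bisconn'_{K,t},N_{K,t}]=2t\derivative{t}\bisconn'_{K,t}$.
\item The piece $P$ supercommutes with the function $\product{\mu,K}$ and with $\omega^H$. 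It lowers the degree in $\Lambda(T^{*(0,1)}X)$ by one, so $[P,N_{K,t}]=[P,N_V]=P$.
\item Since $P$ is proportional to $t^{1/2}$, also $2t\derivative{t}P=P$.
\end{itemize}
So the mismatch between $\frac1t-\frac1v$ and $\frac1t$ produces nothing. The $1/v$-part of $\cvt'$ has the same $t$-weight and the same $N_V$-shift as $\sqrt t\,\dbarstar$.

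Consequently the factor $\frac{\sqrt t}{2\sqrt2\,v}$ in $\chi^{k\prime}_{v,t}$ does not come from any defect. It comes entirely from $[\cvt',\frac{i\mu_k}{v}]=\frac{\sqrt t\,c(K_k^{X(0,1)})}{\sqrt2\,v}$, whose $1/v$ is just the prefactor of $\kappa^k_{v,t}$. This term is exactly $2\chi^{k\prime}_{v,t}$, with a plus sign. In particular there is no mixing of $K^X$ and $K^X_k$ for $k=1,2$: $K^X_k$ enters only through $d\mu_k$, while $N_{K,t}$ enters only through the $k$-independent identity above, which with $\frac{i\mu_k}{v}$ in front yields $\pi^{k\prime}_{v,t}$.

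Your proposed remedy would not have saved the argument had the defect been real. A term $\frac{\partial}{\partial b}\str[\frac{i\mu_k}{v}g\exp(-\lk-\cvt^2-b\sqrt t\,c(K^{X(0,1)})/v)]_{b=0}$ has a different structure from $\chi^{k\prime}_{v,t}$: the insertion is $i\mu_k/v$ and the perturbation is built from the full $K^X$. So it cannot combine with $\chi^{k\prime}_{v,t}$. The symmetry of the $b$-derivative you invoke is stated for even operators, while $c(K^{X(0,1)})$ is odd; in any case it is an identity that merely rewrites such a term rather than cancelling it. The proposition holds precisely because $[\cvt',N_{K,t}]-2t\derivative{t}\cvt'=0$. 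With this check replacing your last paragraph, the proof closes.

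For the second identity, both $[\cvt'',N_{K,t}]=-2t\derivative{t}\cvt''$ and $[\cvt'',i\mu_k]=-\frac{\sqrt t\,c(K_k^{X(1,0)})}{\sqrt2}$ change sign relative to the primed case. That double sign change is what produces $-\dba\kappa^k_{v,t}$, not the first line of \eqref{bku-algebraic identity} alone.
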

 \begin{proof}
     From (\ref{bku-algebraic identity}), we have 
     \begin{equation}\label{cvt-algebraic identity2}
         [\cvt'',N_{K,t}]=-2\derivative{t}\cvt'',\qquad [\cvt',N_{K,t}]=2t\derivative{t}\cvt'.
     \end{equation}
 
     Using (\ref{cvt-algebraic identity}), (\ref{commutator c mu}) and (\ref{cvt-algebraic identity2}), we have
     \begin{equation}\label{proof pichikappa}
         \begin{split}
             \partial \kappa_{v,t}^0 =&\derivative{b}\str\Big[\Big[\cvt',\frac{i\product{\mu,K}}{v}g\exp\Big(-\lk-\cvt^2-bN_{K,t}\Big)\Big]\Big]_{b=0}\\ 
             =&\derivative{b}\str\Big[\Big[\cvt',\frac{i\product{\mu,K}}{v}\Big]g\exp\Big(-\lk-\cvt^2-bN_{K,t}\Big)\Big]_{b=0}\\ 
             &+\derivative{b}\str\Big[\frac{i\product{\mu,K}}{v}g\exp\Big(-\lk-\cvt^2-b[\cvt',N_{K,t}]\Big)\Big]_{b=0}\\ 
            = &\derivative{b}\str\Big[\frac{\sqrt{t}c(K^{X(0,1)})}{\sqrt{2}}g\exp\Big(-\lk-\cvt^2-bN_{K,t}\Big)\Big]_{b=0}\\ 
             &+\derivative{b}\str\Big[\frac{i\product{\mu,K}}{v}g\exp\Big(-\lk-\cvt^2-2bt\derivative{t}\mathcal{A}_{v,t}'\Big)\Big]_{b=0}\\ 
             =&\pi_{v,t}^{0\prime}+2\chi_{v,t}^{0\prime}.
         \end{split}
     \end{equation}
     So we obtain the first identity in (\ref{pichikappa}) for the superscript $0$, for the other five cases in (\ref{pichikappa}), note that there is such  an  analogue of (\ref{commutator c mu}): for $k=1,2$,
     \begin{align}\label{commutator c muk}
        &[\cvt'',i\mu_k]=-\frac{\sqrt{t}c(K_k^{X(1,0)})}{\sqrt{2}}, &&[\cvt',i\mu_k]=\frac{\sqrt{t}c(K_k^{X(0,1)})}{\sqrt{2}}.
     \end{align}
     We can then proceed as in (\ref{proof pichikappa}) to complete the proof.
 \end{proof}
 
 As a direct consequence of (\ref{pichikappa}), for $k=0,1,2$, we have
 \begin{equation}\label{pichikappa2}
     \dba(\pi_{v,t}^{k\prime}+2\chi_{v,t}^{k\prime})=\partial(\pi_{v,t}^{k\prime\prime}+2\chi_{v,t}^{k\prime\prime})=\dba\partial\kappa_{v,t}^k.
 \end{equation}

 By (\ref{alphabeta}) and (\ref{pichikappa2}), we can rewrite the right-hand side of (\ref{0 sum of Ik0}) to obtain
 \begin{equation}\label{sum of Ik0}
    \begin{split}
    \sum_{k=1}^3I_k^0=\Phi\dba\int_{\Delta}(-\pi_{v,t}^{1\prime}-\frac{1}{2}\pi_{v,t}^{2\prime}+\chi_{v,t}^{2\prime})\frac{dv}{v}\frac{dt}{t}\qquad\qquad\qquad\\+\Phi\partial\int_{\Delta} (-\pi_{v,t}^{1\prime\prime}-\frac{1}{2}\pi_{v,t}^{2\prime\prime}+\chi_{v,t}^{2\prime\prime})\frac{dv}{v}\frac{dt}{t} +\Phi\dba\partial \int_{\Delta}\kappa_{v,t}^1 \frac{dv}{v}\frac{dt}{t}.
    \end{split}
 \end{equation}

\subsection{Intermediate results on the right-hand side of (\ref{0 sum of Ik0})}\label{sec intermediate right}
We will state some intermediate results concerning the right-hand side of (\ref{sum of Ik0}). Our situation is similar to that in \cite[\S 6.6-6.7]{B1997}, except for the terms containing $\mu_2$. 

Let $(v,t)$ be a coordinate system for $\rr^2$, for $\alpha\in\Lambda(T_{\rr}^*(\rr^2\times S))$, we write $\alpha$ in the form,
 \begin{equation}
    \alpha=\alpha^0+da \alpha^{dv}+ \dabar \alpha^{dt}+da\dabar \alpha^{dvdt}, \kongge \alpha^0,\alpha^{dv},\alpha^{dt},\alpha^{dvdt}\in\Lambda(T_{\rr}^*S).
 \end{equation}
And we set $\Phi_{\rr^2\times S}:\Lambda^{*}(T_{\rr}^*(\rr^2\times S))\rightarrow \Lambda^{*}(T_{\rr}^*(\rr^2\times S))$ to be the map $\alpha\rightarrow (2i\pi)^{-[\mathrm{deg}\alpha/2]}\alpha$.  

For $0<t\leq v$, put
\begin{equation}\label{very many hvt}
    \begin{split}
       &\theta_{v,t}^0=\derivative{b}\str\Big[g\exp\Big(-\lk-\cvt^2-2dat\derivative{t}\cvt'' -2\dabar t\derivative{t}\cvt'\\ 
        &\qquad\qquad\qquad +da\dabar N_V-b\frac{i\product{\mu,K}}{v}\Big)\Big]_{b=0}^{da\dabar},\\
       &\eta_{v,t}=\frac{1}{2}\frac{\partial^2}{\partial c^2}\str\Big[g\exp\Big(-\lk-\cvt^2+cN_{K,t}\Big)\Big]_{c=0},\\
       &\eta_{t}^{*}=\frac{1}{2}\frac{\partial^2}{\partial c^2}\str\Big[g\exp\Big(-\lk-\bisconn_{K,t}^2+cN_{K,t}\Big)\Big]_{c=0},\\
       &\tau_{v,t}=\frac{1}{2}\frac{\partial^2}{\partial c^2}\str\Big[g\exp\Big(-\lk-\cvt^2+c\frac{i\mu_1}{v}\Big)\Big]_{c=0},\\
      &\lambda_{v,t}'=\derivative{b}\trace_s\Big[g\exp\Big(-\lk-\cvt^2+2\dabar t\derivative{t}\cvt'-b\frac{\sqrt{t}c(K^{X(1,0)})}{2\sqrt{2}v}\Big)\Big]_{b=0}^{\dabar},\\
      &\lambda_{v,t}''=\derivative{b}\trace_s\Big[g\exp\Big(-\lk-\cvt^2+2da t\derivative{t}\cvt''-b\frac{\sqrt{t}c(K^{X(0,1)})}{2\sqrt{2}v}\Big)\Big]_{b=0}^{da},\\ 
      &\rho_{v,t}'=\derivative{b}\trace_s\Big[N_{K,t}g\exp\Big(-\lk-\cvt^2+2\dabar t\derivative{t}\cvt'-b\frac{\sqrt{t}c(K_2^{X(1,0)})}{2\sqrt{2}v}\Big)\Big]_{b=0}^{\dabar},\\
      &\rho_{v,t}''=\derivative{b}\trace_s\Big[N_{K,t}g\exp\Big(-\lk-\cvt^2+2da t\derivative{t}\cvt''-b\frac{\sqrt{t}c(K_2^{X(0,1)})}{2\sqrt{2}v}\Big)\Big]_{b=0}^{da},\\ 
      &\sigma_{v,t}'=\frac{\partial^3}{\partial^2 c\partial b}\str\Big[g\exp\Big(-\lk-\cvt^2+cN_{K,t}-b\frac{\sqrt{t}c(K_2^{X(0,1)})}{4\sqrt{2}v}\Big)\Big]_{c=0,b=0},\\ 
      &\sigma_{v,t}''=\frac{\partial^3}{\partial^2 c\partial b}\str\Big[g\exp\Big(-\lk-\cvt^2+cN_{K,t}-b\frac{\sqrt{t}c(K_2^{X(1,0)})}{4\sqrt{2}v}\Big)\Big]_{c=0,b=0},\\ 
      &\psi_{v,t}=\Big\{  \Phi_{\rr^2\times S}\str\Big[\Bigl(\frac{\sqrt{t}c(K_2^{X(0,1)})}{\sqrt{2}}-\frac{\sqrt{t}c(K_2^{X(1,0)})}{\sqrt{2}}\Bigr)g\exp\Big(-\lk-\cvt^2\\ &\qquad\qquad\qquad\qquad\qquad-dt\derivative{t}\cvt-dv\derivative{v}\cvt\Big)\Big]\Big\}^{dvdt}.
    \end{split}
\end{equation}
By replacing $\product{\mu,K}$ by $\mu_1,\mu_2$ in $\theta_{v,t}^{0}$, we can define $\theta_{v,t}^1, \theta_{v,t}^2$ respectively.

For a function $h_{v,t}$ on $(v,t)$, we denote by $h_{0,v}$ the limit of $h_{v,t}$ as $t\to 0$, if the limit exists.

\begin{proposition}\label{i2}
    The following identities hold,
        \begin{equation}\label{hv0}
        \begin{split}
        &\Phi\theta_{v,0}^{1}=\int_{X_g}(\mathrm{dim}X\,\td_{g,K}(TX,h^{TX})-\td_{g,K}'(TX,h^{TX}))\ch_{g,K}(\xi,h^{\xi})\widetilde{d}_v^1,\\ 
        &\Phi\theta_{v,0}^{2}=\int_{X_g}(\mathrm{dim}X\,\td_{g,K}(TX,h^{TX})-\td_{g,K}'(TX,h^{TX}))\ch_{g,K}(\xi,h^{\xi})\widetilde{d}_v^2,\\
        &\pi_{v,0}^{1'}=\pi_{v,0}^{2'}=\pi_{v,0}^{1''}=\pi_{v,0}^{2''}=0,\\
        &\lambda_{v,0}'=\lambda_{v,0}''=0.         
        \end{split}
    \end{equation}
\end{proposition}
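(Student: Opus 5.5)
The plan is to compute each $t\to0$ limit by the local index localization of Section 6 (the Bismut–Goette rescaling technique) applied to $\mathcal{A}_{v,t}$ at fixed $v$. As $t\to 0$, the term $\frac{\sqrt{t}c(K^{X})}{2\sqrt 2}(\frac1t-\frac1v)$ behaves like $\frac{c(K^X)}{2\sqrt{2t}}$ plus a correction $-\frac{\sqrt t c(K^X)}{2\sqrt2 v}$, which is of lower order. Hence the rescaled operator $\mathcal{L}_K+\mathcal{A}_{v,t}^2$ localizes near $W_g$. After Getzler rescaling it converges to the equivariant harmonic oscillator whose supertrace produces $\td_{g,K}(TX,h^{TX})\ch_{g,K}(\xi,h^\xi)$, exactly as in the first identity of \eqref{Kirillov formula-gK}.

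\textbf{The $\theta$ identities.} Insert the extra factor $b\,i\mu_k/v$, the Grassmann variables $da,\overline{da}$, and the term $da\overline{da}N_V$ into this limit. The $b$-derivative keeps the multiplication by $i\mu_k/v$ as a scalar function on $W_g$, which passes unchanged to the limit. The terms $-2da\,t\partial_t\mathcal{A}''_{v,t}-2\overline{da}\,t\partial_t\mathcal{A}'_{v,t}+da\overline{da}N_V$ are handled as in Ma's computation of $D_{0,g}$ (Theorem \ref{thm asymptotic-g}) and the Bismut–Köhler computation. In the rescaled limit they combine into $b$-derivatives of $\td(\cdot+b)$ and a constant $\ell$, giving $(\dim X\,\td_{g,K}-\td'_{g,K})\ch_{g,K}$. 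What remains is to identify the factor carrying $\mu_k$. The scalar $\frac{i\mu_k}{v}$ together with the exponential of the $\omega$- and $\langle\mu,K\rangle$-dependent terms should reproduce the form $\widetilde d^k_v=-\frac{i\mu_k}{v}\exp(\frac{\overline\partial_K\partial_K}{2\pi v}\langle\mu,K\rangle)$ of \eqref{def of dvt}, up to the sign convention in $\theta$. The reason is that the $1/v$ part of $\mathcal{A}^2_{v,t}$ contributes, in the limit, $\frac{1}{v}(|K^X|^2/2$ plus $dK'^X$ terms$)$, which by \eqref{dkkprime} and \eqref{a9} equals $-\frac{\overline\partial_K\partial_K}{2\pi v}\langle\mu,K\rangle$ up to the normalization $\Phi$. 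I would verify this by expanding $[\mathcal{A}''_{v,t},\mathcal{A}'_{v,t}]$ and collecting the $1/v$ and $1/v^2$ terms using \eqref{commutator c mu}.

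\textbf{The $\pi$ and $\lambda$ vanishings.} These are parity arguments. In $\pi^{k\prime}_{v,0}$ we need the $\overline{da}$-coefficient, so exactly one factor $t\partial_t\mathcal{A}'_{v,t}$ is inserted. This operator is $\frac{\sqrt t}{2}\overline\partial^{X,*}$ plus $\frac{c(T^{(0,1)})}{4\sqrt{2t}}$ plus $c(K^{X(0,1)})$ terms. After rescaling it carries an odd power of the Clifford/Grassmann variable in the normal direction, or a term that vanishes as $t\to0$. Its leading contribution is therefore either an odd function of the normal coordinate $Z$ against the even Gaussian kernel, or of the wrong form degree. This mirrors the argument for \eqref{aym expansion 2} and \eqref{b5}, so the limit is $0$. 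The same argument applies to $\lambda'_{v,0}$ and $\lambda''_{v,0}$, where the extra insertion $\frac{\sqrt t\,c(K^{X})}{2\sqrt2 v}$ has an explicit factor $\sqrt t$ and the insertion $t\partial_t\mathcal{A}$ contributes at most $O(1)$.

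\textbf{Main obstacle.} The hard step is the uniformity needed to pass to the limit. The operators have no lower bound for complex $K$, so the Section 6 techniques must be used to localize near $W_g$ and dominate the kernels. One must also check carefully that the $\mu_k$ factor, which is not constant away from $W_K$ for $k=1$, still yields the full form $\widetilde d^k_v$ on $X_g$ rather than a localized term on $X_{g,K}$. This is plausible because $v$ is fixed while $t\to0$, so there is no localization onto $W_K$ at this stage.
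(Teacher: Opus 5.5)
Your treatment of the $\theta$ identities follows the paper's route. The paper uses a Lichnerowicz formula for $\lk+\cvt^2+2da\,t\partial_t\cvt''+2\dabar\,t\partial_t\cvt'-da\dabar N_V$, combining \eqref{gamma lichnerowicz formula} and \eqref{lichnerowicz mu2}. The $da\dabar$-part gives $\dim X\,\td_{g,K}-\td'_{g,K}$ as in \eqref{process limit gamma}, and the $1/v$-terms give $\ch_{g,K}(L,\nabla^L_v)=\exp\bigl(\frac{\dba_K\partial_K}{2\pi v}\product{\mu,K}\bigr)$ as in \eqref{chern l}, \eqref{compute tend to ev}. The paper reads $\cvt$ as the Bismut superconnection of $\xi\otimes L$ rather than expanding $[\cvt'',\cvt']$.

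The gap is in the last two lines of \eqref{hv0}: neither of your vanishing mechanisms survives the Getzler rescaling.
\begin{itemize}
\item For $\lambda'_{v,t}$, the explicit $\sqrt t$ in $\frac{\sqrt t\,c(K^{X(1,0)})}{2\sqrt2 v}$ is compensated. Along $W_g$ the field $K^X$ is tangent to $X_g$, and $\sqrt t\,c_t(e_j)\to\sqrt2\,e^j\wedge$, so this insertion tends to a nonzero exterior multiplication by a $K^{\prime X}$-type form. It is exactly the term that produces the $1/v$-part of $\ch_{g,K}(L,\nabla^L_v)$, which you use for $\theta$, so calling it lower order is inconsistent. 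The analogous insertion in $\chi^{2\prime}_{v,t}$ has the nonzero limit \eqref{tchi tend to}.
\item For $\pi^{k\prime}_{v,t}$, the inserted operator is $2t\partial_t\cvt'=\sqrt t\,\dbarstar+\frac{c(T^{(0,1)})}{2\sqrt{2t}}-\frac{c(K^{X(0,1)})}{2\sqrt{2t}}\bigl(1+\frac{t}{v}\bigr)$. After rescaling it is not odd in $Z$ or in the normal Clifford variables. Since $T(f^H_p,f^H_q)$ and $K^X$ are tangent to $X_g$ along $W_g$, its leading parts involve the rescaled tangential variables $e^j\wedge$, with coefficients of order $t^{-1/2}$ and $t^{-1}$ that are constant in $Z$ at leading order. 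So the analogy with \eqref{aym expansion 2}, \eqref{b5} fails, and a term-by-term limit is not even finite a priori. These singular contributions cancel only through the structure of the whole operator.
\end{itemize}

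The missing ingredient is the mechanism of \cite[Theorem 2.16]{BGS2}, which is what the paper uses in Sections \ref{sec mu2 control} and \ref{dx}. By \eqref{lichnerowicz mu2}, $\lk+\cvt^2+\dabar\,t\partial_t\cvt'$ has the Lichnerowicz form for the $g$-invariant connection ${}^4\nabla^{\mathbb{E},t}$. In that connection $\dabar$ enters only through the term $-\frac{\dabar\,c(e_i^{(0,1)})}{2\sqrt{2t}}$, so all the singular $\dabar$-terms above are absorbed there, and the curvature of ${}^4\nabla^{\mathbb{E},t}$ has no $\dabar$-component.

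Trivialize by ${}^4\nabla^{\mathbb{E},t}$-parallel transport and then rescale. The limit operator is then the $\dabar$-free one of Section \ref{sec compute to ev}, and the bounded insertions $i\mu_k/v$ and $\frac{\sqrt t\,c(K^{X(1,0)})}{2\sqrt2 v}$ do not reintroduce $\dabar$. Hence the $\dabar$-coefficient of the limit vanishes. This gives $\pi^{k\prime}_{v,0}=0$ for $k=0,1,2$ and $\lambda'_{v,0}=0$; the constant factor in front of $\dabar$ is irrelevant. The conjugate argument with $da$ gives the double-primed identities.

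Without this step, or an equivalent explicit cancellation of the singular terms, your argument does not establish the vanishing statements.
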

\begin{proof}
    Proposition \ref{i2} will be proved in Section \ref{ek}.
\end{proof}

Now we use the notations of Section \ref{em} with respect to the vector filed $TX|_{W_{g,K}}$ over $W_{g,K}$. We put
\begin{equation}\label{def hv}
    \begin{split}
        &\theta_v^1=\Big(\frac{1}{2\pi i}\Big)^{\mathrm{dim}X_{g,K}}\int_{X_{g,K}}\Theta_v\,\ch_{g,K}(\xi,h^{\xi}),\\
        &\lambda_v=\Big(\frac{1}{2\pi i}\Big)^{\mathrm{dim}X_{g,K}}\int_{X_{g,K}}\Lambda_v\,\ch_{g,K}(\xi,h^{\xi}),\\
        &\lambda_v'=\frac{1}{2}\Big(\frac{1}{2\pi i}\Big)^{\mathrm{dim}X_{g,K}}\int_{X_{g,K}}\Lambda_v\,\ch_{g,K}(\xi,h^{\xi}),\\
        &\lambda_v'' =-\frac{1}{2}\Big(\frac{1}{2\pi i}\Big)^{\mathrm{dim}X_{g,K}}\int_{X_{g,K}}\Lambda_v\,\ch_{g,K}(\xi,h^{\xi}),\\
        &\pi_v^{1'}=\Big(\frac{1}{2\pi i}\Big)^{\mathrm{dim}X_{g,K}}\int_{X_{g,K}}\Pi_v'\,\ch_{g,K}(\xi,h^{\xi}),\\
        &\pi_v^{1''}=\Big(\frac{1}{2\pi i}\Big)^{\mathrm{dim}X_{g,K}}\int_{X_{g,K}}\Pi_v''\,\ch_{g,K}(\xi,h^{\xi}),\\ 
        &\tau_v=\Big(\frac{1}{2\pi i}\Big)^{\mathrm{dim}X_{g,K}}\int_{X_{g,K}}\mathrm{T}_v\,\ch_{g,K}(\xi,h^{\xi}).
    \end{split}
\end{equation}

Now we will start to state the intermediate results concerning about the right-hand side of (\ref{sum of Ik0}). In the following theorems, we fix $K_0\in \mathfrak{z}(g)$ and $K=zK_0$ with $|z|\neq 0$ is small enough.

\begin{theorem}\label{h,t1infty}
   For $k=0,1,2$, let $h_{v,t}$ be one of the functions $\pi_{v,t}^{k,\prime/\prime\prime}, \chi_{v,t}^{k,\prime/\prime\prime}, \kappa_{v,t}^k$. For any $t_0>0$, there exist $C>0$,  $\delta\in \,]0,1]$ such that for any  $t_0\leq t\leq v,$
    \begin{equation}
        |h_{v,t}|\leq  \frac{C}{v^{\delta}}.
    \end{equation}
\end{theorem}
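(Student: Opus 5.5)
Every function $h_{v,t}$ in the statement has the same form. It is a (generalized) supertrace of $g\exp(-\lk-\cvt^2)$, possibly perturbed by a first-order term in an auxiliary Grassmann variable ($\dabar$, $da$ or $b$). It carries an explicit prefactor $1/v$ coming from $i\product{\mu,K}/v$, $i\mu_k/v$, or $\sqrt{t}c(K_k^{X})/(2\sqrt2 v)$. The plan is to treat all nine cases at once: get a uniform bound on the supertrace part for $t_0\le t\le v$, and then extract the decay from the $1/v$ prefactor together with the convergence as $v\to\infty$ to objects built from $\bisconn_{K,t}$.

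\textbf{Step 1: a uniform bound.} For $t\ge t_0$ we rescale $\cvt=\sqrt{t}\,(\dbar+\dbarstar)+\nabla^{\mathbb{E}}+O(t^{-1/2})$. The $K$-dependent term is $\frac{\sqrt t}{2\sqrt2}c(K^X)(\frac1t-\frac1v)$. Its coefficient is bounded by $\sqrt t\,|K|/t$, which is $O(|K|t^{-1/2})$, and this holds uniformly in $v\ge t$. Following the large-time analysis used for Theorem \ref{main lemma1} (deferred to Section 6), we use the spectral gap of $D^X$ on $\ker(D^X)^\perp$ and the smallness of $|K|$. The operator $\lk+\cvt^2$ is then a small perturbation of $t(D^X)^2$ plus nilpotent horizontal terms, uniformly in $v\ge t\ge t_0$. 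Combining Duhamel expansions with contour integration in the resolvent, as in \cite[\S 9]{BGS2}, gives a bound on $g\exp(-\lk-\cvt^2)$ in trace norm, uniform in $v\ge t\ge t_0$. The additional factors ($N_{K,t}$, the Clifford factors $\sqrt t c(K_k^X)$, and the $\partial/\partial t$ derivatives) have at most polynomial growth in $t$. Their contribution on $\ker(D^X)^\perp$ is controlled by exponential decay $e^{-ct}$, and their contribution on the kernel part is bounded.

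\textbf{Step 2: the decay rate.} Each $h_{v,t}$ equals $\frac1v$ times a quantity $F(v,t)$. On $\ker D^X$, the factor $\sqrt t\,c(K^X)$ is an off-diagonal term, so by the previous step its contribution is $O(t^{1/2}\cdot t^{-1/2})$ after the projection onto the harmonic forms is taken into account. This would give $|h_{v,t}|\le C/v$ if $F$ were bounded. The only part of $F$ that might grow is the factor $i\product{\mu,K}$ combined with a $b$-derivative of $N_{K,t}$, whose second piece is of size $1/t\le 1/t_0$. So we expect $|F|\le C(1+t^{1-\delta})$ in the worst case, which is $\le Cv^{1-\delta}$ because $t\le v$. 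This yields $C/v^\delta$ for any $\delta$ smaller than the gap exponent $\gamma$ of Theorem \ref{main lemma1}. Concretely, we take $\delta=\gamma/2$ and write $\frac1v=\frac1{v^{\delta}}\cdot\frac{1}{v^{1-\delta}}$, then absorb the possible growth $t^{1-\delta}$ using $t\le v$.

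\textbf{The cases with $\mu_1,\mu_2$.} These are the same as the $\product{\mu,K}$ case, since $\mu_k$ and $K_k^X$ are smooth bounded and $G$-invariant. The only identity we used was a commutator relation, and \eqref{commutator c muk} replaces it here. For $\kappa^k$, the relation \eqref{pichikappa} reduces the question to the $\pi$ and $\chi$ cases, modulo a $\partial$ applied to a quantity of the same type.

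\textbf{Main obstacle.} The hardest step is the uniformity of the heat kernel bound in $v\in[t,\infty)$ when the operators are not self-adjoint, because of $\lk$ and $c(K^X)$. For this we would adapt the Bismut--Goette technique \cite[\S 7]{BismutGoette2000} of writing $\lk+\cvt^2$ as a perturbation of $\lk+t(D^X)^2$. The key point is that $\lk$ commutes with $D^X$ and has purely imaginary spectrum, so that $|e^{-\lk}|=1$ on each isotypic component.
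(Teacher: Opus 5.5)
Your overall strategy is the one the paper uses: uniform large-time estimates for $\lk+\cvt^2$ on $t_0\le t\le v$, combined with the explicit smallness carried by $1/v$. The uniform estimates hold because the coefficient $\frac1t-\frac1v\in[0,\frac1t[$ of $c(K^X)$ lets the analysis of \S 6.1 go through unchanged.

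The paper packages the smallness differently. For $\pi^{0\prime}_{v,t}$ it moves the $1/v$ into the perturbation and writes $\dabar\wedge\pi^{0\prime}_{v,t}=\str[i\product{\mu,K}g(\exp(-C'_{K,v,t})-\exp(-A'_{K,v,t}))]$, with $C'_{K,v,t}=A'_{K,v,t}+\frac{2t}{v}\dabar\derivative{t}\cvt'$. It then proves $\|\exp(-C'_{K,v,t})-\exp(-A'_{K,v,t})\|_0\le C/\sqrt v$ via the resolvent. Finally it turns this into a pointwise kernel bound $C/v^{\delta}$ by interpolating with uniform derivative bounds on the kernels, as in \cite[\S 11 p)]{BL}. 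That interpolation is where $\delta$ comes from. It is unrelated to the exponent $\gamma$ of Theorem \ref{main lemma1}, and your growth allowance $|F|\le C(1+t^{1-\delta})$ is neither needed nor justified.

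The step that carries the whole proof is the uniform boundedness of the supertrace once the prefactor is removed. You justify it with the wrong mechanism, and one of your reductions is invalid.

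(i) The delicate term is not $N_{K,t}$, which is bounded for $t\ge t_0$. It is $\sqrt t\,\dbarstar$ inside $2t\derivative{t}\cvt'=\sqrt t\,\dbarstar+\frac{c(T^{(0,1)})}{2\sqrt{2t}}-\frac{\sqrt t\,c(K^{X(0,1)})}{2\sqrt2}(\frac1t+\frac1v)$. The claim that polynomial growth on $\ker(D^X)^\perp$ is killed by $e^{-ct}$ is not available. The blocks of $\exp(-A'_{K,v,t})$ coupling $\ker D^X$ to its complement decay only polynomially in $t$, so a factor of size $\sqrt t$ is not absorbed that way. What works is that $\dbarstar$ vanishes on $\ker D^X$. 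Hence $\sqrt t\,\dbarstar$ is bounded from $|\cdot|_{t,1}$ to $|\cdot|_{t,0}$ (norms \eqref{sobolev infty 01}), and this pairs with the uniform resolvent bound from $|\cdot|_{t,-1}$ to $|\cdot|_{t,1}$. The remaining terms are $O(t^{-1/2})$, using $t\le v$ for the $c(K^{X(0,1)})$-term.

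(ii) For $\chi^{k,\prime/\prime\prime}_{v,t}$ the prefactor is $\sqrt t/v$, not $1/v$. You simply use $\sqrt t/v\le v^{-1/2}$; no off-diagonality argument is needed.

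(iii) Reducing $\kappa^k_{v,t}$ to the $\pi,\chi$ cases via \eqref{pichikappa} does not work, since that identity only controls $\partial\kappa^k_{v,t}$, and a bound on $\partial\kappa^k_{v,t}$ gives no bound on $\kappa^k_{v,t}$ itself. The reduction is also unnecessary. $\kappa^k_{v,t}$ carries the prefactor $1/v$ and otherwise involves only $N_{K,t}$, which is bounded for $t\ge t_0$, so it is handled directly like the others.
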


\begin{theorem}\label{h,t01}
    For $k=0,1,2$, let $h_{v,t}$ be one of the functions $\theta_{v,t}^{k}, \pi_{v,t}^{k,\prime/\prime\prime}$, $\lambda_{v,t}^{\prime / \prime \prime}$. For any $v_0>0$, there exist $C>0$, $\gamma \in \,]0,1]$, such that for any $t\in \,]0,1]$, $v\geq v_0, v\geq t$,
    \begin{equation}\label{h,t01v1}
        |h_{v,t}-h_{v,0}|\leq C\frac{t^{\gamma}}{v},\qquad \Big|\derivative{t}(th_{v,t})-h_{v,0}\Big|\leq C\frac{t^{\gamma}}{v}.
    \end{equation}
   And if $h_{v,t}$ is one of the functions $\theta_{v,t}^{1}, \pi_{v,t}^{1,\prime/\prime\prime}, v\theta_{v,t}^2, v\pi_{v,t}^{2,\prime/\prime\prime}$, $\lambda_{v,t}^{\prime / \prime \prime}$, there exist $C>0,\gamma' \in \,]0,1]$, such that for any $0<t\leq v\leq 1$,
    \begin{equation}\label{h,t01v01}
        |h_{v,t}-h_{v,0}|\leq C\Big(\frac{t}{v}\Big)^{\gamma'},\qquad \Big|\derivative{t}(th_{v,t})-h_{v,0}\Big|\leq C\Big(\frac{t}{v}\Big)^{\gamma'}.
    \end{equation}
\end{theorem}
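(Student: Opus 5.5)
We follow the strategy of \cite[\S 7]{BismutGoette2000} and \cite[\S 6.6--6.8]{B1997}: localize the supertraces near the fixed point set $W_{g,K}$, rescale there in the normal directions, and compare with the vector bundle model of Section \ref{em}.

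\textbf{Step 1: localization.} For $t\in\,]0,1]$ and $v\ge t$, write $\cvt^2+\lk$ by a Lichnerowicz-type formula. The term $\frac{t}{2}\big(\frac1t-\frac1v\big)^2|K^X|^2$ gives a positive potential away from $W_K$. By finite propagation speed and the standard Gaussian off-diagonal estimates for the heat kernel (as in \cite[\S 7]{BismutGoette2000}), each supertrace $h_{v,t}$ in the list ($\theta^k_{v,t}$, $\pi^{k,\prime/\prime\prime}_{v,t}$, $\lambda'_{v,t}$, $\lambda''_{v,t}$) equals, up to $\mathcal O(e^{-c/t})$, an integral over a tubular neighbourhood $\mathcal U_\varepsilon$ of $W_{g}$ in $W$. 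For the $\mu_2$-terms we use \eqref{proposition of mu1mu2}: $\mu_2$ is locally constant and $K_2^X=0$ near $W_{g,K}$. The $\mu_2$-dependence is therefore supported where $|K^X|\ge c>0$. There the potential $|K^X|^2/v$ produces the extra factor $1/v$. This is why $v\theta^2$ and $v\pi^2$ appear in \eqref{h,t01v01}.

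\textbf{Step 2: Getzler rescaling and expansion in $t^{1/2}$.} On the fibres of $N_{X_g/X}$ we apply the Getzler rescaling in the Clifford variables together with the rescaling $Z\mapsto \sqrt t Z$. The rescaled operator $L^{t}_{v}$ is then expanded in powers of $\sqrt t$, with coefficients depending polynomially on $t/v$. The $t=0$ limit is a harmonic oscillator of the type in Section \ref{b8}, with equivariant curvature $R^{TX}_K$ in which $K$ is weighted by $t/v$. This identifies $h_{v,0}$, yielding Proposition \ref{i2} and the characteristic-form expressions. To get the Hölder-type bound $|h_{v,t}-h_{v,0}|\le C t^\gamma/v$, we use the Duhamel formula and uniform Sobolev estimates for the resolvent of $L^t_v$ along a contour, as in \cite[Theorems 7.9--7.14]{BismutGoette2000}. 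The bound for $\frac{\partial}{\partial t}(th_{v,t})$ follows in the same way, by differentiating the Duhamel expansion. In particular, the $t\frac{\partial}{\partial t}$ derivative of the rescaled operator is $\mathcal O(t^{1/2})$ uniformly in $v$.

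\textbf{Step 3: the regime $0<t\le v\le1$.} Here we rescale by $v$ instead of $t$, setting $K'=(t/v)K$ in effect. The parameter is then $s=t/v\in\,]0,1]$, and the operator becomes $\bisconn_{?,v}$-like with a small parameter $s$. We apply the same Duhamel argument with $s$ in place of $t$, which gives the $(t/v)^{\gamma'}$ bounds. For $v\theta^2$ and $v\pi^2$, the factor $1/v$ is absorbed exactly by the exponential decay of $\exp(-|K^X|^2/(2v))$ on $\operatorname{supp} d\mu_2$.

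\textbf{Main obstacle.} The main obstacle is uniformity in $v$ large for the $\mu_2$-terms with the extra $\partial/\dba$ Grassmann variables. The operator no longer has a lower bound uniformly as $v\to\infty$ once $t$ is fixed, because of the $K$-terms. I would try the Bismut--Goette trick first: conjugate by $\exp(i\mu_2 \cdot/v)$ and use the identity \cite[Proposition 8.32]{BismutGoette2000} to replace $\mu_2$ by $d\mu_2$. The point is that $d\mu_2$ vanishes near $W_{g,K}$, so the estimates reduce to the exponentially small region.
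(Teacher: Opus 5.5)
Your overall architecture (localize near $W_g$, Getzler-rescale, compare with a limit operator, treat $v\ge v_0$ and $0<t\le v\le 1$ separately) is the right one. However, the treatment of the $\mu_2$-terms rests on a false premise. By \eqref{fy}, $\mu_2$ is \emph{not} supported where $|K^X|\ge c>0$. Near each component of $W_K$ it equals the constant value $\mu_K^i$ of $\langle\mu,K\rangle$ there; it is only $d\mu_2$ and $K_2^X$ that vanish near $W_K$, cf.\ \eqref{proposition of mu1mu2}. Consequently $\theta^2_{v,t}$ and $\pi^{2,\prime/\prime\prime}_{v,t}$ concentrate on $X_{g,K}$ exactly like the $\mu_1$-terms, and they are genuinely of size $1/v$ as $v\to 0$. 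For instance, $\theta^2_{v,0}$ in \eqref{hv0} is built from $\widetilde d^{\,2}_v$, whose small-$v$ expansion has a leading term $\frac1v\int_{X_{g,K}}(\cdots)$ (cf.\ \eqref{bw}, \eqref{expansion ev2}). Only $\partial\theta^2_{v,0}$ and $\dba\theta^2_{v,0}$ decay like $v^m$, cf.\ \eqref{partial theta2}. If your mechanism were right, the factor $v$ in \eqref{h,t01v01} would be superfluous. Its actual role is as follows. In the $\mu_1$-argument, after rescaling $Z_0\in N_{X_{g,K}/X_g}$ by $\sqrt v$ and $Z\in N_{X_g/X}$ by $\sqrt t$, one needs \eqref{estimate mu1}, i.e.\ $|\mu_1(\sqrt vZ_0)/v|\le C|Z_0|^2$, which comes from $\mu_1=\mathcal O(|Z|^2)$. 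This fails for $\mu_2/v$, but $v\cdot\mu_2/v=\mu_2$ is bounded, so $v\theta^2_{v,t}$ and $v\pi^{2,\prime/\prime\prime}_{v,t}$ are handled verbatim as $\theta^1_{v,t}$ and $\pi^{1,\prime/\prime\prime}_{v,t}$. Your proposed fallback does not help either. The identity of \cite[Proposition 8.32]{BismutGoette2000}, i.e.\ \eqref{tderivative t str mu2}, computes $t\partial_t$ of the $\mu_2$-supertrace in terms of $\partial\pi^{2\prime}_{v,t}$ and $\dba\pi^{2\prime\prime}_{v,t}$ (which still contain $\mu_2$) plus $K_2^X$-terms. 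It is used for \eqref{t01v01}, and it cannot replace $\mu_2$ by $d\mu_2$ inside $\pi^2$ or $\theta^2$.

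Second, the regime $0<t\le v\le1$ is where the work lies, and Step 3 does not supply it: there is no reduction to a single small parameter $s=t/v$. The exact identity $\cvt=\bisconn_{(1-t/v)K,t}$ leaves $\lk$ untouched, so it only replaces $g$ by $ge^{(t/v)K}$, whose fixed-point set jumps from $W_g$ to $W_{g,K}$ as $t/v\to0$; this is the same two-scale degeneration. What is needed is the machinery of \cite[\S 8]{BismutGoette2000} as carried out in Sections \ref{sec localization of problem mu1}--\ref{e1}:
coordinates $(y_0,Z_0,Z)$ with the two rescalings above;
Clifford rescaling by $c_t$ on $TX_{g,K}$ and by $c_{t/v}$ on $N_{X_{g,K}/X_g}$;
the limit operator \eqref{lxk0v};
kernel bounds weighted by $(1+|zZ_0|)^{-m}$ near $X_{g,K}$ and by $|v/z^2|^m$ away from it;
and the comparison estimate \eqref{difference ftlvzk3tv}, which is what produces $(t/v)^{\gamma'}$.
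For $\pi$ and $\lambda$ this has to be run with the Grassmann variables in the trivialization defined by ${}^4\nabla^{\mathbb{E},t}$. Its curvature has no $\dabar$-part, which is why $\pi_{v,0}=\lambda_{v,0}=0$. For $\theta$ one needs the Lichnerowicz formula combining \eqref{gamma lichnerowicz formula} and \eqref{lichnerowicz mu2}. The bounds on $\frac{\partial}{\partial t}(th_{v,t})$ come from the $u=\sqrt t$-derivative estimates of Section \ref{e0}. By contrast, $v\ge v_0$ is the easy regime rather than the obstacle: $1/v$ is bounded there, the expansion of Section \ref{e0} gives $t^\gamma$, and the factor $1/v$ in \eqref{h,t01v1} is just the explicit prefactor.

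Two further statements in Steps 1--2 are incorrect. First, there is no positive potential $\frac t2(\frac1t-\frac1v)^2|K^X|^2$. The $c(K^X)^2$ contribution is negative and $\lk+\cvt^2$ has no lower bound, which is precisely the Bismut--Goette difficulty. Localization to $W_g$ comes from finite propagation speed for $\widetilde F_t$ together with $\|\widetilde I_t(t\bktv)\|_{(1)}\le ce^{-C/t}$ for $|K|$ small, and the potential localizing to $X_{g,K}$ is $\frac{1}{2v}|K^X|^2$. Second, for fixed $v$ the $t\to0$ limit carries the full $K$ in $R^{TX}_K$, twisted by the line bundle $L$ with connection $d-\frac{K^{\prime X}}{2v}$, not $K$ weighted by $t/v$. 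Otherwise you would not recover the forms $\td_{g,K}\ch_{g,K}\widetilde d^{\,k}_v$ of Proposition \ref{i2}.
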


\begin{theorem}\label{thm h,vt}
    If $h_{v,t}$ is one of the functions $\theta_{v,t}^{1}, \lambda_{v,t}^{\prime/\prime\prime}, \pi_{v,t}^{1,\prime/\prime\prime},\tau_{v,t}$. There exist $C>0$ and $\alpha,\beta>0$ such that for any $0<t\leq 1 \leq v\leq 1/t$,
    \begin{equation}\label{h,vt}
        |h_{vt,t}-h_v|\leq Ct^{\alpha}v^{\beta}. 
    \end{equation}
\end{theorem}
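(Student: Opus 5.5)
The estimate \eqref{h,vt} compares the two-parameter supertraces at $(vt,t)$, for $t\to 0$, with the model supertraces of Section \ref{em} built from $\mathscr{B}_{K,v}$ on the normal bundle $N_{X_{g,K}/X}$. I would prove it by the localization technique of \cite[\S 7]{BismutGoette2000} and \cite[\S 8]{B1997}, carried out uniformly in the extra parameter $v\in[1,1/t]$.

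\textbf{Step 1 (localization).} Using finite propagation speed for the fiberwise Dirac operator together with the Gaussian off-diagonal estimates for $\exp(-\lk-\mathcal{A}_{vt,t}^2)$, I would show that, up to an error $\mathcal{O}(e^{-c/t})$ uniform in $v\leq 1/t$, the supertraces defining $h_{vt,t}$ localize near $W_{g}$. On $W_g$ minus a neighborhood of $W_{g,K}$ the term $\frac{t}{2}\bigl(\frac1t-\frac1{vt}\bigr)^2|K^X|^2$ in $\mathcal{A}_{vt,t}^2$ gives decay like $\exp(-c|K^X|^2/t)$. This uses $v\geq 1$, so that $(1-1/v)\geq0$. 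The borderline $v$ near $1$ needs care; there I would instead use that the $\mu_1$-, $\lambda$-, $\pi^1$-type insertions vanish to second order at $W_{g,K}$ (by \eqref{proposition of mu1mu2}). Consequently the relevant traces only see a neighborhood $\mathcal{U}_\varepsilon$ of $W_{g,K}$.

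\textbf{Step 2 (rescaling).} In normal coordinates $Z\in N_{X_{g,K}/X,\rr}$ around $y\in X_{g,K}$, I would apply the Getzler rescaling in the $TX_{g,K}$ directions (with $\sqrt t$) and the dilation $Z\mapsto \sqrt t Z$ in the normal directions, as in \cite[\S 8]{BismutGoette2000}. The rescaled operator $L_t^{v}$ should converge to the operator $\lk+\mathscr{B}_{K,v}^2$ of \eqref{lichnerowicz scrbku}, with $M=TX|_{W_{g,K}}$, plus the curvature of $TX_{g,K}$ and $\xi$ in Getzler form. Here $\frac1{vt}\cdot t=\frac1v$ is exactly the coefficient $\frac1u$ there. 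Under this rescaling $\mu_1/(vt)$ becomes $\widetilde\mu(K)/v$, by \eqref{proposition of mu1mu2} and the quadratic expansion of $\mu_1$. The insertions $\dabar\, t\partial_t\mathcal{A}'$ and $da\, t\partial_t\mathcal{A}''$ become the $\frac{\dabar}{\sqrt2 u}c(m^M(K)\bar z)$-type terms of \eqref{many generalized supertraces}, and $N_V$ becomes the $-da\dabar\frac m2$ shift. Then $\theta^1,\lambda',\lambda'',\pi^1,\tau$ converge to $\Theta_v,\Lambda_v,\Pi'_v,\Pi''_v,\mathrm{T}_v$ integrated against $\ch_{g,K}(\xi)$, as in \eqref{def hv}.

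\textbf{Step 3 (quantitative convergence).} The difference $L_t^v-L_0^v$ has coefficients $\mathcal{O}(\sqrt t(1+|Z|)^{p})$, with additional terms of size $\frac{\sqrt t}{v}|Z|^3$ coming from the cubic remainder of $K^X$. Using Duhamel's formula and weighted Sobolev estimates, uniform because the potential $\frac1{2v}|m(K)Z|^2$ together with the harmonic-oscillator part gives a spectral gap independent of $v\geq1$, I would get $|h_{vt,t}-h_v|\leq C t^{\alpha}v^{\beta}$. The powers of $v$ come from the derivatives $\partial_b$ and $\partial_c^2$ of insertions such as $\frac{i\mu_1}{v}$, which are bounded by polynomial weights in $Z$, and from the $|Z|$-growth of the error terms.

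\textbf{Main obstacle.} The hard part is uniformity as $v\to 1/t$, i.e. $vt\to1$. There the factor $1-\frac1v$ no longer helps and the model oscillator in \eqref{mehler formula} must stay invertible. I would first try to control this via the explicit Mehler kernel, using that $P=H^2+4F$ stays away from $-4k^2\pi^2$ for $|K|$ small, uniformly in $u=v\geq1$. I would then absorb the growth into $v^\beta$ by a crude polynomial bound on the Duhamel error terms. The $\mu_2$ terms do not appear in the list in \eqref{h,vt}, which is why they need no treatment here.
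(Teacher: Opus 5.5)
Your overall architecture agrees with the paper's: localize to $X_{g,K}$, Getzler-rescale along $TX_{g,K}$ together with $Z\mapsto\sqrt t Z$, identify the limit with $\lk+\mathscr{B}_{K,v}^2$ for $M=TX|_{W_{g,K}}$, and use $\mu_1/(tv)\approx\widetilde\mu(K)/v$ as in \eqref{difference mu}. However, the two analytic steps you rely on are not correct.

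In Step~1 there is no positive term $\frac t2(\frac1t-\frac1{vt})^2|K^X|^2$ in $\mathcal{A}_{vt,t}^2$. Since $c(K^X)^2=-|K^X|^2$, the square of the Clifford term contributes $-\frac{(1-1/v)^2}{8t}|K^X|^2$. The confining term is $\frac{1}{2vt}|K^X|^2$ in \eqref{lichnerowicz bkttv}, and it appears only after adding $\lk$ and putting the real magnetic term $-\frac{\langle K^X,e_i\rangle}{2t}(1+\frac1v)$ inside the square. It is positive for every $v$ and strongest at $v=1$, so $v$ near $1$ is not a borderline. The vanishing of the insertions at $W_{g,K}$ plays no role in localizing away from $W_{g,K}$. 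This term gives an error $C(vt)^m$ as in \eqref{estimate to main lemma 2 mu2}, not $e^{-c/t}$; that is still of the admissible form $t^\alpha v^\beta$.

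The genuine gap is Step~3. After rescaling, the self-adjoint part of the completed magnetic square is $-\frac12\Delta-\frac18(1+\frac1v)^2|m(K)Z|^2$. Adding $\frac1{2v}|m(K)Z|^2$ leaves the inverted oscillator $-\frac18(1-\frac1v)^2|m(K)Z|^2$, plus $\mathcal{O}(\sqrt t|Z|^3)$ corrections. So the operators are non-self-adjoint and their quadratic forms are not bounded below. There is no spectral gap, uniform in $v$ or otherwise, on which to base your Duhamel and weighted-Sobolev argument.

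Moreover, $g$ acts trivially on $N_{X_{g,K}/X_g}$, so in the normal variable $Z_0$ the integrand decays only through $K$, at scale $|Z_0|\sim\sqrt v/|z|$. Controlling this $Z_0$-integral is what produces the factor $v^\beta$, not the $\partial_b,\partial_c$ derivatives.

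The paper handles both points with the Bismut--Goette machinery:
the decomposition $\widetilde F_t=\sum_n\widetilde F_{t,n}$ with finite propagation speed;
truncated operators satisfying Theorem \ref{thm elliptic estimate lykn3tv}, with constants $(1+|nz|^2)$ in the weighted norms \eqref{sobolev norm tznk}, whose $H^1$-part contains $\frac1{tv}|K^X|^2$;
resolvent bounds on $U_{\eta n+d}$;
the $Z_0$-centered trivialization of Section \ref{sec a new trivialization};
and the kernel bounds \eqref{uniform estimate ftlvzk3tv tv}, \eqref{difference ftlvzk3tv tv}, which carry the factor $(1+|zZ_0/\sqrt v|)^{-m}$ and the rate $t^{1/4(2\ell+1)}$ obtained by mollification.
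Integrating $(1+|Z_0|)^N$ against that factor gives the bound $Cv^rt^\gamma$.

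Finally, the regime $vt\to1$ that you single out as the main obstacle is harmless. There $h_{vt,t}$ and $h_v$ are both bounded, and $t^\alpha v^\beta\gtrsim1$ once $\beta\ge\alpha$.
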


\begin{theorem}\label{thm,h2,t01v01}
    If $h_{v,t}$ is one of the functions $\chi_{v,t}^{2,\prime/\prime\prime},\derivative{t}(t\chi_{v,t}^{2,\prime/\prime\prime}),\rho_{v,t}^{\prime/\prime\prime}, \sigma_{v,t}^{\prime/\prime\prime}$. Then for any $m\in \nn$, there is $C_m>0$, such that for any $0<t\leq v\leq 1$,
    \begin{equation}\label{h2,t01v01}
        |h_{v,t}|\leq C_m\frac{v^{m}}{t^2}.
    \end{equation}
And there exists $\delta\in \,]0,1[\,$ such that for any $m\in \nn^*$, there exists $C>0$, for any $0<t\leq v\leq 1$,
\begin{equation}\label{psi,t01v01}
    |\psi_{v,t}|\leq C\frac{v^m}{t^{\delta}}.
\end{equation}
\end{theorem}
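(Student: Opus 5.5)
We prove Theorem \ref{thm,h2,t01v01} by localization: every function listed there contains $\mu_2$ only through $K_2^X$. By \eqref{proposition of mu1mu2}, $\mu_2$ is locally constant near $W_K$. Hence $d\mu_2=0$, and so $K_2^X=0$, on a fixed neighborhood $\mathcal{U}$ of $W_{g,K}$ (indeed of $W_K$) in $W$. Let $\mathcal{V}$ be the open set where $\langle\mu,K\rangle$ is at distance in $]\varepsilon/2,\varepsilon[$ from some $\mu_K^i$; then $K_2^X$ is supported in $\overline{\mathcal{V}}$, on which $|K^X|\geq c_0>0$ because $\overline{\mathcal{V}}\cap W_K=\emptyset$.

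\textbf{Step 1: off-diagonal Gaussian decay of the heat kernel.} Write the kernel of $\exp(-\lk-\cvt^2)$ with $g$ inserted. The Lichnerowicz-type formula for $\cvt^2$ contains the potential term $\frac{t}{8}\bigl(\frac1t-\frac1v\bigr)^2|K^X|^2$; we check this is the analogue of \eqref{lichnerowicz scrbku}, up to sign conventions. For $0<t\leq v\leq1$, and away from the region $v\simeq t$, this term is of size $|K^X|^2/t$. On $\overline{\mathcal{V}}$ it is $\geq c/t$ apart from the $1/v$ correction, and we handle that correction by bounding the $(1/t-1/v)^2$ factor below using the Duhamel expansion in the $1/v$ terms. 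Standard finite propagation speed and Bismut--Goette style estimates (the techniques of \cite[\S 7]{BismutGoette2000}, adapted in Section 6) then give that the pointwise supertrace density, restricted to $\overline{\mathcal{V}}$ (actually everything away from $W_{g}\cap W_K$ after $g$-localization), is $\mathcal{O}(e^{-c/t})$. This holds uniformly in $v\in[t,1]$, after the rescaling of Grassmann variables inherent in $\Phi$ and multiplied by at most polynomial factors $t^{-N}$ coming from $N_{K,t}$, $c(K_2^X)\sqrt t/v$ and the $\partial_t\cvt$ insertions.

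\textbf{Step 2: the estimate \eqref{h2,t01v01}.} Each of $\chi^{2}$, $\partial_t(t\chi^2)$, $\rho$ and $\sigma$ is a derivative, via Duhamel's formula, of a supertrace in which one factor is $c(K_2^{X})$. Inserting $c(K_2^{X})$ localizes the trace integrand to $\overline{\mathcal{V}}$, since the Duhamel kernel composed with a compactly supported multiplication can be bounded by $e^{-c/t}$ times polynomial factors using Step 1 together with Gaussian off-diagonal bounds. We get $|h_{v,t}|\leq C t^{-N}v^{-N'}e^{-c/t}$. The difficulty is that the required bound is $v^m/t^2$ with $v^m$ small: since $t\leq v$, we have $e^{-c/t}\leq C_m t^{m+N+2}\leq C_m v^{m+N'}t^{-2}\cdot t^{N+2}$ after using $v\geq t$. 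Absorbing powers this way gives \eqref{h2,t01v01}.

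\textbf{Step 3: the estimate on $\psi_{v,t}$.} Here the $dv\,dt$ component carries factors $\partial_t\cvt$ and $\partial_v\cvt$ of sizes $t^{-3/2}$ and $\sqrt t/v^2$. The same localization gives a bound $e^{-c/t}$ times a polynomial, which yields the much better \eqref{psi,t01v01} for any $\delta$.

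The main obstacle is Step 1: obtaining uniform exponential localization in the regime $t\leq v\leq1$ where the coefficient $(1/t-1/v)$ degenerates as $v\to t$. Near $v=t$ the operator is $\bisconn_t^2$ twisted by $e^{K}$, so there is no $|K^X|^2/t$ confinement. I would treat this through Bismut--Goette's conjugation by $\exp(i\langle\mu,K\rangle/v)$-type transformations, or equivalently by using that $\lk+\cvt^2$ differs from $\mathcal{L}_{K}+\bisconn_{K,t}^2$ by a controlled term. If that fails, I would instead use that the $g e^{K}$-twisted trace localizes near $W_{ge^K}=W_{g,K}$ by standard Lefschetz localization, which still keeps $\overline{\mathcal{V}}$ at positive distance.
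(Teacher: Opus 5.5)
Your Step 1 is where the argument breaks, and Steps 2--3 rest on it entirely. The confining term you identify has the wrong sign. Since $c(K^X)^2=-|K^X|^2$, squaring the Clifford part of $\cvt$ gives $-\frac{t}{8}\bigl(\frac1t-\frac1v\bigr)^2|K^X|^2\leq 0$.

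In the Lichnerowicz formula \eqref{lichnerowicz avt} the only positive potential is $\frac{1}{2v}|K^X|^2$, exactly like $\frac{1}{2u}|m^M(K)Z|^2$ in the model \eqref{lichnerowicz scrbku} you cite. It lives at scale $1/v$, not $1/t$, and it is offset by the square $-\frac t2\bigl(\nabla_{e_i}-\frac{\langle K^X,e_i\rangle}{2}(\frac1t+\frac1v)+\cdots\bigr)^2$; this is why $\lk+\cvt^2$ has no lower bound.

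Consequently the supertrace density on $X_g\cap\{K_2^X\neq 0\}$ is \emph{not} $\mathcal{O}(e^{-c/t})$ uniformly in $v\in[t,1]$. By the local index computation behind \eqref{compute tend to ev} and \eqref{tchi tend to}, for fixed $v$ the rescaled density converges as $t\to0$ to the integrand of \eqref{tchi tend to}. That integrand contains $\ch_{g,K}(L,\nabla^L_v)=\exp\bigl(-\frac{|K^X|^2}{2v}+\frac{dK'^X}{4i\pi v}\bigr)$ from \eqref{chern l}. It is of size $e^{-c/v}$ on the support of $K_2^X$, but for $v=1$ it is $t$-independent and in general nonvanishing. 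Your bound $Ct^{-N}v^{-N'}e^{-c/t}$ would force $\lim_{t\to0}t\chi^{2\prime}_{1,t}=0$, which is false in general.

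Your fallback does not rescue this. The $ge^K$-Lefschetz localization is available only at $v=t$, where $\mathcal{A}_{t,t}=\bisconn_t$. For $v>t$ the extra Clifford term, of order $t^{-1/2}$, changes the local index density into a smooth form on $X_g$ that is not localized near $X_{g,K}$.

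What is missing is that the smallness must be extracted in $v$, in two pieces.
\begin{enumerate}
\item Since $K_2^X$ vanishes near $X_K$, the limits $\lim_{t\to0}t\chi^{2,\prime/\prime\prime}_{v,t}$, $\lim_{t\to0}t\rho^{\prime/\prime\prime}_{v,t}$ and $\lim_{t\to0}t^2\sigma^{\prime/\prime\prime}_{v,t}$ exist. They are $\mathcal{O}(e^{-c/v})$ because of the factor $\exp(-|K^X|^2/2v)$.
\item One needs a convergence estimate, uniform on $0<t\leq v\leq 1$, whose constant carries arbitrary powers of $v$: $|t\,h_{v,t}-\lim_{t\to0}t\,h_{v,t}|\leq C|v/z^2|^p(t/v)^\gamma$. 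This is the analogue of \eqref{exp transgression mu2}. It comes from the machinery of Section \ref{sec outside neighbor}: Getzler rescaling by $c_{t/v}$ along $X_g$, the norms $|\cdot|_{t,v,n,\cdot}$ containing $\frac1v|K^X|^2$, and the lower bound $|zf_j/\sqrt v|\geq c|z|/\sqrt v$ off $\mathcal{U}_{\varepsilon_0}''$. That lower bound is what produces the factor $|v/z^2|^p$ in \eqref{away from xgk two estimate 1}.
\end{enumerate}
Combining the two gives $|t\,h_{v,t}|\leq C_mv^m$ (with $t^2$ in place of $t$ for $\sigma$), hence \eqref{h2,t01v01}.

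For $\psi$, $t\psi_{v,t}$ has the structure of \eqref{exp transgression mu2}, whose $t\to0$ limit vanishes by \eqref{mu2 transgression tend to 0}. This gives $|\psi_{v,t}|\leq Cv^mt^{\gamma-1}$, i.e.\ $\delta=1-\gamma$. The factors $t^{-2}$ and $t^{-\delta}$ in the statement already signal that no $e^{-c/t}$ decay is available.
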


Theorem \ref{h,t1infty} will be proved in Section \ref{e0}, Theorems \ref{h,t01} and \ref{thm,h2,t01v01} in Section \ref{ek}, Theorem \ref{thm h,vt} in Section \ref{el}.

\subsection{The asymptotics of the right-hand side of (\ref{0 sum of Ik0})}\label{b6}\quad
In this section,
we fix $g\in G$, $K_0\in \mathfrak{z}(g)$ and $K=zK_0$, such that the intermediate results in the last section are valid.
 For a function $h_{v,t}$ of $(v,t)$, we use the notation $\Delta_{a,A}(h)$ to denote the following double integral,
\begin{equation}
    \Delta_{a,A}(h)=\int_{\Delta_{a,A}}h_{v,t}\frac{dv}{v}\frac{dt}{t}=\int_{a}^{A}\frac{dt}{t}\int_{t}^{A}h_{v,t}\frac{dv}{v}.
\end{equation}

a) $\kongge$ $A\rightarrow +\infty \kongge$ If $h_{v,t}=\pi_{v,t}^{k,\prime/\prime\prime}, \chi_{v,t}^{k,\prime/\prime\prime} $ or $ \kappa_{v,t}^{k}$, $k=0,1,2$, by Theorem \ref{h,t1infty}, we have 
 \begin{equation}
    |h_{v,t}|\leq \frac{C}{(\sqrt{vt})^{\delta}}, \qquad\kongge \mathrm{for}\kongge \mathrm{any} \kongge a\leq t\leq v.
 \end{equation} 
Therefore as $ A\rightarrow +\infty$, we have 
\begin{equation}
   \Delta_{a,A}(h)=\int_{a}^{A}\frac{dt}{t}\int_{t}^{A}h_{v,t}\frac{dv}{v}\\ 
    \rightarrow \Delta_{a,\infty}(h)=\int_{a}^{+\infty}\frac{dt}{t}\int_{t}^{+\infty}h_{v,t}\frac{dv}{v}.
\end{equation}

So on the right-hand side of (\ref{sum of Ik0}), we have
\begin{equation}\label{right A}
   \begin{split}
   & \int_{\Delta_{a,A}}\Big(-\pi_{v,t}^{1\prime}-\frac{1}{2}\pi_{v,t}^{2\prime}+\chi_{v,t}^{2\prime}\Big)\frac{dv}{v}\frac{dt}{t}\rightarrow -\Delta_{a,\infty}(\pi^{1\prime})-\frac{1}{2}\Delta_{a,\infty}(\pi^{2\prime})+\Delta_{a,\infty}(\chi^{2\prime}),\\ 
    &\int_{\Delta_{a,A}}\Big(-\pi_{v,t}^{1\prime\prime}-\frac{1}{2}\pi_{v,t}^{2\prime\prime}+\chi_{v,t}^{2\prime\prime}\Big)\frac{dv}{v}\frac{dt}{t}\rightarrow -\Delta_{a,\infty}(\pi^{1\prime\prime})-\frac{1}{2}\Delta_{a,\infty}(\pi^{2\prime\prime})+\Delta_{a,\infty}(\chi^{2\prime\prime}),\\ 
    &\int_{\Delta_{a,A}}\kappa_{v,t}^1\frac{dv}{v}\frac{dt}{t}\rightarrow \Delta_{a,\infty}(\kappa^1).
   \end{split}
\end{equation}

b) $\kongge a\rightarrow 0\kongge$ Now we study terms by terms the right hand side of (\ref{right A}).

\textbf{\underline{1. The terms $\Delta_{a,\infty}(\pi^{1\prime})$, $\Delta_{a,\infty}(\pi^{1\prime\prime})$.}}
Clearly we have
\begin{equation}\label{Delta pi1'}
    \Delta_{a,\infty}(\pi^{1\prime})=\Delta_{1,\infty}(\pi^{1\prime})+\int_{a}^1\frac{dt}{t}\int_{1}^{+\infty} \pi_{v,t}^{1\prime}\frac{dv}{v}+\Delta_{a,1}(\pi^{1\prime}).
\end{equation}
The first term on the right-hand side of (\ref{Delta pi1'}) does not depend on $a$. As for the second term, by using (\ref{hv0}), (\ref{h,t01v1}), we have as $a\rightarrow 0$,
\begin{equation}
    \int_{a}^1\frac{dt}{t}\int_{1}^{+\infty} \pi_{v,t}^{1\prime}\frac{dv}{v}\rightarrow \int_{0}^1\frac{dt}{t}\int_{1}^{+\infty} \pi_{v,t}^{1\prime}\frac{dv}{v}.
\end{equation}
We can rewrite the third term in (\ref{Delta pi1'}) as the form,
\begin{equation}\label{deltaa1}
    \Delta_{a,1}(\pi^{1\prime})=\int_{a}^{1}\frac{dt}{t}\int_{1}^{1/t}\pi_{vt,t}^{1\prime}\frac{dv}{v}.
\end{equation}

By using (\ref{piu=0}), (\ref{h,t01v01}), (\ref{h,vt}), we see that for $t\in \,]0,1], v\in [1,1/t]$,
\begin{align}\label{pi1 two estimates}
        &| \pi_{vt,t}^{1\prime}|\leq \frac{C}{v^{\gamma}}, 
        &&| \pi_{vt,t}^{1\prime}|\leq C't^{\alpha}v^{\beta}.
\end{align}
Combining the two estimates in (\ref{pi1 two estimates}), we obtain that there exists $\alpha',\gamma'>0$ such that for $t\in \,]0,1], v\in [1,1/t]$,
\begin{equation}\label{pivtt1'leq}
    | \pi_{vt,t}^{1\prime}|\leq C\frac{t^{\alpha'}}{v^{\gamma'}}.
\end{equation}
By using (\ref{deltaa1}), (\ref{pivtt1'leq}), we thus have
\begin{equation}\label{Deltaa1pi1'rightarrow}
    \Delta_{a,1}(\pi^{1\prime})\rightarrow \int_{0}^{1}\frac{dt}{t}\int_{1}^{1/t}\pi_{vt,t}^{1\prime}\frac{dv}{v}=\Delta_{0,1}(\pi^{1\prime}).
\end{equation}

We can deal with $\pi_{v,t}^{1\prime\prime}$ in the same way. As a consequence, by combining (\ref{Delta pi1'})-(\ref{Deltaa1pi1'rightarrow}), we obtain that as $a\rightarrow 0$,
\begin{align}\label{1.term}
        &\Delta_{a,\infty}(\pi^{1\prime})\rightarrow \Delta_{0,\infty}(\pi^{1\prime}), 
        &&\Delta_{a,\infty}(\pi^{1\prime\prime})\rightarrow \Delta_{0,\infty}(\pi^{1\prime\prime}).
\end{align}

\textbf{\underline{2. The terms $\Delta_{a,\infty}(\kappa^1)$, $\Delta_{a,\infty}(\chi^{2\prime})$, $\Delta_{a,\infty}(\chi^{2\prime\prime})$.}}

We will deal with these terms by using the method in \cite[\S 6.6]{B1997}.
If $h_{v,t}= \kappa_{v,t}^1, \chi_{v,t}^{2\prime}$ or $\chi_{v,t}^{2\prime \prime}$, as $t\to 0$, $h_{v,t}$ has a singularity expansion of the type,
\begin{equation}
    h_{v,t}^1=\frac{A_v}{t}+B_v+\mathcal{O}(t).
\end{equation}

To deal with the singularity part, we note that
\begin{equation}\label{kappa1-integration by parts}
    \begin{split}
    \int_{a}^1\frac{dt}{t}\int_{t}^{+\infty}h_{v,t}\frac{dv}{v}= \int_{a}^1\frac{dt}{t}\int_{t}^{+\infty}\derivative{t}(t h_{v,t})\frac{dv}{v}-\int_{a}^{1}h_{t,t}\frac{dt}{t}\\ 
    +\int_{a}^{+\infty}h_{v,a}\frac{dv}{v}-\int_{1}^{+\infty}h_{v,1}\frac{dv}{v}.
    \end{split}
\end{equation}

The following result is an analogue of  \cite[Theorem 6.21]{B1997}, which deals with the first term on the right hand side of \eqref{kappa1-integration by parts}.
\begin{theorem}\label{thm-ddbarpartialt kappa1}
    The following identities hold,
    \begin{equation}\label{ddbarpartialt kappa1}
        \begin{split}
           & \dba\partial \derivative{t}(t\kappa_{v,t}^1)= \dba\partial \theta_{v,t}^1- v\derivative{v}(\dba\partial \eta_{v,t})-\dba\partial(\rho_{v,t}'-\rho_{v,t}''),\\
         &  2\derivative{t}(t\chi_{v,t}^{2\prime})-\partial(\rho_{v,t}'-\rho_{v,t}'')+\dba\partial\sigma_{v,t}'=\partial\theta_{v,t}^2-\derivative{t}(t\pi_{v,t}^{2\prime}),\\ 
         &2\derivative{t}(t\chi_{v,t}^{2\prime\prime})+\dba(\rho_{v,t}'-\rho_{v,t}'')+\dba\partial\sigma_{v,t}''=-\dba\theta_{v,t}^2-\derivative{t}(t \pi_{v,t}^{2\prime\prime}).
        \end{split}
    \end{equation}
\end{theorem}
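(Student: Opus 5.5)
The three identities are supertrace calculus in the Grassmann variables $da,\dabar$ (and the auxiliary parameters $b,c$), together with the structure identities \eqref{cvt-algebraic identity}, \eqref{cvt-algebraic identity2}, \eqref{commutator c mu} and \eqref{commutator c muk}. The guiding model is \cite[Theorem 6.21]{B1997}. There, $\derivative{t}(t\kappa)$ is rewritten by differentiating under the supertrace and moving $\cvt'$, $\cvt''$ across commutators. Each time a supercommutator is discarded we pick up $\partial$ or $\dba$ on $S$. The algebraic identity of \cite[Proposition 8.32]{BismutGoette2000} then lets us trade $\mu$ for $N_{K,t}$ and vice versa.

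\textbf{First identity.} Since $\derivative{t}(t\kappa_{v,t}^1)=\kappa_{v,t}^1+t\derivative{t}\kappa_{v,t}^1$, we expand $t\derivative{t}\kappa^1_{v,t}$ by Duhamel. The variation of $\cvt^2$ is $[\cvt'',t\derivative{t}\cvt']+[\cvt',t\derivative{t}\cvt'']$, and the variation of $N_{K,t}$ is $-N_{K,t}+N_V$ plus the $\omega^H$ term. Packaging these with $da,\dabar$ gives the $da\dabar$-coefficient of a supertrace with the perturbation $-2da\,t\derivative{t}\cvt''-2\dabar\,t\derivative{t}\cvt'+da\dabar N_{K,t}$.

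We then apply $\dba\partial$. We move $\cvt'$ and $\cvt''$ through the supertrace, using that supertraces of supercommutators vanish and that $[\cvt',\cdot]$ induces $\partial^S$ on the trace. With \eqref{cvt-algebraic identity2}, the $N_{K,t}$ contributions reorganize into $\theta^1_{v,t}$, in which $N_V$ replaces $N_{K,t}$ because the $\omega^H$ part is absorbed by $\dba\partial$. The leftover terms come from commutators of $\cvt$ with $i\mu_1/v$, via \eqref{commutator c muk}. These produce Clifford terms $c(K_1^X)$, which we write as $c(K^X)-c(K_2^X)$.

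The $c(K^X)$ pieces equal $v\derivative{v}$ of the second derivative in $c$ of $\Tr[g\exp(-\lk-\cvt^2+cN_{K,t})]$. This holds because $\derivative{v}\cvt$ is exactly $\sqrt t\,c(K^X)/(2\sqrt2 v^2)$, and it yields the term $-v\derivative{v}\dba\partial\eta_{v,t}$. The $c(K_2^X)$ pieces are by definition $\rho'_{v,t}$ and $\rho''_{v,t}$.

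\textbf{Second and third identities.} We apply the same procedure to $\chi^{2\prime}$, with $\mu_2$ in place of $\mu_1$. Because $\mu_2$ appears only through $K_2^X$, no $v\derivative{v}$ term arises. Instead, the $c$-second-derivative term with the $K_2^X$ insertion is $\sigma'_{v,t}$. Computing $\partial\kappa^2$ via \eqref{pichikappa} and differentiating in $t$ links $\derivative{t}(t\chi^{2\prime})$ to $\partial\theta^2_{v,t}$ and to $\derivative{t}(t\pi^{2\prime})$. The conjugate computation gives the third identity.

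Careful bookkeeping of signs and of the factors $2$ and $1/2$ (note the $4\sqrt2$ in $\sigma$) should make all terms match. The main obstacle is this sign and factor bookkeeping, especially tracking which commutator terms are absorbed into $\rho$ versus $\sigma$. To control it, I would first verify everything in the trace-class setting where $\mu_2$ is locally constant, so that $K_2^X=0$ and the identities reduce to the $\mu_1$ case. Then I would reinstate the $K_2^X$ terms linearly.
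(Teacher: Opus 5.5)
For the first identity your route is the paper's. You expand $\derivative{t}(t\kappa^1_{v,t})$ and pull $\cvt''$, $\cvt'$ out of $[\cvt'',\derivative{t}\cvt']+[\cvt',\derivative{t}\cvt'']$. Then $[\cvt'',tN_{K,t}]$ and $[\cvt',tN_{K,t}]$ build the quadratic part of $\theta^1_{v,t}$, and $[\cvt,i\mu_1/v]$ produces $c(K_1^X)=c(K^X)-c(K_2^X)$. The $K^X$ part is $-v\derivative{v}\eta_{v,t}$ and the $K_2^X$ part is $-(\rho'_{v,t}-\rho''_{v,t})$, all modulo $\dba\alpha+\partial\beta$, which $\dba\partial$ kills.

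One correction: $N_V$ enters because $\derivative{t}(tN_{K,t})=N_V$ exactly. That is, $t\derivative{t}N_{K,t}=N_V-N_{K,t}$ with no extra $\omega^H$ term, and the $-N_{K,t}$ cancels $\kappa^1_{v,t}$. It is not because ``the $\omega^H$ part is absorbed by $\dba\partial$''. The difference $N_{K,t}-N_V=\frac{i}{t}(\omega^H-\product{\mu,K})$ is not removed by $\dba\partial$, so your packaging with $da\dabar N_{K,t}$ does not give $\theta^1_{v,t}$ as written.

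The genuine gap is in the second and third identities. There only $\partial$ (resp.\ $\dba$) is applied, so repeating the first computation is not enough. That computation determines $\derivative{t}(t\kappa^2_{v,t})$ only modulo $\dba\alpha+\partial\beta$, and applying $\partial$ leaves an undetermined $\partial\dba\alpha$.

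Moreover, at that first level the commutators $[\cvt'',i\mu_2/v]$ and $[\cvt',i\mu_2/v]$ produce exactly $\rho'_{v,t}-\rho''_{v,t}$. So $\sigma'_{v,t}$ does not arise there as a substitute for the $v\derivative{v}\eta_{v,t}$ term, as you suggest. It comes from the boundary term $\dba\,\derivative{b}\str[tN_{K,t}g\exp(-\lk-\cvt^2-\dabar\derivative{t}\cvt'-b\,i\mu_2/v)]^{\dabar}_{b=0}$ and its $da$-analogue. This term must be evaluated by a second transgression:
\begin{itemize}
\item write $t\derivative{t}\cvt'=\frac12[\cvt',N_{K,t}]$ and integrate by parts once more;
\item the commutator of $\cvt'$ with the outer $N_{K,t}$ reproduces the same term, which gives a second factor $\frac12$ and the second derivative in $c$;
\item $[\cvt',i\mu_2/v]=\sqrt t\,c(K_2^{X(0,1)})/(\sqrt2 v)$ then yields $\sigma'_{v,t}$, which explains the $4\sqrt2$.
\end{itemize}
This identifies the boundary term with $\dba\sigma'_{v,t}$ modulo $\dba\partial$-exact forms, hence
\[
\derivative{t}(t\kappa^2_{v,t})=\theta^2_{v,t}+\rho'_{v,t}-\rho''_{v,t}+\dba\sigma'_{v,t}+\partial\sigma''_{v,t}\quad\text{modulo }\dba\partial\,\Omega^{\bullet}(S,\cc).
\]
Only now does applying $\partial$ (resp.\ $\dba$) together with \eqref{pichikappa} give the two identities, using $-\partial\dba\sigma'_{v,t}=\dba\partial\sigma'_{v,t}$.

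Your proposed check with $K_2^X=0$ does not test any of this. It reduces the second identity to $\derivative{t}(t\pi^{2\prime}_{v,t})=\partial\theta^2_{v,t}$, not to the $\mu_1$ case, and it says nothing about the $\sigma$ terms. ``Reinstating $K_2^X$ linearly'' is not an argument.
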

\begin{proof}
   We use the notation "$\equiv$" to indicate that the equality holds modulo elements of the form $\overline{\partial}\alpha+\partial\beta$, where $\alpha$ and $\beta$ are smooth forms on $S$.
  We have
    \begin{equation}\label{partial t tkappa1 1}
       \begin{split}
        \derivative{t}\str\Big[tN_{K,t}g\exp\Big(-\lk-\cvt^2-b\frac{i\mu_1}{v}\Big)\Big]=\str\Big[N_V g\exp\Big(-\lk-\cvt^2-b\frac{i\mu_1}{v}\Big)\Big]\\ 
        +\derivative{c}\str\Big[tN_{K,t}g\exp\Big(-\lk-\cvt^2-c[\cvt'',\derivative{t}\cvt']
        -c[\cvt',\derivative{t}\cvt'']-b\frac{i\mu_1}{v}\Big)\Big]_{c=0}\\ 
        =\str\Big[N_V g\exp\Big(-\lk-\cvt^2-b\frac{i\mu_1}{v}\Big)\Big]\qquad\qquad\qquad\qquad\qquad\qquad\qquad\\
        +\dba\str\Big[tN_{K,t}g\exp\Big(-\lk-\cvt^2-\dabar \derivative{t}\cvt'-b\frac{i\mu_1}{v}\Big)\Big]^{\dabar}\qquad\qquad\qquad\\ 
        +\partial\str\Big[tN_{K,t}g\exp\Big(-\lk-\cvt^2-da \derivative{t}\cvt''-b\frac{i\mu_1}{v}\Big)\Big]^{da}\qquad\qquad\qquad\\ 
        +\str\Big[[\cvt'',tN_{K,t}]g\exp\Big(-\lk-\cvt^2-\dabar \derivative{t}\cvt'-b\frac{i\mu_1}{v}\Big)\Big]^{\dabar}\qquad\qquad\\ 
        +\str\Big[[\cvt',tN_{K,t}]g\exp\Big(-\lk-\cvt^2-da \derivative{t}\cvt''-b\frac{i\mu_1}{v}\Big)\Big]^{da}\qquad\qquad\\
        +b\derivative{c}\str\Big[tN_{K,t}g\exp\Big(-\lk-\cvt^2-\dabar \derivative{t}\cvt'-b\frac{i\mu_1}{v}-c[\cvt'',\frac{i\mu_1}{v}]\Big)\Big]_{c=0}^{\dabar}\\ 
        +b\derivative{c}\str\Big[tN_{K,t}g\exp\Big(-\lk-\cvt^2-da \derivative{t}\cvt''-b\frac{i\mu_1}{v}-c[\cvt',\frac{i\mu_1}{v}]\Big)\Big]_{c=0}^{da}.
       \end{split}
    \end{equation}
    By differentiating (\ref{partial t tkappa1 1}) at $b=0$, and using (\ref{bku-algebraic identity}), (\ref{commutator c muk}), \eqref{hvt}, we get 
    \begin{equation}\label{partial t tkappa1}
        \begin{split}
        \derivative{t}(t&\kappa_{v,t}^1)\equiv\theta_{v,t}^1\\
            +&\derivative{b}\str\Big[N_{K,t}g\exp\Big(-\lk-\cvt^2-\dabar t\derivative{t}\cvt'+b\frac{\sqrt{t}c(K_1^{X(1,0)})}{\sqrt{2}v}\Big)\Big]_{b=0}^{\dabar}\\ 
            +&\derivative{b}\str\Big[N_{K,t}g\exp\Big(-\lk-\cvt^2-da t\derivative{t}\cvt''-b\frac{\sqrt{t}c(K_1^{X(0,1)})}{\sqrt{2}v}\Big)\Big]_{b=0}^{da}.
        \end{split}
    \end{equation}

   Now we start to deal with $\eta_{v,t}$. By \eqref{hvt},
    \begin{equation}
        \begin{split}
           2v\derivative{v}\eta_{v,t}=\frac{\partial^3}{\partial b\partial c^2}\str\Big[g\exp\Big(-\lk-\cvt^2-bv[\cvt'',\derivative{v}\cvt']
          +cN_{K,t}\Big)\Big]_{b=0,c=0}\\ 
          +\frac{\partial^3}{\partial b\partial c^2}\str\Big[g\exp\Big(-\lk-\cvt^2-bv[\cvt',\derivative{v}\cvt'']
          +cN_{K,t}\Big)\Big]_{b=0,c=0}.
        \end{split}
    \end{equation}
By (\ref{bku-algebraic identity}) and proceeding as in (\ref{partial t tkappa1 1}), (\ref{partial t tkappa1}), we have
\begin{equation}
    \begin{split}
   & \frac{\partial^3}{\partial b\partial c^2}\str\Big[g\exp\Big(-\lk-\cvt^2-bv[\cvt'',\derivative{v}\cvt']
          +cN_{K,t}\Big)\Big]_{b=0,c=0}\\
        \equiv&2\frac{\partial}{\partial b}\str\Big[N_{K,t}g\exp\Big(-\lk-\cvt^2+da t\derivative{t}\cvt''-b\frac{\sqrt{t}c(K^{X(0,1)})}{\sqrt{2}v}\Big)\Big]_{b=0}^{da}.
    \end{split}
\end{equation}
Similarly, we have
\begin{equation}\label{proof of partialt kappa1 end}
    \begin{split}
   & \frac{\partial^3}{\partial b\partial c^2}\str\Big[g\exp\Big(-\lk-\cvt^2-bv[\cvt',\derivative{v}\cvt'']
          +cN_{K,t}\Big)\Big]_{b=0,c=0}\\
        \equiv&2\frac{\partial}{\partial b}\str\Big[N_{K,t}g\exp\Big(-\lk-\cvt^2+\dabar t\derivative{t}\cvt'+b\frac{\sqrt{t}c(K^{X(1,0)})}{\sqrt{2}v}\Big)\Big]_{b=0}^{\dabar}.
    \end{split}
\end{equation}

Combining (\ref{partial t tkappa1})-(\ref{proof of partialt kappa1 end}), we obtain the first equation in (\ref{ddbarpartialt kappa1}).

By (\ref{bku-algebraic identity}), (\ref{commutator c muk}), we have 
\begin{equation}\label{proof of partialt tkappa transgression1}
    \begin{split}
   & \str\Big[tN_{K,t}g\exp\Big(-\lk-\cvt^2-\dabar \derivative{t}\cvt'-b\frac{i\mu_2}{v}\Big)\Big]^{\dabar}\\
        =&\frac{1}{2}\str\Big[N_{K,t}g\exp\Big(-\lk-\cvt^2-\dabar [\cvt', N_{K,t}]-b\frac{i\mu_2}{v}\Big)\Big]^{\dabar}\\ 
        \equiv &-\frac{1}{4}b\frac{\partial^3}{\partial^2 c\partial b'}\str\Big[g\exp\Big(-\lk-\cvt^2+cN_{K,t}-b\frac{i\mu_2}{v}+b'\big[\cvt',\frac{i\mu_2}{v}\big]\Big)\Big]_{c=0,b'=0}.
    \end{split}
\end{equation}

By differentiating (\ref{proof of partialt tkappa transgression1}) at $b=0$, we get
\begin{equation}\label{proof of partialt tkappa transgression2}
    \derivative{b}\str\Big[tN_{K,t}g\exp\Big(-\lk-\cvt^2-\dabar \derivative{t}\cvt'-b\frac{i\mu_2}{v}\Big)\Big]_{b=0}^{\dabar}\equiv \sigma_{v,t}'.
\end{equation}
Similarly, we have
\begin{equation}\label{proof of partialt tkappa transgression3}
    \derivative{b}\str\Big[tN_{K,t}g\exp\Big(-\lk-\cvt^2-da\derivative{t}\cvt''-b\frac{i\mu_2}{v}\Big)\Big]_{b=0}^{da}\equiv \sigma_{v,t}''.
\end{equation}

Using (\ref{proof of partialt tkappa transgression2}), (\ref{proof of partialt tkappa transgression3}) and proceeding as in (\ref{partial t tkappa1 1}), (\ref{partial t tkappa1}), we obtain that 
\begin{equation}\label{partialt kappa2}
    \derivative{t}(t\kappa_{v,t}^2)=\theta_{v,t}^2+\rho_{v,t}'-\rho_{v,t}''+\dba \sigma_{v,t}'+\partial\sigma_{v,t}'' \qquad \mathrm{mod} \kongge\dba\partial \Omega^{\bullet}(S, \mathbb{C}).
\end{equation}
Applying $\partial$ and $\dba$ to (\ref{partialt kappa2}) and using (\ref{pichikappa}), we obtain the second and third equations in (\ref{ddbarpartialt kappa1}) respectively.
\end{proof}

By using Theorem \ref{thm-ddbarpartialt kappa1}, we have
\begin{equation}\label{integration by parts-sum}
    \begin{split}
    \dba\partial\int_{a}^{1}\frac{dt}{t}\int_{t}^{+\infty} \derivative{t}(t\kappa_{v,t}^{1})\frac{dv}{v}+\dba  \int_{a}^{1}\frac{dt}{t}\int_{t}^{+\infty} \derivative{t}(t\chi_{v,t}^{2\prime})\frac{dv}{v}
    +\partial\int_{a}^{1}\frac{dt}{t}\int_{t}^{+\infty} \derivative{t}(t\chi_{v,t}^{2\prime\prime})\frac{dv}{v}\\ 
    =\dba\partial\int_{a}^{1}\frac{dt}{t}\int_{t}^{+\infty}\theta_{v,t}^1\frac{dv}{v}+\frac{1}{2}\dba \int_{a}^{1}\frac{dt}{t}\int_{t}^{+\infty} \Big(\partial \theta_{v,t}^2-\derivative{t}(t\pi_{v,t}^{2\prime})\Big)\frac{dv}{v}\\ 
    +\frac{1}{2}\partial \int_{a}^{1}\frac{dt}{t}\int_{t}^{+\infty} \Big(-\dba \theta_{v,t}^2-\derivative{t}(t\pi_{v,t}^{2\prime\prime})\Big)\frac{dv}{v}+\dba\partial\int_{a}^1(\eta_{t,t}-\eta_t^{*})\frac{dt}{t}.
    \end{split}
\end{equation}

We now study each term in (\ref{kappa1-integration by parts}) and (\ref{integration by parts-sum}) individually.

\underline{2.1.The term $\theta_{v,t}^1$.}
Let $\theta_{0,0}^1$ be the limit of $\theta_{v,0}^1$ as $v\to 0$, by \eqref{bw}, \eqref{obo}, \eqref{hv0},  we get
\begin{align}\label{c0}
\begin{split}
    \Phi\theta_{0,0}^1=\int_{X_{g,K}}\Big[\td_{ge^K}'&(TX,h^{TX})-\mathrm{dim}X\,\td_{ge^K}(TX,h^{TX})\\
      +\td_{ge^K}&(TX,h^{TX})\Big(\frac{c_{\mathrm{max}}'}{c_{\mathrm{max}}}\Big)_K(N_{X_{g,K/X_g}},h^{X_{g,K/X_g}})  \Big]\\    & \cdot\Big(\frac{c_{\mathrm{max}}'}{c_{\mathrm{max}}}\Big)_K(N_{X_{g,K/X_g}},h^{X_{g,K/X_g}})\ch_{ge^K}(\xi,h^{\xi}).
\end{split}
\end{align}

By using (\ref{h,t01v1}), for $0<t\leq 1\leq v$, we have
\begin{equation}\label{theta1-t01v1-1}
    |\theta_{v,t}^1-\theta_{v,0}^1|\leq C\frac{t^{\gamma}}{v}.
\end{equation}
By \eqref{def of dvt}, (\ref{hv0}), we see that as $v\rightarrow +\infty$,
\begin{equation}\label{vinfty thetav0}
    \theta_{v,0}^1=\mathcal{O}\Big(\frac{1}{v}\Big).
\end{equation}
Therefore by (\ref{theta1-t01v1-1}), (\ref{vinfty thetav0}),  as $a\rightarrow 0$, we obtain that
\begin{equation}\label{theta1-t01v1}
    \begin{split}
    \int_{a}^{1}\frac{dt}{t}\int_{1}^{+\infty}\theta_{v,t}^1\frac{dv}{v}+\int_{1}^{+\infty}\theta_{v,0}^1\frac{dv}{v}\log(a)
        \rightarrow \int_{0}^{1}\frac{dt}{t}\int_{1}^{+\infty}(\theta_{v,t}^1-\theta_{v,0}^1)\frac{dv}{v}.
    \end{split}
\end{equation}

As for the integration for $v\in [t,1]$, we have
\begin{equation}\label{theta1-t01v01}
    \begin{split}
    \int_{a}^{1}\frac{dt}{t}\int_{t}^{1}\theta_{v,t}^1\frac{dv}{v}=\int_{a}^{1}\frac{dt}{t}\int_{1}^{1/t}(\theta_{vt,t}^1-\theta_v^1-\theta_{vt,0}^1+\theta_{0,0}^1)\frac{dv}{v}\qquad\qquad\qquad\\ 
    +\int_{a}^{1}\frac{dt}{t}\int_{1}^{1/t}(\theta_v^1-\theta_{0,0}^1)\frac{dv}{v}+\int_{a}^{1}\frac{dt}{t}\int_{t}^{1}(\theta_{v,0}^1-\theta_{0,0}^1)\frac{dv}{v}
    -\frac{1}{2}\log^2(a)\theta_{0,0}^1.
    \end{split}
\end{equation}
By  (\ref{h,t01v01}), \eqref{h,vt}, for $0<t\leq 1 \leq v\leq 1/t$,
\begin{align}\label{thetatv inequality1}
    &|\theta_{vt,t}^1-\theta_{vt,0}^1|\leq \frac{C}{v^{\gamma'}},
      &&  |\theta_{v}^1-\theta_{0,0}^1|\leq \frac{C}{v}.
\end{align}
By (\ref{bw}), (\ref{hv0}), (\ref{h,vt}), \eqref{c0}, for  $0<t\leq 1 \leq v\leq 1/t$,
\begin{align}\label{thetatv inequality2}
    &|\theta_{vt,t}^1-\theta_{v}^1|\leq Ct^{\alpha}v^{\beta},
    &&|\theta_{vt,0}^1-\theta_{0,0}^1|\leq C(tv)^{1/2}.
\end{align}
By (\ref{thetatv inequality1}), (\ref{thetatv inequality2}), we find that there exist $\alpha',\beta'\in \,]0,1]$ such that for $0<t\leq 1 \leq v\leq 1/t$,
\begin{equation}\label{thetatv inequality3}
    |\theta_{vt,t}^1-\theta_v^1-\theta_{vt,0}^1+\theta_{0,0}^1|\leq C\frac{t^{\alpha'}}{v^{\beta'}}.
\end{equation}
From (\ref{thetatv inequality1})-(\ref{thetatv inequality3}),  we see that as $a\rightarrow 0$,
\begin{equation}\label{three theta rightarrows}
    \begin{split}
    &\int_{a}^{1}\frac{dt}{t}\int_{1}^{1/t}(\theta_{vt,t}^1-\theta_v^1-\theta_{vt,0}^1+\theta_{0,0}^1)\frac{dv}{v}\\
    &\qquad\qquad\qquad\qquad\qquad\rightarrow\int_{0}^{1}\frac{dt}{t}\int_{1}^{1/t}(\theta_{vt,t}^1-\theta_v^1-\theta_{vt,0}^1+\theta_{0,0}^1)\frac{dv}{v},\\ 
    &\int_{a}^{1}\frac{dt}{t}\int_{1}^{1/t}(\theta_v^1-\theta_{0,0}^1)\frac{dv}{v}+\int_{1}^{+\infty}(\theta_v^1-\theta_{0,0}^1)\frac{dv}{v} \log(a)\\ 
&\qquad\quad\qquad\qquad\qquad\qquad\qquad\qquad\rightarrow -\int_{1}^{+\infty}(\theta_v^1-\theta_{0,0}^1)\log(v)\frac{dv}{v}\\
&\int_{a}^{1}\frac{dt}{t}\int_{t}^{1}(\theta_{v,0}^1-\theta_{0,0}^1)\frac{dv}{v}+\int_{0}^{1}(\theta_{v,0}^1-\theta_{0,0}^1)\frac{dv}{v}\log(a) \\
&\qquad\quad\qquad\qquad\qquad\qquad\qquad\qquad  \rightarrow\int_{0}^{1}(\theta_{v,0}^1-\theta_{0,0}^1)\log(v)\frac{dv}{v}.
    \end{split}
\end{equation}
By using (\ref{partial thetau}), \eqref{def hv}, \eqref{c0}, for $v\geq 1$, we have
\begin{align}\label{partial thetau 2}
    &\dba \theta_v^1=\partial\theta_v^1=0,
      &&\dba\theta_{0,0}^1=\partial\theta_{0,0}^1=0.
\end{align}

From (\ref{theta1-t01v1}), (\ref{theta1-t01v01}), (\ref{three theta rightarrows}), (\ref{partial thetau 2}), we find that as $a\rightarrow 0$,
\begin{equation}\label{2.1term}
    \begin{split}
   \dba\partial\int_{a}^{1}& \frac{dt}{t}\int_{t}^{+\infty}\theta_{v,t}^1\frac{dv}{v}\\ &+\dba\partial\Bigl(
    \int_{0}^{1}(\theta_{v,0}^1-\theta_{0,0}^1)\frac{dv}{v}+\int_{1}^{+\infty}\theta_{v,0}^1\frac{dv}{v}\Bigr)\log(a)\\
        \rightarrow \, \dba\partial \Bigl(&\int_{0}^{1}\frac{dt}{t}\int_{1}^{1/t}(\theta_{vt,t}^1-\theta_v^1-\theta_{vt,0}^1+\theta_{0,0}^1)\frac{dv}{v}\\
      &+\int_{0}^{1}\frac{dt}{t}\int_{1}^{+\infty}(\theta_{v,t}^1-\theta_{v,0}^1)\frac{dv}{v}
       + \int_{0}^{1}(\theta_{v,0}^1-\theta_{0,0}^1)\log(v)\frac{dv}{v}\Bigr).
    \end{split}
\end{equation}

\underline{2.2.The terms $\partial \theta_{v,t}^2-\derivative{t}(t\pi_{v,t}^{2\prime})$, $-\dba \theta_{v,t}^2-\derivative{t}(t\pi_{v,t}^{2\prime\prime})$.} By using \eqref{hv0}, (\ref{h,t01v1}), for $0<t\leq 1\leq v$, we have
\begin{equation}\label{theta2 t01v1 1}
    \begin{split}
        &\big|\partial \theta_{v,t}^2-\derivative{t}(t\pi_{v,t}^{2\prime})-\partial\theta_{v,0}^2\big|\leq C\frac{t^{\gamma}}{v},\\ 
        &\big|-\dba \theta_{v,t}^2-\derivative{t}(t\pi_{v,t}^{2\prime\prime})+\dba\theta_{v,0}^2\big|\leq C\frac{t^{\gamma}}{v}.
    \end{split}
\end{equation}
By (\ref{def of dvt}), (\ref{hv0}), we have as $v\rightarrow +\infty$,
\begin{equation}\label{theta2 t01v1 2}
  \theta_{v,0}^2=\mathcal{O}\Big(\frac{1}{v}\Big).
\end{equation}
Therefore by (\ref{theta2 t01v1 1}), (\ref{theta2 t01v1 2}), as $a\rightarrow 0$, we obtain that
\begin{equation}\label{theta2 t01v1 rightarrow}
\begin{split}
    \int_{a}^{1}\frac{dt}{t}\int_{1}^{+\infty}\Big(\partial \theta_{v,t}^2-\derivative{t}(t\pi_{v,t}^{2\prime})\Big)\frac{dv}{v}+ \int_{1}^{+\infty}\partial\theta_{v,0}^2\frac{dv}{v}\log(a)\\
        \rightarrow \int_{0}^{1}\frac{dt}{t}\int_{1}^{+\infty}\Big(\partial \theta_{v,t}^2-\derivative{t}(t\pi_{v,t}^{2\prime})-\partial\theta_{v,0}^2\Big)\frac{dv}{v},\\ 
        \int_{a}^{1}\frac{dt}{t}\int_{1}^{+\infty}\Big(-\dba\theta_{v,t}^2-\derivative{t}(t\pi_{v,t}^{2\prime\prime})\Big)\frac{dv}{v}- \int_{1}^{+\infty}\dba\theta_{v,0}^2\frac{dv}{v}\log(a)\\
        \rightarrow \int_{0}^{1}\frac{dt}{t}\int_{1}^{+\infty}\Big(-\dba \theta_{v,t}^2-\derivative{t}(t\pi_{v,t}^{2\prime\prime})+\dba\theta_{v,0}^2\Big)\frac{dv}{v}.
    \end{split}
\end{equation}

From \eqref{hv0}, (\ref{h,t01v01}), for $0<t\leq v\leq 1$,
\begin{equation}\label{theta2 t01v01-begin}
\begin{split}
&\big|\partial \theta_{v,t}^2-\derivative{t}(t\pi_{v,t}^{2\prime})-\partial\theta_{v,0}^2\big|\leq C\frac{1}{v}\Big(\frac{t}{v}\Big)^{\gamma'},\\
&\big|-\dba \theta_{v,t}^2-\derivative{t}(t\pi_{v,t}^{2\prime\prime})+\dba\theta_{v,0}^2\big|\leq C\frac{1}{v}\Big(\frac{t}{v}\Big)^{\gamma'}.
\end{split}
\end{equation}
Since $\mu_2$ is locally constant in a neighborhood $U$ of $W_{K}$, we have 
\begin{equation}\label{partial mu2}
\dba \mu_2=\partial\mu_2=0 \kongge\kongge \mathrm{in} \kongge U. 
\end{equation}
By \eqref{dkkprime},  \eqref{def of dvt}, \eqref{hv0}, \eqref{partial mu2},
we see that $\dba \theta_{v,0}^2$ and $\partial \theta_{v,0}^2$ decay exponentially as $v\rightarrow 0$. In particular, for any $m>0$, there exists $C_m>0$ such that for $v\in \,]0,1]$,
\begin{align}\label{partial theta2}
&|\dba \theta_{v,0}^2|\leq C_m v^{m}, &&|\partial\theta_{v,0}^2|\leq C_m v^m.
\end{align}
By Theorems \ref{thm,h2,t01v01} and \ref{thm-ddbarpartialt kappa1} , we obtain that for any $m>0$, there exists $C_m'>0$ such that for $0<t\leq v\leq 1$,
\begin{equation}\label{theta2 t01v01-end}
\begin{split}
&\big|\partial \theta_{v,t}^2-\derivative{t}(t\pi_{v,t}^{2\prime})\big|\leq C_{m}'\frac{v^m}{t^2},\\
&\big|-\dba \theta_{v,t}^2-\derivative{t}(t\pi_{v,t}^{2\prime\prime})\big|\leq C_{m}'\frac{v^m}{t^2}.
\end{split}
\end{equation}
We choose $m>0$ large enough , then by using (\ref{theta2 t01v01-begin})-(\ref{theta2 t01v01-end}), we see that there exists $\widetilde{\gamma}>0$, such that for $0<t\leq v\leq 1$,
\begin{equation}\label{theta2 t01v01 tot}
\begin{split}
&|\partial \theta_{v,t}^2-\derivative{t}(t\pi_{v,t}^{2\prime})-\partial\theta_{v,0}^2|\leq C(tv)^{\widetilde{\gamma}},\\
&|-\dba \theta_{v,t}^2-\derivative{t}(t\pi_{v,t}^{2\prime\prime})+\dba\theta_{v,0}^2|\leq C(tv)^{\widetilde{\gamma}}.
\end{split}
\end{equation}
So by (\ref{partial theta2}), (\ref{theta2 t01v01 tot}), as $a\rightarrow 0$, we get
\begin{equation}\label{theta2 t01v01 rightarrow}
\begin{split}
    \int_{a}^{1}\frac{dt}{t}\int_{t}^{1}\Big(\partial \theta_{v,t}^2-\derivative{t}(t\pi_{v,t}^{2\prime})\Big)\frac{dv}{v}+ \int_{0}^{1}\partial\theta_{v,0}^2\frac{dv}{v}\log(a)\qquad\quad\\
        \rightarrow \int_{0}^{1}\frac{dt}{t}\int_{0}^{1}\Big(\partial \theta_{v,t}^2-\derivative{t}(t\pi_{v,t}^{2\prime})-\partial\theta_{v,0}^2\Big)\frac{dv}{v},\\ 
        \int_{a}^{1}\frac{dt}{t}\int_{0}^{1}\Big(-\dba\theta_{v,t}^2-\derivative{t}(t\pi_{v,t}^{2\prime\prime})\Big)\frac{dv}{v}- \int_{0}^{1}\dba\theta_{v,0}^2\frac{dv}{v}\log(a)\qquad\quad\\
        \rightarrow \int_{0}^{1}\frac{dt}{t}\int_{0}^{1}\Big(-\dba \theta_{v,t}^2-\derivative{t}(t\pi_{v,t}^{2\prime\prime})+\dba\theta_{v,0}^2\Big)\frac{dv}{v}.
    \end{split}
\end{equation}
By (\ref{theta2 t01v1 rightarrow}), (\ref{theta2 t01v01 rightarrow}), we get
\begin{equation}\label{2.2term}
        \begin{split}
           \frac{1}{2}\dba \int_{a}^{1}\frac{dt}{t}\int_{t}^{+\infty} \Big(\partial \theta_{v,t}^2-\derivative{t}(t\pi_{v,t}^{2\prime})\Big)\frac{dv}{v}\qquad\qquad\qquad\qquad\qquad\qquad\qquad\quad\\
        +\frac{1}{2}\partial \int_{a}^{1}\frac{dt}{t}\int_{t}^{+\infty} \Big(-\dba \theta_{v,t}^2-\derivative{t}(t\pi_{v,t}^{2\prime\prime})\Big)\frac{dv}{v}
        +\int_{0}^{+\infty}\dba\partial\theta_{v,0}^2\frac{dv}{v}\log(a) \\ 
        \rightarrow  \frac{1}{2}\dba\int_{0}^{1}\frac{dt}{t}\int_{0}^{+\infty}\Big(\partial \theta_{v,t}^2-\derivative{t}(t\pi_{v,t}^{2\prime})-\partial\theta_{v,0}^2\Big)\frac{dv}{v} \qquad\qquad\qquad\qquad\qquad\\
         +\frac{1}{2}\partial\int_{a}^{1}\frac{dt}{t}\int_{0}^{+\infty}\Big(-\dba\theta_{v,t}^2-\derivative{t}(t\pi_{v,t}^{2\prime\prime})+\dba\theta_{v,0}^2\Big)\frac{dv}{v}\qquad\qquad\quad.
        \end{split}
        \end{equation}

\underline{2.3.The terms $\eta_{t,t}$, $\kappa_{t,t}^1$, $\eta_t^*$.} By (\ref{definition nkt}), \eqref{hvt}, we have
\begin{equation}\label{begin etatt kappatt}
    \begin{split}
    \eta&_{t,t}-\kappa_{t,t}^1=\frac{1}{2}\frac{\partial^2}{\partial c^2}\str\Big[ge^K\exp\Big(-\bisconn_t^2+c\Big(N_{K,t}+i\frac{\mu_1}{t}\Big)\Big)\Big]_{c=0}-\tau_{t,t}\\ 
=&\frac{1}{2}\frac{\partial^2}{\partial c^2}\str\Big[ge^K\exp\Big(-\bisconn_t^2+c\Big(N_{t}-i\frac{\mu_2}{t}\Big)\Big)\Big]_{c=0}-\tau_{t,t}\\ 
=&\frac{1}{2}\frac{\partial^2}{\partial c^2}\str\Big[ge^K\exp\Big(-\bisconn_t^2+cN_{t}\Big)\Big]_{c=0}
-\derivative{c}\str\Big[i\frac{\mu_2}{t}ge^K\exp\Big(-\bisconn_t^2+cN_t\Big)\Big]_{c=0}\\ 
&\qquad\qquad\qquad\qquad+\derivative{c}\str\Big[i\frac{\mu_2}{t}ge^K\exp\Big(-\bisconn_t^2+ci\frac{\mu_2}{t}\Big)\Big]_{c=0}-\tau_{t,t}.
    \end{split}
\end{equation}

By using the equivariant index theorem techniques and proceeding as in the proof of \cite[Theorems 2.14, 2.16]{BGS2}, \cite[Theorem 2.10]{Ma2000}, we know that  there are forms $H_{-2}, H_{-1}, H_{0}\in P^S$ such that as $t\rightarrow 0$,
\begin{equation}\label{i3}
    \Phi\frac{1}{2}\frac{\partial^2}{\partial c^2}\str\Big[ge^K\exp\Big(-\bisconn_t^2+cN_{t}\Big)\Big]_{c=0}=\frac{H_{-2}}{t^2}+\frac{H_{-1}}{t}+H_0+\mathcal{O}(t),
\end{equation}
and moreover,
\begin{equation}
    \begin{split}
        &\frac{\dba\partial}{2i\pi}H_{-2}=0, \qquad  \frac{\dba\partial}{2i\pi}H_{-1}=0,\\ 
       & \frac{\dba\partial}{2i\pi}H_0=D_{0,ge^K}-D_{0,ge^K}'.
    \end{split}
\end{equation}

\begin{proposition}\label{prop three asymptotics mu2}
   Let $g\in G$, $K_0\in \mathfrak{z}(g)$, $K=zK_0$ with $|z|\neq 0$ small enough.  For $j=-2,-1,0$, there are forms $R_{j}, S_{j}, T_{j}\in P^{S}$ such that as $t\rightarrow 0$,
\begin{equation}
    \begin{split}
        &\Phi \str\Big[i\frac{\mu_2}{t}ge^K\exp\Big(-\bisconn_t^2\Big)\Big]=\frac{R_{-1}}{t}+R_{0}+\mathcal{O}(t),\\
        &\Phi \derivative{c}\str\Big[i\frac{\mu_2}{t}ge^K\exp\Big(-\bisconn_t^2+ci\frac{\mu_2}{t}\Big)\Big]_{c=0}=\frac{S_{-2}}{t^2}+\frac{S_{-1}}{t}+S_{0}+\mathcal{O}(t),\\
         & \Phi\derivative{c}\str\Big[i\frac{\mu_2}{t}ge^K\exp\Big(-\bisconn_t^2+cN_t\Big)\Big]_{c=0}=\frac{T_{-2}}{t^2}+\frac{T_{-1}}{t}+T_{0}+\mathcal{O}(t).
    \end{split}
\end{equation}
Moreover we have
\begin{equation}\label{ddbas-2}
    \begin{split}   
        &\frac{\dba\partial}{2i\pi}S_{-2}=0, \qquad \frac{\dba\partial}{2i\pi}S_{-1}=0,\qquad \frac{\dba\partial}{2i\pi}S_0=0,\\ 
        &\frac{\dba\partial}{2i\pi}T_{-2}=0, \qquad \frac{\dba\partial}{2i\pi}T_{-1}=0,\qquad \frac{\dba\partial}{2i\pi}T_0=-R_{0}.
    \end{split}
\end{equation}
And
\begin{equation}
    R_{-1}=\widetilde{D}_{-1,g,K}=\int_{X_{g,K}}\td_{ge^K}(TX,h^{TX})\ch_{ge^K}(\xi,h^{\xi})i\product{\mu,K}.
\end{equation}
\end{proposition}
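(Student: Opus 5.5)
The plan is to treat $\mu_2$ as an auxiliary smooth $G$-invariant function and to run the standard small-time asymptotic analysis for $\bisconn_t$ with $g$ replaced by $ge^K$. The one new feature is the extra factor $i\mu_2/t$. Because the localization for $ge^K$ happens near $W_{ge^K}=W_{g,K}$ (for $|z|$ small), and $\mu_2$ is locally constant near $W_K$ by \eqref{proposition of mu1mu2}, the factor $i\mu_2/t$ behaves like a constant times $1/t$ on the localization region. This is exactly what produces the expansions with at most $t^{-1}$, respectively $t^{-2}$, singular parts.

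\textbf{Step 1: expansions.} Fix a neighbourhood $U$ of $W_{g,K}$ on which $\mu_2$ equals $\langle\mu,K\rangle$, which is itself locally constant there with values $\mu_K^i$. Using finite propagation speed and the usual localization for $\Tr[ge^K\,\cdot\,\exp(-\bisconn_t^2)]$, the contribution of $W\setminus U$ is $\mathcal{O}(t^\infty)$. On $U$, $\mu_2$ is a locally constant scalar, so it commutes with $\bisconn_t$.

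This gives $\Tr[i\frac{\mu_2}{t}ge^K\exp(-\bisconn_t^2)]=\frac{i\mu_2}{t}\Tr[ge^K\exp(-\bisconn_t^2)]$ modulo $\mathcal{O}(t^\infty)$, understood componentwise with constant $\mu_2$. Theorem \ref{thm asymptotic-g} applied to $ge^K$ (integrated on each component of $X_{g,K}$) then yields $R_{-1}/t+R_0+\mathcal{O}(t)$. The coefficient is $R_{-1}=\int_{X_{g,K}}\td_{ge^K}\ch_{ge^K}\,i\mu_2=\widetilde{D}_{-1,g,K}$, since $\mu_2=\langle\mu,K\rangle$ on $W_{g,K}$. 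Moreover $R_0=i\mu_2 A_{1,ge^K}$ componentwise.

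In the same way, the second trace becomes $(i\mu_2)^2t^{-2}\Tr[ge^K\exp(-\bisconn_t^2)]$. Its expansion gives $S_{-2}, S_{-1}, S_0$ as constants times $A_{0,ge^K}, A_{1,ge^K}, A_{2,ge^K}$. The third trace becomes $\frac{i\mu_2}{t}\Tr[N_tge^K\exp(-\bisconn_t^2)]$. Using \eqref{asymptotic-gn} with $D'_{j,ge^K}$ gives $T_{-2}=i\mu_2D'_{-1}$, $T_{-1}=i\mu_2D'_0$ and $T_0=i\mu_2D'_1$. Here the exchange of the constant $\mu_2$ with the integral is done per connected component of $X_{g,K}$.

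\textbf{Step 2: $\bar\partial\partial$ identities.} These follow from \eqref{DDprime}, $\frac{\bar\partial\partial}{2i\pi}D_{j}=-jA_j$, together with the closedness of the $A_j$, which comes from Theorem \ref{double transgression} since each $A_j$ is a Taylor coefficient of a closed form. Thus $\frac{\bar\partial\partial}{2i\pi}T_{-2}=0$, because $D_{-1}$ is $\bar\partial\partial$-closed: it is an integral of $d$-closed forms. Next, $\frac{\bar\partial\partial}{2i\pi}T_{-1}=i\mu_2\cdot 0=0$. Finally, $\frac{\bar\partial\partial}{2i\pi}T_0=-i\mu_2A_1=-R_0$. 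The $S_j$ are constant multiples of closed $A_j$, hence $\bar\partial\partial S_j=0$. The $\Phi$-normalizations must be tracked so that the signs match \eqref{ddbas-2}.

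\textbf{Main obstacle.} The delicate point is justifying that $i\mu_2/t$ may be pulled outside as a constant. The operators are not local in $x$: $\mu_2$ fails to commute with $\bisconn_t$ away from $U$, where $[\bisconn_t,\mu_2]\sim\sqrt t\,c(K_2^X)$. I would first write
\[
\Tr[\mu_2A]-\sum_i\mu_K^i\Tr[\chi_iA]
\]
with cutoffs $\chi_i$ and $A=ge^K\exp(-\bisconn_t^2)$. I would then show that the difference is $\mathcal{O}(t^\infty)$, using off-diagonal Gaussian heat kernel bounds for the Bismut superconnection and the fact that the $ge^K$-twisted kernel $p_t(x,ge^Kx)$ is exponentially small away from $W_{ge^K}$.

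For the traces carrying two factors of $\mu_2$, or $\mu_2$ together with $N_t$, I would apply Duhamel's formula. The cross terms then involve $p_t(x,y)\,p_s(y,ge^Kx)$ with $\mu_2(x)\ne\mu_2(y)$ only when $x$ and $y$ are at distance bounded below, which again gives $\mathcal{O}(t^\infty)$. If this localization argument turns out to be too crude for the horizontal terms, the fallback is to use that $\mu_2$ is constant in horizontal directions, as arranged in Section \ref{bq}, so that $[\nabla^{\mathbb{E}},\mu_2]=0$ and only the vertical commutator matters.
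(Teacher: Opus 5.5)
Your proposal is correct and takes essentially the same route as the paper. The paper's proof just observes that the small-time analysis behind Theorem \ref{thm asymptotic-g} for $ge^K$ localizes near $W_{ge^K}=W_{g,K}$, where $\mu_2$ is locally constant, and reruns that argument; your componentwise identifications $R_j=i\mu_2A_{j+1,ge^K}$, $S_j=-\mu_2^2A_{j+2,ge^K}$, $T_j=i\mu_2D'_{j+1,ge^K}$, and the derivation of \eqref{ddbas-2} from the componentwise form of \eqref{DDprime}, make this explicit (one small slip: near $W_{g,K}$ it is $\mu_2$, not $\langle\mu,K\rangle$, that is locally constant, since by \eqref{proposition of mu1mu2} they agree only on $W_K$ itself, but your argument only uses the former).
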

\begin{proof}
    Recall that the proof of Theorem \ref{thm asymptotic-g} is localized in a neighborhood of $W_{g,K}$. We can choose the neighborhood small enough so that $\mu_2$ is locally constant in this neighborhood. Then the same argument in proof of Theorem \ref{thm asymptotic-g} applies to Proposition \ref{prop three asymptotics mu2}.
\end{proof}

As for $\tau_{t,t}$, by (\ref{h,vt}), as $t\rightarrow 0$, we have
\begin{equation}
    |\tau_{t,t}-\tau_{1}|\leq C t^{\alpha}.
\end{equation}
By using (\ref{piu=0}) and \eqref{def hv}, we have
\begin{align}\label{end etatt kappatt}
   &\dba\tau_{1}=0, &&\partial\tau_{1}=0.
\end{align}

By combining (\ref{begin etatt kappatt})-(\ref{end etatt kappatt}), we find that there are smooth forms $K_{-2},K_{-1},K_0\in P^S$ such that as $t\rightarrow 0,$
\begin{equation}\label{total etatt kappatt}
    \Phi(\eta_{t,t}-\kappa_{t,t}^1)=\frac{K_{-2}}{t^2}+\frac{K_{-1}}{t}+K_0+\mathcal{O}(t^{\alpha}),
\end{equation}
and that
\begin{equation}\label{tot ddba etatt kappatt}
    \begin{split}
        \begin{split}
           \frac{\dba\partial}{2i\pi}K_{-2}=0,\qquad \frac{\dba\partial}{2i\pi}K_{-1}=0,\qquad
            \frac{\dba\partial}{2i\pi}K_0=D_{0,ge^K}-D_{0,ge^K}'+R_{0}.
        \end{split}
    \end{split}
\end{equation}

Recall that 
\begin{equation}
    \eta_{t}^{*}=\frac{1}{2}\frac{\partial^2}{\partial c^2}\str\Big[g\exp\Big(-\lk-\bisconn_{K,t}^2+cN_{K,t}\Big)\Big]_{c=0}.
\end{equation}
By the same arguments as  in the proof of Theorem \ref{thm asymptotic-gK} and \eqref{i3}, we know that  there are forms $L_{-2}, L_{-1}, L_{0}\in P^S$ such that as $t\rightarrow 0$,
\begin{equation}\label{etatstar}
    \Phi\frac{1}{2}\frac{\partial^2}{\partial c^2}\str\Big[g\exp\Big(-\lk-\bisconn_{K,t}^2+cN_{K,t}\Big)\Big]_{c=0}=\frac{L_{-2}}{t^2}+\frac{L_{-1}}{t}+L_0+\mathcal{O}(t),
\end{equation}
and moreover,
    \begin{equation}\label{ddba etatstar}
        \begin{split}
           \frac{\dba\partial}{2i\pi}L_{-2}=0, \qquad \frac{\dba\partial}{2i\pi}L_{-1}=0,\qquad
           \frac{\dba\partial}{2i\pi}L_0=C_{0,g,K}-C_{0,g,K}'.
        \end{split}
    \end{equation}
    By (\ref{total etatt kappatt})-(\ref{ddba etatstar}) we see that as $a\rightarrow 0$,
    \begin{equation}\label{2.3term}
    \begin{split}
        \frac{\dba\partial}{2i\pi}\Phi\Bigl(\int_{a}^{1}(\eta_{t,t}-\eta_t^*)\frac{dt}{t}-\int_{a}^{1}\kappa_{t,t}^1\frac{dt}{t}\Bigr)\qquad\qquad\qquad\\ 
        +(D_{0,ge^K}-D_{0,ge^K}'-C_{0,g,K}+C_{0,g,K}'+R_0)\log(a)\\ 
        \rightarrow  \frac{\dba\partial}{2i\pi}\Bigl(\int_{0}^{1}\Bigl(\Phi(\eta_{t,t}-\kappa_{t,t}^1)-\frac{K_{-2}}{t^2}-\frac{K_{-1}}{t}-K_0\Bigr)\frac{dt}{t}\\ 
        -\int_{0}^{1} \Bigl(\Phi\eta_t^*-\frac{L_{-2}}{t^2}-\frac{L_{-1}}{t}-L_0\Bigr)\frac{dt}{t}
        \Bigr).
    \end{split}
    \end{equation}

    \underline{2.4.The terms $\kappa_{v,a}^1$, $\chi_{v,a}^{2,\prime/\prime\prime}$.}
    By (\ref{calculation2 derivative v}) and \eqref{hvt}, we have
    \begin{equation}\label{i4}
        v\derivative{v}\str\Big[N_{K,t}g\exp\Big(-\lk-\cvt^2\Big)\Big]=\dba \chi_{v,t}^{0\prime}+\partial\chi_{v,t}^{0\prime\prime}+\lambda_{v,t}''-\lambda_{v,t}'.
    \end{equation}
    By using (\ref{pichikappa2}) and \eqref{i4}, we get 
    \begin{equation}\label{kappa1va}
        \begin{split}
        v\derivative{v}\str\Big[N_{K,t}g\exp\Big(-\lk-\cvt^2\Big)\Big]=\dba\partial \kappa_{v,t}^1+\dba \chi_{v,t}^{2\prime}+\partial\chi_{v,t}^{2\prime\prime}\\ 
        -\frac{1}{2}\dba \pi_{v,t}^{1\prime}-\frac{1}{2}\partial\pi_{v,t}^{1\prime\prime}+\lambda_{v,t}''-\lambda_{v,t}'.
        \end{split}
    \end{equation}

From \eqref{kappa1va}, we obtain
\begin{equation}\label{begin kappava}
    \begin{split}
       &\dba\partial  \int_{a}^{+\infty}\kappa_{v,a}^1\frac{dv}{v}+\dba\int_{a}^{+\infty}\chi_{v,a}^{2\prime}\frac{dv}{v}+\partial\int_{a}^{+\infty}\chi_{v,a}^{2\prime\prime}\frac{dv}{v}\\ 
        =&\str\Big[N_{K,a}g\exp\Big(-\lk-\bisconn_{K,a}^2\Big)\Big]-\str\Big[N_{K,a}ge^K\exp\Big(-\bisconn_{a}^2\Big)\Big]\\ 
        &+\frac{1}{2}\dba\int_{a}^{+\infty}\pi_{v,a}^{1\prime}\frac{dv}{v}+\frac{1}{2}\partial \int_{a}^{+\infty}\pi_{v,a}^{1\prime\prime}\frac{dv}{v}+\int_{a}^{+\infty}(\lambda_{v,a}'-\lambda_{v,a}'')\frac{dv}{v}.
    \end{split}
\end{equation}

By Theorem \ref{thm asymptotic-gK}, we see that as $a\rightarrow 0$, 
\begin{equation}
    \Phi\str\Big[N_{K,a}g\exp\Big(-\lk-\bisconn_{K,a}^2\Big)\Big]=C_{-1,g,K}'\frac{1}{a}+C_{0,g,K}'+\text{o}(1).
\end{equation}

For the second term on the right hand side of \eqref{begin kappava}, we note that
\begin{equation}\label{split str nka}
    \begin{split}
    \str\Big[N_{K,a}ge^K\exp\Big(-\bisconn_{a}^2\Big)\Big]=\str\Big[N_{a}ge^K\exp\Big(-\bisconn_a^2\Big)\Big]\qquad\qquad\\ 
    -\str\Big[i\frac{\mu_2}{a}ge^K\exp\Big(-\bisconn_a^2\Big)\Big]- \str\Big[i\frac{\mu_1}{a}g\exp\Big(-\lk-\mathcal{A}_{a,a}^2\Big)\Big].
    \end{split}
\end{equation}
Using Proposition \ref{prop three asymptotics mu2}, (\ref{asymptotic-gn}) and (\ref{mu1 part of main lemma 2}), we see that as $a\rightarrow 0$,
\begin{equation}
    \begin{split}
        &\Phi\str\Big[N_{a}ge^K\exp\Big(-\bisconn_a^2\Big)\Big]=D_{-1,ge^K}'\frac{1}{a}+D_{0,ge^K}'+\text{o}(1),\\ 
        &\Phi\str\Big[i\frac{\mu_2}{a}ge^K\exp\Big(-\bisconn_a^2\Big)\Big]=\widetilde{D}_{-1,g,K}\frac{1}{a}+R_0+\text{o}(1),\\ 
        &\Phi\str\Big[i\frac{\mu_1}{a}g\exp\Big(-\lk-\mathcal{A}_{a,a}^2\Big)\Big]=\widetilde{D}_{0,g,K}^1+\text{o}(1).
    \end{split}
\end{equation}

Using Propositions \ref{prop, piu=0}, \ref{i2}, Theorems \ref{h,t01} and  \ref{thm h,vt}, we proceed as in (\ref{J1-J4})-(\ref{j3}) and see that as $a\rightarrow 0$,
\begin{equation}
    \begin{split}
        &\int_{a}^{+\infty}(\lambda_{v,a}'-\lambda_{v,a}'' )\frac{dv}{v}\rightarrow \int_{1}^{+\infty}\lambda_{v}\frac{dv}{v},\\
        &\overline{\partial}\int_{a}^{+\infty}\pi_{v,a}^{1\prime}\frac{dv}{v}\rightarrow \overline{\partial}\int_{1}^{+\infty}\pi_{v}^{1\prime}\frac{dv}{v}=0,\\ 
        &\partial\int_{a}^{+\infty}\pi_{v,a}^{1\prime\prime}\frac{dv}{v}\rightarrow \partial\int_{1}^{+\infty}\pi_{v}^{1\prime\prime}\frac{dv}{v}=0.
    \end{split}
\end{equation}
From (\ref{Phi and I}), \eqref{bh}, (\ref{lambdau thetau}) \eqref{ev formula}, \eqref{i5} and (\ref{def hv}), we get
\begin{equation}\label{end kappava}
    \begin{split}
   \Phi \int_{1}^{+\infty}\lambda_v\frac{dv}{v}=E_{0,g,K}-\widetilde{D}_{0,g,K}^1.
    \end{split}
\end{equation}

By (\ref{begin kappava})-(\ref{end kappava}), we obtain that as $a\rightarrow 0$,
\begin{equation}\label{2.4term}
    \begin{split}
        \Phi\Bigl(\dba\partial \int_{a}^{+\infty}\kappa_{v,a}^1\frac{dv}{v}+\dba\int_{a}^{+\infty}\chi_{v,a}^{2\prime}\frac{dv}{v}+\partial\int_{a}^{+\infty}\chi_{v,a}^{2\prime\prime}\frac{dv}{v}\Bigr)\\
        -(C_{-1,g,K}'-D_{-1,ge^K}'+\widetilde{D}_{-1,g,K})\frac{1}{a}\qquad\\
        \rightarrow C_{0,g,K}'-D_{0,ge^K}'+E_{0,g,K}+R_0.\quad\quad
    \end{split}
\end{equation} 
Using (\ref{diverge loga}), (\ref{ddbas-2}),  we see that there is a smooth form $H_0\in P^S$ such that 
\begin{equation}\label{ddbaH_0}
    C_{0,g,K}'-D_{0,ge^K}'+E_{0,g,K}+R_0=\frac{\dba\partial}{2i\pi}H_0.
\end{equation}

\underline{2.5. A sum up.}
By using (\ref{transgression bt}), (\ref{hv0}), (\ref{partial thetau 2}), we have
\begin{equation}\label{cdenoprime}
    \begin{split}
   \Phi\Bigl({\dba\partial}
        \int_{0}^{1}(\theta_{v,0}^1-\theta_{0,0}^1)\frac{dv}{v}+{\dba\partial}\int_{1}^{+\infty}\theta_{v,0}^1\frac{dv}{v}+\int_{0}^{+\infty}\dba\partial\theta_{v,0}^2\frac{dv}{v}\Bigr)\\
 =C_{0,g,K}-D_{0,ge^K}+E_{0,g,K}.
    \end{split}
\end{equation}

Using (\ref{kappa1-integration by parts}), (\ref{integration by parts-sum}), (\ref{ddbaH_0}), (\ref{cdenoprime}), we take the sum of (\ref{2.1term}), (\ref{2.2term}), (\ref{2.3term}), (\ref{2.4term}) and see that there are smooth forms $T', T'' ,H$ on $S$, such that as $a\rightarrow 0$,
\begin{equation}\label{2.term}
    \begin{split}
        \Phi\Bigl(\dba\partial\Delta_{a,\infty}(\kappa^1)+\dba \Delta_{a,\infty}(\chi^{2\prime})+\partial\Delta_{a,\infty}(\chi^{2\prime\prime})\Bigr)\\ 
          -(-C_{0,g,K}'+D_{0,ge^K}'-E_{0,g,K}-R_0)\log(a)\\
        -\Bigl(C_{-1,g,K}'-D_{-1,ge^K}'+\widetilde{D}_{-1,g,K}\Bigr)\frac{1}{a}\\
        \rightarrow \Phi\dba T'+\Phi\partial T''+\frac{\dba\partial}{2i\pi}\Phi H.\qquad\qquad\qquad
    \end{split}
\end{equation}

\textbf{\underline{3. The terms $\Delta_{a,\infty}(\pi^{2\prime})$, $\Delta_{a,\infty}(\pi^{2\prime\prime})$.}}
By using (\ref{hv0}), (\ref{h,t01v1}), as $a\rightarrow 0$, we have
\begin{equation}\label{pi2 v1}
    \begin{split}
    \int_{a}^{1}\frac{dt}{t}\int_{1}^{+\infty}\pi_{v,t}^{2,\prime/\prime\prime}\frac{dv}{v}\rightarrow  \int_{0}^{1}\frac{dt}{t}\int_{1}^{+\infty}\pi_{v,t}^{2,\prime/\prime\prime}\frac{dv}{v}.
    \end{split}
\end{equation}

We now turn to the integration of $\pi_{v,t}^{2,\prime/\prime\prime}$ for $v\in [1, +\infty[$. This case is more subtle than that of $\pi_{v,t}^{1,\prime/\prime\prime}$ treated in (\ref{deltaa1})-(\ref{Deltaa1pi1'rightarrow}). Indeed, as seen in \eqref{h,t01v01}, the forms $\pi_{v,t}^{2,\prime/\prime\prime}$ behave worse  than $\pi_{v,t}^{1,\prime/\prime\prime}$ when $v$ is small. We need to exploit the fact that $\mu_2$ is locally constant near $W_K$ and use an analogue of \cite[Theorem 0.2]{LiuMa2022}.

For a  form $\alpha$ on $S$, set $\alpha^{(=)}$ to be the sum of complex $(p,p)$-type part of $\alpha$, and $\alpha^{(+1)}$ the sum of $(p,p+1)$ part, $\alpha^{(-1)}$ the sum of $(p+1,p)$ part. By using (\ref{cvt-algebraic identity}) and proceeding as in \cite[Proposition 1.8 and Theorem 1.9]{BGS1988}, we have
\begin{align}\label{p,p-1,p+1}
    &\pi_{v,t}^{2\prime}=(\pi_{v,t}^{2\prime})^{(-1)}, &&\pi_{v,t}^{2\prime\prime}=(\pi_{v,t}^{2\prime\prime})^{(+1)}.
\end{align}
For the convenience of using the language of \cite{LiuMa2022}, we will study $d(\pi_{v,t}^{2\prime}+\pi_{v,t}^{2\prime\prime })$ instead of $\dba \pi_{v,t}^{2\prime}+\partial \pi_{v,t}^{2\prime\prime }$. Clearly from (\ref{p,p-1,p+1}), we have
\begin{equation}\label{2,p,p-1,p+1}
    \dba \pi_{v,t}^{2\prime}+\partial \pi_{v,t}^{2\prime\prime }=\Bigl(d(\pi_{v,t}^{2\prime}+\pi_{v,t}^{2\prime\prime })\Bigr)^{(=)}.
\end{equation}

Recall that $K'^X$ is the one form on $W$ which is dual to $K^{X}\in T_{\rr}X$. Set 
\begin{equation}
    \widetilde{\mathfrak{e}}_{t}=-\int_{X_g}i\mu_2\td_{g,K}(TX,h^{TX})\ch_{g,K}(\xi,h^{\xi})\frac{K'^X}{4i\pi t}\exp\Bigl(\frac{d_K K'^X}{4i\pi t}\Bigr).
\end{equation}
By proceeding as in (\ref{aym expansion 2})-(\ref{sigma tau}), there is a smooth form $\sigma_1$ on $S$, such that as $t\rightarrow 0$, 
\begin{equation}\label{expansion frak et}
    \widetilde{\mathfrak{e}}_t=\sigma_1t+\mathcal{O}(t^2).
\end{equation}
And from (\ref{def of dvt}),  (\ref{ev formula}), \eqref{expansion ev}, we have
\begin{equation}\label{depsilon1/2}
    \begin{split}
    d\sigma_1=&-\derivative{t}\Bigl(t\int_{X_g} \td_{g,K}(TX,h^{TX})\ch_{g,K}(\xi,h^{\xi}) d_t^2\Bigr)_{t=0}\\ 
        =& E_{0,g,K}'-E_{0,g,K}.
    \end{split}
\end{equation}

Recall that the form $\psi_{v,t}$ is defined in (\ref{very many hvt}). For $v>0$, put
\begin{equation}\label{tilde psi}
    \widetilde{\psi}_{v}=\int_{0}^{v}\int_{t}^{v} \psi_{u,t}dudt.
\end{equation}
From (\ref{psi,t01v01}), we see that $\widetilde{\psi}_{t_0}$ is well defined and for any $m\in\nn^*$, there is $C>0$ such that for $v\leq 1$,
\begin{equation}\label{tilde psi estimate}
    |\widetilde{\psi}_v|\leq Cv^m.
\end{equation}

By \eqref{hv0}, (\ref{h,t01v1}), for $v>0$, the integral 
\begin{equation}\label{piprime + piprimeprime}
    \int_{0}^{v}\str\Big[i\mu_2 g\exp\Big(-\lk-\cvt^2-dt\derivative{t}\cvt\Big)\Big]^{dt}dt=\int_{0}^{v}\frac{v}{2}(\pi_{v,t}^{2\prime}+\pi_{v,t}^{2\prime\prime})\frac{dt}{t}
\end{equation}
is well defined. 

By using the same arguments as in the proof of Proposition \ref{prop three asymptotics mu2}, we have an analogue of \cite[Theorem 2.11]{BGS1988}, there is smooth odd form   $Q_0$ on $S$ such that as $t\rightarrow 0$,
\begin{equation}\label{expansion of mu2 transgression}
    \Big\{\Phi_{\rr^2\times S}\str\Big[i\mu_2 ge^K\exp\Big(-\bisconn_t^2-dt\derivative{t}\bisconn_t\Big)\Big]\Big\}^{dt}=Q_0+\mathcal{O}(t).
\end{equation}
And we have
\begin{equation}\label{dq0}
    dQ_0=R_0.
\end{equation}
From (\ref{expansion of mu2 transgression}), we see that for $v>0$, the integral 
\begin{equation}
\int_{0}^{v} \Big\{\Phi_{\rr^2\times S}\str\Big[i\mu_2 ge^K\exp\Big(-\bisconn_t^2-dt\derivative{t}\bisconn_t\Big)\Big]\Big\}^{dt}dt
\end{equation}
is well defined.

\begin{theorem}\label{analogue of liuma}
    Let $g\in G$, $K_0\in \mathfrak{z}(g)$, $K=zK_0$ with $|z|\neq 0$  small enough. Then for any $v>0$, module $d$-exact forms on $S$, we have
\begin{equation}\label{extension liuma}
    \begin{split}
        -\int_{0}^{v}\Big\{\Phi_{\rr^2\times S}\str\Big[i\mu_2 g\exp\Big(-\lk-\cvt^2-dt\derivative{t}\cvt\Big)\Big]\Big\}^{dt}dt\qquad\qquad\\ 
        +\int_{0}^{v}\Big\{\Phi_{\rr^2\times S}\str\Big[i\mu_2 ge^K\exp\Big(-\bisconn_t^2-dt\derivative{t}\bisconn_t\Big)\Big]\Big\}^{dt}dt=\int_{0}^{v}\widetilde{\mathfrak{e}}_t\frac{dt}{t}-\widetilde{\psi}_v.
    \end{split}
\end{equation}
\end{theorem}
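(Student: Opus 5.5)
The plan is to use a Stokes argument on a two-parameter family, in the spirit of Theorem \ref{thm dclosed form}, but now for the $\mu_2$-weighted supertrace only along the $(v,t)$ directions. Consider on $\mathbb{R}_+^{*2}\times S$ the odd forms
\begin{equation*}
\Psi_{v,t}=\Phi_{\rr^2\times S}\str\Big[i\mu_2 g\exp\Big(-\lk-\big(\cvt+dt\derivative{t}+dv\derivative{v}\big)^2\Big)\Big],
\end{equation*}
obtained by viewing $\cvt+dt\,\partial_t+dv\,\partial_v$ as a superconnection over $\mathbb{R}_+^{*2}\times S$. By the Bianchi identity and $[\cvt,i\mu_2]=\frac{\sqrt t}{\sqrt2}(c(K_2^{X(0,1)})-c(K_2^{X(1,0)}))$ (cf. \eqref{commutator c muk}), the total differential $d^{\rr^2\times S}\Psi$ equals the supertrace with $i\mu_2$ replaced by that commutator. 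Its $dv\,dt$ component is exactly $\psi_{v,t}$ from \eqref{very many hvt}, and its $dt$ (resp. $dv$) component gives $d^S$ of the $dt$-component plus the $v$-derivative of the $dt$-component. Integrating over the region $\{0<t\le u\le v\}$, with the diagonal $u=t$ where $\mathcal A_{t,t}$ is $e^{K}$-conjugate to $\bisconn_t$, should produce the claimed identity modulo $d$-exact forms.

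The first step is the $K$-twist on the diagonal. By \eqref{definition cvt}, $\mathcal{A}_{t,t}=\bisconn_t$, and $g\exp(-\lk-\bisconn_t^2)=ge^K\exp(-\bisconn_t^2)$ as in the evaluation of $I_2^0$. Thus the $dt$-component along the diagonal reduces to the second integrand in \eqref{extension liuma}, after accounting for the extra $dv$ contribution coming from $\partial_v\mathcal A$ at $v=t$, which is a $c(K^X)/v$ term. The second step is to apply Stokes on the triangle $\{\epsilon\le t\le u\le v\}$: the boundary consists of the vertical side $u=v$ (first integral), the diagonal (second integral), and the side $t=\epsilon$. The interior contributes $\int\int\psi_{u,t}\,du\,dt$, which becomes $\widetilde\psi_v$ as $\epsilon\to0$ by \eqref{psi,t01v01} and \eqref{tilde psi}. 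Interior terms of the form $d^S(\cdot)$ are discarded, since we only work modulo $d$-exact forms.

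The third step is the edge $t=\epsilon\to0$, which I expect to be the main obstacle. There one needs the $dv$-component of $\Psi_{v,\epsilon}$, integrated in $v$ over $[\epsilon,v]$, to converge to $\int_0^v\widetilde{\mathfrak e}_u\,du/u$. This should follow from the localization argument of Section 6, as in Theorem \ref{main lemma 1}. As $t\to0$, the heat kernel localizes near $W_g$, and Getzler rescaling turns $\mathcal A_{u,t}^2$ into the equivariant curvature together with the term $\frac{1}{u}$ times a Berline–Vergne–type form $\frac{d_KK'^X}{4i\pi u}$. The $dv$-derivative brings down $\frac{K'^X}{4i\pi u^2}$, yielding precisely $\widetilde{\mathfrak e}_u/u$ after integration over $X_g$ with $\td_{g,K}\ch_{g,K}$, in the same way that Proposition \ref{i2} produces $\widetilde d^2_v$. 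Uniformity is the delicate point: near $u\approx t$ one needs the bound \eqref{h,t01v01} for $v\pi^{2}$-type terms, and one needs that $\mu_2$ is locally constant near $W_K$ (\eqref{partial mu2}) to kill terms that would otherwise blow up like $1/u$. The small-$t$ behaviour of $\widetilde{\mathfrak e}_t$ in \eqref{expansion frak et} ensures that $\int_0^v\widetilde{\mathfrak e}_t\,dt/t$ converges. Finally, the diagonal integral converges at $0$ by \eqref{expansion of mu2 transgression}. Collecting the boundary pieces with their orientations gives \eqref{extension liuma}.
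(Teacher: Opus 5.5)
Your architecture is the paper's. You use the superconnection $\mathcal{A}_{u,t}+dt\wedge\derivative{t}+du\wedge\derivative{u}$ and the identity whose $du\,dt$-component is $\psi_{u,t}$ (via \eqref{commutator c muk}). You apply Stokes on $\{\epsilon\le t\le u\le v\}$, reduce the diagonal to $ge^K\exp(-\mathbb{B}_t^2-dt\,\partial_t\mathbb{B}_t)$ by pulling back to $u=t$, and let the interior $\psi$-term converge to $\widetilde\psi_v$.

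The gap is in your third step, which carries the whole analytic content; the paper imports it from \cite[(4.9)--(4.27)]{LiuMa2022}. For fixed $u$, the limit of the $du$-component on the edge $t=\epsilon$ is indeed $\pm\widetilde{\mathfrak e}_u/u$. But you integrate over $u\in[\epsilon,v]$, and a priori the integrand is of size $1/u$ in the regime $u=\epsilon u'$ with $u'$ of order one. So this region can leave a nonzero contribution as $\epsilon\to0$; it is the same regime that produces the $I$-genus through $J_3$ in Section~\ref{bu}. The estimates you invoke do not reach it:
\begin{itemize}
\item \eqref{h,t01v01} for $v\pi^{2,\prime/\prime\prime}$ concerns the $dt$-component; that is what makes the integral along $u=v$ converge at $t=0$, cf.\ \eqref{piprime + piprimeprime}. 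It does not control the insertion $\partial_u\mathcal{A}_{u,t}=\sqrt{t}\,c(K^X)/(2\sqrt{2}\,u^2)$.
\item In any case, a bound $C(t/u)^{\gamma'}$ integrated against $du/u$ gives an $O(1)$ contribution from $u\sim\epsilon$, not a vanishing one.
\end{itemize}

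What must be shown is that this region contributes nothing in the limit.
\begin{itemize}
\item Rescale $u=\epsilon u'$ and localize near $W_{g,K}$ as in Section~\ref{el}. The operator tends to the model $\lk+\mathscr{B}_{K,u'}^2$ of Section~\ref{em} with $M=TX|_{W_{g,K}}$, and the insertion tends to $\partial_{u'}\mathscr{B}_{K,u'}=c(m^{TX}(K)Z)/(2\sqrt{2}\,u'^2)$.
\item Because $\mu_2$ is constant near $W_{g,K}$, it factors out of the model term. That term then vanishes, since the insertion is odd in $Z$ while the model heat kernels are invariant under $Z\mapsto -Z$, exactly as in the proof of Proposition~\ref{prop, piu=0}.
\item Where $\mu_2$ is not constant you are away from $W_K$, and the bounds of Section~\ref{sec outside neighbor} apply.
\item To pass to the limit you then need a comparison estimate $C\epsilon^{\alpha}u'^{\beta}$ of the type in Theorem~\ref{thm h,vt}. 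Interpolate it with an $(\epsilon/u)^{\gamma}$-bound for $u\gg\epsilon$, as in \eqref{pi1 two estimates}--\eqref{pivtt1'leq}, and conclude by dominated convergence.
\end{itemize}
Your remark that local constancy of $\mu_2$ ``kills terms'' points in this direction, but it does not supply the mechanism.

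Finally, you discard $d^S\bigl(\int_{\Delta_\epsilon}\alpha\bigr)$ for each $\epsilon$ and then let $\epsilon\to0$. That only yields the identity modulo the closure of the $d$-exact forms. You also need, as in the proof of Proposition~\ref{9t}, that $d\Omega^\bullet(S,\mathbb{C})$ is closed under uniform convergence.
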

\begin{remark}
Assuming that $\mu_2$ is constant on $W$, then $\widetilde{\psi}_v=0$. Moreover, when $v=+\infty$, we have $\cvt=\bisconn_{K,t}$, then Theorem \ref{analogue of liuma} coincides with \cite[Theorem 0.2]{LiuMa2022} up to a normalization.
\end{remark}
\begin{proof}
   We follow the steps in \cite[Section 4]{LiuMa2022}. For $t>0, v>0$, set 
   \begin{equation}
    \ccvt=\cvt+dt\wedge \derivative{t}+dv\wedge \derivative{v}.
   \end{equation}
   Then $\ccvt$ is a superconnection associated with the fiberation $\rr^{2*}\times W\rightarrow \rr^{2*}\times S$. So from (\ref{commutator c mu}), we have the analogue of \cite[(4.2)]{LiuMa2022},
   \begin{equation}\label{dcctv}
    \Big\{d^{\rr^2\times S}\Phi_{\rr^2\times S}\str\Big[i\mu_2 g\exp\Big(-\lk-\ccvt^2\Big)\Big]\Big\}^{dvdt}=\psi_{v,t}.
   \end{equation}
   Let $\beta_{g,K}$ be the part of $\Phi_{\rr^2\times S}\str\Big[i\mu_2 g\exp\Big(-\lk-\ccvt^2\Big)\Big]$ of degree one with respect to the coordinates $(v,t)$, and let $\alpha_{g,K}$ be the part of degree two. By (\ref{dcctv}), we get
   \begin{equation}
    \Bigl(dv\wedge \derivative{v}+dt\wedge\derivative{t}\Bigr)\beta_{g,K}=-dv\wedge dt\wedge (d^S \alpha_{g,K}+\psi_{v,t}).
   \end{equation}
   Recall that we have introduced the oriented contour $\Gamma$ and the triangular domain $\Delta$ in Section 5.2. So by Stokes' theorem as in (\ref{0 sum of Ik0}), we have
   \begin{equation}\label{sum liuma beta}
    \sum_{k=1}^3\int_{\Gamma_k}\beta_{g,K}=-d^S\Bigl(\int_{\Delta}\alpha_{g,K}dvdt\Bigr)-\int_{\Delta}\psi_{v,t}dvdt.
   \end{equation}
   Using the fact that $\mu_2$ is locally constant in a neighborhood of $W_{g,K}$, and the fact that the analysis in the proof of \cite[Theorem 4.3]{LiuMa2022} are localized near $W_{g,K}$, we can proceed as in \cite[(4.9)-(4.27)]{LiuMa2022} without letting $A\rightarrow +\infty$, and we obtain that as $a\rightarrow 0$,
   \begin{equation}
    \begin{split}
   \sum_{k=1}^3 &\int_{\Gamma_k}\beta_{g,K}\rightarrow\\ 
     &-\int_{0}^{A}\Big\{\Phi_{\rr^2\times S}\str\Big[i\mu_2 g\exp\Big(-\lk-\mathcal{A}_{A,t}^2-dt\derivative{t}\mathcal{A}_{A,t}\Big)\Big]\Big\}^{dt}dt\\ 
    &+\int_{0}^{A}\Big\{\Phi_{\rr^2\times S}\str\Big[i\mu_2 ge^K\exp\Big(-\bisconn_t^2-dt\derivative{t}\bisconn_t\Big)\Big]\Big\}^{dt}dt-\int_{0}^{A}\widetilde{\mathfrak{e}}_t\frac{dt}{t}.
    \end{split}
   \end{equation}
Also by (\ref{psi,t01v01}), (\ref{tilde psi}), we see that as $a\rightarrow 0$,
\begin{equation}\label{psi a rightarrow 0}
    \int_{\Delta}\psi_{v,t}dvdt\rightarrow \widetilde{\psi}_A.
\end{equation}
By using the fact that $d\Omega^{\bullet}(S,\cc)$ is closed under the uniformly convergence. Thus from (\ref{sum liuma beta})-(\ref{psi a rightarrow 0}), we get (\ref{extension liuma}).
\end{proof}

Now we return to the integration of $\pi_{v,t}^{2\prime}$ and $\pi_{v,t}^{2\prime\prime }$. From (\ref{piprime + piprimeprime}), (\ref{extension liuma}), we have
\begin{equation}\label{Phid-calcu-begin}
    \begin{split}
       &\Phi  d\int_{0}^{v}\frac{v}{2}(\pi_{v,t}^{2\prime}+\pi_{v,t}^{2\prime\prime})\frac{dt}{t}\\ 
       = &d\int_{0}^{v}\Big\{\Phi_{\rr^2\times S}\str\Big[i\mu_2 ge^K\exp\Big(-\bisconn_t^2-dt\derivative{t}\bisconn_t\Big)\Big]\Big\}^{dt}dt-d\int_{0}^{v}\widetilde{\mathfrak{e}}_t\frac{dt}{t}+d\widetilde{\psi}_v.
    \end{split}
\end{equation}
So we get
\begin{equation}
    \begin{split}
        &\Phi d \int_{a}^{1}\frac{dv}{v}\int_{0}^{v}\frac{1}{2}(\pi_{v,t}^{2\prime}+\pi_{v,t}^{2\prime\prime})\frac{dt}{t}\\
       =&d\int_{a}^{1}\frac{dv}{v^2}\int_{0}^{v}\Big\{\Phi_{\rr^2\times S}\str\Big[i\mu_2 ge^K\exp\Big(-\bisconn_t^2-dt\derivative{t}\bisconn_t\Big)\Big]\Big\}^{dt}dt\\ 
      & -d\int_{a}^{1}\frac{dv}{v^2}\int_{0}^{v}\widetilde{\mathfrak{e}}_t\frac{dt}{t}+d\int_{a}^{1}\widetilde{\psi}_v\frac{dv}{v^2}.
    \end{split}
\end{equation}
Also we have
\begin{equation}\label{calculate mu2 transgression}
    \begin{split}
    \int_{a}^{1}\frac{dv}{v^2}\int_{0}^{v}\Big\{\Phi_{\rr^2\times S}\str\Big[i\mu_2 ge^K\exp\Big(-\bisconn_t^2-dt\derivative{t}\bisconn_t\Big)\Big]\Big\}^{dt}dt\\
      = \int_{a}^{1}\Big\{\Phi_{\rr^2\times S}\str\Big[i\mu_2 ge^K\exp\Big(-\bisconn_t^2-dt\derivative{t}\bisconn_t\Big)\Big]\Big\}^{dt}\frac{dt}{t} \quad\\ 
     +\frac{1}{a}\int_{0}^{a}\Big\{\Phi_{\rr^2\times S}\str\Big[i\mu_2 ge^K\exp\Big(-\bisconn_t^2-dt\derivative{t}\bisconn_t\Big)\Big]\Big\}^{dt}dt \\ 
      -\int_{0}^{1}\Big\{\Phi_{\rr^2\times S}\str\Big[i\mu_2 ge^K\exp\Big(-\bisconn_t^2-dt\derivative{t}\bisconn_t\Big)\Big]\Big\}^{dt}dt.
    \end{split}
\end{equation}
So by (\ref{expansion of mu2 transgression}), (\ref{dq0}), (\ref{calculate mu2 transgression}), as $a\rightarrow 0$, we get
\begin{equation}
    \begin{split}
        d\int_{a}^{1}\frac{dv}{v^2}\int_{0}^{v}\Big\{\Phi_{\rr^2\times S}\str\Big[i\mu_2 ge^K\exp\Big(-\bisconn_t^2-dt\derivative{t}\bisconn_t\Big)\Big]\Big\}^{dt}dt+R_0\log(a)\\ 
        \rightarrow d \int_{0}^{1}\Bigl(\Big\{\Phi_{\rr^2\times S}\str\Big[i\mu_2 ge^K\exp\Big(-\bisconn_t^2-dt\derivative{t}\bisconn_t\Big)\Big]\Big\}^{dt}-Q_0\Bigr)\frac{dt}{t}\\ 
        -d\int_{0}^{1}\Big\{\Phi_{\rr^2\times S}\str\Big[i\mu_2 ge^K\exp\Big(-\bisconn_t^2-dt\derivative{t}\bisconn_t\Big)\Big]\Big\}^{dt}dt+ dQ_0.
    \end{split}
\end{equation}
Similarly by using (\ref{expansion frak et}), (\ref{depsilon1/2}), as $a\rightarrow 0$,
\begin{equation}
    \begin{split}
    d\int_{a}^{1}\frac{dv}{v^2}\int_{0}^{v}\widetilde{\mathfrak{e}}_t\frac{dt}{t}+(E_{0,g,K}'-E_{0,g,K})\log(a)\qquad\qquad\quad\\ 
    \rightarrow d\int_{0}^{1}(\widetilde{\mathfrak{e}}_t-\sigma_1t)\frac{dt}{t^2}-d\int_{0}^{1}\widetilde{\mathfrak{e}}_t\frac{dt}{t}.
    \end{split}
\end{equation}
And by (\ref{tilde psi estimate}), as $a\rightarrow 0$,
\begin{equation}\label{Phid-calcu-end}
    d\int_{a}^{1}\widetilde{\psi}_v\frac{dv}{v^2}\rightarrow d\int_{0}^{1}\widetilde{\psi}_v\frac{dv}{v^2}.
\end{equation}
From (\ref{Phid-calcu-begin})-(\ref{Phid-calcu-end}), we find that there is a smooth form $S_0$, such that as $a\rightarrow 0$,
\begin{equation}\label{Phid-calcu-tot}
    \begin{split}
        \Phi d \int_{a}^{1}\frac{dv}{v}\int_{0}^{v}\frac{1}{2}(\pi_{v,t}^{2\prime}+\pi_{v,t}^{2\prime\prime})\frac{dt}{t}+(-E_{0,g,K}'+E_{0,g,K}+R_0)\log(a)
        \rightarrow d S_0.
    \end{split}
\end{equation}
Using (\ref{hvt}), (\ref{J1-J4}) and (\ref{2,p,p-1,p+1}), by taking the sum of $(p,p)$-form parts of (\ref{Phid-calcu-tot}), as $a\rightarrow 0$, we have
\begin{equation}\label{v01 pi2 final}
    \begin{split}
    \frac{1}{2}\Phi\dba\int_{a}^{1}\frac{dt}{t}\int_{t}^{1}\pi_{v,t}^{2\prime}\frac{dv}{v}+ \frac{1}{2}\Phi\partial\int_{a}^{1}\frac{dt}{t}\int_{t}^{1}\pi_{v,t}^{2\prime\prime}\frac{dv}{v}+J_4\qquad\qquad\qquad\\ 
    +(-E_{0,g,K}'+E_{0,g,K}+R_0)\log(a)
    \rightarrow \dba S_0^{(-1)}+\partial S_0^{(+1)}.
    \end{split}
\end{equation}
Finally by (\ref{pi2 v1}), (\ref{v01 pi2 final}), there are explicit universal forms $S', S''$, such that as $a\rightarrow 0$,
\begin{equation}\label{3.term}
    \begin{split}
    \frac{1}{2}\Phi\dba\Delta_{a,\infty}(\pi^{2\prime})+\frac{1}{2}\Phi\partial\Delta_{a,\infty}(\pi^{2\prime\prime})+J_4\qquad\qquad\qquad\\ 
    +(-E_{0,g,K}'+E_{0,g,K}+R_0)\log(a)
        \rightarrow \Phi\dba S'+\Phi\partial S''.
    \end{split}
\end{equation}

c)$\kongge$ Maching the divergences.

As $A\rightarrow +\infty$, by (\ref{left A}), (\ref{right A}), the divergences on both sides of (\ref{sum of Ik0}) vanish.

From (\ref{1.term}), (\ref{2.term}) and (\ref{3.term}), we see that as $a\rightarrow 0$,
\begin{equation}\label{right a}
    \begin{split}
        &\Phi\dba\Bigl(-\Delta_{a,\infty}(\pi^{1\prime})-\frac{1}{2}\Delta_{a,\infty}\Delta(\pi^{2\prime})+\Delta_{a,\infty}(\chi^{2\prime})\Bigr)+\frac{\dba\partial}{2i\pi}\Delta_{a,\infty}(\kappa^1)\\
        &+\Phi\partial\Bigl(-\Delta_{a,\infty}(\pi^{1\prime})-\frac{1}{2}\Delta_{a,\infty}\Delta(\pi^{2\prime})+\Delta_{a,\infty}(\chi^{2\prime}) \Bigr)-J_4\\ 
        &-(-C_{0,g,K}'+D_{0,ge^K}'-E_{0,g,K}')\log(a)
        -\Bigl(C_{-1,g,K}'-D_{-1,ge^K}'+\widetilde{D}_{-1,g,K}\Bigr)\frac{1}{a} \\
        &\rightarrow \Phi\dba(-\Delta_{0,\infty}(\pi^{1\prime})+S'+T')+\Phi\partial(-\Delta_{0,\infty}(\pi^{1\prime\prime})+S''+T'')+\frac{\dba\partial}{2i\pi}\Phi H.
    \end{split}
\end{equation}
By (\ref{diverge 1/a}), (\ref{diverge loga}), (\ref{j4}) and (\ref{right a}), the divergences on the left-hand side of (\ref{sum of Ik0}) coincide with the divergences on the right. 

Ultimately, we conclude that 
\begin{equation}\label{sum of Ik2}
    \begin{split}
    \sum_{0}^{3}I_k^2=\Phi\dba(-\Delta_{0,\infty}(\pi^{1\prime})+S'+T')\qquad\qquad\quad\\ 
    +\Phi\partial(-\Delta_{0,\infty}(\pi^{1\prime\prime})+S''+T'')+\frac{\dba\partial}{2i\pi}\Phi H.
    \end{split}
\end{equation}
And each term on the right-hand side of (\ref{sum of Ik2}) has been explicitly computed in this section. This completes the proof of Theorem \ref{8o}.

\section{Construction of the equivariant infinitesimal analytic torsion forms}\label{e0}
In this section, we will give the proof of  Theorems \ref{thm gK asym bku}, \ref{thm asymptotic-gK} and  the intermediate results Theorems \ref{main lemma1}, \ref{h,t1infty}, \ref{h,t01} and \ref{thm,h2,t01v01}. These are the family versions of the results in \cite[\S 7, 8]{BismutGoette2000} and our situation is very similar to that in \cite[\S 5, 6]{LiuMa2022}, so we will constantly refer to \cite{BismutGoette2000} and \cite{LiuMa2022}.

\subsection{The proof of Theorems \ref{thm gK asym bku} and \ref{thm asymptotic-gK} as $u\to +\infty$.}\label{ej}
For $K=zK_0\in \mathfrak{z}(g)$, set
\begin{equation}\label{akt}
    \akt=\lk+\bisconn_{K,t}^2.
\end{equation}

This subsection is devoted to prove the following theorem, which will be shown to imply the $u\rightarrow +\infty$ part of Theorems \ref{thm gK asym bku}, \ref{thm asymptotic-gK}.
\begin{theorem}\label{thm infty main1}
    For any $\beta>0$, there exist $C>0$, $\delta>0$ such that for $K\in \mathfrak{z}(g)\otimes_{\mathbb{R}}\mathbb{C}$,  $|K|\leq \beta$, $t\geq 1$,
    \begin{equation}\label{infty main1}
        \begin{split}
       &\big|\str\big[g\exp(-\akt)\big]-\str\big[g\exp(-\lk-\bisconn_t^2)\big]\big|\leq \frac{C}{t^{\delta}}, \\
       &\big|\str\big[gN_{K,t}\exp(-\akt)\big]-\str\big[gN_t\exp(-\lk-\bisconn_t^2)\big]\big|\leq \frac{C}{t^{\delta}}.
        \end{split}
    \end{equation}
\end{theorem}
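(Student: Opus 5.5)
The strategy follows \cite[\S 7]{BismutGoette2000} and \cite[\S 5]{LiuMa2022}: we localize the large-time behavior onto the kernel of $D^X$ and treat the $K$-dependent terms as perturbations that vanish as $t\to+\infty$. First rescale. Let $\psi_t$ multiply degree-$k$ forms on $S$ by $t^{-k/2}$. Then $\psi_t\bisconn_{t}\psi_t^{-1}=\sqrt{t}\,D^X+\nabla^{\mathbb{E}}-\frac{c(T)}{2\sqrt{2}\,t}$. The extra term in $\bisconn_{K,t}$ is $c(K^X)/(2\sqrt{2t})$, and since it carries no form degree on $S$ it is unchanged by conjugation. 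Writing
\begin{equation*}
\akt=\lk+\bisconn_t^2+\Big[\bisconn_t,\frac{c(K^X)}{2\sqrt{2t}}\Big]+\frac{c(K^X)^2}{8t},
\end{equation*}
we see that the commutator contains $\frac12[D^X,c(K^X)]/\sqrt{2}$, which is of order $t^0$, together with terms of order $t^{-1/2}$ and $t^{-1}$. This $O(1)$ term is a first-order operator, $-\sqrt2\nabla_{K^X}$ plus a zeroth-order part as in \eqref{bd}. It combines with $\lk=\nabla_{K^X}-m(K)$, so the total first-order drift is $-\nabla_{K^X}$ times a constant plus zeroth-order terms. This operator does not blow up, but it is not bounded below. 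That is the main difficulty: for complex $K$ the operator $\akt$ is not sectorial uniformly in $K$.

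To handle it, use the contour/resolvent technique of \cite[Theorems 7.3--7.9]{BismutGoette2000}, adapted to families as in \cite[\S 5]{LiuMa2022}. Let $P$ be the orthogonal projection onto $\ker D^X$. Since the $G$-action commutes with $D^X$, the Lie derivative $\lk$ preserves $\ker D^X$ and its complement. On $\ker D^X$ the operator $t D^{X,2}$ vanishes, while on the complement it is bounded below by $c\,t$, where $c>0$ is the spectral gap (which exists by the locally constant dimension assumption). All remaining terms are relatively bounded with respect to $\sqrt t D^X$ with constants uniform in $|K|\le\beta$, or carry a factor $t^{-1/2}$. So take a contour $\Gamma$ consisting of a small circle around $0$ together with the boundary of a region containing $\{\mathrm{Re}\,\lambda\ge c t/2\}$, and show that $(\lambda-\psi_t\akt\psi_t^{-1})^{-1}$ exists on $\Gamma$, with Sobolev bounds polynomial in $|\lambda|$. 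The unboundedness from below is dealt with as Bismut--Goette do: write $\exp(-\akt)=g\exp(-\lk)\cdot(\cdots)$ using that $\lk$ commutes with $\bisconn_{t}$, so that $\exp(-\lk)$ acts as the unitary (for real $K$) operator $e^{-K}$. Then treat $\akt-\lk$ via a Duhamel expansion in the perturbation $[\bisconn_t,c(K^X)]$, which is holomorphic in $K$. Since everything is analytic in $K$, bounds for real $K$ extend to a complex neighborhood by the Cauchy formula, provided $\beta$ is small; for larger $\beta$, the constants must be tracked explicitly.

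Next, compare with $\bisconn_t$. Since $g\exp(-\lk-\bisconn_t^2)=ge^{K}\exp(-\bisconn_t^2)$ on the relevant forms, the finite-dimensional limit of both supertraces as $t\to\infty$ is obtained by projecting with $P$. On $\ker D^X$ the operator $P(\akt-\lk)P$ equals $(\nabla^{H})^2$ plus $P[D^X,c(K^X)]P$. This latter term vanishes because $D^X P=PD^X=0$, up to the zeroth-order terms already absorbed into $m(K)$ and $\lk$. Hence both limits equal $\Tr[ge^K\exp(-(\nabla^{H^\bullet})^2)]$. The difference of the two supertraces is then estimated by $\|\exp(-\akt)-P\exp(\cdot)P\|_1$ plus the analogous quantity for $\bisconn_t$. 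Each is $O(t^{-\delta})$: off-diagonal blocks contribute through the resolvent identity with a factor $t^{-1/2}$ from the gap, and the terms of order $t^{-1/2}$ and $t^{-1}$ contribute directly, exactly as in \cite[Theorem 9.23]{BGV} and \cite[(5.20)]{LiuMa2022}.

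For the second estimate, note that $N_{K,t}-N_t=-i\langle\mu,K\rangle/t$ is bounded with norm $O(1/t)$. The term $N_V$ is treated by the same Duhamel argument applied to $\exp(-\akt+bN_V)$ at $b=0$, and $i\omega^H/t$ is $O(1/t)$ after the rescaling by $\psi_t$, since it is a $2$-form. Combining these gives the bound $C/t^{\delta}$ in \eqref{infty main1}. The hardest step is the uniform resolvent estimate on $\Gamma$ for the non-sectorial operator $\akt$ with complex $K$. There I would first attempt the Bismut--Goette factorization that separates $\exp(-\lk)$, using the holomorphy in $K$, before resorting to the finite propagation speed arguments of \cite{BismutGoette2000}.
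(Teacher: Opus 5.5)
There is a genuine gap at the step you single out as hardest, and it comes from misdiagnosing the difficulty.

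\textbf{The missing estimate.} In the large-time regime $\akt$ does not lose sectoriality. Use the norms $|s|_{t,1}^2=\|Ps\|_0^2+t\|P^{\perp}s\|_0^2+t\|\nabla_{P^{TX}\cdot}P^{\perp}s\|_0^2$. Since $\ker D^X$ is a finite-rank bundle of smooth sections, $\|s\|_1\le C|s|_{t,1}$ for $t\ge1$. Hence the order-one first-order terms $Q$, namely $\lk$ and $\frac{1}{2\sqrt2}[D^X,c(K^X)]$, satisfy $|\langle Qs,s\rangle_0|\le C_\beta|s|_{t,1}|s|_{t,0}$ for every complex $K$ with $|K|\le\beta$. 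They are then absorbed by $t\|D^Xs\|_0^2\ge c\,t\|P^{\perp}s\|_1^2$. The zeroth-order term $-|K^X|^2/(8t)$ is bounded for $t\ge1$.

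This yields the uniform G{\aa}rding-type bounds of Lemma~\ref{elliptic estimate infty}, and from them the resolvent bound of Lemma~\ref{resolvent infty} on a parabolic region $U_c$. It holds for every $\beta$ and needs no smallness of $K$. The lack of a lower bound in Bismut--Goette is a small-time phenomenon: it comes from $-|K^X|^2/(8t)$, i.e.\ the $|nK|$ terms of the rescaled operators on $T_xX$.

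\textbf{Why your workaround fails.} First, for complex $K$ the operator $\exp(-\lk)$ is in general unbounded on $L^2$. For $K=iK_0$ one has $\lk=i\mathcal{L}_{K_0}$, with $\mathcal{L}_{K_0}$ skew-adjoint with unbounded spectrum. So splitting it off from $tD^{X,2}$, which is exactly what controls it, is counterproductive. Second, Cauchy's formula cannot transport bounds from real $K$ to a complex neighbourhood, because it requires bounds on a complex contour. Third, you leave large $\beta$ open, whereas the statement is for every $\beta>0$.

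\textbf{Smaller problems.} The conjugation by $\psi_t$ is miscomputed: one gets $\sqrt t D^X+t^{-1/2}\nabla^{\mathbb E}-t^{-3/2}c(T)/(2\sqrt2)$. Moreover, in that direction a bound on $\psi_t\str[\cdots]$ controls the degree-$k$ component only up to a factor $t^{k/2}$. The rescaling is unnecessary: the positive-degree part $\akt^{(>0)}$ is handled by the last inequality of Lemma~\ref{elliptic estimate infty}. Also, $P(\akt-\lk)P$ is not $(\nabla^H)^2+\cdots$. It contains $P(\nabla^{\mathbb E})^2P$, and $(\nabla^H)^2$ appears only after a Schur complement removes the $\sqrt t\,P\nabla^{\mathbb E}D^XP^{\perp}$ blocks.

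\textbf{The paper's route.} It avoids identifying any limit. It writes $\akt-\lk-\bisconn_t^2=[\sqrt tD^X,\frac{c(K^X)}{2\sqrt{2t}}]$ plus terms of order $t^{-1/2}$. Your observation $P[D^X,c(K^X)]P=0$ then gives $|[\sqrt tD^X,\frac{c(K^X)}{2\sqrt{2t}}]s|_{t,-1}\le Ct^{-1/2}|s|_{t,1}$. The resolvent identity and the contour integral over $\Gamma_c$ give $\|\exp(-\akt)-\exp(-\lk-\bisconn_t^2)\|\le Ct^{-1/2}$ (Proposition~\ref{prop akt-bt}). Uniform $C^k$ bounds on the kernels (Proposition~\ref{prop uniform estimate akt}) and the Bismut--Lebeau interpolation then give pointwise kernel bounds $Ct^{-\delta}$. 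Finally $N_{K,t}-N_t=O(t^{-1})$ yields the second estimate. Your trace-norm step would need the same higher Sobolev (commutator) estimates, which the proposal does not supply.
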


Now we explain how to obtain the $u\rightarrow+\infty$ part of Theorems \ref{thm gK asym bku} and \ref{thm asymptotic-gK} from Theorem \ref{thm infty main1}. Since $\bisconn_t$ is $G$-invariant, we have
\begin{equation}\label{equation two supertraces g gK}
    \str\big[g\exp(-\lk-\bisconn_t^2)\big]=\str\big[ge^{K}\exp(-\bisconn_t^2)\big].
\end{equation} 
From the proof of (\ref{asymptotic-infty-g}), we know that there is $C>0$ such that for any $|K|\leq \beta$, $t\geq 1$, we have
\begin{equation}\label{geK asym infty}
    \big|\Phi\str\big[ge^{K}\exp(-\bisconn_t^2)\big]-\ch_{ge^{K}}(H^{{\bullet}}(X,\xi),h^{H^{{\bullet}}(X,\xi)})\big|\leq \frac{C}{\sqrt{t}}.
\end{equation}
Using \eqref{infty main1}-\eqref{geK asym infty}, we get the second line of (\ref{Kirillov formula-gK}). The same arguments work for the proof of (\ref{asymptotic-infty-gK}).

For $s\in S$, recall that $\mathbb{E}_s$ is the space of the smooth sections of $\Lambda(T^{*(0,1)}X)\otimes \xi$ over $X_s$.
 For $k\in\zz$, let $\mathbb{E}_s^{k}$ be the Sobolev space of order $k$ of sections of $\Lambda(T^{*(0,1)}X)\otimes \xi$ over $X_s$. Then $\mathbb{E}_s^0$ is equipped with Hermitian product $\product{\,,}_0$  in (\ref{L2 metric}). Let $\|\cdot\|_0$ be the corresponding norm of $\mathbb{E}_s^0$. For $k\in\zz$, let $\|\cdot\|_{k}$ be the Sobolev norm on $\mathbb{E}_s^{k}$, which is induced by $\nabla^{TX}$ and $\nabla^{\xi}$.

Recall that $\mathrm{Ker}(D)$ form a subbundle of $\mathbb{E}$ over $S$. 
Let $P$ be the orthogonal projection from  $\mathbb{E}^0$ to $\mathrm{Ker}(D)$, and let $P^{\perp}=1-P$.

Recall that $P^{TX}: TW\rightarrow TX $ is the projection associated to the splitting in (\ref{splitting}). For $s,s'\in \mathbb{E}$, $t\geq 1$, we set
\begin{equation}\label{sobolev infty 01}
    \begin{split}
        &|s|_{t,0}^2=\|s\|_0^2,\\ 
        &|s|_{t,1}^2=\|Ps\|_0^2+t\|P^{\perp}s\|_0^2+t\|\nabla_{P^{TX}\cdot}^{\Lambda(T^{*(0,1)}X)\otimes \xi}P^{\perp}s\|_0^2.
    \end{split}
\end{equation} 
Set
\begin{equation}\label{sobolev infty -1}
    |s|_{t,-1}=\sup_{0\neq s'\in \mathbb{E}^1}\frac{|\product{s,s'}_0|}{|s'|_{t,1}}.
\end{equation}
Then (\ref{sobolev infty 01}) and (\ref{sobolev infty -1}) define Sobolev norms on $\mathbb{E}^{1}$ and $\mathbb{E}^{-1}$, which are equivalent to $\|\cdot\|_1$ and $\|\cdot\|_{-1}$. And we have
\begin{equation}\label{infty three sobolev}
    |s|_{t,-1}\leq |s|_{t,0}\leq |s|_{t,1}.
\end{equation}

Let $\akt^{(0)}$ be the piece  of $\akt$ which has degree zero in $\Lambda(T_{\rr}^*S)$, and let $\akt^{(>0)}$ be the piece  of $\akt$ which has positive degree in $\Lambda(T_{\rr}^*S)$. 

By omitting the $dt$ part terms in \cite[(5.6) and (5.15)]{LiuMa2022}, we get
\begin{lemma}\label{elliptic estimate infty}
    There exist $C_1,...,C_5>0$, such that for any $t\geq 1$, $K\in\mathfrak{z}(g)\otimes_{{\mathbb{R}}}\mathbb{C}$,  $|K|\leq \beta$, $s,s'\in \mathbb{E}$,
    \begin{equation}
        \begin{split}
           & \mathrm{Re}\product{\akt^{(0)}s,s}_0\geq C_1|s|_{t,1}^2-C_2|s|^{2}_{t,0},\\ 
            &|\mathrm{Im}\product{\akt^{(0)}s,s}_0|\leq C_3|s|_{t,1}|s|_{t,0},\\ 
            &|\product{\akt^{(0)}s,s'}|\leq C_4 |s|_{t,1}|s'|_{t,1},\\ 
            &|\product{\akt^{(>0)}s,s'}|\leq C_5(|s|_{t,0}|s'|_{t,1}+|s|_{t,1}|s'|_{t,0}).
        \end{split}
    \end{equation}
\end{lemma}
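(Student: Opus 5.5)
The statement is Lemma \ref{elliptic estimate infty}: uniform G\aa rding-type estimates for $\akt=\lk+\bisconn_{K,t}^2$ in the $t$-dependent Sobolev norms \eqref{sobolev infty 01}--\eqref{sobolev infty -1}, valid for $t\geq 1$ and $|K|\leq\beta$. I would reduce it to the $K=0$ estimates of Bismut--K\"ohler and Ma, and treat the $K$-dependent terms as perturbations.

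\textbf{Lichnerowicz formula.} First I would write a Lichnerowicz formula for $\akt$, modelled on \eqref{lichnerowicz scrbku} and \cite[(5.6)]{LiuMa2022}. It should have the shape
\begin{equation*}
\akt=\lk+\Big(\bisconn_t+\frac{c(K^X)}{2\sqrt{2t}}\Big)^2=\bisconn_t^2+\lk+\frac{1}{2\sqrt 2}\big[D^X,c(K^X)\big]+\frac{1}{2\sqrt{2t}}\big[\nabla^{\mathbb{E}}-\tfrac{c(T)}{2\sqrt{2t}},c(K^X)\big]-\frac{|K^X|^2}{8t}.
\end{equation*}
By the computation \eqref{bd} and \eqref{proof of lichnerowicz scrbku 6}, $[D^X,c(K^X)]$ equals $-\sqrt2\nabla_{K^X}$ plus a zeroth-order Clifford term. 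This first-order term combines with $\lk=\nabla_{K^X}-m(K)$, so that
\begin{equation*}
\akt^{(0)}=tD^{X,2}+\tfrac12\nabla_{K^X}+B_K+\mathcal{O}(t^{-1}),
\end{equation*}
where $B_K$ is a bounded zeroth-order operator, uniformly in $|K|\leq\beta$ (including complex $K$). The piece $\akt^{(>0)}$ consists of $\sqrt t$ times first-order fiberwise operators with coefficients in $\Lambda^{\geq1}(T^*_{\rr}S)$, plus bounded terms.

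\textbf{Estimates for $\akt^{(0)}$.} Next I would use the spectral gap: for $t\geq1$,
\begin{equation*}
\langle tD^{X,2}s,s\rangle_0=t\|D^XP^\perp s\|_0^2\geq c\,t\big(\|P^\perp s\|_0^2+\|\nabla P^\perp s\|_0^2\big),
\end{equation*}
by ellipticity and the positive gap of $D^{X,2}$ on $\mathrm{Im}P^\perp$. This gives $C_1|s|_{t,1}^2$ up to the term $\|Ps\|_0^2=|Ps|_{t,0}^2$. The remaining pieces of $\akt^{(0)}$ are controlled as follows.
\begin{itemize}
\item The first-order term $\nabla_{K^X}$ is skew-adjoint up to a zeroth-order term, since $K^X$ is Killing. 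So it contributes nothing to the real part modulo $C|s|_{t,0}^2$.
\item To bound its imaginary part by $|s|_{t,1}|s|_{t,0}$, decompose $s=Ps+P^\perp s$. Terms involving $P^\perp s$ give $\|\nabla P^\perp s\|_0\|s\|_0\leq t^{-1/2}|s|_{t,1}|s|_{t,0}$. The term $\langle\nabla_{K^X}Ps,Ps\rangle$ is bounded by $C\|Ps\|_0^2$, because $\mathrm{Ker}D$ is finite dimensional and consists of smooth sections.
\end{itemize}
This yields the first two inequalities. The third follows by polarization from the form bound $|\langle tD^{X,2}s,s'\rangle|\leq t\|D^XP^\perp s\|\,\|D^XP^\perp s'\|$, together with the bounds just obtained for the lower-order terms.

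\textbf{Estimate for $\akt^{(>0)}$.} Each summand is either $\sqrt t\,\nabla$ or $\sqrt t$ times a Clifford term, with forms on $S$ as coefficients. Write each pairing with the derivative placed on whichever side is $P^\perp$-projected. Since $\sqrt t\|\nabla P^\perp s\|\leq|s|_{t,1}$ and $\sqrt t\|P^\perp s\|\leq|s|_{t,1}$, the $P^\perp$ contributions are bounded by $C(|s|_{t,0}|s'|_{t,1}+|s|_{t,1}|s'|_{t,0})$.

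\textbf{Main obstacle.} The hardest part is the $PP$ block. There $\sqrt t\,P\,(\text{first-order operator})\,P$ is not automatically small, and a bound of order $\sqrt t$ would violate the required estimate. I would handle it exactly as in the $K=0$ case of Bismut--K\"ohler, using the structure from Theorem \ref{identities-g} and Proposition \ref{a2}:
\begin{itemize}
\item $P\,\dbar P=0$ and $P\,\dbarstar P=0$;
\item $P\,c(K^X)P$ carries only $t^{-1/2}$;
\item $\nabla^{\mathbb{E}}$ contributes the bounded operator $P\nabla^{\mathbb{E}}P$.
\end{itemize}
With these, the $PP$ block is $\mathcal{O}(1)$, which completes the last inequality. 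All constants depend continuously on $K$, hence are uniform on $|K|\leq\beta$.
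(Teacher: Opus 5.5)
Your argument is correct and follows essentially the paper's route. The paper gives no independent proof but imports \cite[(5.6), (5.15)]{LiuMa2022} with the $dt$-terms dropped, and your reconstruction (the decomposition \eqref{akt-bt}, the uniform spectral gap of $D^{X,2}$ on $\mathrm{Im}\,P^{\perp}$, and the vanishing of the $PP$ block of the $\sqrt{t}$-term) is the standard large-time argument behind those estimates.

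Two points to tighten. First, for complex $K$ the operator $\tfrac12\nabla_{K^X}$ is not skew-adjoint, since its part $i\nabla_{(\mathrm{Im}K)^X}$ is symmetric; its real part must therefore be absorbed using the bound $C(|s|_{t,0}^2+t^{-1/2}|s|_{t,1}|s|_{t,0})$ that you already derive for the imaginary part. Second, the $PP$ cancellation really uses $PD^X=D^XP=0$ applied to the $C^{\infty}(S)$-linear term $\sqrt{t}\,[D^X,\nabla^{\mathbb{E}}]$; the $\mathcal{O}(1)$ first-order terms coming from $(\nabla^{\mathbb{E}})^2$ and $[D^X,c(T)]$, which you omitted, are harmless.
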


For $c>0$, set
\begin{equation}\label{U,V,Gamma}
    \begin{split}
    U_c:=\big\{\lambda\in\cc\,: \mathrm{Re}(\lambda)\leq \frac{\mathrm{Im}(\lambda)^2}{4c^2}-c^2\big\},\\ 
    V_c:=\big\{\lambda\in\cc\,: \mathrm{Re}(\lambda)\geq  \frac{\mathrm{Im}(\lambda)^2}{4c^2}-c^2\big\},\\ 
        \Gamma_c:=\big\{\lambda\in\cc\,: \mathrm{Re}(\lambda)= \frac{\mathrm{Im}(\lambda)^2}{4c^2}-c^2\big\}.
    \end{split}
\end{equation}
By using Lemma \ref{elliptic estimate infty} and the same arguments in \cite[Theorem 9.15]{B1997}, we get
\begin{lemma}\label{resolvent infty}
    There exist  $c, C>0$, $p\in\nn$, such that for $t\geq 1$, $K\in\mathfrak{z}(g)\otimes_{\mathbb{R}}\mathbb{C}$,  $|K|\leq \beta$, $\lambda\in U_c$, the resolvent $(\lambda-\akt)^{-1}$ exists, and for $s\in \mathbb{E}$,
    \begin{equation}
        |(\lambda-\akt)^{-1}s|_{t,1}\leq C(1+|\lambda|)^{p}|s|_{t,-1}.
    \end{equation}
\end{lemma}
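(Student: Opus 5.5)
The plan is the standard resolvent argument for families (Bismut's \cite[Theorem 9.15]{B1997}, Bismut--Goette \cite[\S 7]{BismutGoette2000}). It has two stages. First we use the estimates on $\akt^{(0)}$ in Lemma \ref{elliptic estimate infty} to control the degree-zero resolvent uniformly in $t\geq 1$ and $|K|\leq\beta$. Then we expand $(\lambda-\akt)^{-1}$ as a finite Neumann-type series in the nilpotent part $\akt^{(>0)}$.

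\textbf{Step 1 (degree-zero resolvent).} We show that $(\lambda-\akt^{(0)})^{-1}$ exists for $\lambda\in U_c$. From the first two inequalities of Lemma \ref{elliptic estimate infty} we get, for $\lambda=a+ib$,
\begin{align*}
|\langle(\lambda-\akt^{(0)})s,s\rangle_0| &\geq \max\big(C_1|s|_{t,1}^2-(C_2+a)|s|_{t,0}^2,\ |b||s|_{t,0}^2-C_3|s|_{t,1}|s|_{t,0}\big).
\end{align*}
Choose $c$ large. In the region $\mathrm{Re}\lambda\leq \mathrm{Im}(\lambda)^2/4c^2-c^2$ one of the two lower bounds dominates: if $a$ is very negative the first does, and otherwise $|b|\gtrsim c\sqrt{a+c^2}$ is large and the second does. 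An interpolation (Cauchy--Schwarz) argument then gives
\begin{align*}
|(\lambda-\akt^{(0)})s|_{t,-1} &\geq C(1+|\lambda|)^{-2}|s|_{t,1}.
\end{align*}
Together with the bounded mapping property $|\langle\akt^{(0)}s,s'\rangle|\leq C_4|s|_{t,1}|s'|_{t,1}$, this gives existence of $(\lambda-\akt^{(0)})^{-1}:\mathbb{E}^{-1}\to\mathbb{E}^{1}$. Existence follows by the Lax--Milgram/Fredholm argument applied to the elliptic operator on each compact fiber, where the operator is a compact perturbation. The constants are uniform in $t\geq1$ and $|K|\leq\beta$ because those in Lemma \ref{elliptic estimate infty} are.

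\textbf{Step 2 (higher degrees).} Since $\akt^{(>0)}$ has positive degree in $\Lambda(T_{\rr}^*S)$, we have the finite expansion
\begin{align*}
(\lambda-\akt)^{-1} &= \sum_{k=0}^{\dim_{\rr} S}(\lambda-\akt^{(0)})^{-1}\big(\akt^{(>0)}(\lambda-\akt^{(0)})^{-1}\big)^{k}.
\end{align*}
The fourth estimate in Lemma \ref{elliptic estimate infty} shows that $\akt^{(>0)}$ maps $|\cdot|_{t,1}$ to $|\cdot|_{t,-1}$ with bound $2C_5$, because $|\cdot|_{t,0}\leq|\cdot|_{t,1}$ by \eqref{infty three sobolev}. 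Each factor therefore costs at most a power of $(1+|\lambda|)$, and we obtain the claimed bound with $p=2(\dim_{\rr}S+1)$.

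\textbf{Main obstacle.} The delicate point is the lower bound in Step 1. The norm $|\cdot|_{t,1}$ treats $\mathrm{Ker} D$ and its complement differently, and $K$ is complex, so $\akt^{(0)}$ is not self-adjoint. As in \cite[(5.6),(5.15)]{LiuMa2022}, the bound must therefore rely only on the coercivity and imaginary-part estimates, not on spectral theory. The parabola $\Gamma_c$ is chosen exactly so that these two estimates suffice.
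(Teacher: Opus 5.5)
Your proposal is correct and is essentially the paper's argument: the paper obtains the lemma directly from the estimates of Lemma \ref{elliptic estimate infty} by the argument of \cite[Theorem 9.15]{B1997}. That argument uses coercivity together with the imaginary-part bound to invert $\lambda-\akt^{(0)}$ on $U_c$ uniformly in $t\geq 1$, $|K|\leq\beta$, and then the finite expansion in the nilpotent part $\akt^{(>0)}$. One imprecision: for $\mathrm{Re}\,\lambda>-C_2$ neither of your two lower bounds suffices on its own, so your interpolation step should make explicit that you take a convex combination of both, using $|\mathrm{Im}\,\lambda|^2\geq 4c^2(\mathrm{Re}\,\lambda+c^2)$ with $c$ large.
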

By omitting the $c(K^{X})$ term, we know that Lemma \ref{resolvent infty} also holds for $\lk+\bisconn_{t}^2$.
\begin{proposition}\label{prop akt-bt}
    There exists $C>0$, such that for $t\geq 1$, $K\in\mathfrak{z}(g)\otimes_{\mathbb{R}}\mathbb{C}$,  $|K|\leq \beta$, $s\in \mathbb{E}$,
    \begin{equation}\label{estimate exp akt bt}
        \big\|\big(\exp(-\akt)-\exp(-\lk-\bisconn_t^2)\big)s\big\|_0\leq \frac{C}{\sqrt{t}}\|s\|_0.
    \end{equation}
\end{proposition}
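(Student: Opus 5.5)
The plan is to write the difference of the two heat operators via the resolvent (contour integral) representation, and to reduce the estimate to a bound on the difference of resolvents, using Lemma \ref{resolvent infty} for both $\akt$ and $\lk+\bisconn_t^2$. Write $\akt-(\lk+\bisconn_t^2)=R_t$, where by \eqref{bku}
\[
R_t=\Big[\bisconn_t,\frac{c(K^X)}{2\sqrt{2t}}\Big]+\frac{c(K^X)^2}{8t}.
\]
The commutator produces $\frac{1}{2\sqrt2}[D^X,c(K^X)]$, which is a first-order operator $-\nabla_{K^X}$ plus zeroth-order terms (as in \eqref{bd}), together with terms carrying $t^{-1/2}$ or $t^{-1}$ and horizontal form degrees. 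Since $t\geq 1$, the plan is to show
\[
|\langle R_t s,s'\rangle_0|\leq \frac{C}{\sqrt t}\,|s|_{t,1}|s'|_{t,1}.
\]
For the pieces with explicit factors $t^{-1/2}$ or $t^{-1}$, this is immediate from \eqref{infty three sobolev}.

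The problematic piece is the $t$-independent first-order term $\nabla_{K^X}$ (together with $\lk$-type zeroth-order terms). The key observation is that $G$-invariance of $D^X$ shows $K^X$-derivatives commute with $P$. Consequently, on $\mathrm{Ker}D$ the operator $\nabla_{K^X}$ acts only through the finite-rank moment term, and we have $P\nabla_{K^X}P=P(\nabla_{K^X}-\lk)P$, a bounded zeroth-order term. Decompose $s=Ps+P^\perp s$. The contributions involving $P^\perp$ pick up a factor $t^{-1/2}$ from the definition of $|\cdot|_{t,1}$ in \eqref{sobolev infty 01}, since $\|P^\perp s\|_1\leq t^{-1/2}|s|_{t,1}$. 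The $PP$ contribution is handled by writing $\nabla_{K^X}-\lk=m(K)$ in the comparison operator: the term $\lk$ appears in both operators, so the difference only sees $\nabla_{K^X}-\lk$ minus its counterpart. One checks that $PR_tP$ is then $O(t^{-1/2})$. The leading $t^0$ part cancels because on $\mathrm{Ker}D$ both $\exp(-\akt)$ and $\exp(-\lk-\bisconn_t^2)$ reduce to $\lk$, up to $O(t^{-1/2})$. This is the step I expect to be the main obstacle: verifying that the $t^0$ component of $R_t$ vanishes when compressed to $\mathrm{Ker}D$. If a direct cancellation fails, I would instead absorb $-\nabla_{K^X}$ by comparing with $\lk$ through the identity $\lk=\nabla_{K^X}-m(K)$, so that the unbounded part is exactly shared by both operators.

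Given the bilinear bound, I would write
\[
\exp(-\akt)-\exp(-\lk-\bisconn_t^2)=\frac{1}{2i\pi}\int_{\Gamma_c}e^{-\lambda}(\lambda-\akt)^{-1}R_t(\lambda-\lk-\bisconn_t^2)^{-1}d\lambda .
\]
Lemma \ref{resolvent infty} bounds $(\lambda-\lk-\bisconn_t^2)^{-1}s$ in $|\cdot|_{t,1}$ by $C(1+|\lambda|)^p\|s\|_0$. The bilinear bound then bounds $R_t$ of this in $|\cdot|_{t,-1}$ by $Ct^{-1/2}(1+|\lambda|)^p\|s\|_0$. Applying the resolvent of $\akt$ once more yields a $|\cdot|_{t,1}$, hence $\|\cdot\|_0$, bound of the form $Ct^{-1/2}(1+|\lambda|)^{2p}\|s\|_0$. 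Finally, $e^{-\lambda}$ decays exponentially along $\Gamma_c$, so the integral converges, and we obtain \eqref{estimate exp akt bt} uniformly in $|K|\leq\beta$.
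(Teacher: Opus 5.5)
Your overall scheme is exactly the paper's: the resolvent identity, a bilinear bound $|\langle R_t s,s'\rangle_0|\le Ct^{-1/2}|s|_{t,1}|s'|_{t,1}$, Lemma \ref{resolvent infty} for both operators, and the contour integral over $\Gamma_c$. Your treatment of the terms carrying $t^{-1/2}$ or $t^{-1}$, and of the $P^{\perp}$-components, is also fine. The gap is the step you flag yourself: showing that the $t$-independent part of $R_t$ has vanishing compression to $\mathrm{Ker}D$. The justification you give does not work.

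\begin{itemize}
\item It is $\mathcal{L}_K$, not $\nabla_{K^X}$, that commutes with $P$. On $\mathrm{Ker}D\simeq H^{\bullet}(X,\xi|_X)$, $\mathcal{L}_K$ is the infinitesimal action of $K$ on cohomology, which is nonzero in general. So $P\nabla_{K^X}P=P(\nabla_{K^X}-\mathcal{L}_K)P$ is false, and $\nabla_{K^X}$ does not act on $\mathrm{Ker}D$ ``only through the moment term''.
\item Even a bounded $t$-independent compression $PAP$ contributes $O(\|Ps\|_0\|Ps'\|_0)$, which is $O(1)$ rather than $O(t^{-1/2})$. Boundedness alone is therefore not enough.
\item Your fallback cannot help. $\mathcal{L}_K$ already cancels exactly in $R_t$, so writing $-\tfrac12\nabla_{K^X}=-\tfrac12\mathcal{L}_K-\tfrac12 m(K)$ only relocates an operator that is nonzero on $\mathrm{Ker}D$.
\item The assertion that both heat operators ``reduce to $\mathcal{L}_K$ on $\mathrm{Ker}D$ up to $O(t^{-1/2})$'' is precisely what has to be proved.
\end{itemize}

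The missing idea is to not expand $[D^X,c(K^X)]$ into $-\sqrt2\,\nabla_{K^X}$ plus zeroth-order terms at all. The $t^0$ part of $R_t$ is the supercommutator $\bigl[\sqrt t D^X,\tfrac{c(K^X)}{2\sqrt{2t}}\bigr]=\tfrac{1}{2\sqrt2}\bigl(D^Xc(K^X)+c(K^X)D^X\bigr)$. Since $D^XP=PD^X=0$, its compression to $\mathrm{Ker}D$ vanishes identically. Individually, $P\nabla_{K^X}P$ and the compressed zeroth-order terms do not vanish; only their sum does.

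The required bound then follows directly. In $\langle [D^X,c(K^X)]s,s'\rangle_0$, the operator $D^X$ only ever hits $P^{\perp}s$ or $P^{\perp}s'$. Moreover $\|D^XP^{\perp}s\|_0\le C\|P^{\perp}s\|_1\le Ct^{-1/2}|s|_{t,1}$ by \eqref{sobolev infty 01}, and $\|c(K^X)s\|_0\le C|s|_{t,1}$. This gives the $t^{-1/2}$ factor, which is the paper's estimate \eqref{estimate D c(K)}. With that in place, the rest of your argument goes through as written.
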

\begin{proof}
    From (\ref{bku}), (\ref{akt}), we have
    \begin{equation}\label{akt-bt}
        \akt=\lk+\bisconn_t^2+\Big[\sqrt{t}D,\frac{c(K^{X})}{2\sqrt{2t}}\Big]+\Big[\nabla^E-\frac{c(T)}{2\sqrt{2t}},\frac{c(K^X)}{2\sqrt{2t}}\Big].
    \end{equation}
    Clearly we have 
    \begin{equation}
        P\Big[\sqrt{t}D,\frac{c(K^{X})}{2\sqrt{2t}}\Big]P=0.
    \end{equation}
    So by (\ref{sobolev infty 01}), (\ref{sobolev infty -1}), for $s\in \mathbb{E}$, we have
    \begin{equation}\label{estimate D c(K)}
        \Big\|\Big[\sqrt{t}D,\frac{c(K^{X})}{2\sqrt{2t}}\Big]s\Big\|_{t,-1}\leq \frac{C}{\sqrt{t}}\|s\|_{t,1}.
    \end{equation}
 And as $t\rightarrow +\infty$, we have
    \begin{equation}
        \Big[\nabla^E-\frac{c(T)}{2\sqrt{2t}},\frac{c(K^X)}{2\sqrt{2t}}\Big]=\mathcal{O}(t^{-1/2}).
    \end{equation}
    From (\ref{akt-bt}),
    \begin{equation}\label{difference resolvent akt bt}
        \begin{split}
        &(\lambda-\akt)^{-1}-(\lambda-\lk-\bisconn_t^2)^{-1}\\ 
            =&(\lambda-\akt)^{-1}\Bigl(\Big[\sqrt{t}D,\frac{c(K^{X})}{2\sqrt{2t}}\Big]+\Big[\nabla^E-\frac{c(T)}{2\sqrt{2t}},\frac{c(K^X)}{2\sqrt{2t}}\Big]\Bigr) (\lambda-\lk-\bisconn_t^2)^{-1}.
        \end{split}
    \end{equation}
    By using Lemma \ref{resolvent infty}, (\ref{infty three sobolev}) and (\ref{estimate D c(K)})-(\ref{difference resolvent akt bt}), we get
\begin{equation}\label{estimate difference resolvent akt bt}
    \big\|\big((\lambda-\akt)^{-1}-(\lambda-\lk-\bisconn_t^2)^{-1}\big)s\big\|_{0}\leq \frac{C}{\sqrt{t}}(1+|\lambda|)^{2p}\|s\|_0.
\end{equation}
Note that we have
\begin{equation}\label{exp resolvent akt}
    \exp(-\akt)=\frac{1}{2i\pi}\int_{\Gamma_c}e^{-\lambda}(\lambda-\akt)^{-1}d\lambda,
\end{equation}
and (\ref{exp resolvent akt}) also holds for $\lk+\bisconn_t^2$. Therefore by using (\ref{estimate difference resolvent akt bt}), we get (\ref{estimate exp akt bt}).
\end{proof}

Since $W$ is compact, we can choose a finite family of smooth sections $U_1,\dots,U_r\in T_{\rr}X$ which spans $T_{\rr,x}X$ for any $x\in W$. Let $\mathcal{D}$ be the family of operators acting on $\mathbb{E}$
\begin{equation}
    \mathcal{D}=\big\{ P^{\perp}\nabla_{U_i}^{\Lambda(T^{*(0,1)}X)\otimes \xi}P^{\perp} , i=1,\dots,r \big\}.
\end{equation}
And for $k\in\nn$, let $\mathcal{D}^k$ be the family operators $Q$ acting on $E$  of the form
\begin{equation}
    Q=Q_1\dots Q_k,\qquad Q_i\in \mathcal{D}.
\end{equation}
For $k\in\nn$, we equip $\mathbb{E}^k$ with the Hilbert norm $\|\cdot\|_{k}'$ such that if $s\in \mathbb{E}$,
\begin{equation}
    \|s\|_k'=\sum_{l=0}^{k}\sum_{Q\in \mathcal{D}^l}\|Qs\|_0.
\end{equation}


For $s\in S$, $x,x'\in X_s$, let $\exp(-\akt)(x,x')$, $\exp(-\lk-\bisconn_t^2)(x,x')$ be the smooth kernels of the operators $\exp(-\akt)$, $\exp(-\lk-\bisconn_t^2)$ with respect to $dv_X/(2\pi)^{\ell}$.

From the same arguments as in \cite[Lemmas 5.6, 5.7, Proposition 5.8]{LiuMa2022}, we have the following results.
\begin{lemma}
    For $k\in\nn$, there exists constants $C_k>0$, such that for $t\geq 1$, $K\in\mathfrak{z}(g)\otimes _{\mathbb{R}}\mathbb{C}$,  $|K|\leq \beta$, $Q_1,\dots,Q_k\in \mathcal{D}$, $s,s'\in \mathbb{E},$ we have
    \begin{equation}
        \big|\big\langle [Q_1,[Q_2,\dots[Q_k,\akt]\dots]]s,s'\big\rangle_0\big|\leq C_k\|s\|_{t,1}\|s'\|_{t,1}.
    \end{equation}
\end{lemma}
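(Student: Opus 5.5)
The plan is to prove the commutator bound by induction on $k$, after computing the structure of $\akt=\lk+\bisconn_{K,t}^2$ explicitly. By the Lichnerowicz-type formula for the Bismut superconnection, together with \eqref{bku} and \eqref{akt-bt}, we can write
\begin{equation*}
\akt = t\,(D^X)^2+\sqrt{t}\,\mathcal{R}_1+\mathcal{R}_0+\lk ,
\end{equation*}
where $\mathcal{R}_1$ is a first order fiberwise operator with coefficients in $\Lambda(T^*_{\rr}S)$ (it comes from $[D^X,\nabla^{\mathbb{E}}]$ and from $[D^X,c(T)]$ and $[D^X,c(K^X)]$ after the $1/\sqrt{t}$ rescalings). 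The remaining terms $\mathcal{R}_0$ and $\lk$ are of order at most one, with coefficients depending smoothly and uniformly on $K$ for $|K|\le\beta$. Since $\lk=\nabla_{K^X}-m(K)$, it is a first order operator along the fiber with no factor of $t$.

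For the base case $k=0$, the bound $|\langle \akt s,s'\rangle_0|\le C\|s\|_{t,1}\|s'\|_{t,1}$ is just the third and fourth estimates of Lemma~\ref{elliptic estimate infty} combined with \eqref{infty three sobolev}. For the inductive step the key observation is that the elements $Q\in\mathcal{D}$ are of the form $P^\perp\nabla_{U}P^\perp$, so commuting with them preserves the splitting into $\operatorname{Ker}D$ and its orthogonal complement. First, $[Q,t(D^X)^2]$ equals $t$ times a second order operator acting only on $P^\perp$-components. Its pairing is therefore controlled by $t\|\nabla P^\perp s\|_0\|\nabla P^\perp s'\|_0+t\|P^\perp s\|_0\|P^\perp s'\|_0$, which is $\le C\|s\|_{t,1}\|s'\|_{t,1}$ by \eqref{sobolev infty 01}. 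Second, the terms $[P^\perp,\cdot]$ involve $P$, which is a smoothing operator with smooth kernel on the finite-rank bundle $\operatorname{Ker}D$; this uses the basic assumption that the cohomology is locally free. Such terms produce contributions bounded by $\|Ps\|_0$ times $C\|s'\|_{t,1}$, again acceptable. Third, commutators of $Q$ with $\sqrt{t}\mathcal{R}_1$ are $\sqrt t$ times a first order operator composed with $P^\perp$. Their pairings are bounded by $\sqrt t\,\|P^\perp s\|_{1}\|P^\perp s'\|_0$ plus the symmetric term, and both are $\le C\|s\|_{t,1}\|s'\|_{t,1}$ because $\sqrt t\|P^\perp s\|_0\le\|s\|_{t,1}$. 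Iterated commutators have exactly the same structure with new smooth coefficients, so the induction closes.

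The main obstacle I expect is the term $\lk$ and the $c(K^X)$ contributions. These are unbounded first order operators carrying no factor of $t$, and their commutators with $\nabla_{U_i}$ are not a priori compatible with the $P$/$P^\perp$ splitting. To handle them I would use that $\lk$ commutes with $D^X$, and hence with $P$, by $G$-invariance. Consequently $[Q,\lk]=P^\perp[\nabla_{U_i},\lk]P^\perp$ is a first order operator on $P^\perp$-components. It is therefore bounded by $\|P^\perp s\|_1\|P^\perp s'\|_0\le t^{-1/2}\|s\|_{t,1}\|s'\|_{t,1}$. This is precisely the argument of \cite[Lemma 5.6]{LiuMa2022}, and I would follow it verbatim after dropping the $dt$ terms.
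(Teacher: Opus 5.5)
Your proposal is correct and follows essentially the argument the paper relies on (the paper simply refers to Lemma 5.6 of Liu--Ma): decompose with respect to $P$ and $P^{\perp}$, use that elements of $\mathcal{D}$ are sandwiched by $P^{\perp}$ so that iterated commutators of $t(D^{X})^{2}$ stay in the $P^{\perp}$--$P^{\perp}$ block, and use that $P$ is smoothing to absorb the $\sqrt{t}$-weighted off-diagonal pieces. The step you flag as the main obstacle is in fact harmless: since $Ps$ lies in the smooth finite-rank bundle $\mathrm{Ker}(D)$, one has $\|s\|_{1}\leq C|s|_{t,1}$ for $t\geq 1$, so $\lk$, the $c(K^{X})$-terms, and all their iterated commutators with $\mathcal{D}$ (differential operators with no positive power of $t$, up to smoothing terms) are bounded directly, and the $G$-invariance of $\lk$ is not needed.
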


\begin{lemma}
    For $k\in\nn$, there exist constants $C_k'>0$, $p_k\in\nn$, such that for $t\geq 1$, $K\in\mathfrak{z}(g)\otimes_{\mathbb{R}}\mathbb{C}$,  $|K|\leq \beta$, $\lambda\in U_c$, $s\in \mathbb{E}$, we have
    \begin{equation}
        \|(\lambda-\akt)^{-1}s\|_{k+1}'\leq C_k (1+|\lambda|)^{p_k} \|s\|_k'.
    \end{equation}
\end{lemma}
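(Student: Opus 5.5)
We prove the resolvent estimate in the higher Sobolev norms $\|\cdot\|_k'$ by induction on $k$. The base case $k=0$ is Lemma \ref{resolvent infty} combined with \eqref{infty three sobolev}. That lemma gives $|(\lambda-\akt)^{-1}s|_{t,1}\le C(1+|\lambda|)^p|s|_{t,-1}$. Since $|s|_{t,-1}\le \|s\|_0$, and $|\cdot|_{t,1}$ dominates both $\|\cdot\|_0$ and $\|P^\perp\nabla_{P^{TX}\cdot}P^\perp\cdot\|_0$ uniformly in $t\ge1$, we obtain the bound on $\|(\lambda-\akt)^{-1}s\|_1'$. Here the factor $\sqrt t$ in $|\cdot|_{t,1}$ only helps, and the commutator $[\nabla,P^\perp]$ is a smoothing operator bounded uniformly in $t$.

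For the inductive step, take $Q=Q_1\cdots Q_k\in\mathcal{D}^k$. We commute $Q$ through the resolvent using the standard expansion
\begin{equation*}
Q(\lambda-\akt)^{-1}=\sum (\lambda-\akt)^{-1}[Q_{i_1},[\dots,[Q_{i_j},\akt]\dots]](\lambda-\akt)^{-1}\cdots (\lambda-\akt)^{-1}Q_{\mathrm{rest}}.
\end{equation*}
Each term is a product of resolvents interlaced with iterated commutators $[Q_{i_1},[\dots,[Q_{i_j},\akt]]]$, with the remaining $Q$'s applied directly to $s$. By the preceding lemma, each iterated commutator is bounded as a map from $|\cdot|_{t,1}$ to $|\cdot|_{t,-1}$ with constants $C_k$ uniform in $t\ge1$ and $|K|\le\beta$. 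Lemma \ref{resolvent infty} maps $|\cdot|_{t,-1}$ back to $|\cdot|_{t,1}$ at the cost of a factor $(1+|\lambda|)^p$. Chaining these estimates, every term is controlled by $C(1+|\lambda|)^{p_k}\|Q_{\mathrm{rest}}s\|_0\le C(1+|\lambda|)^{p_k}\|s\|_k'$. Each such term then also carries one extra $\mathcal{D}$-derivative of control by the base case, which gives the $\|\cdot\|_{k+1}'$ bound. Summing over $Q\in\mathcal{D}^l$, $l\le k$, finishes the induction.

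The main point to check is uniformity in $t$ of the commutator structure. Conjugating by $P^\perp$ ensures that $Q_i$ does not see the kernel part, where $|\cdot|_{t,1}$ gains no factor $\sqrt t$. We must also confirm that $[Q_i,\akt]$ contains no term growing like $t$: the $tD^2$ part commutes with $P^\perp\nabla_UP^\perp$ modulo a second-order operator whose quadratic form is bounded by $t\|\nabla P^\perp s\|^2$, hence by $|s|_{t,1}^2$. The new $c(K^X)/\sqrt t$ terms are harmless. This is exactly the argument of \cite[Lemmas 5.6, 5.7]{LiuMa2022} with the $dt$ terms omitted, which we follow.
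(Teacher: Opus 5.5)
Your proposal is correct and follows essentially the same route as the paper, which defers to \cite[Lemma 5.7]{LiuMa2022}, itself the Bismut--Lebeau argument. You commute $Q\in\mathcal{D}^k$ through the resolvent to get products of resolvents and iterated commutators, then chain the $|\cdot|_{t,-1}\to|\cdot|_{t,1}$ bound of Lemma \ref{resolvent infty} with the $|\cdot|_{t,1}\to|\cdot|_{t,-1}$ commutator bound of the preceding lemma, using that $|\cdot|_{t,1}$ controls one more $\mathcal{D}$-derivative for $t\geq 1$. Two small remarks: your argument is in fact direct rather than inductive, and the remark that $[\nabla,P^\perp]$ is smoothing is unnecessary, since $t\|\nabla P^\perp u\|_0^2$ already appears in $|u|_{t,1}^2$.
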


\begin{proposition}\label{prop uniform estimate akt}
    For any $k\in\nn$, there exists $C>0$ such that for  $\alpha, \beta\in\nn^{2\ell}$, $|\alpha|+|\beta|\leq k$,  $t\geq 1$, $K\in\mathfrak{z}(g)\otimes_{\mathbb{R}}\mathbb{C}$,  $|K|\leq \beta$,  $s\in S$, $x,x'\in X_s$,  we have
    \begin{equation}
        \Big|\frac{\partial^{|\alpha|+|\beta|}}{\partial x^{\alpha}\partial x'^{\beta}}\exp(-\akt)(x,x')\Big|\leq C.
    \end{equation}
\end{proposition}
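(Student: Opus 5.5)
We will derive the uniform kernel bound from the Sobolev-type resolvent estimates of the two preceding lemmas via the contour representation \eqref{exp resolvent akt}, following \cite[Proposition 5.8]{LiuMa2022} and \cite[Theorem 9.22]{B1997}.

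\textbf{Step 1: smoothing in the norms $\|\cdot\|_k'$.} Because $\exp(-\akt)=(-1)^{m}m!\,(2i\pi)^{-1}\int_{\Gamma_c}e^{-\lambda}(\lambda-\akt)^{-m-1}d\lambda$ (by integration by parts in $\lambda$), applying the second lemma $m+1$ times gives
\begin{equation*}
\|\exp(-\akt)s\|_{m+1}'\leq C\int_{\Gamma_c}|e^{-\lambda}|(1+|\lambda|)^{q_m}|d\lambda|\,\|s\|_0\leq C_m\|s\|_0 ,
\end{equation*}
uniformly for $t\ge 1$ and $|K|\le\beta$. Here we use that $\mathrm{Re}\,\lambda\to+\infty$ along $\Gamma_c$, so $e^{-\lambda}$ absorbs any polynomial factor. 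We then apply the same estimate to the adjoint $\exp(-\akt)^{*}=\exp(-\akt^{*})$. The operator $\akt^*$ has the same structure: the estimates of Lemma \ref{elliptic estimate infty} are symmetric, and $-K$ is handled the same way. Writing $\exp(-\akt)=\exp(-\akt/2)\exp(-\akt/2)$, we conclude that $Q\exp(-\akt)Q'$ is bounded $\mathbb{E}^0\to\mathbb{E}^0$, uniformly, for $Q,Q'\in\mathcal{D}^k$. The half-time operator $\akt/2$ also satisfies the lemmas after rescaling the constants.

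\textbf{Step 2: the projection issue.} The norms $\|\cdot\|_k'$ only control $P^{\perp}$ derivatives. We therefore decompose $\exp(-\akt)=P\exp(-\akt)P+P\exp(-\akt)P^\perp+P^\perp\exp(-\akt)P+P^\perp\exp(-\akt)P^\perp$. Since $\mathrm{Ker}(D)$ is a finite-rank bundle spanned by smooth sections whose derivatives are bounded uniformly on compact subsets of $S$, $P$ is a smoothing operator with smooth kernel. Each block with a $P$ factor then has a kernel whose derivatives are bounded by the $\mathbb{E}^0$ operator norm of $\exp(-\akt)$, which is uniformly bounded by Step 1. For the $P^\perp\cdot P^\perp$ block, $P^\perp=1-P$ and the family $\{U_i\}$ spans $T_{\rr}X$. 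Hence the norm $\|\cdot\|_k'$ restricted to $P^\perp\mathbb{E}$, together with the smooth correction from $P$, dominates the usual Sobolev norm $\|\cdot\|_k$ with constants independent of $t$. This follows from commutators $[P,\nabla_{U_i}]$ being smoothing.

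\textbf{Step 3: Sobolev embedding.} From Steps 1 and 2, $\exp(-\akt)$ maps $\mathbb{E}^{-k'}\to\mathbb{E}^{k'}$ with norm bounded uniformly in $t\ge1$, $|K|\le\beta$, $s\in S'$ for any $k'$. The usual Sobolev embedding on the compact fibers (with $k'>\ell+k$) then bounds $\partial_x^\alpha\partial_{x'}^\beta\exp(-\akt)(x,x')$ by $\|\exp(-\akt)\|_{-k',k'}$. The main obstacle is Step 2: we must make sure no hidden factor $t^{1/2}$ enters when passing from $\|\cdot\|_k'$ (and the $t$-weighted $|\cdot|_{t,1}$) to unweighted Sobolev norms. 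We will handle this by noting that the weights only improve the estimates, since $|s|_{t,1}\ge\|s\|_1'$-type bounds hold with $t\ge1$, and by checking that the $\Lambda(T^*_{\rr}S)$-valued parts $\akt^{(>0)}$ are nilpotent. The resolvent can then be expanded as a finite Neumann series, each term obeying the same bounds.
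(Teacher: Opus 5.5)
Your argument is correct and follows essentially the route the paper takes by invoking \cite[Lemmas 5.6, 5.7, Proposition 5.8]{LiuMa2022}, in the style of Bismut--Lebeau: write $\exp(-A_{K,t})$ as a contour integral of $e^{-\lambda}(\lambda-A_{K,t})^{-m}$ over $\Gamma_c$, iterate the $\|\cdot\|'_k$ resolvent bounds, use the adjoint (which has the same structure) to control the $x'$-derivatives, and finish with Sobolev embedding. Your Step 2 can be replaced by one remark: since $P$ and the $U_i$ do not depend on $t$ and $[P,\nabla_{U_i}]$ is smoothing, $\|\cdot\|'_k$ is equivalent to $\|\cdot\|_k$ with $t$-independent constants. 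As written, the block argument is slightly off, because the $x'$-derivatives of the kernel of $P\exp(-A_{K,t})P^{\perp}$ need the smoothing of $\exp(-A_{K,t}^{*})$ from Step 1, not just the $\mathbb{E}^0$ operator norm. The nilpotent Neumann-series remark belongs to the proof of the resolvent lemma, which the paper already provides.
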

By omitting the $c(K^{X})$ term, we know that Proposition \ref{prop uniform estimate akt} also holds for $\exp(-\lk-\bisconn_{t}^2)$. Then by using Propositions \ref{prop akt-bt}, \ref{prop uniform estimate akt} and the same arguments as in \cite[\S 11 p)]{BL}, we get the following theorem.
\begin{theorem}\label{thm estimate kernel akt bt}
    There are $C>0$, $\delta>0$, such that for  $t\geq 1$, $K\in\mathfrak{z}(g)\otimes_{\mathbb{R}}\mathbb{C}$,  $|K|\leq \beta$,  $s\in S$, $x,x'\in X_s$,
    \begin{equation}
        \big|\exp(-\akt)(x,x')-\exp(-\lk-\bisconn_t^2)(x,x')\big|\leq \frac{C}{t^{\delta}}.
    \end{equation}
\end{theorem}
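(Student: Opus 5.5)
The plan is to follow the standard route of \cite[\S 11 p)]{BL}, where uniform operator-norm convergence is combined with uniform smoothness bounds on the kernels and then interpolated to pointwise bounds. Fix $s\in S'$ (with $S'$ compact, so all constants are uniform) and set
\[
F_t=\exp(-\akt)-\exp(-\lk-\bisconn_t^2).
\]
By Proposition \ref{prop akt-bt}, $\|F_t\|_{0\to 0}\leq C t^{-1/2}$, uniformly in $|K|\leq\beta$. By Proposition \ref{prop uniform estimate akt}, and its analogue for $\exp(-\lk-\bisconn_t^2)$ obtained by dropping $c(K^X)$, the kernel $F_t(x,x')$ has all derivatives in $(x,x')$ up to any fixed order $k$ bounded by $C_k$, uniformly in $t\geq1$ and $K$. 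The task is therefore to convert an $L^2$ operator-norm bound into a sup-norm kernel bound, at the cost of a weaker exponent $\delta$.

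The conversion goes through a mollified pairing. Fix $x_0,x_0'\in X_s$ and $\varepsilon\in\,]0,1]$. Choose smooth sections $\phi_\varepsilon$ and $\phi'_\varepsilon$ supported in geodesic balls of radius $\varepsilon$ around $x_0$ and $x_0'$. Their values should approximate prescribed unit vectors $e\in(\Lambda(T^{*(0,1)}X)\otimes\xi)_{x_0}$ and $e'$ at $x_0'$, obtained by parallel transport, and they should have $L^1$-mass normalized to $1$, so that $\|\phi_\varepsilon\|_0\leq C\varepsilon^{-\ell}$ (real dimension $2\ell$). The operator bound then gives
\[
|\langle F_t\phi_\varepsilon,\phi'_\varepsilon\rangle_0|\leq C t^{-1/2}\varepsilon^{-2\ell}.
\]
On the other hand, the first-derivative bound on the kernel (case $k=1$) gives
\[
\bigl|\langle F_t\phi_\varepsilon,\phi'_\varepsilon\rangle_0-\langle F_t(x_0',x_0)e,e'\rangle\bigr|\leq C\varepsilon,
\]
after accounting for the $(2\pi)^{-\ell}$ normalization and the variation of the volume density and of the parallel transport over the small balls. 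Combining the two estimates,
\[
|F_t(x_0',x_0)|\leq C\bigl(\varepsilon+t^{-1/2}\varepsilon^{-2\ell}\bigr).
\]
Choosing $\varepsilon=t^{-1/(2(2\ell+1))}$ yields the statement with $\delta=1/(4\ell+2)$. If one prefers, using higher Sobolev norms instead, namely the bound $\|F_t\|_{-k\to k}$ obtained from the uniform kernel bounds together with interpolation against the $L^2$ bound, produces the same kind of estimate via Sobolev embedding.

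The main point requiring care is uniformity, not the interpolation itself. Every constant has to be independent of $t\geq1$, of $K$ with $|K|\leq\beta$ (complex $K$ allowed), and of $s$ in a compact set. Both inputs already satisfy this: Proposition \ref{prop akt-bt} rests on the resolvent bound of Lemma \ref{resolvent infty} on the contour $\Gamma_c$, and Proposition \ref{prop uniform estimate akt} gives uniform kernel bounds. The one thing to check is that the operators involved act on forms on $S$. Since $\Lambda(T^*_{\rr}S)$ is finite dimensional and nilpotent, all the estimates are taken componentwise, and the norms $\|\cdot\|_0$ are understood on $\Lambda(T_{\rr}^*S)\widehat\otimes\mathbb{E}_s$. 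The $\mathcal{D}^k$-based norms $\|\cdot\|_k'$ control only derivatives of $P^\perp$-components. This is harmless, because $P$ projects onto the finite-rank, smoothly varying $\mathrm{Ker}(D)$, whose elements are smooth with uniform bounds. Proposition \ref{prop uniform estimate akt} as stated already gives full derivative control of the kernel, so I would simply invoke it.
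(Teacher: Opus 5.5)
Your proposal is correct and is essentially the paper's argument. The paper likewise combines the $\mathcal{O}(t^{-1/2})$ operator-norm bound of Proposition~\ref{prop akt-bt} with the uniform kernel bounds of Proposition~\ref{prop uniform estimate akt} (and their analogue for $\exp(-\lk-\bisconn_t^2)$) through the interpolation of \cite[\S 11 p)]{BL}, which is exactly your mollified-pairing step, and your choice $\varepsilon=t^{-1/(2(2\ell+1))}$, giving $\delta=1/(4\ell+2)$, is the same mollifier balance the paper carries out explicitly in the proof of Theorem~\ref{thm t-0 estimate ftnprh}.
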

Moreover by (\ref{definition nkt}), we see that as $t\rightarrow +\infty$,
\begin{equation}\label{nkt-nt}
    N_{K,t}-N_{t}=\mathcal{O}(t^{-1}).
\end{equation}
From Theorems \ref{thm estimate kernel akt bt} and (\ref{nkt-nt}), we get Theorem \ref{thm infty main1}.

\subsection{The asymptotics of supertraces near $0$.} Recall that by Theorem \ref{thm gK asym bku}, we see that $\Phi\str\big[g\exp(-\akt)\big]$ has a limit 
 as $t\to 0$. This is done by Liu-Ma in \cite[\S 5]{LiuMa2022} by using the local index theory and Bismut-Goette's technique in \cite[\S 7]{BismutGoette2000}. Now we prove a higher degree expansion of $\Phi\str\big[g\exp(-\akt)\big]$, this kind of expansions will be needed in order to to get the expansion \eqref{asymptotic-infty-gK} in Theorem \ref{thm asymptotic-gK} (cf. \eqref{expansion gammatgk}).

\begin{theorem}\label{thm asymptotic higher akt}
There exists $\beta>0$, $A_{r,g,K}\in P^S$ $(r\in \mathbb{N})$,
such that for $k\in \nn$, there exists $C_k>0$, for any $K\in \mathfrak{z}(g)\otimes_{\mathbb{R}}\mathbb{C}$, $|K|<\beta$, $t\in \,]0,1]$,
\begin{equation}\label{asymptotic higher akt}
    \big|\Phi\str\big[g\exp(-\akt)\big]-\sum_{r=0}^{k}t^r A_{r,g,K}\big|\leq C_k t^{k+1}.
\end{equation}
In particular, 
\begin{equation}
    A_{0,g,K}=\int_{X_g}\td_{g,K}(TX,h^{TX})\ch_{g,K}(\xi,h^{\xi}),
\end{equation}
\end{theorem}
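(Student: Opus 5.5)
We will prove Theorem \ref{thm asymptotic higher akt} by the local index theory method of Bismut--Goette \cite[\S 7]{BismutGoette2000}, as adapted to families by Liu--Ma \cite[\S 5]{LiuMa2022}, keeping track of all powers of $t$ rather than only the limit $t\to 0$.

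\textbf{Localization.} First we localize. By finite propagation speed and the uniform off-diagonal Gaussian estimates for the heat kernel of $\akt$, the supertrace $\str[g\exp(-\akt)]$ equals, up to $\mathcal{O}(e^{-c/t})$ uniformly for $|K|<\beta$, an integral over a tubular neighborhood $\mathcal{U}_\varepsilon$ of $W_g$ in $W$. The integrand is $\str[g\exp(-\akt)(g^{-1}x,x)]$. We use the normal coordinates $(y,Z)$, $y\in W_g$, $Z\in N_{X_g/X,\mathbb{R}}$, as in the proof of Proposition \ref{prop rho1}. Since $g$ acts on $N_{X_g/X}$ with no eigenvalue $1$, the factor $g$ produces Gaussian decay in $Z$ after rescaling.

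\textbf{Rescaling.} Next we apply the Getzler rescaling in the directions of $TX_g$, combined with the rescaling $f^p\mapsto f^p/\sqrt t$ of the horizontal Grassmann variables, and $Z\mapsto\sqrt t Z$. We follow \cite[\S 7]{BismutGoette2000}, where the term $\lk+\frac{c(K^X)}{2\sqrt{2t}}$ is treated with $K$ small. Let $L_t$ be the rescaled operator. Taylor-expanding the coefficients at $y$ in powers of $\sqrt t$ gives
\[
L_t = L_0 + \sum_{j=1}^{N} t^{j/2} O_j + \mathcal{O}(t^{(N+1)/2}),
\]
with remainder controlled in weighted Sobolev norms. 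The limit operator $L_0$ is a harmonic oscillator with equivariant curvature $R^{TX}_K$; its kernel is given by Mehler's formula \eqref{mehler formula}, and it produces $A_{0,g,K}$ exactly as in Theorem \ref{thm gK asym bku}.

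\textbf{Expansion.} The Volterra/Duhamel expansion of $\exp(-L_t)$ yields coefficients $t^{j/2}$ that are Gaussian integrals of polynomial expressions in $Z$. These are uniformly bounded for complex $K$ with $|K|<\beta$, since the Mehler kernel is analytic in $(F,H)$ near $0$. Half-integer powers vanish by a parity argument: odd total degree in $(Z,$ Clifford variables$)$ integrates to zero, as in \eqref{aj}--\eqref{b5}. This leaves the expansion $\sum_r t^r A_{r,g,K}$. That each $A_{r,g,K}\in P^S$ follows because every $\Phi\str[g\exp(-\akt)]\in P^S$ by Theorem \ref{thm gK asym bku}, and $P^S$ is closed under taking asymptotic coefficients.

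\textbf{Main obstacle.} The key difficulty is the uniform remainder estimate $C_k t^{k+1}$ in the presence of the non-self-adjoint, unbounded-below term coming from $\lk$ and $\frac{c(K^X)}{2\sqrt{2t}}$. Here the operator has no lower bound uniformly in $K$. To handle it, we first use Bismut--Goette's trick of conjugating by $\exp(\langle\mu,K\rangle/t)$-type weights, or equivalently we restrict to $|K|<\beta$ with $\beta$ small. This gives resolvent estimates for $L_t$ on the contour $\Gamma_c$ of \eqref{U,V,Gamma}, uniform in $t\in\,]0,1]$, in the rescaled Sobolev norms. With these estimates in hand, we run the contour-integral argument of \cite[\S 11]{BL} to control the kernel differences, exactly as in Proposition \ref{prop akt-bt}.
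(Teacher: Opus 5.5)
Your overall architecture matches the paper's: localization near $W_g$ up to an exponentially small error, Getzler rescaling with $Z\mapsto\sqrt{t}Z$, Taylor expansion of the rescaled operator in $\sqrt{t}$, parity to kill the half-integer powers, and identification of $A_{0,g,K}$. The gap is at the step you flag as the main obstacle, and that step is the whole content of Sections \ref{ed}--\ref{ee}.

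\textbf{Why smallness of $|K|$ does not give the resolvent estimates.} Expanding the square $-\frac12\bigl(\nabla_{e_i}+\frac12\langle(\jmath^*R^{TX}-m^{TX}(K))Z,e_i\rangle\bigr)^2$ in the limit operator \eqref{def of lxk30} produces the potential $-\frac18|m^{TX}(K)Z|^2$. For real $K$, the first-order term $\nabla_{m^{TX}(K)Z}$ contributes nothing to the real part of the quadratic form. Accordingly, Lemma \ref{lemma elliptic estimate lxk3t} gives only $\mathrm{Re}\langle L^{3,t}_{x,K}s,s\rangle_{t,x,0}\geq C_1|s|^2_{t,x,1}-C_2(1+|nK|^2)|s|^2_{t,x,0}$ for $s$ supported in $\{|Z|\leq n\}$. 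On the full support $|Z|\lesssim\varepsilon_0/\sqrt{t}$ the lower bound degenerates like $-C|K|^2/t$.

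This holds however small $\beta$ is, as soon as $m^{TX}(K)_x\neq0$. Then $L^{3,0}_{x,K}$ is an inverted harmonic oscillator: its spectrum meets every region $U_c$, and its Mehler kernel is not jointly integrable in $(Z,Z')$, so it does not generate a bounded semigroup on $L^2$. Consequently:
\begin{itemize}
\item there are no resolvent estimates on a fixed contour $\Gamma_c$ uniform in $t\in\,]0,1]$;
\item the remainder of a Duhamel/Volterra expansion cannot be controlled;
\item Proposition \ref{prop akt-bt} does not apply, being a large-time statement resting on the spectral gap of $D^X$.
\end{itemize}
Conjugation by $\exp(\langle\mu,K\rangle/t)$ is not an equivalent fix. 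By \eqref{af}, $\bisconn''_{K,t}$ and $\bisconn'_{K,t}$ arise from $\bisconn''_t$ and $\bisconn'_t$ by conjugation with the mutually inverse factors $e^{\pm i\langle\mu,K\rangle/2t}$. So $\bisconn_{K,t}^2$ is not conjugate to $\bisconn_t^2$, and no lower bound is transferred. The paper also obtains the localization not from heat-kernel Gaussian bounds for $\akt$, but from the splitting \eqref{F+I(akt)} and the trace-norm estimate \eqref{I(akt)}.

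\textbf{The missing idea.} What you need is the device of \cite[\S 7]{BismutGoette2000}, reproduced in Sections \ref{ed}--\ref{ee}.
\begin{itemize}
\item Write $\ft=\sum_n\widetilde F_{t,n}$ using a partition of unity $k_n$ in the Fourier variable \eqref{def of ftn}.
\item By finite propagation speed, on $\{|Z|\leq p\}$ the kernel of $\widetilde F_{t,n}(L^{3,t}_{x,K})$ only sees the truncated operator $L^{3,t}_{x,K,n+p}$ of \eqref{lxkn3t}. For $|K|\leq c_\eta$, its resolvent exists on the $n$-dependent region $U_{\eta(n+p)+d}$ (Theorem \ref{thm estimate of resolvent first}).
\item The decay $|\widetilde F_{t,n}(\lambda)|\leq Ce^{-C'n^2+C''c^2}$ on $V_c$, from \eqref{estimate ftn}, beats the $|nK|^2$ loss once $\eta$ is small.
\item The expansion in $u=\sqrt{t}$ is carried out on these contour integrals (Theorems \ref{thm QArknQ'}--\ref{thm t-0 estimate ftnprh}), not through a Duhamel series.
\end{itemize}
Even then, the kernel bounds \eqref{expansion ftlxk} hold only up to a factor $\exp(2C''\eta^2\sup(|Z|^2,|Z'|^2))$. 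Integrability over $N_{X_g/X}$, for the coefficients and above all for the remainder, therefore comes only from the off-diagonal factor $\exp(-C'''|Z-Z'|^2)$ evaluated at $(g^{-1}Z,Z)$. This uses $|g^{-1}Z-Z|\geq c|Z|$ from \eqref{distance gz z} and the choice of $\eta$ in \eqref{fix small eta}. Your remark that $g$ produces Gaussian decay points in this direction. But the uniform remainder $C_kt^{k+1}$ comes from the interplay of the Fourier decomposition, the $n$-dependent contours and the twist by $g$, not from the smallness of $|K|$ alone.
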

\begin{proof}
    For $k=0$, Theorem \ref{thm asymptotic higher akt} is established in \cite[Theorem 2.2]{LiuMa2022} by omitting its $dt$ part. And Section \ref{ed} is devoted to the proof of the  existence of the expansion (\ref{asymptotic higher akt}).
\end{proof}

Note that if $K=0$, it is a classical result that the kernel of $\exp(-\akt)$ has an asymptotic expansion as $t\to 0$. And the family  local index theorem (cf. \cite{B1986}) implies that the supertraces of negative degree part in the expansion vanishes. But now we have an extra $c(K^X)/2\sqrt{2t}$ term in $\akt$, so that we do not have an asymptotic expansion for its heat kernel  in general.  Thus we need to Bismut-Goette's technique as in \cite[\S 5]{LiuMa2022} to prove Theorem \ref{thm asymptotic higher akt}.

\subsection{Localization of the problem}\label{ed}
Let $a_X$ be the inf of the injectivity radius of the fibers $X$. Let $\alpha\in \,]0,a_X/8]$, the precise value of $\alpha$ will be fixed later. The constants $C>0$, $C'>0$... appearing later may depend on the choice of $\alpha$.

Let $f: \rr\rightarrow [0,1]$ be a smooth even function such that 
\begin{equation}\label{def of f}
    \begin{split}
        f(s)=&1\kongge\mathrm{for}\kongge |s|\leq \frac{\alpha}{2},\\ 
=&0 \kongge\mathrm{for}\kongge |s|\geq  {\alpha}.
    \end{split}
\end{equation}

For $t>0$, $a\in\cc$, put 
\begin{equation}\label{e2}
    \begin{split}
        &F_t(a)=\int_{-\infty}^{+\infty}\exp(is\sqrt{2}a)\exp\left(-\frac{s^2}{2}\right)f(\sqrt{t}s)\frac{ds}{\sqrt{2\pi}},\\ 
        &I_t(a)=\int_{-\infty}^{+\infty}\exp(is\sqrt{2}a/\sqrt{t})\exp\left(-\frac{s^2}{2}\right)(1-f(\sqrt{t}s))\frac{ds}{\sqrt{2\pi}}.
    \end{split}
\end{equation}
Then $F_t(a)$, $I_t(a)$ are even holomorphic functions of $a$ such that 
\begin{equation}\label{F+G}
    \exp(-a^2)=F_t(a)+I_t(\sqrt{t}a). 
\end{equation}
Moreover $F_t$ and $I_t$ both lie in the Schwartz space $\mathcal{S}(\rr)$. 
Let $\widetilde{F}_t(a)$,  $\widetilde{I}_t(a)$ be holomorphic functions such that
\begin{equation}\label{e3}
    F_t(a)=\widetilde{F}_t(a^2),\qquad\qquad I_t(a)=\widetilde{I}_t(a^2).
\end{equation}
By (\ref{F+G}), 
\begin{equation}\label{widetilde F,G,I}
    \exp(-a)=\widetilde{F}_t(a)+\widetilde{I}_t(ta).
\end{equation}
So from (\ref{widetilde F,G,I}), we have
\begin{equation}\label{F+I(akt)}
    \exp(-\akt)=\widetilde{F}_t(\akt)+\widetilde{I}_t(t\akt).
\end{equation}

Recall that for $s\in S$, the $0$-order Sobolev space $(\mathbb{E}_s^0, \product{\,,}_0)$ is a separable Hilbert space. We set the trace class 
\begin{equation}
    \mathcal{L}_1(\mathbb{E}_0^s)=\{A\in\mathcal{L}(\mathbb{E}_0^s)\,:\trace[(A^*A)^{1/2}]<+\infty\}.
\end{equation}
For $A\in \mathcal{L}_1(\mathbb{E}_0^s)$, set
\begin{equation}
    \|A\|_{(1)}=\trace[(A^*A)^{1/2}].
\end{equation}

By omitting the $dt$ part in \cite[Lemma 5.16]{LiuMa2022}, we get the following analogue of \cite[Theorem 7.15]{BismutGoette2000}.
\begin{lemma}
    There exist $\beta>0$, $c, C>0$ such that for $t\in \,]0,1]$, $K\in \mathfrak{g}\otimes_{\mathbb{R}}\mathbb{C}$, $|K|\leq \beta$, 
    \begin{equation}\label{I(akt)}
        \|\widetilde{I}_t(t\akt)\|_{(1)}\leq c\exp(-C/t).
    \end{equation}
\end{lemma}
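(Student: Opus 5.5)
The estimate \eqref{I(akt)} will follow the proof of \cite[Theorem 7.15]{BismutGoette2000} and \cite[Lemma 5.16]{LiuMa2022}, omitting the $dt$-terms. The argument combines:
\begin{itemize}
\item[(i)] a pointwise bound showing that $\widetilde{I}_t$ is exponentially small in $1/t$ on a parabolic region containing the spectrum of $t\akt$;
\item[(ii)] resolvent estimates for $t\akt$ in $t$-rescaled Sobolev norms, uniform in $t\in\,]0,1]$ and $|K|\leq\beta$;
\item[(iii)] a factorization that produces trace-class operators with only polynomial losses in $1/t$.
\end{itemize}

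\emph{Step (i): the scalar estimate.} By \eqref{def of f} and \eqref{e2}, the integrand defining $I_t(a)$ is supported in $|s|\geq \alpha/(2\sqrt t)$. For $|\mathrm{Im}\, a|\leq c$ we have $|\exp(is\sqrt2 a/\sqrt t)|\leq \exp(\sqrt2 c|s|/\sqrt t)$. On the support, $|s|/\sqrt t\leq 2s^2/\alpha$, so for $c$ small compared with $\alpha$ the Gaussian still dominates. Integrating by parts in $s$ then gives, for every $m\in\nn$,
\begin{equation*}
|a|^{2m}|I_t(a)|\leq C_m \exp(-C/t)\qquad\text{for } |\mathrm{Im}\,a|\leq c,\ t\in\,]0,1].
\end{equation*}
Via $I_t(a)=\widetilde I_t(a^2)$, this says $|\lambda|^m|\widetilde I_t(\lambda)|\leq C_m e^{-C/t}$ on the region $V_{c'}$ bounded by a parabola $\Gamma_{c'}$ as in \eqref{U,V,Gamma}.

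\emph{Step (ii): resolvent estimates.} For $t\in\,]0,1]$ I will use the norms
\begin{equation*}
|s|_{t,0}=\|s\|_0,\qquad |s|_{t,1}^2=\|s\|_0^2+t\|\nabla s\|_0^2,
\end{equation*}
with the dual norm $|\cdot|_{t,-1}$. I then prove the analogue of Lemma \ref{elliptic estimate infty} for $t\akt^{(0)}$ and $t\akt^{(>0)}$.

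In $t\akt=t\lk+(t^{1/2}\bisconn_{K,t})^2$:
\begin{itemize}
\item the Clifford term $t^{1/2}\cdot c(K^X)/(2\sqrt{2t})$ is bounded;
\item its anticommutator with $tD$ is of order $\sqrt t\,|K|\nabla$;
\item $t\lk=t\nabla_{K^X}+\mathcal O(t)$ is controlled by $C|K||s|_{t,1}|s|_{t,0}$.
\end{itemize}
For $|K|\leq\beta$ small, the Gårding-type lower bound $\mathrm{Re}\langle t\akt^{(0)}s,s\rangle\geq C_1|s|_{t,1}^2-C_2|s|_{t,0}^2$ therefore survives. As in Lemma \ref{resolvent infty} (cf. \cite[Theorem 9.15]{B1997}), this gives
\begin{equation*}
|(\lambda-t\akt)^{-1}s|_{t,1}\leq C(1+|\lambda|)^p|s|_{t,-1}\qquad\text{for }\lambda\in U_{c'} .
\end{equation*}
The positive-degree part $t\akt^{(>0)}$ is nilpotent, so the resolvent is obtained as a finite Neumann series. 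Commuting with the operators $\mathcal D$ then yields higher Sobolev bounds with constants that are polynomial in $1/t$.

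\emph{Step (iii): trace class.} Fix $N>\dim X$ and a constant $c_0$ large enough that $-c_0$ lies in $U_{c'}$. Write
\begin{equation*}
\widetilde I_t(t\akt)=(t\akt+c_0)^{-N}\,\frac{1}{2i\pi}\int_{\Gamma_{c'}}(\lambda+c_0)^N\widetilde I_t(\lambda)(\lambda-t\akt)^{-1}d\lambda .
\end{equation*}
By Step (ii), $(t\akt+c_0)^{-N}$ maps $\mathbb{E}^0$ into $\mathbb{E}^{N}$ with norm $\leq Ct^{-M}$ for some $M$, so its trace norm is $\leq C't^{-M'}$. The contour integral is bounded on $\mathbb{E}^0$ by $Ce^{-C/t}$, by Step (i) together with the polynomial resolvent bound. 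Absorbing $t^{-M'}$ into the exponential gives \eqref{I(akt)}.

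The main obstacle is Step (ii): obtaining resolvent bounds for the non-self-adjoint operator $t\akt$ that are uniform in $K$ near $0$. The difficulty is the $1/\sqrt t$ Clifford term and the first-order term $\lk$, which must be shown to be a small perturbation in the rescaled norms. I would treat this exactly as in \cite[\S 5]{LiuMa2022}, where this structure is already handled.
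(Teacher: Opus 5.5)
Your overall route is the paper's: the lemma is proved there only by quoting \cite[Lemma 5.16]{LiuMa2022} with the $dt$-part dropped, which is the family version of \cite[Theorem 7.15]{BismutGoette2000}. The sketch you give of that argument, however, breaks down in Step (ii).

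\textbf{The rescaled norm has the wrong scaling.} The degree-zero part is
\begin{equation*}
t\akt^{(0)}=t^{2}(D^{X})^{2}+t\lk+\frac{t}{2\sqrt{2}}\big[D^{X},c(K^{X})\big]-\frac{1}{8}\langle K^{X},K^{X}\rangle .
\end{equation*}
Its principal part is $-\frac{t^{2}}{2}\Delta$, so the appropriate norm is $\|s\|_0^2+t^2\|\nabla s\|_0^2$. You used $\|s\|_0^2+t\|\nabla s\|_0^2$, which is the large-time norm of Section 6.1 without the projections. In that norm the bound $\mathrm{Re}\langle t\akt^{(0)}s,s\rangle_0\geq C_1|s|_{t,1}^2-C_2|s|_{t,0}^2$ fails uniformly in $t$. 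For highly oscillating $s$, the left side grows like $t^2\|\nabla s\|_0^2$, while the right side grows like $C_1t\|\nabla s\|_0^2$.

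\textbf{Steps (i) and (ii) do not match.} Step (i) genuinely needs $c'$ small compared with $\alpha$. For $a=iy$ the integrand of $I_t$ is positive and peaks at $s=-\sqrt{2}y/\sqrt{t}$. Hence $|I_t(iy)|$ is of order $e^{y^2/t}$ as soon as $\sqrt{2}y\geq\alpha$, so $\widetilde I_t$ is exponentially large on $]-\infty,-\alpha^2/2]$. A G\aa rding or numerical-range argument ``as in Lemma \ref{resolvent infty}'' yields the resolvent only on $U_c$ for $c$ fixed by the G\aa rding constants ($c^2\geq C_2$, $4c^2\geq C_3^2/C_1$). That does not cover the small parabola $\Gamma_{c'}$ required by Step (i).

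\textbf{The missing idea.} You need the structure of $t\akt^{(0)}$ in the norm $\|s\|_0^2+t^2\|\nabla s\|_0^2$:
\begin{itemize}
\item its only $K$-independent part is the nonnegative self-adjoint operator $t^2(D^X)^2$;
\item the three remaining terms are bounded by $C|K|\,|s|_{t,1}|s|_{t,0}$;
\item hence its numerical range lies in a parabolic neighbourhood of $[-C|K|,+\infty[$ of aperture $O(|K|)$;
\item this neighbourhood sits inside $V_{c'/2}$ once $\beta$ is small compared with $c'^2$.
\end{itemize}
This gives the uniform resolvent bounds on $U_{c'}$, and it is exactly where the smallness of $\beta$ relative to $\alpha$ enters.

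With this in place, the rest goes through. The finite Neumann series in $t\akt^{(>0)}$ works, since that term is uniformly bounded from $|\cdot|_{t,1}$ to $|\cdot|_{t,-1}$, and your Step (iii) then applies. In Step (iii) you need $N>2\ell=\dim_{\mathbb{R}}X$, because $\dim X=\ell$ denotes the complex dimension in this paper.
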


From (\ref{F+I(akt)}) and (\ref{I(akt)}), we see that we only need to establish the expansion (\ref{asymptotic higher akt}) for $\str\big[g\widetilde{F}_t(\akt)\big]$ in place of $\str\big[g\exp(-\akt)\big]$.

For $s\in S$, $x,x'\in X_s$, let $\ft(\akt)(x,x')$ be the smooth kernel of the operator $\ft(-\akt)$ with respect to $dv_X/(2\pi)^{\ell}$. Then,
\begin{equation}\label{integral of supertrace F}
    \str\big[g\ft(\akt)\big]=\int_{X}\str\big[g\ft(\akt)(g^{-1}x,x)\big]\frac{dv_X}{(2\pi)^{\ell}}.
\end{equation}

For $\varepsilon>0$, let $B^X(x,\varepsilon)$ be the open ball in $X_s$ with center $x$ and radius $\varepsilon$. Using the finite propagation speed  for solutions of hyperbolic equations (cf. \cite[Appendix.D.2]{MaGeorge}), we find that given any $x\in X_s$, $\ft(\akt)(x,\cdot)$ vanishes on the complement of $B^X(x,\alpha)$ in $X_s$, and depends only on the restriction of the operator $\akt$ to the ball $B^X(x,\alpha)$. Therefore, we have shown that the proof of (\ref{asymptotic higher akt}) can be made local on $X_s$.

By the above, it follows that $g\ft(\akt)(g^{-1}x,x)$ vanishes if $d^{X_s}(g^{-1}x,x)\geq \alpha$. Here $d^{X_s}$ denote the distance on $(X_s, g^{TX})$.

Now we explain our choice of $\alpha$. Given $\varepsilon>0$, let $\uneighbor$ be the $\varepsilon$-neighborhood of $X_{s,g}$ in $N_{X_g/X,\rr}$. Recall that $N_{X_g/X,\rr}$ is identified with the orthogonal bundle to $T_{\rr}W_g$ in $T_{\rr}W|_{W_g}$. There exists $\varepsilon_0\in \,]0,a_X/32]$ such that if $\varepsilon\in \,]0,16\varepsilon_0]$, the map $(x,Z)\in N_{X_g/x,\rr}\rightarrow \exp_x(Z)$ is a diffeomorphism of $\uneighbor$ on the turbular neighborhood $\vneighbor$ of $X_{s,g}$ in $X_s$. In the sequel, we identify $\uneighbor$ and $\vneighbor$, and this identification is $g$-equivariant.

We will assume that $\alpha\in \,]0,\varepsilon_0]$ is small enough so that for any $s\in S$, $x\in X_s$, if $d^{X_s}(g^{-1}x,x)<\alpha$, then $x\in \mathcal{V}_{\varepsilon_0}$.

Let $k(x,Z)$ be the smooth function on $\mathcal{U}_{\varepsilon_0}$ such that 
\begin{equation}\label{def of k}
    dv_{X}(x,Z)=k(x,Z)dv_{X_g}(x)dv_{N_{X_g/X}}(Z).
\end{equation}
In particualr, $k|_{X_g}=1$.

By (\ref{integral of supertrace F}) and the above considerations, it follows that for $\beta>0$ small enough, our proof of (\ref{asymptotic higher akt}) is localized on the $\varepsilon_0$-neighborhood $\mathcal{V}_{\varepsilon_0}$ of $X_g$. And we have
\begin{equation}\label{calculate str gftakt}
    \begin{split}
   & \str\big[g\ft(\akt)\big]=\int_{\mathcal{U}_{\varepsilon_0}}\str\big[g\ft(\akt)(g^{-1}x,x)\big]\frac{dv_X}{(2\pi)^{\ell}}\\ 
=&\int_{(x,Z)\in \mathcal{U}_{\varepsilon_0/\sqrt{t}}}t^{\mathrm{dim}N_{X_g/X}}\str\big[g\ft(\akt)(g^{-1}(x,\sqrt{t}Z),(x,\sqrt{t}Z))\big]\\ 
&\qquad\qquad\qquad\qquad\qquad\qquad \cdot k(x,\sqrt{t}Z)\frac{dv_{X_g}(x)dv_{N_{X_g/X}}(Z)}{(2\pi)^{\ell}}.    
    \end{split}
\end{equation}

By using (\ref{F+I(akt)}), (\ref{I(akt)}) and (\ref{calculate str gftakt}), the expansion (\ref{asymptotic higher akt}) is implied by the following theorem.

\begin{theorem}\label{thm ft akt expansion}
    There exist $\beta>0$, $C_k>0$, for any $k\in \nn$, $K\in \mathfrak{z}(g)\otimes_{\mathbb{R}}\mathbb{C}$, $|K|<\beta$, $t\in \,]0,1]$, there are smooth forms $a_{r,g,K}$ with $r\in \mathbb{N}$ on $W_g$ such that 
    \begin{equation}
        \begin{split}
            &\Big|\frac{t^{\mathrm{dim}N_{X_g/X}}}{(2\pi)^{\dim X_g}}\int_{Z\in N_{X_g/X,\rr}\atop |Z|\leq \varepsilon_0/\sqrt{t}}\str\big[g\ft(\akt)(g^{-1}(x,\sqrt{t}Z),(x,\sqrt{t}Z))\big]\\ 
&\qquad\qquad\qquad\qquad\quad \cdot k(x,\sqrt{t}Z)\frac{dv_{N_{X_g/X}}(Z)}{(2\pi)^{\mathrm{dim}N_{X_g/X}}}-\sum_{r=0}^k t^r a_{r,g,K}\Big|\leq C_k t^{k+1}.
        \end{split}
    \end{equation}
\end{theorem}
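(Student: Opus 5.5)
We will follow the scheme of \cite[\S 7]{BismutGoette2000} as adapted to families in \cite[\S 5]{LiuMa2022}, keeping track of all powers of $t$ rather than only the constant term. In normal coordinates $(x,Z)$ near $x\in W_g$, we trivialize $\Lambda(T^{*(0,1)}X)\otimes\xi$ by parallel transport along radial geodesics. We then extend the resulting operator $\akt$ from the ball of radius $\varepsilon_0$ to the whole of $T_{\mathbb{R},x}X$ by a cutoff, which finite propagation speed allows: by the discussion preceding the statement, the kernel of $\ft(\akt)$ only sees $\akt$ on balls of radius $\alpha$.

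Next we perform the Getzler rescaling. We write $c(e_i)=\sqrt{2}e^i\wedge/\sqrt{t}-\sqrt{t}\sqrt{2}i_{e_i}$ for $e_i\in T_{\mathbb{R}}X_g$, leave the Clifford variables of $N_{X_g/X}$ unrescaled, rescale $Z\mapsto\sqrt{t}Z$, and replace the horizontal Grassmann variables $f^p$ by $f^p/\sqrt{t}$. The conjugated operator $L_t$ then has coefficients that are smooth in $\sqrt{t}$, with $L_t=L_0+\sum_{j\ge1}t^{j/2}\mathcal{O}_j$ as a Taylor expansion. Here $L_0$ is the harmonic-oscillator-type operator of \cite[(7.63)]{BismutGoette2000}, with the equivariant curvature $R_K^{TX}$. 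The term $c(K^X)/2\sqrt{2t}$ is the only dangerous one. By \cite[Lemma 7.15]{BGV}-type expansions of $K^X$ near $W_g$, after rescaling it contributes $\frac{1}{t}\langle m^{TX}(K)Z,\cdot\rangle$ on the $X_g$ directions plus bounded terms. In the $N_{X_g/X}$ directions it is not singular, because those Clifford variables are not rescaled and $g$ has no fixed vectors there. For $|K|<\beta$ small we can therefore absorb it, exactly as in \cite[Theorems 7.18--7.20]{BismutGoette2000}.

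The analytic core consists of uniform estimates for $L_t$ and its resolvent in weighted Sobolev norms on $T_{\mathbb{R},x}X$, uniformly in $t\in\,]0,1]$ and $|K|\le\beta$. These are obtained as in \cite[\S 7]{BismutGoette2000} and \cite[Lemmas 5.18--5.22]{LiuMa2022}, using the contour $\Gamma_c$ from (\ref{U,V,Gamma}) together with $\widetilde F_t(\akt)=\frac{1}{2i\pi}\int_{\Gamma_c}\widetilde F_t(\lambda)(\lambda-\akt)^{-1}d\lambda$. Expanding the resolvent $(\lambda-L_t)^{-1}$ in a Neumann series in $t^{1/2}$ and controlling the remainder by the uniform resolvent bounds, we obtain, for each $k$, an expansion of the kernel $\ft(L_t)(g^{-1}Z,Z)$ in powers $t^{j/2}$ up to order $k$. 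The remainder is bounded by $C_kt^{(k+1)/2}(1+|Z|)^{N}e^{-C|Z|^2}$, where the Gaussian decay in $Z\in N_{X_g/X}$ comes from $g$ acting without fixed vectors, as in \cite[Theorem 7.21]{BismutGoette2000}. Multiplying by the Taylor expansion of $k(x,\sqrt tZ)$ and integrating over $|Z|\le\varepsilon_0/\sqrt t$ gives the expansion, since the region $|Z|>\varepsilon_0/\sqrt t$ contributes $\mathcal{O}(t^\infty)$.

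It remains to see that only integer powers $t^r$ with nonnegative $r$ survive. Half-integer powers vanish after the $Z$-integration and supertrace by a parity argument: each $\mathcal{O}_j$ with $j$ odd changes the total parity of (degree in $Z$) plus (Clifford/Grassmann degree), as in (\ref{aj})--(\ref{b5}). Negative powers are excluded because, after rescaling, extracting the top Grassmann degree along $X_g$ and applying $\Phi$ yields no negative powers, which is the family local index cancellation of \cite{B1986}. We expect the main obstacle to be the uniform (in $t$ and $K$) resolvent estimates with Gaussian weights in the presence of the $1/t$-singular term $c(K^X)/2\sqrt{2t}$. There we will first try to transplant \cite[Lemma 5.16 and Theorem 5.19]{LiuMa2022} verbatim, dropping their $dt$ variables and checking that the higher-order Taylor remainders only add polynomially weighted perturbations.
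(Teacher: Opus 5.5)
Your overall architecture matches the paper's: finite-propagation localization near $X_g$, Getzler rescaling along $X_g$ only, expansion of the rescaled operator in powers of $\sqrt t$, Gaussian decay at $(g^{-1}Z,Z)$, and parity to kill half-integer powers. However, the analytic core as you state it would fail. Expanding the square in \eqref{lichnerowicz formula} shows that $\akt$ contains the potential $-|K^X|^2/8t$. After rescaling, for real $K$, the limit operator $L^{3,0}_{x,K}$ of \eqref{def of lxk30} has the real-valued, hence non-unitary, connection term $-\frac12\langle m^{TX}(K)Z,e_i\rangle$. Expanding the square produces the unbounded negative potential $-\frac18|m^{TX}(K)Z|^2$, so $L^{3,t}_{x,K}$ has no lower bound. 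The G{\aa}rding inequality of Lemma \ref{lemma elliptic estimate lxk3t} only holds for sections supported in $|Z|\le n$, with constant $C_2(1+|nK|^2)$. Consequently:
\begin{itemize}
\item there are no resolvent bounds on a fixed contour $\Gamma_c$ that are uniform in $t\in\,]0,1]$;
\item smallness of $|K|$ alone does not let you ``absorb'' the $K$-term;
\item a remainder of the form $C_kt^{(k+1)/2}(1+|Z|)^Ne^{-C|Z|^2}$ cannot be obtained from such estimates.
\end{itemize}
The missing idea is the Bismut--Goette decomposition $\widetilde F_t=\sum_n\widetilde F_{t,n}$. By finite propagation speed, for $|Z|\le p$ the kernel $\widetilde F_{t,n}(L^{3,t}_{x,K})(Z,\cdot)$ only involves the cut-off operator $L^{3,t}_{x,K,n+p}$ of \eqref{lxkn3t}. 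One then uses $n$-dependent contours $\Gamma_{\eta n+d}$ with $|K|\le c_\eta$ (Theorem \ref{thm estimate of resolvent first}), and balances the decay $\exp(-C'n^2+C''c^2)$ of $\widetilde F_{t,n}$ on $V_c$ (see \eqref{estimate ftn}) against this growth. The resulting kernel bound \eqref{expansion ftlxk} carries a \emph{growing} factor $\exp(2C''\eta^2\sup(|Z|^2,|Z'|^2)-C'''|Z-Z'|^2)$. Gaussian decay at $(g^{-1}Z,Z)$ follows only after choosing $\eta$ small relative to the constant in $|g^{-1}Z-Z|\ge c|Z|$, see \eqref{distance gz z} and \eqref{fix small eta}; it is not a direct consequence of $g$ having no fixed vectors on $N_{X_g/X}$. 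Inside this framework, your Neumann-series expansion of $(\lambda-L^{3,t}_{x,K,n+p})^{-1}$ would be an acceptable substitute for the paper's Taylor expansion in $u=\sqrt t$ with the derivative bounds of Theorem \ref{thm QArknQ'}.

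Your handling of the singular $K$-term is also off. At a generic $x\in W_g$ one has $K^X(x)\neq0$, since $K^X$ vanishes only on $W_{g,K}$, so an expansion of the type \cite[Lemma 7.15]{BGV} around the zeros of $K^X$ is not available there. Without a gauge change, the rescaled operator contains $\frac{1}{2\sqrt t}\nabla_{K^X(x)}$ and $-|K^X(x)|^2/8t$. These terms are scalar, so neither the choice of which Clifford variables to rescale nor the $g$-action helps. The paper removes them by trivializing with the connection ${}^1\nabla^{\mathbb{E},t}$ of \eqref{1nablat}, which contains $-K^{\prime X}/2t$. This gauge is $g$-invariant and therefore drops out at $(g^{-1}Z,Z)$. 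What remains is $-\frac12\langle m^{TX}(K)Z,e_i\rangle$ plus $t^{-1/2}h_i(K,\sqrt tZ)=\sqrt t\,\mathcal{O}(|Z|^2)$ in \eqref{explicit nabla ei'}, and this polynomial growth is why the weighted norms and cut-offs in $n$ are needed. Also, $\widetilde F_t$ itself depends on $t$. The paper uses that $f\equiv1$ near $0$, so the $u$-derivatives of $f(us)$ vanish at $u=0$; this kills \eqref{e8} and gives \eqref{estimate derivative ftnq difference}. An expansion of the resolvent alone does not account for this. Two minor points:
\begin{itemize}
\item the rescaled Clifford variable is $c_t(e_j)=\sqrt{2/t}\,e^j\wedge-\sqrt{t/2}\,i_{e_j}$, whereas yours squares to $-2$;
\item no extra rescaling of $f^p$ and no $\Phi$ is involved: the absence of negative powers follows directly from \eqref{getzler rescaling str}, because $L^{3,t}_{x,K}$ is regular at $t=0$.
\end{itemize}
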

\begin{proof}
    Sections 6.4-6.12 are devoted to the proof of this theorem.
\end{proof}

\subsection{Lichnerowicz formula}
Let $e_1,.., e_{2\ell}$ be the local orthogonal frame of $T_{\rr}X$. If $(F,\nabla^F)$ be a vector bundle with connection on $X$, we use the notation 
\begin{equation}
    (\nabla_{e_i}^F)^2=\sum_{i=1}^{2\ell}(\nabla_{e_i}^F)^2-\nabla_{\nabla_{e_i}^{TX}e_i}^F.
\end{equation}

Let $H$ be the scalar curvature of $X$. Let $f_{1},...,f_{2n}$ be a local basis of $T_{\rr}S$, with dual basis $f^1,..., f^{2n}$. By omitting the $dt$ part of \cite[Proposition 5.18]{LiuMa2022}, we obtain the following result.

\begin{proposition}
    The following identity holds,
    \begin{equation}\label{lichnerowicz formula}
        \begin{split}
            \akt=-\frac{t}{2}\Big(\nabla_{e_i}^{\Lambda(T^{*(0,1)}X)\otimes \xi}+\frac{1}{\sqrt{2t}}\product{S(e_i)e_j, f_p^H}c(e_j)f^p\wedge\qquad\qquad\qquad\qquad\qquad\\
             +\frac{1}{2t}\product{S(e_i)f_p^H,f_q^H}f^p\wedge f^q\wedge -\frac{\product{K^X,e_i}}{2t}\Big)^2+\frac{t}{8}H\\ 
            +\frac{t}{4}\bigl(R^{\xi}+\frac{1}{2}\trace[R^{TX}]\bigr)(e_i,e_j)c(e_i)c(e_j)
            +\sqrt{\frac{t}{2}}\bigl(R^{\xi}+\frac{1}{2}\trace[R^{TX}]\bigr)(e_i,f_p^H)c(e_i)f^p\\ 
            +\frac{1}{2}\bigl(R^{\xi}+\frac{1}{2}\trace[R^{TX}]\bigr)(f_p^H,f_q^H)f^p\wedge f^q\wedge-(m^{\xi}(K)+\frac{1}{2}\trace[m^{TX}(K)]).
        \end{split}
    \end{equation}
\end{proposition}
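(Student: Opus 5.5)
This is a pointwise Lichnerowicz-type identity for $\akt=\lk+\bisconn_{K,t}^2$. I will derive it by expanding the square and using the $K=0$ case together with the computations already done in the model situation of Section \ref{em}. Write
\[
\bisconn_{K,t}=\bisconn_t+\frac{c(K^X)}{2\sqrt{2t}},
\]
so that
\[
\akt=\lk+\bisconn_t^2+\Big[\bisconn_t,\frac{c(K^X)}{2\sqrt{2t}}\Big]+\frac{c(K^X)^2}{8t}.
\]

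The first ingredient is the classical Bismut Lichnerowicz formula for $\bisconn_t^2$ on a K\"ahler fibration (\cite[Theorem 3.5]{B1986}, \cite[Theorem 2.2]{BGS2}). It gives exactly the $K=0$ part of \eqref{lichnerowicz formula}: the rescaled square of $\nabla+\frac{1}{\sqrt{2t}}\langle S(e_i)e_j,f_p^H\rangle c(e_j)f^p+\frac{1}{2t}\langle S(e_i)f_p^H,f_q^H\rangle f^pf^q$, the term $\frac t8 H$, and the twisting curvature terms of $R^\xi+\frac12\trace[R^{TX}]$. Here $\Lambda(T^{*(0,1)}X)\otimes\xi$ is viewed as a spinor bundle twisted by $\xi\otimes\det(TX)^{1/2}$. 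By \eqref{clifford action}, $c(K^X)^2=-|K^X|^2$.

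Next, I will compute the cross terms in the three degrees separately.

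(i) In degree $0$, by \eqref{98}, $[\sqrt t D^X,c(K^X)]$ equals $-\sqrt{2t}\,\nabla_{K^X}$ plus a zeroth order term $\frac{\sqrt t}{\sqrt 2}\langle\nabla_{e_i}K^X,e_j\rangle c(e_i)c(e_j)$, exactly as in \eqref{bd}.

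(ii) In degree $1$, $[\nabla^{\mathbb E},c(K^X)]=c(\nabla^{TX}_{f_p^H}K^X)f^p$. Since $G$ preserves $T^HW$, the field $K^X$ commutes with horizontal lifts, so this reduces to $c(T(f_p^H,K^X))f^p$ up to $S$-terms.

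(iii) In degree $2$, $[c(T),c(K^X)]=2\langle T(f_p^H,f_q^H),K^X\rangle f^pf^q$.

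Finally, I will write $\lk=\nabla_{K^X}-m^{\Lambda\otimes\xi}(K)$. Using \eqref{action on exterior algebra}, $m^{\Lambda(T^{*(0,1)}X)}(K)=\frac14\langle m^{TX}(K)e_i,e_j\rangle c(e_i)c(e_j)+\frac12\trace[m^{TX}(K)]$. Since $\nabla^{TX}_\cdot K^X=m^{TX}(K)$ on the fibres, where $K^X$ is Killing, the $c(e_i)c(e_j)$ terms from $\lk$ and from (i) cancel. The net effect is $-\nabla_{K^X}$ from (i) plus $\nabla_{K^X}$ from $\lk$, leaving the cross terms needed to complete the square.

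To regroup, I expand $-\frac t2(\nabla_{e_i}+\cdots-\frac{\langle K^X,e_i\rangle}{2t})^2$. Its linear cross term with $\nabla_{e_i}$ reproduces $\frac12\nabla_{K^X}$ as in Section \ref{em}, and its quadratic term gives $-\frac{|K^X|^2}{8t}$, matching $\frac{c(K^X)^2}{8t}$. The mixed products of $\frac{\langle K^X,e_i\rangle}{2t}$ with the $S$-terms must be matched with (ii) and (iii) via \eqref{S and T} and Proposition \ref{prop T}, for instance through $\langle S(e_i)f_p^H,f_q^H\rangle K^X_i$ in terms of $\langle T(f_p^H,f_q^H),K^X\rangle$. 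What remains is $-(m^\xi(K)+\frac12\trace[m^{TX}(K)])$.

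The main obstacle is the bookkeeping of these horizontal mixed terms, i.e.\ checking that (ii) and (iii) exactly produce the completed-square cross terms with the correct $\sqrt2$ and sign conventions. Since the statement is precisely \cite[Proposition 5.18]{LiuMa2022} with the $dt$-component set to zero, the plan is to verify consistency of normalizations, $\bisconn_{K,t}^{\mathrm{ours}}=\bisconn_{K,t/2}^{\mathrm{LM}}$ and the $\det^{1/2}$ twist, and then import their computation rather than redo it.
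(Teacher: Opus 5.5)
Your proposal is correct. In its decisive step it coincides with the paper's proof, which consists solely of quoting \cite[Proposition 5.18]{LiuMa2022} with the $dt$-terms dropped, identifying their $\bisconn_{K,t}$ with $\bisconn_{K,2t}$ here and regarding $\Lambda(T^{*(0,1)}X)\otimes\xi$ as spinors twisted by $\xi\otimes\det(TX)^{1/2}$.

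Your supplementary direct check also closes once a few coefficients are corrected. After the prefactor $\frac{1}{2\sqrt{2t}}$, term (i) contributes $-\frac12\nabla_{K^X}$ (not $-\nabla_{K^X}$), so together with $\lk$ one gets exactly the $\frac12\nabla_{K^X}$ of the completed square. Term (ii) is exactly $f^p c(T(f_p^H,K^X))$, with no $S$-terms, because $[f_p^H,K^X]=0$. In (iii), $[c(T),c(K^X)]=-\product{T(f_p^H,f_q^H),K^X}f^p\wedge f^q$, so $-\frac{1}{8t}[c(T),c(K^X)]$ matches the square's cross term $\frac{1}{4t}\product{S(K^X)f_p^H,f_q^H}f^p\wedge f^q$ via \eqref{S and T}.
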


\subsection{A local coordinate system near $X_g$.}\label{ef}
Take $x\in W_g$,  the fiberwise exponential map $Z\in T_xX\rightarrow \exp_x^X(Z)\in X$ identifies $B^{T_xX}(0,16\varepsilon_0)$ with $B^X(x,16\varepsilon_0)$. With this identification, there exists a smooth function $k_x'(Z)$, $Z\in B^{T_xX}(0,a_X/2)$ such that
\begin{equation}\label{ea}
    dv_X(Z)=k_x'(Z)dv_{TX}(Z),\qquad \mathrm{with}\kongge k_x'(0)=1.
\end{equation}
We may and we will assume that $\varepsilon_0$ is small enough so that if $Z\in T_xX, |Z|\leq 4\varepsilon_0$,
\begin{equation}\label{g uniform}
    \frac{1}{2}g_x^{TX}\leq g_Z^{TX}\leq \frac{3}{2}g_x^{TX}.
\end{equation}

Recall that $K^{\prime X}$ is the one form dual to $K^X$ with respect to $g^{TX}$.
\begin{definition}
    Let $^1\nabla^{\mathbb{E},t}$ be the connection on $\pi^*\Lambda(T_{\rr}^*S)\,\widehat{\otimes }\,\Lambda(T^{*(0,1)}X)\otimes \xi$ along the fibers,
    \begin{equation}\label{1nablat}
        \begin{split}
            ^1\nabla^{\mathbb{E},t}_{\cdot}=\nabla_{\cdot}^{\mathbb{E}}+\frac{1}{\sqrt{2t}}\product{S(\cdot)e_j, f_p^H}c(e_j)f^p\wedge
            +\frac{1}{2t}\product{S(\cdot)f_p^H,f_q^H}f^p\wedge f^q\wedge -\frac{K^{\prime X}}{2t}.
        \end{split}
    \end{equation}
\end{definition}
In the sequel, we will trivialize $\pi^*\Lambda(T_{\rr}^*S)\,\widehat{\otimes }\,\Lambda(T^{*(0,1)}X)\otimes \xi$ by parallel transport along $s\in [0,1]\rightarrow sZ$ with respect to the connection $^1\nabla^{\mathbb{E},t}$. Note that it is often more convenient first to trivialize with respect to $^1\nabla^{\Lambda(T^{*(0,1)}X)\otimes \xi,1}$, and then replace $f^p$, $f^q$ and $K$ by $\frac{f^p}{\sqrt{t}}$, $\frac{f^q}{\sqrt{t}}$ and $\frac{K}{t}$. Observe that the above connections are $g$-invairant.

By omitting the $dt$ part in \cite[(5.80)]{LiuMa2022} and by using (\ref{action on exterior algebra}), we calculate that
\begin{equation}\label{curvature 1nablat}
    \begin{split}
       ( ^1\nabla^{\mathbb{E},1})^2(e_i,e_j)=\frac{1}{4}\product{R^{TX}(e_k,e_{\ell})e_i,e_j}c(e_k)c(e_{\ell})\hspace{10 em}\\ 
       +\frac{1}{\sqrt{2}}\product{R^{TX}(e_k,f_p^H)e_i,e_j}c(e_k)f^p\wedge
       +\frac{1}{2}\product{R^{TX}(f_p^H,f_q^H)e_i,e_j}f^p\wedge f^q\wedge\\ +\big(R^{\xi}+\frac{1}{2}\trace[R^{TX}]\big)(e_i,e_j)-\product{m^{TX}(K)e_i,e_j}.
    \end{split}
\end{equation}
When $K=0$, \eqref{curvature 1nablat} is the result of \cite[Theorem 4.14]{B1986} or \cite[Theorem 11.8]{B1997}.

Let $A,B$ be smooth sections of $T_{\rr}X$. By (\ref{1nablat}),
\begin{equation}\label{1nablat c}
    ^1\nabla^{\mathbb{E},1}_A c(B)=c(\nabla_A^{TX}B)+\sqrt{2}\product{S(A)B,f_p^H}f^p\wedge.
\end{equation}
Let $c^1(T_{\rr}X)\simeq T_{\rr}X$ be the set of the elements of length $1$  in $c(T_{\rr}X)$. It follows from (\ref{1nablat c}) that parallel transport along the curve $s\in [0,1]\rightarrow sZ$ with respect to $ ^1\nabla^{\Lambda(T^{*(0,1)}X)\otimes \xi,1}$ maps $c^1(T_{\rr}X)$ into $c^1(T_{\rr}X)\oplus T_{\rr}^*S$, while leaving $\Lambda(T_{\rr}^*S)$ invariant.

\subsection{Replacing $X$ by $(T_{\rr}X)_x$}\label{ei}
Let $\gamma(s)$ be a smooth even function from $\rr$ into $[0,1]$ such that
\begin{equation}\label{dt}
    \begin{split}
        \gamma(s)=&1\kongge\mathrm{for}\kongge |s|\leq \frac{1}{2},\\ 
=&0 \kongge\mathrm{for}\kongge |s|\geq  {1}.
    \end{split}
\end{equation}
If $Z\in (T_{\rr}X)_x$, put
\begin{equation}\label{du}
    \rho(Z)=\gamma\Big(\frac{|Z|}{4\varepsilon_0}\Big).
\end{equation}

For $x\in W_g$, let $\mathbf{H}_x$ be the vector space of smooth sections of $\big(\pi^*\Lambda(T_{\rr}^*S)\widehat{\otimes} \Lambda(T^{*(0,1)}X)\otimes\xi \big)_x$ over $(T_{\rr}X)_x$. Let $\Delta^{TX}$ be the standard Laplacian on the fibers of $T_{\rr}X$.

Let $L_{x,K}^{1,t}$ be the differential operator acting on $\mathbf{H}_x$,
\begin{equation}\label{lxk1t}
    L_{x,K}^{1,t}=-(1-\rho^2(Z))\frac{t}{2}\Delta^{TX}+\rho^2(Z)\akt.
\end{equation}

Let $\ft(L_{x,K}^{1,t})(Z,Z')$ be the smooth kernel of $\ft(L_{x,K}^{1,t})$ with respect to $dv_{TX}(Z')/(2\pi)^{\ell}$. Using the finite propagation speed for solutions of hyperbolic equations (cf. \cite[Appendix D.2]{MaGeorge}), we find that if $Z\in (N_{X_g/X,\rr})_x$, $|Z|\leq \varepsilon_0$, then
\begin{equation}\label{finite propagation ftakt=ftl}
    \ft(\akt)(g^{-1}Z,Z)k'_x(Z)=\ft(L_{x,K}^{1,t})(g^{-1}Z,Z).
\end{equation}
Thus in our proof of Theorem \ref{thm ft akt expansion}, we can replace $\akt$ by $L_{x,t}^{1,t}$.

\subsection{The Getzler rescaling.}\label{sec getzler rescaling}
Let $\mathrm{Op}_x$ be the set of scalar differential operators acting on $\mathbf{H}_x$. Then
\begin{equation}\label{lxk1t belong}
    L_{x,K}^{1,t}\in \big(\pi^*\Lambda(T_{\rr}^*S)\big)_x\,\widehat{\otimes }\, \ENDO\big((\Lambda(T^{*(0,1)}X)\otimes\xi)_x\big)\otimes \mathrm{Op}_x.
\end{equation}

For $t>0$, let $H_t: \mathbf{H}_x\rightarrow \mathbf{H}_x$ be the linear map
\begin{equation}\label{def of ht}
    H_th(Z)=h(Z/\sqrt{t}).
\end{equation}
For $t>0$, let $L_{x,K}^{2,t}$ be the differential operator acting on $\mathbf{H}_{x}$ defined by 
\begin{equation}\label{lxk2t}
    L_{x,K}^{2,t}=H_t^{-1}L_{x,K}^{1,t}H_t.
\end{equation}

By (\ref{lxk1t belong}), and since $\ENDO(\Lambda(T^{*(0,1)}X))=c(T_{\rr}X)\otimes_{\rr}\cc$,
\begin{equation}\label{lxk2t belong}
    L_{x,K}^{2,t}\in \bigl(\pi^*\Lambda(T_{\rr}^*S)_x \,\widehat{\otimes}\,  c(T_{\rr}X)\otimes \ENDO(\xi)\bigr)\otimes \mathrm{Op}_x.
\end{equation}

Recall that $\dim X=\ell$, $\dim X_g=\ell'$. Let $e_1,...,e_{2\ell'}$ be an orthonormal oriented basis of $(T_{\rr}X_g)_x$, let $e_{2\ell'+1},...,e_{2\ell }$ be an orthonormal oriented basis of $(N_{X_g/X,\rr})_x$, so that $e_1,...,e_{2\ell}$ is an orthonormal oriented basis of $(T_{\rr}X)_x$. We denote with an upper script the corresponding dual basis of $(T_{\rr}^*X)_x$.

For $1\leq j\leq 2\ell'$, the operators $e^j\wedge$ and $i_{e_j}$ act as odd operators on $\Lambda(T_{\rr}^*X_g)_x$.
\begin{definition}\label{def getzler rescaling}
    For $t>0$, put
    \begin{equation}\label{getzler rescaling}
        c_t(e_j)=\sqrt{\frac{2}{t}}e^j\wedge-\sqrt{\frac{t}{2}}i_{e_j},\kongge 1\leq j\leq 2l'.
    \end{equation}
\end{definition}

Let $L_{x,K}^{3,t}$ be th operator obtained from $L_{x,K}^{2,t}$ by replacing $c(e_j)$ by $c_t(e_j)$ for $1\leq j\leq 2\ell'$ while leaving the $c(e_j)$'s unchanged for $2\ell'+1\leq j\leq 2\ell$. 

From (\ref{lxk2t belong}), (\ref{getzler rescaling}),
\begin{equation}
    L_{x,K}^{3,t}\in \Bigl(\pi^*\Lambda(T_{\rr}^*S)\,\widehat{\otimes }\,\ENDO\big(\Lambda(T_{\rr}^*X_g)\,\widehat{\otimes }\, \Lambda(N_{X_g/X,\rr}^{*}) \otimes\xi\big)\Bigr)_x\otimes \mathrm{Op}_x.
\end{equation}

Let $\ft(L_{x,K}^{3,t})(Z,Z')$ be the smooth kenel associated to $\ft(L_{x,K}^{3,t})$ with respect to $dv_{TX}(Z')/(2\pi)^{\ell}$. 
We  write $\ft(L_{x,K}^{3,t})(Z,Z')$ in the form
\begin{equation}
    \begin{split}
    \ft(L_{x,K}^{3,t})(Z,Z')=\sum e^{i_1}\wedge...\wedge\, e^{i_p}i_{e_{j_1}}...i_{e_{j_q}}\widetilde{F}_{t,i_1,...,i_p}^{j_1,...,j_q}(Z,Z'),\qquad\qquad\quad\\ 
        1\leq i_1<...<i_p\leq 2\ell',\quad 1\leq j_1<...<j_q\leq 2\ell'.
    \end{split}
\end{equation}
where $\widetilde{F}_{t,i_1,...,i_p}^{j_1,...,j_q}(Z,Z')\in \pi^*\Lambda(T_{\rr}^*S)\,\widehat{\otimes }\, \bigl(c(N_{X_g/X,\rr})\otimes\ENDO(\xi)\bigr)_x$. Then we put
\begin{equation}
    [\ft(L_{x,K}^{3,t})(Z,Z')]^{\mathrm{max}}=\widetilde{F}_{t,1,2,...,2\ell'}.
\end{equation}
As an analogue of \cite[Proposition 7.25]{BismutGoette2000}, we have the following result.
\begin{proposition}
    For $Z\in (T_{\rr}X)_x$, $|Z|\leq \varepsilon_0/\sqrt{t}$, 
    \begin{equation}\label{getzler rescaling str}
        \begin{split}
        t^{\mathrm{dim}N_{X_g/X}}\str\big[g\ft(\akt)(g^{-1}(\sqrt{t}Z),\sqrt{t}Z)\big]k_x'(\sqrt{t}Z)=\qquad\qquad\qquad\\ 
        (-i)^{\ell'}\str^{\Lambda(N_{X_g/X}^*)\otimes \xi}\big[g\ft(L_{x,K}^{3,t})(g^{-1}Z,Z)\big]^{\mathrm{max}}.
        \end{split}
    \end{equation}
\end{proposition}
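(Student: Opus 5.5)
This identity is the standard Getzler-rescaling bookkeeping, as in \cite[Proposition 7.25]{BismutGoette2000}. It reduces to three effects: the change of variables $Z\mapsto\sqrt{t}Z$, the rescaling $H_t$, and the replacement of $c(e_j)$ by $c_t(e_j)$ for $1\leq j\leq 2\ell'$. We follow each in turn.

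First, by \eqref{finite propagation ftakt=ftl} with $\sqrt{t}Z$ in place of $Z$ (legitimate since $|\sqrt{t}Z|\leq\varepsilon_0$), we have $\ft(\akt)(g^{-1}\sqrt{t}Z,\sqrt{t}Z)k'_x(\sqrt{t}Z)=\ft(L^{1,t}_{x,K})(g^{-1}\sqrt{t}Z,\sqrt{t}Z)$. Here we use that the exponential coordinates and the trivialization by ${}^1\nabla^{\mathbb{E},t}$ are $g$-equivariant, so $g$ acts linearly on $(T_{\rr}X)_x$ and commutes with the radial parallel transport. Next, since $L^{2,t}_{x,K}=H_t^{-1}L^{1,t}_{x,K}H_t$, functional calculus gives $\ft(L^{2,t}_{x,K})=H_t^{-1}\ft(L^{1,t}_{x,K})H_t$. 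Computing kernels with respect to $dv_{TX}/(2\pi)^{\ell}$, the Jacobian of $Z\mapsto\sqrt{t}Z$ on the real $2\ell$-dimensional space yields $\ft(L^{2,t}_{x,K})(Z,Z')=t^{\ell}\ft(L^{1,t}_{x,K})(\sqrt{t}Z,\sqrt{t}Z')$. Since $g$ is linear, $g^{-1}\sqrt{t}Z=\sqrt{t}\,g^{-1}Z$. So the left-hand side of \eqref{getzler rescaling str} equals $t^{\dim N_{X_g/X}-\ell}\str[g\ft(L^{2,t}_{x,K})(g^{-1}Z,Z)]=t^{-\ell'}\str[g\ft(L^{2,t}_{x,K})(g^{-1}Z,Z)]$.

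It remains to compare the supertrace of the $L^{2,t}$-kernel with the $[\cdot]^{\max}$ of the $L^{3,t}$-kernel. Split $\Lambda(T^{*(0,1)}X)_x=\Lambda(T^{*(0,1)}X_g)_x\widehat{\otimes}\Lambda(N^{*(0,1)}_{X_g/X})_x$. Since $g$ acts trivially on $TX_g$ at $x\in W_g$, $g$ acts only on the second factor. The supertrace therefore factorizes as $\str^{\Lambda(T^{*(0,1)}X_g)}\otimes\str^{\Lambda(N^*)\otimes\xi}$. On $c(T_{\rr}X_g)\otimes\cc\cong\ENDO(\Lambda(T^{*(0,1)}X_g))$, the classical formula gives $\str[c(e_1)\cdots c(e_{2\ell'})]=(-2i)^{\ell'}$, and the supertrace vanishes on every other monomial $c(e_I)$. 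With the normalization \eqref{clifford action}, the convention needs checking against $(-i)^{\ell'}$ together with the $(2\pi)$-powers absorbed in the kernel normalization.

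The replacement $c(e_j)\mapsto c_t(e_j)$ is induced by the algebra homomorphism $\psi_t$ that identifies $c(T_{\rr}X_g)$ with $\Lambda(T^*_{\rr}X_g)\widehat{\otimes}\Lambda(T_{\rr}X_g)$-valued operators. Hence $\ft(L^{3,t}_{x,K})=\psi_t(\ft(L^{2,t}_{x,K}))$, and the coefficient of $e^1\wedge\cdots\wedge e^{2\ell'}$ picks out $(2/t)^{\ell'}$ times the top Clifford coefficient. Combining $t^{-\ell'}\cdot t^{\ell'}2^{-\ell'}(-2i)^{\ell'}$ gives $(-i)^{\ell'}$, which proves \eqref{getzler rescaling str}.

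The main obstacle is the constant bookkeeping. One must get the sign and power of $2$ from $\str[c(e_1)\cdots c(e_{2\ell'})]$ right under the convention $c(X)=\sqrt2X^*\wedge$, $c(\overline X)=-\sqrt2 i_{\overline X}$, and verify that the extra $T^*_{\rr}S$ components produced by parallel transport (see \eqref{1nablat c}) do not interfere. They do not, because those components are absorbed into $\pi^*\Lambda(T^*_{\rr}S)$ and the top coefficient is taken only in the $X_g$-directions. I would fix the constant by checking the case $\ell'=1$ directly.
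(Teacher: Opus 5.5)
Your proposal is correct and follows the same route as the paper. The paper gives no separate proof; it appeals to the Getzler-rescaling bookkeeping of \cite[Proposition 7.25]{BismutGoette2000}, and your three ingredients are exactly those. The Jacobian $t^{\ell}$ from $H_t$, the value $\str[c(e_1)\cdots c(e_{2\ell'})]=(-2i)^{\ell'}$, and the factor $(2/t)^{\ell'}$ from $[\cdot]^{\mathrm{max}}$ combine to $(-i)^{\ell'}$. The middle value does hold with the convention \eqref{clifford action} for the complex orientation: for $\ell'=1$, $c(e_1)c(J^{TX}e_1)$ acts by $-i$ on $\Lambda^0$ and by $+i$ on $\Lambda^1$. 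Your hesitation about $(2\pi)$-powers is unnecessary. All kernels here are taken with respect to $dv_{TX}/(2\pi)^{\ell}$, or $dv_X/(2\pi)^{\ell}$ converted by $k_x'$, so no such factor enters this identity.
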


For $A\in\bigl(\pi^*\Lambda(T_{\rr}^*S)\,\widehat{\otimes }\, c(TX)\otimes \ENDO(\xi) \bigr)_x\otimes \mathrm{Op}_x$, we denote by $[A]_t^{(3)}$ the differential operator obtained from $A$ by using Getzler rescaling  in Definition \ref{def getzler rescaling}.

Let $\tau e_j(Z)$ be the parallel transport of $e_j$ along the curve $t\in [0,1]\rightarrow tZ$ with respect to the connection $\nabla^{TX}$. By (\ref{1nablat c}), in the trivialization associated with $ ^1\nabla^{\mathbb{E},t}$,
\begin{equation}\label{ctauej}
    c(\tau e_j(Z))=c(e_j)+\sqrt{\frac{2}{t}}\bigl(\product{S(Z)e_j,f_p^H}+\mathcal{O}(|Z|^2)\bigr)f^p\wedge.
\end{equation}
From (\ref{ctauej}), for $1\leq j\leq 2\ell'$,
\begin{equation}\label{ct explicit 1}
    \big[c(\tau e_j(\sqrt{t}Z))\big]_t^{(3)}=c_t(e_j)+\sqrt{{2}}\bigl(\product{S(Z)e_j,f_p^H}+\mathcal{O}(\sqrt{t}|Z|^2)\bigr)f^p\wedge,
\end{equation}
for $2\ell'+1\leq j\leq 2\ell$,
\begin{equation}\label{ct explicit 2}
    \big[c(\tau e_j(\sqrt{t}Z))\big]_t^{(3)}=c(e_j)+\sqrt{{2}}\bigl(\product{S(Z)e_j,f_p^H}+\mathcal{O}(\sqrt{t}|Z|^2)\bigr)f^p\wedge.
\end{equation}

From (\ref{lichnerowicz formula}), (\ref{lxk1t}), (\ref{lxk2t}) and (\ref{getzler rescaling}), we calculate that
\begin{equation}\label{explicit Lxk2t}
    \begin{split}
        L_{x,K}^{3,t}=-\frac{1}{2}(1-\rho^2(\sqrt{t}Z))\Delta^{TX}+\rho^2(\sqrt{t}Z)\Big\{-\frac{g^{ij}(\sqrt{t}Z)}{2}\bigl(\nabla_{e_i}'\nabla_{e_j}'-\Gamma_{ij}^k(\sqrt{t}Z)\sqrt{t}\nabla_{e_k}' \bigr)\\ +\frac{t}{8}H_{\sqrt{t}Z} 
        +\frac{t}{4}\bigl(R_{\sqrt{t}Z}^{\xi}+\frac{1}{2}\trace[R_{\sqrt{t}Z}^{TX}]\bigr)(\tau e_i,\tau e_j)\big[c(\tau e_i(\sqrt{t}Z))c(\tau e_j(\sqrt{t}Z))\big]_t^{(3)}\\ 
        +\sqrt{\frac{t}{2}}\bigl(R_{\sqrt{t}Z}^{\xi}+\frac{1}{2}\trace[R_{\sqrt{t}Z}^{TX}]\bigr)(\tau e_i,f_p^H)\big[c(\tau e_i(\sqrt{t}Z))\big]_t^{(3)}f^p\wedge\qquad\qquad \\ 
        +\frac{1}{2}\bigl(R_{\sqrt{t}Z}^{\xi}+\frac{1}{2}\trace[R_{\sqrt{t}Z}^{TX}]\bigr)(f_p^H,f_q^H)f^p\wedge f^q\wedge 
        -(m_{\sqrt{t}Z}^{\xi}(K)+\frac{1}{2}\trace[m_{\sqrt{t}Z}^{TX}(K)])\Big\},
    \end{split}
\end{equation}
where $(g^{ij}(Z))$ is the inverse matrix of $(g_{ij}(Z))=(\product{e_i,e_j}_Z$), $(\nabla_{e_i}^{TX}e_j)_Z=\Gamma_{ij}^k(Z)e_k$ and $\nabla_{e_i}'=[^1\nabla_{e_i}^{\Lambda(T^{*(0,1)}X)\otimes \xi,t}]_t^{(3)}$ is computed by
\begin{equation}\label{explicit nabla ei'}
    \begin{split}
        \nabla_{e_i}'=\nabla_{e_i}+\frac{t}{8}\Bigl(\product{R_{x}^{TX}(e_k,e_{\ell})Z,e_i}+\mathcal{O}(\sqrt{t}|Z|^2)\Bigr)\big[c(
        \tau e_k(\sqrt{t}Z))c(\tau e_{\ell}(\sqrt{t}Z))\big]_t^{(3)}\\
        +\frac{\sqrt{t}}{2\sqrt{2}}\Bigl(\product{R_{x}^{TX}(e_j,f_p^{H})Z,e_i}+\mathcal{O}(\sqrt{t}|Z|^2)\Bigr)\big[c(
            \tau e_j(\sqrt{t}Z))\big]_t^{(3)}f^p\wedge\\ 
            +\frac{1}{4}\Bigl(\product{R_{x}^{TX}(f_p^H,f_q^H)Z,e_i}+\mathcal{O}(\sqrt{t}|Z|^2)\Bigr)f^p\wedge f^q\wedge\qquad\qquad\qquad\\ 
            +\frac{1}{2}\bigl(R^{\xi}(e_i,e_j)+\frac{1}{2}\trace[R^{TX}]+\mathcal{O}(\sqrt{t}|Z|^2)\bigr)-\frac{1}{2}\product{m^{TX}(K)e_i,e_j}+\frac{1}{\sqrt{t}}h_i(K,\sqrt{t}Z).
    \end{split}
\end{equation}
Here the smooth function $h_i(K,Z)$ depends linearly on $K\in\mathfrak{g}$ and $h_i(K,Z)=\mathcal{O}(|Z|^2)$ for $|K|\leq 1$. And the terms  $\mathcal{O}(\sqrt{t}|Z|^2)$ in (\ref{explicit nabla ei'}) are of the form $\frac{1}{\sqrt{t}}f(\sqrt{t}Z)$ for a smooth function $f(Z)=\mathcal{O}(|Z|^2)$.

Now we introduce a few notations to study explicitly the expansion of $L_{x,K}^{3,t}$ at $t=0$.
We denote by $\cc[\mathfrak{g}]$  the space of polynomials on the Lie algebra $\mathfrak{g}$ with complex coefficients. Let $\nabla_{e_i}$ be the ordinary differentiation operator on $TX$ in the direction $e_i$.

\begin{definition}\label{dr}
    For $r\in \mathbb{N}$,
we denote by $\Op_{r}$ the space of operators in the form 
\begin{align}\label{e4}
\mathcal{A}_{i,j,r}\nabla_{e_{i}}\nabla_{e_{j}}+\mathcal{B}_{\ell,r}\nabla_{e_{\ell}}+\mathcal{C}_{r}, 
\end{align}
where $\mathcal{A}_{i,j,r}$ is a homogeneous polynomial in $Z$ of degree $r$,  $\mathcal{B}_{\ell, r}$ (resp. $\mathcal{C}_{r}$) is a linear combination of 
\begin{equation}\label{Op}
    \begin{split}
    & Z^{\alpha}f^{k_1}\wedge...\wedge f^{k_s}\wedge e^{i_1}\wedge...\wedge e^{i_p}\, i_{e_{j_1}}...i_{e_{j_q}}\, c(e_{\ell_1})...c(e_{\ell_u}) \otimes \Omega\otimes \phi\, \\
     &\quad\in \pi^*\Lambda(T_{\rr}^*S)_x\,\widehat{\otimes }\,\ENDO(\Lambda(T_{\rr}^*X_g))_x\,\widehat{\otimes }\, \bigl(c(N_{X_g/X,\rr})\otimes\ENDO(\xi)\bigr)_x\otimes \cc[\mathfrak{g}]\\
     &   1\leq i_1<...<i_p\leq 2\ell',\quad 1\leq j_1<...<j_q\leq 2\ell',\quad 2\ell'+1\leq \ell_1<...<\ell_u\leq 2\ell,
    \end{split}
\end{equation}
$\Omega\in \ENDO(\xi)_{x}$, $\phi\in \mathbb{C}[\mathfrak{g}]$, such that $|\alpha|\leq r+1$, (resp. $\mathcal{C}_r$ is $\leq r+2$), $s+p+q+u\leq 2$ (resp. $\mathcal{C}_r$ is $\leq 4$) and $|\alpha|+u$ has the same parity as $r-1$ (resp. $r$).

\end{definition}

Let $\jmath: W_g\rightarrow W$ be the nature embedding.
\begin{definition}
    Let $L_{x,K}^{3,0}$ be the operator in $\bigl(\pi^*\Lambda(T_{\rr}^*S)\,\widehat{\otimes }\,\Lambda(T_{\rr}^*X_g)\,\widehat{\otimes }\, c(N_{X_g/X,\rr}) \otimes \ENDO(\xi)\bigr)_x\otimes \mathrm{Op}_x$ given by
    \begin{equation}\label{def of lxk30}
        \begin{split}
        L_{x,K}^{3,0}=-\frac{1}{2}\Bigl(\nabla_{e_i}+\frac{1}{2}\product{(\jmath^*R^{TX}-m^{TX}(K))Z,e_i}\Bigr)^2+\jmath^*R_x^{\xi}-m^{\xi}(K)_x\\ 
        +\frac{1}{2}\trace\big[\jmath^*R^{TX}-m^{TX}(K)\big].
        \end{split}
    \end{equation}
\end{definition}

\begin{theorem}\label{thm 1xk3t expansion}
    There exist $\mathcal{L}_r\in \Op_r$ $(r\in \mathbb{N})$ such that for any $m\in\nn$,
\begin{equation}\label{lxk3t expansion}
    L_{x,K}^{3,t}=\sum_{r=0}^{m}\frac{1}{r!}t^{r/2}\mathcal{L}_r(K)+R_{m+1}(t,Z)(K),
\end{equation}
with $\mathcal{L}_0(K)=L_{x,K}^{3,0}$.
 Let $t=u^2$, for any $k\in\nn$,  $|K|\leq 1$, $u\in \,]0,1]$, the coefficients of $\frac{\partial^k}{\partial u^k}R_{m+1}(t,Z)(K)$ together with their derivatives in $Z$ of order $\leq p$ are uniformly bounded by $C_{k,m,p} t^{\max\{m+1-k,0\}/2}(1+|Z|)^{m+3+k}$.
\end{theorem}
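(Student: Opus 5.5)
We expand each ingredient of $L_{x,K}^{3,t}$ in \eqref{explicit Lxk2t} as a Taylor series in $\sqrt{t}$, at fixed $Z$, and then collect the terms by power of $t^{1/2}$. The substitutions $Z\mapsto\sqrt{t}Z$ together with the Getzler rescaling \eqref{getzler rescaling} turn every coefficient into a function of the form $t^{-a/2}f(\sqrt{t}Z)$, where $f$ vanishes to a controlled order at $Z=0$. Expanding $f(\sqrt{t}Z)=\sum_k t^{k/2}f_k(Z)+\dots$, where $f_k$ is homogeneous of degree $k$, therefore produces polynomial coefficients in $Z$ multiplied by powers of $t^{1/2}$. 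We set $\mathcal{L}_r$ equal to $r!$ times the coefficient of $t^{r/2}$. Because $\rho(\sqrt{t}Z)=1$ to infinite order at $t=0$ for fixed $Z$, and the $(1-\rho^2)\Delta^{TX}$ part is flat in $t$ at fixed $Z$, the cutoff terms contribute only to the remainder.

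The first step is to check that no negative powers of $t$ survive. There are three sources to control. The terms $\frac{t}{4}R(\tau e_i,\tau e_j)[c c]_t^{(3)}$ carry $c_t(e_j)$ with its factor $\sqrt{2/t}\,e^j\wedge$, so they give $t\cdot t^{-1}$ at worst. The term $\sqrt{t/2}\,R(\tau e_i,f_p^H)[c]_t^{(3)}f^p$ gives $t^{1/2}t^{-1/2}$. The connection term \eqref{explicit nabla ei'} contains $\frac{1}{\sqrt{t}}h_i(K,\sqrt{t}Z)$ with $h_i=\mathcal{O}(|Z|^2)$, which is $\mathcal{O}(\sqrt{t}|Z|^2)$, and the $\mathcal{O}(\sqrt{t}|Z|^2)$ corrections in \eqref{ct explicit 1}, \eqref{ct explicit 2} are handled the same way. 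Setting $t=0$ in each term recovers the quadratic Lichnerowicz-type operator. Here $\frac{1}{8}\langle R(e_k,e_l)Z,e_i\rangle c_t c_t$ with $t\cdot(2/t)$ gives $\frac{1}{4}\langle R(e_k,e_l)Z,e_i\rangle e^ke^l$, and the horizontal pieces give $\jmath^*R^{TX}$ in total degree. Combining these with $-\frac12\langle m^{TX}(K)e_i,e_j\rangle$, which produces $\frac12\langle m^{TX}(K)Z,e_i\rangle$ after contracting with $Z$ in the radial gauge, yields \eqref{def of lxk30}. This identification repeats the computation behind \cite[Theorem 4.14]{B1986} and \cite[\S 7]{BismutGoette2000}, with the $K$-terms read off from \eqref{curvature 1nablat}. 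So $\mathcal{L}_0(K)=L_{x,K}^{3,0}$.

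The second step is to verify that each $\mathcal{L}_r$ lies in $\Op_r$ as in Definition \ref{dr}. The degree bounds follow by counting. Each factor $t^{1/2}$ is paired either with a power of $Z$ from Taylor expansion, or with the replacement of an $e^j\wedge$ by an $i_{e_j}$ term in $c_t$, or with a Clifford variable $c(e_\ell)$ for $\ell>2\ell'$. The parity rule, that $|\alpha|+u\equiv r$ (mod 2) with the appropriate shift for the first-order coefficients, comes from the invariance of $L^{3,t}_{x,K}$ under the combined operation $t^{1/2}\mapsto -t^{1/2}$, $Z\mapsto -Z$, $c(e_\ell)\mapsto -c(e_\ell)$ for normal directions. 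The exterior-degree bounds $s+p+q+u\le 2$ (resp. $\le4$) hold because $\akt$ is at most quadratic in Clifford/form variables in each term, and $\nabla'_{e_i}$ is at most quadratic, so its square is at most quartic.

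The main obstacle is the remainder estimate, uniformly in $|K|\le1$ and with $u$-derivatives. We write $R_{m+1}$ using the integral form of Taylor's theorem in $u=\sqrt{t}$ for each smooth coefficient $u\mapsto f(uZ)$ or $u^{-1}h(uZ)$. Differentiating $k$ times in $u$ costs at most $|Z|^k$ and lowers the order of vanishing in $u$ by $k$, which produces the bound $t^{\max\{m+1-k,0\}/2}(1+|Z|)^{m+3+k}$. The exponent $m+3$ comes from the top-order coefficients allowing degree up to $r+2$ in $Z$. The cutoff terms involving $\rho(\sqrt{t}Z)$ are supported where $|Z|\ge 2\varepsilon_0/\sqrt{t}$. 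There $1\le(2\varepsilon_0)^{-N}t^{N/2}|Z|^N$, so they are absorbed into the same bound. Uniformity in $K$ follows because $K$ enters linearly, through $m^{TX}(K)$, $m^\xi(K)$ and $h_i(K,\cdot)$, with smooth coefficients on the compact set $W_g$, or locally uniformly over compact subsets of $S$.
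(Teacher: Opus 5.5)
Your proposal is correct and is essentially the paper's proof. You Taylor-expand the smooth coefficients, which have the form $t^{i/2}d(\sqrt{t}Z)\rho^2(\sqrt{t}Z)$, in $u=\sqrt{t}$ with integral remainder, and you put the cutoff term $(1-\rho^2(\sqrt{t}Z))\Delta^{TX}$ entirely into $R_{m+1}$ using that $1-\rho^2(\sqrt{t}Z)\le(\sqrt{t}|Z|/2\varepsilon_0)^n$ for every $n$.

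The only difference is that you spell out the checks the paper compresses into the assertion that each such term has $\omega\in\Op_i$: no negative powers of $t$ survive, $\mathcal{L}_0=L^{3,0}_{x,K}$, and the parity condition, which your symmetry $(u,Z,c(e_\ell))\mapsto(-u,-Z,-c(e_\ell))$ on normal directions correctly encodes. There is one harmless sign slip: the radial-gauge $K$-term is $-\frac{1}{2}\langle m^{TX}(K)Z,e_i\rangle$, as in $L^{3,0}_{x,K}$, not $+\frac{1}{2}\langle m^{TX}(K)Z,e_i\rangle$.
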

\begin{proof}

    By using (\ref{ctauej})-(\ref{explicit nabla ei'}), we see that $L_{x,K}^{3,t}$ consists of two sorts of terms,
    \begin{equation}\label{ti/2drho}
        t^{i/2}d(\sqrt{t}Z)\rho^2(\sqrt{t}Z)\omega(K),\qquad (1-\rho^2(\sqrt{t}Z))\Delta^{TX},
    \end{equation} 
    where $\omega\in\Op_i$, and $d(Z)\rho^2(Z)$ is a smooth function which is uniformly bounded together with its derivatives in $Z$. 
    
   By using  the  Taylor expansion of  $d(\sqrt{t}Z)\rho^2(\sqrt{t}Z)$, we can write
\begin{equation}\label{lxk3t expansion first}
    t^{i/2}d(\sqrt{t}Z)\rho^2(\sqrt{t}Z)\omega=\sum_{r=0}^{m}t^{r/2}D_r+r_{m+1}(t,Z),
\end{equation}
    where $D_r\in \Op_r$. Let $t=u^2$, for any $k\in\nn$, the coefficients of $\frac{\partial^k}{\partial u^k}r_{m+1}(t,Z)$  together with their derivatives in $Z$ of order $\leq p$ are uniformly bounded by  $C_{k,m,p}'t^{\max\{m+1-k,0\}/2}(1+|Z|)^{m+3+k}$ for $|K|\leq 1$, $u\in \,]0,1]$.

From (\ref{du}), we see that for any $n\in\nn$, the function $(1-\rho^{2}(\sqrt{t}Z))$ is uniformly bounded by $(\sqrt{t}|Z|/2\varepsilon_0)^n$. By  (\ref{ti/2drho}) and (\ref{lxk3t expansion first}), we obtain (\ref{lxk3t expansion}).
\end{proof}

\subsection{A family of norms}
For $x\in W_g$, let $\mathbf{I}_x$ be the vector space of smooth sections of $\bigl(\pi^*\Lambda(T_{\rr}^*S)\,\widehat{\otimes }\,\Lambda(T_{\rr}^*X_g)\,\widehat{\otimes }\, \Lambda(N_{X_g/X}^{*(0,1)}) \otimes  \xi\bigr)_x$ on $(T_{\rr}X)_x$, let $\mathbf{I}_{(r,q),x}$ be the  space of smooth sections of $\bigl(\pi^*\Lambda^r(T_{\rr}^*S)\,\widehat{\otimes }\, \Lambda^q(T_{\rr}^*X_g)\,\widehat{\otimes }\, \Lambda(N_{X_g/X}^{*(0,1)}) \otimes \xi\bigr)_x$. We denote by $\mathbf{I}_x^0=\oplus_{r,q}\mathbf{I}_{(r,q),x}^0$ the corresponding vector space of square integrable sections. 
\begin{definition}
    If $s\in \mathbf{I}_{(r,q),x}$ has compact support, put
    \begin{equation}\label{a family of norms}
        |s|_{t,x,0}^2=\int_{(T_{\rr}X)_x}|s|^2\Bigl(1+|Z|\rho\big(\frac{\sqrt{t}Z}{2}\big)\Bigr)^{2(2\ell'+2\mathrm{dim}S-r-q)}dv_{TX}(Z).
    \end{equation}
    Then $(\mathbf{I}_{x}^0, |\cdot|_{t,x,0})$ form a Hilbert spcae, and let $\product{\kongge,\kongge}_{t,x,0}$ be the corresponding Hermitian product.
\end{definition}

\begin{definition}
    If $s\in \mathbf{I}_x$ has compact support, put
    \begin{equation}
        \begin{split}
        &|s|_{t,x,1}^2=|s|_{t,x,0}^2+\sum_{1}^{2\ell}|\nabla_{e_i}s|_{t,x,0}^2,\\ 
          &  |s|_{t,x,-1}=\sup_{0\neq s'\in\mathbf{I}_x}\frac{|\product{s,s'}_{t,x,0}|}{|s'|_{t,x,1}}.
        \end{split}
    \end{equation}
\end{definition}

Let $(\mathbf{I}_x^1,|\cdot|_{t,x,1})$ be the Hilbert closure of the above vector spaces with respect to $|\cdot|_{t,x,1}$. Let $(\mathbf{I}_{x}^{-1},|\cdot|_{t,x,-1})$ be the antidual of $(\mathbf{I}_{x}^{1},|\cdot|_{t,x,1})$. Then $(\mathbf{I}_{x}^1,|\cdot|_{t,x,1})$ and $(\mathbf{I}_{x}^{0},|\cdot|_{t,x,0})$ are densely embedded in $(\mathbf{I}_{x}^{0},|\cdot|_{t,x,0})$ and $(\mathbf{I}_{x}^{-1},|\cdot|_{t,x,-1})$ with norms smaller than $1$ respectively.

We recall the following estimate in \cite[Theorem 7.31]{BismutGoette2000}, \cite[Lemma 5.27]{LiuMa2022}.
\begin{lemma}\label{lemma elliptic estimate lxk3t}
    There exist constants $C_i>0$, $i=1,2,3,4$, such that if $t\in \,]0,1]$, $K\in\mathfrak{g}\otimes \mathbb{C}$, $|K|\leq 1$, if $n\in \nn$,  if the support of $s,s'\in\mathbf{I}_{x}$ is included in $\{Z\in (T_{\rr}X)_{x}:|Z|\leq n\}$, then
\begin{equation}
    \begin{split}
        &\mathrm{Re}\product{L_{x,K}^{3,t}s,s}_{t,x,0}\geq C_1|s|_{t,x,1}^2-C_2(1+|nK|^2)|s|_{t,x,0}^2,\\ 
        &|\mathrm{Im}\product{L_{x,K}^{3,t}s,s}_{t,x,0}|\leq C_3\bigl((1+|nK|)|s|_{t,x,1}|s|_{t,x,0}+|nK|^2|s|_{t,x,0}^2\bigr),\\ 
        &|\product{L_{x,K}^{3,t}s,s'}_{t,x,0}|\leq C_4(1+|nK|^2)|s|_{t,x,1}|s'|_{t,x,1}.
    \end{split}
\end{equation}
\end{lemma}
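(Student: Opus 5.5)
This is the analogue of \cite[Theorem 7.31]{BismutGoette2000} and \cite[Lemma 5.27]{LiuMa2022}. I will compute $\langle L_{x,K}^{3,t}s,s\rangle_{t,x,0}$ directly from the explicit formula \eqref{explicit Lxk2t}, \eqref{explicit nabla ei'}, treating the weight in \eqref{a family of norms} as a perturbation.

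\textbf{Step 1: the principal part.} The second-order part of $L_{x,K}^{3,t}$ is
\begin{equation*}
-\frac{1}{2}(1-\rho^2)\Delta^{TX}-\frac{\rho^2}{2}g^{ij}(\sqrt{t}Z)\nabla_{e_i}'\nabla_{e_j}'.
\end{equation*}
By \eqref{g uniform}, $g^{ij}$ is uniformly elliptic on the support of $\rho(\sqrt{t}Z)$. Integrating by parts against the weight $w_t(Z)=(1+|Z|\rho(\sqrt{t}Z/2))^{2N}$ gives $C\sum_i|\nabla_{e_i}s|_{t,x,0}^2$. The derivatives of $w_t$ and of $g^{ij}$ produce first-order error terms, and $|\nabla w_t|\le Cw_t$ uniformly in $t$. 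Hence these errors are bounded by $C|s|_{t,x,1}|s|_{t,x,0}$. By Cauchy--Schwarz they are absorbed into $C_1|s|_{t,x,1}^2-C|s|_{t,x,0}^2$.

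\textbf{Step 2: lower-order terms; main obstacle.} The terms in $\nabla_{e_i}'$ and in the potential fall into two classes.

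\emph{Grassmann terms.} These contain $f^p\wedge$, $e^j\wedge$ (from $c_t$), and $Z$-growth of the form $\langle R^{TX}(\cdot,\cdot)Z,e_i\rangle$ together with the $O(\sqrt{t}|Z|^2)$ corrections. They are nilpotent and raise the degree $r+q$. The weight exponent $2\ell'+2\dim S-r-q$ in \eqref{a family of norms} is designed exactly so that each power of $|Z|$ gained is compensated by the drop of the weight in higher degree. I expect this bookkeeping to be the main obstacle, since growing coefficients must be shown to be bounded operators between the weighted spaces.

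\emph{Terms in $K$.} These are $-\tfrac12\langle m^{TX}(K)e_i,e_j\rangle$, the term $\tfrac{1}{\sqrt{t}}h_i(K,\sqrt{t}Z)=O(|K||Z|^2\sqrt{t})$ (bounded by $|K||Z|$ on the support), and the moment terms in the potential. On $\{|Z|\le n\}$ they are bounded by $C(1+|nK|)$ in first order and $|nK|^2$ after squaring in $(\nabla'_{e_i})^2$. Expanding the square, the cross terms give $C|nK||s|_{t,x,1}|s|_{t,x,0}$ and the squared terms give $|nK|^2|s|_{t,x,0}^2$.

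\textbf{Step 3: assembling the three estimates.}
\begin{itemize}
\item Taking real parts, the $\nabla$-cross terms are skew up to weight derivatives. Combined with Step 1 and Young's inequality, this yields the first estimate with constant $C_2(1+|nK|^2)$.
\item The imaginary part sees no principal contribution, since the principal part is symmetric modulo weight errors. Only the first-order terms and the $K$-potential survive, which gives the second estimate.
\item For the third estimate, integrate by parts once and apply Cauchy--Schwarz with the same coefficient bounds.
\end{itemize}

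All constants are uniform in $x\in W_g$ by compactness of the fibers and of the relevant part of $S$, as in \cite{BismutGoette2000}.
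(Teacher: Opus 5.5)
Your proposal is correct and is essentially the argument behind the paper's citation of Bismut--Goette (Theorem 7.31) and Liu--Ma (Lemma 5.27). Its ingredients are uniform ellipticity of the principal part, the degree-dependent weight in $|\cdot|_{t,x,0}$ making the Getzler-rescaled Grassmann coefficients bounded (together with their adjoints), and the bound $|K||Z|\le |nK|$ for the $K$-terms on $\{|Z|\le n\}$, followed by Young's inequality. The one imprecision is that not all Grassmann terms raise the degree: the $-\sqrt{t/2}\,i_{e_j}$ parts of $c_t(e_j)$ lower it, and they are bounded because their powers of $t$ absorb the resulting weight increase, using $\sqrt{t}|Z|\le 4\varepsilon_0$ on the support of $\rho(\sqrt{t}Z)$.
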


\subsection{The kernel $\ft(L_{x,K}^{3,t})$ as an infinite sum}
Let $h:\rr\rightarrow [0,1]$ be a smooth even function such that
\begin{equation}
    \begin{split}
       h(s)=&1\kongge\mathrm{for}\kongge |s|\leq \frac{1}{2},\\ 
=&0 \kongge\mathrm{for}\kongge |s|\geq  {1}.
    \end{split}
\end{equation}

For $n\in\nn$, put 
\begin{equation}
    h_n(u)=h(u+n/2)+h(u-n/2).
\end{equation}
Then $h_n$ is a smooth even function whose support is included in $[-\frac{n}{2}-1,-\frac{n}{2}+1]\cup [\frac{n}{2}-1,\frac{n}{2}+1]$. 
Set
\begin{equation}
    H(u)=\sum_{n\in\nn}h_n(u).
\end{equation}
The above sum is locally finite, and $H(u)$ is a bounded smooth even function which takes positive values and has a positive values and has a positive lower bound on $\rr$. Put
\begin{equation}
    k_n(u)=\frac{h_u}{H}(u).
\end{equation}
Then the $k_n$ are bounded even smooth functions with bounded derivatives, and moreover 
\begin{equation}\label{sum kn}
    \sum_{n\in\nn}k_n=1.
\end{equation}
\begin{definition}
    For $t\in [0,1]$, $n\in\nn$, $a\in\cc$, put
    \begin{equation}\label{def of ftn}
        F_{t,n}(a)=\int_{-\infty}^{+\infty}\exp(is\sqrt{2}a)\exp(-\frac{s^2}{2})f(\sqrt{t}s)k_n(s)\frac{ds}{\sqrt{2\pi}}.
    \end{equation}
\end{definition}
By (\ref{sum kn}),
\begin{equation}\label{sum ftn}
    F_t(a)=\sum_{n\in\nn}F_{t,n}(a).
\end{equation}
Also given $m,m'\in\nn$, there exist $C>0$, $C'>0$, $C''>0$ such that for any $n\in\nn$, $c>0$,
\begin{equation}\label{ftn estimate}
    \sup_{|\mathrm{Im}(a)|\leq c}|a|^{m}|F_{t,n}^{(m')}(a)|\leq C\exp(-C'n^2+C'' c^2).
\end{equation}
Let $\widetilde{F}_{t,n}(a)$ be the unique holomorphic function such that 
\begin{equation}
    F_{t,n}(a)=\widetilde{F}_{t,n}(a^2).
\end{equation}
Recall that $V_c$ is defined in (\ref{U,V,Gamma}), we note that $V_c$ are the images of $\{\lambda\in\cc:|\mathrm{Im}(\lambda)|\leq c\}$ by the map $\lambda\rightarrow\lambda^2$. So by (\ref{ftn estimate}),  given $m,m'\in\nn$, there exist $C>0$, $C'>0$, $C''>0$ such that for any $n\in\nn$, $c>0$, if $\lambda\in V_c$,
\begin{equation}\label{estimate ftn}
    |\lambda|^{m}|\widetilde{F}_{t,n}^{(m')}(\lambda)|\leq C\exp(-C'n^2+C'' c^2).
\end{equation}
By (\ref{sum ftn}),
\begin{equation}\label{sum widetilde ftn}
    \widetilde{F}_t(a)=\sum_{n\in\nn}\widetilde{F}_{t,n}(a).
\end{equation}

Using (\ref{sum widetilde ftn}), we get
\begin{equation}\label{as infinite sum}
    \widetilde{F}_t(L_{x,K}^{3,t})=\sum_{n\in\nn}\widetilde{F}_{t,n}(L_{x,K}^{3,t}).
\end{equation}
More precisely, by (\ref{estimate ftn}) and using the standard elliptic estimates, given $t\in \,]0,1]$, we have the identity
\begin{equation}\label{sum ftn kernel}
    \widetilde{F}_t(L_{x,K}^{3,t})(Z,Z')=\sum_{n\in\nn}\widetilde{F}_{t,n}(L_{x,K}^{3,t})(Z,Z')
\end{equation}
and the series in the right-hand side of (\ref{sum ftn kernel}) converges uniformly together with its derivatives on the compact sets in $T_xX\times T_xX$.
\begin{definition}
 For $t\geq 0$, put
 \begin{equation}\label{lxkn3t}
    L_{x,K,n}^{3,t}=-\Bigl(1-\gamma^2\Bigl(\frac{|Z|}{2(n+2)}\Bigr)\Bigr)\frac{\Delta^{TX}}{2}+\gamma^2\Bigl(\frac{|Z|}{2(n+2)}\Bigr)L_{x,K}^{3,t}.
 \end{equation}
\end{definition}

Observe that if $k_n(s)\neq 0$, then $|s|\leq \frac{n}{2}+1$. Using finite propagation speed and (\ref{g uniform}), we find there is $C>0$ such that if $Z\in (T_{\rr}X)_x$, the support of $\widetilde{F}_{t,n}(L_{x,K}^{3,t})(Z,\cdot)$ is included in $\{Z'\in (T_{\rr}X)_x, |Z'-Z|\leq 2n+2\}$. Therefore, given $p\in\nn$, if $Z\in (T_{\rr}X)_x$, $|Z|\leq p$, the support of $\widetilde{F}_{t,n}(L_{x,K}^{3,t})(Z,\cdot)$ is included in $\{Z'\in (T_{\rr}X)_x, |Z'|\leq n+p+2\}$.

If $|Z|\leq n+p+2$, then $\gamma(|Z|/2(n+p+2))=1$. Using \eqref{e2}, \eqref{e3} and the finite propagation speed for solutions of hyperbolic equations, we see that if $Z\in (T_{\rr}X)_{x}$, $|Z|\leq p$, then 
\begin{equation}\label{finite propagation n+p}
    \widetilde{F}_{t,n}(L_{x,K}^{3,t})(Z,Z')=\widetilde{F}_{t,n}(L_{x,K,n+p}^{3,t})(Z,Z').
\end{equation}

\subsection{Estimates on the resolvent of $L_{x,K,n}^{3,t}$.}
From Lemma \ref{lemma elliptic estimate lxk3t}, we see that if $s, s'\in \mathbf{I}_x$ have compact support,
\begin{equation}\label{elliptic estimate lxkn3t}
    \begin{split}
        &\mathrm{Re}\product{L_{x,K,n}^{3,t}s,s}_{t,x,0}\geq C_1|s|_{t,x,1}^2-C_2(1+|nK|^2)|s|_{t,x,0}^2,\\ 
        &|\mathrm{Im}\product{L_{x,K,n}^{3,t}s,s}_{t,x,0}|\leq C_3\bigl((1+|nK|)|s|_{t,x,1}|s|_{t,x,0}+|nK|^2|s|_{t,x,0}^2\bigr),\\ 
        &|\product{L_{x,K,n}^{3,t}s,s'}_{t,x,0}|\leq C_4(1+|nK|^2)|s|_{t,x,1}|s'|_{t,x,1}.
    \end{split}
\end{equation}

Let $\mathcal{Q}_x$ be the family of operators 
\begin{equation}
    \mathcal{Q}_x=\{\nabla_{e_i}, 1\leq i\leq 2\ell\}.
\end{equation}
For $j\in\nn$, let $\mathcal{Q}_x^j$ be the set of operators  $Q_1\dots Q_j$ with $Q_i\in\mathcal{Q}_x$, $1\leq i\leq j.$
If $s\in \mathbf{I}_x$ has compact support, put 
\begin{equation}
    \|s\|_{t,x,k}^2=\sum_{j=0}^{k}\sum_{Q\in \mathcal{Q}_x^j}|Qs|_{t,x,0}^2.
\end{equation}
Let $(\mathbf{I}_x^k,\|\cdot\|_{t,x,k})$ be the corresponding closure with respect to the norm $\|\cdot\|_{t,x,k}$. If $k,k'\in\nn$ and if $A\in \mathcal{L}(\mathbf{I}_x^{k},\mathbf{I}_x^{k'})$, let $\|A\|_{t,x}^{k,k'}$ be the norm of $A$ with respect to the norms $\|\cdot\|_{t,x,k}$, $\|\cdot\|_{t,x,k'}$.

By using (\ref{elliptic estimate lxkn3t}), we follow exactly in the same way as in proofs of \cite[Theorems 7.34, 7.37]{BismutGoette2000}, we obtain the following result.
\begin{theorem}\label{thm estimate of resolvent first}
    Given $\eta>0$, there exist $c_{\eta}\in \,]0,1]$, $C>0$, $d>0$ such that if $t\in \,]0,1]$, $K\in \mathfrak{g}\otimes_{\mathbb{R}}\mathbb{C}$, $|K|\leq c_{\eta}$, $n\in\nn$, $x\in W_g$, $\lambda\in U_{\eta n+d}$ , the resolvent $(\lambda-L_{x,K,n}^{3,t})^{-1}$ exists, and moreover
    \begin{equation}\label{resolvent lxkn3t estimate first}
        \begin{split}
        &\|(\lambda-L_{x,K,n}^{3,t})^{-1}\|_{t,x}^{0,0}\leq \frac{4}{(\eta n+d)^2},\\ 
         &   \|(\lambda-L_{x,K,n}^{3,t})^{-1}\|_{t,x}^{-1,1}\leq C(1+|n|^2)(1+|\lambda|^2).
        \end{split}
    \end{equation} 
\end{theorem}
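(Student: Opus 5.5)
The plan is to derive both bounds in \eqref{resolvent lxkn3t estimate first} from the three inequalities \eqref{elliptic estimate lxkn3t}, which hold for the \emph{full} operator $L_{x,K,n}^{3,t}$, nilpotent parts included. The weighted norms \eqref{a family of norms} are built precisely so that the terms of positive degree in $\Lambda(T_{\rr}^*S)\,\widehat{\otimes}\,\Lambda(T_{\rr}^*X_g)$ are absorbed. Such a term carries a factor $Z^\alpha$, but it lowers the exponent $2\ell'+2\dim S-r-q$ of the weight $(1+|Z|\rho(\sqrt tZ/2))$. On the support $\{|Z|\le 4(n+2)\}$ of the cutoff $\gamma^2(|Z|/2(n+2))$ the operator therefore only costs factors $|nK|$, which is the content of Lemma \ref{lemma elliptic estimate lxk3t}. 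I would not separate off a degree-zero part at any point. The argument works directly with the sesquilinear form $\langle (L_{x,K,n}^{3,t}-\lambda)s,s\rangle_{t,x,0}$ for compactly supported $s\in\mathbf{I}_x$, in the manner of \cite[Theorem 11.27]{BL} and \cite[Theorems 7.34, 7.37]{BismutGoette2000}.

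\emph{Step 1 (lower bound on the form).} Write $\lambda=a+ib$ with $\lambda\in U_c$, $c=\eta n+d$, so $a\le b^2/4c^2-c^2$. By \eqref{elliptic estimate lxkn3t},
\begin{equation*}
\mathrm{Re}\langle (L-\lambda)s,s\rangle\ge C_1|s|_1^2-\bigl(C_2(1+|nK|^2)+a\bigr)|s|_0^2,
\end{equation*}
and
\begin{equation*}
|\mathrm{Im}\langle (L-\lambda)s,s\rangle|\ge |b||s|_0^2-C_3\bigl((1+|nK|)|s|_1|s|_0+|nK|^2|s|_0^2\bigr).
\end{equation*}
I would take $c_\eta\le\eta/4$, so that $|nK|\le \eta n/4\le c/4$. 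The $|s|_1|s|_0$ term is then absorbed by Young's inequality into $C_1|s|_1^2$ at the cost of $C(1+|nK|)^2|s|_0^2$. Next I would split into the cases $|b|\le 2c^2$ and $|b|\ge 2c^2$. In the first case the real part gives the bound, since $-a\ge c^2-b^2/4c^2$. In the second case the imaginary part gives it. The aim of this step is
\begin{equation*}
|\langle (L-\lambda)s,s\rangle|\ge \tfrac{c^2}{4}|s|_0^2 .
\end{equation*}
This requires choosing $d$ large in terms of $C_1,C_2,C_3$ and $\eta$, so that all error terms of size $C(1+|nK|)^2\le C(1+c/4)^2$ stay below a small fraction of $c^2$. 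Combined with Cauchy--Schwarz, this gives $|s|_0\le \frac{4}{c^2}|(L-\lambda)s|_0$.

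\emph{Step 2 (existence of the resolvent).} The operator $L_{x,K,n}^{3,t}$ coincides with $-\frac12\Delta^{TX}$ for $|Z|\ge 4(n+2)$, so it is a closed elliptic operator on $\mathbf{I}_x^0$ with domain $\mathbf{I}_x^2$. The Step 1 bound shows that $L-\lambda$ is injective with closed range. Its formal adjoint with respect to $\langle\cdot,\cdot\rangle_{t,x,0}$ satisfies the same three estimates with the same constants, hence the same lower bound, so the range of $L-\lambda$ is dense. Thus $(\lambda-L)^{-1}$ exists, and Step 1 gives $\|(\lambda-L)^{-1}\|^{0,0}_{t,x}\le 4/(\eta n+d)^2$.

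\emph{Step 3 (the $(-1,1)$ bound).} From the real-part inequality I get
\begin{equation*}
C_1|s|_1^2\le |(L-\lambda)s|_{-1}|s|_1+\bigl(C(1+n^2)+|\lambda|\bigr)|s|_0^2 .
\end{equation*}
Fix a base point $\lambda_0=-c^2-1$. There the real part alone gives the coercive bound $|s|_1\le C|(L-\lambda_0)s|_{-1}$. Then I would use the resolvent identity
\begin{equation*}
R(\lambda)=R(\lambda_0)+(\lambda-\lambda_0)R(\lambda)R(\lambda_0).
\end{equation*}
The $(0,0)$ bound from Step 2 handles the middle factor. Feeding $|s|_0^2\le |s|_{-1}|s|_1$ back into the display above then yields $\|R(\lambda)\|^{-1,1}\le C(1+n^2)(1+|\lambda|)^2$.

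The main obstacle is Step 1. The constants have to be uniform in $n$, and the factor in the $(0,0)$ bound has to be exactly $4/(\eta n+d)^2$. This forces the careful choice of $c_\eta$ against the $|nK|$ and $|nK|^2$ losses in \eqref{elliptic estimate lxkn3t}, and of $d$ against $C_2$ and $C_3^2/C_1$. The boundary case of the parabola $\Gamma_c$, where neither the real part nor the imaginary part is dominant, needs to be checked explicitly.
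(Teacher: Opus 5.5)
Your overall route is the one the paper follows. Its proof feeds the three inequalities \eqref{elliptic estimate lxkn3t} for the full operator into the argument of \cite[Theorems 7.34, 7.37]{BismutGoette2000}. Your Steps 2 and 3 are sound. The gap is in Step 1, which carries the whole content of the statement.

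\emph{The choice of $c_\eta$.} Taking $c_\eta\le\eta/4$ only gives $|nK|\le c/4$ with $c=\eta n+d$. The loss $C_2|nK|^2|s|_0^2$ in the real part can then be of size $C_2c^2/16$, and the same goes for $C_3^2(1+|nK|)^2|s|_0^2/C_1$ after Young. This exceeds $c^2$ once $C_2>16$, and then the inequalities no longer exclude numerical-range points inside $U_c$ (for instance near $-C_2c^2/16<-c^2$). Because this loss grows like $c^2$ as $n\to\infty$, enlarging $d$ cannot make it a small fraction of $c^2$. You must take $c_\eta=\epsilon\eta$ with $\epsilon$ small in terms of $C_1,C_2,C_3$.

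\emph{The case split.} For $|b|\le 2c^2$, one can have $a=0$ near $|b|=2c^2$, so the real part gives nothing there. For $|b|\ge 2c^2$, absorbing $C_3(1+|nK|)|s|_1|s|_0$ into $C_1|s|_1^2$ means using the real part. That real part contains $-a|s|_0^2$, which can be as negative as $-(b^2/4c^2-c^2)|s|_0^2$. So neither case closes as written.

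A clean replacement, which also explains the constant $4$, is to show that the numerical range lies in $V_{c/2}$. Normalize $|s|_{t,x,0}=1$ and put $u=|s|_{t,x,1}\ge 1$, $\mu=\langle L_{x,K,n}^{3,t}s,s\rangle_{t,x,0}$. Assume $|nK|\le\epsilon c$ with $\epsilon$ small and $d$ large. Then $(\mathrm{Im}\,\mu)^2/c^2\le 2C_3^2\bigl((1+|nK|)^2u^2+|nK|^4\bigr)/c^2\le \tfrac{C_1}{2}u^2+\tfrac{c^2}{8}$. Also $\mathrm{Re}\,\mu\ge C_1u^2-C_2(1+|nK|^2)\ge C_1u^2-\tfrac{c^2}{8}$. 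Hence $\mathrm{Re}\,\mu\ge(\mathrm{Im}\,\mu)^2/c^2-c^2/4$, that is, $\mu\in V_{c/2}$.

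Now $U_c=\{\alpha^2:|\mathrm{Im}\,\alpha|\ge c\}$ and $V_{c/2}=\{\beta^2:|\mathrm{Im}\,\beta|\le c/2\}$. So $|\alpha^2-\beta^2|=|\alpha-\beta|\,|\alpha+\beta|\ge c^2/4$. This gives $|\langle(L_{x,K,n}^{3,t}-\lambda)s,s\rangle_{t,x,0}|\ge\tfrac{c^2}{4}|s|_{t,x,0}^2$ for all $\lambda\in U_c$, with no case distinction, and hence exactly $4/(\eta n+d)^2$.

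Your Steps 2 and 3 then apply unchanged. Step 3 can even be done directly: combine $\tfrac{c^2}{4}|s|_0^2\le|(L-\lambda)s|_{-1}|s|_1$ with your real-part inequality. This gives $|s|_1\le C(1+n^2+|\lambda|)|(L-\lambda)s|_{-1}$ without the resolvent identity.
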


\begin{theorem}
    Given $\eta>0$, $k\in\nn$, there exist $m_k\in\nn$, $C_k>0$ such that such if $t\in \,]0,1]$, $K\in \mathfrak{g}\otimes_{\mathbb{R}}\mathbb{C}$, $|K|\leq c_{\eta}$, $n\in\nn$, $x\in W_g$, $\lambda\in U_{\eta n+d}$ , the resolvent $(\lambda-L_{x,K,n}^{3,t})^{-1}$ maps $\mathbf{I}_x^{k}$ into $\mathbf{I}_x^{k+1}$, and moreover
    \begin{equation}\label{resolvent lxkn3t estimate high}
        \begin{split}
        &\|(\lambda-L_{x,K,n}^{3,t})^{-1}\|_{t,x}^{k,k+1}\leq C_k\bigl((1+n^2)(1+|\lambda|)\bigr)^{m_k}.
        \end{split}
    \end{equation} 
\end{theorem}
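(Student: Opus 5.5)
The plan is to derive the bound on $\|(\lambda-L_{x,K,n}^{3,t})^{-1}\|_{t,x}^{k,k+1}$ by induction on $k$, in the way \cite[Theorem 7.37]{BismutGoette2000} is derived from \cite[Theorem 7.34]{BismutGoette2000}. The base case $k=0$ follows from Theorem \ref{thm estimate of resolvent first}. Since $\|\cdot\|_{t,x,0}\le \|\cdot\|_{t,x,1}$ up to a uniform constant, the inequality $\|s\|_{t,x,-1}\le |s|_{t,x,0}$ holds. Composing with the $(-1,1)$ bound in \eqref{resolvent lxkn3t estimate first} then gives $\|(\lambda-L_{x,K,n}^{3,t})^{-1}\|_{t,x}^{0,1}\le C(1+n^2)(1+|\lambda|^2)$ for $\lambda\in U_{\eta n+d}$.

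For the inductive step, fix $Q\in\mathcal{Q}_x^{j}$ with $j\le k+1$. I would commute $Q$ through the resolvent using the standard identity
\begin{equation*}
Q(\lambda-L)^{-1}=(\lambda-L)^{-1}Q+(\lambda-L)^{-1}[Q,L](\lambda-L)^{-1},
\end{equation*}
where $L=L_{x,K,n}^{3,t}$, iterated for products. This reduces the problem to controlling iterated commutators $[Q_1,[Q_2,\dots[Q_p,L]\dots]]$ as maps $\mathbf{I}_x^{1}\to\mathbf{I}_x^{-1}$.

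The main work is to show a uniform bound of the form
\begin{equation*}
\bigl|\langle[Q_1,[\dots[Q_p,L]\dots]]s,s'\rangle_{t,x,0}\bigr|\le C_p(1+n^2)\,|s|_{t,x,1}|s'|_{t,x,1},
\end{equation*}
uniformly for $t\in\,]0,1]$ and $|K|\le c_\eta$. The explicit form \eqref{explicit Lxk2t}--\eqref{explicit nabla ei'} together with the cutoff in \eqref{lxkn3t} gives this. Differentiating in $Z$ a coefficient of the form $f(\sqrt t Z)\rho^2(\sqrt tZ)$ produces factors $\sqrt t$, which are harmless. Differentiating a Getzler-rescaled term, such as $\frac{1}{\sqrt t}h_i(K,\sqrt tZ)$ or the $\mathcal{O}(\sqrt t|Z|^2)$ terms, lowers the polynomial growth in $Z$ and does not worsen it. The cutoff $\gamma(|Z|/2(n+2))$ confines the support to $|Z|\lesssim n$, so the polynomial growth costs at most powers of $(1+n)$. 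Two further points keep the estimate uniform. The weight in \eqref{a family of norms} compensates the powers of $|Z|$ attached to the degree-raising operators $f^p\wedge$ and $e^{i}\wedge$. The derivative of the weight itself is bounded by $C(1+|Z|)^{-1}$ times the weight, so commuting $\nabla_{e_i}$ with the weight is also controlled.

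With these commutator bounds, the argument closes by induction. Each term in the expansion of $Q(\lambda-L)^{-1}s$ is a product of at most $k+2$ resolvents and commutators. Each resolvent gains one Sobolev order at cost $C(1+n^2)(1+|\lambda|^2)$, and each commutator loses at most one order at cost $C(1+n^2)$. This gives the claimed bound with $m_k$ roughly $2(k+2)$. The main obstacle is the uniformity of the commutator estimates in $t\to0$ and $n$: the $K^{\prime X}/2t$ term in \eqref{1nablat} is singular in $t$ before rescaling. I would check that after the substitution $Z\mapsto\sqrt tZ$ it becomes $\frac{1}{\sqrt t}h_i(K,\sqrt tZ)=\mathcal{O}(\sqrt t|Z|^2)$ with all $Z$-derivatives of order $\ge2$ bounded. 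Its growth in $|Z|\le n$ is then exactly what forces the assumption $|K|\le c_\eta$ and the $(1+n^2)$ factors, as in \cite[Lemma 5.27]{LiuMa2022}.
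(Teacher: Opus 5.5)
Your proposal is correct and follows the same route as the paper, which proves this only by deferring to the argument of \cite[Theorem 7.37]{BismutGoette2000}. That argument commutes the $\nabla_{e_i}$'s through the resolvent, bounds the iterated commutators $[Q_1,[\dots[Q_p,L_{x,K,n}^{3,t}]\dots]]$ as maps $\mathbf{I}_x^1\to\mathbf{I}_x^{-1}$ polynomially in $n$, and feeds in the $(-1,1)$ resolvent bound of Theorem \ref{thm estimate of resolvent first}, exactly as you propose. One correction to your final count: the commutators are second order, so each resolvent gains two orders ($\mathbf{I}_x^{-1}\to\mathbf{I}_x^{1}$) and each commutator loses two ($\mathbf{I}_x^{1}\to\mathbf{I}_x^{-1}$), not one. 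This is precisely what your stated commutator estimate provides, so the conclusion is unaffected.
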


For $t=u^2$, we now proceed as in \cite[\S 4.1.4]{MaGeorge} to obtain the estimates of $\frac{\partial^{r}}{\partial u^{r}}(\lambda-L_{x,K,n}^{3,t})^{-1}$.
For $k,r\in \nn^*$, let
\begin{equation}
    I_{{k},{r}}=\Big\{(\mathbf{k},\mathbf{r})=(k_i,r_i):\sum_{i=0}^j k_i=k+j,\quad \sum_{i=1}^j r_i=r,\quad j,k_i,r_i\in\nn^*\Big\}.
\end{equation}

For $(\mathbf{k},\mathbf{r})\in I_{k,r}$, $\lambda\in U_{\eta n +d}$, $t=u^2\in \,]0,1]$, set 
\begin{equation}\label{arnklambdat}
    A_{\mathbf{r},n}^{\bf{k}}(\lambda,t)=(\lambda-L_{x,K,n}^{3,t})^{-k_0}\frac{\partial^{r_1}L_{x,K,n}^{3,t}}{\partial u^{r_1}}(\lambda-L_{x,K,n}^{3,t})^{-k_1}...\frac{\partial^{r_j}L_{x,K,n}^{3,t}}{\partial u^{r_j}}(\lambda-L_{x,K,n}^{3,t})^{-k_j}.
\end{equation}
Then there exist $a_{\bf{r}}^{\bf{k}}\in\rr$ such that
\begin{equation}\label{sum of arnklambdat}
    \frac{\partial^{r}}{\partial u^{r}}(\lambda-L_{x,K,n}^{3,t})^{-k}=\sum_{(\bf{k},\bf{r})\in I_{k,r}}a_{\bf{r}}^{\bf{k}} A_{\mathbf{r},n}^{\bf{k}}(\lambda,t).
\end{equation}

Then we have the following analogue of \cite[Theorem 4.1.13]{MaGeorge}.
\begin{theorem}\label{thm QArknQ'}
    For any $m>0$, $k\geq m+2r$, $(\bf{k},\bf{r})\in I_{k,r}$, there exist $C_k'>0$, $m_k'>0$ such that for $\lambda\in U_{\eta n+d}$, $t\in \,]0,1]$, $K\in \mathfrak{g}\otimes_{\mathbb{R}}\mathbb{C}$, $|K|\leq c_{\eta}$, $n\in\nn$, $Q\in \mathcal{Q}_x^m$, 
\begin{equation}\label{e7}
    \|QA_{\mathbf{r},n}^{\bf{k}}(\lambda,t)\|_{t,x}^{0,0}\leq C_k'\bigl((1+n^2)(1+|\lambda|)\bigr)^{m_k'}.
\end{equation} 
\end{theorem}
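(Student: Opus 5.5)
The plan is to follow the classical scheme of \cite[Theorem 4.1.13]{MaGeorge}, adapted to the rescaled operators $L_{x,K,n}^{3,t}$ with their explicit dependence on $n$. The statement is a bound on $Q A_{\mathbf{r},n}^{\mathbf{k}}(\lambda,t)$ in $\|\cdot\|_{t,x}^{0,0}$. Since $k_0,\dots,k_j\geq 1$ and $\sum k_i=k+j$ with $k\geq m+2r$, there are enough resolvent factors to absorb both the $m$ derivatives in $Q$ and the differential orders of the factors $\partial^{r_i}_u L_{x,K,n}^{3,t}$.

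The first step is to control each $\partial_u^{r_i}L_{x,K,n}^{3,t}$. By Theorem \ref{thm 1xk3t expansion}, and the fact that the cutoff $\gamma^2(|Z|/2(n+2))$ in \eqref{lxkn3t} does not depend on $u$, the operator $\partial_u^{r}L_{x,K,n}^{3,t}$ is a second-order operator whose coefficients have at most polynomial growth in $Z$. On the support of the cutoff we have $|Z|\leq 4(n+2)$, so this growth is bounded by $C(1+n)^{N_r}$. As a result, $\partial_u^r L_{x,K,n}^{3,t}$ is bounded from $\mathbf{I}_x^{p+2}$ to $\mathbf{I}_x^{p}$ with norm at most $C_{p,r}(1+n^2)^{N_{p,r}}$. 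The weights in the norms \eqref{a family of norms} do not spoil this, because the degree-lowering terms interact with the weights exactly as in the proof of Lemma \ref{lemma elliptic estimate lxk3t}. I would justify this last point by the same bookkeeping as in \cite[Theorem 7.31]{BismutGoette2000}.

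The second step is the counting argument. I move $Q\in\mathcal{Q}_x^m$ to the right through the product in \eqref{arnklambdat}. Each resolvent factor $(\lambda-L_{x,K,n}^{3,t})^{-1}$ gains one Sobolev order by \eqref{resolvent lxkn3t estimate high}, at a cost of $C_p((1+n^2)(1+|\lambda|))^{m_p}$. Each factor $\partial_u^{r_i}L$ loses two orders by the first step. Since $\sum_i k_i = k+j\geq m+2j+j$ (using $r\geq j$), the total available gain is at least $m+2j$. Starting from level $0$ on the right, we can distribute the gains so that the levels along the product never become negative and reach at least $m$ at the left end. Then $Q$, viewed as a map $\mathbf{I}_x^m\to\mathbf{I}_x^0$, has norm at most $1$.

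Composing these bounds, all of which are polynomial in $(1+n^2)(1+|\lambda|)$, gives \eqref{e7}. The $0\to0$ and $-1\to1$ bounds from Theorem \ref{thm estimate of resolvent first} handle the base cases.

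The main obstacle I expect is the uniformity in $t\in\,]0,1]$ of the first step. Near $u=0$, the derivatives $\partial_u^r$ of the rescaled coefficients must not produce negative powers of $u$; the $\frac{1}{\sqrt t}h_i(K,\sqrt t Z)$ terms and the Getzler-rescaled Clifford terms are the dangerous ones here. I would handle this by using the structure from Theorem \ref{thm 1xk3t expansion}: every term has the form $\frac{1}{\sqrt t}f(\sqrt tZ)$ with $f=\mathcal{O}(|Z|^2)$, or is a smooth function of $u$ times an element of $\Op_r$. Consequently $\partial_u^r$ yields coefficients bounded by $C(1+|Z|)^{r+3}$ uniformly in $u$, which is exactly the input the first step needs.
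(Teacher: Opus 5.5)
Your proposal is correct and is essentially the paper's argument. The paper bounds $\partial_u^{r}L^{3,t}_{x,K,n}$ as a second-order operator from $\mathbf{I}_x^{m+1}$ to $\mathbf{I}_x^{m-1}$ with constant $C(1+n)^{7+r}$. The ingredients are the ones you name: $e^j\wedge$, $f^p\wedge$ and $1_{|Z|\leq n}\,i_{e_j}/(1+n)$ are uniformly bounded for the weighted norms, commutators with $\nabla_{e_i}$ keep the same structure, and Theorem \ref{thm 1xk3t expansion} gives coefficient bounds uniform in $u$. The paper then chains this with \eqref{resolvent lxkn3t estimate high} through the product \eqref{arnklambdat}, as in Ma--Marinescu.

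One point is imprecise. When some $k_i=1$, the Sobolev level drops to $-1$ right after the corresponding factor $\partial_u^{r_i}L$, so the levels do not ``never become negative''. The $-1\to1$ resolvent bound you cite absorbs this, and the final level is still at least $k-j\geq m$.
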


\begin{proof}
    From (\ref{a family of norms}), we see that the following operators acting on $(\mathbf{I}_x^0,|\cdot|_{t,x,0})$ are uniformly bounded :
    \begin{equation}\label{3operators bound}
       e^j\wedge, \kongge\kongge  1_{|Z|\leq n} \frac{i_{e_j}}{1+n}, \kongge\kongge f^p\wedge,\kongge \kongge\mathrm{for}\kongge 1\leq j\leq 2\ell', \, n\in\nn.
    \end{equation}

    By \eqref{e4}, \eqref{Op}, \eqref{lxk3t expansion} and \eqref{lxkn3t}, $\frac{\partial^{r}}{\partial u^{r}}L_{x,K,n}^{3,t}$ is a second differential operator where the 
operators $e^j\wedge$, $i_{e_j}$, $f^p\wedge$ appear at most $4$ times in each term. Moreover by Theorem \ref{thm 1xk3t expansion}, for  $t\in \,]0,1]$, $|K|\leq 1$, the coefficients in $\frac{\partial^{r}}{\partial u^{r}}L_{x,K,n}^{3,t}$ are uniformly bounded by $C(1+n)^{3+r}$. So by \eqref{3operators bound}, we have
\begin{equation}\label{e5}
    \Big\| \frac{\partial^{r}}{\partial u^{r}}L_{x,K,n}^{3,t}\Big\|_{t,x}^{1,-1}\leq C_{m,r}(1+n)^{7+r}.
\end{equation}

By \eqref{e4}, \eqref{Op}, \eqref{lxk3t expansion} and \eqref{lxkn3t}, we see that for $1\leq p\leq k$, the operator 
\begin{align}\label{e6}
  \Big[\nabla_{e_{i_1}},\Big[\nabla_{e_{i_2}},\dots,\Big[\nabla_{e_{i_p}},\frac{\partial^{r}}{\partial u^{r}}L_{x,K,n}^{3,t}\Big]\dots\Big]\Big]
\end{align}
has the same structure as $\frac{\partial^{r}}{\partial u^{r}}L_{x,K,n}^{3,t}$. Therefore by Theorem \ref{thm 1xk3t expansion} and (\ref{e6}), we deduce that there are constants $C_{m,r}>0$ such that for $n\in\nn$, $t\in \,]0,1]$, $|K|\leq 1$, 
\begin{equation}\label{partial r lxkn3t high estimate}
    \Big\| \frac{\partial^{r}}{\partial u^{r}}L_{x,K,n}^{3,t}\Big\|_{t,x}^{m+1,m-1}\leq C_{m,r}(1+n)^{7+r}.
\end{equation}
From  (\ref{resolvent lxkn3t estimate high}), \eqref{arnklambdat} and (\ref{partial r lxkn3t high estimate}), we get \eqref{e7}.
\end{proof}

Observe that we can take $t=0$ in (\ref{a family of norms}). And the estimates in this section still hold when making $t=0$. Also
\begin{equation}\label{comparision norm t=0}
    |s|_{t,x,0}\leq |s|_{0,x,0}.
\end{equation}
\begin{theorem}\label{thm two estimates dur-t-0}
    Let $t=u^2$, for any $r\geq 0$, $k>0$, there exist $C>0$, $N\in\nn$ such that for $\lambda\in U_{\eta n+d}$,  $u\in \,]0,1]$, $|K|\leq c_{\eta}$, $n\in\nn$, 
    \begin{equation}\label{two estimates dur-t-0}
        \begin{split}
            &\Big|\Bigl( \frac{\partial^{r}L_{x,K,n}^{3,t}}{\partial u^{r}}- \frac{\partial^{r}L_{x,K,n}^{3,t}}{\partial u^{r}}\Big|_{u=0}\Bigr)s\Big|_{t,x,-1}\leq C(1+n^2)^N\sqrt{t}|s|_{0,x,1},\\ 
            \Big|\Bigl(  \frac{\partial^{r}}{\partial u^{r}}(\lambda-&L_{x,K,n}^{3,t})^{-k}-\sum_{(\bf{k},\bf{r})\in I_{k,r}}a_{\bf{r}}^{\bf{k}} A_{\mathbf{r},n}^{\bf{k}}(\lambda,0)\Bigr)s\Big|_{t,x,0}\leq C\sqrt{t}\bigl((1+n^2)(1+|\lambda|)\bigr)^{N}|s|_{t,x,0}.
        \end{split}
    \end{equation}
\end{theorem}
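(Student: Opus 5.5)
We prove Theorem \ref{thm two estimates dur-t-0} as in \cite[Theorem 4.1.14]{MaGeorge}. The first estimate comes from Theorem \ref{thm 1xk3t expansion}, and the second is then obtained from the first by a resolvent identity built on the expansion \eqref{sum of arnklambdat}.

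\emph{First estimate.} By \eqref{lxk3t expansion} with $t=u^2$, $\frac{\partial^{r}}{\partial u^{r}}L_{x,K,n}^{3,t}$ is a polynomial in $u$ with coefficients in $\Op_j$, plus the $r$-th $u$-derivative of a remainder. The coefficients of these operators are cut off by $\gamma^2(|Z|/2(n+2))$ as in \eqref{lxkn3t}, so they are supported where $|Z|\leq 4(n+2)$. We write
\begin{equation*}
\frac{\partial^{r}L_{x,K,n}^{3,t}}{\partial u^{r}}-\frac{\partial^{r}L_{x,K,n}^{3,t}}{\partial u^{r}}\Big|_{u=0}=u\int_0^1\frac{\partial^{r+1}L_{x,K,n}^{3,t}}{\partial u^{r+1}}\Big|_{u\to su}\,ds .
\end{equation*}
By the remainder bound in Theorem \ref{thm 1xk3t expansion}, the coefficients of $\partial_u^{r+1}L^{3,t}_{x,K,n}$ and their $Z$-derivatives are bounded by $C(1+|Z|)^{N'}\leq C(1+n)^{N'}$ on the support. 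Each term is a second-order operator in which $e^j\wedge$, $i_{e_j}$, $f^p\wedge$ appear at most $4$ times, and by \eqref{3operators bound} these are bounded on the $|\cdot|_{t,x,0}$-scale at cost $(1+n)$ each.

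Pairing with $s'\in\mathbf{I}^1_x$ and integrating by parts once shows that the $|\cdot|_{t,x,-1}$-norm of the difference applied to $s$ is at most $C(1+n^2)^N u|s|_{t,x,1}$. We then use \eqref{comparision norm t=0}, that is $|s|_{t,x,1}\leq |s|_{0,x,1}$, to get the stated bound with $\sqrt t=u$.

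\emph{Second estimate.} We compare $A^{\mathbf k}_{\mathbf r,n}(\lambda,t)$ with $A^{\mathbf k}_{\mathbf r,n}(\lambda,0)$ factor by factor, using the telescoping decomposition of the product \eqref{arnklambdat}. For the resolvent factors we use
\begin{equation*}
(\lambda-L^{3,t})^{-1}-(\lambda-L^{3,0})^{-1}=(\lambda-L^{3,t})^{-1}(L^{3,t}-L^{3,0})(\lambda-L^{3,0})^{-1},
\end{equation*}
and for the factors $\partial_u^{r_i}L^{3,t}$ we use the first estimate.

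Each difference term is controlled by a chain of bounds. Theorem \ref{thm estimate of resolvent first} and its $t=0$ analogue give the $-1\to1$ bounds on resolvents, and the first estimate gives the $O(\sqrt t)$ gain from $0$- or $1$-norms to the $(-1)$-norm. The remaining $A$-factors are handled by \eqref{partial r lxkn3t high estimate} and Theorem \ref{thm QArknQ'}, applied both at $t$ and at $0$; the latter is valid at $t=0$ since the estimates there persist.

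\emph{Main obstacle.} The delicate point is that the first estimate involves mixed norms: it is measured against $|s|_{0,x,1}$, whereas the target norm is $|\cdot|_{t,x,0}$. We need the $t=0$ resolvent to map $|\cdot|_{t,x,0}$ into $|\cdot|_{0,x,1}$ with polynomial loss in $n$ and $\lambda$.

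We first try to get this from the weights in \eqref{a family of norms}. On the support $|Z|\leq 4(n+2)$, the weights $(1+|Z|\rho(\sqrt tZ/2))$ and $(1+|Z|)$ are comparable up to $(1+n)^{C}$. Hence $|\cdot|_{0,x,\bullet}\leq C(1+n)^{C}|\cdot|_{t,x,\bullet}$ on functions supported there.

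This comparability does not hold for arbitrary $s$. We restore the support condition by the finite propagation speed localisation \eqref{finite propagation n+p}, or alternatively by inserting cutoffs, which commute with the resolvent up to similar errors. All the losses are polynomial in $(1+n^2)(1+|\lambda|)$ and are absorbed into the exponent $N$. The loss in $n$ is harmless downstream because of the factor $e^{-C'n^2}$ in \eqref{estimate ftn}.
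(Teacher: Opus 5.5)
Your proof of the first estimate matches the paper's, and so does the architecture of the second (resolvent identity plus telescoping through \eqref{arnklambdat}). The gap is in how you resolve the mixed-norm problem.

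\textbf{The intermediate bound you aim for is false.} You want $(\lambda-L^{3,0}_{x,K,n})^{-1}$ to map $|\cdot|_{t,x,0}$ into $|\cdot|_{0,x,1}$ with polynomial loss in $n$ and $\lambda$. By \eqref{lxkn3t}, $L^{3,0}_{x,K,n}=-\frac{1}{2}\Delta^{TX}$ on $\{|Z|\geq 2(n+2)\}$. Take $s$ a fixed bump translated to $|Z|=R\gg 1/\sqrt{t}$, in a component of non-maximal degree, so that its weight in \eqref{a family of norms} is $(1+|Z|\rho(\sqrt{t}Z/2))^{2a}$ with $a>0$. Then $|s|_{t,x,0}=\|s\|_{L^2}$. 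Near the bump, $w=(\lambda-L^{3,0}_{x,K,n})^{-1}s$ solves $(\lambda+\frac{1}{2}\Delta^{TX})w=s$. Testing this against $s$ gives $|w|_{0,x,1}\geq cR^{a}\|s\|_{L^2}/(1+|\lambda|)$, which is unbounded in $R$.

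\textbf{Neither proposed repair helps.}
Finite propagation speed \eqref{finite propagation n+p} is a statement about $\widetilde{F}_{t,n}(L)$, whose definition \eqref{def of ftn} cuts the wave group off in time. It says nothing about resolvent kernels, which are not compactly supported.
Commuting a cutoff $\chi$ through the resolvent produces $(\lambda-L^{3,0}_{x,K,n})^{-1}[L^{3,0}_{x,K,n},\chi](\lambda-L^{3,0}_{x,K,n})^{-1}s$. Its inner factor is the same uncontrolled quantity, now on the annulus where $\nabla\chi\neq 0$. So ``up to similar errors'' hides exactly the missing estimate, and the argument is circular.

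\textbf{The missing idea: compare weights only on vectors supported in $B_n=\{|Z|\leq 2(n+2)\}$.} This is possible because the coefficients of $L^{3,t}_{x,K,n}-L^{3,0}_{x,K,n}$ and of every $\partial_u^{r_i}L^{3,t}_{x,K,n}$ ($r_i\geq 1$) are supported in $B_n$.
\begin{itemize}
\item Write the resolvent identity in the order opposite to \eqref{difference of resolvent}: $(\lambda-L^{3,t}_{x,K,n})^{-1}-(\lambda-L^{3,0}_{x,K,n})^{-1}=(\lambda-L^{3,0}_{x,K,n})^{-1}(L^{3,t}_{x,K,n}-L^{3,0}_{x,K,n})(\lambda-L^{3,t}_{x,K,n})^{-1}$.
\item Telescope $A^{\mathbf{k}}_{\mathbf{r},n}(\lambda,t)-A^{\mathbf{k}}_{\mathbf{r},n}(\lambda,0)$ with the $u=0$ factors to the left of each difference and the $u$-factors to its right.
\item Handle the right side entirely in the $t$-norms. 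Use Theorem~\ref{thm estimate of resolvent first}, \eqref{e5}, and the $\|\cdot\|^{1,-1}_{t,x}$ bound $C\sqrt{t}(1+n)^N$ that your first step proves before you weaken $|s|_{t,x,1}$ to $|s|_{0,x,1}$. This produces a vector $v$ supported in $B_n$.
\item For such $v$, show $|v|_{0,x,-1}\leq C(1+n)^{C}|v|_{t,x,-1}$. Test against $\chi s'w_0/w_t$, where $w_0,w_t$ are the weights of \eqref{a family of norms} and $\chi$ is a cutoff equal to $1$ on $B_n$, supported in $\{|Z|\leq 4(n+2)\}$, with $|\nabla\chi|\leq C$.
\item Handle the left side in the $0$-norms, and finish with $|\cdot|_{t,x,0}\leq|\cdot|_{0,x,0}$ from \eqref{comparision norm t=0}.
\end{itemize}

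Your suspicion about this step was justified. The paper's own line applies $|s|_{0,x,-1}\leq|s|_{t,x,-1}\leq|s|_{t,x,0}$ to the unlocalized input $s$ before using the $t=0$ case of \eqref{resolvent lxkn3t estimate first}. That first inequality does not follow from \eqref{comparision norm t=0}, because the two $(-1)$-norms are dual for differently weighted pairings. The same far-away bump violates it.
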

\begin{proof}
    From (\ref{lxk3t expansion}), (\ref{lxkn3t}), we have
    \begin{equation}\label{dur lxkn3t-t=0}
        \frac{\partial^{r}L_{x,K,n}^{3,t}}{\partial u^{r}}-\frac{\partial^{r}L_{x,K,n}^{3,t}}{\partial u^{r}}\Big|_{u=0}=\gamma^2\Bigl(\frac{|Z|}{2(n+2)}\Bigr)\frac{\partial^{r}}{\partial u^{r}}R_{r+1}(t,Z)(K).
    \end{equation}
    By Theorem \ref{thm 1xk3t expansion} and (\ref{dur lxkn3t-t=0}), we use the same arguments before \eqref{e5} and get
    \begin{equation}\label{norm dur lxkn3t-t=0}
        \Big\| \frac{\partial^{r}L_{x,K,n}^{3,t}}{\partial u^{r}}-\frac{\partial^{r}L_{x,K,n}^{3,t}}{\partial u^{r}}\Big|_{u=0}\Big\|_{t,x}^{1,-1}\leq C_r\sqrt{t}(1+n^{2r+7}).
    \end{equation}
From (\ref{comparision norm t=0}), (\ref{norm dur lxkn3t-t=0}), we obtain the first estimate in (\ref{two estimates dur-t-0}). Note that
\begin{equation}\label{difference of resolvent}
    (\lambda-L_{x,K,n}^{3,t})^{-1}-(\lambda-L_{x,K,n}^{3,0})^{-1}=(\lambda-L_{x,K,n}^{3,t})^{-1}(L_{x,K,n}^{3,t}-L_{x,K,n}^{3,0})(\lambda-L_{x,K,n}^{3,0})^{-1}.
\end{equation}
   If $s\in\mathbf{I}_x$ is smooth with compact support, from (\ref{comparision norm t=0}),
   \begin{equation}
    |s|_{0,x,-1}\leq |s|_{t,x,-1}\leq |s|_{t,x,0}.
   \end{equation}
   Using now the estimates in (\ref{resolvent lxkn3t estimate first}) (also in the case $t=0$) and (\ref{norm dur lxkn3t-t=0}), (\ref{difference of resolvent}), we obtain 
   \begin{equation}\label{norm difference of resolvent}
    \Big|\Bigl((\lambda-L_{x,K,n}^{3,t})^{-1}-(\lambda-L_{x,K,n}^{3,0})^{-1}\Bigr)s\Big|_{t,x,0}\leq C\sqrt{t}\bigl((1+n^2)(1+|\lambda|)\bigr)^{N}|s|_{t,x,0}.
   \end{equation}
   Observe from (\ref{arnklambdat}), (\ref{sum of arnklambdat}) that $\frac{\partial^{r}}{\partial u^{r}}(\lambda-L_{x,K,n}^{3,t})^{-k}$ is a combination of  $\frac{\partial^{r}L_{x,K,n}^{3,t}}{\partial u^{r}}$ and $(\lambda-L_{x,K,n}^{3,t})^{-1}$, so the second inequality of (\ref{two estimates dur-t-0}) follows from (\ref{resolvent lxkn3t estimate first}),  (\ref{norm dur lxkn3t-t=0}) and (\ref{norm difference of resolvent}).
\end{proof}

\subsection{Uniform estimate on the kernels}
For $h\in\nn$, let
$\widetilde{F}_{t,n}^h(a)$ be the holomorphic function defined by 
\begin{equation}\label{def of ftnh}
    \widetilde{F}_{t,n}^h(a^2)=\int_{-\infty}^{+\infty}\exp(is\sqrt{2}a)\exp(-\frac{s^2}{2})f(\sqrt{t}s)k_n(s)(1-\gamma^2(2s/h))\frac{ds}{\sqrt{2\pi}}.
\end{equation}
Then as in (\ref{estimate ftn}), we see that given  $k,m,m'\in\nn$, there exist $C>0$, $C'>0$, $C''>0$, $C'''>0$ such that for any $n\in\nn$, $c>0$, if $\lambda\in V_c$,
\begin{equation}\label{estimate derivative ftn}
    |\lambda|^{m}|\frac{\partial^k}{\partial u^k}\widetilde{F}_{t,n}^{h(m')}(\lambda)|\leq C\exp(-C'n^2+C'' c^2-C'''h^2).
\end{equation}

By using \eqref{e2} and the  finite propagation speed as in (\ref{finite propagation n+p}), we deduce that if $|Z|, |Z'|\in (T_{\rr}X)_x$, $|Z-Z'|\geq h$, then for any $p\in\nn$,
\begin{equation}\label{ftnh propagation}
    \widetilde{F}_{t,n}(L_{x,K,n+p}^{3,t})(Z,Z')=\widetilde{F}_{t,n}^h(L_{x,K,n+p}^{3,t})(Z,Z').
\end{equation}

\begin{theorem}\label{thm uniform estimate frn lxkn2t}
   Let $t=u^2$. There exist $C'>0$, $C''>0$, $C'''>0$ such that for $\eta>0$ small enough, for any $m\in\nn$, $r\in\nn$, there are $C>0$, $N\in\nn$, for any $u\in \,]0,1]$, $|K|\leq c_{\eta}$, $n,p,h\in\nn$, 
   \begin{equation}\label{estimate ftnn+p kernel}
    \begin{split}
    \sup_{|\alpha|,|\alpha'|\leq m\atop |Z|,|Z'|\leq p+1}\Big|\frac{\partial^{|\alpha|+|\alpha'|}}{\partial Z^{\alpha}\partial Z'^{\alpha'}}\frac{\partial^r}{\partial u^r}\widetilde{F}_{t,n}^h(L_{x,K,n+p}^{3,t})(Z,Z')\Big|\qquad\qquad\qquad\qquad\\ 
    \leq C(1+p)^N\exp(-C'n^2/4+2C''\eta^2p^2-C'''h^2).
    \end{split}
\end{equation}
\end{theorem}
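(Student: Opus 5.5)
We will prove \eqref{estimate ftnn+p kernel} by the standard route of \cite[Theorem 7.38]{BismutGoette2000} and \cite[\S 4.1.4]{MaGeorge}. The operator is written through the resolvent as a contour integral, the resolvent and its $u$-derivatives are bounded in the Sobolev norms $\|\cdot\|_{t,x,k}$, and these bounds are converted into pointwise kernel bounds by Sobolev embedding on balls of radius $p+1$. The Gaussian factors come from \eqref{estimate derivative ftn}. The polynomial factor $(1+p)^N$ and the factor $\exp(2C''\eta^2p^2)$ come from working with $L^{3,t}_{x,K,n+p}$, whose resolvent is controlled only on $U_{\eta(n+p)+d}$ (Theorem \ref{thm estimate of resolvent first}).

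\textbf{Step 1: contour representation.} Since $\widetilde F^h_{t,n}$ is holomorphic and decays rapidly on $V_c$, we write, for $c=\eta(n+p)+d$ and $M$ large,
\begin{equation*}
\widetilde F^h_{t,n}(L^{3,t}_{x,K,n+p})=\frac{1}{2i\pi}\int_{\Gamma_c}\widetilde F^{h}_{t,n,M}(\lambda)\,(\lambda-L^{3,t}_{x,K,n+p})^{-M}\,d\lambda .
\end{equation*}
Here $\widetilde F^{h}_{t,n,M}$ is the $M$-fold primitive of $\widetilde F^h_{t,n}$ that vanishes at infinity on $V_c$, obtained by repeated integration by parts. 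By \eqref{estimate derivative ftn} it satisfies $|\lambda|^{m}|\partial_u^k\widetilde F^h_{t,n,M}(\lambda)|\le C\exp(-C'n^2+C''c^2-C'''h^2)$ on $\Gamma_c$.

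\textbf{Step 2: differentiate in $u$.} Differentiating $r$ times in $u$, each term is either a $u$-derivative of $\widetilde F^h_{t,n,M}$ times $(\lambda-L)^{-M}$, or $\widetilde F^h_{t,n,M}$ times $\partial_u^{r'}(\lambda-L)^{-M}$ with $r'\le r$. We expand the latter by \eqref{sum of arnklambdat}. For $Q\in\mathcal Q_x^m$ and $Q'\in\mathcal Q_x^{m'}$, Theorem \ref{thm QArknQ'} with $n$ replaced by $n+p$, applied to $QA$ and, by duality, to the adjoint to absorb $Q'$, gives the bound $\|QA^{\mathbf k}_{\mathbf r,n+p}(\lambda,t)Q'\|^{0,0}_{t,x}\le C((1+n+p)^2(1+|\lambda|))^{N}$. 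This holds once $M\ge m+m'+2r+2$.

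\textbf{Step 3: norms to kernels.} On $\{|Z|\le p+1\}$ the weight in \eqref{a family of norms} is between $1$ and $(2+p)^{2(2\ell'+2\dim S)}$. Hence the weighted norms are equivalent to the usual Sobolev norms up to a factor $(1+p)^{N}$. Sobolev embedding in $Z$ and $Z'$ then converts the operator bounds into sup bounds on the kernel derivatives over $|Z|,|Z'|\le p+1$.

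\textbf{Step 4: collect the factors.} Integrating over $\Gamma_c$, the polynomial growth in $|\lambda|$ is absorbed by the decay of $\widetilde F$. We use $c^2\le 2\eta^2(n+p)^2+2d^2\le 4\eta^2 n^2+2\eta^2p^2+C$ and $(1+n)^N\le C e^{\epsilon n^2}$. Choosing $\eta$ small, so that $4C''\eta^2\le C'/2$, leaves $\exp(-C'n^2/4+2C''\eta^2p^2-C'''h^2)$ times $(1+p)^N$, which is \eqref{estimate ftnn+p kernel}.

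The main obstacle is the uniformity in $n+p$. The resolvent region $U_{\eta(n+p)+d}$ grows with $p$, and the $|nK|$-dependent constants in \eqref{elliptic estimate lxkn3t} must be kept polynomial. This is where the constraint $|K|\le c_\eta$ and the careful $n$-tracking of Theorem \ref{thm QArknQ'} are essential.
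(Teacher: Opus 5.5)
Your architecture matches the paper's proof:
\begin{itemize}
\item the contour representation of $\widetilde F^h_{t,n}(L^{3,t}_{x,K,n+p})$ on $\Gamma_{\eta(n+p)+d}$ through the primitive $\widetilde F^h_{t,n,q}$;
\item Theorem~\ref{thm QArknQ'} with $n$ replaced by $n+p$;
\item comparison of weighted and flat norms on $\{|Z|\le p+1\}$ at a cost $(1+p)^N$;
\item the Sobolev inequalities, and absorption of the polynomial factors by taking $\eta$ small.
\end{itemize}

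The gap is in Step~2. There you handle the derivatives $Q'$ in the second variable by applying Theorem~\ref{thm QArknQ'} ``by duality'' to the adjoint, and you then assert a two-sided bound on $\|QA^{\mathbf k}_{\mathbf r,n+p}(\lambda,t)Q'\|^{0,0}_{t,x}$.

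Theorem~\ref{thm QArknQ'}, and the G{\aa}rding-type estimate \eqref{elliptic estimate lxkn3t} behind it, hold for the norm \eqref{a family of norms}. Its weight $(1+|Z|\rho(\sqrt{t}Z/2))^{2(2\ell'+2\dim S-r-q)}$ depends on the $\Lambda(T^*_{\rr}X_g)$-degree $q$. It makes $e^j\wedge$ bounded, while $i_{e_j}$ costs a factor $1+|Z|$, cf.~\eqref{3operators bound}.

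The formal adjoint of $L^{3,t}_{x,K,n}$ exchanges $e^j\wedge$ and $i_{e_j}$. Consider the Getzler term $\langle R^{TX}_x(e_k,e_l)Z,e_i\rangle\, e^k\wedge e^l\wedge\nabla_{e_i}$ coming from \eqref{explicit nabla ei'}. It is harmless in this norm, but in the adjoint it becomes a first-order term containing $i_{e_l}i_{e_k}$. That term has size $|Z|(1+|Z|)^2$ in this norm on its support.

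The loss in the lower bound then grows like a power of $\min(n+2,t^{-1/2})$ instead of like $1+|nK|^2$. Consequently:
\begin{itemize}
\item you can no longer work on $U_{\eta(n+p)+d}$;
\item the factor $\exp(C''c^2)$ from \eqref{estimate derivative ftn} is no longer beaten by $\exp(-C'n^2)$.
\end{itemize}
So the theorem does not transfer to the adjoint in the same norm, and the cited results do not supply the two-sided bound you write down. This weight asymmetry, not the uniformity in $n+p$, is the non-routine point of this proof. Theorems~\ref{thm estimate of resolvent first} and~\ref{thm QArknQ'} already provide the uniformity in $n+p$.

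The paper's remedy is to estimate the formal adjoint in the reversed norm $|s|'^2_{t,x,0}=\int|s|^2(1+|Z|\rho(\sqrt{t}Z/2))^{2(-2\ell'+2\dim S-r+q)}dv_{TX}(Z)$. In this norm the adjoint has the same structure as $L^{3,t}_{x,K,n}$. The same arguments then give \eqref{qfynq normal norm'} for $\partial_u^r\widetilde F^h_{t,n}(L^{3,t}_{x,K,n+p})Q$, alongside \eqref{qfynq normal norm} for $Q\,\partial_u^r\widetilde F^h_{t,n}(L^{3,t}_{x,K,n+p})$.

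Both weighted norms are comparable to the flat norm on $\{|Z|\le p+1\}$ up to $(1+p)^N$, as in \eqref{comparision normal special norm}. So the two one-sided bounds are combined in the flat norm $\|\cdot\|_{\infty,p}$, and only then are the Sobolev inequalities applied. You could instead work with the adjoint for $\langle\,,\rangle_{t,x,0}$ itself, but its resolvent estimates would have to be proved separately; Theorem~\ref{thm QArknQ'} does not contain them.

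A minor point in Step~4: the inequality $2\eta^2(n+p)^2\le 4\eta^2n^2+2\eta^2p^2$ fails unless $2p\le n$. Use $(n+p)^2\le 2n^2+2p^2$ and enlarge $C''$; this is harmless, since $C''$ is only asserted to exist.
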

\begin{proof}
   We take the constants such that (\ref{estimate derivative ftn}) is valid for $k\leq r$. We fix $\eta>0$ such that 
    \begin{equation}\label{take small eta}
        C'-2C''\eta^2\geq \frac{C'}{2}.
    \end{equation}
    By  (\ref{estimate derivative ftn}), (\ref{take small eta}), given $m$, $m'\in\nn$, $d>0$, $k\leq r$, there exists $C_0>0$ such that if  $\lambda\in V_{\eta(n+p)+d}$,
    \begin{equation}\label{estimate ftn eta}
        |\lambda|^m\Big|\frac{\partial^k}{\partial u^k}\widetilde{F}_{t,n}^{h(m')}(\lambda)\Big|\leq C_0\exp(-\frac{C'}{2}n^2+2C''\eta^2p^2-C'''h^2).
    \end{equation}
    In view of (\ref{estimate ftn eta}), given $q\in\nn$, there is a unique holomorphic function $\widetilde{F}_{t,n,q}^h(\lambda)$ defined on a neighborhood of $V_{\eta(n+p)+d}$ such that
    \begin{equation}\label{def of ftnq}
        \begin{split}
            \widetilde{F}_{t,n,q}^{h}(\lambda)\rightarrow 0\quad\mathrm{as}\kongge \lambda\rightarrow +\infty,\quad
            \frac{\widetilde{F}_{t,n,q}^{h,(q-1)}(\lambda)}{(q-1)!}=\widetilde{F}_{t,n}^{h}(\lambda).
        \end{split}
    \end{equation}
    Then $\widetilde{F}_{t,n,q}^h(\lambda)$ and its derivatives in $u$ verifies a similar bound on $V_{\eta(n+p)+d}$ as  in (\ref{estimate ftn eta}). 

    By Theorem \ref{thm estimate of resolvent first}, (\ref{estimate ftn eta}) and (\ref{def of ftnq}), if $\eta>0$ is taken small enough, for $c_{\eta}$ and $d$ as in Theorem \ref{thm estimate of resolvent first} and for $|K|\leq c_{\eta}$,
    \begin{equation}\label{spectral thm}
        \widetilde{F}_{t,n}^h(L_{x,K,n+p}^{3,t})=\frac{1}{2i\pi}\int_{\Gamma_{\eta(n+p)+d}}\widetilde{F}_{t,n,q}^h(\lambda)(\lambda-L_{x,K,n+p}^{3,t})^{-q}d\lambda.
    \end{equation} 
    By using Theorem \ref{thm QArknQ'}, we see that for $m\in\nn$, $k\leq r$, $q\geq 2(m+r)$, for $Q\in \mathcal{Q}_x^m$, $\lambda\in \Gamma_{\eta(n+p)+d}$, $t\in \,]0,1]$, $|K|\leq c_{\eta}$,
    \begin{equation}\label{qresolventq' estimate}
        \Big\|Q\frac{\partial^k}{\partial u^k}(\lambda-L_{x,K,n+p}^{3,t})^{-q}\Big\|_{t,x}^{0,0}\leq C_q'\bigl((1+(n+p)^2)(1+|\lambda|^2)\bigr)^{m_q'}.
    \end{equation}
    From (\ref{estimate ftn eta}), (\ref{spectral thm}) and (\ref{qresolventq' estimate}), we conclude that
    \begin{equation}\label{qftnq}
        \Big\|Q\frac{\partial^r}{\partial u^r}\widetilde{F}_{t,n}^h(L_{x,K,n+p}^{3,t})\Big\|_{t,x}^{0,0}\leq C(1+p)^{2m_q'}\exp(-C'\frac{n^2}{4}+2C''\eta^2p^2-C'''h^2).
    \end{equation}

    Let $(J_{x,p}^{0},|\cdot|_0)$ be the vector space of square integrable sections of $\bigl( \pi^*\Lambda(T_{\rr}^*S)\,\widehat{\otimes} \,\Lambda(T_{\rr}^*X_g)$ $\widehat{\otimes} \Lambda(N_{X_g/X}^{*(0,1)}) \, \otimes\xi\bigr)_x$ over $\{Z\in(T_{\rr}X)_x: |Z|\leq p+1\}$, equipped with the normal Hermitian norm. By (\ref{a family of norms}), for $s\in J_{x,p}^0$,
    \begin{equation}\label{comparision normal special norm}
        |s|_0\leq |s|_{t,x,0}\leq C(1+p)^{2\ell'}|s|_0.
    \end{equation}

    If $A\in \mathcal{L}(J_{x,p}^0)$, let $\|A\|_{\infty,p}$ be the norm of $A$ with respect to $|\cdot|_0$. From (\ref{qftnq}), (\ref{comparision normal special norm}), we find that 
    \begin{equation}\label{qfynq normal norm}
        \begin{split}
        \Big\|Q\frac{\partial^r}{\partial u^r}\widetilde{F}_{t,n}^h(L_{x,K,n+p}^{3,t})\Big\|_{\infty,p}\qquad\qquad\qquad\qquad\qquad\qquad\qquad\qquad\\ 
        \leq C(1+p)^{2(\ell'+m_q')}\exp(-C'\frac{n^2}{4}+2C''\eta^2p^2-C'''h^2).
        \end{split}
    \end{equation}

We denote by $(L_{x,K,n}^{3,t})^*$ the formal adjoint of $L_{x,K,n}^{3,t}$ with respect to the usual Hermitian product on $\mathbf{I}_x^0$. Note that $(L_{x,K,n}^{3,t})^*$ has the same structure as $L_{x,K,n}^{3,t}$, except that we exchanges $e^j\wedge$ and $i_{e_j}$ for $j=1,...,2\ell'$.  By using the norm 
   \begin{equation}
        |s|_{t,x,0}'^2=\int_{(T_{\rr}X)_x}|s|^2\Bigl(1+|Z|\rho\big(\frac{\sqrt{t}Z}{2}\big)\Bigr)^{2(-2\ell'+2\mathrm{dim}S-r+q)}dv_{TX}(Z).
    \end{equation}
    instead of \eqref{a family of norms}, we can use the same arguments which leads to  (\ref{qfynq normal norm})  and obtain
 \begin{equation}\label{qfynq normal norm'}
        \begin{split}
        \Big\|\frac{\partial^r}{\partial u^r}\widetilde{F}_{t,n}^h(L_{x,K,n+p}^{3,t})Q\Big\|_{\infty,p}\qquad\qquad\qquad\qquad\qquad\qquad\qquad\qquad\\ 
        \leq C(1+p)^{2(\ell'+m_q')}\exp(-C'\frac{n^2}{4}+2C''\eta^2p^2-C'''h^2).
        \end{split}
    \end{equation}

  By using \eqref{qfynq normal norm}, \eqref{qfynq normal norm'} and the Sobolev inequalities, we obtain (\ref{estimate ftnn+p kernel}), thus we complete the proof of Theorem \ref{thm uniform estimate frn lxkn2t}.
\end{proof}

\subsection{A proof of Theorem \ref{thm ft akt expansion}}\label{ee}

Recall that the function $\widetilde{F}_{t,n}^h(\lambda)$ is defined in (\ref{def of ftnh}). For $q\in\nn$, we set $\widetilde{F}_{t,n,q}^h(\lambda)$ as in (\ref{def of ftnq}). By proceeding as in (\ref{estimate derivative ftn}), we find that for $r\in\nn$, for $m,m'\in\nn$, there exist $C>0$, $C'>0$, $C'''>0$, $C''''>0$ such that for $c>0$, $\lambda\in V_c$,
\begin{equation}\label{estimate derivative ftnq difference}
    \begin{split}
    &|\lambda|^{m}\Big|\frac{\partial^r}{\partial u^r}\widetilde{F}_{t,n,q}^{h, (m')}(\lambda)-\frac{\partial^r}{\partial u^r}\Big|_{u=0}\widetilde{F}_{t,n,q}^{h, (m')}(\lambda)\Big|\\ 
    &\qquad\qquad\qquad\qquad\qquad\leq C\exp(-C'n^2+C'' c^2-C'''h^2-C''''/t).
    \end{split}
\end{equation} 
 
 Let $r\in\nn$ and let $q$ be large enough, $\eta$ be small enough, for $n,p,h\in\nn$, set
\begin{equation}\label{def of fnprh}
    \begin{split}
        &\mathscr{F}_{n,p}^{r,h}=\frac{1}{2i\pi r!}\int_{\Gamma_{\eta(n+p)+d}}\widetilde{F}_{0,n,q}^h(\lambda)\sum_{(\bf{k},\bf{r})\in I_{q,r}}a_{\bf{r}}^{\bf{k}} A_{\mathbf{r},n+p}^{\bf{k}}(\lambda,0)\, d\lambda,\\ 
        &\mathscr{F}_{t,n,p}^{r,h}=\frac{1}{r!}\frac{\partial^r}{\partial u^r}\widetilde{F}_{t,n}^h(L_{x,K,n+p}^{3,t})-\mathscr{F}_{n,p}^{r,h}.
    \end{split}
\end{equation}

Let $\mathscr{F}_{n,p}^{r,h}(Z,Z' )$ and $\mathscr{F}_{t,n,p}^{r,h}(Z,Z' )$ be the Schwartz kernel associated to $\mathscr{F}_{n,p}^{r,h}$ and $\mathscr{F}_{t,n,p}^{r,h}$ with respect to $dv/(2\pi)^{\ell}$.
It is clear that the uniform estimate in (\ref{estimate ftnn+p kernel}) holds for $\mathscr{F}_{n,p}^{r,h}(Z,Z')$ and $\mathscr{F}_{t,n,p}^{r,h}(Z,Z')$, that is to say, under the condition of Theorem \ref{thm uniform estimate frn lxkn2t}, we have
\begin{equation}\label{uniform estimate frnp}
    \begin{split}
    \sup_{|\alpha|,|\alpha'|\leq m\atop |Z|,|Z'|\leq p+1}\Big|\frac{\partial^{|\alpha|+|\alpha'|}}{\partial Z^{\alpha}\partial Z'^{\alpha'}}\mathscr{F}_{t,n,p}^{r,h}(Z,Z')\Big|
    \leq C(1+p)^N\exp(-C'n^2/4+2C''\eta^2p^2-C'''h^2).
    \end{split}
\end{equation}

\begin{theorem}\label{thm t-0 estimate ftnprh}
   Let $t=u^2$.  There exist  $C'>0$, $C'' >0$, $C'''>0$, such that  for $\eta>0$ small enough, $r\in\nn$, there exist $C>0$, $N\in\nn$, $c_{\eta}\in \,]0,1]$, for any $u\in \,]0,1]$, $|K|\leq c_{\eta}$, $n,p,h\in \nn$,
    \begin{equation}\label{t-0 estimate ftnprh}
        \begin{split}
        \sup_{|Z|, |Z'|\leq p}\big|\mathscr{F}_{t,n,p}^{r,h}(Z,Z')\big|\leq Ct^{1/4(2\ell+1)}(1+p)^N\exp(-C'n^2/4+2C''\eta^2 p^2-C'''h^2).
        \end{split}
    \end{equation}
\end{theorem}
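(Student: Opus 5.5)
We will compare $\mathscr{F}_{t,n,p}^{r,h}$ with its formal limit at $u=0$ using two ingredients: the contour representation \eqref{spectral thm}, and the quantitative $\sqrt{t}$-closeness of resolvents in Theorem \ref{thm two estimates dur-t-0}. Differentiating \eqref{spectral thm} $r$ times in $u$ and using \eqref{sum of arnklambdat}, we write
\begin{multline*}
\frac{1}{r!}\frac{\partial^r}{\partial u^r}\widetilde{F}_{t,n}^h(L_{x,K,n+p}^{3,t})=\frac{1}{2i\pi r!}\sum_{r'\le r}\binom{r}{r'}\int_{\Gamma_{\eta(n+p)+d}}\frac{\partial^{r-r'}}{\partial u^{r-r'}}\widetilde{F}_{t,n,q}^h(\lambda)\\
\times\sum_{(\mathbf{k},\mathbf{r})\in I_{q,r'}}a_{\mathbf{r}}^{\mathbf{k}}A_{\mathbf{r},n+p}^{\mathbf{k}}(\lambda,t)\,d\lambda.
\end{multline*}
Subtracting $\mathscr{F}_{n,p}^{r,h}$ leaves two kinds of error terms. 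The first kind comes from replacing $\widetilde{F}_{t,n,q}^h$ and its $u$-derivatives by their values at $u=0$. By \eqref{estimate derivative ftnq difference}, these terms carry a factor $e^{-C''''/t}$ together with the Gaussian decay in $n$ and $h$. Combined with the polynomial resolvent bounds of Theorem \ref{thm QArknQ'}, this gives an operator-norm bound far better than needed. (The terms with $r'<r$ and $u$-derivatives falling on $\widetilde F$ vanish at $u=0$ up to such exponentially small errors, since $f(\sqrt t s)$ is flat near $t=0$.) The second kind comes from replacing $A^{\mathbf{k}}_{\mathbf{r},n+p}(\lambda,t)$ by $A^{\mathbf{k}}_{\mathbf{r},n+p}(\lambda,0)$. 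The second estimate of \eqref{two estimates dur-t-0} bounds this difference in $|\cdot|_{t,x,0}$-operator norm by $C\sqrt t\,((1+(n+p)^2)(1+|\lambda|))^N$. Integrating against \eqref{estimate ftn eta} along $\Gamma_{\eta(n+p)+d}$ then yields
\begin{equation*}
\big\|\mathscr{F}_{t,n,p}^{r,h}\big\|_{t,x}^{0,0}\le C\sqrt t\,(1+p)^N\exp(-C'n^2/4+2C''\eta^2p^2-C'''h^2).
\end{equation*}
Via \eqref{comparision normal special norm}, the same bound holds in $\|\cdot\|_{\infty,p}$ on the ball $|Z|\le p+1$.

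The main obstacle is that we have only an $L^2$ estimate with gain $\sqrt t$, while we need a pointwise kernel estimate. We plan to close this gap by interpolation rather than by proving derivative bounds with $\sqrt t$ gain. The uniform estimate \eqref{uniform estimate frnp} bounds the kernel $\mathscr{F}_{t,n,p}^{r,h}(Z,Z')$ in $C^m$ with $m=2\ell+1$ derivatives on $|Z|,|Z'|\le p+1$, with no gain in $t$. We then apply a standard Sobolev/interpolation lemma on $\mathbb{R}^{2\ell}\times\mathbb{R}^{2\ell}$: if a kernel $k$ on a ball satisfies $\|k\|_{C^{m}}\le M$ and the associated operator has $L^2$-norm $\le \epsilon$, then $\sup|k|\le C M^{1-\theta}\epsilon^{\theta}$ for some $\theta>0$ depending only on the dimension.

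To prove the lemma, we pair $k$ with an approximate delta $\phi_\delta(Z-Z_0)\phi_\delta(Z'-Z_0')$ of scale $\delta$. The pairing error is at most $\delta M$, while the operator bound controls the pairing by $\epsilon\,\|\phi_\delta\|_{L^2}^2\sim\epsilon\,\delta^{-2\ell}$. Optimizing over $\delta$ gives $\sup|k|\le C(M\epsilon^{1/(2\ell+1)})$, up to powers, since $M\ge\epsilon$. With $\epsilon\sim\sqrt t$ this produces the factor $t^{1/(2(2\ell+1))}$. Because all estimates are restricted to $|Z|\le p$, we may shrink $\delta\le 1$ so that the test functions stay inside $|Z|\le p+1$.

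The exponent $\theta$ sits on the Gaussian weights, which replaces $C',C'',C'''$ by $\theta C'$, etc. We absorb this by renaming the constants and choosing $\eta$ smaller if necessary; the polynomial factors in $p$ simply combine into $(1+p)^N$. Finally, the dependence on $K$ is harmless: all the cited estimates are uniform for $|K|\le c_\eta$, so the resulting bound \eqref{t-0 estimate ftnprh} is uniform in $u\in\,]0,1]$, $|K|\le c_\eta$ and $n,p,h\in\mathbb{N}$.
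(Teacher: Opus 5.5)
Your proposal is correct and follows the paper's proof essentially step by step. You use the same splitting of $\mathscr{F}_{t,n,p}^{r,h}$, via the contour representation \eqref{spectral thm}, into an exponentially small part coming from $\widetilde F_{t,n,q}^h-\widetilde F_{0,n,q}^h$ (together with the $u$-derivatives of $\widetilde F$, which vanish at $u=0$) and a $\sqrt t$-small resolvent part controlled by Theorem~\ref{thm two estimates dur-t-0}; this gives an $\|\cdot\|_{\infty,p}$ bound with a factor $\sqrt t$, and you then pass to a pointwise kernel bound by the same mollification at scale $t^{1/2(2\ell+1)}$ against the uniform derivative bound \eqref{uniform estimate frnp}. (The renaming of $C',C'',C'''$ is unnecessary: your $C^1$ bound and your $L^2$ bound carry the same Gaussian weight, so $M^{1-\theta}\epsilon^{\theta}$ reproduces that weight exactly.)
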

\begin{proof}
   From (\ref{spectral thm}), (\ref{def of fnprh}), we see that $\mathscr{F}_{t,n,p}^{r,h}$   is the combination of three sorts of terms, for $r_1, r_2\in \mathbb{N}$, $r_1+r_2=r$,
   \begin{equation}\label{two operator in ftnprh}
    \begin{split}
    \int_{\Gamma_{\eta(n+p)+d}}\Bigl(\frac{\partial^{r_1}}{\partial u^{r_1}}\widetilde{F}_{t,n,q}^{h}(\lambda)-\frac{\partial^{r_{1}}}{\partial u^{r_{1}}}\Big|_{u=0}\widetilde{F}_{t,n,q}^{h}(\lambda)\Bigr)\frac{\partial^{r_{2}}}{du^{{r_{2}}}}(\lambda-L_{x,K,n+p}^{3,0})^{-q}d\lambda,\\
    \int_{\Gamma_{\eta(n+p)+d}}\frac{\partial^{r_1}}{\partial u^{r_1}}\Big|_{u=0}\widetilde{F}_{t,n,q}^h(\lambda)\Bigl(  \frac{\partial^{r_2}}{\partial u^{r_2}}(\lambda-L_{x,K,n+p}^{3,t})^{-q}-\sum_{(\bf{k},\bf{r})\in I_{q,r_2}}a_{\bf{r}}^{\bf{k}} A_{\mathbf{r},n+p}^{\bf{k}}(\lambda,0)\Bigr)d\lambda,
    \end{split}
   \end{equation}
   and for $r_1\geq 1$, $r_2\in \mathbb{N}$, $r_1+r_2=r$, 
   \begin{align}\label{e8}
\int_{\Gamma_{\eta(n+p)+d}}\frac{\partial^{r_1}}{\partial u^{r_1}}\Big|_{u=0}\widetilde{F}_{t,n,q}^h(\lambda)\sum_{(\bf{k},\bf{r})\in I_{q,r_2}}a_{\bf{r}}^{\bf{k}} A_{\mathbf{r},n+p}^{\bf{k}}(\lambda,0)d\lambda.                                     
   \end{align}
  From the $t=0$ version of (\ref{resolvent lxkn3t estimate first}) and (\ref{estimate derivative ftnq difference}), the $\|\cdot\|_{\infty,p}$ norm of the first operator in (\ref{two operator in ftnprh}) is bounded by $C(1+p)^N\exp(-C'n^2/4+2C''\eta^2 p^2-C'''h^2-C''''/t)$.  From (\ref{two estimates dur-t-0}), (\ref{estimate ftn eta}) and (\ref{comparision normal special norm}), we see that the $\|\cdot\|_{\infty,p}$ norm of the second operator in (\ref{two operator in ftnprh}) is bounded by $C\sqrt{t}(1+p)^N\exp(-C'n^2/4+2C''\eta^2 p^2-C'''h^2)$. As for \eqref{e8}, by (\ref{def of f}), we know that for any $r_1\geq 1$, $f^{(r_1)}(0)=0$. By \eqref{def of ftnh}, \eqref{def of ftnq}, we see that the term in \eqref{e8} vanish.

To sum up, for $t\in \,]0,1]$, $|K|\leq c_{\eta}$, $n,p,h\in\nn$, we have
   \begin{equation}\label{infty p norm ftnprh}
    \|\mathscr{F}_{t,n,p}^{r,h}\|_{\infty,p}\leq C\sqrt{t}(1+p)^N\exp(-C'n^2/4+2C''\eta^2 p^2-C'''h^2).
   \end{equation}

   Let $\phi:\rr^{2\ell}\rightarrow [0,1]$ be a smooth function with compact support, which equals to $1$ near $0$, such that $\int_{\rr^{2\ell}}\phi(Z)dv_{TX}(Z)=1$. For $\nu\in \,]0,1]$ small enough, by (\ref{uniform estimate frnp}), there exists $C>0$, such that if $|Z|, |Z'|\leq p$, $U,U'\in \bigl(\pi^*\Lambda(T_{\rr}^*S)\,\widehat{\otimes }\,\Lambda(T_{\rr}^*X_g)\,\widehat{\otimes }\,\Lambda(N_{X_g/X}^{*(0,1)}) \otimes  \xi\bigr)_x$,
   \begin{equation}\label{take phi 1}
    \begin{split}
    \Big|\product{\mathscr{F}_{t,n,p}^{r,h}(Z,Z')U,U'}-\hspace*{22em}\\ 
    \int_{|Z|,|Z'|\leq p+1}\product{\mathscr{F}_{t,n,p}^{r,h}(Z-W,Z'-W')U,U'}
\frac{1}{\nu^{4\ell}}\phi(W/\nu)\phi(W'/\nu)dv_{TX}(W)dv_{TX}(W')\Big|\\ 
   \leq C\nu(1+p)^N \exp(-C'n^2/4+2C''\eta^2 p^2-C'''h^2)|U||U'|.
    \end{split}
   \end{equation}
   On the other hand, by (\ref{infty p norm ftnprh}),
   \begin{equation}\label{take phi 2}
    \begin{split}
    &\Big|\int_{|Z|,|Z'|\leq p+1}\product{\mathscr{F}_{t,n,p}^{r,h}(Z-W,Z'-W')U,U'}
\frac{1}{\nu^{4\ell}}\phi(W/\nu)\phi(W'/\nu)\\ 
&\quad dv_{TX}(W)dv_{TX}(W')\Big|\leq C\frac{\sqrt{t}}{\nu^{2\ell}}(1+p)^N\exp(-C'n^2/4+2C''\eta^2 p^2-C'''h^2).
    \end{split}
   \end{equation}
   By taking $\nu=t^{1/2(2\ell+1)}$, we deduce from (\ref{take phi 1}), (\ref{take phi 2}) that for $t$ small enough, (\ref{t-0 estimate ftnprh}) holds. By using (\ref{uniform estimate frnp}) again, we complete the proof of Theorem \ref{thm t-0 estimate ftnprh}.
\end{proof}

Let $t=u^2$. We deduce from Theorem \ref{thm t-0 estimate ftnprh} the following essential fact,
\begin{equation}\label{derivative of ftnh at t=0}
    \frac{1}{r!}\frac{\partial^r}{\partial u^r}\Big|_{u=0}\widetilde{F}_{t,n}^h(L_{x,K,n+p}^{3,t})(Z,Z')=\mathscr{F}_{n,p}^{r,h}(Z,Z'), \kongge\kongge \mathrm{for}\kongge |Z|,|Z'|\leq p.
\end{equation}

From (\ref{finite propagation n+p}), (\ref{ftnh propagation}), we see that if $|Z|,|Z'|\leq p$, $|Z-Z'|\geq h$, the kernel
\begin{equation}\label{combine propagation}
    \widetilde{F}_{t,n}^h(L_{x,K,n+p}^{3,t})(Z,Z')=\widetilde{F}_{t,n}(L_{x,K}^{3,t})(Z,Z')
\end{equation}
does not depend on $p$ and $h$. So by (\ref{derivative of ftnh at t=0}), we find that  if $|Z|,|Z'|\leq p$, $|Z-Z'|\geq h$, the kernel $\mathscr{F}_{n,p}^{r,h}(Z,Z')$ does not depend on $p$ and $h$. Therefore for $\eta>0$ small enough and $t\in \,]0,1]$, $|K|\leq c_{\eta}$, by (\ref{uniform estimate frnp}), we may define the kernel $\mathscr{F}^r(Z,Z')$   by the formula,
\begin{equation}\label{def of fr}
    \mathscr{F}^{r}(Z,Z')=\sum_{n\in\nn}\mathscr{F}_{n,p}^{r,h}(Z,Z'),\qquad \,|Z|,|Z'|\leq p,\, |Z-Z'|\geq h,
\end{equation}
and the series in (\ref{def of fr}) converges uniformly on compact subsets of $(T_{\rr}X)_x$ together with its derivatives. We note that $\mathscr{F}^0(Z,Z')$ coincides with the kernel $\exp(-L_{x,K}^{3,0})(Z,Z')$ defined in \cite[(7.212)]{BismutGoette2000}.

\begin{theorem}
    Let $t=u^2$. There exist $C'' >0$, $C'''>0$ such that for  $\eta>0$ small enough, $k\in\nn$, there exist $C>0$,   $N\in\nn$, $c_{\eta}\in \,]0,1]$ such that if $u\in \,]0,1]$, $|K|\leq c_{\eta}$, 
    \begin{equation}\label{expansion ftlxk}
        \begin{split}
        &\big|\widetilde{F}_{t}(L_{x,K}^{3,t})(Z,Z')-\sum_{r=0}^k  t^{r/2}\mathscr{F}^r(Z,Z')\big|\\ 
        &\leq C t^{\frac{k+1}{2}}(1+|Z|+|Z'|)^N\exp(2C''\eta^2 \sup(|Z|^2,|Z'|^2)-C'''|Z-Z'|^2).
        \end{split}
    \end{equation}
\end{theorem}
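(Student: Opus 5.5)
The plan is to assemble the expansion from the kernel decomposition \eqref{sum ftn kernel}, localizing each piece by finite propagation speed as in \eqref{finite propagation n+p} and \eqref{combine propagation}, and then applying Taylor's formula in $u=\sqrt{t}$ to each localized piece. Fix $Z,Z'$ and let $p$ be an integer with $p\geq |Z|+|Z'|$. Put $h$ equal to the integer part of $|Z-Z'|$; when $|Z-Z'|<1$, use $h=0$. For each $n$ we replace $\widetilde{F}_{t,n}(L_{x,K}^{3,t})(Z,Z')$ by $\widetilde{F}_{t,n}^h(L_{x,K,n+p}^{3,t})(Z,Z')$. By \eqref{def of fr} and \eqref{derivative of ftnh at t=0}, the coefficient $\mathscr{F}^r(Z,Z')$ is $\sum_n \mathscr{F}_{n,p}^{r,h}(Z,Z')$, and each summand is the $r$-th Taylor coefficient in $u$ at $u=0$ of the corresponding localized kernel. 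Therefore
\begin{equation*}
\widetilde{F}_{t}(L_{x,K}^{3,t})(Z,Z')-\sum_{r=0}^k t^{r/2}\mathscr{F}^r(Z,Z')=\sum_{n\in\nn}\Bigl(\widetilde{F}_{t,n}^h(L_{x,K,n+p}^{3,t})-\sum_{r=0}^k \frac{u^r}{r!}\frac{\partial^r}{\partial u^r}\Big|_{u=0}\widetilde{F}_{t,n}^h(L_{x,K,n+p}^{3,t})\Bigr)(Z,Z').
\end{equation*}

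Next, Taylor's formula with integral remainder in $u\in[0,1]$ bounds each bracket by $\frac{u^{k+1}}{(k+1)!}\sup_{u'\in]0,u]}\bigl|\partial_u^{k+1}\widetilde{F}_{t',n}^h(L_{x,K,n+p}^{3,t'})(Z,Z')\bigr|$. Theorem \ref{thm uniform estimate frn lxkn2t} with $r=k+1$ and $m=0$ bounds this supremum by
\[
C(1+p)^N\exp\bigl(-C'n^2/4+2C''\eta^2p^2-C'''h^2\bigr).
\]
Summing the factor $e^{-C'n^2/4}$ over $n$ gives a finite constant. With $p\sim|Z|+|Z'|$, the polynomial factor is $(1+|Z|+|Z'|)^N$. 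Since $h\geq |Z-Z'|-1$, the factor $e^{-C'''h^2}$ is at most $Ce^{-C'''|Z-Z'|^2/2}$, and we rename $C'''$.

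The remaining factor $e^{2C''\eta^2p^2}$ must be brought to the form $e^{2C''\eta^2\sup(|Z|^2,|Z'|^2)}$. With $p\sim |Z|+|Z'|$ it only gives $e^{8C''\eta^2\sup(|Z|^2,|Z'|^2)}$. I will handle this either by renaming $\eta$, which is allowed because $\eta$ is arbitrary small in the statement, or by choosing $p\sim\sup(|Z|,|Z'|)+1$. The second choice is legitimate since the support condition in \eqref{finite propagation n+p} only requires $|Z|\leq p$ for the first variable, while the sup in \eqref{estimate ftnn+p kernel} covers $|Z'|\leq p+1$. The extra linear term in $p^2$ is absorbed into the constants $C$ and $N$.

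The main obstacle I expect is justifying the interchange of $\sum_n$ with differentiation in $u$ and the Taylor expansion. The point to check is that the remainder estimate is uniform in $n$, $p$ and $h$, and that $\partial_u^r$ of the localized kernel at $u=0$ genuinely equals $r!\,\mathscr{F}_{n,p}^{r,h}$. The latter is \eqref{derivative of ftnh at t=0}, which in turn relies on Theorem \ref{thm t-0 estimate ftnprh}. The uniformity in $u\in]0,1]$ of Theorem \ref{thm uniform estimate frn lxkn2t} then makes the series converge absolutely, which completes the argument.
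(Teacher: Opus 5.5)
Your proposal is the paper's proof. The paper applies Taylor's formula in $u=\sqrt{t}$, for each $n$, to the localized kernel $\widetilde{F}_{t,n}^h(L_{x,K,n+p}^{3,t})(Z,Z')$, bounds the remainder by Theorem \ref{thm uniform estimate frn lxkn2t}, identifies the coefficients through \eqref{derivative of ftnh at t=0}, and resums over $n$ via \eqref{as infinite sum}, \eqref{combine propagation} and \eqref{def of fr}; your explicit choices of $p$ and $h$ just spell out what the paper leaves implicit.

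One correction concerns your second option for $p$. Taking $p\leq \sup(|Z|,|Z'|)+1$ leaves a factor $\exp\bigl(2C''\eta^2(2\sup(|Z|,|Z'|)+1)\bigr)$. This grows exponentially, so it cannot be absorbed into $C$ and $N$. Your first option works: replace $\eta$ by a fixed fraction of itself. This is allowed because $\mathscr{F}^r$ does not depend on $\eta$ and $c_\eta$ may be re-chosen.
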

\begin{proof}
From (\ref{estimate ftnn+p kernel}), (\ref{derivative of ftnh at t=0}), and the Taylor expansion
\begin{equation}
    G(u)-\sum_{r=0}^k \frac{1}{r!}\frac{\partial^r G}{\partial u^r}(0)u^r=\frac{1}{k!}\int_{0}^{u}(u-u_0)^k\frac{\partial^{k+1}G}{\partial u^{k+1}}(u_0)du_0,
\end{equation}
we find that if $|Z|, |Z'|\leq p$,
\begin{equation}\label{expansion ftnlxkn+p}
        \begin{split}
        &\big|\widetilde{F}_{t,n}(L_{x,K,n+p}^{3,t})(Z,Z')-\sum_{r=0}^k  t^{r/2}\mathscr{F}_{n,p}^{r,h}(Z,Z')\big|\\ 
        &\qquad\qquad\leq Ct^{(k+1)/2}(1+p)^N\exp(-C'n^2/4+2C''\eta^2 p^2-C'''h^2).
        \end{split}
    \end{equation}

   Thus we can  sum up (\ref{expansion ftnlxkn+p}) for $n\in\nn$, by using (\ref{as infinite sum}), (\ref{combine propagation}) and (\ref{def of fr}), we obtain (\ref{expansion ftlxk}). Then our theorem is proved.
\end{proof}

\begin{proof}[Proof of Theorem \ref{thm ft akt expansion}]
    Now there is $c>0$ such that if $Z\in N_{X_g/X,\rr,x}$, then
    \begin{equation}\label{distance gz z}
        |g^{-1}Z-Z|\geq c|Z|.
    \end{equation}
    We use the notations in (\ref{expansion ftlxk}), and we take $\eta>0$ small enough such that
    \begin{equation}\label{fix small eta}
        2C''\eta^2\leq c^2C'''/2.
    \end{equation}
    Therefore by (\ref{expansion ftlxk}), (\ref{distance gz z}) and (\ref{fix small eta}), if $Z\in N_{X_g/X,\rr,x}$,
    \begin{equation}\label{expansion ftlxk g}    \begin{split}
        \big|\widetilde{F}_{t}(L_{x,K}^{3,t})(g^{-1}Z,Z)-\sum_{r=0}^k  t^{r/2}\mathscr{F}^r(g^{-1}Z,Z)\big|
        \leq C t^{(k+1)/2}\exp(-\frac{C'''}{4}|Z|^2).
        \end{split}
    \end{equation}

    By using (\ref{estimate ftnn+p kernel}) and the same arguments leading to (\ref{expansion ftlxk g}), we obtain
    \begin{equation}
        \big|\widetilde{F}_{t}(L_{x,K}^{3,t})(g^{-1}Z,Z)\big|
        \leq C \exp(-\frac{C'''}{4}|Z|^2).
    \end{equation}

    From \eqref{ea}, (\ref{getzler rescaling str}), we get
    \begin{equation}\label{e9}
        \begin{split}
       \frac{t^{\mathrm{dim}N_{X_g/X}}}{(2\pi)^{\ell'}} \int_{Z\in N_{X_g/X,\rr}\atop |Z|\leq \varepsilon_0/\sqrt{t}}\str\big[g\ft(\akt)(g^{-1}(x,\sqrt{t}Z),(x,\sqrt{t}Z))\big] 
        k(x,\sqrt{t}Z)\frac{dv_{N_{X_g/X}}(Z)}{(2\pi)^{\mathrm{dim}N_{X_g/X}}}\\ 
        =  \Big(\frac{1}{2i\pi}\Big)^{\ell'}\int_{Z\in N_{X_g/X,\rr}\atop |Z|\leq \varepsilon_0/\sqrt{t}}\str^{\Lambda(N_{X_g/X}^*)\otimes \xi}\big[g\ft(L_{x,K}^{3,t})(g^{-1}Z,Z)\big]^{\mathrm{max}}\widetilde{k}_x(\sqrt{t}Z)\frac{dv_{N_{X_g/X}}(Z)}{(2\pi)^{\mathrm{dim}N_{X_g/X}}},
        \end{split}
    \end{equation}
    where $\widetilde{k}_x(Z)=k(x,Z)k^{'-1}_x(Z)$ is a smooth function on $\{Z\in N_{X_g/X,\rr}:|Z|\leq \varepsilon_0\}$.

    By using (\ref{expansion ftlxk g})-(\ref{e9}), we see that for $k\in\nn$, there is $C>0$ such that for $K\in\mathfrak{z}(g)\otimes_{\mathbb{R}}\mathbb{C}$, $|K|\leq c_{\eta}$, $t\in \,]0,1]$,
    \begin{equation}
        \begin{split}
            &\Big|\frac{t^{\mathrm{dim}N_{X_g/X}}}{(2\pi)^{\ell'}}\int_{Z\in N_{X_g/X,\rr}\atop |Z|\leq \varepsilon_0/\sqrt{t}}\str\big[g\ft(\akt)(g^{-1}(x,\sqrt{t}Z),(x,\sqrt{t}Z))\big]\\ 
&\qquad\qquad\qquad\quad \cdot k(x,\sqrt{t}Z)\frac{dv_{N_{X_g/X}}(Z)}{(2\pi)^{\mathrm{dim}N_{X_g/X}}}-\sum_{r=0}^k t^{r/2} a_{r/2,g,K}\Big|\leq C t^{(k+1)/2}.
        \end{split}
        \end{equation}

        Finally, to complete the proof of Theorem \ref{thm ft akt expansion}, it suffices to show that for odd $r$, $a_{r/2,g,K}$ vanishes. This is done if we show that $\str^{\Lambda(N_{X_g/X}^{*(0,1)})\otimes \xi_x}\big[g\mathscr{F}^r(g^{-1}Z,Z)\big]$ is an odd function in $Z$ for $r$ odd. By \eqref{lxk3t expansion}, \eqref{arnklambdat}, \eqref{def of fnprh} and \eqref{def of fr}, $\mathscr{F}^r$ consists of integrals of 
        \begin{equation}\label{eb}
        \begin{split}
       (\lambda-L_{x,K,n}^{3,0})^{-k_0}\gamma^2\Bigl(\frac{|Z|}{2(n+2)}\Bigr)\mathcal{L}_{r_1}(K)(\lambda-L_{x,K,n}^{3,0})^{-k_1} \qquad\qquad\\
        \dots(\lambda-L_{x,K,n}^{3,0})^{-k_{j-1}}\gamma^2\Bigl(\frac{|Z|}{2(n+2)}\Bigr)\mathcal{L}_{r_j}(K)(\lambda-L_{x,K,n}^{3,0})^{-k_j},
        \end{split}
     \end{equation}
     here $\mathcal{L}_{r_i}\in \Op_{r_{i}}$ is of the form \eqref{e4}, and satisfies $r_1+\dots+r_j=r$. By \eqref{def of lxk30}, \eqref{lxkn3t}, the operators $(\lambda-L_{x,K,n}^{3,0})^{-k_i}$ are invariant under the change of variable $Z\to -Z$.  Note that $\gamma$ is an even function in $Z$, by using the parity condition in Definition \ref{dr},  if $r$ is odd, then under the change of variable $Z \to -Z$, the terms in \eqref{eb} of odd Clifford degree remain invariant, whereas those of even Clifford degree change sign. However, the odd Clifford degree terms do not contribute to supertraces. So we deduce that the supertrace $\str^{\Lambda(N_{X_g/X}^{*(0,1)})\otimes \xi_x}\big[g\mathscr{F}^r(g^{-1}Z,Z)\big]$ is an odd function in $Z$, so we finishes the proof of Theorem \ref{thm ft akt expansion}.
\end{proof}

\subsection{Proof of Theorem \ref{thm asymptotic-gK} for $u\in \,]0,1]$.}\label{section establish analytic torsion}
Recall that 
\begin{equation}\label{ec}
    \gamma_{t,g,K}=\Phi\str\big[gN_{K,t}\exp(-\akt)\big].
\end{equation}

Let $e_1,\dots,e_{2\ell}$ be an oriented orthogonal basis of $T_{\rr}X$ and let $J^{TX}$ be the complex structure on $T_{\rr}X$. By \cite[Proposition 2.4]{BGS2} and by (\ref{definition nkt}),
\begin{equation}\label{nkt def using c}
    N_{K,t}=\frac{i}{4}\sum_{i=1}^{2\ell}c(J^{TX}e_i)c(e_i)+\frac{\ell}{2}+\frac{i\omega^H}{t}-i\frac{\product{\mu,K}}{t}.
\end{equation}
Let $[N_{K,t}]_t^{(3)}$ be the operator obtained from $N_{K,t}$ by   using Getzler rescaling  in Definition \ref{def getzler rescaling}. Recall that $\jmath: W_g\rightarrow W$ is the nature embedding. From (\ref{nkt def using c}), one verifies easily that as $t\rightarrow 0$,
\begin{equation}\label{nkt rescaling}
    t[N_{K,t}]_t^{(3)}\rightarrow i\, \jmath^*\omega-i\,\product{\mu,K}.
\end{equation}

Now we work on $\gamma_{t,g,K}$ in \eqref{ec} and wish to get a similar expansion as Theorem \ref{thm asymptotic higher akt}. Indeed we can follow same steps in Sections \ref{ed}-\ref{ee}, in particular equations \eqref{getzler rescaling str} and \eqref{expansion ftlxk g}. Now we write down the first two terms of the expansion of $\gamma_{t,g,K}$: there exist $\beta>0$, $C>0$ such that for $K\in \mathfrak{z}(g)\otimes _{\mathbb{R}}\mathbb{C}$, $|K|\leq \beta$, there are differential forms $C_{-1,g,K}', C_{0,g,K}'\in P^S$, for any $t\in \,]0,1]$,
\begin{equation}\label{expansion gammatgk}
    \big|\gamma_{t,g,K}-\frac{C_{-1,g,K}'}{t}-C_{0,g,K}'\big|\leq Ct.
\end{equation}
Here the expansion initials from $t^{-1}$ because of the singularity of $[N_{K,t}]_t^{(3)}$ as $t\to 0$, which is calculated in \eqref{nkt rescaling}.

Let $\varphi:\Lambda(T_{\rr}^*W_g)\rightarrow \Lambda(T_{\rr}^*W_g) $ be the map $\alpha\rightarrow (2i\pi)^{-\mathrm{deg}\alpha/2}\alpha$. 
Recall that the  operator $\exp(-L_{x,K}^{3,0})=\mathscr{F}^0$ is defined in (\ref{def of fr}). By using \eqref{expansion ftlxk g}, (\ref{nkt rescaling}),  and by computing as in (\ref{getzler rescaling str}), we obtain
\begin{equation}\label{compute c-1gk}
    \begin{split}
    &C_{-1,g,K}'=\int_{X_g}\varphi\big(i(\jmath^*\omega)-i\product{\mu,K}\big)\cdot\,\\ 
    &\qquad\varphi\int_{Z\in N_{X_g/X},\rr}\str^{\Lambda(N_{X_g/X}^*)\otimes \xi}\big[g\exp(-L_{x,K}^{3,0})(g^{-1}Z,Z)\big]\frac{dv_{N_{X_g/X}}(Z)}{(2\pi)^{\mathrm{dim}N_{X_g/X}}}.
    \end{split}
\end{equation}

By using the same arguments as in \cite[(7.225)]{BismutGoette2000}, we get
\begin{equation}\label{compute c-1gk 2}
    C_{-1,g,K}'=C_{-1,g,K}=\int_{X_g}\Big(\frac{\omega}{2\pi}-i\product{\mu,K}\Big)\td_{g,K}(TX,h^{TX})\ch_{g,K}(\xi,h^{\xi}).
\end{equation}

It remains to compute $C_{0,g,K}'\in P^S$. Here we proceed as in \cite[\S 7.14]{BismutGoette2000}.

Recall that $\bisconn_{K,t}''$ and $\bisconn_{K,t}'$ are defined in (\ref{bku}), which satisfy the identities in (\ref{bku-algebraic identity}).  
Let $da$ be the canonical generator of $\mathbb{C}^{*}$. Then $da\dabar$ span $\rr^{2*}\otimes_{\rr}\cc$. If $\sigma\in\Lambda(\rr^{2*})\,\widehat{\otimes}\,\Lambda(T_{\rr}^*S)$, we write $\sigma$ in the form 
 \begin{equation}
     \sigma=\sigma_0+da \sigma^{da}+ \dabar \sigma^{\dabar}+da\dabar \sigma^{da\dabar}, \kongge \sigma_0,\sigma^{da},\sigma^{\dabar},\sigma^{da\dabar}\in\Lambda(T_{\rr}^*S).
 \end{equation}
 \begin{proposition}
    The following identity holds,
    \begin{equation}\label{derivative gamma}
        \begin{split}
               \frac{\partial}{\partial t}\gamma_{t,g,K}=\Phi\str\Big[g\exp\Big(-\lk-\bisconn_{K,t}^2-da\sqrt{2}t\frac{\partial}{\partial t}(\bisconn_{K,t}''-\bisconn_{K,t}')
               -\dabar\sqrt{2}t\frac{\partial}{\partial t}(\bisconn_{K,t}''\\ +\bisconn_{K,t}')
               +N_Vda\dabar\Big)\Big]^{da\dabar}-\overline{\partial}\partial \Phi\frac{1}{2}\frac{\partial^2}{\partial c^2}\str\big[g\exp(-\lk-\bisconn_{K,t}^2+cN_{K,t})\big]_{c=0}. 
        \end{split}
    \end{equation}
 \end{proposition}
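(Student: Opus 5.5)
We will follow the argument for the $K=0$ transgression in \cite[Theorem 2.9]{BGS2}, \cite[\S 7.14]{BismutGoette2000}, using only the algebraic identities of Proposition \ref{a2}. We work with $A_{K,t}=\lk+\bisconn_{K,t}^2$. We write $\str$ for the supertrace, and we use that $\lk$ supercommutes with $\bisconn_{K,t}$, $\bisconn_{K,t}'$, $\bisconn_{K,t}''$ and with $N_{K,t}$, by $G$-invariance and the fact that $\mu$ is $\mathrm{Ad}^*$-equivariant with $K\in\mathfrak{z}(g)$.

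\textbf{Step 1 (derivative of $N_{K,t}$).} Differentiating $\gamma_{t,g,K}=\Phi\str[gN_{K,t}\exp(-A_{K,t})]$ in $t$ gives two contributions. The first is
\[
\str\Bigl[g\,\frac{\partial N_{K,t}}{\partial t}\exp(-A_{K,t})\Bigr]
=-\frac1t\str\Bigl[g\Bigl(i\frac{\omega^H}{t}-i\frac{\langle\mu,K\rangle}{t}\Bigr)\exp(-A_{K,t})\Bigr]
=-\frac1t\str\bigl[g(N_{K,t}-N_V)\exp(-A_{K,t})\bigr].
\]
The second comes from differentiating the exponential; by Duhamel's formula it is
\[
-\frac{\partial}{\partial b}\str\Bigl[gN_{K,t}\exp\Bigl(-A_{K,t}-b\,\frac{\partial}{\partial t}\bisconn_{K,t}^2\Bigr)\Bigr]_{b=0}.
\]

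\textbf{Step 2 (expanding the second contribution).} By Proposition \ref{a2} we have $\frac{\partial}{\partial t}\bisconn_{K,t}^2=[\bisconn_{K,t}'',\frac{\partial}{\partial t}\bisconn_{K,t}']+[\bisconn_{K,t}',\frac{\partial}{\partial t}\bisconn_{K,t}'']$. We then use three facts: $[\bisconn_{K,t}',A_{K,t}]=[\bisconn_{K,t}'',A_{K,t}]=0$; supertraces vanish on supercommutators; and $\str[[\bisconn_{K,t}'',\cdot]]=\overline\partial\str[\cdot]$, $\str[[\bisconn_{K,t}',\cdot]]=\partial\str[\cdot]$. This is \cite[Prop. 1.8]{BGS1988}; it extends to our setting because $g$ and $\lk$ commute with the superconnection. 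With these, the second term of Step 1 splits into
\[
\overline\partial(\cdots)+\partial(\cdots)+\str\bigl[[\bisconn_{K,t}'',N_{K,t}]\,g\cdots\bigr]+\str\bigl[[\bisconn_{K,t}',N_{K,t}]\,g\cdots\bigr].
\]
The first line of \eqref{bku-algebraic identity} replaces $[\bisconn_{K,t}'',N_{K,t}]$ by $-2t\frac{\partial}{\partial t}\bisconn_{K,t}''$ and $[\bisconn_{K,t}',N_{K,t}]$ by $2t\frac{\partial}{\partial t}\bisconn_{K,t}'$.

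\textbf{Step 3 (Grassmann variables and the $\overline\partial\partial$ term).} We introduce the Grassmann variables $da,\dabar$ and rewrite the resulting terms, together with the $N_V$ term of Step 1, as the $da\dabar$-coefficient of a single supertrace, exactly as in \cite[(7.229)--(7.232)]{BismutGoette2000}. This gives the expression
\[
\str\Bigl[g\exp\Bigl(-A_{K,t}-da\sqrt2\,t\tfrac{\partial}{\partial t}(\bisconn''_{K,t}-\bisconn'_{K,t})-\dabar\sqrt2\,t\tfrac{\partial}{\partial t}(\bisconn''_{K,t}+\bisconn'_{K,t})+N_Vda\dabar\Bigr)\Bigr]^{da\dabar}.
\]
The remaining $\overline\partial(\cdot)$ and $\partial(\cdot)$ pieces are recognized as $-\overline\partial\partial$ of $\frac12\frac{\partial^2}{\partial c^2}\str[g\exp(-A_{K,t}+cN_{K,t})]_{c=0}$. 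To see this we apply the same commutator identities once more, now with $N_{K,t}$ inserted twice. The $1/t$ term from Step 1 either cancels against the $N_{K,t}$-parts or is absorbed by the $\Phi$ normalization. Finally we apply $\Phi$, which commutes with $\overline\partial\partial$ up to the stated $2i\pi$ conventions.

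\textbf{Main obstacle.} The main difficulty is the bookkeeping in Step 3. We must check that the $-\frac1t(N_{K,t}-N_V)$ term combines exactly with the degree-counting factors hidden in $\Phi$, so that only $N_V$ survives. We also need the $\mu$-dependent terms to match, through \eqref{af}, with the $c(K^X)$ pieces of $\frac{\partial}{\partial t}\bisconn_{K,t}$. We will verify this by comparing with the $K=0$ case \cite[Theorem 2.9]{BGS2}, where the $\mu$-terms are absent. The extra contributions coming from $\langle\mu,K\rangle$ and $c(K^X)$ then cancel by the first line of Proposition \ref{a2}.
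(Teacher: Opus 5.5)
Your route is the one the paper intends: its proof only says that the argument of \cite[Theorem 2.14]{BGS2} goes through with Proposition \ref{a2} in place of Theorem \ref{identities-g}. Your Steps 1--2 are correct. There is also no need to compare separately with $K=0$, since Proposition \ref{a2} already handles the $\langle\mu,K\rangle$ and $c(K^X)$ terms.

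The gap is Step 3, and it is not just bookkeeping. The Step 1 term $-\frac1t\str[g(N_{K,t}-N_V)e^{-A_{K,t}}]$ contains $-\frac1t\str[gN_{K,t}e^{-A_{K,t}}]$. Step 2 produces only $\overline\partial$- and $\partial$-exact terms, plus a term bilinear in $P=\frac{\partial}{\partial t}\bisconn''_{K,t}$ and $Q=\frac{\partial}{\partial t}\bisconn'_{K,t}$, so nothing cancels it. $\Phi$ is a $t$-independent rescaling of form degrees, so it cannot absorb it either. Set $J_t=\int_0^1\str[g\,Pe^{-sA_{K,t}}Qe^{-(1-s)A_{K,t}}]\,ds$ and $\eta_t=\frac12\frac{\partial^2}{\partial c^2}\str[g\exp(-A_{K,t}+cN_{K,t})]_{c=0}$. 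Carrying out Steps 1--2 exactly gives
\[
t\frac{\partial}{\partial t}\str\bigl[gN_{K,t}e^{-A_{K,t}}\bigr]=-\str\bigl[gN_{K,t}e^{-A_{K,t}}\bigr]+\str\bigl[gN_Ve^{-A_{K,t}}\bigr]-4t^2J_t-\overline{\partial}\partial\eta_t .
\]
By contrast, the $da\,\dabar$-coefficient of the Grassmann supertrace in the statement is $\str[gN_Ve^{-A_{K,t}}]-4t^2J_t$. The $P\cdots P$ and $Q\cdots Q$ terms vanish after integrating in $s$, by odd cyclicity of $\str$ and $s\mapsto 1-s$. So with $\frac{\partial}{\partial t}\gamma_{t,g,K}$ on the left, every piece is off by a factor $1/t$, and a surplus $-\frac1t\gamma_{t,g,K}$ remains.

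The missing idea is to differentiate $t\gamma_{t,g,K}=\Phi\str[g\,(tN_{K,t})e^{-A_{K,t}}]$ instead. Since $tN_{K,t}=tN_V+i\omega^H-i\langle\mu,K\rangle$, you have $\frac{\partial}{\partial t}(tN_{K,t})=N_V$; the paper does exactly this for $\kappa^1_{v,t}$ in \eqref{partial t tkappa1 1}. The display above then becomes
\[
\frac{\partial}{\partial t}\bigl(t\gamma_{t,g,K}\bigr)=\Phi\str\bigl[\cdots\bigr]^{da\dabar}-\frac{\overline{\partial}\partial}{2i\pi}\Phi\,\eta_t ,
\]
which is the identity of \cite{BGS2}. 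It is also the form the paper actually uses next. By \eqref{expansion gammatgk}, the constant term of $\frac{\partial}{\partial t}(t\gamma_{t,g,K})$ is $C'_{0,g,K}$. Comparing with \eqref{process limit gamma} and \eqref{etatstar2} then gives $C'_{0,g,K}=C_{0,g,K}-\frac{\overline\partial\partial}{2i\pi}L_0$, consistent with \eqref{ddba etatstar}.

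So the left-hand side of the statement must be read as $\frac{\partial}{\partial t}(t\gamma_{t,g,K})$. The literal version is false. Take $S$ a point, $g=e$, $K=0$: combined with the correct identity it would force $\str[N_Ve^{-(D^X)^2}]=0$ at $t=1$. This fails for $\mathcal{O}$ on $\mathbb{P}^1$, where the left side is minus the sum of $e^{-\lambda}$ over the nonzero eigenvalues $\lambda$ of $\overline\partial^{X,*}\overline\partial^X$ on functions. Hence no rearrangement in your Step 3 could close the argument as stated.
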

 \begin{proof}
    By using the identities in (\ref{bku-algebraic identity}). The proof is totally the same as the proof of \cite[Theorem 2.14]{BGS2}.
 \end{proof}

 \begin{theorem}
    The following identity holds,
    \begin{equation}\label{gamma lichnerowicz formula}
        \begin{split}
            \lk+\bisconn_{K,t}^2+da\sqrt{2}t\frac{\partial}{\partial t}(\bisconn_{K,t}''-\bisconn_{K,t}')
               +\dabar\sqrt{2}t\frac{\partial}{\partial t}(\bisconn_{K,t}'' +\bisconn_{K,t}')
               -N_Vda\dabar\\
            =-\frac{t}{2}\Big(\nabla_{e_i}^{\Lambda(T^{*(0,1)}X)\otimes \xi}+\frac{1}{\sqrt{2t}}\product{S(e_i)e_j, f_p^H}c(e_j)f^p\wedge
             +\frac{1}{2t}\product{S(e_i)f_p^H,f_q^H}f^p\wedge f^q\wedge\\ -\frac{\product{K^X,e_i}}{2t}+\frac{da}{2\sqrt{t}}ic(J^{TX}e_i)-\frac{\dabar}{2\sqrt{t}}c(e_i)\Big)^2+\frac{t}{8}H\\ 
            +\frac{t}{4}\bigl(R^{\xi}+\frac{1}{2}\trace[R^{TX}]\bigr)(e_i,e_j)c(e_i)c(e_j)
            +\sqrt{\frac{t}{2}}\bigl(R^{\xi}+\frac{1}{2}\trace[R^{TX}]\bigr)(e_i,f_p^H)c(e_i)f^p\\ 
            +\frac{1}{2}\bigl(R^{\xi}+\frac{1}{2}\trace[R^{TX}]\bigr)(f_p^H,f_q^H)f^p\wedge f^q\wedge-(m^{\xi}(K)+\frac{1}{2}\trace[m^{TX}(K)]).
        \end{split}
    \end{equation}
 \end{theorem}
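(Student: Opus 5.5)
This identity is a generalized Lichnerowicz formula. I would prove it by reducing it to \eqref{lichnerowicz formula}, which already gives $\lk+\bisconn_{K,t}^2$, and then computing the extra Grassmann-valued terms exactly as in the proof of \cite[Theorem 2.16]{BGS2}, where the case $K=0$ is treated.

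\emph{Step 1: the $t$-derivatives.} From \eqref{Bismut superconnection} and \eqref{bku},
\begin{equation*}
t\frac{\partial}{\partial t}\bisconn_{K,t}''=\frac{\sqrt{t}}{2}\dbar+\frac{c(T^{(1,0)})}{4\sqrt{2t}}-\frac{c(K^{X(1,0)})}{4\sqrt{2t}},
\end{equation*}
and the analogous formula holds for $\bisconn_{K,t}'$. So $\sqrt{2}\,t\frac{\partial}{\partial t}(\bisconn_{K,t}''\pm\bisconn_{K,t}')$ equals $\sqrt{t/2}$ times a Dirac-type operator, namely $\dbar\pm\dbarstar$, plus zeroth-order Clifford terms in $T$ and $K^X$ carrying the factor $t^{-1/2}$. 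I would write $\dbar\pm\dbarstar$ as $\frac{1}{\sqrt{2}}\sum_i c(e_i)\nabla_{e_i}$, respectively $\frac{i}{\sqrt{2}}\sum_i c(J^{TX}e_i)\nabla_{e_i}$ up to sign, using \eqref{clifford action}. Then the $da$- and $\dabar$-terms, which are linear in $\nabla_{e_i}$, are precisely the cross terms obtained by expanding the square in \eqref{gamma lichnerowicz formula} with the additional connection coefficient $\frac{da}{2\sqrt{t}}ic(J^{TX}e_i)-\frac{\dabar}{2\sqrt{t}}c(e_i)$.

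\emph{Step 2: matching the lower-order terms.} The cross terms of this new coefficient with $\frac{1}{\sqrt{2t}}\langle S(e_i)e_j,f_p^H\rangle c(e_j)f^p$ and with $\frac{1}{2t}\langle S(e_i)f_p^H,f_q^H\rangle f^pf^q$ should match the $c(T)$-parts of the $t$-derivatives, via \eqref{S and T} and Proposition~\ref{prop T}. This bookkeeping is identical to \cite{BGS2} and I would simply cite it. The genuinely new cross term is
\begin{equation*}
-\frac{t}{2}\cdot 2\cdot\Bigl(-\frac{\langle K^X,e_i\rangle}{2t}\Bigr)\Bigl(\frac{da}{2\sqrt{t}}ic(J^{TX}e_i)-\frac{\dabar}{2\sqrt{t}}c(e_i)\Bigr),
\end{equation*}
which gives $\frac{1}{2\sqrt{t}}\bigl(da\, ic(J^{TX}K^X)-\dabar\, c(K^X)\bigr)$. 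I would check that this equals the $c(K^{X(1,0)})$, $c(K^{X(0,1)})$ contributions of the $da$ and $\dabar$ terms, using $c(J^{TX}K^X)=i\bigl(c(K^{X(1,0)})-c(K^{X(0,1)})\bigr)$, which follows from \eqref{clifford action}. Care is needed with the ordering of odd elements.

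\emph{Step 3: the quadratic term.} The square of the new coefficient is $-\frac{da\dabar}{4t}\, i\bigl(c(J^{TX}e_i)c(e_i)+\dots\bigr)$. Multiplied by $-\frac{t}{2}$, it should give $\frac{i}{4}\sum_i c(J^{TX}e_i)c(e_i)\,da\dabar$ up to a constant. By \eqref{nkt def using c} this is $(N_V-\ell/2)\,da\dabar$, which is meant to cancel the $-N_V\,da\dabar$ on the left-hand side. Both sides must be normalized consistently, as in \cite[Theorem 2.16]{BGS2}, where the constant is absorbed the same way. The $K$-dependent terms in \eqref{lichnerowicz formula} are unchanged because they involve no Grassmann variables.

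I expect the main obstacle to be the sign and ordering bookkeeping in Step 2 for the new $K^X$ cross terms, since $da$, $\dabar$, $f^p$ and the Clifford elements anticommute. I would carry it out first in a local orthonormal frame adapted to $J^{TX}$.
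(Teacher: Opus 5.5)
Your route is the paper's: set $da=\dabar=0$ to recover \eqref{lichnerowicz formula}, then treat the Grassmann terms as in \cite{BGS2}. Steps 1 and 2 are fine apart from one factor. The $K^X$ cross term is $\frac12\langle K^X,e_i\rangle a(e_i)=\frac{1}{4\sqrt t}\bigl(da\,ic(J^{TX}K^X)-\dabar\,c(K^X)\bigr)$, not $\frac{1}{2\sqrt t}(\cdots)$. With the factor $\frac{1}{4\sqrt t}$ it matches exactly the $K$-parts $-\frac{1}{4\sqrt t}\bigl(c(K^{X(1,0)})\pm c(K^{X(0,1)})\bigr)$ of $\sqrt2\,t\frac{\partial}{\partial t}(\bisconn''_{K,t}\pm\bisconn'_{K,t})$.

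The genuine gap is Step 3, the only computation that is new beyond \eqref{lichnerowicz formula}; as written it fails. Put $a(e_i)=\frac{da}{2\sqrt t}ic(J^{TX}e_i)-\frac{\dabar}{2\sqrt t}c(e_i)$. Move the odd $da,\dabar$ past the odd Clifford elements; the $da^2$ and $\dabar^2$ terms vanish. Then use $\sum_i c(e_i)c(J^{TX}e_i)=-\sum_i c(J^{TX}e_i)c(e_i)$ and \eqref{nkt def using c} to get
\begin{equation*}
-\frac{t}{2}\sum_i a(e_i)^2=-\frac{i}{4}\sum_i c(J^{TX}e_i)c(e_i)\,da\dabar=-\Bigl(N_V-\frac{\ell}{2}\Bigr)da\dabar .
\end{equation*}
Your sign is reversed. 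With your sign the $N_V$-terms on the two sides would not match at all; calling it a ``cancellation'' hides this. With the correct sign the $N_V$-terms agree, but a constant $\frac{\ell}{2}\,da\dabar$ is left on the right-hand side with no counterpart on the left.

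No other term of the square has a $da\dabar$-component. The cross terms with the $S$-terms give only $da\,f^p$, $\dabar\,f^p$, and so on. The derivative terms $[\nabla_{e_i},a(e_i)]-a(\nabla_{e_i}e_i)$ reduce to such $f^p$-terms, because $J^{TX}$ is parallel. So the constant cannot be ``absorbed'' in an operator identity. A flat torus with $\ell=1$, $K=0$ and $S$ a point already shows the two displayed sides differ by exactly $\frac12\,da\dabar$.

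What your computation actually proves is the identity with $-(N_V-\frac{\ell}{2})\,da\dabar$ on the left. This is the normalization the paper itself uses in the analogous Lichnerowicz formula after \eqref{split of theta}. Equivalently, the displayed identity holds with an extra $-\frac{\ell}{2}\,da\dabar$ on the right. The constant matters downstream. It shifts the $da\dabar$-coefficient of the supertrace by $\frac{\ell}{2}\Tr[g\exp(-\lk-\bisconn_{K,t}^2)]$. Hence it shifts the limit in \eqref{process limit gamma} by $\frac{\ell}{2}\int_{X_g}\td_{g,K}(TX,h^{TX})\ch_{g,K}(\xi,h^{\xi})$, which is exactly the kind of term that produces $\ell\,\td_{g,K}$ in $C_{0,g,K}$. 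You need to carry out this step explicitly and state the resulting constant, not defer it to \cite{BGS2}.
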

 \begin{proof}
    When making $da=0$, $\dabar=0$, (\ref{gamma lichnerowicz formula}) is just (\ref{lichnerowicz formula}). The terms with $da$, $\dabar$ can be dealt with in the same manner as   \cite[Theorem 2.15]{BGS2}.
 \end{proof}

As in Section \ref{ef}, we take $x\in W_g$, and we identify $B^{T_xX}(0,16\varepsilon_0)$ with $B^{X}(x,16\varepsilon_0)$ through the exponential map centered at $x$. We trivialize $\Lambda(\rr^{2*})\,\widehat{\otimes }\, \pi^*\Lambda(T_{\rr}^*S)\,\widehat{\otimes }\, \Lambda(T^{*(0,1)}X)\otimes \xi $ by parallel transport along $s\in [0,1]\rightarrow sZ$ with respect to the connection
\begin{equation}\label{2nablat}
    \begin{split}
        ^2\nabla^{\mathbb{E},t}_{\cdot}=\nabla_{\cdot}^{\mathbb{E}}+\frac{1}{\sqrt{2t}}\product{S(\cdot)e_j, f_p^H}c(e_j)f^p\wedge +\frac{1}{2t}\product{S(\cdot)f_p^H,f_q^H}f^p\wedge f^q\wedge\qquad\\
        -\frac{K^{\prime X}}{2t}+\frac{da}{2\sqrt{t}}ic(J^{TX}\cdot)-\frac{\dabar}{2\sqrt{t}}c(\cdot).
    \end{split}
\end{equation}

From (\ref{curvature 1nablat}), (\ref{2nablat}), we calculate that
\begin{equation}\label{curvature 2nabla}
    \begin{split}
        ( ^2\nabla^{\mathbb{E},1})^2(e_i,e_j)=\frac{1}{4}\product{R^{TX}(e_k,e_{\ell})e_i,e_j}c(e_k)c(e_{\ell})\hspace{12 em}\\ 
        +\frac{1}{\sqrt{2}}\product{R^{TX}(e_k,f_p^H)e_i,e_j}c(e_k)f^p\wedge
        +\frac{1}{2}\product{R^{TX}(f_p^H,f_q^H)e_i,e_j}f^p\wedge f^q\wedge\\ +\big(R^{\xi}+\frac{1}{2}\trace[R^{TX}]\big)(e_i,e_j)-\product{m^{TX}(K)e_i,e_j}-i{da\dabar}\product{e_i,J^{TX}e_j}.
    \end{split}
\end{equation}

 If $s$ is a smooth section of $\bigl(\Lambda^r(\rr^{2*})\,\widehat{\otimes }\,\pi^*\Lambda^q(T_{\rr}^*S) \,\widehat{\otimes}\, \Lambda^p(T_{\rr}^*X_g)\,\widehat{\otimes }\, \Lambda(N_{X_g/X}^{*(0,1)}) \otimes  \xi\bigr)_x$, and if $s$ has compact support, we set 
 \begin{equation}
    |s|_{t,x,0}'^2= \int_{(T_{\rr}X)_x}|s|^2\Bigl(1+|Z|\rho\big(\frac{\sqrt{t}Z}{2}\big)\Bigr)^{2(2\ell'+2\mathrm{dim}S+1-p-q-r)}dv_{TX}(Z).
 \end{equation}

 The techniques in Sections \ref{ei}-\ref{ee} work well for the operator in (\ref{gamma lichnerowicz formula}), with the  trivialization associated to $ ^2\nabla^{\mathbb{E},t}$ and the new norm $|\cdot|_{t,x,0}'$. 
Thus we can establish the analogue of Theorem \ref{thm asymptotic higher akt} by following the lines of the previous sections. In particular, we find that for $K\in\mathfrak{z}(g)\otimes_{\mathbb{R}}\mathbb{C}$ with $|K|$ small enough, as $t\rightarrow 0$,
 \begin{equation}\label{process limit gamma}
    \begin{split}
       \Phi\str\Big[g\exp\Big(-\lk-\bisconn_{K,t}^2-da\sqrt{2}t\frac{\partial}{\partial t}(\bisconn_{K,t}''-\bisconn_{K,t}')\qquad\qquad\qquad\qquad\quad\\
               -\dabar\sqrt{2}t\frac{\partial}{\partial t}(\bisconn_{K,t}'' +\bisconn_{K,t}')
               +N_Vda\dabar\Big)\Big]^{da\dabar}\rightarrow C_{0,g,K}.
    \end{split}
 \end{equation}

 Once again, by using (\ref{nkt rescaling}) and by proceeding as in the previous sections, we obtain (\ref{etatstar}), that is 
    \begin{equation}\label{etatstar2}
        \Phi\frac{1}{2}\frac{\partial^2}{\partial c^2}\str\big[g\exp(-\lk-\bisconn_{K,t}^2+cN_{K,t})\big]_{c=0}=\frac{L_{-2}}{t^2}+\frac{L_{-1}}{t}+L_0+\mathcal{O}(t).
    \end{equation}

   By using (\ref{derivative gamma}) and by comparing the coefficients of  (\ref{expansion gammatgk}), (\ref{process limit gamma}) and (\ref{etatstar2}), we find that
   \begin{equation}
    C_{0,g,K}'=C_{0,g,K}-\overline{\partial}\partial L_0,
   \end{equation}
   and this completes the proof of Theorem \ref{thm asymptotic-gK}.

\subsection{Proof of Theorem \ref{main lemma1}}
We adapt the  notations  in Section \ref{ej}. Set 
\begin{equation}
   {A}_{K,v,t}'=\lk+\cvt^2.
\end{equation}
Since $ v\geq t\geq t_0>0$, we have
\begin{equation}
    0\leq \frac{1}{t}-\frac{1}{v}<\frac{1}{t}.
\end{equation}
Therefore all of the results in Section 6.1 hold true if we  replace $\akt$ by $ {A}_{K,v,t}'$. And we get Theorem \ref{main lemma1}.

\subsection{Proof of Theorem \ref{h,t1infty}}
We only prove the theorem for $\pi_{v,t}^{0\prime}$, the other cases are similar. We keep the notations in Sections \ref{ej} and \ref{section establish analytic torsion}.

For $1\leq t\leq v< +\infty$, set
\begin{equation}\label{definition ckvt}
    C_{K,v,t}'=\lk+\cvt^2+\frac{2t}{v}\dabar \derivative{t}\cvt'=A_{K,v ,t}'+\frac{2t}{v}\dabar \derivative{t}\cvt'.
\end{equation}
From (\ref{definition cvt}), we have
\begin{equation}\label{explicit cvt'}
    2\derivative{t}\cvt'=\frac{1}{\sqrt{t}}\dbarstar+\frac{c(T^{(0,1)})}{2\sqrt{2}t^{3/2}}-\frac{c(K^{X(0,1)})}{2\sqrt{2t}}\Big(\frac{1}{t}+\frac{1}{v}\Big).
\end{equation}

Since $\dbarstar$ is a first order differential operator along the fiber which vanish on $\mathrm{Ker}(D)$, by \eqref{sobolev infty 01}, we see that $\sqrt{t}\,\dbarstar$ is a bounded operator from $\mathbb{E}^1$ to  $\mathbb{E}^0$.  Therefore from (\ref{explicit cvt'}), there is $C>0$, such that for $1\leq t\leq v< +\infty$, $|K|\leq \beta$, $s\in \mathbb{E}$,
\begin{equation}\label{estimate partialt cvt'}
    \Big|\Big(\frac{2t}{v}\derivative{t}\cvt'\Big)s\Big|_{t,0}\leq \frac{C}{\sqrt{v}}|s|_{t,1}.
\end{equation}

Let $C_{K,v,t}^{\prime(0)}$ (resp.$C_{K,v,t}^{\prime(>0)})$ be the zero degree piece (resp. positive degree piece) of $C_{K,v,t}'$ in $\Lambda(\rr^{2*})\,\widehat{\otimes }\, \Lambda(T_{\rr}^*S)$.
From (\ref{estimate partialt cvt'}) and proceed as in Lemmas \ref{elliptic estimate infty}, \ref{resolvent infty}, we see that there are $c,C>0$ such that for $t\geq 1$,  $|K|\leq \beta$, $\lambda\in U_c$, the resolvent $(\lambda-C_{K,v,t}')^{-1}$ exists, and for $s\in E$,
\begin{equation}\label{resolvent ckvt}
    |(\lambda-C_{K,v ,t})^{-1}s|_{t,1}\leq C(1+|\lambda|)^{p}|s|_{t,-1}.
\end{equation}
And we have the following analogue of Proposition \ref{prop akt-bt},
\begin{proposition}\label{prop ckvt-akvt}
    There exists $C>0$, such that for $1\leq t\leq v<+\infty$, $K\in\mathfrak{z}(g)\otimes_{\mathbb{R}}\mathbb{C}$,  $|K|\leq \beta$, $s\in \mathbb{E}$,
    \begin{equation}\label{estimate exp ckvt akvt}
        \|\big(\exp(-C_{K,v,t}')-\exp(-A_{K,v,t}')\big)s\|_0\leq \frac{C}{\sqrt{v}}\|s\|_0.
    \end{equation}
\end{proposition}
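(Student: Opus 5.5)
We will mimic the proof of Proposition \ref{prop akt-bt}, replacing $\akt$ and $\lk+\bisconn_t^2$ by $C_{K,v,t}'$ and $A_{K,v,t}'$. By \eqref{definition ckvt}, the difference is
\begin{equation*}
C_{K,v,t}'-A_{K,v,t}'=\frac{2t}{v}\dabar\derivative{t}\cvt'.
\end{equation*}
This is a perturbation of positive degree in the Grassmann variable $\dabar$, and by \eqref{estimate partialt cvt'} it satisfies
\begin{equation*}
\Big|\frac{2t}{v}\dabar\derivative{t}\cvt' s\Big|_{t,0}\leq \frac{C}{\sqrt{v}}|s|_{t,1}.
\end{equation*}
We use the uniform resolvent bounds \eqref{resolvent ckvt} for $C_{K,v,t}'$. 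For $A_{K,v,t}'$ we use the analogue of Lemma \ref{resolvent infty}, which holds because $0\le 1/t-1/v<1/t$, so the proof of Lemmas \ref{elliptic estimate infty} and \ref{resolvent infty} applies uniformly in $1\le t\le v$. With these, for $\lambda\in U_c$ we write the resolvent identity
\begin{equation*}
(\lambda-C_{K,v,t}')^{-1}-(\lambda-A_{K,v,t}')^{-1}=(\lambda-C_{K,v,t}')^{-1}\Big(\frac{2t}{v}\dabar\derivative{t}\cvt'\Big)(\lambda-A_{K,v,t}')^{-1}.
\end{equation*}

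For the right-hand side, the first factor maps $|\cdot|_{t,-1}$ to $|\cdot|_{t,1}$ with polynomial growth in $|\lambda|$. The middle factor maps $|\cdot|_{t,1}$ to $|\cdot|_{t,0}$, which is dominated by $|\cdot|_{t,-1}$ by \eqref{infty three sobolev}, with norm $C/\sqrt v$. The last factor maps $|\cdot|_{t,-1}\supset|\cdot|_{t,0}$ to $|\cdot|_{t,1}$. Combining these with \eqref{infty three sobolev} gives
\begin{equation*}
\big\|\big((\lambda-C_{K,v,t}')^{-1}-(\lambda-A_{K,v,t}')^{-1}\big)s\big\|_0\le \frac{C}{\sqrt v}(1+|\lambda|)^{2p}\|s\|_0 .
\end{equation*}
The norm $\|\cdot\|_0$ is equivalent to $|\cdot|_{t,0}$, taken componentwise in the finite-dimensional factor $\Lambda(\rr^{2*})\widehat\otimes\Lambda(T^*_\rr S)$.

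We then represent both heat operators by the contour integral \eqref{exp resolvent akt} over $\Gamma_c$ and integrate against $e^{-\lambda}$. Since $\mathrm{Re}\lambda$ grows quadratically in $\mathrm{Im}\lambda$ along $\Gamma_c$, the factor $e^{-\lambda}$ absorbs the polynomial growth $(1+|\lambda|)^{2p}$, and this yields \eqref{estimate exp ckvt akvt}.

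The only delicate point is to justify that the resolvent of $C_{K,v,t}'$ exists on $U_c$ with uniform bounds, i.e. \eqref{resolvent ckvt}. The positive-degree part of $C_{K,v,t}'$ is nilpotent, so we expand $(\lambda-C_{K,v,t}')^{-1}$ as a finite Neumann series in $C_{K,v,t}^{\prime(>0)}$ around $(\lambda-C_{K,v,t}^{\prime(0)})^{-1}$. The zero-degree part is controlled exactly as in Lemmas \ref{elliptic estimate infty} and \ref{resolvent infty}. The additional $\dabar$ term is controlled by \eqref{estimate partialt cvt'}, which only needs the mixed bound $|\cdot|_{t,1}\to|\cdot|_{t,0}$; the fourth inequality of Lemma \ref{elliptic estimate infty} is of this form. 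Each step of the Neumann series costs a factor $(1+|\lambda|)^{p}$, and this fixes the exponent $p$.
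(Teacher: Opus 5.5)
Your proposal is correct and follows the paper's proof. The paper simply invokes \eqref{definition ckvt}, \eqref{estimate partialt cvt'}, \eqref{resolvent ckvt} and the resolvent-identity and contour-integral argument of Proposition \ref{prop akt-bt}; this is what you carry out, with the $\dabar$-term now bounded from $|\cdot|_{t,1}$ to $|\cdot|_{t,0}$. One wording slip: by \eqref{infty three sobolev} it is $|\cdot|_{t,-1}$ that is dominated by $|\cdot|_{t,0}$, not the other way round, and that is the direction your chain of inequalities actually uses.
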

\begin{proof}
    By using (\ref{definition ckvt}), (\ref{estimate partialt cvt'}), (\ref{resolvent ckvt}) and the same arguments as in the proof of Proposition \ref{prop akt-bt}, we get (\ref{estimate exp ckvt akvt}).
\end{proof}

Let $\exp(-C_{K,v,t}')(x,x')$ be the smooth kernel of $\exp(-C_{K,v,t}')$ with respect to $dv_X/(2\pi)^{\ell}$.
By using \eqref{explicit cvt'}, we have the following analogue of Proposition \ref{prop uniform estimate akt}.
\begin{proposition}\label{prop uniform estimate ckvt}
    For any $k\in\nn$, there exists $C>0$ such that for  $\alpha, \beta\in\nn^{2\ell}$, $|\alpha|+|\beta|\leq k$,  $t\geq 1$, $K\in\mathfrak{z}(g)\otimes _{\mathbb{R}}\mathbb{C}$,  $|K|\leq \beta$,  $s\in S$, $x,x'\in X_s$,  we have
    \begin{equation}
        \Big|\frac{\partial^{|\alpha|+|\beta|}}{\partial x^{\alpha}\partial x'^{\beta}}\exp(-C_{K,v,t}')(x,x')\Big|\leq C.
    \end{equation}
\end{proposition}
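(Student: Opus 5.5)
The plan is to follow the proof of Proposition \ref{prop uniform estimate akt}, which is itself modeled on \cite[Lemmas 5.6, 5.7, Proposition 5.8]{LiuMa2022}, replacing $\akt$ by $C_{K,v,t}'$. The first step is to check that $C_{K,v,t}'$ satisfies the analogues of Lemma \ref{elliptic estimate infty}, uniformly for $1\leq t\leq v$, with respect to the norms $|\cdot|_{t,0}$, $|\cdot|_{t,1}$, $|\cdot|_{t,-1}$ of \eqref{sobolev infty 01}--\eqref{sobolev infty -1}.

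For the principal part $A_{K,v,t}'$ nothing new is needed. Since $0\leq 1/t-1/v<1/t$, the coefficient of $c(K^X)$ in $\cvt$ is bounded by the one appearing in $\bisconn_{K,t}$, so the estimates of Lemma \ref{elliptic estimate infty} hold uniformly in $v\geq t$.

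The extra term $\frac{2t}{v}\dabar\derivative{t}\cvt'$ has positive degree in $\Lambda(\rr^{2*})$. By \eqref{estimate partialt cvt'} it satisfies $|\langle \frac{2t}{v}\dabar\derivative{t}\cvt' s,s'\rangle|\leq C(|s|_{t,1}|s'|_{t,0}+|s|_{t,0}|s'|_{t,1})$, which is exactly the form of the fourth estimate in Lemma \ref{elliptic estimate infty}. So it can be absorbed into $C_{K,v,t}^{\prime(>0)}$, and this gives the resolvent bound \eqref{resolvent ckvt}.

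Next I will consider the family $\mathcal{D}$ of operators $P^{\perp}\nabla_{U_i}P^{\perp}$. The key step is to show that iterated commutators $[Q_1,[Q_2,\dots[Q_k,C_{K,v,t}']\dots]]$ are bounded by $C_k|s|_{t,1}|s'|_{t,1}$, uniformly in $1\leq t\leq v$.

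For $A_{K,v,t}'$ this is the commutator lemma preceding Proposition \ref{prop uniform estimate akt}, applied with the modified $c(K^X)$ coefficient. For the extra term, I would use \eqref{explicit cvt'}. Commutators of $Q_i$ with $\frac{t}{v}\cdot\frac{1}{\sqrt t}\dbarstar$ give first-order operators with a factor $\sqrt t/v\leq 1/\sqrt t$. These are controlled in the $|\cdot|_{t,1}$ norm because $P^\perp$ appears, as in \cite[Lemma 5.6]{LiuMa2022}. The remaining Clifford terms carry factors $t^{-1/2}$ or smaller and are zeroth order.

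I expect this commutator estimate to be the main obstacle. The point is to keep track of how $P$ and $P^\perp$ interact with $\dbarstar$, using that $\dbarstar P=0$ and that $P\dbarstar$ is smoothing, so that no loss in $t$ appears.

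With the commutator bounds in hand, I will iterate the resolvent to get $\|(\lambda-C_{K,v,t}')^{-1}s\|_{k+1}'\leq C_k(1+|\lambda|)^{p_k}\|s\|_k'$ for $\lambda\in U_c$. Writing
\[\exp(-C_{K,v,t}')=\frac{1}{2i\pi}\int_{\Gamma_c}e^{-\lambda}(\lambda-C_{K,v,t}')^{-1}d\lambda,\]
and using that $C_{K,v,t}^{\prime(>0)}$ is nilpotent, I obtain $\|\exp(-C_{K,v,t}')\|^{-k,k}$ bounds uniformly in $t,v,K$. This uses $\lambda^m$ powers on the contour and duality for the adjoint, as in Proposition \ref{prop uniform estimate akt}.

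The component on $\mathrm{Ker}(D)$ is finite-dimensional and smooth, so $\|\cdot\|_k'$ together with $P$ controls the usual Sobolev norm $\|\cdot\|_k$. Sobolev embedding in $x$ and $x'$ then gives the uniform $\cinfty$ bounds on $\exp(-C_{K,v,t}')(x,x')$. Compactness of $S'$, or working locally on $S$, makes the constants uniform in $s$.
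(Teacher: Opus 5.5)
Your proposal is correct and follows the same route as the paper, which simply invokes \eqref{explicit cvt'} and \eqref{estimate partialt cvt'} to rerun the argument of Proposition \ref{prop uniform estimate akt} (i.e.\ \cite[Lemmas 5.6, 5.7, Proposition 5.8]{LiuMa2022}) with $C_{K,v,t}'$ in place of $\akt$; your commutator and resolvent steps are exactly what that adaptation requires. One small correction: $P\,\dbarstar$ is not merely smoothing but vanishes identically, since the range of $\dbarstar$ is orthogonal to $\mathrm{Ker}(D)$, so $\dbarstar=P^{\perp}\dbarstar P^{\perp}$, which only simplifies your commutator estimate.
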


From Propositions \ref{prop ckvt-akvt}, \ref{prop uniform estimate ckvt} and using again the same arguments in  \cite[\S 11 p)]{BL}, we know that there are $C>0$, $\delta>0$, such that for  $1\leq t\leq v<+\infty$, $K\in\mathfrak{z}(g)\otimes_{\mathbb{R}}\mathbb{C}$,  $|K|\leq \beta$,  $s\in S$, $x,x'\in X_s$,
\begin{equation}\label{kernel ckvt-akvt}
    \big|\exp(-C_{K,v,t}')(x,x')-\exp(-A_{K,v,t}')(x,x')\big|\leq \frac{C}{v^{\delta}}.
\end{equation}

From (\ref{pivt}), (\ref{definition ckvt}), we know
\begin{equation}\label{dabar wedge pi}
    \dabar\wedge \pi_{v,t}^{0\prime}=\str\big[i\product{\mu,K}g\bigl(\exp(-C_{K,v,t}')-\exp(-A_{K,v,t}')\bigr)\big].
\end{equation}
By (\ref{kernel ckvt-akvt}), (\ref{dabar wedge pi}), we get Theorem \ref{h,t1infty} for $h_{v,t}=\pi_{v,t}^{0\prime}$.

\section{The analysis of the two parameter operator $\exp(-\lk-\cvt^2)$}\label{ek}
The aim of this section is to prove Theorems \ref{main lemma 1}, \ref{h,t01} and  \ref{thm,h2,t01v01} by using the  techniques in \cite[\S 8]{BismutGoette2000} and \cite[\S 6]{LiuMa2022}. This section is organized as follows. In Section \ref{dv}, we establish a Lichnerowicz formula. In Section \ref{sec compute to ev}, we prove \eqref{limit of main}. In Sections \ref{sec localization of problem mu1}-\ref{sec mu2 control}, we prove \eqref{t01v01}. In Section \ref{dw}, we prove \eqref{t01v1}. In Section \ref{dx}, we prove Theorem \ref{h,t01}. In Section \ref{dy}, we prove Theorem  \ref{thm,h2,t01v01}. In this section, we use the assumptions and the notations in Section \ref{dz}.

Recall that
\begin{equation}\label{redef cvt}
    \cvt=\bisconn_{K,t}-\frac{\sqrt{t}c(K^{X})}{2\sqrt{2}v}.
\end{equation}

\subsection{A Lichnerowicz formula}\label{dv}

Let $K\in\mathfrak{g}$, $v>0$.
Let $L$ be a trivial line bundle over $W$, equipped with the curvature $\nabla^L_{v}$ which is given by
\begin{equation}\label{def of nabla l}
    \nabla^L_{v}=d-\frac{K^{\prime X}}{2v}.
\end{equation}
Since $K^{X}\in T_{\mathbb{R}}X$, for $U\in T_{\rr}S$,
\begin{equation}\label{L horizontal connection}
    \nabla^{\Lambda(T^{*(0,1)}X)\otimes \xi\otimes L}_{U^H}=\nabla^{\Lambda(T^{*(0,1)}X)\otimes \xi}_{U^H}
\end{equation}
Observe that $\sqrt{2}(D^X-\frac{c(K^{X})}{2\sqrt{2}v})$ is the standard Dirac operator on $\Lambda(T^{*(0,1)}X)\otimes \xi\otimes L$ associated to $\nabla^{TX}$, $\nabla^{\xi}$ and $\nabla^L_{v}$. From (\ref{L horizontal connection}), the new Bismut superconnection associated with $\Lambda(T^{*(0,1)}X)\otimes \xi\otimes L$ is
\begin{equation}\label{def bisconntv}
    \bisconn_{t}^v=\bisconn_{t}-\frac{\sqrt{t}c(K^{X})}{2\sqrt{2}v}.
\end{equation}

In the sense of (\ref{redef cvt}) and (\ref{def bisconntv}), the following result is obtained from  (\ref{lichnerowicz formula}) with $\xi$ replaced by $\xi\otimes L$.  We refer to \cite[Theorem 6.1]{LiuMa2022} for more details of the proof.
\begin{theorem}\label{thm lichnerowicz avt}
    The following identity holds,
    \begin{equation}\label{lichnerowicz avt}
        \begin{split}
            \lk+\cvt^2=-\frac{t}{2}\Big(\nabla_{e_i}^{\Lambda(T^{*(0,1)}X)\otimes \xi}+\frac{1}{\sqrt{2t}}\product{S(e_i)e_j, f_p^H}c(e_j)f^p\wedge\qquad\qquad\\
             +\frac{1}{2t}\product{S(e_i)f_p^H,f_q^H}f^p\wedge f^q\wedge -\frac{\product{K^X,e_i}}{2}\Bigl(\frac{1}{t}+\frac{1}{v}\Bigr)\Big)^2\\ 
            +\frac{t}{8}H+\frac{t}{4}\bigl(\bigl(R^{\xi}+\frac{1}{2}\trace[R^{TX}]\bigr)(e_i,e_j)-\frac{1}{v}\product{\nabla_{e_i}^{TX}K^X,e_j}\bigr)c(e_i)c(e_j)\\ 
            +\sqrt{\frac{t}{2}}\bigl(\bigl(R^{\xi}+\frac{1}{2}\trace[R^{TX}]\bigr)(e_i,f_p^H)-\frac{1}{v}\product{T(e_i,f_p^H),K^X}\bigr)c(e_i)f^p\\ 
            +\frac{1}{2}\bigl(\bigl(R^{\xi}+\frac{1}{2}\trace[R^{TX}]\bigr)(f_p^H,f_q^H)-\frac{1}{2v}\product{T(f_p^H,f_q^H),K^X}\bigr)f^p\wedge f^q\wedge\\ 
            -m^{\xi}(K)-\frac{1}{2}\trace[m^{TX}(K)]+\frac{1}{2v}|K^X|^2.
        \end{split}
    \end{equation}
\end{theorem}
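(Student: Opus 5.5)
The statement is Theorem~\ref{thm lichnerowicz avt}. I would deduce it from the Lichnerowicz formula \eqref{lichnerowicz formula} for $\lk+\bisconn_{K,t}^2$, applied with $\xi$ replaced by $\xi\otimes L$, where $L$ is the trivial line bundle carrying the connection $\nabla^L_v=d-K'^X/2v$ from \eqref{def of nabla l}.

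The first step is to rewrite $\cvt$ in the twisted setting. By \eqref{L horizontal connection}, the horizontal part of the connection on $\mathbb{E}$ is unchanged by the twist. The vertical Dirac operator becomes $D^X-c(K^X)/(2\sqrt{2}v)$. Hence the Bismut superconnection for $\xi\otimes L$ is $\bisconn^v_t$ of \eqref{def bisconntv}. Adding the term $c(K^X)/(2\sqrt{2t})$ as in \eqref{bku} gives exactly $\cvt$, by \eqref{redef cvt}.

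The second step is to fix the $G$-action and moment map on $L$. I lift $G$ trivially to $L$. Since $K^X$ is $K$-invariant, $\nabla^L_v$ is $G$-invariant and the Lie derivative $\lk$ is unchanged. The moment map of $L$ is then $m^L(K)=\nabla^L_{v,K^X}-\lk^L=-|K^X|^2/2v$.

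The third step is to substitute the data of $L$ into \eqref{lichnerowicz formula}. The curvature of $L$ is $R^L=-dK'^X/2v$. Each $R^\xi$ is replaced by $R^\xi+R^L$, and $m^\xi(K)$ by $m^\xi(K)-|K^X|^2/2v$. In the quadratic term, the connection form $-K'^X/2v$ combines with the existing $-\langle K^X,e_i\rangle/2t$ to give $-\tfrac12\langle K^X,e_i\rangle(1/t+1/v)$. The constant term gains $+|K^X|^2/2v$, which is the last term of \eqref{lichnerowicz avt}.

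It remains to decompose $dK'^X$ by type using the splitting \eqref{splitting}. In the vertical--vertical direction I use $\nabla^{TX}$ together with $T(Z,Z')=0$, which gives $dK'^X(e_i,e_j)=\langle\nabla_{e_i}K^X,e_j\rangle-\langle\nabla_{e_j}K^X,e_i\rangle$. Contracted with $c(e_i)c(e_j)$, this yields $\tfrac{2}{v}\cdot\tfrac t4\langle\nabla_{e_i}^{TX}K^X,e_j\rangle c(e_i)c(e_j)$ with the stated sign. Here $\nabla K^X$ is skew because $K^X$ is Killing.

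For the mixed and horizontal directions I use the torsion identities of Proposition~\ref{prop T} to obtain $dK'^X(e_i,f_p^H)=\langle T(e_i,f_p^H),K^X\rangle$ and $dK'^X(f_p^H,f_q^H)=\langle T(f_p^H,f_q^H),K^X\rangle$. In the mixed case this holds because $K^X$ is $G$-invariant, so $\mathcal{L}_{f^H_p}K^X$ is vertical in a way controlled by $T$. In the horizontal case it follows from \eqref{9n}. Plugging these in produces the $-\tfrac1v\langle T,K^X\rangle$ terms. The combinatorial factors $1$ and $\tfrac12$ come from antisymmetrisation, as in \cite[Theorem 6.1]{LiuMa2022}.

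The main obstacle is getting the signs and factors right in this last decomposition of $dK'^X$. The mixed term is the delicate one, since the action of $\mathcal{L}_{U^H}$ on the metric also enters through $T(U^H,Z)$. I would check it by comparing with the $K=0$ case of \eqref{lichnerowicz formula} and with the $dt$-free part of \cite[Theorem 6.1]{LiuMa2022}.
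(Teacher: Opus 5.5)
Your strategy is exactly the paper's: twist by $(L,\nabla^L_v)$ with the trivial lift of the action, recognise $\cvt$ as $\bisconn_{K,t}$ for $\xi\otimes L$, and read off \eqref{lichnerowicz avt} from \eqref{lichnerowicz formula} using $R^L=-dK'^X/2v$ and $m^L(K)=-|K^X|^2/2v$. The paper simply cites \cite[Theorem 6.1]{LiuMa2022} and records the decomposition of $dK'^X$ later, as \eqref{dk'}.

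However, the step you single out as the main obstacle is done incorrectly, and as written it gives the wrong mixed term. The correct identity is $dK'^X(e_i,f_p^H)=2\langle T(e_i,f_p^H),K^X\rangle$, not $\langle T(e_i,f_p^H),K^X\rangle$. To see this, note that $G$ acts fiberwise and preserves $T^HW$, so $f_p^H$ is $G$-invariant and $[f_p^H,K^X]=0$. Also $K'^X$ vanishes on horizontal vectors. Hence
\[
dK'^X(e_i,f_p^H)=-f_p^H\langle K^X,e_i\rangle-\langle K^X,[e_i,f_p^H]\rangle=-(\mathcal{L}_{f_p^H}g^{TX})(K^X,e_i)=2\langle T(e_i,f_p^H),K^X\rangle .
\]
The factor $2$ comes from $T(U^H,Z)=\frac12(g^{TX})^{-1}\mathcal{L}_{U^H}g^{TX}Z$ in Proposition~\ref{prop T}. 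With your value, the mixed term would carry $-\frac1{2v}\langle T(e_i,f_p^H),K^X\rangle$ instead of the stated $-\frac1v$.

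Your appeal to ``antisymmetrisation'' cannot repair this. The mixed term $\sqrt{t/2}\,F(e_i,f_p^H)c(e_i)f^p$ in \eqref{lichnerowicz formula} already combines both orderings $e^i\wedge f^p$ and $f^p\wedge e^i$ with coefficient $1$. The different coefficients $\frac1v$ and $\frac1{2v}$ in front of $\langle T,K^X\rangle$ in the mixed and horizontal terms are exactly this factor $2$. The horizontal identity $dK'^X(f_p^H,f_q^H)=\langle T(f_p^H,f_q^H),K^X\rangle$, which you correctly derive from \eqref{9n}, has no such factor.

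There are two smaller points. First, in the vertical term, $dK'^X(e_i,e_j)=2\langle\nabla^{TX}_{e_i}K^X,e_j\rangle$ together with $R^L=-dK'^X/2v$ gives the coefficient $-\frac1v\cdot\frac t4$, not $\frac2v\cdot\frac t4$. Second, $\nabla^L_v$ is not $G$-invariant in general, since $h_*K^X=(\mathrm{Ad}_hK)^X$. It is invariant only under those $h$ with $\mathrm{Ad}_hK=K$, in particular under $e^{sK}$ and $g$, and that is all the argument needs. With these corrections your proof coincides with the paper's.
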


\subsection{A proof of the convergence of the supertrace as $t\rightarrow 0$.}\label{sec compute to ev}
From (\ref{def of nabla l}), 
 the corresponding curvature associated to $\nabla^L_{v}$ is given by
\begin{equation}
    R^L=(\nabla^L_{v})^2=-\frac{dK^{\prime X}}{2v}.
\end{equation}
Since $G$ acts on $L$ trivially, then the corresponding $m^L(K)$ in the sense of (\ref{moment}) is given by
\begin{equation}
    m^L(K)=-\frac{|K^X|^2}{2v}.
\end{equation}
Then by (\ref{genus-gK}), (\ref{dkkprime}),
\begin{equation}\label{chern l}
    \ch_{g,K}(L,\nabla^L_{v})=\exp(-\frac{|K^X|^2}{2v}+\frac{dK^{\prime X}}{4i\pi v})=\exp\Bigl(\frac{\overline{\partial}_K\partial_K}{2\pi v}\product{\mu,K}\Bigr).
\end{equation}

Recall that $\widetilde{e}_v$ is defined in (\ref{ev formula}). By using Theorem \ref{thm lichnerowicz avt} and by proceeding as in Section \ref{e0}, especially as in (\ref{compute c-1gk}) and (\ref{compute c-1gk 2}), we see that for $K\in\mathfrak{z}(g)$ with $|K|$ sufficiently small, as $t\rightarrow 0$,
\begin{equation}\label{compute tend to ev}
    \begin{split}
    &\Phi\str\big[i\frac{\product{\mu,K}}{v}g\exp(-\lk-\cvt^2)\big]\\ 
    &\quad\qquad\rightarrow\int_{X_g}i\frac{\product{\mu,K}}{v}\td_{g,K}(TX,h^{TX})\ch_{g,K}(\xi,h^{\xi})\ch_{g,K}(L,\nabla^L_{v})=-\widetilde{e}_v.
    \end{split}
\end{equation}

\subsection{Localization of the problem}\label{sec localization of problem mu1}
For $t>0$, $v>0$, set
\begin{equation}\label{eo}
    \mathcal{B}_{K,t,v}=\lk+\cvt^2.
\end{equation}

As in \cite[\S 8.3]{BismutGoette2000}, the proof of (\ref{t01v01}) is split into two parts. Recall that $\product{\mu,K}$ is split into $\mu_1+\mu_2$ in Section 5.1. For $v>0$, set
\begin{equation}\label{en}
    \begin{split}
        \widetilde{e}_v^k=-\int_{X_g}\frac{\mu_k}{v}\td_{g,K}(TX,h^{TX})\ch_{g,K}(\xi,h^{\xi})\exp\Big(\frac{\bar{\partial}_K\partial_K}{2\pi v}\product{\mu,K}\Big), \qquad {k=1,2}.
    \end{split}
\end{equation}  

It is clear that $\widetilde{e}_v=\widetilde{e}_v^1+\widetilde{e}_v^2$. We will establish the following two estimates, for $K_0\in \mathfrak{z}(g)$, $K=zK_0\neq 0$ with $|z|$ small enough, $0<t\leq v\leq 1$,
\begin{equation}\label{split of mu bktv}
    \begin{split}
      &\Big|\Phi\str\Big[i\frac{\mu_1}{v}g\exp\Big(-\bktv\Big)\Big]+\widetilde{e}_v^1\Big|\leq C\Bigl(\frac{t}{v}\Bigr)^{\gamma},\\ 
      &\Big|\Phi\str\Big[i\frac{\mu_2}{v}g\exp\Big(-\bktv\Big)\Big]+\widetilde{e}_v^2-\Phi\partial \xi_{v,t}'-\Phi\bar{\partial}\xi_{v,t}''\Big|\leq C\Bigl(\frac{t}{v}\Bigr)^{\gamma}.
    \end{split}
\end{equation}

By omitting the $dt$ part in \cite[Theorem 6.2]{LiuMa2022}, we have the following analogue of \cite[Theorem 8.6]{BismutGoette2000}.
\begin{lemma}
    There exist $\beta>0$, $c,C>0$ such that for $K\in\mathfrak{g}$, $|K|\leq \beta$, $t\in \,]0,1]$, $v\in [t,1]$, 
    \begin{equation}\label{I(bktv)}
        \|\widetilde{I}_t(t\mathcal{B}_{K,t,v})\|_{(1)}\leq c\exp(-C/t).
    \end{equation}
\end{lemma}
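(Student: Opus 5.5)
The plan is to imitate the proof of the analogous estimate \eqref{I(akt)} for $\mathcal{A}_{K,t}$, that is \cite[Lemma 5.16]{LiuMa2022} and \cite[Theorem 7.15]{BismutGoette2000}, now applied to $t\mathcal{B}_{K,t,v}$.

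\textbf{Step 1: a G\aa rding-type bound.} Write $t\mathcal{B}_{K,t,v}$ using Theorem \ref{thm lichnerowicz avt}. After multiplication by $t$, the horizontal Grassmann terms are nilpotent perturbations. The degree-zero part in $\Lambda(T^*_{\mathbb{R}}S)$ is
\[
-\frac{t^2}{2}\Big(\nabla_{e_i}-\frac{\langle K^X,e_i\rangle}{2}\big(\tfrac1t+\tfrac1v\big)\Big)^2
+\mathcal{O}(t^2)
+\frac{t^2}{4v}\langle\nabla_{e_i}K^X,e_j\rangle c(e_i)c(e_j)
+\frac{t}{2v}|K^X|^2 .
\]
Since $t\le v$, we have $t/v\le 1$ and $t(1/t+1/v)\le 2$. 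Hence every coefficient involving $K$ is $\mathcal{O}(|K|)$ uniformly in $(t,v)$ with $0<t\le v\le1$. Note that the term $\frac{t}{2v}|K^X|^2$ is nonnegative.

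With respect to the Sobolev norms $\|s\|_{0}$ and $\|s\|_{t,1}^2=\|s\|_0^2+t^2\|\nabla s\|_0^2$, this gives the following for $|K|\le\beta$ small:
\begin{itemize}
\item $\operatorname{Re}\langle t\mathcal{B}^{(0)}s,s\rangle\ge c_1\|s\|_{t,1}^2-c_2\|s\|_0^2$;
\item $|\operatorname{Im}\langle t\mathcal{B}^{(0)}s,s\rangle|\le c_3\|s\|_{t,1}\|s\|_0$;
\item the positive-degree part is controlled similarly.
\end{itemize}
All constants are uniform in $0<t\le v\le1$. This is the point that needs care, because of the $1/v$ factors. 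The mechanism is that they always appear multiplied by $t$.

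\textbf{Step 2: resolvent estimates.} Step 1 yields resolvent bounds for $\lambda$ outside a parabolic region $V_c$, of the form $\|(\lambda-t\mathcal{B}_{K,t,v})^{-1}\|\le C(1+|\lambda|)^p$, exactly as in Lemma \ref{resolvent infty}. Iterating with commutators of $t\nabla$ then gives Sobolev mapping bounds between $t$-weighted Sobolev spaces.

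\textbf{Step 3: contour representation and decay of $\widetilde I_t$.} Write
\[
\widetilde I_t(t\mathcal{B}_{K,t,v})=\frac{1}{2i\pi}\int_{\Gamma_c}\widetilde I_t(\lambda)(\lambda-t\mathcal{B}_{K,t,v})^{-q}\,d\lambda
\]
for $q$ large, with $\widetilde I_{t}$ replaced by its $q$-fold primitive. From \eqref{e2}, the factor $1-f(\sqrt t s)$ forces $|s|\ge\alpha/(2\sqrt t)$. Therefore, on $V_c$, $|\lambda|^m|\widetilde I_t^{(m')}(\lambda)|\le C\exp(-C'/t+C''c^2)$.

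\textbf{Step 4: trace-norm conclusion.} The trace norm of $(\lambda-t\mathcal{B})^{-q}$ is bounded by a fixed power of $t^{-1}(1+|\lambda|)$. This follows from the Sobolev gain of order $q$ with $q>\dim X$, together with the $t$-scaling of the Sobolev norms, which costs only polynomial factors in $1/t$. These polynomial losses are absorbed by $e^{-C'/t}$, which gives \eqref{I(bktv)}.

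The main obstacle is the uniformity in Step 1 as $v\to0$ together with $t\le v$. If a residual term of size $|K|^2/v$ escaped the factor $t$, I would instead absorb it using the positivity of $\frac{t}{2v}|K^X|^2$ combined with Cauchy–Schwarz against the magnetic term.
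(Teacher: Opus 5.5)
Your route (Gårding bounds for $t\mathcal B_{K,t,v}$, resolvent estimates, a contour integral for $\widetilde I_t$, trace norms polynomial in $t^{-1}$) is the standard argument behind \cite[Theorem 8.6]{BismutGoette2000} and \cite[Theorem 6.2]{LiuMa2022}, which the paper simply quotes. Your remark that after multiplying by $t$ every $1/v$ appears as $t/v\le 1$ correctly settles uniformity in $v\in[t,1]$.

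\textbf{The gap is in Step 3.} The phase in $I_t(a)$ is $e^{is\sqrt2 a/\sqrt t}$. So for $\lambda=a^2\in V_c$ with $|\mathrm{Im}(a)|\le c$ you only get $|e^{is\sqrt2 a/\sqrt t}e^{-s^2/2}|\le e^{-s^2/4+2c^2/t}$, and hence, up to powers of $t^{-1}$,
\[ |\lambda|^m\big|\widetilde I_t^{(m')}(\lambda)\big|\le C\exp\big(-(C'-C''c^2)/t\big)\quad\text{on } V_c, \]
not $C\exp(-C'/t+C''c^2)$. The bound you wrote is what $\lambda\mapsto\widetilde I_t(t\lambda)$ satisfies, i.e.\ it belongs to a contour for $\mathcal B_{K,t,v}$, not for $t\mathcal B_{K,t,v}$. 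This matters. By \eqref{F+G} with $a=ic/\sqrt t$ one has $e^{c^2/t}=\widetilde F_t(-c^2/t)+\widetilde I_t(-c^2)$. For $c>\alpha/\sqrt2$ the cut-off $f$ gives $|\widetilde F_t(-c^2/t)|\le Ct^{-1/2}e^{(\sqrt2\alpha c-\alpha^2/2)/t}\ll e^{c^2/t}$. So $|\widetilde I_t|$ is of size $e^{c^2/t}$ at the vertex $-c^2$ of $\Gamma_c$ itself. Consequently the contour argument yields $e^{-C/t}$ only if the resolvent of $t\mathcal B_{K,t,v}$ is controlled on $U_c$ with $c$ \emph{small compared with} $\alpha$. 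This is exactly where $|K|\le\beta$ and small $t$ must be used.

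\textbf{What is missing to fix it.} Your Steps 1--2 do not give a small $c$. They produce uniform but non-small constants $c_1,c_2,c_3$, hence some $U_c$ with uncontrolled $c$ (as in Lemma \ref{resolvent infty}); $|K|\le\beta$ is used only to get $c_1>0$. What you need is the sharper statement: for $t\le t_0$ and $|K|\le\beta$,
\[ \mathrm{Re}\langle t\mathcal B^{(0)}s,s\rangle\ge \tfrac{t^2}{4}\|\nabla s\|_0^2-\epsilon\|s\|_0^2,\qquad \big|\mathrm{Im}\langle t\mathcal B^{(0)}s,s\rangle\big|\le \epsilon\,\big(t\|\nabla s\|_0+\|s\|_0\big)\|s\|_0, \]
with $\epsilon=C(|K|+t)$. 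This holds for the following reasons.
\begin{itemize}
\item Write the magnetic term as $-\frac12(t\nabla_{e_i}-a_i)^2$ with the \emph{real} potential $a_i=\frac12\langle K^X,e_i\rangle(1+t/v)$. The net zeroth-order $K$-part of the real part is $-\frac12|a|^2+\frac{t}{2v}|K^X|^2=-\frac18(1-t/v)^2|K^X|^2$, and the remaining real zeroth-order terms are $O(t^2)$.
\item The imaginary part comes only from skew-adjoint pieces: $\frac t2(1+\frac tv)\nabla_{K^X}$, $t\,m^\xi(K)$, $\frac t2\mathrm{Tr}[m^{TX}(K)]$ and $\frac{t^2}{4v}\langle\nabla_{e_i}K^X,e_j\rangle c(e_i)c(e_j)$, all $O(|K|)$. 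Your Step 1 display omits the second and third of these.
\end{itemize}
These bounds put the numerical range, hence the spectrum and the resolvent bounds, inside $V_c$ with $c^2=O(|K|+t)$. Now choose $\beta,t_0$ with $C''c^2\le C'/2$. For $t\in[t_0,1]$, your argument with any fixed $c$ already gives the harmless bound $Ce^{C''c^2/t_0}$. With this correction the rest of your plan goes through.
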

So from (\ref{F+I(akt)}) and (\ref{I(bktv)}), we may replace $\exp(-\bktv)$ by $\widetilde{F}_{t}(\bktv)$ in the proof of (\ref{split of mu bktv}). Thus the analysis of $\widetilde{F}_t(\bktv)$  can be localized near $X_g$.

\subsection{A rescaling of the normal coordinate to $X_{g,K}$ in $X_g$}\label{eg}
In the sequel, we fix $g\in G$, $K_0\neq 0\in \mathfrak{z}(g)$, and take $K=zK_0$ with $z\neq 0\in \rr$.

Recall that $X_g$ and $X_{g,K}=X_g\cap X_K$ are totally geodesic in $X$. Given $\varepsilon>0$, let $\uprime'$ be the $\varepsilon$-neighborhood of $X_{g,K}$ in $N_{X_{g,K}/X_g,\rr}$. By zooming out $\varepsilon_0\in \,]0,a_X/32]$ in Section 6.3, we may assume that the map $(y_0,Z_0)\in N_{X_{g,K}/X_g,\rr}\rightarrow \exp_{y_0}^{X_g}(Z_0)\in X_g$ is a diffeomorphism from $\uprime'$ into the turbular neighborhood $\mathcal{V}_{\varepsilon}''$ of $X_{g,K}$ in $X_g$ for any $0<\varepsilon\leq 16\varepsilon_0$.

Since $X_g$ is totally geodesic in $X$, the connection $\nabla^{TX}$ induces the holomorphic Hermitian connection $\nabla^{N_{X_g/X}}$ on $N_{X_g/X}$.

If $(y_0,Z_0)$ is taken as before, we identity $N_{X_g/X,(y_0,Z_0)}$ with $N_{X_g/X,y_0}$ by parallel transport along the geodesic $s\in [0,1]\rightarrow sZ_0$ with respect to $\nabla^{N_{X_g/X}}$. If $y_0\in X_{g,K}$, $Z_0\in N_{X_{g,K}/X_g,\rr,y_0}$, $Z\in N_{X_g/X,\rr,y_0}$, we identify $(y_0,Z_0,Z)$ with $\exp_{\exp_{y_0}^{X_g}(Z_0)}^X(Z)\in X$. Therefore $(y_0, Z_0,Z)$ defines a coordinate system on $X$ near $X_{g,K}$.

Recall that the function $k$ was defined in (\ref{def of k}), and $\widetilde{d}_{v}^1$ defined in (\ref{def of dvt}). Set $\ell''=\mathrm{dim}X_{g,K}$.  Put 
\begin{equation}
    \beta_v^1=(2\pi)^{\mathrm{dim}X_g}\big[\td_{g,K}(TX,h^{TX})\ch_{g,K}(\xi,h^{\xi})\widetilde{d}_v^1\big]^{\mathrm{max}}.
\end{equation}

The following result is an important step to the first inequality in (\ref{split of mu bktv}).

\begin{theorem}\label{thm first ineq of mu}
    Take $K_{0}\neq 0\in \mathfrak{z}(g)$. There exist $\beta\in \,]0,1]$, $\gamma\in \,]0,1]$ such that for $p\in\nn$, there is $C>0$ such that if $z\in\rr$, $0<|z|\leq \beta$,  $0<t\leq v\leq 1$, $y_0\in X_{g,K}$, $Z_0\in N_{X_{g,K}/X_g,\rr,y_0}$, $|Z_0|\leq \varepsilon_0/\sqrt{v}$, then
    \begin{equation}\label{first ineq of mu}
        \begin{split}
        \Big|v^{\mathrm{dim}N_{X_{g,K}/X_g}}\Bigl(\Phi\int_{Z\in N_{X_g/X,\rr,y_0}\atop |Z|\leq \varepsilon_0}\str\Big[i\frac{\mu_1}{v}g\widetilde{F}_t(\mathcal{B}_{zK_{0},t,v})(g^{-1}(y_0,\sqrt{v}Z_0,Z), \qquad\\ (y_0,\sqrt{v}Z_0,Z))\Big]k(y_0,\sqrt{v}Z_0,Z)\frac{dv_{N_{X_g/X}(Z)}}{(2\pi)^{\mathrm{dim}N_{X_g/X}}}+\beta_{v}^1(y_0,\sqrt{v}Z_0)\Bigr)\Big|\\ 
        \leq C\frac{(1+|Z_0|)^{2\ell'+1}}{(1+|zZ_0|)^p}\Bigl(\frac{t}{v}\Bigr)^{\gamma}.
        \end{split}
    \end{equation}
\end{theorem}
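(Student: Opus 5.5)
The plan follows the scheme of \cite[Theorem 8.14]{BismutGoette2000} in its family form \cite[\S 6]{LiuMa2022}. We work with the Lichnerowicz formula of Theorem \ref{thm lichnerowicz avt}, localized near $X_g$ by \eqref{I(bktv)} and by finite propagation speed. The proof has two scales. Near $X_g$ we use the Getzler rescaling of Section \ref{sec getzler rescaling}, with $Z\in N_{X_g/X}$ at scale $\sqrt{t}$. Along $X_g$, in the normal direction to $X_{g,K}$, we rescale $Z_0$ at scale $\sqrt{v}$, which is why $\sqrt{v}Z_0$ appears in the statement.

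\textbf{Step 1: coordinates and trivialization.} Fix $y_0\in X_{g,K}$ and $Z_0$ with $|Z_0|\le\varepsilon_0/\sqrt v$, and set $x=(y_0,\sqrt vZ_0)\in X_g$. In the coordinates $(y_0,Z_0,Z)$ of Section \ref{eg}, trivialize along the radial geodesic from $x$ using the analogue of ${}^1\nabla^{\mathbb{E},t}$. The term $-K'^X/2t$ is now replaced by $-\frac{K'^X}{2}(\frac1t+\frac1v)$, coming from the line bundle $(L,\nabla^L_v)$ of \eqref{def of nabla l}. We then perform the Getzler rescaling $L^{3,t}_{x,K}$ with $\xi$ replaced by $\xi\otimes L$. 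Since $t\le v$, we have $\frac1v\le\frac1t$. Each $1/v$-term of \eqref{lichnerowicz avt} is treated as the curvature and moment of $L$, and \eqref{chern l} identifies these with $\ch_{g,K}(L,\nabla^L_v)=\exp(\frac{\overline{\partial}_K\partial_K}{2\pi v}\langle\mu,K\rangle)$. The resulting $t\to0$ model operator is \eqref{def of lxk30} with $\xi$ replaced by $\xi\otimes L$, evaluated at $x$. Its Mehler kernel, integrated against $i\mu_1/v$, produces exactly $-\beta^1_v(x)$ after integration over $Z$, as in \eqref{compute c-1gk}--\eqref{compute tend to ev}.

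\textbf{Step 2: uniform difference estimate, at most $C(t/v)^\gamma$.} We repeat Sections 6.6--6.12 with a rescaled family of norms in which the parameter $t$ is replaced by $t/v$. The aim is a uniform bound, with its $Z$-derivatives, on the difference between the rescaled kernel of $\widetilde F_t(\mathcal B_{K,t,v})$ at $(g^{-1}Z,Z)$ and the model kernel. This should follow from Theorem \ref{thm two estimates dur-t-0} applied with $\sqrt{t/v}$ as the small parameter. The reason is that after also rescaling $Z_0$ by $\sqrt v$, the coefficients depend smoothly on $(\sqrt{t/v},\sqrt v)$, and the $\sqrt t|Z|^2$-remainders become $\sqrt{t/v}$-small. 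The mismatch between $\mu_1$ at $(x,\sqrt tZ)$ and at $x$ is controlled because $\mu_1=\mathcal O(|Z|^2)$ near $W_K$ by \eqref{proposition of mu1mu2}. Thus $\mu_1/v$ is bounded by $C|\sqrt vZ_0|^2/v+Ct|Z|^2/v$, and the prefactor $(1+|Z_0|)^{2\ell'+1}$ absorbs this together with the polynomial growth from the weights in \eqref{a family of norms}. The Gaussian factor $\exp(-c|Z|^2)$ from \eqref{distance gz z} makes the $Z$-integral converge, and the restriction $|Z|\le\varepsilon_0$ only costs $\mathcal O(e^{-c/t})$.

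\textbf{Step 3: decay in $zZ_0$, the main obstacle.} The factor $(1+|zZ_0|)^{-p}$ has to come from the potential $\frac1{2v}|K^X|^2$. After rescaling this potential is $\frac12|m^{TX}(K)Z_0|^2+\dots$, and $m^{TX}(zK_0)$ is nondegenerate on $N_{X_{g,K}/X_g}$ with size $|z|$. This gives a harmonic-oscillator confinement with frequency of order $|z|$ in the $Z_0$ direction. The difficulty is that this confinement is non-uniform as $z\to0$, and that $\mathcal L_K$ produces the non-self-adjoint drift $\nabla_{K^X}$, whose effect grows linearly in $Z_0$.

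I would follow \cite[\S 8.5--8.8]{BismutGoette2000}. First take weighted norms with weight $(1+|zZ_0|)^{p}$. Then prove that commutators of the operator with this weight are bounded by $|z|$ times lower order terms. The key input is the pointwise inequality $\mathrm{Re}(\text{potential})\ge c|zZ_0|^2-C$ for $|z|\le\beta$, and this is where $\beta$ is fixed. With this, the resolvent estimates of Theorem \ref{thm estimate of resolvent first} continue to hold in the weighted spaces, with the contour $\Gamma_{\eta n+d}$ chosen so that $|nK|$ stays small. The contour integral representation \eqref{spectral thm} then transfers the weighted decay to the kernel difference, since both the kernel and $\beta^1_v$ decay in $zZ_0$ in the same way; for $\beta^1_v$ this is visible from the explicit Gaussian in \eqref{chern l}. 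Finally, the factor $v^{\dim N_{X_{g,K}/X_g}}$ is the Jacobian of the $Z_0\mapsto\sqrt vZ_0$ rescaling.
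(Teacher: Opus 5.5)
\textbf{Gap in Step 2: the estimates are not uniform in $v\in[t,1]$ with your rescaling.}

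In Step 1 you apply the isotropic Getzler rescaling $c_t(e_j)$ on all of $T_{\mathbb{R}}X_g$, centered at $x=(y_0,\sqrt v Z_0)$. Under it, the term $-\frac{t}{4v}\langle\nabla^{TX}_{e_i}K^X,e_j\rangle c(e_i)c(e_j)$ of \eqref{lichnerowicz avt} becomes $-\frac{1}{2v}\langle\nabla^{TX}_{e_i}K^X,e_j\rangle_x\, e^i\wedge e^j$, up to bounded terms.

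\begin{itemize}
\item For $e_i,e_j\in N_{X_{g,K}/X_g}$ this coefficient is $\approx\frac{1}{2v}\langle m^{TX}(zK_0)e_i,e_j\rangle$, of size $|z|/v$.
\item With one index tangent to $X_{g,K}$ it is of size $|z||Z_0|/\sqrt v$.
\end{itemize}

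These terms are nilpotent but unbounded as $v\to0$. So your claim that the coefficients depend smoothly on $(\sqrt{t/v},\sqrt v)$ is false. The norms \eqref{a family of norms}, whatever parameter you put in the cut-off, give no resolvent bounds uniform in $v$. Your model operator, which is $\overline L^{3,(0,v)}_{x,K}$ of \eqref{lxk30vbar}, is itself singular at $v=0$.

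In fact both terms inside the absolute value in \eqref{first ineq of mu} are separately of size $v^{-\dim N_{X_{g,K}/X_g}}$. For $\beta^1_v$ this comes from the term $\frac{dK'^X}{4i\pi v}$ in \eqref{chern l}. This also shows that your final remark misreads the statement. Nothing is integrated over $Z_0$ in \eqref{first ineq of mu}, so $v^{\dim N_{X_{g,K}/X_g}}$ is not a Jacobian; it cancels this pointwise Grassmann singularity. It plays the role of a Jacobian only later, when one integrates over $X_g$ in Theorem~\ref{sec mu1 control}.

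\textbf{The missing idea: the anisotropic rescaling of Definition~\ref{def lzk3tv}.} Use $c_t(e_j)$ only on $T_{\mathbb{R}}X_{g,K}$, use $c_{t/v}(e_j)$ on $N_{X_{g,K}/X_g,\mathbb{R}}$, and leave $N_{X_g/X,\mathbb{R}}$ unchanged.

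\begin{itemize}
\item In the normal directions, $\frac{t}{4v}c(e_i)c(e_j)$ then becomes $\frac12 e^i\wedge e^j$.
\item The supertrace acquires exactly the prefactor $v^{-(\ell'-\ell'')}$ of \eqref{str getzler rescaling v}.
\item The limit operator is $L^{3,(0,v)}_{Z_0,K}=\psi_v\overline L^{3,(0,v)}_{Z_0,K}\psi_v^{-1}$ of \eqref{lxk0v}, whose coefficients are uniform in $v$.
\end{itemize}

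One checks that $\psi_v^{-1}c_{t/v}(e_j)\psi_v=c_t(e_j)$, so your operator is just the $\psi_v^{-1}$-conjugate of the paper's. Your route is therefore repairable. But the whole resolvent and kernel analysis must be done in the conjugated frame, i.e.\ with norms weighting each normal Grassmann generator by $\sqrt v$. Only in that frame do the uniform bound \eqref{uniform estimate ftlvzk3tv} and the $(t/v)^{1/4(2\ell+1)}$ estimate \eqref{difference ftlvzk3tv} hold.

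\textbf{A smaller correction to Step 3.} The lower bound $\mathrm{Re}\geq c|zZ_0|^2-C$ is not available as stated. Even in the frame of ${}^3\nabla^{\mathbb{E},t}$, the square of the residual real magnetic term costs $C(1+|nz|^2)$ on $\{|Z|\leq n\}$, cf.\ \eqref{elliptic estimate lxkn3t}. This loss is absorbed by the $e^{-C'n^2}$ decay of $\widetilde F_{t,n}$, with $|K|\leq c_\eta$ and contour $\Gamma_{\eta n+d}$. It is not handled by keeping $|nK|$ small.
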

\begin{proof}
    Sections \ref{sec A new trivialization and Getzler rescaling}-\ref{e1} are devoted to the proof of Theorem \ref{thm first ineq of mu}.
\end{proof}

\subsection{A trivialization and the Getzler rescaling}\label{sec getzler rescaling v}\label{sec A new trivialization and Getzler rescaling}
We use the notations in Sections \ref{ef}-\ref{sec getzler rescaling}.
Compared with $ ^1\nabla^{\mathbb{E},t}_{\cdot}$ in (\ref{1nablat}), we set
\begin{equation}
    \begin{split}
        ^3\nabla^{\mathbb{E},t}_{e_i}=\nabla_{e_i}^{\mathbb{E}}+\frac{1}{\sqrt{2t}}\product{S(e_i)e_j, f_p^H}c(e_j)f^p\wedge\qquad\qquad\qquad\\
        +\frac{1}{2t}\product{S(e_i)f_p^H,f_q^H}f^p\wedge f^q\wedge -\frac{\product{K^X,e_i}}{2}\Bigl(\frac{1}{t}+\frac{1}{v}\Bigr).
    \end{split}
\end{equation}

We use the coordinate system $(y_0, Z_0, Z)$ of $X$ near $X_{g,K}$ in the previous section.
Since $g$ preserves geodesics and parallel transports,  we have
\begin{equation}
    g(y_0,Z_0,Z)=(y_0,Z_0,gZ).
\end{equation}

Fix $y_0\in X_{g,K}$, if $Z_0\in N_{X_{g,K}/X_g,\rr,y_0}$, $|Z_0|\leq \varepsilon_0$, we identify $T_{Z_0}X$ with $T_{y_0}X$ by parallel transport along the geodesic $s\in [0,1]\rightarrow sZ_0\in X_g$ with respect to $\nabla^{TX}$. By an abuse of notations, if $Z\in (T_{\rr}X)_{y_0}$, we will write $Z_0+Z$ as $\exp_{Z_0}^{X}(Z).$

Now we fix $Z_0\in N_{X_{g,K}/X_g,\rr,y_0}$, $|Z_0|\leq \varepsilon_0$, and we take $Z\in (T_{\rr}X)_{y_0}$, $|Z|\leq 4\varepsilon_0.$ The curve $s\in [0,1]\rightarrow \exp_{Z_0}^X(sZ)$ lies in $B_{y_0}^X(0,5\varepsilon)$. Moreover we identify $TX_{Z_0+Z}$, $\bigl(\pi^*\Lambda(T_{\rr}^*S)\,\widehat{\otimes }\,\Lambda(T^{*(0,1)}X)\otimes \xi\bigr)_{Z_0+Z}$ with $TX_{Z_0}$, $\bigl(\pi^*\Lambda(T_{\rr}^*S)\,\widehat{\otimes }\,\Lambda(T^{*(0,1)}X)\otimes \xi\bigr)_{Z_0}$  by parallel transport with respect to the connections $\nabla^{TX}$, $^3\nabla^{\mathbb{E}}$ along  the curve.

When $Z_0\in N_{X_{g,K}/X_g,\rr,y_0}$ is allowed to vary, we identify $TX_{Z_0}$, $\big(\Lambda(T^{*(0,1)}X)\big)_{Z_0}$,  $\xi_{Z_0}$ with $TX_{y_0}$, $\big(\Lambda(T^{*(0,1)}X)\big)_{y_0}$,  $\xi_{y_0}$  by parallel transport along $s\in [0,1]\rightarrow sZ_0$ with respect to $\nabla^{TX}$, $\nabla^{\Lambda(T^{*(0,1)}X)}$, $\nabla^{\xi}$. Then $\mathbf{H}_{Z_0}$ is identified with $\mathbf{H}_{y_0}$ isometrically.

For $Z_0\in N_{X_{g,K}/X_g,\rr,y_0}$, $|Z_0|\leq \varepsilon_0$, the considered trivialization depend explicitly on $Z_0$. We denote by $(\bktv)_{Z_0}$ the action of $\bktv$ centered at $Z_0$. The operator $(\bktv)_{Z_0}$ thus acts on $\mathbf{H}_{Z_0}$, by the identification of $\mathbf{H}_{Z_0}$ with $\mathbf{H}_{y_0}$, $(\bktv)_{Z_0}$ acts on $\mathbf{H}_{y_0}$.

We may and we will assume that $\varepsilon_0$ is small enough so that if $|Z_0|\leq \varepsilon_0,$ $|Z|\leq 4\varepsilon_0$, then
\begin{equation}
    \frac{1}{2}g_{y_0}^{TX}\leq g_{Z_0+Z}^{TX}\leq \frac{3}{2}g_{y_0}^{TX}.
\end{equation}

\begin{definition}
    Let $L_{Z_0,K}^{1,(t,v)}$ be the differential operator acting on $H_{y_0}$,
    \begin{equation}\label{lzk1tv}
        L_{Z_0,K}^{1,(t,v)}=-(1-\rho^2(Z))\frac{t}{2}\Delta^{TX}+\rho^2(Z)(\bktv)_{Z_0}.
    \end{equation}
\end{definition}

As in (\ref{finite propagation ftakt=ftl}), we find that if $Z_0\in N_{X_{g,K}/X_g,\rr,y_0}$, $Z\in N_{X_g/X,\rr,y_0}$, $|Z_0|, |Z|\leq \varepsilon_0$,
\begin{equation}
    \widetilde{F}_t(\bktv)\big(g^{-1}(Z_0,Z), (Z_0,Z)\big)k_{(y_0,Z_0)}'(Z)=\ft(L_{Z_0,K}^{1,(t,v)})(g^{-1}Z,Z).
\end{equation} 

As in (\ref{lxk2t}), let $L_{Z_0,K}^{2,(t,v)}$ be the differential operator acting on $\mathbf{H}_{y_0}$ defined by 
\begin{equation}
    L_{Z_0,K}^{2,(t,v)}=H_t^{-1}L_{Z_0,K}^{1,(t,v)}H_t.
\end{equation}

Let $(e_1,...,e_{2\ell''})$, $(e_{2\ell''+1},...,e_{2\ell'})$ and $(e_{2\ell'+1},...,e_{2\ell})$ be orthonormal oriented bases of $(T_{\rr}X_{g,K})_{y_0}$, $N_{X_{g,K}/X_g,\rr,y_0}$, $N_{X_g/X,\rr,y_0}$ respectively.
\begin{definition}\label{def lzk3tv}
    Let $L_{Z_0,K}^{3,(t,v)}$ be differential operator acting on $H_{y_0}$ obtained from $L_{Z_0,K}^{2,(t,v)}$ by replacing $c(e_j)$ by $c_t(e_j)$ for $1\leq j\leq 2\ell''$, by $c_{t/v}(e_j)$ for $2\ell''+1\leq j\leq 2\ell'$, while leaving unchanged for $2\ell'+1\leq j\leq 2\ell$.
\end{definition}

We denote by $\ft(L_{Z_0,K}^{3,(t,v)})(Z,Z')$ the smooth kernel associated to the operator $\ft(L_{Z_0,K}^{3,(t,v)})$ with respect to $dv_{TX}(Z')/(2\pi)^{\mathrm{dim}X}$.
As an analogue of \cite[Proposition 8.11]{BismutGoette2000}, if $Z_0\in /X_g$, $|Z_0|\leq \varepsilon_0$, $Z\in N_{X_g/X,\rr,y_0}$, $ |Z|\leq \varepsilon_0/\sqrt{t}$, we have
\begin{equation}\label{str getzler rescaling v}
    \begin{split}
    t^{\mathrm{dim}N_{X_g/X}}\str\Big[i\frac{\mu_1}{v}g\ft(\bktv)\bigl(g^{-1}(Z_0,\sqrt{t}Z), (Z_0,\sqrt{t}Z)\bigr)\Big]k_{(y_0,Z_0)}'(\sqrt{t}Z)\\ 
    =\frac{(-i)^{\ell'}}{v^{\ell'-\ell''}}\str^{\Lambda(N_{X_g/X}^{*(0,1)})\otimes \xi}\Big[i\frac{\mu_1(Z_0,\sqrt{t}Z)}{v}g\ft(\lzktv)(g^{-1}Z,Z)\Big]^{\mathrm{max}}.
    \end{split}
\end{equation}

\begin{definition}
    If $x\in X_g$, let $\overline{L}_{x,K}^{3,(0,v)}$ be the operator in $\bigl(\pi^*\Lambda(T_{\rr}^*S)\,\widehat{\otimes }\,\Lambda(T_{\rr}^*X_g)\,\widehat{\otimes }\, c(N_{X_g/X,\rr}) \otimes \ENDO(\xi)\bigr)_{x}\otimes \mathrm{Op}_{x}$ given by
    \begin{equation}\label{lxk30vbar}
        \begin{split}
        \overline{L}_{x,K}^{3,(0,v)}=-\frac{1}{2}\bigl(\nabla_{e_i}+\frac{1}{2}\product{(\jmath^* R^{TX}-m^{TX}(K))Z,e_i}\bigr)^2-\frac{1}{2v}(d-i_{K^X})\jmath^* K^{'X}\\+\jmath^* R_x^{\xi}-m^{\xi}(K)_x
        +\frac{1}{2}\trace[\jmath^* R^{TX}-m^{TX}(K)].
        \end{split}
    \end{equation}
\end{definition}

Let $\psi_v\in \ENDO(\Lambda(T_{\rr}^*X_g))$ be the morphism of exterior algebras such that $\psi_v(e^j)=e^j$ for $1\leq j\leq 2\ell''$, $\psi_v(e^j)=\sqrt{v}e^j$ for $2\ell''+1\leq j\leq 2\ell'$.
\begin{definition}
    Let $L_{Z_0,K}^{3,(0,v)}$ be the operator
    \begin{equation}\label{lxk0v}
        L_{Z_0,K}^{3,(0,v)}=\psi_v\overline{L}_{Z_0,K}^{3,(0,v)}\psi_v^{-1}.
    \end{equation}
\end{definition}

By proceeding as in \cite[(6.27)-(6.29)]{LiuMa2022}, we see that as $t\rightarrow 0$,
\begin{equation}\label{limit to lxk0v}
    L_{Z_0,K}^{3,(t,v)}\rightarrow L_{Z_0,K}^{3,(0,v)}
    .
\end{equation}

\subsection{A proof of Theorem \ref{thm first ineq of mu}}\label{eh}
The following uniform estimate is a family version of \cite[Theorem 8.23]{BismutGoette2000}.
\begin{theorem}\label{thm uniform estimate ftlvzk3tv}
    There exist $C'>0$, $C'' >0$, $C'''>0$ such that for $\eta\in \,]0,1]$ small enough, there are $c_{\eta}>0$, such that for $z\in\rr$, $0<|z|\leq c_{\eta}$,  for $m,m'\in\nn$, there is $C>0$, $r\in\nn$ such that for $0<t\leq v\leq 1$, $n\in\nn$, $y_0\in W_{g,K}$, $Z_0\in N_{X_{g,K}/X_g,\rr,y_0}$, $|Z_0|\leq \varepsilon_0/\sqrt{v}$, $Z,Z'\in (T_{\rr}X)_{y_0}$, $|Z|,|Z'|\leq \varepsilon_0/\sqrt{t}$,
    \begin{equation}\label{uniform estimate ftlvzk3tv}
        \begin{split}
            (1+|zZ_0|)^m\sup_{|\alpha|,|\alpha'|\leq m'}\Big|\frac{\partial^{|\alpha|+|\alpha'|}}{\partial Z^{\alpha}\partial Z^{'\alpha'}}\widetilde{F}_{t,n}(L_{\sqrt{v}Z_0,zK_0}^{3,(t,v)})(Z,Z')\Big|\qquad\qquad\\ 
            \leq C(1+|Z|+|Z'|)^r(1+|Z_0|)^{2\ell''+2\mathrm{dim}S}\hspace{8 em}\\ 
            \exp(-C'n^2/4+2C''\eta^2\sup(|Z|^2,|Z'|^2)-C'''|Z-Z'|^2).
        \end{split}
    \end{equation}
\end{theorem}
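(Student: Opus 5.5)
The plan follows the pattern of Theorem \ref{thm uniform estimate frn lxkn2t}, now carried out uniformly in the base point $\sqrt{v}Z_0$, and mirrors the single-parameter argument of \cite[Theorem 8.23]{BismutGoette2000}.

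\textbf{Step 1: cut-off operator and energy estimates.} As in \eqref{lxkn3t}, I will replace $L_{\sqrt{v}Z_0,zK_0}^{3,(t,v)}$ by the cut-off operator
\[
L_{\sqrt{v}Z_0,zK_0,n}^{3,(t,v)}=-\Bigl(1-\gamma^2\Bigl(\tfrac{|Z|}{2(n+2)}\Bigr)\Bigr)\tfrac{\Delta^{TX}}{2}+\gamma^2\Bigl(\tfrac{|Z|}{2(n+2)}\Bigr)L_{\sqrt{v}Z_0,zK_0}^{3,(t,v)} .
\]
Finite propagation speed then gives the analogue of \eqref{finite propagation n+p}. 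Next I will establish the analogue of Lemma \ref{lemma elliptic estimate lxk3t} for this operator, using a weighted norm $|\cdot|_{t,v,0}$. This norm is built like \eqref{a family of norms}, but the weight now counts the degrees in $\Lambda(T^*X_{g,K})$, in $\Lambda(N^*_{X_{g,K}/X_g})$ (rescaled by $c_{t/v}$) and in $\Lambda(T^*_{\rr}S)$.

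The essential new input comes from the Lichnerowicz formula \eqref{lichnerowicz avt}. After Getzler rescaling, the potential term $\frac{1}{2v}|K^X|^2$ evaluated at the point $\sqrt{v}Z_0+\sqrt{t}Z$ is bounded below by $c|zK_0|^2|Z_0|^2$. This holds because $X_{g,K}$ is the nondegenerate zero set of $K^X$ in $X_g$, and $m^{TX}(K)$ is invertible on $N_{X_{g,K}/X_g}$. This positive term will supply the decay in $|zZ_0|$.

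\textbf{Step 2: resolvent bounds.} Using these estimates, I will prove the analogues of Theorems \ref{thm estimate of resolvent first} and \ref{thm QArknQ'} for $\lambda\in U_{\eta n+d}$: uniform bounds on $(\lambda-L^{3,(t,v)}_{\cdot,n})^{-1}$ and on $Q(\lambda-L)^{-k}$ with $Q\in\mathcal{Q}^m$. The bounds must be uniform in $0<t\leq v\leq 1$ and in $|Z_0|\leq\varepsilon_0/\sqrt v$.

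Polynomial growth in $|Z_0|$ enters only through the weights. Terms such as $\frac{1}{v}\langle\nabla K^X,\cdot\rangle c\,c$ and $\frac1v\langle T(\cdot,f^H),K^X\rangle$, after rescaling with $\psi_v$ as in \eqref{lxk0v}, carry factors $O(1+|Z_0|)$ multiplied by Grassmann variables. This is why the factor $(1+|Z_0|)^{2\ell''+2\dim S}$ appears: the Grassmann degree is at most $2\ell''+2\dim S$ in the relevant directions.

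\textbf{Step 3: kernel estimate and the decay factor.} With the contour formula \eqref{spectral thm} applied to $\widetilde F_{t,n,q}$ and the bound \eqref{estimate ftn}, I will obtain the operator-norm estimate \eqref{qftnq}. The Sobolev argument in the proof of Theorem \ref{thm uniform estimate frn lxkn2t}, including the adjoint trick \eqref{qfynq normal norm'}, then converts this into the pointwise kernel bound with the Gaussian factor $\exp(-C'n^2/4+2C''\eta^2\sup(|Z|^2,|Z'|^2)-C'''|Z-Z'|^2)$. The off-diagonal factor $e^{-C'''|Z-Z'|^2}$ comes from the functions $\widetilde F^h_{t,n}$ as in \eqref{ftnh propagation}.

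To get the factor $(1+|zZ_0|)^{-m}$, I will follow \cite[Theorem 8.23]{BismutGoette2000}. For $|zZ_0|$ large, the spectrum of $L^{3,(t,v)}_{\cdot,n}$ is shifted by a constant $\geq c|zZ_0|^2$, coming from $\frac{1}{2v}|K^X|^2$. I will therefore move the contour to $\Gamma$ shifted by $c|zZ_0|^2$ and use the decay $|\lambda|^{m}|\widetilde F^{(m')}_{t,n}(\lambda)|$ from \eqref{estimate ftn}, which yields any power of $(1+|zZ_0|)^{-1}$.

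\textbf{Main obstacle.} I expect the hardest step to be Step 1: making the lower bound on $\mathrm{Re}\langle L s,s\rangle$ uniform in both $t/v$ and $|Z_0|\leq\varepsilon_0/\sqrt v$, while keeping the imaginary-part and cross-term estimates compatible with the weighted norm. In particular, the terms $-\frac{\langle K^X,e_i\rangle}{2}(\frac1t+\frac1v)$ in the connection must be controlled. My first approach is to expand $K^X$ around $\sqrt vZ_0$ as in \eqref{expansion of K}, treating the linear part $m^{TX}(K)Z$ exactly (harmonic-oscillator type, as in \eqref{def of lxk30}) and the remainder by $|z|\,|\sqrt tZ|^2/\sqrt t$ together with the cut-off. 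Taking $c_\eta$ small makes these perturbations absorbable.
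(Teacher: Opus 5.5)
The paper gives no argument of its own here: it refers to \cite[\S 6.7]{LiuMa2022}, the family version of \cite[Theorem 8.23]{BismutGoette2000}. Your scaffolding is the right one: cut-off operators, weighted norms producing $(1+|Z_0|)^{2\ell''+2\dim S}$, the contour formula, Sobolev estimates with the adjoint trick, and $\widetilde F^h_{t,n}$ for the Gaussian in $Z-Z'$.

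The gap is in the one genuinely new ingredient, the factor $(1+|zZ_0|)^m$. You derive it from two claims: that $\frac{1}{2v}|K^X|^2(\sqrt{v}Z_0+\sqrt{t}Z)\geq c|zZ_0|^2$, and that the spectrum of $L^{3,(t,v)}_{\cdot,n}$ is therefore shifted by $c|zZ_0|^2$, so the contour can be moved to the right. Neither claim holds:
\begin{itemize}
\item Your cut-off operator from Step~1 equals $-\frac12\Delta^{TX}$ for $|Z|\geq 2(n+2)$, where there is no potential at all.
\item Even before cutting off, take $Z=-\sqrt{v/t}\,Z_0$. It satisfies $|Z|\leq\varepsilon_0/\sqrt t$, so it is in the range of the statement, yet the point $\sqrt{v}Z_0+\sqrt{t}Z$ is then $y_0\in X_{g,K}$, where $K^X$ vanishes.
\item The lower bound is only available where $\sqrt{t/v}\,|Z|\leq|Z_0|/2$. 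Even there it must still dominate the term $-C_2(1+|nz|^2)$ of the energy estimate (as in Lemma~\ref{lemma elliptic estimate lxk3t}).
\end{itemize}
So no estimate $\mathrm{Re}\langle Ls,s\rangle\geq c|zZ_0|^2|s|^2-\cdots$ holds for the operator you work with, and the contour shift is not justified.

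There are two ways to close this.

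\emph{Localizing within your approach.} Take $p\geq\sup(|Z|,|Z'|)$.
\begin{itemize}
\item If $|Z_0|\leq C(n+p+2)$, then $(1+|zZ_0|)^m\leq C(1+n+p)^m$. This is absorbed by the Gaussian factor in $n$ (at the cost of decreasing $C'$) and by $(1+|Z|+|Z'|)^r$.
\item If $|Z_0|\geq C(n+p+2)$, use finite propagation speed as in \eqref{finite propagation n+p} to replace $L$ by an operator that coincides with it on $|Z|\leq n+p+2$. Interpolate it, as in \eqref{lxkn3t}, to $-\frac12\Delta^{TX}+cz^2|Z_0|^2$ outside. For this operator the energy estimate really contains the shift, and your contour argument then goes through.
\end{itemize}

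\emph{Using the mechanism behind $|v/z^2|^m$.} This is the mechanism that produces the factor $|v/z^2|^m$ in Section~\ref{sec outside neighbor}, following \cite[\S 8]{BismutGoette2000}.
\begin{itemize}
\item The $1$-norm contains $\frac1v|K^X|^2$ (cf.\ \eqref{sobolev norm tvnk}). So multiplication by $z/\sqrt v$ times a function vanishing to first order on $W_K$ (the $f_j$ there) is bounded from the $1$-norm to the $0$-norm, and has small commutators with $L$.
\item Adding these operators to the family $\mathcal Q$ in the analogue of Theorem~\ref{thm QArknQ'} bounds $\|Q\widetilde F_{t,n}(L)\|$.
\item For a suitable $j$, such a function evaluated at $\sqrt vZ_0+\sqrt tZ$ is at least $c|zZ_0|-C|z||Z|$. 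This yields $(1+|zZ_0|)^m$ at the price of a power of $(1+|Z|+|Z'|)$, which is exactly the shape of the statement.
\end{itemize}
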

\begin{proof}
    The proof of Theorem \ref{thm uniform estimate ftlvzk3tv} is explained in \cite[\S 6.7]{LiuMa2022}.
\end{proof}

We fix $\eta>0$ small enough as in (\ref{distance gz z}), (\ref{fix small eta}). Then from (\ref{as infinite sum}), (\ref{fix small eta}) and (\ref{uniform estimate ftlvzk3tv}), we see that for $Z_0\in N_{X_{g,K}/X_g,\rr,y_0}$, $|Z_0|\leq \varepsilon_0/\sqrt{v}$, $ Z\in N_{X_g/X,\rr,y_0}$, $|Z|\leq \varepsilon_0/\sqrt{t}$,
\begin{equation}\label{uniform estimate ftlvzk3tv g}
    \begin{split}
 \big|\ft(L_{\sqrt{v}Z_0,zK_0}^{3,(t,v)})(g^{-1}Z,Z)\big|
    \leq C\frac{(1+|Z_0|)^{2\ell''+2\mathrm{dim}S}}{ (1+|zZ_0|)^m}
    \exp(-\frac{C'''}{4}|Z|^2).
    \end{split}
\end{equation}

Let $\exp(-L_{Z_0,K}^{3,(0,v)})(Z,Z')$ be the heat kernel of the operator $L_{Z_0,K}^{3,(0,v)}$ in the sense of (\ref{def of fr}) with $r=0$.
Now we recall an important estimate in \cite[Theorem 6.13]{LiuMa2022}, which is a family version of \cite[(8.76)]{BismutGoette2000}.
\begin{theorem}
    There exist $C>0$, $C'>0$ such that given $m>0$, there exists $\beta>0$ such that if $t\in \,]0,1]$, $v\in [t,1]$, $z\in\rr$, $0<|z|\leq \beta$,  $y_0\in W_{g,K}$, $Z_0\in N_{X_{g,K}/X_g,\rr,y_0}$, $|Z_0|\leq \varepsilon_0/\sqrt{v}$, $Z\in N_{X_g/X,\rr,y_0}$, $|Z|\leq \varepsilon_0/\sqrt{t}$,
    \begin{equation}\label{difference ftlvzk3tv}
        \begin{split}
        \big|\bigl(\ft(L_{\sqrt{v}Z_0,zK_0}^{3,(t,v)})-\exp(-L_{\sqrt{v}Z_0,zK_0}^{3,(0,v)})\bigr)(g^{-1}Z,Z)\big|\qquad\qquad\qquad\qquad\\ 
           \leq C\Bigl(\frac{t}{v}\Bigr)^{1/4(2\ell+1)}\cdot\frac{(1+|Z_0|)^{2\ell''+2\mathrm{dim}S+1}}{(1+|zZ_0|)^m}\exp(-C'|Z|^2/4).
        \end{split}
    \end{equation}
\end{theorem}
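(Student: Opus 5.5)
The plan is to compare the two kernels by Duhamel's formula on the resolvent side, using the contour representation of $\widetilde{F}_t$ already used in Sections 6.8--6.10. We fix $\eta$ small as in \eqref{fix small eta} and split $\widetilde{F}_t=\sum_n\widetilde{F}_{t,n}$ as in \eqref{as infinite sum}. For each $n$, finite propagation speed as in \eqref{finite propagation n+p} lets us replace $L^{3,(t,v)}_{\sqrt{v}Z_0,zK_0}$ by its cut-off version $L^{3,(t,v)}_{\sqrt{v}Z_0,zK_0,n+p}$ (with $p\sim |Z|+1$), and do the same for $L^{3,(0,v)}$. Then we write
\begin{equation*}
\widetilde{F}_{t,n}(L^{3,(t,v)})-\widetilde{F}_{0,n}(L^{3,(0,v)})=\frac{1}{2i\pi}\int_{\Gamma_{\eta(n+p)+d}}\widetilde{F}_{t,n,q}(\lambda)\bigl((\lambda-L^{3,(t,v)})^{-q}-(\lambda-L^{3,(0,v)})^{-q}\bigr)d\lambda
\end{equation*}
plus a term involving $\widetilde{F}_{t,n,q}-\widetilde{F}_{0,n,q}$. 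The second term is $\mathcal{O}(e^{-C/t})$ by the analogue of \eqref{estimate derivative ftnq difference}.

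The key step is a perturbation bound of the form
\begin{equation*}
\bigl|(L^{3,(t,v)}_{\sqrt{v}Z_0,zK_0,n}-L^{3,(0,v)}_{\sqrt{v}Z_0,zK_0,n})s\bigr|_{-1}\le C(t/v)^{1/2}(1+n)^N|s|_{1}.
\end{equation*}
We would derive it from the expansion behind \eqref{limit to lxk0v}, following \cite[(6.27)--(6.29)]{LiuMa2022}. The rescaled Clifford variables carry $c_t$ in the $X_{g,K}$ directions and $c_{t/v}$ in the $N_{X_{g,K}/X_g}$ directions. The coefficients are Taylor expanded in $\sqrt{t}Z$ and $\sqrt{v}Z_0$. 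The Getzler rescaling in the normal directions leaves remainders that are $\mathcal{O}(\sqrt{t/v})$ times polynomial weights. The $K$-dependent terms $\frac{1}{t}\langle K^X,\cdot\rangle$ and $\frac{1}{2v}|K^X|^2$ are controlled because $K^X$ vanishes on $X_{g,K}$, so $K^X=\mathcal{O}(|z|(\sqrt{v}|Z_0|+\sqrt{t}|Z|))$. We then insert this bound into the resolvent identity \eqref{difference of resolvent}, with the resolvent bounds of Theorem \ref{thm estimate of resolvent first} in the weighted norms adapted to $Z_0$ (as in Theorem \ref{thm uniform estimate ftlvzk3tv}). This gives an operator-norm estimate $C(t/v)^{1/2}(1+p)^N e^{-C'n^2/4+2C''\eta^2p^2}$.

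Next we pass from operator norms to a pointwise kernel bound by the mollification argument of Theorem \ref{thm t-0 estimate ftnprh}. We use the uniform $C^1$ bounds of Theorem \ref{thm uniform estimate ftlvzk3tv} for both kernels and take $\nu=(t/v)^{1/2(2\ell+1)}$, which gives the exponent $\frac{1}{4(2\ell+1)}$. Summing over $n$ and evaluating at $(g^{-1}Z,Z)$ with $|g^{-1}Z-Z|\ge c|Z|$ turns $e^{2C''\eta^2|Z|^2-C'''|Z-Z'|^2}$ into $e^{-C'|Z|^2/4}$, exactly as in \eqref{expansion ftlxk g}.

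The main obstacle is the factor $(1+|zZ_0|)^{-m}$, which must hold uniformly for $|Z_0|\le\varepsilon_0/\sqrt{v}$. We would obtain it as in \cite[(8.76)]{BismutGoette2000}. Off $X_{g,K}$, the term $\frac{1}{2v}|K^X|^2\approx\frac{1}{2}|z\,m^{TX}(K_0)Z_0|^2$ in $L^{3,(0,v)}$ acts as a positive potential that grows in $|zZ_0|$. This shifts the spectrum, so the resolvent on $\Gamma$ gains decay $(1+|zZ_0|^2)^{-1}$ per power. Taking $q$ large in the contour integral then gives any power $m$, at the cost of shrinking $\beta$ so that the imaginary parts from $K$ stay dominated, as in Lemma \ref{lemma elliptic estimate lxk3t}. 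The polynomial factor $(1+|Z_0|)^{2\ell''+2\dim S+1}$ comes from the weights in \eqref{a family of norms}.
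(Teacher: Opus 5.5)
Your overall architecture is the one behind the cited \cite[Theorem 6.13]{LiuMa2022} and the paper's own analogue, Theorem \ref{i1}. That is: split $\widetilde F_t=\sum_n\widetilde F_{t,n}$, pass to cut-off operators by finite propagation speed, compare resolvents through \eqref{difference of resolvent} in $Z_0$-adapted weighted norms, and turn an operator-norm bound into a pointwise one by mollification as in Theorem \ref{thm t-0 estimate ftnprh}. The gap is in the step you yourself call the main obstacle, the factor $(1+|zZ_0|)^{-m}$.

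The spectral-shift mechanism you propose does not work, for two reasons.
\begin{itemize}
\item The resolvent bounds and the contour integral are applied to cut-off operators built as in \eqref{lxkn3t}. These coincide with $-\frac12\Delta^{TX}$ outside a ball. Taking $s$ supported far out shows that $\mathrm{Re}\langle Ls,s\rangle$ admits no lower bound of the form $c|zZ_0|^2|s|^2$, so the resolvent gains nothing uniformly on $\Gamma$.
\item For $t>0$ the potential is $\frac{1}{2v}|K^X(\sqrt vZ_0+\sqrt tZ)|^2$. The constant $\frac12|z\,m^{TX}(K_0)Z_0|^2$ only appears in $L^{3,(0,v)}$, where the coefficients are frozen at $\sqrt vZ_0$. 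The true potential vanishes wherever $\sqrt vZ_0+\sqrt tZ$ meets $X_{g,K}$. That locus lies inside the region $\sqrt t|Z|\leq\alpha$ reached by $\widetilde F_t$ as soon as $\sqrt v|Z_0|<\alpha$.
\end{itemize}
So the decay in $|zZ_0|$ is a phenomenon localized near $Z=0$, and no global argument produces it. Note also that in your bound $\nu\|\cdot\|_{C^1}+\nu^{-2\ell}\|\cdot\|_{\mathrm{op}}$, the $z$-decay of the $C^1$ bounds from Theorem \ref{thm uniform estimate ftlvzk3tv} is useless unless the operator-norm term carries it too.

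There are two ways to repair this.
\begin{itemize}
\item Use the mechanism of \cite[\S 8.8--8.10]{BismutGoette2000}, which the paper invokes in Section \ref{sec outside neighbor}. Add to the family of Sobolev operators the (cut-off) multiplications by $\frac{z}{\sqrt v}f_j(\sqrt vZ_0+\sqrt tZ)$, where the $f_j$ vanish on $W_K$ and the $df_j$ span $N_{X_{g,K}/X_g}$. These are controlled by the $\frac1v|K^X|^2$ part of the $|\cdot|_1$ norm, and their commutators with the operator are of lower order; this is where $\beta$ must depend on $m$. Commute them through the resolvent identity before mollifying. The pointwise decay then comes from $|\frac{z}{\sqrt v}f_j(\sqrt vZ_0)|\geq c|zZ_0|$.
\item More cheaply, prove your difference bound without $z$-decay. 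Your choice of $\nu$ actually gives $C(t/v)^{1/(2(2\ell+1))}(1+|Z_0|)^{k}e^{-C|Z|^2}$. Then take the geometric mean with $|A|+|B|$, bounded by \eqref{uniform estimate ftlvzk3tv g} with $2m$ in place of $m$ together with the easy analogous bound for $\exp(-L^{3,(0,v)})$ (its potential $\frac{1}{2v}|K^X(\sqrt vZ_0)|^2$ is constant in $Z$ and factors out). This yields $(t/v)^{1/(4(2\ell+1))}(1+|zZ_0|)^{-m}$, with the powers of $(1+|Z_0|)$ averaged. Whether that average is at most $2\ell''+2\dim S+1$ depends on the power $k$, which you would have to check.
\end{itemize}

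Smaller points:
\begin{itemize}
\item Your perturbation estimate is not uniform in $Z_0$; for example, $\frac1{2v}|K^X|^2$ contributes $z^2\sqrt{t/v}\,|Z_0||Z|$.
\item You must not Taylor expand in $\sqrt vZ_0$, since $|\sqrt vZ_0|$ ranges up to $\varepsilon_0$.
\item The power of $(1+|Z_0|)$ comes from these factors and the $Z_0$-adapted weights, not from \eqref{a family of norms}, which does not involve $Z_0$.
\end{itemize}
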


From (\ref{proposition of mu1mu2}), we see that for $|z|\leq \beta$, $Z_0\in N_{X_{g,K}/X_g,\rr,y_0}$, $|Z_0|\leq \varepsilon_0/\sqrt{v}$, $Z\in N_{X_g/X,\rr,y_0}$, $|Z|\leq \varepsilon_0/\sqrt{t}$,
\begin{equation}\label{estimate mu1}
    \begin{split}
        &\Big|\frac{\mu_1(\sqrt{v}Z_0+\sqrt{t}Z)-\mu_1(\sqrt{v}Z_0)}{v}\Big|\leq C\Bigl(\frac{t}{v}|Z|^2+\Bigl(\frac{t}{v}|Z||Z_0|\Bigr)\Bigr),\\ 
        &\Big|\frac{\mu_1(\sqrt{v}Z_0)}{v}\Big|\leq C|Z_0|^2.
    \end{split}
\end{equation}

From the same arguments as in \cite[(7.223)-(7.225)]{BismutGoette2000}, and by using (\ref{lxk30vbar}) and (\ref{lxk0v}), we get
\begin{equation}\label{calculate exp -lzk30v}
    \begin{split}
        \Phi\int_{N_{X_g/X}}\str^{\Lambda(N_{X_g/X}^{*(0,1)})\otimes \xi}\Big[i\frac{\mu_1(Z_0,\sqrt{t}Z)}{v}g\exp(-L_{Z_0,zK_{0}}^{3,(0,v)})(g^{-1}Z,Z)\Big]^{\mathrm{max}}\\ \cdot\frac{dv_{N_{X_g/X}(Z)}}{(2\pi)^{\mathrm{dim}N_{X_g/X}}}
        =v^{\mathrm{dim}N_{X_{g,K}/X_g}}\beta_v^1.
    \end{split}
\end{equation}
From (\ref{str getzler rescaling v}), (\ref{uniform estimate ftlvzk3tv g})-(\ref{calculate exp -lzk30v}), we get (\ref{first ineq of mu}). The proof of Theorem \ref{thm first ineq of mu} is completed.

\subsection{The analysis of $\ft(\bktv)$ on $X_g\backslash \mathcal{U}_{\varepsilon_0}''$}\label{sec outside neighbor}
Recall that $\mathcal{U}_{\varepsilon_0}''\in N_{X_{g}/X,\rr}$ is identified to a turbular neighborhood $\mathcal{V}_{\varepsilon_0}''$ of $X_{g,K}$ in $X_g$ by sending $(x,Z)\in N_{X_g/X,\rr}$ to $\exp_x(Z)\in X$.   This subsection is devoted to the proof of the following theorem.

\begin{theorem}\label{thm outside uepsilon''}
    There exist $\beta\in \,]0,1]$, $\gamma\in \,]0,1]$ such that for $p\in\nn$, there is $C>0$ such that if $z\in\rr$, $0<|z|\leq \beta$, $t\in \,]0,1]$, $v\in [t,1]$, $x\in X_{g}\backslash \mathcal{U}_{\varepsilon_0}''$,  then
    \begin{equation}\label{outside uepsilon''}
        \begin{split}
            v^{\mathrm{dim}W_g+1} \Big|\Phi\int_{Z\in N_{X_g/X,\rr,x}\atop |Z|\leq \varepsilon_0}\str\Big[i\frac{\mu_1}{v}g\widetilde{F}_t(\mathcal{B}_{zK_{0},t,v})(g^{-1}(x,Z), (x,Z))\Big]k(x,Z) \qquad\\
        \cdot\frac{dv_{N_{X_g/X}(Z)}}{(2\pi)^{\mathrm{dim}N_{X_g/X}}} +\beta_{v}^1(x)\Big|
        \leq C\Big|\frac{v}{z^2}\Big|^p\Bigl(\frac{t}{v}\Bigr)^{\gamma}.
        \end{split}
    \end{equation}
\end{theorem}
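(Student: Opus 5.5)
The plan is to exploit that away from $X_{g,K}$ (i.e. for $x\in X_g\backslash\mathcal{U}''_{\varepsilon_0}$) the vector field $K^X$ does not vanish on $X_g$, so $|K_0^X(x)|\geq c_0>0$ uniformly there. Both terms on the left of (\ref{outside uepsilon''}) should therefore be small of order $(v/z^2)^p$. This is the family analogue of the argument in \cite[\S 8.5]{BismutGoette2000} and \cite[\S 6.8]{LiuMa2022}. I will bound the two terms separately instead of comparing them, and I will take care to also extract a factor $(t/v)^\gamma$.

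\textbf{First term.} Work in the coordinates of Section \ref{sec getzler rescaling}, centered at $x\in X_g$. Use the Getzler rescaling of Definition \ref{def getzler rescaling} on $T X_g$, with parameter $t$ for all directions of $X_g$; there is no rescaling of a normal direction to $X_{g,K}$ here. By Theorem \ref{thm lichnerowicz avt}, the resulting operator contains the scalar potential $\frac{1}{2v}|zK_0^X(x+\sqrt{t}Z)|^2\geq \frac{c_0^2z^2}{4v}$ for $\sqrt{t}|Z|$ small. It also contains the connection term $\frac{1}{2}\langle K^X,e_i\rangle(\frac1t+\frac1v)$, which I gauge away. In the trivialization by $^3\nabla^{\mathbb{E},t}$ it becomes a curvature term $\frac{1}{v}d K'^X$ and a harmless $\frac1t$ piece, handled as in Section \ref{ef}.

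The key step is a resolvent estimate analogous to Theorem \ref{thm estimate of resolvent first}. I would show that, after the $\psi$-type conjugation of the Grassmann variables, the real part of the operator is bounded below by $C_1|s|^2_{t,x,1}+\frac{c\,z^2}{v}|s|^2_{t,x,0}-C_2|s|^2_{t,x,0}$. Feeding this lower bound into the contour integral (\ref{spectral thm}) with $\widetilde F_{t,n,q}$, and using the bound $|\lambda|^{-p}$ available for $\lambda$ in the shifted region, gives kernel bounds of size $C(v/z^2)^{p}\exp(-C|Z|^2)$. Nilpotent Grassmann factors carrying $1/v$ (from $dK'^X/v$) are at most of degree $\dim W_g+1$, and they are absorbed by the prefactor $v^{\dim W_g+1}$.

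The factor $(t/v)^\gamma$ is obtained by interpolation. I compare the kernel with its $t\to0$ limit as in Theorem \ref{thm t-0 estimate ftnprh}, which yields $t^{1/(4(2\ell+1))}$ but loses powers of $1/v$. In the regime $t\leq v$, I combine this with the crude $(v/z^2)^p$ bound via $\min(a,b)\leq a^\theta b^{1-\theta}$, increasing $p$ beforehand.

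\textbf{Second term.} For $\beta^1_v(x)$, note that off $X_{g,K}$ the factor $\exp(\frac{\overline\partial_K\partial_K}{2\pi v}\langle\mu,K\rangle)$ contains $\exp(-|K^X|^2/2v)\leq \exp(-cz^2/v)$. Its remaining part is a polynomial in $1/v$ of bounded degree, controlled by $v^{\dim W_g+1}$, and $\mu_1/v$ contributes one more $1/v$. So $v^{\dim W_g+1}|\beta^1_v|\leq C e^{-cz^2/v}\leq C_p (v/z^2)^{p+1}$. Since $(v/z^2)^{p+1}\leq (v/z^2)^p\cdot v/z^2$ and $t\leq v$, I choose the exponents so that a factor $(t/v)^\gamma$ can be extracted, or else I cite the limit (\ref{compute tend to ev}) to compare the two terms directly.

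The main obstacle is the uniform resolvent estimate with the large spectral shift $z^2/v$. It must hold while simultaneously controlling the unbounded $1/t$ connection terms and the $1/v$ Grassmann terms. I would first try to reproduce the weighted-norm argument of \cite[Theorem 8.18]{BismutGoette2000}, with the extra $\Lambda(T^*_{\mathbb R}S)$ degrees counted in the weight as in (\ref{a family of norms}).
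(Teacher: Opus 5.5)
Your decision to bound the two terms separately is where the argument breaks. For fixed $v$, $z$, $x$ the right-hand side of \eqref{outside uepsilon''} tends to $0$ as $t\to 0$, so the left-hand side must vanish in that limit. But $\beta^1_v(x)$ does not depend on $t$ and is in general nonzero off $X_{g,K}$: it is exponentially small in $z^2/v$, not zero, and $\mu_1(x)$ need not vanish there. So no bound $v^{\dim W_g+1}|\beta^1_v(x)|\leq C(v/z^2)^{p+1}$ can produce a factor $(t/v)^\gamma$; the constraint $t\leq v$ only gives $(t/v)^\gamma\leq 1$, which is the wrong direction.

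The missing ingredient is an exact pointwise cancellation. The $\mu_1/v$-weighted integral over $N_{X_g/X,\mathbb{R},x}$ of the supertrace of the $t=0$ model kernel equals, after the normalizing powers of $v$, exactly $-\beta^1_v(x)$. This is the local Mehler computation \eqref{calculate exp -lzk30v}, as in \cite[(7.223)--(7.225)]{BismutGoette2000}. It produces $\td_{g,K}(TX,h^{TX})\ch_{g,K}(\xi,h^\xi)$ times $\ch_{g,K}(L,\nabla^L_v)=\exp\bigl(\frac{\overline{\partial}_K\partial_K}{2\pi v}\langle\mu,K\rangle\bigr)$ from \eqref{chern l}. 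Your fallback of citing \eqref{compute tend to ev} does not supply this: that limit is taken after integrating over $X$, concerns $\langle\mu,K\rangle$ rather than $\mu_1$, and comes with no rate.

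Even granting the cancellation, the difference between the rescaled kernel and its limit must be measured in $t/v$ and carry the decay $|v/z^2|^m$ simultaneously, with constants uniform in $0<|z|\leq\beta$; this is \eqref{away from xgk two estimate 1}. Suppose your comparison, after multiplication by $v^{\dim W_g+1}$, is only $O(t^{\gamma_0}v^{-N})$ with $N>\gamma_0$ (you say it loses powers of $1/v$). Then interpolating against a crude $(v/z^2)^q$ bound cannot give a $z$-uniform constant. At $v=z^2$ the target is $C(t/v)^\gamma$, while $\min\{C_q,\,C(t/v)^{\gamma_0}|z|^{-2(N-\gamma_0)}\}$ stays of order one for all $t/v\geq |z|^{2(N-\gamma_0)/\gamma_0}$.

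The paper avoids this as follows:
\begin{itemize}
\item It Getzler-rescales every Clifford direction of $X_g$ with $c_{t/v}$ (Definition \ref{def lzk3tv} with $X_{g,K}=\emptyset$ near $x$). The limit operator is then the $\psi_v$-conjugate \eqref{lxk0v}, and convergence is governed by $t/v$.
\item The $\sqrt v$-weighted norms \eqref{sobolev norm tvnk} cost only $v^{-\dim S}$. Together with the factor $v^{-\ell'}$ from \eqref{str getzler rescaling v} and the $1/v$ from $\mu_1/v$, this is exactly what the prefactor $v^{\dim W_g+1}$ absorbs.
\item The decay $|v/z^2|^m$ comes from commutator estimates with the multiplication operators $\frac{z}{\sqrt v}f_j(\sqrt t Z)$ and the lower bound \eqref{lower bound fj}, following \cite[\S 8.8--8.10]{BismutGoette2000}.
\end{itemize}
It does not come from a spectral shift. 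A lower bound $cz^2/v$ on the real part is not available for the localized operators $L^{3,(t,v)}_{x,K,n}$: their cut-off region around $x$ can reach $X_{g,K}$, and they equal $-\frac12\Delta^{TX}$ outside a ball.
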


In principal, the proof of Theorem \ref{thm outside uepsilon''} is proceeded by applying the techniques of Sections \ref{eg}-\ref{eh} to the case $X_{g,K}=\emptyset$.
Fix $x\in X_g\backslash \mathcal{U}_{\varepsilon_0}''$, the fiberwise exponential map $Z\in T_{x}X\rightarrow \exp_x^X(Z)\in X$ identifies $B^{T_{x}X}(0,16\varepsilon_0)$ with $B^X(x,16\varepsilon_0)\subset X_g$. We trivialize $\pi^*\Lambda(T_{\rr}^*S)\,\widehat{\otimes}\,\Lambda(T^{*(0,1)}X)\otimes \xi$ by parallel transport along $s\in [0,1]\rightarrow sZ$ with respect to the connection $^3\nabla^{\mathbb{E},t}$.

We set $L_{x,K}^{1,(t,v)}$ as in (\ref{lzk1tv}). Let ${L}_{x,K}^{3,(t,v)}$ be the operator obtained from $L_{x,K}^{1,(t,v)}$ by using the rescaling in Section \ref{sec getzler rescaling v} to the case $X_{g,K}=\emptyset$. In particular, the Clifford variables  $c(e_i)$ are now replaced by $c_{t/v}(e_i)$ for $1\leq i\leq 2\ell'$, while leaving unchanged for  $2\ell'+1\leq i\leq 2\ell$. Recall that ${L}_{x,K}^{3,(0,v)}$ is defined in (\ref{lxk30vbar}), it is clear to have
\begin{equation}
    \begin{split}
        {L}_{x,K}^{3,(t,v)}\rightarrow {L}_{x,K}^{3,(0,v)}
        .
    \end{split}
\end{equation}\

For $t\geq 0$, $v\geq t$, $n\in\nn$, set
\begin{equation}\label{lxkn3tv}
    {L}_{x,K,n}^{3,(t,v)}=-\Bigl(1-\gamma^2\Bigl(\frac{|Z|}{2(n+2)}\Bigr)\Bigr)\frac{\Delta^{TX}}{2}+\gamma^2\Bigl(\frac{|Z|}{2(n+2)}\Bigr){L}_{x,K}^{3,(t,v)}.
\end{equation}

Recall that $\mathbf{I}_{(r,q),x}$ is the  space of smooth sections of $\bigl(\pi^*\Lambda^r(T_{\rr}^*S)\,\widehat{\otimes }\,\Lambda^q(T_{\rr}^*X_g)\otimes \Lambda(N_{X_g/X}^{*(0,1)}) \otimes \xi\bigr)_x$ on $(T_{\rr}X)_x$.
If $s\in \mathbf{I}_{(r,q),x}$ has compact support, put 
\begin{equation}\label{sobolev norm tvnk}
    \begin{split}
   & |s|_{t,v,n,0}^2=\int_{(T_{\rr}X)_x}|s|^2\Bigl(1+\sqrt{v}|Z|\rho\big(\frac{\sqrt{t}Z}{2}\big)\Bigr)^{2(2\ell'-q)}\\ 
   &\qquad\qquad\qquad\qquad \cdot\Bigl(\frac{1}{\sqrt{v}}\big(1+|Z|\rho\big(\frac{\sqrt{t}Z}{2}\big)\big)\Bigr)^{2(2\mathrm{dim}S-r)}dv_{TX}(Z),\\ 
    &|s|_{t,v,n,1}^2=|s|_{t,v,n,0}^2+\sum_{1}^{2\ell}|\nabla_{e_i}s|_{t,v,n,0}^2\\&\qquad\qquad\qquad\qquad\qquad+\frac{1}{v}\big|\rho(\sqrt{t}Z)\gamma^2\Bigl(\frac{|Z|}{2(n+2)}\Bigr)|K^X|(\sqrt{t}Z)s\big|_{t,v,n,0}^2.
    \end{split}
\end{equation}
The corresponding Hermitian product associated to $|\cdot|_{t,v,n,0}$ is denoted by $\product{\,,\,}_{t,v,n,0}$. 

Recall that the Hermitian vector space $(J_{x,m}^0,|\cdot|_0)$ before (\ref{comparision normal special norm}). By (\ref{sobolev norm tvnk}), for $s\in J_{x,m}^0$, $v\leq 1$,
\begin{equation}\label{comparision normal special norm,v}
    |s|_0\leq |s|_{t,v,n,0}\leq \frac{C}{v^{\mathrm{dim}S}}(1+m)^{2\ell'}|s|_{0}.
\end{equation}

\begin{theorem}
    There exist constants $C_i>0$, $i=1,2,3,4$, such that if $t\in \,]0,1]$, $v\in [t,1]$, $n\in\nn$, $z\in\rr$, $|z|\leq 1$, if $n\in \nn$,  if  $s,s'\in\mathbf{I}_{x}$ have compact support, then
\begin{equation}\label{elliptic estimate lxkn3tv}
    \begin{split}
        &\mathrm{Re}\product{L_{x,zK_0,n}^{3,(t,v)}s,s}_{t,v,n,0}\geq C_1|s|_{t,v,n,1}^2-C_2(1+|nz|^2)|s|_{t,v,n,0}^2,\\ 
        &|\mathrm{Im}\product{L_{x,zK_0,n}^{3,(t,v)}s,s}_{t,v,n,0}|\leq C_3\bigl((1+|nz|)|s|_{t,v,n,1}|s|_{t,v,n,0}+|nz|^2|s|_{t,v,n,0}^2\bigr),\\ 
        &|\product{L_{x,zK_0,n}^{3,(t,v)}s,s'}_{t,v,n,0}|\leq C_4(1+|nz|^2)|s|_{t,v,n,1}|s'|_{t,v,n,1}.
    \end{split}
\end{equation}
\end{theorem}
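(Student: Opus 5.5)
We will prove the three inequalities \eqref{elliptic estimate lxkn3tv} by following the proof of \cite[Theorem 8.18]{BismutGoette2000} and its family version in \cite[Lemma 6.10]{LiuMa2022}. By the analogue of Theorem \ref{thm 1xk3t expansion} with the rescaling of Section \ref{sec getzler rescaling v}, we first write $L_{x,zK_0,n}^{3,(t,v)}$ explicitly from the Lichnerowicz formula \eqref{lichnerowicz avt}. It is the sum of three pieces. The first is a generalized Laplacian $-\frac{1}{2}\sum_i(\nabla_{e_i}+\Gamma_i)^2$, where the connection terms $\Gamma_i$ involve the rescaled curvature and the term $-\frac{1}{2}\langle K^X,e_i\rangle(\frac1t+\frac1v)$. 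The second is the nonnegative potential $\frac{1}{2v}|K^X|^2(\sqrt tZ)$, cut off by $\rho$ and $\gamma^2$. The third is a remainder of zeroth order terms.

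The rescaled remainder terms are built from $e^j\wedge$, $c_{t/v}(e_j)$, $f^p\wedge$ and the Clifford variables on $N_{X_g/X}$, with coefficients polynomial in $Z$. The weights in $|\cdot|_{t,v,n,0}$ are chosen so that these operators are bounded, as in \eqref{3operators bound}. Concretely, $i_{e_j}$ with $j\le 2\ell'$ carries a factor $\sqrt{t/v}$ after rescaling. Each such factor is absorbed by the weight $(1+\sqrt v|Z|\rho)$, since lowering the degree $q$ costs exactly this factor. Similarly, $f^p\wedge$ is absorbed by $(1+|Z|\rho)/\sqrt v$. Thus every term of degree $\ge 1$ in $Z$ is controlled by either $|Z|$ or $|K^X(\sqrt tZ)|/\sqrt v$, up to constants.

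For the first inequality, we integrate by parts in $\langle L s,s\rangle_{t,v,n,0}$. Derivatives falling on the weights produce only lower order terms, which are bounded by $C|s|_{t,v,n,1}|s|_{t,v,n,0}$. The Laplacian part gives $\frac12\sum|\nabla_{e_i}s|^2$. The potential term contributes $\frac{1}{2v}|\rho\gamma^2 K^X s|^2$, which supplies the last summand of $|s|_{t,v,n,1}^2$. The first order term $\frac{1}{2}(\frac1t+\frac1v)\langle K^X,e_i\rangle\nabla_{e_i}$ is skew-adjoint up to its divergence. After rescaling, this divergence is bounded by $C|z|$ times the support radius. On the support of $\gamma^2(|Z|/2(n+2))$ we have $|Z|\le 4(n+2)$, so these contributions are $O(|nz|)$. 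The terms quadratic in $K$ within $(\nabla+\Gamma)^2$ give $O(|nz|^2)$. Combining the two estimates with Cauchy--Schwarz yields $\mathrm{Re}\ge C_1|s|_1^2-C_2(1+|nz|^2)|s|_0^2$.

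The imaginary part comes only from the skew parts: the first order $K$-term and the odd nilpotent terms. These are bounded in the same way, which gives the second inequality. The third inequality follows by Cauchy--Schwarz once each term is bounded as a map from $|\cdot|_{1}$ to $|\cdot|_{-1}$.

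The main obstacle is the uniform control in $v\in[t,1]$ of the terms carrying $\frac1v$ in \eqref{lichnerowicz avt}: the term $\frac1v\langle\nabla K^X,e_j\rangle c(e_i)c(e_j)$, the term $\frac1v\langle T,K^X\rangle$, and the part $\frac{1}{2v}(d-i_{K^X})K'^X$. Here we will use that $K^X$ vanishes to first order on $X_{g,K}$. Away from $X_{g,K}$, which is the present setting $x\in X_g\backslash\mathcal U''_{\varepsilon_0}$, we will bound these terms via $|K^X|\ge c|z|$. Each $\frac1v$ is then matched either with the factor $\frac{t}{v}\cdot\frac1t$ produced by the rescaling $c_{t/v}$, or with $\frac{1}{\sqrt v}|K^X|$ times the weight. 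This is exactly the role of the $\frac1v|K^Xs|^2$ term in $|s|_{t,v,n,1}$.
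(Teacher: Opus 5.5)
This is essentially the paper's argument: the paper's proof only notes that $\rho(\sqrt{t}Z)\frac{1}{\sqrt{v}}f^p\wedge$ is uniformly bounded on $(\mathbf{I}^0_x,|\cdot|_{t,v,n,0})$ and otherwise repeats \cite[Theorem 6.10]{LiuMa2022}. Your matching of each $\frac{1}{v}$ either with the $c_{t/v}$-rescaling, or with $\frac{1}{\sqrt{v}}|K^X|$ (controlled by the last term of $|\cdot|_{t,v,n,1}$) together with $\frac{1}{\sqrt{v}}f^p\wedge$, is exactly that mechanism.

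Be careful on two points. First, the $K$-part of the connection in $(\nabla+\Gamma)^2$ is $O(|z||Z|)$, and hence contributes only $O(|nz|^2)$, because $L^{3,(t,v)}_{x,K}$ is written in the trivialization by ${}^3\nabla^{\mathbb{E},t}$. With the bare term $\frac{1}{2}(\frac{1}{t}+\frac{1}{v})\langle K^X,e_i\rangle$, its square would be of order $|z|^2/t$. Second, the lower bound $|K^X|\geq c|z|$ must play no role in this estimate, since $C_1,\dots,C_4$ have to be uniform as $z\to 0$.
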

\begin{proof}
  Note that the family of operators $\rho(\sqrt{t}Z)\frac{1}{\sqrt{v}}f^p\wedge,\,p=1,...,2\mathrm{dim}S$ are uniformly bounded acting on $(\mathbf{I}_{x}^0,|\cdot|_{t,v,n,0})$.  Then the proof of (\ref{elliptic estimate lxkn3tv}) is  the same as the proof of \cite[Theorem 6.10]{LiuMa2022}. 
\end{proof}

Since $W$ is a compact manifold, there exists a finite family of smooth functions $f_i,...,f_r: W\rightarrow [-1,1]$ which have the following properties:
\begin{itemize}
    \item $W_K=\cap_{j=1}^r\{x\in W: f_j(x)=0\}$;
    \item On $W_K$, $df_1,...,df_r$ span $N_{X_{g,K}/X_g,\rr}$.
\end{itemize}
Observe that there is $c>0$ such that for any $s\in S$, $x\in \bigl(X_{g}\backslash \mathcal{U}_{\varepsilon_0}''\bigr)_s$,
\begin{equation}\label{lower bound fj}
   \Big| \frac{z}{\sqrt{v}}f_j(x)\Big|\geq c\frac{|z|}{\sqrt{v}},\kongge 1\leq j\leq r.
\end{equation}

Let $\mathcal{Q}_{t,v,n,x}$ be the family of operators 
\begin{equation}
    \mathcal{Q}_{t,v,n,x}=\big\{\nabla_{e_i},\, 1\leq i\leq 2\ell;\; \frac{z}{\sqrt{v}}\rho(\sqrt{t}Z)\gamma^2\bigl(\frac{|Z|}{2(n+2)}\bigr)f_j(\sqrt{t}Z),\, 1\leq j\leq r\big\}.
\end{equation}

Following the arguments in \cite[\S 8.8-8.10]{BismutGoette2000}, we have the following uniform estimates, which are formally the same as (\ref{uniform estimate ftlvzk3tv g}) and (\ref{difference ftlvzk3tv}) (or \cite[(8.57), (8.76)]{BismutGoette2000}). In particular, in the following formulas, the scalar $v^{\mathrm{dim}S}$ comes from the inequality (\ref{comparision normal special norm,v}), and the term $\big|\frac{v}{z^2}\big|^m$ origins from the lower bound in (\ref{lower bound fj}).

\begin{theorem}
    There exist $C>0$, $C'>0$ such that given $m>0$, there exists $\beta>0$ such that if $t\in \,]0,1]$, $z\in\rr$, $0<|z|\leq \beta$,  $x\in X_{g}\backslash \mathcal{U}_{\varepsilon_0}''$,  $Z\in N_{X_g/X,\rr,x}$, $|Z|\leq \varepsilon_0/\sqrt{t}$,
    \begin{equation}\label{away from xgk two estimate 1}
        \begin{split}
         & v^{\mathrm{dim}S}\big|\ft(L_{x,zK_0}^{3,(t,v)})(g^{-1}Z,Z)\big|
    \leq C\Big|\frac{v}{z^2}\Big|^m
    \exp(-\frac{C'''}{4}|Z|^2),\\
      &  v^{\mathrm{dim}S}\big|\bigl(\ft(L_{x,zK_0}^{3,(t,v)})-\exp(-L_{x,zK_0}^{3,(0,v)})\bigr)(g^{-1}Z,Z)\big|\\ 
       &    \qquad\qquad\qquad\qquad\leq C\Bigl(\frac{t}{v}\Bigr)^{1/4(2\ell+1)}\cdot\Big|\frac{v}{z^2}\Big|^m\exp(-C'|Z|^2/4).
        \end{split}
    \end{equation}
\end{theorem}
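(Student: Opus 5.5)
We would prove the two estimates in \eqref{away from xgk two estimate 1} by running the scheme of Sections \ref{ee} and \ref{eh} with the weighted norms \eqref{sobolev norm tvnk} in place of $|\cdot|_{t,x,0}$. We enlarge the family of differentiations to $\mathcal{Q}_{t,v,n,x}$, which also contains the multiplication operators $\frac{z}{\sqrt{v}}\rho(\sqrt{t}Z)\gamma^2(\frac{|Z|}{2(n+2)})f_j(\sqrt{t}Z)$. This follows \cite[\S 8.8--8.10]{BismutGoette2000}.

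The steps are as follows.

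\emph{Step 1 (resolvent bounds).} Starting from the elliptic estimates \eqref{elliptic estimate lxkn3tv}, we argue as in Theorem \ref{thm estimate of resolvent first}. For $\lambda\in U_{\eta n+d}$ this gives existence of $(\lambda-L_{x,zK_0,n}^{3,(t,v)})^{-1}$ together with bounds $\|\cdot\|^{0,0}\le 4/(\eta n+d)^2$ and $\|\cdot\|^{-1,1}\le C(1+n^2)(1+|\lambda|^2)$, all with respect to $|\cdot|_{t,v,n,\pm1}$.

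\emph{Step 2 (higher Sobolev bounds).} Next we show that $(\lambda-L^{3,(t,v)}_{x,zK_0,n})^{-1}$ maps the $k$-th Sobolev space built from $\mathcal{Q}_{t,v,n,x}$ into the $(k+1)$-st, with polynomial bounds in $n$ and $|\lambda|$. For this we need that iterated commutators of $L^{3,(t,v)}_{x,zK_0,n}$ with the elements of $\mathcal{Q}_{t,v,n,x}$ are controlled in the $|\cdot|_{t,v,n,1}\to|\cdot|_{t,v,n,-1}$ norm.
\begin{itemize}
\item For $\nabla_{e_i}$ this is as in \eqref{partial r lxkn3t high estimate}.
\item For the multipliers, $[\nabla_{e_i},\frac{z}{\sqrt v}f_j(\sqrt tZ)]=\frac{z\sqrt t}{\sqrt v}(\partial f_j)(\sqrt tZ)$ is bounded because $t\le v$. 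The key input is that $\frac{z}{\sqrt v}f_j$ is dominated by $\frac{1}{\sqrt v}|K^X|$, which is exactly the extra term in $|s|_{t,v,n,1}$; this holds since $f_j$ vanishes to first order on $W_K$ and $K^X$ is nondegenerate normally.
\end{itemize}
We expect this commutator control, uniformly in $v\in[t,1]$ and in $z$, to be the main obstacle. We would first try to reduce it to the corresponding statement in \cite[Theorem 8.19]{BismutGoette2000}, checking that the horizontal variables $f^p$ only contribute the factor $v^{-\dim S}$ through \eqref{comparision normal special norm,v}.

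\emph{Step 3 (kernel bounds and decay).} We write $\widetilde F_t=\sum_n\widetilde F_{t,n}$ and represent each term by a contour integral as in \eqref{spectral thm}. Sobolev embedding then gives pointwise kernel bounds with Gaussian off-diagonal decay as in Theorem \ref{thm uniform estimate frn lxkn2t}, with the factor $v^{\dim S}$ coming from \eqref{comparision normal special norm,v}. Because $x\notin\mathcal U''_{\varepsilon_0}$, the lower bound \eqref{lower bound fj} lets us apply the multiplier $\frac{z}{\sqrt v}f_j$ $2m$ times. This converts the Sobolev control into a gain of $|v/z^2|^m$, which yields the first estimate after choosing $\eta$ as in \eqref{fix small eta} and using $|g^{-1}Z-Z|\ge c|Z|$.

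\emph{Step 4 (comparison with the limit operator).} For the second estimate we compare $L^{3,(t,v)}$ with $L^{3,(0,v)}$ in norm $\|\cdot\|^{1,-1}$. The coefficients differ by $O(\sqrt{t/v})$ times polynomial weights, since after the rescaling $c_{t/v}$ the error terms carry $\sqrt{t/v}$. The resolvent identity then gives an $O(\sqrt{t/v})$ bound in the $L^2$-type norm, and the mollification argument with $\nu=(t/v)^{1/2(2\ell+1)}$ from the proof of Theorem \ref{thm t-0 estimate ftnprh} upgrades this to the pointwise bound $(t/v)^{1/4(2\ell+1)}$. The same $|v/z^2|^m$ gain is retained by commuting the $f_j$-multipliers through the resolvent identity.
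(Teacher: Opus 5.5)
Your proposal takes essentially the same route as the paper. The paper simply invokes \cite[\S 8.8--8.10]{BismutGoette2000} with the weighted norms \eqref{sobolev norm tvnk}, the elliptic estimates \eqref{elliptic estimate lxkn3tv}, the enlarged family $\mathcal{Q}_{t,v,n,x}$ containing the multipliers $\frac{z}{\sqrt v}f_j$, the factor $v^{\dim S}$ from \eqref{comparision normal special norm,v}, and the gain $|v/z^2|^m$ from \eqref{lower bound fj}; your Steps 1--4 (resolvent bounds, commutator control via domination of $zf_j$ by $|K^X|$, contour-integral kernel bounds, and the $\sqrt{t/v}$ comparison followed by mollification) correctly unpack exactly that argument in more detail.
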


Since $W$ is compact, the function $\mu_1$ is bounded. Then from (\ref{str getzler rescaling v}), (\ref{calculate exp -lzk30v}) and (\ref{away from xgk two estimate 1}), we obtain (\ref{outside uepsilon''}). The proof of Theorem \ref{thm outside uepsilon''} is completed.

\subsection{Control of the supertrace containing $\mu_1$}\label{e1}

\begin{theorem}\label{sec mu1 control}
    There exist $\beta>0$, $r\in\nn$, $C>0$, $\gamma\in \,]0,1]$ such that if $t\in \,]0,1]$, $v\in [t,1]$, if $z\in\rr$, $0<|z|\leq \beta$, $K=zK_{0}$, then
    \begin{equation}\label{mu1 to sum up}
        |z|^r\Big|\Phi\str\Big[i\frac{\mu_1}{v}g\exp(-\mathcal{L}_{K}-\cvt^2)\Big]+\widetilde{e}_v^1\Big|\leq C\Bigl(\frac{t}{v}\Bigr)^\gamma.
    \end{equation} 
\end{theorem}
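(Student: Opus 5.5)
The plan is to assemble the local estimates of Theorems \ref{thm first ineq of mu} and \ref{thm outside uepsilon''} into a global bound on the supertrace. First, by (\ref{F+I(akt)}) and (\ref{I(bktv)}), replace $\exp(-\mathcal{L}_K-\cvt^2)=\exp(-\bktv)$ by $\ft(\bktv)$. The error is $O(e^{-C/t})$ in trace norm, and since $\mu_1/v$ is bounded by $C/v\le C/t$, this error is harmless. By finite propagation speed, the supertrace of $i\frac{\mu_1}{v}g\ft(\bktv)$ localizes to the tubular neighbourhood $\mathcal{V}_{\varepsilon_0}$ of $X_g$. In the normal coordinates $(x,Z)$ it becomes an integral over $x\in X_g$ of the inner integral over $Z\in N_{X_g/X,\rr,x}$, $|Z|\le\varepsilon_0$, appearing in (\ref{first ineq of mu}) and (\ref{outside uepsilon''}). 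Likewise, $-\widetilde{e}^1_v=\int_{X_g}(2\pi)^{-\dim X_g}\beta^1_v$ up to the normalization $\Phi$, by the definition of $\beta^1_v$. So it suffices to bound the difference pointwise on $X_g$ and integrate.

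Split $X_g$ into $\mathcal{V}''_{\varepsilon_0}$, the tubular neighbourhood of $X_{g,K}$ in $X_g$, and its complement.

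On the complement, Theorem \ref{thm outside uepsilon''} bounds the pointwise difference by $C v^{-\dim W_g-1}|v/z^2|^p(t/v)^\gamma$. Choose $p\ge \dim W_g+1$. Since $v\le1$, this is at most $C|z|^{-2p}(t/v)^\gamma$. Multiplying by $|z|^r$ with $r\ge 2p$ gives the required bound after integrating over the compact complement.

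On $\mathcal{V}''_{\varepsilon_0}$, write points as $(y_0,\sqrt{v}Z_0)$ with $|Z_0|\le\varepsilon_0/\sqrt{v}$. The change of variables $Z_0\mapsto\sqrt{v}Z_0$ on $N_{X_{g,K}/X_g,\rr}$ produces exactly the Jacobian $v^{\dim N_{X_{g,K}/X_g}}$ appearing in (\ref{first ineq of mu}), together with a bounded factor from the Riemannian volume density. Theorem \ref{thm first ineq of mu} then bounds the integrand by $C(1+|Z_0|)^{2\ell'+1}(1+|zZ_0|)^{-p}(t/v)^\gamma$. Integrate over $Z_0\in N_{X_{g,K}/X_g,\rr,y_0}$, a real space of dimension $2(\ell'-\ell'')$. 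Substituting $W=zZ_0$ gives
\begin{equation*}
\int (1+|Z_0|)^{2\ell'+1}(1+|zZ_0|)^{-p}\,dZ_0 \le C|z|^{-(2\ell'+1)-2(\ell'-\ell'')}\int (1+|W|)^{2\ell'+1}(1+|W|)^{-p}dW,
\end{equation*}
where we used $(1+|Z_0|)\le |z|^{-1}(1+|W|)$ for $|z|\le1$. Choosing $p$ large makes the last integral finite, so this region contributes $C|z|^{-r_0}(t/v)^\gamma$ for an explicit $r_0$. Integrating over $y_0\in X_{g,K}$, which is compact fibrewise and uniform over compact subsets of $S$, and taking $r\ge\max(r_0,2p)$ yields (\ref{mu1 to sum up}).

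The main obstacle is the bookkeeping: matching the overlapping cut-offs and the normalizations ($\Phi$, powers of $2\pi$, the rescaling $\psi_v$ hidden in $\beta^1_v$). One must check that the same $\beta^1_v$ appears in both local theorems, so that the two pieces add up to exactly $-\widetilde{e}^1_v$. One must also check that Theorem \ref{thm outside uepsilon''} applies on the whole complement of $\mathcal{V}''_{\varepsilon_0}$ while Theorem \ref{thm first ineq of mu} covers $|Z_0|\le\varepsilon_0/\sqrt{v}$, which is the same region after rescaling. I would first treat the boundary overlap with a partition of unity subordinate to these two sets. Since both estimates hold there, no new analysis is needed.
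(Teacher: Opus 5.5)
Your proposal is correct and follows the same route as the paper, which defers to Bismut--Goette, Theorem 8.29. You replace $\exp(-\mathcal{B}_{K,t,v})$ by $\widetilde{F}_t(\mathcal{B}_{K,t,v})$ using (\ref{I(bktv)}), integrate the near-$X_{g,K}$ bound of Theorem \ref{thm first ineq of mu} (derived from (\ref{uniform estimate ftlvzk3tv})) in $Z_0$ against the decay $(1+|zZ_0|)^{-p}$ to get a negative power of $|z|$, and add (\ref{outside uepsilon''}) on the complement. The partition of unity is unnecessary: $\mathcal{V}''_{\varepsilon_0}$ and its complement already split $X_g$ disjointly, so you can just split the integral.
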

\begin{proof}
    The estimate in (\ref{mu1 to sum up}) follows from (\ref{I(bktv)}), (\ref{uniform estimate ftlvzk3tv}) and (\ref{outside uepsilon''}). We refer to \cite[Theorem 8.29]{BismutGoette2000} for more details of the proof.
\end{proof}

\subsection{Control of the supertrace containing $\mu_2$}\label{sec mu2 control}
This subsection is devoted to prove the following result.
\begin{theorem}\label{thm mu2 to sum up}
    There exist $\beta>0$, $\gamma\in \,]0,1]$ such that for any $r\in\nn$, there are $r\in\nn$, $C>0$,  such that if $t\in\,]0,1]$, $v\in [t,1]$, if $z\in\rr$, $0<|z|\leq \beta$, $K=zK_{0}$, then
    \begin{equation}\label{mu2 to sum up}
        |z|^r\Big|\Phi\str\Big[i\frac{\mu_2}{v}g\exp(-\lk-\cvt^2)\Big]+\widetilde{e}_v^2-\Phi\partial\xi_{v,t}'-\Phi\bar{\partial}\xi_{v,t}''\Big|\leq C\Big|\frac{v}{z^2}\Big|^r\Bigl(\frac{t}{v}\Bigr)^\gamma.
    \end{equation} 
\end{theorem}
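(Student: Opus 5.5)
The approach is to localize near $W_{g,K}$ and write the $\mu_2$-supertrace as a limit term plus a transgression. Since $\mu_2$ is locally constant on a neighborhood $U$ of $W_K$ (see \eqref{proposition of mu1mu2}), I will split $\mu_2=\mu_2\phi+\mu_2(1-\phi)$, where $\phi$ is a cutoff equal to $1$ near $W_{g,K}$ and supported in $U$.

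The piece $\mu_2(1-\phi)$ is supported away from $X_{g,K}$. There I rerun the argument of Section \ref{sec outside neighbor}: the operators $\frac{z}{\sqrt v}f_j$ in $\mathcal{Q}_{t,v,n,x}$ together with the lower bound \eqref{lower bound fj} produce the factor $|v/z^2|^m$ of \eqref{away from xgk two estimate 1}. Combined with the convergence \eqref{difference ftlvzk3tv} to $\exp(-L^{3,(0,v)}_{x,zK_0})$, this gives a bound $C|v/z^2|^r(t/v)^{\gamma}$ for the difference from its $t\to0$ limit. The same local computation shows that the corresponding part of $\widetilde{e}^2_v$ is also of size $|v/z^2|^r$, so this piece already fits \eqref{mu2 to sum up}.

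On the piece $\mu_2\phi$, near $X_{g,K}$, the function $\mu_2$ is constant. Still, $[\mathcal{A}_{v,s}',i\mu_2]=\sqrt{s}c(K_2^{X(0,1)})/\sqrt2$ only vanishes on $U$, so I will not treat $\mu_2$ as a scalar but use transgression in $s$. Following \eqref{proof of partialt tkappa transgression1} and Theorem \ref{thm dclosed form}, with $v$ fixed I differentiate $s\mapsto \str[\frac{i\mu_2}{v}g\exp(-\lk-\mathcal{A}_{v,s}^2)]$. Using $\mathcal{A}_{v,s}^2=[\mathcal{A}''_{v,s},\mathcal{A}'_{v,s}]$ and the vanishing of supertraces on supercommutators, I get
\[
s\frac{\partial}{\partial s}\str\Big[\tfrac{i\mu_2}{v}g e^{-\lk-\mathcal{A}_{v,s}^2}\Big]=\tfrac12\partial(\cdots)^{\dabar}+\tfrac12\dba(\cdots)^{da}+(\text{terms containing }c(K_2^X)).
\]
The $\partial,\dba$ terms are exactly the integrands of $\xi'_{v,t},\xi''_{v,t}$ in \eqref{hvt}. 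Integrating from $0$ to $t$ and using \eqref{limit of main} restricted to $\mu_2$, i.e.\ the limit $-\widetilde{e}^2_v$ obtained via \eqref{compute tend to ev} with $\mu_2$ in place of $\langle\mu,K\rangle$, I get
\[
\Phi\str\Big[\tfrac{i\mu_2}{v}g e^{-\lk-\mathcal{A}_{v,t}^2}\Big]+\widetilde{e}^2_v-\Phi\partial\xi'_{v,t}-\Phi\dba\xi''_{v,t}=\Phi\int_0^t(\text{$c(K^X_2)$-terms})\frac{ds}{s}.
\]

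It remains to bound the $c(K_2^X)$-terms. Since $K_2^X$ vanishes on $U$, these terms are supported away from $X_{g,K}$, where the estimates of Section \ref{sec outside neighbor} give a bound $C|v/z^2|^r(s/v)^{\gamma}s^{-1}$. After multiplying by $|z|^r$ this yields \eqref{mu2 to sum up}.

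I expect the main obstacle to be justifying the $s\to0$ limit and the integrability of $\xi'_{v,t},\xi''_{v,t}$ near $s=0$. The Getzler-rescaled kernels of $\exp(-\lk-\mathcal{A}_{v,s}^2-2\dabar s\partial_s\mathcal{A}'_{v,s})$ must converge uniformly with rate $(s/v)^{\gamma}$. I would handle this by repeating Sections \ref{sec A new trivialization and Getzler rescaling}–\ref{eh} with the extra Grassmann variables $da,\dabar$, exactly as in Section \ref{section establish analytic torsion}, using the norms $|\cdot|'_{t,x,0}$.
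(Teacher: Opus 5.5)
Your skeleton is the paper's. You use the $t$-transgression identity \eqref{tderivative t str mu2} with $\mu_2$ inserted, the limit $-\widetilde{e}^2_v$ from \eqref{tend to ev2}, and the $\partial$/$\dba$ terms integrating to $\xi'_{v,t},\xi''_{v,t}$. The $c(K_2^X)$-terms are then handled by the off-$X_{g,K}$ estimates of Section \ref{sec outside neighbor}.

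\textbf{The cutoff split is unnecessary and not used consistently.} In your second step you transgress the full $\mu_2$, as you must, since $\xi'_{v,t}$, $\xi''_{v,t}$ and $\widetilde e^2_v$ are defined with $\mu_2$. So the $(1-\phi)$-piece ends up counted twice. Had you actually transgressed $\mu_2\phi$, the commutator would involve $d(\mu_2\phi)=\mu_2\,d\phi$, including a horizontal part of $d\phi$ (an arbitrary cutoff is not constant along $T^HW$, unlike $\mu_2$), rather than $c(K_2^X)$. You would also get a different $\xi'$. Drop the split: $[\mathcal{A}'_{v,s},i\mu_2]=\sqrt{s}\,c(K_2^{X(0,1)})/\sqrt2$ already vanishes near $W_K$.

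\textbf{The real gap is the claimed bound $C|v/z^2|^r(s/v)^\gamma$ for the $c(K_2^X)$-terms.} Section \ref{sec outside neighbor} (cf.\ Theorem \ref{thm outside uepsilon''}) does not give this. It only shows that a supertrace localized away from $X_{g,K}$ lies within $C|v/z^2|^r(s/v)^\gamma$ of its $s\to 0$ local limit. That limit is merely $O(|v/z^2|^r)$ and does not decay in $s$. With that alone, $\int_0^t(\cdots)\,ds/s$ diverges logarithmically.

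The missing ingredient is \eqref{mu2 transgression tend to 0}: this limit is zero. To see it, trivialize by ${}^4\nabla^{\mathbb{E},t}$, which absorbs the term $-\dabar\, c(e_i^{(0,1)})/(2\sqrt{2t})$ of \eqref{lichnerowicz mu2}. Its curvature has no $\dabar$-component, so the Getzler-rescaled limit operator contains no $\dabar$. Hence the $\dabar$-coefficient of the limit vanishes, by the Bismut--Gillet--Soul\'e mechanism; the $da$-term is handled the same way. Only after this does the outside-neighbourhood estimate give the $(s/v)^\gamma$ rate.

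The same mechanism gives $\pi^{2\prime}_{v,0}=\pi^{2\prime\prime}_{v,0}=0$ (Proposition \ref{i2}). That is exactly what makes $\xi'_{v,t}$ and $\xi''_{v,t}$ convergent at $s=0$, the point you flagged as your main obstacle without saying why it holds.
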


From the same arguments as in Section \ref{sec compute to ev}, we see that for $K\in \mathfrak{z}(g)$ with $|K|\neq  0$ small enough, as $t\rightarrow 0$,
\begin{equation}\label{tend to ev2}
    \Phi\str\Big[i\frac{\mu_2}{v}g\exp(-\lk-\cvt^2)\Big]\rightarrow -\widetilde{e}_v^2.
\end{equation}

Recall that the differential forms $\pi_{v,t}^{2'}$, $\pi_{v,t}^{2''}$ are set in (\ref{pivt}). We take the odd Grassmannian variables $da$, $\dabar$ as in Section \ref{sec intermediate right}.  Now we state an analogue of \cite[Proposition 8.32]{BismutGoette2000}.
\begin{proposition}
The following identity holds,
\begin{equation}\label{tderivative t str mu2}
    \begin{split}
    t\derivative{t}\str\Big[\frac{i\mu_2}{v}g\exp(-\lk-\cvt^2)\Big]=\frac{1}{2}(\bar{\partial}\pi_{v,t}^{2''}+\partial\pi_{v,t}^{2'})\qquad\qquad\\+\trace_s\Big[\frac{\sqrt{t}c(K_2^{X(1,0)})}{\sqrt{2}v}g\exp(-\lk-\cvt^2-\dabar t\derivative{t}\cvt')\Big]^{\dabar}\\
    -\trace_s\Big[\frac{\sqrt{t}c(K_2^{X(0,1)})}{\sqrt{2}v}g\exp(-\lk-\cvt^2-da t\derivative{t}\cvt'')\Big]^{da}.
    \end{split}
\end{equation}
\end{proposition}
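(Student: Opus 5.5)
The identity \eqref{tderivative t str mu2} is a variation formula in $t$ for a supertrace with the multiplier $i\mu_2/v$. I would prove it the same way as \eqref{calculation derivative v} and \eqref{calculation derivative t}, using the algebraic identities \eqref{cvt-algebraic identity} for $\cvt$. The first step is Duhamel's formula. By \eqref{cvt-algebraic identity}, $\cvt^2=[\cvt'',\cvt']$ with $\cvt''^2=\cvt'^2=0$. Differentiating in $t$ therefore gives
\begin{equation*}
t\derivative{t}\cvt^2=\Big[\cvt'',t\derivative{t}\cvt'\Big]+\Big[\cvt',t\derivative{t}\cvt''\Big].
\end{equation*}
Since $\mu_2$ does not depend on $t$, the left-hand side of \eqref{tderivative t str mu2} equals
\begin{equation*}
-\derivative{b}\Big|_{b=0}\str\Big[\frac{i\mu_2}{v}g\exp\Big(-\lk-\cvt^2-b\Big[\cvt'',t\derivative{t}\cvt'\Big]-b\Big[\cvt',t\derivative{t}\cvt''\Big]\Big)\Big].
\end{equation*}
I would rewrite each term with the Grassmann variables: the first becomes a $\dabar$-coefficient with $\dabar\, t\derivative{t}\cvt'$ in the exponent, and the second becomes a $da$-coefficient with $da\, t\derivative{t}\cvt''$ in the exponent.

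The second step moves the supercommutator with $\cvt''$ (resp.\ $\cvt'$) out of the exponent. This uses the Bianchi identities $[\cvt'',\cvt^2]=[\cvt',\cvt^2]=0$, the $G$-invariance of $\cvt$ (so that $\lk$ commutes with it), and the vanishing of supertraces on supercommutators. Exactly as in \eqref{calculation derivative v}, the term coming from $[\cvt'',\cdot\,]$ splits into two pieces. The first is $\dba^S$ applied to $\str[\frac{i\mu_2}{v}g\exp(\cdots-\dabar t\derivative{t}\cvt')]^{\dabar}$; by \eqref{pivt} this is a multiple of $\pi^{2\prime}_{v,t}$. The second is a supertrace containing $[\cvt'',\frac{i\mu_2}{v}]$, which by \eqref{commutator c muk} equals $-\frac{\sqrt t c(K_2^{X(1,0)})}{\sqrt2 v}$. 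The term coming from $[\cvt',\cdot\,]$ is treated symmetrically: it produces $\partial^S$ of a multiple of $\pi^{2\prime\prime}_{v,t}$ and a supertrace containing $[\cvt',\frac{i\mu_2}{v}]=\frac{\sqrt t c(K_2^{X(0,1)})}{\sqrt2 v}$. Since \eqref{pivt} carries the factor $2$ in $2\dabar t\derivative{t}\cvt'$, the first pieces contribute with coefficient $\frac12$.

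The third step, which I expect to be the main obstacle, is bookkeeping rather than analysis. The identity as stated pairs $\partial$ with $\pi^{2\prime}$ and $\dba$ with $\pi^{2\prime\prime}$, while the naive computation above pairs $\dba$ with $\pi^{2\prime}$. I would resolve this by tracking carefully which of $\cvt''$, $\cvt'$ differentiates in the horizontal holomorphic versus antiholomorphic directions. Here $\nabla^{\mathbb E''}\subset\cvt''$ yields $\dba^S$, and the Grassmann variable conventions, including the type statement \eqref{p,p-1,p+1}, must be used consistently; the pairing should be fixed the same way as in the proof of \eqref{pichikappa}. I would also check the signs that arise when the odd variables $da$, $\dabar$ pass through the odd operators, and the sign convention in $[\cvt'',i\mu_2]$. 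With these conventions fixed, collecting terms gives \eqref{tderivative t str mu2}.

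Finally, to justify commuting $\partial^S,\dba^S$ with the supertrace and to apply Duhamel's formula, I would use that $g\exp(-\lk-\cvt^2+\cdots)$ is trace class with smooth kernel for $t>0$. This holds by the standard elliptic theory already used in Section \ref{e0}.
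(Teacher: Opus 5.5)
Your Steps 1 and 2 are exactly the paper's route. Its proof takes the transgression computation \eqref{calculation derivative t} (Duhamel, $[\cvt'',\cvt^2]=[\cvt',\cvt^2]=0$, vanishing of supertraces on supercommutators) and substitutes $\mu_2$ for $\product{\mu,K}$ and $K_2^X$ for $K^X$, using \eqref{commutator c muk} as you do.

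The problem is Step 3. Your computation pairs $\dba$ with $\pi^{2\prime}_{v,t}$, since it comes from $[\cvt'',t\derivative{t}\cvt']$, just as $\overline{\partial}^S$ is paired with $\derivative{t}\cvt'$ in \eqref{calculation derivative t}. It pairs $\partial$ with $\pi^{2\prime\prime}_{v,t}$. This pairing is the correct one, and no choice of sign or Grassmann conventions can turn it into the printed one. By \eqref{p,p-1,p+1}, $\pi^{2\prime}_{v,t}$ has type $(p+1,p)$ and $\pi^{2\prime\prime}_{v,t}$ has type $(p,p+1)$. Hence $\partial\pi^{2\prime}_{v,t}$ and $\dba\pi^{2\prime\prime}_{v,t}$ have types $(p+2,p)$ and $(p,p+2)$. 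But the left-hand side and both $c(K_2^X)$-terms lie in $P^S$. So the printed terms would have to vanish separately, and the $(p,p)$-contribution $\frac{1}{2}(\dba\pi^{2\prime}_{v,t}+\partial\pi^{2\prime\prime}_{v,t})$ produced by your Step 2 would be lost.

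The statement therefore has $\partial$ and $\dba$ interchanged on the $\pi$-terms; the same slip appears in \eqref{t01v01} and \eqref{mu2 to sum up}. What you, and the paper's proof via \eqref{calculation derivative t}, actually establish is the identity with $\frac{1}{2}(\dba\pi^{2\prime}_{v,t}+\partial\pi^{2\prime\prime}_{v,t})$. That version is consistent with \eqref{2,p,p-1,p+1} and is the one used in \eqref{J1-J4} and \eqref{v01 pi2 final}.

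Your appeal to \eqref{pichikappa} to fix the pairing does not work. There the differential acts on $\kappa^k_{v,t}$, and $\pi^{k\prime}_{v,t}$ enters only through $[\cvt',N_{K,t}]=2t\derivative{t}\cvt'$ inside the supertrace. That says nothing about which differential should act on $\pi^{2\prime}_{v,t}$ here.

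What is genuinely convention-dependent is the overall sign of the two $c(K_2^X)$-terms. It changes according to how $da$, $\dabar$ are moved past the odd operators $c(K_2^X)$; reading the Grassmann coefficients as the scalar $b$-derivatives of \eqref{calculation derivative t} reproduces the printed signs. This is harmless, since those terms are shown to be negligible in \eqref{exp transgression mu2}. Step 3 should therefore end with the corrected identity, not with the claim that bookkeeping recovers the printed one.
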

\begin{proof}
    By repacing $\product{\mu,K}$ by $\mu_2$, and $K^X$ by $K_2^X$ in (\ref{calculation derivative t}), we get (\ref{tderivative t str mu2}).
\end{proof}

\begin{theorem}
The following identify holds,
\begin{equation}\label{lichnerowicz mu2}
    \begin{split}
        \lk+\cvt^2+\dabar t\derivative{t}\cvt'=-\frac{t}{2}\Big(\nabla_{e_i}^{\Lambda(T^{*(0,1)}X)\otimes \xi}+\frac{1}{\sqrt{2t}}\product{S(e_i)e_j, f_p^H}c(e_j)f^p\wedge\qquad\qquad\\
        +\frac{1}{2t}\product{S(e_i)f_p^H,f_q^H}f^p\wedge f^q\wedge -\frac{\product{K^X,e_i}}{2}\Bigl(\frac{1}{t}+\frac{1}{v}\Bigr)-\frac{\dabar c(e_i^{(0,1)})}{2\sqrt{2t}}\Big)^2\\ 
       +\frac{t}{8}H+\frac{t}{4}\bigl(\bigl(R^{\xi}+\frac{1}{2}\trace[R^{TX}]\bigr)(e_i,e_j)-\frac{1}{v}\product{\nabla_{e_i}^{TX}K^X,e_j}\bigr)c(e_i)c(e_j)\\ 
       +\sqrt{\frac{t}{2}}\bigl(\bigl(R^{\xi}+\frac{1}{2}\trace[R^{TX}]\bigr)(e_i,f_p^H)-\frac{1}{v}\product{T(e_i,f_p^H),K^X}\bigr)c(e_i)f^p\\ 
       +\frac{1}{2}\bigl(\bigl(R^{\xi}+\frac{1}{2}\trace[R^{TX}]\bigr)(f_p^H,f_q^H)-\frac{1}{2v}\product{T(f_p^H,f_q^H),K^X}\bigr)f^p\wedge f^q\wedge\\ 
       -m^{\xi}(K)-\frac{1}{2}\trace[m^{TX}(K)]+\frac{1}{2v}|K^X|^2.
    \end{split}
\end{equation}
\end{theorem}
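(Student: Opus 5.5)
The plan is to derive \eqref{lichnerowicz mu2} from Theorem \ref{thm lichnerowicz avt}, treating the $\dabar$-term as a perturbation that is absorbed into the covariant derivative by completing the square. This parallels the passage from \eqref{lichnerowicz formula} to \eqref{gamma lichnerowicz formula}, where the $da$, $\dabar$ terms were handled as in \cite[Theorem 2.15]{BGS2}. Since $\dabar$ is an odd Grassmann variable with $\dabar^2=0$, anticommuting with $c(\cdot)$ and $f^p\wedge$, only terms linear in $\dabar$ can appear.

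First, I compute $t\derivative{t}\cvt'$ explicitly from \eqref{definition cvt}, as in \eqref{explicit cvt'}:
\begin{equation*}
t\derivative{t}\cvt'=\frac{\sqrt{t}}{2}\dbarstar+\frac{c(T^{(0,1)})}{4\sqrt{2t}}-\frac{c(K^{X(0,1)})}{4\sqrt{2t}}\Bigl(1+\frac{t}{v}\Bigr).
\end{equation*}
Next, I expand the square on the right of \eqref{lichnerowicz mu2}. Write $\nabla_{e_i}^{v}$ for the operator inside the parentheses of \eqref{lichnerowicz avt}. The cross terms linear in $\dabar$ are
\begin{equation*}
\frac{t}{2}\cdot\frac{1}{2\sqrt{2t}}\Bigl(\nabla_{e_i}^{v}\,\dabar c(e_i^{(0,1)})+\dabar c(e_i^{(0,1)})\nabla_{e_i}^{v}\Bigr).
\end{equation*}
Because $\sqrt{2}\sum_i c(e_i^{(0,1)})\nabla_{e_i}$ reproduces $\dbarstar$, this should give:
\begin{itemize}
\item from the $\nabla^{\Lambda\otimes\xi}$ part, the term $\frac{\sqrt t}{2}\dabar\dbarstar$, up to sign conventions for moving $\dabar$ past odd operators;
\item from the $\frac{1}{\sqrt{2t}}\product{S(e_i)e_j,f_p^H}c(e_j)f^p$ and $\frac{1}{2t}\product{S(e_i)f_p^H,f_q^H}f^pf^q$ parts, after anticommuting the Clifford variables and using \eqref{S and T}, the term $\frac{c(T^{(0,1)})}{4\sqrt{2t}}\dabar$;
\item from the $-\frac{\product{K^X,e_i}}{2}(\frac1t+\frac1v)$ part, the term $-\frac{c(K^{X(0,1)})}{4\sqrt{2t}}(1+\frac tv)\dabar$.
\end{itemize}
These three terms match $\dabar t\derivative{t}\cvt'$ term by term.

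The remaining $\dabar$-linear contributions are commutator terms such as $[\nabla_{e_i},c(e_i^{(0,1)})]$. These vanish in the trivialisation because $\nabla^{TX}$ is Kähler, so $\nabla$ preserves $T^{(0,1)}X$ and the Clifford terms are parallel. The quadratic term in $\dabar$ is zero since $\dabar^2=0$, and there is no $da\dabar$ term because no $da$ is present. Subtracting, the zeroth-order part in $\dabar$ is exactly \eqref{lichnerowicz avt}.

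The main obstacle is the bookkeeping of the middle item. I must verify that the $f^p$-dependent connection terms combine with $\dabar c(e_i^{(0,1)})$ into exactly $\frac{c(T^{(0,1)})}{4\sqrt{2t}}$, with no leftover $c(T^{(1,0)})$ or mixed $f^p c(e_j)$ terms. I would first check this in the classical case $K=0$, $v=\infty$, where it is the BGS identity behind \cite[Theorem 2.15]{BGS2}, and then observe that the $K$-dependent part is a scalar one-form in the connection. That part contributes only through $\product{K^X,e_i}c(e_i^{(0,1)})=c(K^{X(0,1)})$, with the coefficient $\frac12(\frac1t+\frac1v)\cdot\frac{t}{2}\cdot\frac{2}{2\sqrt{2t}}$ checked against the formula above.
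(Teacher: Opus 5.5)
Your proposal is correct and follows the route the paper intends. The identity is Theorem~\ref{thm lichnerowicz avt} plus the $\dabar$-linear cross terms obtained by completing the square, handled like the $da,\dabar$ terms of \eqref{gamma lichnerowicz formula} via \cite[Theorem 2.15]{BGS2}; reducing the $S$-part to the $K=0$ BGS identity is legitimate, since those terms involve neither $K$ nor $v$ (specialize $da\mapsto-\dabar$ and rescale).

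Fix three bookkeeping points:
\begin{itemize}
\item The correct normalization is $\sum_i c(e_i^{(0,1)})\nabla_{e_i}=\sqrt{2}\,\dbarstar$; with the doubled cross term this gives your $\frac{\sqrt t}{2}\dabar\dbarstar$.
\item Every matched term must carry $\dabar$ on the left, since $c(T^{(0,1)})\dabar=-\dabar\, c(T^{(0,1)})$.
\item The term $\dabar\,c(T^{(0,1)})/(4\sqrt{2t})$ comes solely from the $f^p\wedge f^q$ part of the connection. The $c(e_j)f^p$ cross term vanishes on its own, because $\langle S(\cdot)\cdot,f_p^H\rangle$ is symmetric and $J$-anti-invariant on $TX$ for a K\"ahler fibration.
\end{itemize}
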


Let $^4\nabla^{\mathbb{E},t}$ be the connection on $\Lambda(T^*\rr)\,\widehat{\otimes }\, \pi^*\Lambda(T_{\rr}^*S)\,\widehat{\otimes }\,\Lambda(T^{*(0,1)}X)\otimes \xi$ defined by
\begin{equation}
    \begin{split}
        ^4\nabla^{\mathbb{E},t}_{e_i}=\nabla_{e_i}^{\mathbb{E}}+\frac{1}{\sqrt{2t}}\product{S(e_i)e_j, f_p^H}c(e_j)f^p\wedge\hspace{12 em}\\
        +\frac{1}{2t}\product{S(e_i)f_p^H,f_q^H}f^p\wedge f^q\wedge -\frac{\product{K^X,e_i}}{2}\Bigl(\frac{1}{t}+\frac{1}{v}\Bigr)-\frac{\dabar c(e_i^{(0,1)})}{2\sqrt{2t}}.
    \end{split}
\end{equation}

From (\ref{lichnerowicz mu2}), and proceeding as in \cite [Theorem 2.16]{BGS2}, we find that as $t\rightarrow 0$,
\begin{equation}\label{mu2 transgression tend to 0}
    \trace_s\Big[\frac{\sqrt{t}c(K_2^{X(1,0)})}{\sqrt{2}v}g\exp(-\lk-\cvt^2-\dabar t\derivative{t}\cvt')\Big]^{\dabar}\rightarrow 0.
\end{equation}
In fact, the relevant computations leading to (\ref{mu2 transgression tend to 0}) is essentially the same as in Section \ref{sec compute to ev}, by using the key fact that the $\dabar$-part of $\big(^4\nabla^{\mathbb{E},t}\big)^2$ vanishes.

Since $c(K_2^{X})$ vanishes near $X_{K}$, we can proceed as in Section \ref{sec outside neighbor}, by  using the trivialization associated to $ ^4\nabla^{\mathbb{E},t}$. Also from (\ref{mu2 transgression tend to 0}), we find that for any $p\in\nn$, $t\in \,]0,1]$, $v\in [t,1]$,
\begin{equation}\label{ft transgression mu2}
   v^{\mathrm{dim}W_g+\frac{1}{2}}\Big|\trace_s\Big[\frac{\sqrt{t}c(K_2^{X(1,0)})}{\sqrt{2}v}g\ft(-\lk-\cvt^2-\dabar t\derivative{t}\cvt')\Big]^{\dabar}\Big|\leq C\big|\frac{v}{z^2}\big|^p\Bigl(\frac{t}{v}\Bigr)^{\gamma}.
\end{equation}

By proceeding as in Section \ref{sec localization of problem mu1} and by (\ref{ft transgression mu2}), we get
\begin{equation}\label{exp transgression mu2}
    |z|^r\Big|\trace_s\Big[\frac{\sqrt{t}c(K_2^{X(1,0)})}{\sqrt{2}v}g\exp(-\lk-\cvt^2-\dabar t\derivative{t}\cvt')\Big]^{\dabar}\Big|\leq C\big|\frac{v}{z^2}\big|^p\Bigl(\frac{t}{v}\Bigr)^{\gamma}.
\end{equation}

The analogue of (\ref{exp transgression mu2}) holds for $\trace_s\Big[\frac{\sqrt{t}c(K_2^{X(0,1)})}{\sqrt{2}v} g\exp(-\lk-\cvt^2-da t\derivative{t}\cvt'')\Big]^{da}$. By (\ref{hvt}), (\ref{tend to ev2}), (\ref{exp transgression mu2}), and by integration of (\ref{tderivative t str mu2}) in the variable $t$, we get (\ref{mu2 to sum up}), the proof of Theorem \ref{thm mu2 to sum up} is completed.

\subsection{A proof of the last part of Theorem \ref{main lemma 1}}\label{dw}
For $v\geq 1$, $1/v$ remains bounded. By using Theorem \ref{thm lichnerowicz avt} and the trivialization associated to the connection $ ^2\nabla^{\mathbb{E},t}$, we can proceed as in Section \ref{e0} to obtain an analogue of Theorem \ref{thm asymptotic higher akt}. In particular, we see that for $|K|$ small enough, $0<t\leq 1\leq v$,
\begin{equation}
    \Big|\Phi\str\big[i\product{\mu,K}g\exp(-\lk-\cvt^2)\big]\Big|\leq C,
\end{equation}
which is equivalent to (\ref{t01v1}). Then we complete the proof of Theorem \ref{main lemma 1}.

\subsection{A proof of Proposition \ref{i2}, Theorem \ref{h,t01}}\label{dx}
The proof of (\ref{h,t01v1}) is essentially the same as the proof of (\ref{t01v1}). 

By using the Lichnerowicz formula (\ref{lichnerowicz mu2}), and by proceeding as in \cite[Theorem 2.16]{BGS2}, we find that as $t\rightarrow 0$,
\begin{equation}
    \pi_{v,t}^{k\prime}\rightarrow 0, \qquad k=0,1,2,
\end{equation}
as we explained in Section \ref{sec mu2 control}, this is because $\big(^4\nabla^{\mathbb{E},t}\big)^2$ does not have $\dabar$-part. Now we apply the methods in Section \ref{sec localization of problem mu1}-\ref{sec mu1 control} to the operator $\exp(-\lk-\cvt^2-\dabar t\derivative{t}\cvt')$, with the trivialization associated to $^4\nabla^{\mathbb{E},t}$, we obtain the following result corresponding to (\ref{mu1 to sum up}),
\begin{equation}\label{estimate pivt1'}
    |\pi_{v,t}^{1\prime}|\leq C\Bigl(\frac{t}{v}\Bigr)^{\gamma},\kongge 0< t\leq v\leq 1.
\end{equation}
Thus we obtain the first formula in (\ref{h,t01v01}) when $h_{v,t}=\pi_{v,t}^{1\prime}$.

The techniques in Section \ref{e0} concerning about the derivatives in $u$ fit well with the methods of this section. In particular, from the analogues of (\ref{uniform estimate ftlvzk3tv g}), (\ref{away from xgk two estimate 1}) for the derivatives of kernels, one sees that if $u=\sqrt{t}$,
\begin{equation}\label{estimate derivative pivt1'}
    \big|\derivative{u}\pi_{v,t}^{1'}\big|\leq C\sqrt{t},\qquad 0< t\leq v\leq 1.
\end{equation} 
From (\ref{estimate pivt1'}), (\ref{estimate derivative pivt1'}), we obtain the second formula in (\ref{h,t01v01}) when $h_{v,t}=\pi_{v,t}^{1\prime}$.

Now there is no difficulty to get (\ref{h,t01v01}) for $h=\pi_{v,t}^{1,\prime/\prime\prime}$, $v\pi_{v,t}^{2,\prime/\prime\prime}$. We should note that the scalar $v$ before $\pi_{v,t}^{2,\prime/\prime\prime}$ is necessary, technically because (\ref{estimate mu1}) does not hold when we replace $\mu_1$ by $\mu_2$, but holds when $\mu_1$ replaced by  $v\mu_2$.

The only difference for $h=\theta_{v,t}^1$, $v\theta_{v,t}^{2}$ is that we need a Lichnerowicz formula (\ref{lichnerowicz mu2}) for the operator $\lk+\cvt^2+2dat\derivative{t}\cvt''+2\dabar t\derivative{t}\cvt'-da\dabar N_V$, which is the combination of (\ref{gamma lichnerowicz formula}) and (\ref{lichnerowicz mu2}). Thus we complete the proof of Theorem \ref{h,t01}.

\subsection{A proof of Theorem \ref{thm,h2,t01v01}}\label{dy}
The most important fact in the proof of Theorem \ref{thm,h2,t01v01} is that $K_2^X$ vanishes near $X_K$. Once we get a limit as in (\ref{mu2 transgression tend to 0}), since $K_2^X$ vanishes near $X_K$, one immediately get an estimate as in (\ref{exp transgression mu2}) using the techniques in Section \ref{sec outside neighbor}.

From  (\ref{nkt rescaling}), (\ref{chern l}), we have an analogue of (\ref{compute tend to ev}), as $t\rightarrow 0$,
\begin{equation}\label{tchi tend to}
    \begin{split}
    &\Phi^{\mathrm{odd}}\derivative{b}\str\Big[\frac{{\sqrt{t}c(K_2^{X(0,1)})}}{2\sqrt{2}v} g\exp(-\lk-\cvt^2-btN_{K,t})\Big]_{b=0}
        \rightarrow \\ 
      &  \int_{X_g}\frac{K_2^{\prime X(1,0)}}{2v}\big(i\product{\mu,K}-\frac{\omega}{2\pi}\big)\td_{g,K}(TX,h^{TX})\ch_{g,K}(\xi,h^{\xi})\exp\Bigl(\frac{\bar{\partial}_K\partial_K}{2\pi v}\product{\mu,K}\Bigr),
    \end{split}
\end{equation}
where $\Phi^{\mathrm{odd}}:\Lambda^{\mathrm{odd}}(T_{\rr}^*S)\rightarrow \Lambda^{\mathrm{odd}}(T_{\rr}^*S)$ sends $\alpha$ to $(2i\pi)^{-(\mathrm{deg}\alpha-1)/2}\alpha$. From  (\ref{dkkprime}), (\ref{pivt}), (\ref{tchi tend to}), we see that the limit
$ \lim_{t\rightarrow 0}(t\chi_{v,t}^{2\prime})=\mathcal{O}(e^{-1/v})$ exists. Then we have the following analogue of (\ref{exp transgression mu2}),
\begin{equation}
    |z|^r\big|t\chi_{v,t}^{2\prime}-\lim_{t\rightarrow 0}(t\chi_{v,t}^{2\prime})\big|\leq C\Big|\frac{v}{z^2}\Big|^p\Bigl(\frac{t}{v}\Bigr)^{\gamma},\qquad 0<t\leq v\leq 1.
\end{equation}
Thus we obtain (\ref{h2,t01v01}) when $h_{v,t}=\chi_{v,t}^{2\prime}$. The other cases are similar, as long as  we notice that the following limits exist and decay exponentially as $v\rightarrow 0$,
\begin{equation}
    \lim_{t\rightarrow 0}(t\chi_{v,t}^{2,\prime/\prime\prime}),\kongge\kongge  \lim_{t\rightarrow 0}(t\rho_{v,t}^{2,\prime/\prime\prime}),\kongge\kongge  \lim_{t\rightarrow 0}(t^2\sigma_{v,t}^{\prime/\prime\prime}).
\end{equation}

The proof of (\ref{psi,t01v01}) is  easier, once we observe that $t\psi_{v,t}$ has totally the same structure as the supertrace in (\ref{exp transgression mu2}).
\section{The analysis of the two parameter operator $\exp(-\lk-\ctvt^2)$}\label{el}

The aim of this section is to  prove Theorems \ref{main lemma 2}, \ref{thm h,vt}. The main point is to study the asymptotic behavior of $\exp(-\lk-\ctvt^2)$ as $t\to 0$, for a fixed $v\in [1,+\infty[\,$. Recall that in Section \ref{ek}, we split the proof into two parts with the terms $\mu_1$ and $\mu_2$ respectively. In this Section, we will continue to use this approach.  In particular, the terms with $\mu_2$ is relatively easy, whose estimates can be derived directly from Theorem \ref{thm mu2 to sum up}. To study the terms with $\mu_1$, we will use the techniques of Bismut and Goette in \cite[\S 9]{BismutGoette2000}. We will establish more detailed bounds on the norms of the relevant operators, these will be used in the proof of (\ref{h,vt}).

This section is organized as follows. In Section \ref{ep}, we split Theorem \ref{main lemma 2} into two parts and prove one of them. Then we translate the remaining part to a local question in a neighborhood of $X_{g,K}$. In Sections \ref{sec A change of coordinates and a Getzler rescaling: a first simple approach}, \ref{sec a new trivialization}, we describe the trivialization of \cite[\S 9.4, 9.5]{BismutGoette2000} near $X_{g,K}$. In Section \ref{sec A proof of Theorem main lemma 2}, we complete the proof of  Theorem \ref{main lemma 2}. Finally in Section \ref{er}, we prove Theorem \ref{thm h,vt}. 

In this section, we use the notations in Sections \ref{dz}, \ref{ek}. In particular, we  fix $g\in G$, $ K_0\neq 0\in \mathfrak{z}(g)$, and take $K=zK_0$ with $z\neq 0 \in\rr$.

\subsection{Localization of the proof of  Theorem \ref{main lemma 2} to a neighborhood of $X_{g,K}$}\label{ep}
As in Section \ref{sec localization of problem mu1}, we split the proof of Theorem \ref{main lemma 2} into two parts. Recall that the operator $\bktv$ is defined in \eqref{eo}. Then Theorem \ref{main lemma 2}  is implied by the following result.

\begin{theorem}\label{es}
For $z\in \rr$ with $|z|\neq  0$ small enough, for $v\in [1,+\infty[\,$, as $t\rightarrow 0$,
\begin{equation}\label{main lemma 2 into 2}
    \begin{split}
       & \Phi\str\Big[i\frac{\mu_1}{tv}g\exp(-\mathcal{B}_{zK_{0},t,tv})\Big]=\widetilde{D}_{0,g,zK_{0}}^v+o(1),\\ 
       & \Phi\str\Big[i\frac{\mu_2}{tv}g\exp(-\mathcal{B}_{zK_{0},t,tv})\Big]-\Phi\overline{\partial}\xi_{tv,t}'-\Phi\partial\xi_{tv,t}''=\frac{\widetilde{D}_{-1,g,zK_{0}}}{tv}\\ &\qquad\qquad\qquad\qquad\qquad\qquad\qquad\qquad\qquad+E_{0,g,zK_{0}}'-E_{0,g,zK_{0}}+o(1).
    \end{split}
\end{equation}
\end{theorem}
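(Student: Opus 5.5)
The two identities in \eqref{main lemma 2 into 2} are handled separately, following the splitting $\product{\mu,K}=\mu_1+\mu_2$. The $\mu_2$-identity should follow from Theorem \ref{thm mu2 to sum up} and the small-$v$ expansion \eqref{expansion ev}, without new analysis. The $\mu_1$-identity needs a genuinely new local computation near $X_{g,K}$, in the spirit of \cite[\S 9]{BismutGoette2000}, producing the generalized supertrace of Section \ref{em}.

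\emph{The $\mu_2$ part.} Apply \eqref{mu2 to sum up} with $v$ replaced by $tv$, where $v\geq1$ is fixed and $t\to0$. Then $t/(tv)=1/v$ is fixed, so the error $C|tv/z^2|^r v^{-\gamma}$ is $\mathcal{O}(t^r)=o(1)$ for fixed $z$. This gives
\[
\Phi\str\Big[i\frac{\mu_2}{tv}g\exp(-\mathcal B_{K,t,tv})\Big]-\Phi\partial\xi'_{tv,t}-\Phi\dba\xi''_{tv,t}=-\widetilde e^2_{tv}+o(1).
\]
The mismatch between $\partial\xi'$ in \eqref{mu2 to sum up} and $\dba\xi'$ in \eqref{main lemma 2 into 2} is only a matter of bookkeeping of types (cf.\ \eqref{p,p-1,p+1}); I would check it directly. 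It then remains to expand $\widetilde e^2_{tv}$ as $tv\to0$. Since $\mu_2$ is locally constant near $W_K$, the localization argument in the proof of Proposition \ref{prop rho1} (via \eqref{bw} and \eqref{obo}) gives
\[
-\widetilde e^2_{s}=\frac{\widetilde D_{-1,g,K}}{s}+\int_{X_g}(-i\mu_2)\rho_1(\cdots)+\mathcal O(s),
\]
and the constant term lies in $P^{S,0}$. Identifying this constant with $E'_{0,g,K}-E_{0,g,K}$ uses the definition of $E'_{0}$ in \eqref{expansion ev}: there $E'_0$ collects the $\mu_1$ and $\mu_2$ contributions, and the $\mu_1$ one equals $E_0$ exactly by \eqref{obo}.

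\emph{The $\mu_1$ part: localization and rescaling.} By \eqref{I(bktv)}, whose proof works verbatim for $v$ replaced by $tv\geq t$, we may replace $\exp(-\mathcal B_{K,t,tv})$ by $\widetilde F_t(\mathcal B_{K,t,tv})$ and localize near $X_g$. Away from a neighborhood of $X_{g,K}$, the argument of Theorem \ref{thm outside uepsilon''} shows the contribution is $\mathcal O(t^\infty)$. The source of this smallness is the term $\frac{1}{2tv}|K^X|^2$ in \eqref{lichnerowicz avt}, which has size $\geq c/t$ off $X_K$. Near $X_{g,K}$, I would use coordinates $(y_0,Z_0,Z)$ with $Z_0\in N_{X_{g,K}/X_g}$, and rescale both $Z_0$ and $Z$ by $\sqrt t$. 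This is the Bismut--Goette trivialization of \cite[\S 9.4--9.5]{BismutGoette2000}. Getzler rescaling is applied only on $TX_{g,K}$, and the Clifford variables on $N_{X_{g,K}/X}$ are kept. One should then find that, as $t\to0$, the rescaled operator converges to the harmonic-oscillator operator $\mathcal L_K+\mathscr B_{K,v}^2$ of \eqref{lichnerowicz scrbku}, with $B=W_{g,K}$ and $M=TX|_{W_{g,K}}$. In the same limit, $\mu_1/(tv)$ converges to $\widetilde\mu(K)/v$ by \eqref{def of tilde mu} and \eqref{proposition of mu1mu2}. Comparing with \eqref{lichnerowicz scrbku}, the factor $(1+\frac1v)$ arises from $\frac1t+\frac1{tv}$ after rescaling, and $\frac1{2v}|m(K)Z|^2$ from $\frac{1}{2tv}|K^X|^2$. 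Integrating over the normal fibers then produces the generalized supertrace of Definition \ref{bf}, tensored with $\ch_{ge^K}(\xi)$. By \eqref{i5}, this limit is exactly $\widetilde D^v_{0,g,K}$.

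\emph{Main obstacle.} The main obstacle is justifying the kernel convergence uniformly in the unbounded normal variable $Z_0\in N_{X_{g,K}/X_g}$. The limit operator is not trace class along $L$ (Theorem \ref{thm-transition invariance}). Decay in the $N$-directions comes from $|m(K)Z|^2$, while along $Z_0\in N_{X_{g,K}/X_g}$ one needs a weight like $(1+|zZ_0|)^{-p}$, as in Theorem \ref{thm uniform estimate ftlvzk3tv}. I would first try to reproduce the resolvent estimates of Sections 6.8--6.11 with norms weighted by $|K^X|(\sqrt t Z)/\sqrt{tv}$, as in \eqref{sobolev norm tvnk}. This gives Gaussian bounds uniform in $t$, which are integrable against $\mu_1/(tv)=\mathcal O(|Z|^2)$. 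Dominated convergence then yields the $o(1)$ statement.
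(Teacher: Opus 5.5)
Your proposal is correct and follows the paper's proof essentially step by step. For the $\mu_2$ identity you use Theorem \ref{thm mu2 to sum up} with $v$ replaced by $tv$ together with the expansion \eqref{expansion ev2}, where the $\mu_1$-part of $E'_{0,g,K}$ equals $E_{0,g,K}$ exactly by \eqref{obo}. For the $\mu_1$ identity you use \eqref{I(bktv)}, the off-$X_{g,K}$ bound \eqref{estimate to main lemma 2 mu2}, the Bismut--Goette trivialization with Getzler rescaling only along $TX_{g,K}$, the limit operator \eqref{lyk30v} matching \eqref{lichnerowicz scrbku}, and kernel estimates weighted by $(1+|zZ_0/\sqrt{v}|)^{-m}$ as in Theorems \ref{thm uniform estimate ftlvzk3tv tv} and \ref{i1}. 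The only difference is that the paper proves the quantitative bound $Cv^rt^{\gamma}$ (reused for Theorem \ref{thm h,vt}) rather than only $o(1)$ by dominated convergence. Your remark that $\partial\xi'$ in \eqref{mu2 to sum up} should read $\dba\xi'$ is consistent with \eqref{calculation derivative t} and \eqref{2,p,p-1,p+1}.
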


The second identity in (\ref{main lemma 2 into 2}) is relatively easy. Recall that we split $\widetilde{e}_v$ in    into $\widetilde{e}_v^1+\widetilde{e}_v^2$ in \eqref{en}, by using \eqref{aymptopic-bt}, \eqref{obo},  we see that $\widetilde{e}_v^2$ has an asymptotic  expansion similar to (\ref{expansion ev}), as $v\rightarrow 0$,
\begin{equation}\label{expansion ev2}
    \widetilde{e}_v^2=-\frac{\widetilde{D}_{-1,g,zK_{0}}}{v}-E_{0,g,zK_{0}}'+E_{0,g,zK_{0}}+\mathcal{O}(v).
\end{equation}
The second identity in (\ref{main lemma 2 into 2}) follows  from Theorem \ref{thm mu2 to sum up} and (\ref{expansion ev2}).

Now we deal with the first identity of (\ref{main lemma 2 into 2}). From (\ref{I(bktv)}), we see that we can replace $\exp(-\bkttv)$ by the operator $\ft(\bkttv)$. Since  $\ft(\bkttv)(g^{-1}x,x)$ vanishes for $x\notin \mathcal{U}_{\varepsilon_0}$, our problem is now localized in a neighborhood of $X_g$.

Recall that $\mathcal{U}_{\varepsilon_0}''$ in Section \ref{eg} is a neighborhood of $X_{g,K}$ in $X_g$. From (\ref{str getzler rescaling v}) and (\ref{away from xgk two estimate 1}), we find that for $z\in\rr$, $0<|z|\leq \beta$, for any $m\in\nn$, there exist $C>0$ such that if $t\in \,]0,1]$, $v\in [1,1/t]$, 
\begin{equation}\label{estimate to main lemma 2 mu2}
    \begin{split}
    \Big|\int_{x\in X_g\backslash \mathcal{U}_{\varepsilon_0}''}\int_{Z\in N_{X_g/X,\rr,x}\atop |Z|\leq \varepsilon_0}\str\Big[i\frac{\mu_1}{tv}g\ft(\mathcal{B}_{zK_{0},t,tv})(g^{-1}(x,Z),(x,Z))\Big]\qquad\qquad\\ \cdot k(x,Z)\frac{dv_{N_{X_g/X}(Z)}}{(2\pi)^{\mathrm{dim}N_{X_g/X}}}\frac{dv_{X_g}}{(2\pi)^{\mathrm{dim}X_g}}\Big|\leq C(vt)^m.
    \end{split}
\end{equation}

It's clear that our problem is now localized in a neighborhood of $X_{g,K}$. By possibly shrinking $\varepsilon_0$, we may assume that if $\varepsilon\in \,]0,16\varepsilon_0]$, the map $(y_0,Z)\in N_{X_{g,K}/X,\rr}\rightarrow \exp_{y_0}^X(Z)\in X$ is an isomorphism from the $\varepsilon$-neighborhood $\overline{\mathcal{U}}_{\varepsilon}$ of $X_{g,K}$ in $N_{X_{g,K}/X}$ onto the tubular neighborhood $\overline{\mathcal{V}}_{\varepsilon}$ of $X_{g,K}$ in $X$.

From (\ref{lichnerowicz avt}), we have
\begin{equation}\label{lichnerowicz bkttv}
    \begin{split}
        \bkttv=-\frac{t}{2}\Big(\nabla_{e_i}^{\Lambda(T^{*(0,1)}X)\otimes \xi}+\frac{1}{\sqrt{2t}}\product{S(e_i)e_j, f_p^H}c(e_j)f^p\wedge\qquad\qquad\\
         +\frac{1}{2t}\product{S(e_i)f_p^H,f_q^H}f^p\wedge f^q\wedge -\frac{\product{K^X,e_i}}{2t}\Bigl(1+\frac{1}{v}\Bigr)\Big)^2\\ 
        +\frac{t}{8}H+\frac{t}{4}\Bigl(\bigl(R^{\xi}+\frac{1}{2}\trace[R^{TX}]\bigr)(e_i,e_j)-\frac{1}{vt}\product{\nabla_{e_i}^{TX}K^X,e_j}\Bigr)c(e_i)c(e_j)\\ 
        +\sqrt{\frac{t}{2}}\Bigl(\bigl(R^{\xi}+\frac{1}{2}\trace[R^{TX}]\bigr)(e_i,f_p^H)-\frac{1}{vt}\product{T(e_i,f_p^H),K^X}\Bigr)c(e_i)f^p\\ 
        +\frac{1}{2}\Bigl(\bigl(R^{\xi}+\frac{1}{2}\trace[R^{TX}]\bigr)(f_p^H,f_q^H)-\frac{1}{2vt}\product{T(f_p^H,f_q^H),K^X}\Bigr)f^p\wedge f^q\wedge\\ 
        -m^{\xi}(K)-\frac{1}{2}\trace[m^{TX}(K)]+\frac{1}{2vt}|K^X|^2.
    \end{split}
\end{equation}

\begin{definition}
    For $t>0$, put
    \begin{equation}\label{5nablat}
        \begin{split}
        ^5\nabla^{\mathbb{E},t}=\nabla^{\Lambda(T^{*(0,1)}X)\otimes \xi}+\frac{1}{\sqrt{2t}}\product{S(\cdot)e_j, f_p^H}c(e_j)f^p\wedge\qquad\qquad\qquad\\
        +\frac{1}{2t}\product{S(\cdot)f_p^H,f_q^H}f^p\wedge f^q\wedge -\frac{K^{'X}}{2t}\Bigl(1+\frac{1}{v}\Bigr).
        \end{split}
    \end{equation}
\end{definition}
Let $A,B$ be smooth sections of $T_{\rr}X$. By (\ref{5nablat}),
\begin{equation}\label{5nablat c}
    ^5\nabla^{\mathbb{E},t}_A c(B)=c(\nabla_A^{TX}B)+\sqrt{\frac{2}{t}}\product{S(A)B,f_p^H}f^p\wedge.
\end{equation}

\subsection{A change of coordinates and a Getzler rescaling: a first simple approach}\label{sec A change of coordinates and a Getzler rescaling: a first simple approach}

Here we follow the strategy of \cite[\S 9.4]{BismutGoette2000} and \cite[\S 6.10]{LiuMa2022}.

Take $y_0\in X_{g,K}$. If $Z\in (T_{\rr}X)_{y_0}$, $|Z|\leq 4\varepsilon_0$, we identify $Z$ with $\exp_{y_0}^X(Z)\in X$, and we identify $(\pi^*\Lambda(T_{\rr}^*S)\,\widehat{\otimes }\,\Lambda(T^{*(0,1)}X)\otimes \xi )_{Z}$ with $(\pi^*\Lambda(T_{\rr}^*S)\,\widehat{\otimes }\,\Lambda(T^{*(0,1)}X)\otimes \xi )_{y_0}$ by parallel transport  with respect to the connection $^5\nabla^{\mathbb{E},t}$ along the curve $s\in [0,1]\rightarrow sZ$.

Recall that $\rho$ is taken as a cut-off function in \eqref{du}, put
\begin{equation}\label{ly0k1tv}
    L_{y_0,K}^{1,(t,v)}=(1-\rho^2(Z))(-\frac{t}{2}\Delta^{TX})+\rho^2(Z)\bkttv.
\end{equation}
We still define $H_t$ as in (\ref{def of ht}). Let $L_{y_0,K}^{2,(t,v)}$ be obtained from $L_{y_0,K}^{1,(t,v)}$ as in (\ref{lxk2t}).

Let $(e_1,...,e_{2\ell})$ be an orthonormal oriented basis of $(T_{\rr}X)_x$ taken as before Definition \ref{def lzk3tv}. 
\begin{definition}\label{et}
    Let $L_{y_0,K}^{3,(t,v)}$ be the differential operator  obtained from $L_{Z_0,K}^{2,(t,v)}$ by replacing $c(e_j)$ by $c_t(e_j)$ for $1\leq j\leq 2\ell''$,  while leaving unchanged for $2\ell''+1\leq j\leq 2\ell$.
\end{definition}

Let $\jmath':W_{g,K}\rightarrow W$ be the obvious embedding. Put
\begin{equation}\label{lyk30v}
    \begin{split}
       L_{y_0,K}^{3,(0,v)}= -\frac{1}{2}\Bigl(\nabla_{e_i}+\frac{1}{2}\big\langle\big(\jmath^{\prime *}R^{TX}-\bigl(1+\frac{1}{v}\bigr)m^{TX}(K)\big)Z,e_i\big\rangle\Bigr)^2\qquad\qquad\\+\jmath^{\prime *}R_{y_0}^{\xi}-m^{\xi}(K)_{y_0}
        +\frac{1}{2}\trace[\jmath^{\prime *}R^{TX}-m^{TX}(K)]_{y_0}+\frac{1}{2v}\big|m_{y_0}^{TX}(K)Z\big|^2\quad\\ 
        -\frac{1}{4v}\product{m^{TX}(K)e_j,e_k}_{y_0}c(e_j)c(e_k)+\frac{1}{2v}\product{\jmath^{\prime *} R^{TX}m^{TX}(K)Z,Z}_{y_0}.
    \end{split}
\end{equation}

\begin{theorem}\label{thm limit of lyk3tv}
    For $v\geq 1$, as $t\rightarrow 0$,
    \begin{equation}\label{limit of lyk3tv}
        L_{y_0,K}^{3,(t,v)}\rightarrow L_{y_0,K}^{3,(0,v)}.
    \end{equation}
\end{theorem}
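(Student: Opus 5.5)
We prove Theorem \ref{thm limit of lyk3tv} the way \eqref{lxk3t expansion} was obtained in Theorem \ref{thm 1xk3t expansion}, and as in \cite[Theorem 9.14]{BismutGoette2000} and \cite[\S 6.10]{LiuMa2022}. We start from the Lichnerowicz formula \eqref{lichnerowicz bkttv} for $\bkttv$, written in the trivialization given by $^5\nabla^{\mathbb{E},t}$ along the radial curves from $y_0\in X_{g,K}$. We then conjugate by $H_t$ and apply the Getzler rescaling of Definition \ref{et}. The claim is that the rescaled operator converges term by term, in the sense that its coefficients converge together with their derivatives, uniformly on compact sets in $Z$.

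\textbf{Connection part.} After rescaling, $\nabla'_{e_i}=[^5\nabla^{\mathbb{E},t}_{e_i}]^{(3)}_t$ is expanded as in \eqref{explicit nabla ei'}. The curvature of $^5\nabla^{\mathbb{E},t}$ is computed as in \eqref{curvature 1nablat}. Compared with $^1\nabla^{\mathbb{E},t}$ there is one new contribution, $-\frac{1}{2t}(1+\frac{1}{v})\,dK'^X$, coming from the factor $K'^X(1+1/v)/2t$. At $y_0\in W_K$ we have $K^X(y_0)=0$ and $\nabla K^X=m^{TX}(K)$ (up to sign convention, cf.\ \cite[Lemma 7.15]{BGV}). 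Hence $\frac{1}{t}K'^X(\sqrt{t}Z)=\langle m^{TX}(K)Z,\cdot\rangle+\mathcal{O}(\sqrt{t}|Z|^2)$. So in radial gauge the $K$-dependent part of the connection tends to $-\frac{1}{4}(1+\frac{1}{v})\langle m^{TX}(K)Z,e_i\rangle$. The curvature terms with $c(e_k)c(e_l)$, $k,l\le 2\ell''$, and with $f^p$ tend to $\frac{1}{4}\langle \jmath'^*R^{TX}Z,e_i\rangle$, exactly as in the $K=0$ case \cite[Theorem 4.14]{B1986}. Together these give the square $-\frac{1}{2}(\nabla_{e_i}+\frac{1}{2}\langle(\jmath'^*R^{TX}-(1+\frac{1}{v})m^{TX}(K))Z,e_i\rangle)^2$.

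\textbf{Potential terms.} We rescale each of the remaining terms in \eqref{lichnerowicz bkttv}.
\begin{itemize}
\item The term $\frac{1}{2vt}|K^X|^2(\sqrt{t}Z)$ tends to $\frac{1}{2v}|m^{TX}(K)Z|^2$ by \eqref{expansion of K}.
\item The term $-\frac{1}{4v}\langle\nabla_{e_i}K^X,e_j\rangle c(e_i)c(e_j)$ carries no $t$ factor after combining $\frac{t}{4}\cdot\frac{1}{vt}$. Since $\nabla K^X=m^{TX}(K)$ preserves $TX_{g,K}$ and is zero there, only normal Clifford variables $e_j$, $j>2\ell''$, appear at $y_0$; these are not rescaled. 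So the term tends to $-\frac{1}{4v}\langle m^{TX}(K)e_j,e_k\rangle c(e_j)c(e_k)$. The $\mathcal{O}(\sqrt t|Z|)$ corrections are multiplied by at most $t^{-1/2}$ coming from one rescaled $c_t$. We must check that they still vanish; they do, because $\nabla K^X$ has no tangential–tangential component at first order, since $W_{g,K}$ is totally geodesic.
\item The $f^p$ terms $-\frac{1}{v}\sqrt{t/2}\cdot\frac{1}{t}\langle T(e_i,f_p^H),K^X\rangle c(e_i)f^p$ and $-\frac{1}{4vt}\langle T(f_p^H,f_q^H),K^X\rangle f^pf^q$ must be combined with the cross terms produced by the $\frac{1}{\sqrt{2t}}\langle S(e_i)e_j,f_p^H\rangle$ pieces of the connection. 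By Proposition \ref{prop T}, \eqref{S and T} and $\nabla_{f_p^H}K^X$ relations, these should assemble, after $K^X(\sqrt tZ)=\sqrt t\,m^{TX}(K)Z+\cdots$, into $\frac{1}{2v}\langle\jmath'^*R^{TX}m^{TX}(K)Z,Z\rangle$. This uses the equivariant Bianchi identity \eqref{dmoment map}, applied in the horizontal directions on the total space $W$.
\item The $\xi$-curvature, trace and moment terms tend to the stated limits exactly as in \eqref{def of lxk30}.
\end{itemize}

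\textbf{Main obstacle.} The delicate step is the last bullet of the potential terms: showing that the singular $1/(vt)$ horizontal terms involving $T$ and $K^X$ cancel against the $S$-part of the connection, leaving only $\frac{1}{2v}\langle \jmath'^*R^{TX}m^{TX}(K)Z,Z\rangle$. We would first verify this on the model where $W$ is the total space of $TX|_{W_{g,K}}$. There Proposition \ref{9r}, with $\Omega$ identified with $K$, and the vector-bundle formula \eqref{lichnerowicz scrbku} show that the limit must have the structure of $\lk+\mathscr{B}_{K,v}^2$. Comparing with \eqref{lichnerowicz scrbku} then identifies each coefficient of $L^{3,(0,v)}_{y_0,K}$.

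Finally, the cut-off $\rho^2(\sqrt tZ)$ tends to $1$ on compact sets, so the $\Delta^{TX}$ term in \eqref{ly0k1tv} disappears in the limit. This gives \eqref{limit of lyk3tv}.
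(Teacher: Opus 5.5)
There is a genuine error in how you handle the $K^X$-dependent potential terms. In your second bullet you keep only the normal--normal block of $-\frac{1}{4v}\langle\nabla_{e_i}K^X,e_j\rangle c(e_i)c(e_j)$, on the grounds that the tangential corrections meet at most $t^{-1/2}$ and vanish to first order. The first-order vanishing does hold, though the reason is $\nabla^{TX}m^{TX}(K)=-i_{K^X}R^{TX}=0$ on $W_K$, by \eqref{dmoment map}. But for $i,j\leq 2\ell''$ \emph{both} Clifford variables are rescaled, so $c_t(e_i)c_t(e_j)=\frac{2}{t}e^i\wedge e^j+\mathcal{O}(1)$. The second-order part of the coefficient then survives: by \eqref{expansion T 1},
\[
\langle\nabla_{\tau e_i}K^X,\tau e_j\rangle_{\sqrt{t}Z}=-\frac{t}{2}\langle R^{TX}_{y_0}(e_i,e_j)m^{TX}(K)Z,Z\rangle+\mathcal{O}(t^{3/2}).
\]
This gives a generally nonzero $\mathcal{O}(1)$ limit, namely the $e^i\wedge e^j$ ($i,j\leq 2\ell''$) component of $\frac{1}{2v}\langle\jmath^{\prime *}R^{TX}m^{TX}(K)Z,Z\rangle$. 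Your third bullet therefore cannot deliver that whole term. The $T(e_i,f_p^H)$- and $T(f_p^H,f_q^H)$-terms, and the $S$-corrections to the Clifford variables, all carry a factor $f^p$, so they only produce components of $\jmath^{\prime *}R^{TX}$ involving horizontal directions. When $S$ is a point (the Bismut--Goette case), your argument would lose $\frac{1}{2v}\langle R^{TX}m^{TX}(K)Z,Z\rangle$ altogether.

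The missing idea is the one the paper uses. By \eqref{dk'}, the three $K^X$-dependent potential terms in \eqref{lichnerowicz bkttv} are exactly $-\frac{1}{2vt}\,dK^{\prime X}$ with $e^i$ replaced by $\sqrt{t/2}\,c(e_i)$. Their rescaled limit is therefore governed by \eqref{expansion of dk}, i.e.\ by $d\theta_0$. This treats the directions of $X_{g,K}$ and the horizontal directions uniformly as directions tangent to $W_{g,K}$, and gives \eqref{expansion T 1}--\eqref{expansion T 2} at once:
\begin{itemize}
\item on two normal directions one gets $2\langle m^{TX}(K)\cdot,\cdot\rangle$;
\item on two directions tangent to $W_{g,K}$, vertical or horizontal, one gets $-\langle R^{TX}(\cdot,\cdot)m^{TX}(K)Z,Z\rangle$;
\item mixed components drop out.
\end{itemize}
In particular the horizontal $T$-terms are not singular, since their first-order parts already vanish, so there is no singular cancellation to establish. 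The tensor $S$ enters only through the expression \eqref{5 ctauej} of $c(\tau e_j)$ in the $^5\nabla^{\mathbb{E},t}$-trivialization, and is handled with \eqref{S and T}. This yields \eqref{explicit 53t}, and the remaining terms converge as in \eqref{limit to lxk0v}.

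Your fallback of reading off coefficients from the model formula \eqref{lichnerowicz scrbku} is not a substitute. It presupposes that the limit is the model operator, which is what the theorem asserts. Also, Proposition \ref{9r} concerns the extended fibration $\widetilde{W}\to B\times S$, not the local limit at $y_0$. (Minor: your connection coefficients $\frac{1}{4}$ are off by a factor $2$ from the square you then write down.)
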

\begin{proof}
    For $A\in\bigl(\pi^*\Lambda(T_{\rr}^*S)\,\widehat{\otimes }\, c(TX)\otimes \ENDO(\xi) \bigr)_{y_0}\otimes \mathrm{Op}_{y_0}$, we denote by $[A]_t^{(3)}$ the differential operator obtained from $A$ by using the Getzler rescaling in Definition \ref{et}.
Let $\tau e_j(Z)$ be the parallel transport of $e_j$ along the curve $t\in [0,1]\rightarrow tZ$ with respect to the connection $\nabla^{TX}$. By (\ref{5nablat c}), in the trivialization associated with $ ^5\nabla^{\mathbb{E},t}$, we have
\begin{equation}\label{5 ctauej}
    c(\tau e_j(Z))=c(e_j)+\sqrt{\frac{2}{t}}\product{S(Z)e_j,f_p^H}f^p+\mathcal{O}(t^{-1/2}|Z|^2).
\end{equation}
From (\ref{ctauej}), for $1\leq j\leq 2\ell''$,
\begin{equation}\label{5 ct explicit 1}
    \Big[\sqrt{t}c_t(\tau e_j(\sqrt{t}Z))\Big]_t^{(3)}=\sqrt{2}c(e_j)+\mathcal{O}(\sqrt{t}|Z|^2),
\end{equation}
for $2\ell''+1\leq j\leq 2\ell$,
\begin{equation}\label{5 ct explicit 2}
    \Big[c(\tau e_j(\sqrt{t}Z))\Big]_t^{(3)}=c(e_j)+\sqrt{{2}}(\product{S(Z)e_j,f_p^H}f^p+\mathcal{O}(\sqrt{t}|Z|^2).
\end{equation}

From \cite[(3.3)-(3.5)]{LiuMa2022},
\begin{equation}\label{dk'}
    \begin{split}
    dK^{\prime X}=\product{\nabla_{e_i}^{TX}K^X,e_j}e^i\wedge e^j+2\product{T(e_i,f_p^H),K^X}e^i\wedge f^p\qquad\\ 
    +\frac{1}{2}\product{T(f_p^H,f_q^H),K^X}f^p\wedge f^q.
    \end{split}
\end{equation}
From (\ref{expansion of dk}), (\ref{dk'}), we see that if $1\leq i,j\leq 2\ell''$,
\begin{equation}\label{expansion T 1}
    \begin{split}
       & \product{\nabla_{\tau e_i}^{TX}K^X,\tau e_j}_Z=-\frac{1}{2}\product{R^{TX}_{y_0}(e_i,e_j)(m^{TX}(K)_{y_0}Z),Z}+\mathcal{O}(|Z|^3),\\
        &\product{T(\tau e_i,f_p^H),K^X}_Z=-\frac{1}{2}\product{R^{TX}_{y_0}(e_i,f_p^H)(m^{TX}(K)_{y_0}Z),Z}+\mathcal{O}(|Z|^3),\\ 
      &  \product{T(f_p^H,f_q^H),K^X}_Z=-\product{R^{TX}_{y_0}(f_p^H,f_q^H)(m^{TX}(K)_{y_0}Z),Z}+\mathcal{O}(|Z|^3),
    \end{split}
\end{equation}
 and if $2\ell''+1\leq i\leq 2\ell$,
 \begin{equation}\label{expansion T 2}
    \begin{split}
        & \product{\nabla_{\tau e_i}^{TX}K^X,\tau e_j}_Z=\product{m^{TX}(K)_{y_0}e_i,e_j}+\mathcal{O}(|Z|),\\
        &\product{T(\tau e_i,f_p^H),K^X}_Z=\mathcal{O}(|Z|^2).
    \end{split}
 \end{equation}

By using (\ref{S and T}) and (\ref{5 ct explicit 1})-(\ref{expansion T 2}), we find that as $t\rightarrow 0$,
 \begin{equation}\label{explicit 53t}
    \begin{split}
        \frac{1}{4}  \Big[\product{\nabla_{\tau e_i}^{TX}K^X,\tau e_j}c(\tau e_i)c(\tau e_j)(\sqrt{t}Z)\Big]_t^{(3)}+\frac{1}{\sqrt{2t}}\Big[\product{T(\tau e_i,f_p^H),K^X}\qquad\qquad\\ \cdot c(\tau e_i)f^p(\sqrt{t}Z)\Big]_t^{(3)}
        +\frac{1}{4t}\Big[\product{T(f_p^H,f_q^H),K^X}f^p\wedge f^q(\sqrt{t}Z)\Big]_t^{(3)}
       \\  \rightarrow \frac{1}{4}\sum_{2\ell''+1\leq i,j\leq 2\ell}\product{m^{TX}(K)e_i,e_j}c(e_i)c(e_j)
        -\frac{1}{2}\product{\jmath^{\prime *}R^{TX}m^{TX}(K)Z,Z}_{y_0}.
    \end{split}
 \end{equation}

Now from  (\ref{limit to lxk0v}), (\ref{lichnerowicz bkttv}) and (\ref{explicit 53t}), we complete the proof of Theorem \ref{thm limit of lyk3tv}.
\end{proof}

\subsection{A new trivialization}\label{sec a new trivialization}
Now we introduce another trivialization as in \cite[\S 9.5]{BismutGoette2000} 
to make the estimates easier. 
Recall that in Section \ref{sec A new trivialization and Getzler rescaling}, we use the notation $Z_0+Z$ to denote $\exp_{Z_0}^X Z$.

Take $Z_0\in N_{X_{g,K}/X_g,\rr,y_0}$, $|Z_0|\leq \varepsilon_0$. If $Z\in (T_{\rr}X)_{y_0}$, $|Z|\leq 4\varepsilon_0$, we identify $(\pi^*\Lambda(T_{\rr}^*S)\,\widehat{\otimes }$ $\Lambda(T^{*(0,1)}X)\otimes \xi)_{Z_0+Z}$ with $(\pi^*\Lambda(T_{\rr}^*S)\,\widehat{\otimes }\,\Lambda(T^{*(0,1)}X)\otimes \xi )_{Z_0}$ by parallel transport  along the curve $s\in [0,1]\rightarrow Z_0+sZ$ with respect to the connection $^5\nabla^{\mathbb{E},t}$. Also we identify $(\pi^*\Lambda(T_{\rr}^*S)\,\widehat{\otimes }\,\Lambda(T^{*(0,1)}X)\otimes \xi)_{Z_0}$ with $(\pi^*\Lambda(T_{\rr}^*S)\,\widehat{\otimes }\,\Lambda(T^{*(0,1)}X)\otimes \xi )_{y_0}$ by parallel transport  along the curve $s\in [0,1]\rightarrow sZ_0$ with respect to the connection $\nabla^{\Lambda(T^{*(0,1)}X)\otimes \xi}$.

We may and we will assume that $\varepsilon_0$ is small enough so that if $|Z_0|\leq \varepsilon_0$, $|Z|\leq 4\varepsilon_0$,
\begin{equation}
    \frac{1}{2}g_{y_0}^{TX}\leq g_{Z_0+Z}^{TX}\leq \frac{3}{2}g^{TX}_{y_0}.
\end{equation}
Let $L_{Z_0,K}^{\prime 1,(t,v)}$ be the operator 
\begin{equation}
    L_{y_0,Z_0,K}^{\prime 1,(t,v)}=(1-\rho^2(Z))(-\frac{t}{2}\Delta^{TX})+\rho^2(Z)(\bkttv)_{Z_0}.
\end{equation}

We still obtain $L_{y_0,Z_0,K}^{\prime 2,(t,v)}$, $L_{y_0,Z_0,K}^{\prime 3,(t,v)}$ from $L_{Z_0,K}^{\prime 1,(t,v)}$ as in Section \ref{sec A change of coordinates and a Getzler rescaling: a first simple approach}. As in (\ref{str getzler rescaling v}), if $Z_0\in N_{X_{g,K}/X_g,\rr,y_0}$, $|Z_0|\leq \varepsilon_0$, $Z\in N_{X_g/X,\rr,y_0}$, $ |Z|\leq \varepsilon_0/\sqrt{t}$,
\begin{equation}\label{str getzler rescaling tv}
    \begin{split}
    t^{\mathrm{dim}N_{X_{g,K}/X}}\str\Big[i\frac{\mu_1}{tv}g\ft(\bkttv)\bigl(g^{-1}(Z_0,\sqrt{t}Z),(Z_0,\sqrt{t}Z)\bigr)\Big] k_{(y_0,Z_0)}'(\sqrt{t}Z)\\ 
        =(-i)^{\mathrm{dim}X_{g,K}}\str^{\Lambda(N_{X_{g,K}/X}^{*(0,1)})}\Big[i\frac{\mu_1(Z_0,\sqrt{t}Z)}{tv}g\ft(L_{y_0,Z_0,K}^{\prime 3,(t,v)})(g^{-1}Z,Z)\Big]^{\mathrm{max}}.
    \end{split}
\end{equation}
\begin{definition}
    Let $L_{y_0,Z_0,K}^{\prime 3,(0,v)}$ be the operator
    \begin{equation}
        \begin{split}
            L_{y_0,Z_0,K}^{\prime 3,(0,v)}= -\frac{1}{2}\Bigl(\nabla_{e_i}+\frac{1}{2}\big\langle\big(\jmath^{\prime *}R^{TX}-\bigl(1+\frac{1}{v}\bigr)m^{TX}(K)\big)Z,e_i\big\rangle\Bigr)^2\qquad\qquad\qquad\\+\jmath^{\prime *}R_{y_0}^{\xi}-m^{\xi}(K)_{y_0}
            +\frac{1}{2}\trace[\jmath^{\prime *}R^{TX}-m^{TX}(K)]_{y_0}+\frac{1}{2v}\big|m_{y_0}^{TX}(K)(Z+Z_0)\big|^2\\ 
            -\frac{1}{4v}\product{m^{TX}(K)e_j,e_k}_{y_0}c(e_j)c(e_k)+\frac{1}{2v}\product{\jmath^{\prime *} R^{TX}m^{TX}(K)Z+Z_0,Z+Z_0}_{y_0}.
        \end{split}
    \end{equation}
\end{definition}
From (\ref{lyk30v}), we observe that
\begin{equation}\label{two l0 conjugation}
    \begin{split}
        L_{y_0,Z_0,K}^{\prime 3,(0,v)}=\exp\Big(\frac{1}{2}\big\langle\big(\jmath^{\prime *}R^{TX}-\bigl(1+\frac{1}{v}\bigr)m^{TX}(K)\big)Z,e_i\big\rangle\Big)\Big[L_{y_0,K}^{3,(0,v)}\Big]_{Z+Z_0}\\ 
        \cdot\exp\Big(-\frac{1}{2}\big\langle\big(\jmath^{\prime *}R^{TX}-\bigl(1+\frac{1}{v}\bigr)m^{TX}(K)\big)Z,e_i\big\rangle\Big).
    \end{split}
\end{equation}

The following result follows from the same arguments as the proof of Theorem \ref{thm limit of lyk3tv}.
\begin{theorem}\label{thm limit of lzkprime3tv}
    For $v\geq 1$, as $t\rightarrow 0$,
    \begin{equation}\label{limit of lzkprime3tv}
        L_{y_0,\sqrt{t}Z_0,K}^{\prime 3,(t,v)}\rightarrow L_{y_0,Z_0,K}^{\prime 3,(0,v)}.
    \end{equation}
\end{theorem}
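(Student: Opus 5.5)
The convergence $L_{y_0,\sqrt{t}Z_0,K}^{\prime 3,(t,v)}\to L_{y_0,Z_0,K}^{\prime 3,(0,v)}$ should follow by redoing the computation of Theorem \ref{thm limit of lyk3tv}, but with every coefficient now evaluated at the shifted base point $\sqrt{t}Z_0+\sqrt{t}Z$ instead of $\sqrt{t}Z$. The first step is to write $L_{y_0,\sqrt{t}Z_0,K}^{\prime 3,(t,v)}$ explicitly via the Lichnerowicz formula \eqref{lichnerowicz bkttv}, conjugated by $H_t$ and Getzler-rescaled as in Definition \ref{et}, in the trivialization of Section \ref{sec a new trivialization}. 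That trivialization consists of parallel transport by $^5\nabla^{\mathbb{E},t}$ along $s\mapsto \sqrt{t}Z_0+s\sqrt{t}Z$, composed with $\nabla^{\Lambda(T^{*(0,1)}X)\otimes\xi}$-transport along $s\mapsto s\sqrt{t}Z_0$. I then sort the terms into three groups:
\begin{itemize}
\item the connection (kinetic) term;
\item the potential $\frac{1}{2vt}|K^X|^2$;
\item the Clifford/curvature terms involving $\nabla K^X$ and $T(\cdot,\cdot)$ paired with $K^X$.
\end{itemize}

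For the kinetic term, I compute the connection form of $^5\nabla^{\mathbb{E},t}$ in the new trivialization. In radial gauge centred at $Z_0$ its curvature equals the curvature \eqref{curvature 1nablat}, with $K^{\prime X}$ scaled by $(1+\frac1v)/t$. Because $X_{g,K}$ is totally geodesic and $m^{TX}(K)$ is parallel along $X_{g,K}$ to first order, the Taylor expansion of $K^{\prime X}/t$ at $\sqrt{t}(Z_0+Z)$ comes from \eqref{expansion of K}. Here $\frac1t\delta_t^*K^{\prime X}=\theta_0+\mathcal{O}(\sqrt t)$, evaluated at the point $Z+Z_0$.

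The $\nabla^{\Lambda}$-transport along $sZ_0$ is a gauge transformation. It differs from $^5\nabla$-transport by the factor
\[\exp\Big(\tfrac12\big\langle\big(\jmath^{\prime *}R^{TX}-(1+\tfrac1v)m^{TX}(K)\big)Z_0,Z\big\rangle\Big),\]
up to terms that vanish as $t\to0$. This factor is exactly the conjugation appearing in \eqref{two l0 conjugation}. The kinetic part therefore converges to the harmonic-oscillator term of $L^{\prime 3,(0,v)}_{y_0,Z_0,K}$.

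For the potential term, $\frac{1}{2vt}|K^X|^2(\sqrt t(Z_0+Z))\to\frac1{2v}|m^{TX}(K)(Z+Z_0)|^2$ by the second line of \eqref{expansion of K}. The Clifford and curvature terms are handled exactly as in \eqref{5 ct explicit 1}--\eqref{explicit 53t}, with the expansions \eqref{expansion T 1}, \eqref{expansion T 2} taken at $Z+Z_0$. This yields the limit $\frac1{2v}\langle \jmath^{\prime*}R^{TX}m^{TX}(K)(Z+Z_0),Z+Z_0\rangle$ and the Clifford term $-\frac1{4v}\langle m^{TX}(K)e_j,e_k\rangle c(e_j)c(e_k)$. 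The remaining terms, namely $R^\xi$, $\mathrm{Tr}[R^{TX}]$, $m^\xi(K)$ and $H$, converge to their values at $y_0$, since the base point $\sqrt{t}Z_0\to y_0$.

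The main obstacle is controlling the $1/t$-singular terms uniformly in $Z_0$, since the base point depends on $t$. The $K^{\prime X}/t$ connection term, and the $T(f_p^H,f_q^H)$ term with coefficient $1/(vt)$, must vanish to second order at $X_{g,K}$ in the combined variable $\sqrt t(Z_0+Z)$. I would verify this using the fact that $K^X$ vanishes on $W_K$ together with \eqref{S and T} and \eqref{dk'}. Convergence is meant coefficientwise, uniformly on compact sets in $(Z,Z_0)$, exactly as in \eqref{limit to lxk0v}.
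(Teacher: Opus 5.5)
Your plan matches the paper's proof, which is the single sentence that the argument of Theorem~\ref{thm limit of lyk3tv} carries over. That plan is to redo Theorem~\ref{thm limit of lyk3tv} at the base point $\sqrt tZ_0$, with coefficients evaluated at $\sqrt t(Z_0+Z)$. Your handling of the potential $\frac{1}{2vt}|K^X|^2$ and of the terms in \eqref{explicit 53t} is fine. The kinetic-term step, however, is wrong as stated.

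The $\nabla^{\Lambda(T^{*(0,1)}X)\otimes\xi}$-transport and the ${}^5\nabla^{\mathbb{E},t}$-transport along $s\mapsto s\sqrt tZ_0$ differ by a fixed endomorphism of the fibre at $y_0$. That endomorphism is independent of $Z$; its $K'^X$-part is even trivial, since $\langle K^X,\dot\gamma\rangle$ is constant along a geodesic starting at $y_0\in X_K$. So it cannot produce the $Z$-dependent factor $\exp\big(\frac12\langle(\jmath^{\prime *}R^{TX}-(1+\frac1v)m^{TX}(K))Z_0,Z\rangle\big)$. After rescaling, that factor is to leading order the holonomy of ${}^5\nabla^{\mathbb{E},t}$ around the geodesic triangle $y_0,\sqrt tZ_0,\sqrt t(Z_0+Z)$. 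It converts radial gauge centred at $y_0$ into radial gauge centred at $\sqrt tZ_0$. In the former, the rescaled connection form is $\frac12\langle(\jmath^{\prime *}R^{TX}-(1+\frac1v)m^{TX}(K))(Z+Z_0),e_i\rangle$, which is what you get by evaluating $\frac1t\delta_t^*K'^X=\theta_0+\mathcal{O}(\sqrt t)$ at $Z+Z_0$. You have conflated these two different gauge changes.

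No conjugation is needed at all. In the trivialization you describe, the operator is, as a function of $Z$, already in radial gauge centred at $\sqrt tZ_0$. The $Z_0$-step is only a constant identification of fibres, compatible with Clifford multiplication by $\nabla^{TX}$-parallel frames and free of the $t$-singular $S$-terms of ${}^5\nabla^{\mathbb{E},t}$. Hence the connection form is $\Gamma_i(Z)=\int_0^1 s\,\mathcal{R}_{\sqrt tZ_0+sZ}(Z,e_i)\,ds$. Here $\mathcal{R}$ is the curvature of ${}^5\nabla^{\mathbb{E},t}$, i.e.\ \eqref{curvature 1nablat} with $f^p$ replaced by $f^p/\sqrt t$ and $m^{TX}(K)$ by $(1+\frac1v)m^{TX}(K)/t$. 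After conjugation by $H_t$ and the Getzler rescaling, $\sqrt t\,\Gamma_i(\sqrt tZ)\to\frac12\langle(\jmath^{\prime *}R^{TX}-(1+\frac1v)m^{TX}(K))_{y_0}Z,e_i\rangle$. This is the computation of Theorem~\ref{thm limit of lyk3tv}, using that the base point $\sqrt tZ_0\to y_0$, and it gives directly the kinetic part of $L_{y_0,Z_0,K}^{\prime 3,(0,v)}$. The identity \eqref{two l0 conjugation} plays no role in this theorem; the paper uses it only afterwards, in \eqref{calculate l by scrkbv}.

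Your ``main obstacle'' paragraph also misplaces the second-order vanishing requirement. $K'^X$ vanishes only to first order on $X_K$, and nothing more is needed for the connection term: in radial gauge it enters only through $\frac1t dK'^X$, which the rescaling $Z\to\sqrt tZ$ already makes finite. Second-order vanishing, i.e.\ \eqref{expansion T 1}, is needed only for the zeroth-order terms that carry the factor $1/(vt)$ after Getzler rescaling. These are $\langle\nabla_{e_i}K^X,e_j\rangle$ and $\langle T(e_i,f_p^H),K^X\rangle$ with $i,j\leq 2\ell''$, and $\langle T(f_p^H,f_q^H),K^X\rangle$.
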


Let $L_{y_0,Z_0,K,n}^{\prime 3,(t,v)}$ be the operators obtained from $L_{y_0,\sqrt{t}Z_0,K}^{\prime 3,(t,v)}$ as in (\ref{lxkn3t}) and (\ref{lxkn3tv}). By proceeding as in (\ref{finite propagation n+p}), if $Z,Z'\in (T_{\rr}X)_{y_0}$, $|Z|\leq p$, then
\begin{equation}
    \widetilde{F}_{t,n}(L_{y_0,Z_0,K}^{3,t})(Z,Z')=\widetilde{F}_{t,n}(L_{y_0,Z_0,K,n+p}^{3,t})(Z,Z').
\end{equation}

Let  $\mathbf{I}_{(p,q),y_0}'$ be the  space of smooth sections of $\bigl( \pi^*\Lambda^q(T_{\rr}^*S)\,\widehat{\otimes }\,\Lambda^p(T_{\rr}^*X_{g,K})\otimes \Lambda(N_{X_{g,K}/X}^{*(0,1)}) \otimes \xi\bigr)_{y_0}$ on $(T_{\rr}X)_{y_0}$.
If $s\in \mathbf{I}_{(p,q),y_0}'$ has compact support, put 
\begin{equation}\label{sobolev norm tznk}
    \begin{split}
   & |s|_{t,Z_0,n,0}^2=\int_{(T_{\rr}X)_x}|s|^2\Bigl(1+(|Z|+|Z_0|)\rho\Big(\frac{\sqrt{t}Z}{2}\Big)\Bigr)^{2(2\ell''+2\mathrm{dim}S-p-q)}dv_{TX}(Z),\\ 
    &|s|_{t,Z_0,n,1}^2=|s|_{t,Z_0,n,0}^2+\sum_{1}^{2\ell}|\nabla_{e_i}s|_{t,Z_0,n,0}^2\\&\qquad\qquad\qquad\qquad\qquad+\frac{1}{tv}\big|\rho(\sqrt{t}Z)\gamma^2\Bigl(\frac{|Z|}{2(n+2)}\Bigr)|K^X|(\sqrt{t}Z_0+\sqrt{t}Z)s\big|_{t,Z_0,n,0}^2.
    \end{split}
\end{equation}
The corresponding Hermitian product associated to $|\cdot|_{t,Z_0,n,0}$ is denoted by $\product{\,,\,}_{t,Z_0,n,0}$. 
\begin{theorem}\label{thm elliptic estimate lykn3tv}
    There exist constants $C_i>0$, $i=1,2,3,4$, such that if $t\in \,]0,1]$, $v\in [1,+\infty[\,$, $n\in\nn$, $z\in\rr$, $0<|z|\leq 1$, if $n\in \nn$,  $y_0\in W_{g,K}$, $Z_0\in N_{X_{g,K}/X_g,\rr,y_0}$, $|Z_0|\leq \varepsilon_0/\sqrt{t}$,   if  $s,s'\in\mathbf{I}_{y_0}$ have compact support, then
    \begin{equation}\label{elliptic estimate lykn3tv}
        \begin{split}
            &\mathrm{Re}\product{L_{y_0,\sqrt{t}Z_0,zK_{0},n}^{\prime 3,(t,v)}s,s}_{t,Z_0,n,0}\geq C_1|s|_{t,Z_0,n,1}^2-C_2(1+|nz|^2)|s|_{t,Z_0,n,0}^2,\\ 
            &|\mathrm{Im}\product{L_{y_0,\sqrt{t}Z_0,zK_{0},n}^{\prime 3,(t,v)}s,s}_{t,Z_0,n,0}|\leq C_3\bigl((1+|nz|)|s|_{t,Z_0,n,1}|s|_{t,Z_0,n,0}+|nz|^2|s|_{t,Z_0,n,0}^2\bigr),\\ 
            &|\product{L_{y_0,\sqrt{t}Z_0,zK_{0},n}^{\prime 3,(t,v)}s,s'}_{t,Z_0,n,0}|\leq C_4(1+|nz|^2)|s|_{t,Z_0,n,1}|s'|_{t,Z_0,n,1}.
        \end{split}
    \end{equation}
\end{theorem}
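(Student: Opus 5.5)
The plan follows the proofs of Lemma \ref{lemma elliptic estimate lxk3t}, of (\ref{elliptic estimate lxkn3tv}) and of \cite[Theorem 9.14]{BismutGoette2000}. I would write $L_{y_0,\sqrt{t}Z_0,zK_0,n}^{\prime 3,(t,v)}$ through its Lichnerowicz form. This is (\ref{lichnerowicz bkttv}) in the trivialization given by $^5\nabla^{\mathbb{E},t}$ centered at $\sqrt{t}Z_0$, rescaled by $H_t$ and by the Getzler rescaling of Definition \ref{et}, and cut off by $\gamma^2(|Z|/2(n+2))$ as in (\ref{lxkn3t}). The operator then splits into four pieces.
\begin{itemize}
\item[(a)] A generalized Laplacian $-\frac12 g^{ij}(\nabla'_{e_i}\nabla'_{e_j}-\sqrt{t}\Gamma^k_{ij}\nabla'_{e_k})$, whose connection terms are controlled as in (\ref{explicit nabla ei'}).
\item[(b)] The nonnegative potential $\frac{1}{2tv}|K^X|^2(\sqrt{t}Z_0+\sqrt{t}Z)$.
\item[(c)] The zero-order Clifford and curvature terms involving $\frac{1}{tv}\langle\nabla K^X,\cdot\rangle$ and $\frac{1}{tv}\langle T(\cdot,\cdot),K^X\rangle$.
\item[(d)] Bounded terms such as $m^{\xi}(K)$, the scalar curvature and $R^{\xi}$.
\end{itemize}

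\textbf{Step 1 (real part).} Integrating by parts in $\langle\cdot,\cdot\rangle_{t,Z_0,n,0}$ turns piece (a) into $\sum_i|\nabla_{e_i}s|^2$ up to errors. These come from differentiating the weight $(1+(|Z|+|Z_0|)\rho(\sqrt{t}Z/2))^{2(\cdots)}$. Because $\rho$ is cut off, this derivative is bounded by a constant times the weight itself, so the errors are $\le \epsilon|s|_{t,Z_0,n,1}^2+C_\epsilon|s|^2_{t,Z_0,n,0}$. Piece (b) contributes exactly the extra term $\frac1{tv}|\rho\gamma^2|K^X|s|^2$ appearing in $|s|_{t,Z_0,n,1}$.

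The connection term $-\frac{\langle K^X,e_i\rangle}{2t}(1+\frac1v)$ is handled by Taylor expanding $K^X$ around $y_0$ as in (\ref{expansion of K}). This gives $\frac{1}{\sqrt t}K^X(\sqrt t(Z_0+Z))=m^{TX}(K)(Z_0+Z)+O(\sqrt t|Z_0+Z|^2)$. On the support of the cutoff, $|Z|\le 2(n+2)$ and $\sqrt t|Z_0|\le\varepsilon_0$, so this term is at most $C|z|(n+|Z_0|)$. Its square combines into the gauge structure of $L_{y_0,Z_0,K}^{\prime 3,(0,v)}$ (cf. (\ref{two l0 conjugation})), which is why the constants carry $(1+|nz|^2)$.

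\textbf{Step 2 (the singular terms, main obstacle).} The $\frac{1}{tv}$ terms of piece (c) blow up as $t\to0$. I would bound them as in the proof of (\ref{explicit 53t}), using (\ref{expansion T 1}) and (\ref{expansion T 2}).
\begin{itemize}
\item In the directions normal to $X_{g,K}$ (indices $2\ell''+1,\dots,2\ell$), $\langle\nabla_{e_i}K^X,e_j\rangle=\langle m^{TX}(K)e_i,e_j\rangle+O(|K^X|)$. The leading term is $O(|z|)$. The remainder, multiplied by $\frac1{tv}$ and Clifford elements, is absorbed via Cauchy--Schwarz into $\epsilon\frac1{tv}|K^Xs|^2+C|s|^2$.
\item In the Getzler-rescaled tangential directions (indices $\le2\ell''$), the rescaling multiplies by $t$. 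What remains is the term $\langle R^{TX}m^{TX}(K)(Z+Z_0),Z+Z_0\rangle$, which is quadratic in $|Z|+|Z_0|$.
\item The horizontal terms carrying $f^p\wedge$ are handled the same way.
\end{itemize}
Here the weight is essential: each exterior factor $e^j\wedge$ or $f^p\wedge$ gains a factor $(1+|Z|+|Z_0|)^{-1}$, as in (\ref{3operators bound}). This weight gain compensates the polynomial growth in $Z_0$. I would also use the bound $|K^X|(\sqrt t(Z_0+Z))\ge c\sqrt t|z||Z_0+Z|$ near $X_{g,K}$, which absorbs the quadratic $Z_0$-growth into the $\frac1{tv}|K^X|^2$ term. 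These estimates are uniform in $v\ge1$ because $1/v\le1$ enters linearly.

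\textbf{Step 3 (imaginary part and continuity).} The imaginary part comes only from the skew-adjoint first-order parts: the $m^{TX}(K)$ connection piece and the commutators with the weight. Step 1 bounds these by $C(1+|nz|)|s|_1|s|_0+C|nz|^2|s|_0^2$. The bilinear bound for $|\langle L s,s'\rangle|$ follows by the same term-by-term Cauchy--Schwarz estimates, with all coefficients bounded by $C(1+|nz|^2)$ on the support.
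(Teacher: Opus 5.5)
Your architecture matches the Liu--Ma/Bismut--Goette estimate to which the paper defers. It has the Lichnerowicz formula, integration by parts in the weighted norm, the potential $\frac{1}{2tv}|K^X|^2$ supplying the extra term of $|\cdot|_{t,Z_0,n,1}$, the weight absorbing Getzler-rescaled exterior factors, and the cutoff $\gamma^2(|Z|/2(n+2))$ producing $(1+|nz|^2)$. But your Step~1 mishandles the one feature that distinguishes this statement from \eqref{elliptic estimate lxkn3tv}: uniformity in $Z_0$ with $|Z_0|\le\varepsilon_0/\sqrt t$.

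Although you name the trivialization centred at $\sqrt tZ_0$, you expand the $K$-part of the connection about $y_0$ and bound it by $C|z|(n+|Z_0|)$. Take that coefficient, $\alpha_i=-\frac12(1+\frac1v)\langle m^{TX}(K)(Z+Z_0),e_i\rangle$. It is \emph{real}, since $z\in\mathbb{R}$. Up to lower-order terms, $\mathrm{Re}\langle-\frac12(\nabla_{e_i}+\alpha_i)^2s,s\rangle=\frac12|\nabla s|^2-\frac12|\alpha s|^2$. Adding the potential, the leading real part contains
\[
\Bigl(\frac{1}{2v}-\frac18\Bigl(1+\frac1v\Bigr)^2\Bigr)\bigl|m^{TX}(K)(Z+Z_0)s\bigr|^2=-\frac18\Bigl(1-\frac1v\Bigr)^2\bigl|m^{TX}(K)(Z+Z_0)s\bigr|^2 .
\]
This is $\le 0$, of size $|z|^2|Z_0|^2$, and not controlled by the cutoff in $Z$. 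The imaginary-part and bilinear bounds likewise acquire factors $|z||Z_0|$, and $\sqrt v$ or $v$. So the constants $(1+|nz|^2)$ do not follow, and ``its square combines into the gauge structure'' does not supply the argument.

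The missing idea is what the centring at $\sqrt tZ_0$ actually does. The paper trivializes by $^5\nabla^{\mathbb{E},t}$-parallel transport along the geodesic $s\mapsto \sqrt tZ_0+s\sqrt tZ$. Since $K^X$ is Killing, $\langle K^X,\dot\gamma\rangle$ is constant along geodesics. Hence the scalar part of this parallel transport is multiplication by $\exp\bigl(\frac{1}{2t}(1+\frac1v)\langle K^X(\sqrt tZ_0),\sqrt tZ\rangle\bigr)$. To leading order this is $\exp\bigl(\frac12(1+\frac1v)\langle m^{TX}(K)Z_0,Z\rangle\bigr)$, the conjugation in \eqref{two l0 conjugation}.

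In this gauge, the radial-gauge formula gives the $K$-part of the connection coefficient as $-\frac12(1+\frac1v)\langle m^{TX}(K)_{\sqrt tZ_0}Z,e_i\rangle+O(\sqrt t|z||Z|^2)$. This involves only $Z$, so it is $\le C|z|(n+2)$ on the support. $Z_0$ survives only in the nonnegative potential $\frac{1}{2tv}|K^X|^2(\sqrt t(Z_0+Z))$, which is exactly the extra term of $|s|_{t,Z_0,n,1}$. Because the conjugating factor is a real exponential, it is not unitary: it changes $\mathrm{Re}\langle Ls,s\rangle$. That is why the estimate holds in the $\sqrt tZ_0$-centred trivialization and fails in the $y_0$-centred one; it is not bookkeeping.

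Two smaller corrections. First, the bound $|K^X|(\sqrt t(Z_0+Z))\ge c\sqrt t|z||Z_0+Z|$ is false when $Z$ has components tangent to $X_K$, for example along $TX_{g,K}$; only the distance to $X_K$ is controlled. Even where it holds, absorbing terms of size $\frac{|z|}{v}|Z+Z_0|^2$ into $\frac{|z|^2}{2v}|m^{TX}(K_0)(Z+Z_0)|^2$ would require $|z|$ bounded below. The polynomial growth of the Getzler-rescaled terms must therefore be absorbed by the weight $1+|Z|+|Z_0|$ alone, as you also say. Second, the terms with both Clifford indices normal to $X_{g,K}$ are not singular. The $\frac1{tv}$ in \eqref{lichnerowicz bkttv} carries the prefactor $\frac t4$, so these terms are $O(|z|/v)$ outright.
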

\begin{proof}
    The proof of (\ref{elliptic estimate lykn3tv}) is the same as the proof of \cite[Theorem 6.10]{LiuMa2022}.
\end{proof}

Let $|\cdot|_{t,Z_0,n,-1}$ be the dual norm of $|\cdot|_{t,Z_0,n,1}$ with respect to the product $\product{\,,\,}_{t,Z_0,n,0}$. For $k=-1,0,1$,
let $(\mathbf{I}_{y_0}^{\prime k},\|\cdot\|_{t,Z_0,n,k})$ be the closure of $\mathbf{I}_{y_0}$ with respect to the norm $\|\cdot\|_{t,Z_0,n,k}$. If $k_0,k_1\in\nn$ and if $A\in \mathcal{L}(\mathbf{I}_{y_0}^{\prime k_0},\mathbf{I}_{y_0}^{\prime k_1})$, let $\|A\|_{t,Z_0,n}^{k_0,k_1}$ be the norm of $A$ with respect to the norms $\|\cdot\|_{t,Z_0,n,k_0}$, $\|\cdot\|_{t,Z_0,n,k_1}$.

By using (\ref{elliptic estimate lxkn3t}), exactly in the same way as the proofs of \cite[Theorems 7.34, 7.37]{BismutGoette2000}, we obtain the following analogue of Theorem \ref{thm estimate of resolvent first}.
\begin{theorem}\label{thm estimate of resolvent first vt}
    Given $\eta>0$, there exist $c_{\eta}\in \,]0,1]$, $C>0$, $d>0$ such that if $t\in \,]0,1]$, $z\in\rr$, $0<|z|\leq c_{\eta}$, $n\in\nn$, $y_0\in W_{g,K}$, $Z_0\in N_{X_{g,K}/X_g,\rr,y_0}$, $|Z_0|\leq \varepsilon_0/\sqrt{t}$, $\lambda\in U_{\eta n+d}$ , the resolvent $(\lambda-L_{y_0,Z_0,zK_{0},n}^{\prime 3,(t,v)})^{-1}$ exists, and moreover
    \begin{equation}\label{resolvent lxkn3t estimate first vt}
        \begin{split}
        &\|(\lambda-L_{y_0,Z_0,zK_{0},n}^{\prime 3,(t,v)})^{-1}\|_{t,Z_0,n}^{0,0}\leq \frac{4}{(\eta n+d)^2},\\ 
         &   \|(\lambda-L_{y_0,Z_0,zK_{0},n}^{\prime 3,(t,v)})^{-1}\|_{t,Z_0,n}^{-1,1}\leq C(1+|n|^2)(1+|\lambda|^2).
        \end{split}
    \end{equation} 
\end{theorem}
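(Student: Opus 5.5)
The plan is to follow the scheme of the proofs of Theorem \ref{thm estimate of resolvent first} and \cite[Theorems 7.34, 7.37]{BismutGoette2000}, using the coercivity estimates of Theorem \ref{thm elliptic estimate lykn3tv} as the only analytic input. I will write $L=L_{y_0,Z_0,zK_{0},n}^{\prime 3,(t,v)}$ and work with the Hilbert triple $(\mathbf{I}_{y_0}^{\prime k},|\cdot|_{t,Z_0,n,k})$, $k=-1,0,1$.

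\emph{Step 1: a spectral region.} The first inequality in \eqref{elliptic estimate lykn3tv} shows that $\mathrm{Re}\product{Ls,s}\geq C_1|s|_1^2-C_2(1+|nz|^2)|s|_0^2$. Choose $c_\eta$ so small that $C_2|nz|^2\le C_2c_\eta^2 n^2\ll \eta^2n^2$. Take $\lambda\in U_{\eta n+d}$, write $\lambda=a+ib$, and set $c=\eta n+d$, so that $a\le b^2/4c^2-c^2$. Then estimate
\[
|\product{(\lambda-L)s,s}|\ge \max\bigl(\mathrm{Re}\product{(L-\lambda)s,s},\,|\mathrm{Im}\product{(L-\lambda)s,s}|\bigr).
\]
The second inequality in \eqref{elliptic estimate lykn3tv} bounds $|\mathrm{Im}\product{Ls,s}|$ by $C_3((1+|nz|)|s|_1|s|_0+|nz|^2|s|_0^2)$. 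Apply Cauchy--Schwarz with weights and split into the two cases $|b|\le$ (resp. $>$) a multiple of $c|s|_1/|s|_0$, as in \cite[Theorem 7.34]{BismutGoette2000}. This should give $|\product{(\lambda-L)s,s}|\ge \tfrac{c^2}{4}|s|_0^2$ and also $|\product{(\lambda-L)s,s}|\ge C|s|_1^2/((1+n^2)(1+|\lambda|^2))$, once $d$ is taken large relative to $C_2,C_3$. I expect this step to be the main obstacle: $c_\eta$ and $d$ must be chosen so that the $n$-dependent error terms $|nz|$ and $|nz|^2$ are absorbed uniformly in $t\in\,]0,1]$, $v\ge1$, $Z_0$ and $y_0$. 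This is possible because the constants $C_i$ in Theorem \ref{thm elliptic estimate lykn3tv} are uniform in all these parameters. The point of the weighted norms \eqref{sobolev norm tznk} is exactly that the Grassmann variables $f^p$ and $e^j$ are bounded operators on them.

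\emph{Step 2: existence and the bounds.} The third inequality in \eqref{elliptic estimate lykn3tv} shows that $L:\mathbf{I}^{\prime1}\to\mathbf{I}^{\prime-1}$ is bounded. By Step 1 and Lax--Milgram, $\lambda-L$ is injective with closed range; the same lower bound holds for the adjoint, so $\lambda-L$ is invertible. The lower bound $|\product{(\lambda-L)s,s}|\ge \tfrac{c^2}{4}|s|_0^2$ gives $|(\lambda-L)s|_0\ge \tfrac{c^2}{4}|s|_0$, which is the $(0,0)$ estimate $4/(\eta n+d)^2$. Similarly, $|s|_1^2\le C(1+n^2)(1+|\lambda|^2)|(\lambda-L)s|_{-1}|s|_1$ gives the $(-1,1)$ estimate.

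The one new feature compared with Theorem \ref{thm estimate of resolvent first} is the extra term $\frac{1}{tv}|K^X|^2$ in the $|\cdot|_1$-norm. It only improves the real-part bound and enters the imaginary part through $|nz|$, so the argument goes through unchanged.
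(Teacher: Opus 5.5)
Your proposal is correct and follows essentially the same route as the paper. The paper also proves this by feeding the three coercivity inequalities of Theorem~\ref{thm elliptic estimate lykn3tv} into the abstract numerical-range and Lax--Milgram argument of Bismut--Goette's Theorems 7.34 and 7.37; its citation of (\ref{elliptic estimate lxkn3t}) is evidently meant to be (\ref{elliptic estimate lykn3tv}), which is the estimate you correctly use.
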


\subsection{A proof of Theorem \ref{main lemma 2}}\label{sec A proof of Theorem main lemma 2}
\quad

\begin{theorem}\label{thm uniform estimate ftlvzk3tv tv}
    There exist $C'>0$, $C'' >0$, $C'''>0$ such that for $m, m'\in\nn$, there are $C>0$, $r\in\nn$ satisfying that for $\eta\in \,]0,1]$ small enough, there is $c_{\eta}>0$ such that: for $z\in\rr$, $0<|z|\leq c_{\eta}$, $t\in \,]0,1]$, $v\in [1,1/t]$, $n\in\nn$, $y_0\in W_{g,K}$, $Z_0\in N_{X_{g,K}/X_g,\rr,y_0}$, $|Z_0|\leq \varepsilon_0/\sqrt{t}$, $Z,Z'\in (T_{\rr}X)_{y_0}$, $|Z|,|Z'|\leq \varepsilon_0/\sqrt{t}$,
    \begin{equation}\label{uniform estimate ftlvzk3tv tv}
        \begin{split}
            &\Big(1+\Big|\frac{zZ_0}{\sqrt{v}}\Big|\Big)^m\sup_{|\alpha|,|\alpha'|\leq m'}\Big|\frac{\partial^{|\alpha|+|\alpha'|}}{\partial Z^{\alpha}\partial Z^{'\alpha'}}\widetilde{F}_{t,n}(L_{y_0,\sqrt{t}Z_0,zK_{0}}^{\prime 3,(t,v)})(Z,Z')\Big|\qquad\qquad\\ 
           &\qquad \leq C(1+|Z|+|Z'|)^r(1+|Z_0|)^{2\ell''+2\mathrm{dim}S}\\
          &\qquad\qquad\qquad\qquad  \exp(-C'n^2/4+2C''\eta^2\sup(|Z|^2,|Z'|^2)-C'''|Z-Z'|^2).
        \end{split}
    \end{equation}
\end{theorem}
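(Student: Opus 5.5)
The estimate will be proved by the same scheme as Theorems \ref{thm uniform estimate frn lxkn2t} and \ref{thm uniform estimate ftlvzk3tv}, that is, as in \cite[Theorems 9.11, 9.15]{BismutGoette2000} and \cite[\S 6.7]{LiuMa2022}. The work splits into a resolvent part and a weight-commutation part.

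\textbf{Resolvent and contour representation.} Starting from Theorem \ref{thm estimate of resolvent first vt}, which holds uniformly in $t\in\,]0,1]$, $v\in[1,1/t]$, $|Z_0|\le\varepsilon_0/\sqrt{t}$, I first prove the higher Sobolev analogue of \eqref{resolvent lxkn3t estimate high}. This means showing that $(\lambda-L^{\prime 3,(t,v)}_{y_0,Z_0,zK_0,n})^{-1}$ maps $\|\cdot\|_{t,Z_0,n,k}$ to $\|\cdot\|_{t,Z_0,n,k+1}$ with polynomial bounds in $(1+n)(1+|\lambda|)$. The input is that iterated commutators $[\nabla_{e_{i_1}},[\dots,[\nabla_{e_{i_p}},L^{\prime 3,(t,v)}_{\dots,n}]]]$ are bounded from $|\cdot|_{t,Z_0,n,1}$ to $|\cdot|_{t,Z_0,n,-1}$ uniformly. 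This follows from Theorem \ref{thm limit of lzkprime3tv}, \eqref{lichnerowicz bkttv}, the Taylor expansions \eqref{expansion T 1}--\eqref{expansion T 2}, and the fact that the terms carrying $1/(tv)$ are controlled by the $\frac{1}{tv}|K^X|^2$ piece built into $|\cdot|_{t,Z_0,n,1}$.

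Next, as in \eqref{spectral thm}, I write $\widetilde F_{t,n}(L')=\frac{1}{2i\pi}\int_{\Gamma_{\eta(n+p)+d}}\widetilde F_{t,n,q}(\lambda)(\lambda-L')^{-q}d\lambda$. I then use \eqref{estimate ftn eta}, the comparison of $|\cdot|_{t,Z_0,n,0}$ with the flat $L^2$ norm on $|Z|\le p+1$ (which costs the factor $(1+|Z_0|)^{2\ell''+2\dim S}$), and Sobolev embedding, applying the adjoint trick with the dual weight for derivatives in $Z'$. This yields \eqref{uniform estimate ftlvzk3tv tv} with $m=0$, including the Gaussian off-diagonal factor via $\widetilde F^h_{t,n}$ and finite propagation, exactly as in \eqref{estimate ftnn+p kernel}.

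\textbf{The weight $(1+|zZ_0/\sqrt v|)^m$.} For the decay factor I adapt the argument that produced \eqref{away from xgk two estimate 1}. Let $f_1,\dots,f_r$ be the functions cutting out $W_K$. I add to the family $\mathcal{Q}$ the multiplication operators $\frac{z}{\sqrt{tv}}\rho(\sqrt tZ)\gamma^2(\frac{|Z|}{2(n+2)})f_j(\sqrt tZ_0+\sqrt tZ)$. Their commutators with $L'$ are bounded by $|\cdot|_{t,Z_0,n,1}$ uniformly, because $\nabla f_j$ is bounded and the prefactor $\sqrt t$ coming from the rescaling cancels the $1/\sqrt{t}$. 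Their action on $s$ is dominated by the $\frac{1}{\sqrt{tv}}|K^X|$ term in the norm, since $|K^X|(y)\ge c|z|\,\mathrm{dist}(y,W_K)\ge c'|z||f(y)|$ near $W_K$.

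Inserting $m$ such operators on the left of $(\lambda-L')^{-q}$ and commuting them through, with $q$ large, gives a bound on $\frac{|z|^m}{(tv)^{m/2}}|f(\sqrt tZ_0+\sqrt t Z)|^m\,\widetilde F_{t,n}(L')(Z,Z')$ that is uniform in $Z_0$. For $Z_0\in N_{X_{g,K}/X_g}$ and $|Z|$ bounded by a fixed multiple of $|Z_0|$, one has $|f(\sqrt tZ_0+\sqrt tZ)|\ge c\sqrt t|Z_0|$, so this bound gives the factor $(1+|zZ_0|/\sqrt v)^{-m}$. In the complementary region $|Z|\gtrsim|Z_0|$, the weight is absorbed by the polynomial factor $(1+|Z|)^r$ after enlarging $r$.

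\textbf{Main obstacle.} I expect the hard step to be the uniformity of the commutator bounds in the full range $v\in[1,1/t]$, $|Z_0|\le\varepsilon_0/\sqrt t$. The terms with $\frac{1}{vt}\langle\nabla K^X,\cdot\rangle$ and $\frac{1}{vt}\langle T,K^X\rangle$ in \eqref{lichnerowicz bkttv} must be shown to be at most quadratic in $\sqrt t(Z+Z_0)$, using \eqref{expansion T 1} and the conjugation \eqref{two l0 conjugation}, so that after rescaling they are bounded by $(1+|Z|+|Z_0|)^2$ times quantities controlled by $|\cdot|_{t,Z_0,n,1}$. My first attempt would follow \cite[\S 9.5--9.6]{BismutGoette2000} verbatim, treating the $f^p\wedge$ variables as uniformly bounded operators via the weight exponent $2\dim S-q$ in \eqref{sobolev norm tznk}.
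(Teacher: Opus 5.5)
Your proposal is correct, and its analytic engine is the one the paper relies on. That engine consists of uniform resolvent bounds for $L^{\prime 3,(t,v)}_{y_0,\sqrt{t}Z_0,zK_0,n}$ in the norms \eqref{sobolev norm tznk}, the contour formula \eqref{spectral thm}, finite propagation speed, and weight operators dominated by the $\frac{1}{tv}|K^X|^2$ part of $|\cdot|_{t,Z_0,n,1}$.

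The paper's written proof, however, is only a reduction. For $v=1$:
\begin{itemize}
\item one has $\mathcal{B}_{zK_0,t,tv}=\mathcal{B}_{zK_0,t,t}$;
\item the connection $^5\nabla^{\mathbb{E},t}$ with $v=1$ equals $^3\nabla^{\mathbb{E},t}$ with $v=t$;
\item the Getzler rescalings of Definitions \ref{et} and \ref{def lzk3tv} agree, since at $v=t$ the factor $c_{t/v}=c_1$ is unrescaled;
\item the weight $(1+|zZ_0/\sqrt v|)^m$ becomes $(1+|zZ_0|)^m$.
\end{itemize}
So the $v=1$ case is literally Theorem \ref{thm uniform estimate ftlvzk3tv} at $v=t$, which is itself deferred to \cite[\S 6.7]{LiuMa2022}. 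The range $v\in\,]1,1/t]$ is then declared similar.

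You instead run the machinery directly on the whole range. This makes explicit what the paper leaves under ``similar''. First, uniformity in $v\geq 1$: the commutators of your weight operators with $\nabla_{e_i}$ carry only $z\,\partial f_j/\sqrt v$. Second, the origin of the $\sqrt v$ in the weight: $\frac{|z|}{\sqrt{tv}}|f|\geq c\,|zZ_0|/\sqrt v$ once $|f|\geq c\sqrt t|Z_0|$. The paper's route instead buys brevity by recycling an already established estimate.

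One precision on your weight step: the region where $|f(\sqrt tZ_0+\sqrt tZ)|\geq c\sqrt t|Z_0|$ holds must be $|Z|\leq c_0|Z_0|$ with $c_0$ a \emph{small} constant. For $|Z|$ comparable to $|Z_0|$ (e.g.\ $Z\approx -Z_0$), the point $\exp_{\sqrt tZ_0}(\sqrt tZ)$ can return near $W_K$, so a lower bound of that form fails. With that choice of $c_0$, your absorption of the complementary region $|Z|\geq c_0|Z_0|$ into the factor $(1+|Z|)^r$ goes through as you state.
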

\begin{proof}
    First when $v=1$, by taking $v=t$ in Theorem \ref{thm uniform estimate ftlvzk3tv}  and using (\ref{as infinite sum}), we obtain (\ref{uniform estimate ftlvzk3tv tv}). The remaining cases are similar.
\end{proof}

\begin{theorem}\label{i1}
        There exist $C>0$, $C'>0$ such that for $m>0$, there exists $\beta>0$ such that if $t\in \,]0,1]$, $v\in [1,1/t]$, $z\in\rr$, $0<|z|\leq \beta$,  $y_0\in W_{g,K}$, $Z_0\in N_{X_{g,K}/X_g,\rr,y_0}$, $|Z_0|\leq \varepsilon_0/\sqrt{t}$, $Z\in N_{X_g/X,\rr,y_0}$, $|Z|\leq \varepsilon_0/\sqrt{t}$,
        \begin{equation}\label{difference ftlvzk3tv tv}
            \begin{split}
            \big|\bigl(\ft(L_{y_0,\sqrt{t}Z_0,zK_{0}}^{\prime 3,(t,v)})-\exp(-L_{y_0,Z_0,zK_{0}}^{\prime 3,(0,v)})\bigr)(g^{-1}Z,Z)\big|\qquad\qquad\qquad\qquad\\ 
               \leq Ct^{1/4(2\ell+1)}\cdot\frac{(1+|Z_0|)^{2\ell''+2\mathrm{dim}S+3}}{(1+|zZ_0/\sqrt{v}|)^m}\exp(-C'|Z|^2/4),
            \end{split}
        \end{equation}
        here the heat kernel $\exp(-L_{y_0,Z_0,zK_{0}}^{\prime 3,(0,v)})(g^{-1}Z,Z)$ is calculated as in (\ref{def of fr}) with $r=0$.
\end{theorem}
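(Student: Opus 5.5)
The estimate \eqref{difference ftlvzk3tv tv} is the analogue of \eqref{difference ftlvzk3tv} (i.e. \cite[(8.76)]{BismutGoette2000}, \cite[Theorem 6.13]{LiuMa2022}), now for the operators of Section \ref{sec a new trivialization}. I would obtain it by repeating the scheme of Sections \ref{ee} and \ref{eh}, uniformly in the parameters $v\in[1,1/t]$ and $Z_0$.

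\emph{Step 1: reduction to localized pieces.} Using \eqref{as infinite sum} and finite propagation speed as in \eqref{finite propagation n+p}, I write $\ft(L_{y_0,\sqrt{t}Z_0,zK_0}^{\prime 3,(t,v)})(Z,Z')$ as $\sum_n \widetilde{F}_{t,n}(L_{y_0,Z_0,zK_0,n+p}^{\prime 3,(t,v)})(Z,Z')$ for $|Z|,|Z'|\le p$. I do the same for the heat kernel of $L_{y_0,Z_0,zK_0}^{\prime 3,(0,v)}$, defined via \eqref{def of fr} with $r=0$. Each piece is then represented by the contour integral \eqref{spectral thm} over $\Gamma_{\eta(n+p)+d}$, with $\widetilde F_{t,n,q}$. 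Theorem \ref{thm estimate of resolvent first vt}, including its $t=0$ version, controls the resolvents uniformly in $t$, $v$, $Z_0$ and $n$.

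\emph{Step 2: difference of operators.} The key quantitative input is a $t=0$ comparison. With respect to the norms \eqref{sobolev norm tznk}, I want
\begin{equation*}
\big\|L_{y_0,Z_0,zK_0,n}^{\prime 3,(t,v)}-L_{y_0,Z_0,zK_0,n}^{\prime 3,(0,v)}\big\|^{1,-1}_{t,Z_0,n}\le C\sqrt t\,(1+n)^N(1+|Z_0|)^{3}.
\end{equation*}
I get this by Taylor expanding the coefficients in the trivialization associated to $^5\nabla^{\mathbb{E},t}$, exactly as in the proof of Theorem \ref{thm limit of lyk3tv} via \eqref{5 ct explicit 1}--\eqref{explicit 53t}. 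The remainders are $\mathcal O(\sqrt t(|Z|+|Z_0|)^3)$. The singular terms carrying $\frac1{vt}$ are handled because $\frac{1}{tv}|K^X(\sqrt t(Z_0+Z))|^2$ is controlled by the $|\cdot|_{t,Z_0,n,1}$ norm, and because $v\ge1$ keeps $1/v$ bounded.

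I then use the resolvent identity \eqref{difference of resolvent}. It gives the $\|\cdot\|^{0,0}$ bound $C\sqrt t\,((1+n^2)(1+|\lambda|))^N(1+|Z_0|)^3$ for $(\lambda-L^{(t)})^{-q}-(\lambda-L^{(0)})^{-q}$. Integrating against $\widetilde F_{t,n,q}$ gives operator-norm control with weight $\exp(-C'n^2/4+\dots)$. The difference $\widetilde F_{t,n}-\widetilde F_{0,n}$ is exponentially small by \eqref{estimate derivative ftnq difference}.

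\emph{Step 3: from operator norm to pointwise kernel bounds.} Combine the $\|\cdot\|_{\infty,p}$ bound with the uniform smoothness estimate of Theorem \ref{thm uniform estimate ftlvzk3tv tv} and with its $t=0$ analogue. Then mollify as in \eqref{take phi 1}--\eqref{take phi 2} with $\nu=t^{1/2(2\ell+1)}$. This produces the factor $t^{1/4(2\ell+1)}$ and the polynomial weight $(1+|Z_0|)^{2\ell''+2\dim S+3}$.

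The decay factor $(1+|zZ_0/\sqrt v|)^{-m}$ comes from Theorem \ref{thm uniform estimate ftlvzk3tv tv}. For the $t=0$ kernel it comes from the explicit Mehler-type structure: the potential $\frac1{2v}|m^{TX}(K)(Z+Z_0)|^2$ in $L^{\prime3,(0,v)}$, combined with the conjugation \eqref{two l0 conjugation}, gives Gaussian decay in $|zZ_0|/\sqrt v$ via \eqref{mehler formula}. For the difference itself, I would interpolate: take the geometric mean of the $t^{\cdot}$-bound and the separate decay bounds, adjusting $m$ by lowering the power of $t$ if needed. Finally, \eqref{distance gz z} with $\eta$ fixed by \eqref{fix small eta} converts the off-diagonal Gaussian into $\exp(-C'|Z|^2/4)$ at $(g^{-1}Z,Z)$.

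\emph{Main obstacle.} The main obstacle is Step 2: obtaining the $\sqrt t$ comparison uniformly for $|Z_0|\le\varepsilon_0/\sqrt t$ and $v\le1/t$. The $\frac1{tv}$ terms there are large individually, so I would first reorganize them around the recentred point $Z_0$, following \cite[\S 9.5--9.6]{BismutGoette2000}, and check that only the polynomial loss $(1+|Z_0|)^3$ appears.
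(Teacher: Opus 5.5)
Your proposal is correct and follows essentially the paper's proof: the paper extracts from the proof of Theorem \ref{thm limit of lzkprime3tv} the comparison $\big|\langle (L^{\prime 3,(t,v)}_{y_0,\sqrt t Z_0,zK_0,n}-L^{\prime 3,(0,v)}_{y_0,Z_0,zK_0,n})s,s'\rangle_{t,Z_0,n,0}\big|\le C\sqrt t\,(1+n^4)(1+|Z_0|)^3|s|_{0,Z_0,n,1}|s'|_{t,Z_0,n,1}$. It then runs the argument of Theorem \ref{thm t-0 estimate ftnprh} on each $n$-piece using Theorems \ref{thm estimate of resolvent first vt} and \ref{thm uniform estimate ftlvzk3tv tv}, and sums over $n$ with $\eta$ fixed by \eqref{fix small eta}, exactly as you outline. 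The differences are minor: the paper gets all $t=0$ bounds, including the $(1+|zZ_0/\sqrt v|)^{-m}$ decay, by observing that the norms \eqref{sobolev norm tznk} and Theorems \ref{thm elliptic estimate lykn3tv}--\ref{thm uniform estimate ftlvzk3tv tv} extend to $t=0$, rather than via Mehler's formula; and it puts the $t=0$ norm on the input side of the comparison, which is what the resolvent identity (as in \eqref{difference of resolvent}) actually uses, whereas your $\|\cdot\|^{1,-1}_{t,Z_0,n}$ version needs the extra (true) remark that $|s|_{t,Z_0,n,1}\le C|s|_{0,Z_0,n,1}$.
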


\begin{proof}
    From (\ref{expansion of K}), we observe that the norms in (\ref{sobolev norm tznk}) extend to $t=0$. Moreover the estimates in Theorems \ref{thm elliptic estimate lykn3tv}-\ref{thm uniform estimate ftlvzk3tv tv} still hold when making $t=0$.

From the proof of Theorem \ref{thm limit of lzkprime3tv}, we  find that   there is $C>0$, $\beta>0$, such that for $t\in \,]0,1]$, $v\in [1,+\infty[\,$, $z\in\rr$, $0<|z|\leq \beta$, $n\in\nn$, $Z_0\in N_{X_{g,K}/X_g,\rr,y_0}$, $|Z_0|\leq \varepsilon_0/\sqrt{t}$, if $s,s'\in \mathbf{I}_{y_0}$ has compact support,
\begin{equation}\label{difference lyzkn3tv}
    \begin{split}
    \big|\langle \bigl(L_{y_0,\sqrt{t}Z_0,zK_{0},n}^{\prime 3,(t,v)}-L_{y_0,Z_0,zK_{0},n}^{\prime 3,(0,v)}\bigr)s,s'\rangle_{t,Z_0,n,0}\big|\qquad\qquad\qquad\qquad\qquad\qquad\\ 
    \leq C\sqrt{t}(1+n^4)(1+|Z_0|)^3|s|_{0,Z_0,n,1}|s'|_{t,Z,n,1}.
    \end{split}
\end{equation}

Let $C', C'', C'''$ be the constants in \eqref{uniform estimate ftlvzk3tv tv}.
From (\ref{resolvent lxkn3t estimate first vt}), (\ref{uniform estimate ftlvzk3tv tv}), (\ref{difference lyzkn3tv}), we  proceed as in  the proof of Theorem \ref{thm t-0 estimate ftnprh} to obtain that for any $m\in\nn$, there exist $C>0$ such that if $Z_0\in N_{X_{g,K}/X_g,\rr,y_0}$, $|Z_0|\leq \varepsilon_0/\sqrt{t}$, $Z,Z'\in N_{X_g/X,\rr,y_0}$, $|Z|, |Z'|\leq \varepsilon_0/\sqrt{t}$,
\begin{equation}\label{difference ftn-f0n tv}
    \begin{split}
   & \big|\bigl(\widetilde{F}_{t,n}(L_{y_0,\sqrt{t}Z_0,zK_{0}}^{\prime 3,(t,v)})-\widetilde{F}_{0,n}(L_{y_0,Z_0,zK_{0}}^{\prime 3,(0,v)})\bigr)(Z,Z')\big|
    \leq Ct^{1/4(2\ell+1)}\frac{(1+|Z_0|)^{2\ell''+2\mathrm{dim}S+3}}{(1+|zZ_0/\sqrt{v}|)^m}\\ 
    &\qquad\qquad\qquad\cdot\exp(-C'n^2/4+2C''\eta^2\sup(|Z|^2,|Z'|^2)-C'''|Z-Z'|^2).
    \end{split}
\end{equation}
We fix $\eta>0$ as in (\ref{fix small eta}) and sum up (\ref{difference ftn-f0n tv}) for $n\in\nn$, thus we obtain (\ref{difference ftlvzk3tv tv}). So we complete the proof of Theorem \ref{i1}.
\end{proof}

\begin{theorem}
    There exists $\beta>0$, $\gamma>0$,  $r>0$  such that for $z\in\rr$, $0<|z|\leq \beta$, there exist  $C>0$, such that if $t\in\,]0,1]$, $v\in [1,1/t]$,
    \begin{equation}\label{estimate to main lemma 2 mu1}
        \big|\Phi\str\Big[i\frac{\mu_1}{tv}g\widetilde{F}_t(-\mathcal{B}_{zK_{0},t,tv})\Big]-\widetilde{D}_{0,g,zK_{0}}^v\big|\leq Cv^r t^{\gamma}.
    \end{equation}
\end{theorem}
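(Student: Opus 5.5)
We would prove \eqref{estimate to main lemma 2 mu1} by localizing near $X_{g,K}$ and using the Getzler-rescaled operator $L_{y_0,\sqrt{t}Z_0,zK_0}^{\prime 3,(t,v)}$ of Section \ref{sec a new trivialization}. This mirrors the proof of Theorem \ref{thm first ineq of mu}, with the roles of $t$ and $v$ rearranged.

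\textbf{Step 1: localization.} We split the supertrace into the integral over $X_g\backslash\mathcal{U}_{\varepsilon_0}''$ and the integral over $\mathcal{U}_{\varepsilon_0}''$. The first part is $\mathcal{O}((vt)^m)$ by \eqref{estimate to main lemma 2 mu2}. Since $v\le 1/t$, this is bounded by $Cv^{r}t^{\gamma}$ once $m$ is chosen large.

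\textbf{Step 2: rescaling in $Z_0$ and $Z$.} On $\mathcal{U}_{\varepsilon_0}''$ we use coordinates $(y_0,\sqrt{t}Z_0,\sqrt{t}Z)$ with $|Z_0|,|Z|\le\varepsilon_0/\sqrt t$. We then apply \eqref{str getzler rescaling tv}. The factor $t^{\dim N_{X_{g,K}/X}}$ is absorbed by this change of variables, and the supertrace becomes
\[
\int_{X_{g,K}}\int_{Z_0}\int_{Z}\str\Big[i\tfrac{\mu_1(\sqrt t Z_0,\sqrt tZ)}{tv}\,g\,\ft\big(L^{\prime3,(t,v)}_{y_0,\sqrt tZ_0,zK_0}\big)(g^{-1}Z,Z)\Big]^{\max}\widetilde k\,dZ\,dZ_0.
\]

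\textbf{Step 3: compare with the limiting integrand.} By \eqref{proposition of mu1mu2}, $\mu_1=\mathcal{O}(|Z|^2)$ near $W_K$. Hence
\[
\frac{\mu_1(\sqrt t(Z_0+Z))}{tv}=\frac{\widetilde\mu(K)(Z_0+Z)}{v}+\mathcal{O}\big(\sqrt t(1+|Z_0|+|Z|)^3\big),
\]
where $\widetilde\mu(K)$ is the quadratic moment map of \eqref{def of tilde mu}. This follows as in \eqref{estimate mu1}, using \eqref{dmuk} and the expansion \eqref{expansion of K}.

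We then replace $\ft(L')$ by $\exp(-L^{\prime3,(0,v)}_{y_0,Z_0,zK_0})$ using Theorem \ref{i1}. The error is $Ct^{1/4(2\ell+1)}(1+|Z_0|)^{N}(1+|zZ_0/\sqrt v|)^{-m}e^{-C'|Z|^2/4}$. We also use the uniform bound of Theorem \ref{thm uniform estimate ftlvzk3tv tv}. Integrating the resulting errors over $Z_0$ gives
\[
\int(1+|Z_0|)^N(1+|zZ_0|/\sqrt v)^{-m}\,dZ_0\le C_z\,v^{(N+\dim)/2}.
\]
The total error is therefore $\le Cv^r t^{\gamma}$ for some $r$ and $\gamma=\min(1/2,1/4(2\ell+1))$.

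\textbf{Step 4: identify the limit.} The limiting integral is
\[
\int_{Z_0+Z\in N_{X_{g,K}/X}}\str\Big[i\tfrac{\widetilde\mu(K)(Z_0+Z)}{v}\,g\,\exp(-L^{\prime3,(0,v)})(g^{-1}Z,Z)\Big]^{\max}.
\]
By the conjugation \eqref{two l0 conjugation}, the kernel of $L^{\prime3,(0,v)}$ is the kernel of $L^{3,(0,v)}_{y_0,K}$ centered at $Z_0$, up to a unitary phase. Combined with the translation invariance of Theorem \ref{thm-transition invariance}, the $Z_0$-integral reassembles into the generalized supertrace $\str[i\frac{\widetilde\mu(K)}{v}g\exp(-\lk-\mathscr B_{K,v}^2)]$ of Definition \ref{bf}.

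To match the operators, compare \eqref{lyk30v} with \eqref{lichnerowicz scrbku} for $M=TX|_{W_{g,K}}$, $B=W_{g,K}$. The two agree: the factor $(1+\frac1v)m^M(K)$, the terms $\frac1{2v}|m^M(K)Z|^2$, the Clifford term $-\frac1{4v}\langle m e_j,e_k\rangle c(e_j)c(e_k)$, and the curvature term $\frac1{2v}\langle R^Mm^M(K)Z,Z\rangle$ all correspond. Their $\xi$-parts together produce $\ch_{ge^K}(\xi)$. Integrating over $X_{g,K}$ and applying $\Phi$ then gives exactly $\widetilde D^v_{0,g,zK_0}$ by \eqref{i5}.

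\textbf{Expected main obstacle.} The hard part is the uniformity in $Z_0$ up to $|Z_0|\le\varepsilon_0/\sqrt t$ together with $v$ up to $1/t$. Decay in $Z_0$ comes only from $(1+|zZ_0/\sqrt v|)^{-m}$, so the constant grows like a power of $v$. This is why the statement allows the factor $v^r$, and we must check that the polynomial weights $(1+|Z_0|)^N$ coming from the norms \eqref{sobolev norm tznk} stay controlled.
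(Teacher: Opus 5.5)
Your proposal is correct and follows essentially the paper's own argument. The paper likewise localizes away from $X_{g,K}$ via \eqref{estimate to main lemma 2 mu2}, passes to the rescaled formula \eqref{calculate bkttv by l} through \eqref{str getzler rescaling tv}, compares $\mu_1/(tv)$ with $\widetilde\mu(K)/v$ as in \eqref{difference mu}, and uses the kernel estimates of Theorems \ref{thm uniform estimate ftlvzk3tv tv} and \ref{i1}; it then identifies the limit with $\widetilde D^v_{0,g,zK_0}$ in \eqref{calculate l by scrkbv} via \eqref{two l0 conjugation} and Section \ref{em}, and the appeal to Theorem \ref{thm-transition invariance} in your Step 4 is unnecessary, since $Z_0\in N_{X_{g,K}/X_g}$ already lies in $N=N_{X_{g,K}/X}$ and the $(Z_0,Z)$-integral is directly the $N$-integral of Definition \ref{bf}.
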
 
\begin{proof}
 Let $\overline{k}_{y_0}(Z_0)$ be the smooth function such that for $(y_0,Z_0)\in N_{X_{g,K}/X_g,\rr}$,
 \begin{equation}\label{def of kbar}
    dv_{X_g}(y_0,Z_0)=\overline{k}_{y_0}(Z_0) dv_{X_{g,K}}(y_0) dv_{N_{X_{g,K}/X_g}}(Z_0).
 \end{equation}
Clearly, $\overline{k}_{y_0}(0)=1$.
 From (\ref{def of k}), (\ref{str getzler rescaling tv}),  (\ref{def of kbar}),  we see that
 \begin{equation}\label{calculate bkttv by l}
    \begin{split}
        \int_{(Z_0,Z)\in N_{X_{g,K}/X_g}\times N_{X_g/X}\atop |Z|,|Z'|\leq \varepsilon_0}\str\Big[i\frac{\mu_1}{tv}g\ft(\mathcal{B}_{zK_{0},t,tv})\bigl(g^{-1}(y_0,Z_0,Z), (y_0,Z_0,Z)\bigr)\Big]\frac{dv_{X}(y_0,Z_0,Z)}{(2\pi)^{\mathrm{dim}X}}\\ 
        =\int_{X_{g,K}}\int_{|Z_0|,|Z|\leq \varepsilon_0/\sqrt{t}}(-i)^{\mathrm{dim}X_{g,K}}\str^{\Lambda(N_{X_{g,K}/X}^{*(0,1)})}\Big[i\frac{\mu_1(\sqrt{t}Z_0,\sqrt{t}Z)}{tv}g\ft(L_{y_0,Z_0,zK_{0}}^{\prime 3,(t,v)}(g^{-1}Z,Z))\Big]^{\mathrm{max}}\\ 
        \cdot\frac{k(y_0,\sqrt{t}Z_0,\sqrt{t}Z)\overline{k}_{y_0}(\sqrt{t}Z_0)}{k_{(y_0,\sqrt{t}Z_0)}'(\sqrt{t}Z)}\frac{dv_{N_{X_{g,K}/X_g}}(Z_0)}{(2\pi)^{\mathrm{dim}N_{X_{g,K}/X_g}}}\frac{dv_{N_{X_g/X}}(Z)}{(2\pi)^{\mathrm{dim}N_{X_g/X}}}\frac{dv_{X_{g,K}}(y_0)}{(2\pi)^{\mathrm{dim}X_{g,K}}}.
    \end{split}
 \end{equation}

   We apply the constructions in Section \ref{em} to $B=W_{g,K}$, $M=TX|_{W_{g,K}}$. Therefore $\varphi\str\Big[i\frac{\widetilde{\mu}(K)}{v}g\exp\Big(-\lk-\mathscr{B}_{K,v}^2\Big)\Big]$ in \eqref{basic generalized supertraces} is a smooth form on $W_{g,K}$. From (\ref{basic generalized supertraces}), (\ref{lyk30v}), (\ref{two l0 conjugation}), one verifies that
 \begin{equation}\label{calculate l by scrkbv}
    \begin{split}
      \int_{X_{g,K}}  \int_{(Z_{0},Z)\in N_{X_{g,K}/X,\rr,y_0}}(-i)^{\mathrm{dim}X_{g,K}}\Phi\str^{\Lambda(N_{X_{g,K}/X}^{*(0,1)})}\Big[i\frac{\widetilde{\mu}(K)(Z_0+Z)}{v}\cdot\qquad\qquad\\ g\ft(L_{y_0,Z_0,zK_{0}}^{\prime 0,(t,v)}(g^{-1}Z,Z))\Big]^{\mathrm{max}} 
      \frac{dv_{N_{X_{g,K}/X_g}}(Z_0)}{(2\pi)^{\mathrm{dim}N_{X_{g,K}/X_g}}}\frac{dv_{N_{X_g/X}}(Z)}{(2\pi)^{\mathrm{dim}N_{X_g/X}}}\frac{dv_{X_{g,K}}}{(2\pi)^{\mathrm{dim}X_{g,K}}}\\ 
      =\int_{X_{g,K}}\varphi\str\Big[i\frac{\widetilde{\mu}(K)}{v}g\exp(-\lk-\mathscr{B}_{zK_{0},v}^2)\Big]\ch_{ge^K}(\xi,g^{\xi})=\widetilde{D}_{0,g,zK_{0}}^v.
    \end{split}
 \end{equation}

 From (\ref{dmuk}), (\ref{km}), (\ref{def of tilde mu}) and (\ref{proposition of mu1mu2}), we find that for $t\in \,]0,1]$, $v\in [1,+\infty[\,$, $Z_0\in N_{X_{g,K}/X_g,\rr,y_0}$, $|Z_0|\leq \varepsilon_0/\sqrt{t}$, $Z\in N_{X_g/X,\rr,y_0}$, $|Z|\leq \varepsilon_0/\sqrt{t}$,
 \begin{equation}\label{difference mu}
    \begin{split}
   & \Big|\frac{\mu_1(\sqrt{t}Z_0,\sqrt{t}Z)}{tv}-\frac{\widetilde{\mu}(K)(Z_0+Z)}{v}\Big|\leq C\frac{\sqrt{t}}{v}\bigl(|Z_0|^3+|Z|^3\bigr).
    \end{split}
 \end{equation}
 From (\ref{estimate to main lemma 2 mu2}),  (\ref{uniform estimate ftlvzk3tv tv}), (\ref{difference ftlvzk3tv tv}), (\ref{calculate bkttv by l})-(\ref{difference mu}), we obtain (\ref{estimate to main lemma 2 mu1}).
\end{proof}

From (\ref{I(bktv)}), (\ref{estimate to main lemma 2 mu1}), we  obtain the first identity in (\ref{main lemma 2 into 2}).  Thus we finish the proof of Theorem \ref{main lemma 2}.

\subsection{A proof of Theorem \ref{thm h,vt}}\label{er}

First we assume that $h_{v,t}=\theta_{v,t}^{1}$. From (\ref{hvt}),
\begin{equation}\label{split of theta}
    \begin{split}
        &\theta_{tv,t}^1=\str\Big[\frac{i\mu_1}{tv}g\exp\big(-\lk-\ctvt^2-2dat\derivative{t}\ctvt'' -2\dabar t\derivative{t}\ctvt'\\ 
        &\qquad\qquad+da\dabar \big(N_V-\frac{\mathrm{dim}X}{2}\big)\big)\Big]^{da\dabar}+\frac{\mathrm{dim}X}{2}\str\Big[i\frac{\mu_1}{tv}g\exp(-\bkttv)\Big].
    \end{split}
\end{equation}

From (\ref{nkt def using c}), (\ref{lichnerowicz mu2}), we find that the following identity holds.
\begin{equation}
    \begin{split}
        \lk+\ctvt^2+2dat\derivative{t}\ctvt'' +2\dabar t\derivative{t}\ctvt'-da\dabar \big(N_V-\frac{\mathrm{dim}X}{2}\big)\qquad\qquad\\
        =-\frac{t}{2}\Big(\nabla_{e_i}^{\Lambda(T^{*(0,1)}X)\otimes \xi}+\frac{1}{\sqrt{2t}}\product{S(e_i)e_j, f_p^H}c(e_j)f^p\wedge
        +\frac{1}{2t}\product{S(e_i)f_p^H,f_q^H}\\  f^p\wedge f^q\wedge  -\frac{\product{K^X,e_i}}{2t}\Bigl(1+\frac{1}{v}\Bigr)-\frac{da c(e_i^{(1,0)})}{\sqrt{2t}}-\frac{\dabar c(e_i^{(0,1)})}{\sqrt{2t}}\Big)^2\qquad\quad\\ 
       +\frac{t}{8}H+\frac{t}{4}\bigl(\bigl(R^{\xi}+\frac{1}{2}\trace[R^{TX}]\bigr)(e_i,e_j)-\frac{1}{vt}\product{\nabla_{e_i}^{TX}K^X,e_j}\bigr)c(e_i)c(e_j)\\ 
       +\sqrt{\frac{t}{2}}\bigl(\bigl(R^{\xi}+\frac{1}{2}\trace[R^{TX}]\bigr)(e_i,f_p^H)-\frac{1}{vt}\product{T(e_i,f_p^H),K^X}\bigr)c(e_i)f^p\\ 
       +\frac{1}{2}\bigl(\bigl(R^{\xi}+\frac{1}{2}\trace[R^{TX}]\bigr)(f_p^H,f_q^H)-\frac{1}{2vt}\product{T(f_p^H,f_q^H),K^X}\bigr)f^p\wedge f^q\wedge\\ 
       -m^{\xi}(K)-\frac{1}{2}\trace[m^{TX}(K)]+\frac{1}{2v}|K^X|^2.
    \end{split}
\end{equation}

For $t>0$, put   
\begin{equation}\label{6nablat}
    \begin{split}
    ^6\nabla^{\mathbb{E},t}=\nabla^{\Lambda(T^{*(0,1)}X)\otimes \xi}+\frac{1}{\sqrt{2t}}\product{S(\cdot)e_j, f_p^H}c(e_j)f^p\wedge +\frac{1}{2t}\product{S(\cdot)f_p^H,f_q^H}f^p\wedge f^q\wedge\\
    -\frac{K^{'X}}{2t}\Bigl(1+\frac{1}{v}\Bigr)-\frac{da c(\cdot^{(1,0)})}{\sqrt{2t}}-\frac{\dabar c(\cdot^{(0,1)})}{\sqrt{2t}}.
    \end{split}
\end{equation}
As in (\ref{curvature 2nabla}), we have
\begin{equation}\label{curvature 6nabla}
    \begin{split}
        ( ^6\nabla^{\mathbb{E},1})^2(e_i,e_j)=\frac{1}{4}\product{R^{TX}(e_k,e_{\ell})e_i,e_j}c(e_k)c(e_{\ell})\hspace{12 em}\\ 
        +\frac{1}{\sqrt{2}}\product{R^{TX}(e_k,f_p^H)e_i,e_j}c(e_k)f^p\wedge
        +\frac{1}{2}\product{R^{TX}(f_p^H,f_q^H)e_i,e_j}f^p\wedge f^q\wedge\\ +\big(R^{\xi}+\frac{1}{2}\trace[R^{TX}]\big)(e_i,e_j)-\Bigl(1+\frac{1}{v}\Bigr)\product{m^{TX}(K)e_i,e_j}-i{da\dabar}\product{e_i,J^{TX}e_j}.
    \end{split}
\end{equation}
Let $A,B$ be smooth sections of $T_{\rr}X$. By (\ref{6nablat}),
\begin{equation}\label{6nablat c}
    \begin{split}
^6\nabla^{\mathbb{E},t}_A c(B)=c(\nabla_A^{TX}B)+\sqrt{\frac{2}{t}}\product{S(A)B,f_p^H}f^p\wedge\hspace{8 em}\\ +\sqrt{\frac{2}{t}}da\product{A^{(1,0)},B}+\sqrt{\frac{2}{t}}\dabar\product{A^{(0,1)},B}.
    \end{split}
\end{equation}

As in Section \ref{sec A change of coordinates and a Getzler rescaling: a first simple approach}, we identify $B^{TX}(y_0,4\varepsilon_0)$ with $B^X(y_0,4\varepsilon_0)$ by sending $Z\in (T_{\rr}X)_{y_0}$ to $\exp_{y_0}^X(Z)\in X$. Then we identify $(\pi^*\Lambda(T_{\rr}^*S)\,\widehat{\otimes }\,\Lambda(T^{*(0,1)}X)\otimes \xi )_{Z}$ with $(\pi^*\Lambda(T_{\rr}^*S)\,\widehat{\otimes }\,\Lambda(T^{*(0,1)}X)\otimes \xi)_{y_0}$ by parallel transport  with respect to the connection $^6\nabla^{\mathbb{E},t}$ along the curve $s\in [0,1]\rightarrow sZ$. 

Let $L_{y_0,K,\theta}^{ 1,(t,v)}$ be the operator obtained from $  \lk+\ctvt^2+2dat\derivative{t}\ctvt'' +2\dabar t\derivative{t}\ctvt'-da\dabar \big(N_V-\frac{\mathrm{dim}X}{2}\big)$ as in (\ref{ly0k1tv}). Let $L_{y_0,K,\theta}^{3,(t,v)}$ be the operator obtained from $L_{y_0,K,\theta}^{ 1,(t,v)}$ by using the Getzler rescaling of Section \ref{sec A change of coordinates and a Getzler rescaling: a first simple approach}. 
\begin{definition}
    For $t>0$, $v\geq 1$, set
    \begin{equation}\label{ly0ktheta 30v} \begin{split}
        L_{y_0,K,\theta}^{3,(0,v)}= -\frac{1}{2}\Bigl(\nabla_{e_i}+\frac{1}{2}\big\langle\big(\jmath^{\prime *}R^{TX}-\bigl(1+\frac{1}{v}\bigr)m^{TX}(K)-iJ^{TX}da\dabar\big)Z,e_i\big\rangle\Bigr)^2\qquad\qquad\quad\\+\jmath^{\prime *}R_{y_0}^{\xi}-m^{\xi}(K)_{y_0}
         +\frac{1}{2}\trace[\jmath^{\prime *}R^{TX}-m^{TX}(K)]_{y_0}+\frac{1}{2v}\big|m_{y_0}^{TX}(K)Z\big|^2-\frac{da}{\sqrt{2}v}c(m^{TX}(K)z)\\ -\frac{\dabar}{\sqrt{2}v}c(m^{TX}(K)\bar{z}) 
         -\frac{1}{4v}\product{m^{TX}(K)e_j,e_k}_{y_0}c(e_j)c(e_k)+\frac{1}{2v}\product{\jmath^{\prime *} R^{TX}m^{TX}(K)Z,Z}_{y_0}.
     \end{split}
 \end{equation}
\end{definition}
\begin{theorem}
    For $v\geq 1$, as $t\rightarrow 0$,
    \begin{equation}\label{limit of ly0ktheta}
        L_{y_0,K,\theta}^{3,(t,v)}\rightarrow L_{y_0,K,\theta}^{3,(0,v)}.
    \end{equation}
\end{theorem}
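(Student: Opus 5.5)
The plan is to repeat, term by term, the proof of Theorem \ref{thm limit of lyk3tv}, keeping track of the extra Grassmann variables $da$, $\dabar$. Pass from $L_{y_0,K,\theta}^{1,(t,v)}$ to $L_{y_0,K,\theta}^{3,(t,v)}$ in two steps. First conjugate by $H_t$. Then apply the Getzler rescaling of Definition \ref{et}, with $c_t(e_j)$ for $1\leq j\leq 2\ell''$, working in the trivialization given by $^6\nabla^{\mathbb{E},t}$. Next expand every term of the Lichnerowicz formula written just before (\ref{6nablat}) in $t$, at fixed $v\geq 1$. The terms containing no $da$, $\dabar$ are exactly those of $\bkttv$ in (\ref{lichnerowicz bkttv}). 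Their limit is the $da\dabar$-free part of $L_{y_0,K}^{3,(0,v)}$ in (\ref{lyk30v}), by the computation (\ref{5 ct explicit 1})--(\ref{explicit 53t}). So only the new terms need analysis.

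For the connection part, the standard argument goes as follows. In the radial gauge of $^6\nabla^{\mathbb{E},t}$, the connection form at $\sqrt{t}Z$ equals $\frac{1}{2}(^6\nabla^{\mathbb{E},t})^2_{y_0}(Z,\cdot)+\mathcal{O}(|Z|^2)$. After rescaling, we use the curvature formula (\ref{curvature 6nabla}). Its $da\dabar$-term $-i\,da\dabar\langle e_i,J^{TX}e_j\rangle$ is not affected by the rescaling, because $da,\dabar$ carry no Clifford degree. It therefore survives and produces the extra $-iJ^{TX}da\dabar$ inside the magnetic term of (\ref{ly0ktheta 30v}). The factor $(1+1/v)m^{TX}(K)$ comes exactly as before from $K^{'X}(1+1/v)/2t$ together with (\ref{expansion of K}).

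The remaining new terms are the two linear in $da$, $\dabar$, namely $-\frac{da}{\sqrt{2}v}c(m^{TX}(K)z)$ and $-\frac{\dabar}{\sqrt{2}v}c(m^{TX}(K)\bar z)$. We expect these to come from the cross terms in the square. There the $-\frac{da\,c(e_i^{(1,0)})}{\sqrt{2t}}$ piece of the connection meets the $-\frac{\langle K^X,e_i\rangle}{2t}(1+\frac1v)$ piece. This is equivalently the commutator of $\frac{2t}{v}\,da\,\partial_t\mathcal{A}''_{tv,t}$ with $c(K^X)$, as in (\ref{bd}). In the gauge, part of the cross term is absorbed by (\ref{6nablat c}); what remains is the $1/v$-weighted contribution $\frac{1}{\sqrt{2t}v}\langle K^X,e_i\rangle c(e_i^{(1,0)})da$. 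By (\ref{expansion of K}), $K^X(\sqrt tZ)=\sqrt t\,m^{TX}(K)Z+\mathcal{O}(t|Z|^2)$, and $c(e_j)$ is not rescaled for normal directions $j>2\ell''$. Hence the limit is $\frac{da}{\sqrt2 v}c(m^{TX}(K)z)$ with the sign of (\ref{ly0ktheta 30v}). We will also check that the $N_V-\frac{\dim X}{2}$ term, written via (\ref{nkt def using c}) as $\frac{i}{4}c(J e_i)c(e_i)$, cancels against the $da\dabar$ part of the completed square, in the manner of \cite[Theorem 2.15]{BGS2}. The estimates on the remainders, of the form $\mathcal{O}(\sqrt t(1+|Z|)^N)$, follow as in Theorem \ref{thm 1xk3t expansion}.

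The main obstacle is the bookkeeping of the $1/\sqrt t$-singular $da$, $\dabar$ terms. The piece $\frac{1}{\sqrt{2t}}\langle K^X,\cdot\rangle(1)\,c(\cdot)da$ with coefficient $1$ rather than $1/v$ must be removed exactly by the gauge (\ref{6nablat c}). Likewise the tangential Clifford variables $c_t(e_j)$, $j\leq 2\ell''$, must appear only in combinations that stay bounded, and for this we use that $K^X$ vanishes on $X_{g,K}$ to second order in the relevant components. We will check this cancellation first, by comparing with the case $v=\infty$ in \cite[\S 9]{BismutGoette2000}.
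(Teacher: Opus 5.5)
Your overall setup is the paper's: conjugation by $H_t$, Getzler rescaling of the $X_{g,K}$-tangential Clifford variables, radial gauge of ${}^6\nabla^{\mathbb{E},t}$, and the $-iJ^{TX}da\dabar$ magnetic term read off from \eqref{curvature 6nabla}. The gap is in the one genuinely new computation, the origin of the terms $-\frac{da}{\sqrt2 v}c(m^{TX}(K)z)-\frac{\dabar}{\sqrt2 v}c(m^{TX}(K)\bar z)$. Your account of these terms is inconsistent with the gauge you chose.

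The connection inside the square of the Lichnerowicz formula is exactly ${}^6\nabla^{\mathbb{E},t}$ of \eqref{6nablat}. It contains the full $-\frac{K'^X}{2t}(1+\frac1v)$ and the full $-\frac{da\,c(\cdot^{(1,0)})+\dabar\,c(\cdot^{(0,1)})}{\sqrt{2t}}$. In the radial gauge of this very connection, the rescaled square becomes $-\frac12\sum_i\bigl(\nabla_{e_i}+\frac12({}^6\nabla^{\mathbb{E},1})^2(Z,e_i)+\dots\bigr)^2$. So the square enters only through its curvature, namely $(1+\frac1v)m^{TX}(K)$ and $-iJ^{TX}da\dabar$. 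Nothing of the cross term between the $K'^X$-piece and the $da\,c$-piece is left over. In particular there is no residual $\frac{1}{\sqrt{2t}\,v}\langle K^X,e_i\rangle c(e_i^{(1,0)})da$, and the coefficient-$1$ part plays no special role. A picture where only part of the cross term is absorbed corresponds to a trivialization without the $da$, $\dabar$ Clifford terms, such as ${}^5\nabla^{\mathbb{E},t}$. In that gauge, however, the square leaves the first-order term $\frac{1}{\sqrt2}\,da\,c(e_i^{(1,0)})\nabla_{e_i}$ after rescaling, so you do not land on \eqref{ly0ktheta 30v}.

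Conversely, you claim the $da$-free terms of \eqref{lichnerowicz bkttv} converge to the corresponding part of \eqref{lyk30v} ``by \eqref{5 ct explicit 1}--\eqref{explicit 53t}''. This fails here, because those formulas were computed in the ${}^5\nabla$ gauge. By \eqref{6nablat c}, in the ${}^6\nabla$ gauge the rescaled normal Clifford variables are given by \eqref{6 ct explicit 2}, which carries the extra terms $\sqrt2\,da\langle z,e_j\rangle+\sqrt2\,\dabar\langle\bar z,e_j\rangle$. Insert these into $-\frac{1}{4v}\langle m^{TX}(K)e_i,e_j\rangle c(\tau e_i)c(\tau e_j)$ for $2\ell''<i,j\leq 2\ell$; this is the $\frac{1}{vt}\langle\nabla_{e_i}K^X,e_j\rangle$ term, via \eqref{expansion T 2}. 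Using the antisymmetry of $m^{TX}(K)$ and $c(e_i)\,da=-da\,c(e_i)$, you get exactly $-\frac{da}{\sqrt2 v}c(m^{TX}(K)z)-\frac{\dabar}{\sqrt2 v}c(m^{TX}(K)\bar z)$, plus a $da\dabar$ term. This is the content of \eqref{explicit 63t}, and it is the paper's argument. As written, your proof credits the linear terms to a source that contributes nothing, while declaring their actual source $da$-free. Carried out consistently in one gauge, it would either lose these terms or count them twice. Drop the cross-term paragraph, and redo the zeroth-order terms with \eqref{6 ct explicit 2} in place of \eqref{5 ct explicit 2}.
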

\begin{proof}
    For $Z\in (T_{\rr}X)_{y_0}$, we split $Z$ into $z+\bar{z}$ where $z\in (T^{(1,0)}X)_{y_0}$.
    For $A\in\bigl(\Lambda(\mathbb{R}^{2*})\,\widehat{\otimes }\,\pi^*\Lambda(T_{\rr}^*S)\,\widehat{\otimes }\, c(TX)\otimes \ENDO(\xi) \bigr)_{y_0}\otimes \mathrm{Op}_{y_0}$, we denote by $[A]_t^{(3)}$ the differential operator obtained from $A$ by using the Getzler rescaling in Definition \ref{et}. By (\ref{6nablat c}), in the trivialization associated to $ ^6\nabla^{\mathbb{E},t}$,
    for $1\leq j\leq 2\ell''$,
    \begin{equation}\label{6 ct explicit 1}
        \Big[\sqrt{t}c_t(\tau e_j(\sqrt{t}Z))\Big]_t^{(3)}=\sqrt{2}c(e_j)+\mathcal{O}(\sqrt{t}|Z|^2),
    \end{equation}
    for $2\ell''+1\leq j\leq 2\ell$,
    \begin{equation}\label{6 ct explicit 2}
        \begin{split}
        \Big[c(\tau e_j(\sqrt{t}Z))\Big]_t^{(3)}=c(e_j)+\sqrt{{2}}\bigl(\product{S(Z)e_j,f_p^H}\bigr)f^p\wedge\qquad\qquad\quad\\
        +\sqrt{2}da\product{z,e_j}+\sqrt{2}\dabar\product{\bar{z},e_j}+\mathcal{O}(\sqrt{t}|Z|^2).
        \end{split}
    \end{equation}
    From (\ref{dk'})-(\ref{expansion T 2}),  (\ref{6 ct explicit 1}), (\ref{6 ct explicit 2}),   we find that in the trivialization associated with $ ^6\nabla^{\mathbb{E},t}$, as $t\rightarrow 0$, 
    \begin{equation}\label{explicit 63t}
        \begin{split}
            \frac{1}{4}  \Big[\product{\nabla_{\tau e_i}^{TX}K^X,\tau e_j}c(\tau e_i)c(\tau e_j)(\sqrt{t}Z)\Big]_t^{(3)}+\frac{1}{\sqrt{2t}}\Big[\product{T(\tau e_i,f_p^H),K^X}\\ \cdot c(\tau e_i)f^p(\sqrt{t}Z)\Big]_t^{(3)}
            +\frac{1}{4t}\Big[\product{T(f_p^H,f_q^H),K^X}f^p\wedge f^q(\sqrt{t}Z)\Big]_t^{(3)}
           \\  \rightarrow \frac{1}{4}\sum_{2\ell''+1\leq i,j\leq 2\ell}\product{m^{TX}(K)e_i,e_j}c(e_i)c(e_j)
            -\frac{1}{2}\product{\jmath^{\prime *}R^{TX}m^{TX}(K)Z,Z}_{y_0}\\
            +\frac{da}{\sqrt{2}}c(m^M(K)z) +\frac{\dabar}{\sqrt{2}}c(m^M(K)\bar{z})-\frac{i}{2}da \overline{da}\langle J^{TX}m^{TX}(K)Z,Z  \rangle.
        \end{split}
     \end{equation}
     Then (\ref{limit of ly0ktheta}) follows from (\ref{curvature 6nabla}) and (\ref{explicit 63t}).
\end{proof}

Now  we proceed as in Sections \ref{sec a new trivialization}-\ref{sec A proof of Theorem main lemma 2} for the operator $ \lk+\ctvt^2+2dat\derivative{t}\ctvt'' +2\dabar t\derivative{t}\ctvt'-da\dabar \big(N_V-\frac{\mathrm{dim}X}{2}\big)$. In particular, by using (\ref{many generalized supertraces}), (\ref{def hv}), (\ref{split of theta}), (\ref{ly0ktheta 30v}), we have the following analogue of (\ref{estimate to main lemma 2 mu1}),
\begin{equation}
    |\theta_{tv,t}^1-\theta_{v}^1|\leq Cv^rt^{\gamma},\quad t\in \,]0,1], v\in[1,1/t].
\end{equation}

Now we complete the proof of Theorem \ref{thm h,vt} when $h_{v,t}=\theta_{v,t}^1$. For the cases $h_{v,t}=\pi_{v,t}^{1,\prime/\prime\prime}$ or $\lambda_{v,t}^{\prime/\prime\prime}$, one can follow the steps in this subsection by making $da$ or $\dabar=0$. For the case $h_{v,t}=\tau_{v,t}$, one only needs to use (\ref{difference mu}) again.

\end{document}